\documentclass[12pt,twoside,a4paper]{article}
\usepackage[a4paper,margin=1.05in]{geometry}
\usepackage[T1]{fontenc}
\usepackage[utf8]{inputenc}
\usepackage{lmodern}
\usepackage[nopatch=footnote]{microtype}
\usepackage{amsmath,amssymb,amsthm,mathtools}
\usepackage{mathrsfs}
\usepackage{bm}
\usepackage{array}
\usepackage{booktabs}
\usepackage{longtable}
\usepackage{enumitem}
\usepackage{cite}
\usepackage{hyperref}
\usepackage{bookmark}
\usepackage{xcolor}
\usepackage{titlesec}
\usepackage{setspace}
\usepackage{float}
\usepackage{fancyhdr}
\usepackage{calc}
\hypersetup{colorlinks=true,linkcolor=black,citecolor=black,urlcolor=black}
\allowdisplaybreaks
\newcommand{\mainsectionformat}{\titleformat{\section}{\Large\bfseries}{\thesection.}{0.65em}{}}
\newcommand{\appendixsectionformat}{\titleformat{\section}{\Large\bfseries}{Appendix \thesection.}{0.65em}{}}
\mainsectionformat
\titleformat{\subsection}{\large\bfseries}{\thesubsection.}{0.55em}{}
\titleformat{\subsubsection}{\normalsize\bfseries}{\thesubsubsection.}{0.5em}{}
\titlespacing*{\section}{0pt}{1.35ex plus 0.35ex minus 0.2ex}{0.75ex plus 0.15ex}
\titlespacing*{\subsection}{0pt}{0.75ex plus 0.2ex minus 0.1ex}{0.30ex plus 0.08ex}
\titlespacing*{\subsubsection}{0pt}{0.50ex plus 0.15ex minus 0.1ex}{0.20ex plus 0.06ex}
\numberwithin{equation}{section}
\theoremstyle{plain}
\newtheorem{theorem}{Theorem}[section]
\newtheorem{proposition}[theorem]{Proposition}
\newtheorem{lemma}[theorem]{Lemma}
\newtheorem{corollary}[theorem]{Corollary}
\theoremstyle{definition}
\newtheorem{definition}[theorem]{Definition}
\newtheorem{assumption}[theorem]{Assumption}

\theoremstyle{remark}
\newtheorem{remark}[theorem]{Remark}
\theoremstyle{plain}

\newcommand{\R}{\mathbb{R}}
\newcommand{\C}{\mathbb{C}}
\newcommand{\N}{\mathbb{N}}
\newcommand{\Z}{\mathbb{Z}}
\newcommand{\D}{\mathcal{D}}

\newcommand{\Hh}{\mathcal{H}}
\newcommand{\Lop}{\mathcal{L}}

\newcommand{\Lie}{\mathcal{L}}
\newcommand{\tr}{\operatorname{tr}}

\newcommand{\Ker}{\operatorname{Ker}}
\newcommand{\Ran}{\operatorname{Ran}}
\newcommand{\spec}{\operatorname{spec}}
\newcommand{\dd}{\mathrm{d}}
\newcommand{\Dom}{\operatorname{Dom}}

\newcommand{\ii}{\mathrm{i}}
\newcommand{\ee}{\mathrm{e}}
\newcommand{\del}{\delta}

\begin{document}

\pagestyle{plain}

\title{Threshold-Sharp Conformal Scalar Stability on Carter Slabs and Black Hole Exteriors}
\author{Bobby Eka Gunara\\[0.8em]
\small Theoretical Physics Laboratory,\\
\small Theoretical High Energy Physics Research Division,\\
\small Faculty of Mathematics and Natural Sciences,\\
\small Institut Teknologi Bandung,\\
\small Jl. Ganesha no. 10 Bandung, Indonesia, 40132\\[0.5em]
\small email: \texttt{bobby@itb.ac.id}}
\date{}

\maketitle

\begin{abstract}
We prove a threshold-sharp stability theory for the conformal scalar-curvature sector on zero-curvature Carter backgrounds. The main result is a fully closed bounded-slab theorem: the reflecting evolution is constructed, the conserved energy is proved positive, the complete affine threshold obstruction is identified, and all remaining finite-energy dynamics are shown to be uniformly stable with no unstable modes. This is the sharp statement for compact reflecting slabs, where genuine time decay is false in general. We then extend the same threshold philosophy to black-hole exteriors, separating the intrinsic conformal mechanism from the exterior scalar-wave inputs needed for red-shift, local energy, limiting absorption, and zero-frequency control. The framework gives main applications to Kerr, Reissner-Nordström, slowly rotating weakly charged Kerr-Newman wall exteriors, and extremal horizon-charge obstructions. Our precise result is that it proves stability only for the conformal scalar-curvature sector, not tensorial or nonlinear gravitational stability, and it distinguishes boundedness, qualitative local decay, polynomial decay, and extremal Aretakis-type obstruction without conflating them.
\end{abstract}

\tableofcontents
\newpage

\section{Introduction}

\subsection{Scope and dependency map}\label{subsec:scope-proof-obligations}\label{subsec:proof-status-ledger}\label{subsec:intro-where-inputs}

This paper proves stability statements only for the conformal scalar-curvature sector.  The unrestricted linearized scalar-curvature equation is one scalar relation for a symmetric two-tensor and is not, by itself, a closed tensorial Cauchy problem.  After the conformal restriction
\begin{equation}
 h_{\mu\nu}=2u g_{\mu\nu},
\end{equation}
the four-dimensional identity
\begin{equation}
\delta R_g(2u g)=-6\Box_g u-2R(g)u
\end{equation}
turns the zero-scalar-curvature equation into the closed scalar wave equation \(\Box_g u=0\).  All boundedness, decay, threshold, mode-stability, and obstruction statements below refer to this scalar equation and to the corresponding conformal perturbations.

The proof has three logically separate parts which can be mentioned as follows.  First, Sections~\ref{sec:geometry}-\ref{sec:strictstationary} treat bounded Carter slabs where we prove the metric algebra, conformal reduction, closed reflecting form realization, energy identity, weighted Poincar\'e estimate, and threshold decomposition.  The affine threshold component \(c_0+c_1t\) is separated before uniform boundedness is asserted.

Second, Sections~\ref{sec:energy}-\ref{sec:proofofmain} give the compact axisymmetric refinement in which we prove the self-adjoint compact spectral theorem, identify constants as the only zero modes, and compute the zero-frequency Laurent structure.

Third, Section~\ref{sec:ledger} treats black-hole exteriors.  The principal exterior results are now organized by physical special case: genuine subextremal Kerr, extremal Kerr, genuine subextremal Reissner-Nordstr\"om, extremal Reissner-Nordstr\"om, the internally verified slowly rotating weakly charged subextremal Kerr-Newman wall exterior, and the extremal Kerr-Newman horizon-charge obstruction.  The full asymptotically flat subextremal Kerr-Newman theorem and the Schwarzschild endpoint are intentionally moved to Appendix~\ref{app:schwarzschild-full-kn}; this reflects their status as, respectively, an application using the external scalar Kerr-Newman scattering package and a nonrotating uncharged endpoint.  Section~\ref{sec:decay-estimates} then gathers the decay consequences and, just as importantly, the non-decay limitations: compact reflecting slabs give boundedness but not decay, while nondegenerate exteriors give threshold-projected qualitative compact local decay and rate statements only under additional spectral-density regularity.

The formal conditions and theorem-level external inputs are collected in Appendix~\ref{app:all-inputs}.  Bounded slabs use Assumptions~\ref{ass:abstract} and~\ref{ass:complete-list}; exteriors use the package Assumption~\ref{ass:axisymmetric-bh-package}; the two theorem-level literature inputs are External Assumptions~\ref{ass:external-kn-scattering-package} and~\ref{ass:aretakis-input}.

The main non-claims are also fixed at the outset.  The paper does not prove nonlinear stability, tensorial gravitational stability, non-axisymmetric superradiant Kerr-Newman stability, bounded-slab stability inside an ergoregion, or black-hole stability for an arbitrary non-Einstein Carter exterior without verifying the package in Section~\ref{sec:ledger}.  These restrictions are part of the theorem statements, not later qualifications.

\begin{proposition}[Closed-system criterion for the stability claims]\label{prop:closed-system-criterion}
The stability theorems below are mathematically closed precisely because they are restricted to the conformal scalar-curvature sector.  The equation \(L_g[h]=0\) for an arbitrary symmetric two-tensor \(h\) is one scalar equation and, by itself, does not define a deterministic tensorial Cauchy problem.  By contrast, the conformal ansatz \(h=2ug\) turns the same scalar-curvature equation into the scalar hyperbolic equation \(\Box_g u+(k/3)u=0\).  Therefore every boundedness, decay, mode-stability, and threshold statement in this paper is a statement about this closed scalar equation and its corresponding conformal metric perturbations.
\end{proposition}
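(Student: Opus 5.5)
The proof has two parts: a computation showing that the conformal restriction closes, and a counting argument showing that the unrestricted equation does not.

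\emph{Step 1: the closed scalar equation.} I will use the standard first variation of scalar curvature in four dimensions,
\begin{equation*}
L_g[h]=\delta R_g(h)=-\Box_g(\tr_g h)+\nabla^\mu\nabla^\nu h_{\mu\nu}-R^{\mu\nu}h_{\mu\nu}.
\end{equation*}
Insert \(h=2ug\). Then \(\tr_g h=8u\), \(\nabla^\mu\nabla^\nu h_{\mu\nu}=2\Box_g u\), and \(R^{\mu\nu}h_{\mu\nu}=2R(g)u\). This recovers \(\delta R_g(2ug)=-6\Box_g u-2R(g)u\), as displayed in Section~\ref{subsec:scope-proof-obligations}.

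\emph{Step 2: the constant \(k\).} On the backgrounds considered, the scalar curvature is constant, \(R(g)=k\). Dividing \(L_g[2ug]=0\) by \(-6\) gives
\begin{equation*}
\Box_g u+(k/3)u=0.
\end{equation*}
In the zero-curvature Carter setting \(k=0\), and this reduces to \(\Box_g u=0\). Since \(g\) is Lorentzian, this is a linear second-order equation whose principal symbol is \(g^{\mu\nu}\xi_\mu\xi_\nu\). It is therefore strictly hyperbolic relative to any spacelike hypersurface. Standard well-posedness for linear wave equations then gives a deterministic Cauchy problem for the data \((u,\partial_t u)\) on such a hypersurface.

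\emph{Step 3: the unrestricted equation is underdetermined.} I will argue in two ways.
\begin{enumerate}
\item \emph{Counting.} The unknown \(h\) has ten components, but \(L_g[h]=0\) is a single scalar equation.
\item \emph{Explicit non-uniqueness.} Take any smooth compactly supported \(h_0\) satisfying \(L_g[h_0]=0\) and vanishing near the initial hypersurface. Examples are Lie derivatives \(\mathcal L_X g\) with \(X\) compactly supported away from that hypersurface; these are pure-gauge deformations, and \(\delta R(\mathcal L_X g)=X(R)=0\) because \(R\) is constant. Adding such an \(h_0\) to any solution produces a different solution with identical Cauchy data.
\end{enumerate}
So uniqueness fails, and no deterministic tensorial Cauchy problem exists. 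The principal symbol tells the same story: the map \(h\mapsto -|\xi|^2\tr h+\xi^\mu\xi^\nu h_{\mu\nu}\) is a single linear functional, so its kernel is large.

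\emph{Step 4: the final clause.} This part is definitional. Every subsequent theorem is stated for solutions of the scalar equation in Step 2, together with the associated perturbations \(h=2ug\). So each claim concerns that closed equation.

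The main subtlety is stating precisely what ``not a deterministic Cauchy problem'' means. I will handle this through the gauge non-uniqueness in Step 3 rather than appealing to heuristic counting alone. The other computations are routine.
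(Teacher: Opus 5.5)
Your proposal is correct and takes essentially the paper's route: you compute \(L_g[2ug]=-6\Box_g u-2ku\) to obtain the closed scalar wave equation, and you contrast it with a single scalar relation for the ten components of \(h\); your compactly supported gauge perturbation \(\mathcal L_X g\) is a more explicit witness of non-uniqueness than the paper's bare counting remark (it is nonzero because a compactly supported Killing field vanishes). The one ingredient the paper states that you leave implicit is injectivity of \(u\mapsto 2ug\), via \(u=\tfrac18\tr_g(2ug)\), which is what makes the scalar results literally statements about the corresponding conformal metric perturbations.
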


\begin{proof}
The linearized scalar-curvature map has the form
\begin{equation}
L_g:\Gamma(S^2T^*M)\longrightarrow C^\infty(M),
\end{equation}
so it supplies one scalar relation among the components of \(h\).  Without additional tensorial evolution equations, this relation neither determines the trace-free part of \(h\) nor gives uniqueness of tensorial evolution from Cauchy data.  On the conformal subspace \(h=2ug\), however, the trace-free degrees of freedom have been removed.  The identity
\begin{equation}
\delta R_g(2ug)=-6\Box_g u-2R(g)u
\end{equation}
shows that \(L_g[2ug]=0\) is equivalent to \(\Box_g u+(k/3)u=0\).  This is a closed second-order scalar wave equation with a standard Cauchy problem.  The map \(u\mapsto 2ug\) is injective in this sector, since \(u=\frac18\operatorname{tr}_g(2ug)\) in four dimensions.
\end{proof}

\subsection{Statement of the main result}

The starting point of this paper is Theorem~1 discussed in \cite{AssafariGunara}, which gives a stationary axisymmetric family of four-dimensional metrics with constant scalar curvature.  The family contains familiar Carter geometries as special cases, but it is larger than the Einstein family.  Thus the phrase \emph{linear stability} must be used in the precise sense appropriate to the equation under consideration.

The geometric equation available in this setting is
\begin{equation}
R(g)=k.
\end{equation}
Its linearization is
\begin{equation}
L_g[h]=\delta R(g)\cdot h=0.
\end{equation}
This is a scalar equation for a symmetric two-tensor $h$ and therefore does not define a closed tensorial evolution by itself.  The conformal sector is different.  If
\begin{equation}
h_{\mu\nu}=2u\,g_{\mu\nu},
\end{equation}
then, in dimension four,
\begin{equation}
L_g[2u\,g]=-6\Box_g u-2ku.
\end{equation}
In the zero scalar curvature case $k=0$, the conformal part of the linearized scalar-curvature equation is exactly
\begin{equation}
\Box_g u=0.
\end{equation}
This is the closed hyperbolic problem studied in the paper.

The case $k=0$ is analytically delicate because no positive mass term is present.  On a bounded reflecting slab the spatial form is nonnegative but has constants in its kernel.  Consequently boundedness cannot be stated before the zero-frequency threshold space has been separated.  We first isolate the mechanism at an abstract level, without using Carter separation or axial symmetry.

\begin{theorem}[Abstract stationary bounded-slab theorem]\label{thm:abstractmain}
Let $\mathcal X$ be the energy space of a stationary linear evolution on a bounded slab, and let $\mathcal G$ denote the generator of the corresponding strongly continuous group $e^{t\mathcal G}$.  Assume that the full zero-frequency threshold space
\begin{equation}
\mathcal T_0:=\bigcup_{N\geq 1}\Ker \mathcal G^N
\end{equation}
is finite-dimensional, that there exists a bounded projection $\Pi_{\mathrm{thr}}$ onto $\mathcal T_0$ commuting with the evolution, and that the conserved energy $\mathscr E$ is equivalent to the $\mathcal X$-norm on $\Ker \Pi_{\mathrm{thr}}$.  Then every finite-energy solution $U(t)=e^{t\mathcal G}U_0$ admits a unique decomposition
\begin{align}
U(t) = U_{\mathrm{thr}}(t)+U_{\mathrm{bd}}(t) \,\,\,\text{with}\,\,\, U_{\mathrm{thr}}(t)\in \mathcal T_0 \, \text{and} \, \, U_{\mathrm{bd}}(t)\in \Ker\Pi_{\mathrm{thr}}
\end{align}
where both components solve the evolution, $U_{\mathrm{thr}}$ is polynomial in $t$, and
\begin{equation}
\sup_{t\in\R}\|U_{\mathrm{bd}}(t)\|_{\mathcal X}
\leq
C\|U_{\mathrm{bd}}(0)\|_{\mathcal X}.
\end{equation}
Moreover there is no nontrivial finite-energy mode $e^{\lambda t}V$ with $\operatorname{Re}\lambda>0$ on $\Ker\Pi_{\mathrm{thr}}$.
\end{theorem}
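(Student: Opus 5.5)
The plan is to work entirely at the level of the group $e^{t\mathcal G}$ and the two assumed structures: the invariant projection $\Pi_{\mathrm{thr}}$ and the coercivity of the conserved energy on its kernel.

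\textbf{Decomposition.} Given $U_0\in\mathcal X$, set $U_{\mathrm{thr}}(t):=\Pi_{\mathrm{thr}}e^{t\mathcal G}U_0$ and $U_{\mathrm{bd}}(t):=(I-\Pi_{\mathrm{thr}})e^{t\mathcal G}U_0$. Because $\Pi_{\mathrm{thr}}$ commutes with the evolution, $U_{\mathrm{thr}}(t)=e^{t\mathcal G}\Pi_{\mathrm{thr}}U_0$ and $U_{\mathrm{bd}}(t)=e^{t\mathcal G}(I-\Pi_{\mathrm{thr}})U_0$, so both pieces are solutions. Uniqueness follows from $\mathcal T_0\cap\Ker\Pi_{\mathrm{thr}}=\{0\}$: this holds because $\Pi_{\mathrm{thr}}$ is a projection with range $\mathcal T_0$, so it is the identity on $\mathcal T_0$.

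\textbf{Polynomial threshold part.} The space $\mathcal T_0$ is finite-dimensional. Since $\Ker\mathcal G^N$ increases with $N$, it stabilizes, so $\mathcal T_0=\Ker\mathcal G^{N_0}$ for some $N_0$. This space is invariant under $\mathcal G$, and $\mathcal G$ is nilpotent there. Hence $e^{t\mathcal G}|_{\mathcal T_0}=\sum_{j<N_0}t^j\mathcal G^j/j!$. To justify this, note that on a finite-dimensional invariant subspace contained in $\Dom\mathcal G$, the semigroup restricted there solves the matrix ODE $\dot V=\mathcal G V$, and uniqueness for that ODE gives the formula. Thus $U_{\mathrm{thr}}$ is polynomial in $t$.

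\textbf{Uniform bound.} The energy $\mathscr E$ is conserved along $e^{t\mathcal G}$ by hypothesis. The subspace $\Ker\Pi_{\mathrm{thr}}$ is invariant, and on it $c\|V\|_{\mathcal X}^2\le\mathscr E(V)\le C\|V\|_{\mathcal X}^2$. Therefore
\begin{equation}
c\|U_{\mathrm{bd}}(t)\|_{\mathcal X}^2\le\mathscr E(U_{\mathrm{bd}}(t))=\mathscr E(U_{\mathrm{bd}}(0))\le C\|U_{\mathrm{bd}}(0)\|_{\mathcal X}^2
\end{equation}
for all $t\in\R$. This gives the stated estimate with constant $(C/c)^{1/2}$.

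\textbf{No growing modes.} Suppose $e^{\lambda t}V$ is a mode in $\Ker\Pi_{\mathrm{thr}}$ with $V\neq0$, so that $e^{t\mathcal G}V=e^{\lambda t}V$. The bound just proved gives $e^{t\operatorname{Re}\lambda}\|V\|\le C'\|V\|$ for all $t$. This forces $\operatorname{Re}\lambda\le0$ as $t\to+\infty$, which already excludes $\operatorname{Re}\lambda>0$. Running the same bound as $t\to-\infty$ even gives $\operatorname{Re}\lambda=0$.

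\textbf{Main obstacle.} The only delicate point is the meaning of ``conserved energy'' when $\mathscr E$ is indefinite or degenerate on all of $\mathcal X$. I would interpret it as a continuous quadratic form invariant under the group, and apply it only to $U_{\mathrm{bd}}$, which stays in the invariant kernel. This avoids any cross terms between $U_{\mathrm{thr}}$ and $U_{\mathrm{bd}}$, so no orthogonality of the splitting with respect to $\mathscr E$ is required.
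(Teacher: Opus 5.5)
Your proposal is correct and follows the paper's proof essentially step by step. Both use the splitting via the commuting projection, nilpotency of $\mathcal G$ on the stabilized finite-dimensional $\mathcal T_0$, the chain $c\|U_{\mathrm{bd}}(t)\|_{\mathcal X}^2\le\mathscr E[U_{\mathrm{bd}}(t)]=\mathscr E[U_{\mathrm{bd}}(0)]\le C\|U_{\mathrm{bd}}(0)\|_{\mathcal X}^2$, and the contradiction from exponential growth. Your additions (the finite-dimensional ODE justification of the exponential formula on $\mathcal T_0$, and the $t\to-\infty$ remark giving $\operatorname{Re}\lambda=0$) are valid refinements rather than a different route.
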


\begin{proof}
Let \(U_{\rm thr}(t)=\Pi_{\mathrm{thr}}e^{t\mathcal G}U_0\) and \(U_{\rm bd}(t)=(I-\Pi_{\mathrm{thr}})e^{t\mathcal G}U_0\).  Since the projection commutes with the group, both components solve the same evolution and the decomposition is unique.  Because \(\mathcal T_0\) is finite dimensional and equals \(\bigcup_N\Ker \mathcal G^N\), the ascending sequence of kernels stabilizes on \(\mathcal T_0\); hence \(\mathcal G|_{\mathcal T_0}\) is nilpotent and \(U_{\rm thr}(t)\) is a finite polynomial in \(t\).  Conservation and coercivity on \(\Ker\Pi_{\mathrm{thr}}\) give
\[
 c\|U_{\rm bd}(t)\|_{\mathcal X}^2
 \le \mathscr E[U_{\rm bd}(t)]
 =\mathscr E[U_{\rm bd}(0)]
 \le C\|U_{\rm bd}(0)\|_{\mathcal X}^2.
\]
If \(e^{\lambda t}V\), \(V\in\Ker\Pi_{\mathrm{thr}}\), were a nonzero finite-energy mode with \(\operatorname{Re}\lambda>0\), this bound applied to positive times would force \(\|e^{\lambda t}V\|_{\mathcal X}\) to remain bounded, a contradiction.  The detailed abstract formulation is expanded in Section~\ref{sec:abstractslab}.
\end{proof}

The theorem above is the stationary bounded-slab statement which will be used later.  The model-dependent part is the verification of the threshold space and the coercivity of the conserved energy on its complement.  For the Carter metrics considered here this verification can be done explicitly.

We now state the main Carter theorem.  Let
\begin{equation}
\Sigma:=S^1_\phi\times \Omega,
\qquad
\Omega\subset\{(r,x):\rho^2>0,\Delta_r>0,\Delta_x>0\},
\end{equation}
with $\Omega$ bounded and connected, and define
\begin{equation}
A:=-\rho^2 g^{tt},
\qquad
B:=\rho^2 g^{t\phi},
\qquad
\Phi:=\rho^2 g^{\phi\phi}.
\end{equation}
Then the $k=0$ conformal equation can be written as
\begin{align}
-Au_{tt}+2B\,u_{t\phi}+\Phi\,u_{\phi\phi} +\partial_r(\Delta_r\partial_r u)+\partial_x(\Delta_x\partial_xu)&=0.
\end{align}
The coefficient $A$ is the time coefficient.  The coefficient $\Phi$ is the azimuthal spatial coefficient, and
\begin{equation}
\Phi=\frac{-\rho^2 g_{tt}}{\Delta_r\Delta_x}.
\end{equation}
Thus $\Phi>0$ is equivalent to $g_{tt}<0$, i.e.\ to the absence of an ergoregion for the stationary Killing field $\partial_t$ on the slab.

\begin{theorem}[Full \texorpdfstring{$\boldsymbol{\phi}$}{phi}-dependent Carter bounded-slab theorem]\label{thm:fullmain}
Let $g$ be a metric from Theorem~1 of \cite{AssafariGunara} with $k=0$.  Let
\begin{equation}
\Sigma=S^1_\phi\times\Omega,
\qquad
\Omega=[r_-,r_+]\times[x_-,x_+]
\end{equation}
be a bounded connected product slab satisfying the geometric and positivity conditions \textbf{H1}-\textbf{H3} of Assumption~\ref{ass:complete-list}; equivalently,
\begin{equation}
\rho^2>0,\qquad \Delta_r>0,\qquad \Delta_x>0,\qquad A>0,\qquad \Phi>0
\end{equation}
on $\overline\Omega$.  Let $V_{\rm full}\subset H^1(\Sigma)$ be a form-realized reflecting domain satisfying \textbf{H4}-\textbf{H5} in Assumption~\ref{ass:complete-list}; in particular $1\in V_{\rm full}$, functions are periodic in $\phi$, and the $r,x$ boundary conditions are imposed through a closed spatial form rather than through an unproved pointwise boundary calculation.

For every finite-energy datum
\begin{equation}
(u_0,u_1)\in V_{\rm full}\times L^2(\Sigma)
\end{equation}
there exists a unique energy solution of $\Box_g u=0$ on $\R_t\times\Sigma$ with
\begin{equation}
u\in C(\R;V_{\rm full}),\qquad u_t\in C(\R;L^2(\Sigma)).
\end{equation}
It admits the unique decomposition
\begin{equation}
u(t,\phi,r,x)=c_0+c_1t+v(t,\phi,r,x),
\end{equation}
where
\begin{equation}
c_0=\frac{\int_{\Sigma}A u_0}{\int_{\Sigma}A},
\qquad
c_1=\frac{\int_{\Sigma}A u_1}{\int_{\Sigma}A},
\end{equation}
and the $A$-weighted averages of $v(t,\cdot)$ and $v_t(t,\cdot)$ vanish for every $t$.  Moreover
\begin{align}
\sup_{t\in\R} \Bigl(\|v(t)\|_{H^1(\Sigma)}+\|v_t(t)\|_{L^2(\Sigma)}\Bigr)&\le C_{\rm stab} \Bigl(\|v(0)\|_{H^1(\Sigma)}+\|v_t(0)\|_{L^2(\Sigma)}\Bigr),
\end{align}
where $C_{\rm stab}$ may be chosen explicitly from the coefficient bounds and the weighted Poincar\'e constant in \eqref{eq:explicit-C-stab}.  Equivalently, the full $k=0$ conformal scalar-curvature dynamics on a bounded strictly stationary slab are stable modulo the explicit generalized zero-mode space
\begin{equation}
\mathrm{span}\{1,t\}.
\end{equation}
There is no nontrivial finite-energy mode
\begin{equation}
u=\ee^{-\ii\sigma t}\psi(\phi,r,x),
\qquad \operatorname{Im}\sigma>0.
\end{equation}
\end{theorem}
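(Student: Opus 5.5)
The plan is to reduce Theorem~\ref{thm:fullmain} to the abstract Theorem~\ref{thm:abstractmain}. This requires four things: constructing the group $e^{t\mathcal G}$ on $\mathcal X:=V_{\rm full}\times L^2(\Sigma)$, identifying $\mathcal T_0$ exactly with $\mathrm{span}\{(1,0),(0,1)\}$, exhibiting a commuting projection, and proving coercivity of the conserved energy on its kernel.

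\emph{Spatial form and well-posedness.} Define the closed form
\[
a(u,v)=\int_\Sigma\bigl(\Phi\,u_\phi\bar v_\phi+\Delta_r u_r\bar v_r+\Delta_x u_x\bar v_x\bigr),
\]
with the measure fixed in \textbf{H4}--\textbf{H5}, on $V_{\rm full}$. By \textbf{H1}--\textbf{H3} and compactness of $\overline\Omega$, the coefficients $\Phi,\Delta_r,\Delta_x,A$ are bounded above and below by positive constants. Hence $a(u,u)+\int A|u|^2$ is equivalent to $\|u\|_{H^1}^2$.

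Write the equation as $u_t=w$ and $A w_t=-\mathcal A u+2B\,\partial_\phi w$, where $\mathcal A$ is the form operator of $a$; this is the weak formulation that encodes the reflecting conditions. Equip $\mathcal X$ with the inner product $a(u,u')+\int A u\bar u'+\int A w\bar w'$. The key observation is that, because $B$ is independent of $\phi$ and functions are periodic in $\phi$,
\[
\operatorname{Re}\int 2B\,w_\phi\bar w=\int B\,\partial_\phi|w|^2=0 .
\]
So the twisting term is skew, and $\mathcal G$ is dissipative up to the bounded operator produced by the $L^2$ part of the norm. For maximality I would solve the resolvent equation for large real $\lambda$. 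This amounts to the variational problem
\[
a(u,v)+\lambda^2\int A u\bar v-2\lambda\int B u_\phi\bar v=F(v).
\]
The last term has purely imaginary diagonal part by the same identity, so the real part of the sesquilinear form is coercive and Lax--Milgram applies. Lumer--Phillips, applied to $\mathcal G$ and to $-\mathcal G$, then gives the group, together with $u\in C(\R;V_{\rm full})$ and $u_t\in C(\R;L^2)$. Uniqueness follows from the energy identity. I expect this step to be the main technical obstacle, because the unbounded term $B\partial_\phi$ must be handled inside the form framework rather than pointwise. If the direct argument stalls, I would fall back on a Fourier decomposition in $\phi$: each azimuthal mode $m$ turns $B\partial_\phi$ into the bounded skew multiplier $\ii mB$, and uniformity in $m$ comes from the energy identity.

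\emph{Energy, threshold, and projection.} Pairing the equation with $\bar u_t$ gives conservation of
\[
\mathscr E[u]=\tfrac12\Bigl(\int A|u_t|^2+a(u,u)\Bigr),
\]
again using the skewness of the $B$-term. Testing the weak equation against $1\in V_{\rm full}$, and using $a(u,1)=0$ and $\int B\,u_{t\phi}=0$, gives
\[
\frac{d^2}{dt^2}\int_\Sigma A u=0 .
\]
Hence the $A$-weighted mean of $u$ is exactly $c_0+c_1t$ with the stated $c_0,c_1$. Define $\Pi_{\rm thr}(u,w)=(\langle u\rangle_A,\langle w\rangle_A)$, where $\langle\cdot\rangle_A$ denotes the $A$-weighted average, viewed in $\mathrm{span}\{(1,0),(0,1)\}$. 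This projection is bounded, and it commutes with $e^{t\mathcal G}$ because $\mathcal G(1,0)=(0,1)$, $\mathcal G(0,1)=0$, and the mean-zero subspace is invariant by the computation above. Setting $v:=u-c_0-c_1t$ gives the decomposition, with $v$ and $v_t$ of zero $A$-mean for all $t$.

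\emph{Coercivity and conclusion.} Since $\Omega$ is connected and $\Phi,\Delta_r,\Delta_x>0$, we have $\Ker a=\mathbb C$. The compact embedding $H^1(\Sigma)\hookrightarrow L^2(\Sigma)$ then yields the weighted Poincar\'e inequality
\[
\int A|v|^2\le C_P\,a(v,v)\qquad\text{whenever }\langle v\rangle_A=0 .
\]
Therefore $\mathscr E$ is equivalent to the $\mathcal X$-norm on $\Ker\Pi_{\rm thr}$, with constants depending only on the coefficient bounds and $C_P$; this produces $C_{\rm stab}$. The same equivalence shows that $\mathcal G^N U=0$ forces $\|e^{t\mathcal G}U\|$ to be polynomial in $t$, while the $\Ker\Pi_{\rm thr}$ part stays bounded. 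Hence any generalized zero mode of $\mathcal G$ in $\Ker\Pi_{\rm thr}$ is an element of $\Ker\mathcal G$ there. Such an element has $w=0$ and $a(u,u)=0$, so $u$ is constant with zero mean, i.e.\ $u=0$. Thus $\mathcal T_0=\mathrm{span}\{(1,0),(0,1)\}$, and Theorem~\ref{thm:abstractmain} gives the uniform bound for $v$.

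Finally, take a mode $u=\ee^{-\ii\sigma t}\psi$ with $\operatorname{Im}\sigma>0$. Its threshold part is simultaneously affine in $t$ and proportional to $\ee^{-\ii\sigma t}$, so it vanishes. The remaining part lies in $\Ker\Pi_{\rm thr}$ and grows like $\ee^{(\operatorname{Im}\sigma)t}$, which contradicts the uniform bound unless $\psi=0$.
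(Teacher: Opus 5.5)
Your proposal is correct. From the affine average law onward it is the paper's argument: the $A$-weighted projection, the compactness proof of the weighted Poincar\'e inequality, and the identification of $\mathcal T_0$ all match. In particular, you show that the $\Ker\Pi_{\rm thr}$-part of a polynomial orbit is a bounded Hilbert-space polynomial, hence constant, hence in $\Ker\mathcal G$. Together with mode exclusion through the abstract theorem, this is Propositions~\ref{prop:generator-threshold-full}, \ref{prop:verifyabstractfull} and Theorem~\ref{thm:appendixAclosure}.

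The genuine difference is how you build the evolution. The paper (Theorem~\ref{thm:fullwellposed}) uses a Galerkin scheme with $w_1\propto 1$, where $\Re\mathfrak c[v,v]=0$ makes $C_N$ skew-Hermitian. It then extracts weak-star limits and upgrades to strong continuity via the time-mollified energy equality (Theorem~\ref{thm:weak-energy-equality-full}) and a norm-recovery step. You instead show that the operator of Definition~\ref{def:closed-form-generator} becomes m-dissipative after the shift $\mathcal G-\tfrac12$:
\begin{itemize}
\item the same skew identity gives $\Re\langle\mathcal G U,U\rangle_{\mathcal X}=\Re\int_\Sigma A w\bar u$;
\item the resolvent problem is the form $a+\lambda^2\langle A\,\cdot,\cdot\rangle+\lambda\mathfrak c$, whose diagonal real part is coercive, so Lax--Milgram and then Lumer--Phillips for $\pm\mathcal G$ apply.
\end{itemize}
This gives the group, strong continuity and continuous dependence at once. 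It also proves that this $\mathcal G$ really is the generator of the solution group, which the paper uses tacitly when it sets $e^{t\mathcal G}=\mathcal S(t)$ and expands $\mathcal S(t)U_0$ on $\Ker\mathcal G^N$. The Galerkin route, in exchange, gives an exact discrete energy and affine law at every stage and proves energy equality for arbitrary weak solutions.

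That last point is the one thing you still owe. The theorem asserts uniqueness among all energy solutions in the stated class, not only among semigroup orbits. Your phrase ``uniqueness follows from the energy identity'' tacitly tests the weak equation with $u_t$, which lies only in $L^2$, not in $V_{\rm full}$. Use the paper's mollification. For a weak solution $U$, $\chi_\varepsilon*U(t)$ lies in $\Dom(\mathcal G)$: its velocity $\chi_\varepsilon'*u$ is in $V_{\rm full}$, and by the mollified weak equation $\mathcal H_{\rm full}u^\varepsilon+\mathcal C_{\rm full}w^\varepsilon=-M_A(\chi_\varepsilon'*u_t)\in M_A(L^2)$. Hence it is a classical solution and coincides with the group orbit; letting $\varepsilon\to0$ finishes.

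Two small slips:
\begin{itemize}
\item $\mathcal G(1,0)=0$ and $\mathcal G(0,1)=(1,0)$, since the datum $(0,1)$ generates $u=t$; you wrote the reverse. This is harmless, because only invariance of the span is used.
\item Your Fourier-in-$\phi$ fallback would need $V_{\rm full}$ to be invariant under $\phi$-rotations, which \textbf{H4} does not assert. The Lax--Milgram route does not need this.
\end{itemize}
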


\begin{proof}
Assumption~\ref{ass:complete-list} fixes the regular slab, the positive coefficients, and the closed reflecting form domain.  Lemma~\ref{lem:form-reflection-primary} makes the boundary condition a closed-form condition rather than a formal pointwise cancellation.  Lemma~\ref{lem:gyro-bounded-main} proves that the \(B\partial_{t\phi}\) term is skew-Hermitian and therefore contributes no real part to the energy balance.  Theorems~\ref{thm:fullenergy} and~\ref{thm:weak-energy-equality-full}, with the Galerkin construction in Proposition~\ref{prop:galerkin-main-identity} and Lemma~\ref{lem:galerkin-recovery-main}, give the unique energy evolution and the conserved energy.  Propositions~\ref{prop:kernelfull},~\ref{prop:weightedPoincareFull},~\ref{prop:generator-threshold-full}, and~\ref{prop:verifyabstractfull} identify the threshold space as \(\operatorname{span}\{1,t\}\), construct the \(A\)-weighted projection, and prove coercivity on its complement.  Applying Theorem~\ref{thm:abstractmain} gives the decomposition, the projected bound, and absence of upper-half-plane modes; Proposition~\ref{prop:appendix-proof-fullmain} records the final verification in one place.
\end{proof}

\begin{remark}[On the conditions in Theorem~\ref{thm:fullmain}]\label{rem:scopefullmain}
Theorem~\ref{thm:fullmain} is the Carter-family verification of Theorem~\ref{thm:abstractmain}.  The background is still stationary and axisymmetric, but the bounded-slab argument itself does not use axial symmetry or Carter separation.  The theorem is an interior positive-energy theorem: it does not cover slabs where $\Phi$ changes sign, radial boundaries at horizons where $\Delta_r=0$, true axis endpoints at which $\Delta_x=0$ unless a separate regular-axis form domain is constructed, unbounded exterior regions, or radiative boundary conditions.  These cases require the separate exterior, microlocal, and scattering analysis discussed in Section~\ref{sec:nonaxisymmetric} and Section~\ref{sec:ledger}.
\end{remark}

\begin{proposition}[Sharpness of the threshold projection and positivity assumptions]\label{prop:sharpness-threshold-positivity}
The threshold removal in Theorem~\ref{thm:fullmain} is necessary.  Under the hypotheses of that package, every function
\begin{equation}
 u(t,\phi,r,x)=c_0+c_1t
\end{equation}
solves \(\Box_g u=0\), satisfies the reflecting form condition, and has finite conserved energy
\begin{equation}
 E_{\rm full}[u](t)=\frac12 |c_1|^2\int_\Sigma A.
\end{equation}
If \(c_1\ne0\), then \(\|u(t)\|_{H^1(\Sigma)}\) grows linearly in \(|t|\).  Hence no theorem can give a uniform \(H^1\times L^2\) bound for the unprojected solution space in the \(k=0\) reflecting problem.  Moreover, the lower bounds \(A>0\), \(\Phi>0\), \(\Delta_r>0\), and \(\Delta_x>0\) are exactly the coefficient positivity needed for the energy \eqref{eq:fullenergy} to control \(u_t,u_\phi,u_r,u_x\); if one of these lower bounds is removed, the positive-energy proof no longer yields coercivity.
\end{proposition}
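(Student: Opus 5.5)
The proof is a direct computation in three parts: the affine functions are solutions, their energy and growth are as stated, and the energy coercivity genuinely depends on each coefficient lower bound.

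\emph{Affine functions are solutions.} Substitute \(u=c_0+c_1t\) into the Carter form of the \(k=0\) equation. We have \(u_{tt}=0\), \(u_{t\phi}=0\), \(u_{\phi\phi}=0\), and \(u_r=u_x=0\), so every term of
\[
-Au_{tt}+2Bu_{t\phi}+\Phi u_{\phi\phi}+\partial_r(\Delta_r\partial_ru)+\partial_x(\Delta_x\partial_xu)
\]
vanishes identically. Hence \(\Box_g u=0\).

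\emph{Reflecting condition and energy.} For each fixed \(t\), \(u(t)\) is a constant function. It lies in \(V_{\rm full}\) because \(1\in V_{\rm full}\) by \textbf{H4}--\textbf{H5}. The closed spatial form vanishes on constants, since it involves only \(u_\phi,u_r,u_x\). So the weak formulation holds for every test function, and \(u\) is the unique energy solution with data \((c_0,c_1)\), which lies in \(V_{\rm full}\times L^2(\Sigma)\).

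The energy \eqref{eq:fullenergy} has the form
\[
\tfrac12\int_\Sigma\bigl(A|u_t|^2+\Phi|u_\phi|^2+\Delta_r|u_r|^2+\Delta_x|u_x|^2\bigr),
\]
possibly with a measure factor that the stated formula absorbs into \(\int_\Sigma A\). Inserting \(u_t=c_1\) and zero spatial derivatives gives \(E_{\rm full}[u](t)=\tfrac12|c_1|^2\int_\Sigma A\), which is finite because \(A\) is bounded on \(\overline\Omega\).

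\emph{Linear growth.} We have \(\|u(t)\|_{H^1(\Sigma)}=|c_0+c_1t|\,|\Sigma|^{1/2}\). For \(c_1\neq0\) this is bounded below by \(|c_1||t||\Sigma|^{1/2}-|c_0||\Sigma|^{1/2}\), so it grows linearly. A uniform bound \(\sup_t\|u(t)\|_{H^1}\le C(\|u_0\|_{H^1}+\|u_1\|_{L^2})\) on the unprojected space would fail for such a solution. This is consistent with Theorem~\ref{thm:fullmain}, because here \(v\equiv0\) and \((c_0,c_1)\) are exactly the \(A\)-weighted averages of the data.

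\emph{Positivity is needed for coercivity.} The energy density is a diagonal quadratic form in \((u_t,u_\phi,u_r,u_x)\) with coefficients \((A,\Phi,\Delta_r,\Delta_x)\). The \(B\) term does not appear, since Lemma~\ref{lem:gyro-bounded-main} shows it is skew. Hence pointwise control of all four derivatives holds if and only if all four coefficients are positive.

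Suppose one coefficient, say \(\Phi\), fails to have a positive lower bound, so that \(\Phi\le0\) at some point of \(\overline\Omega\). By continuity the coefficient is small, or \(\le0\), on a small ball, and we choose a test function concentrated there.
\begin{itemize}[label={}]
\item For \(\Phi\): take data of the form \(\chi(r,x)e^{\ii m\phi}\) with large \(m\). Then \(\Phi m^2\int|\chi|^2\) is small or negative, while \(\|u_\phi\|^2\) is large. The ratio of energy to the \(H^1\times L^2\) norm on \(\Ker\Pi_{\rm thr}\) is therefore not bounded below.
\item For \(\Delta_r\): use a high-frequency oscillation \(e^{\ii Nr}\) in place of \(e^{\ii m\phi}\).
\item For \(\Delta_x\): use \(e^{\ii Nx}\) in the same way.
\item For \(A\): take \(u_0=0\) and \(u_1\) concentrated near the bad point.
\end{itemize}
In each case the functional \(E_{\rm full}\) loses its coercive lower bound \(c\|\cdot\|^2\). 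The estimate in Proposition~\ref{prop:verifyabstractfull} then breaks down, so the argument behind Theorem~\ref{thm:abstractmain} no longer applies.

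The main obstacle is this last part. It must be phrased carefully: we claim only that the positive-energy proof fails, not that instability actually occurs. The constructed test data must also be made mean-zero, by subtracting the \(A\)-weighted average, without destroying the concentration. A high-frequency oscillation makes this correction negligible.
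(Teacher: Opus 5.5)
Your proposal is correct. The first three parts (affine functions solve the equation and the form condition, the energy is \(\tfrac12|c_1|^2\int_\Sigma A\), and \(\|u(t)\|_{H^1}\) grows linearly via \(\|u(t)\|_{L^2}\ge |c_1||t||\Sigma|^{1/2}-|c_0||\Sigma|^{1/2}\)) coincide with the paper's proof. There is one small omission in your weak-form check: besides \(q[u,\eta]=0\) you also need \(\mathfrak c[c_1,\eta]=2c_1\int_\Sigma B\,\overline{\eta_\phi}=0\). This follows from \(\phi\)-periodicity and \(\partial_\phi B=0\).

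The genuine difference is in the positivity assertion. The paper only observes that \(E_{\rm full}\) is a diagonal weighted sum of squares. Its comparison with the \(H^1\times L^2\) norm (together with the weighted Poincar\'e inequality) therefore uses all four lower bounds. That is a statement about the proof, which matches the literal claim and requires nothing further.

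You instead show that coercivity actually fails. Take data localized where a coefficient is \(\le\eta\) and oscillating at high frequency in the corresponding direction; for \(A\), take \(u_0=0\) with a localized \(u_1\). On the threshold complement these give an energy-to-norm ratio of at most about \(\eta\), so no uniform constant \(c>0\) can exist. This is a stronger sharpness result, and your mean-zero bookkeeping works:
\begin{itemize}
\item With \(e^{\ii m\phi}\), \(m\ne0\), the \(A\)-weighted average vanishes exactly because \(A\) is \(\phi\)-independent. This also sidesteps the fact that \(\Pi_0\) may be undefined once \(A\) changes sign.
\item With \(e^{\ii Nr}\) or \(e^{\ii Nx}\), the average is \(o(1)\) by Riemann--Lebesgue.
\end{itemize}

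The price of the stronger result is that your localized test functions must lie in \(V_{\rm full}\). This is automatic in the \(A\)-direction, where \(u_1\) ranges over all of \(L^2(\Sigma)\). It also holds for the natural realization \(V_{\rm full}=H^1(\Sigma)\) and for any realization containing \(H^1\) functions supported in the interior. It is not, however, literally guaranteed by \textbf{H4} for an arbitrary closed subspace.
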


\begin{proof}
For \(u=c_0+c_1t\), all spatial derivatives vanish and \(u_{tt}=0\).  The equation
\begin{equation}
-Au_{tt}+2B u_{t\phi}+\Phi u_{\phi\phi}+\partial_r(\Delta_r u_r)+\partial_x(\Delta_xu_x)=0
\end{equation}
is therefore satisfied identically, and the form boundary condition is automatic because the spatial gradient is zero.  Substitution into \eqref{eq:fullenergy} gives the displayed conserved energy.  On the other hand,
\begin{equation}
 \|u(t)\|_{L^2(\Sigma)}\ge |c_1|\,|t|\,|\Sigma|^{1/2}-|c_0|\,|\Sigma|^{1/2},
\end{equation}
so the \(H^1\)-norm is unbounded as \(|t|\to\infty\) whenever \(c_1\ne0\).  This proves that the affine threshold component must be separated before a boundedness theorem can be true.

The last assertion follows directly from the form of \eqref{eq:fullenergy}.  Uniform positive lower and upper coefficient bounds make \(E_{\rm full}\) equivalent to the derivative part of the \(H^1\times L^2\) norm; after imposing the zero \(A\)-weighted average, the weighted Poincar\'e inequality supplies the missing \(L^2\)-control of \(u\).  Without such positivity, the quadratic form may fail to control one of the derivative directions or the time derivative, so the bounded-slab energy argument has no coercive norm to conserve.
\end{proof}

The assumptions $A>0$ and $\Phi>0$ are precisely the assumptions under which the Killing energy is positive definite on the bounded slab.  Once horizons or ergoregions are included, this positivity argument is no longer available, and a different analysis is needed.

There are three layers in the proof which can be mentioned as follows. First,  we prove the abstract stationary bounded-slab theorem modulo the full zero-frequency threshold space.  This is Theorem~\ref{thm:abstractmain}, and it has no axial-symmetry assumption. Then, we verify the conditions on strictly stationary Carter slabs.  This gives Theorem~\ref{thm:fullmain}.  In this case the threshold projection is the $A$-weighted average, and the threshold space is $\mathrm{span}\{1,t\}$. Finally, we explain what changes outside the positive-energy bounded-slab regime.  Horizons, ergoregions, and noncompact ends require red-shift, frequency-localized, or microlocal/scattering estimates.

The next result is the axisymmetric self-adjoint refinement.  In the Carter family, imposing $\partial_\phi u=0$ is not needed for boundedness on a strictly stationary bounded slab; boundedness already follows from Theorem~\ref{thm:fullmain}.  Axisymmetry is useful because the gyroscopic term disappears and the zero-frequency structure can be described by a self-adjoint operator.

\begin{theorem}[Axisymmetric spectral refinement]\label{thm:mainresult}
Let $g$ be a metric from Theorem~1 of \cite{AssafariGunara} with $k=0$, and let $\Omega$ be a bounded connected regular timelike slab with reflecting boundary conditions for which the constant function belongs to the operator domain.  Consider the axisymmetric conformal equation
\begin{equation}
\Box_g u=0
\end{equation}
on $\R_t\times \Omega$.  Then the following statements hold.

\begin{enumerate}[label=(\roman*)]
  \item The weighted spatial operator
  \begin{equation}
L_0:=A^{-1}\Hh_0
\end{equation}
  is nonnegative and self-adjoint on $L_A^2(\Omega)$.  Its kernel is one-dimensional and is generated by the constant function $1$.
  \item The generalized zero-mode space for the time evolution is exactly
  \begin{equation}
\mathrm{span}\{1,t\}.
\end{equation}
  Equivalently, every polynomial-in-$t$ axisymmetric solution is affine in $t$.
  \item Every axisymmetric solution in the energy class
  \begin{equation}
u\in C(\R;H^1(\Omega)),
  \qquad
  u_t\in C(\R;L^2(\Omega)),
\end{equation}
  admits a unique decomposition
  \begin{equation}
u(t,r,x)=c_0+c_1t+v(t,r,x),
\end{equation}
  where $c_0,c_1\in\R$, the weighted averages of $v(t,\cdot)$ and $v_t(t,\cdot)$ vanish for all $t$, and
  \begin{align}
\sup_{t\in\R}\Bigl(\|v(t)\|_{H^1(\Omega)}+\|v_t(t)\|_{L^2(\Omega)}\Bigr)&\le C\Bigl(\|v(0)\|_{H^1(\Omega)}+\|v_t(0)\|_{L^2(\Omega)}\Bigr).
\end{align}
  In particular, the $k=0$ axisymmetric conformal dynamics are linearly stable modulo the finite-dimensional generalized zero-mode space.
  \item There are no nontrivial separated axisymmetric modes of the form $u=\ee^{-\ii\Omega t}\psi(r,x)$ with $\operatorname{Im}\Omega>0$.
\end{enumerate}
\end{theorem}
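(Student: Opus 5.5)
The plan is to reduce Theorem~\ref{thm:mainresult} to Theorem~\ref{thm:abstractmain}, using the self-adjoint structure available once \(\partial_\phi u=0\).

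\textbf{Setting up \(L_0\).} For axisymmetric \(u\) the equation reads
\[
A u_{tt}+\Hh_0 u=0,\qquad \Hh_0 u:=-\partial_r(\Delta_r\partial_r u)-\partial_x(\Delta_x\partial_x u).
\]
The gyroscopic term \(2Bu_{t\phi}\) and the term \(\Phi u_{\phi\phi}\) both vanish. I would define \(L_0=A^{-1}\Hh_0\) through the closed, densely defined, nonnegative form
\[
q(u,w)=\int_\Omega\bigl(\Delta_r u_r\overline{w_r}+\Delta_x u_x\overline{w_x}\bigr)
\]
on the reflecting form domain \(V\subset H^1(\Omega)\), which contains \(1\). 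The Hilbert space is \(L^2_A(\Omega)\) with inner product \(\int_\Omega A\,u\overline w\).

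Since \(A,\Delta_r,\Delta_x\) are bounded above and below on \(\overline\Omega\), the norm \(q(u,u)+\|u\|_{L^2_A}^2\) is equivalent to the \(H^1\) norm, so \(q\) is closed. The representation theorem then gives that \(L_0\) is nonnegative and self-adjoint, with compact resolvent by Rellich on the bounded regular slab. For the kernel: \(L_0u=0\) implies \(q(u,u)=0\), so \(u_r=u_x=0\) a.e., and connectedness of \(\Omega\) forces \(u\) to be constant. Since \(1\in V\), the kernel is exactly \(\mathrm{span}\{1\}\). This proves (i).

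\textbf{Weighted Poincaré inequality.} On \(1^{\perp_A}\) I would use the spectral gap \(\lambda_1>0\) of \(L_0\), the first nonzero eigenvalue, which exists by compactness of the resolvent. This gives
\[
\lambda_1\int_\Omega A|u|^2\le q(u,u)\qquad\text{whenever } \int_\Omega Au=0.
\]
This is the axisymmetric restriction of Proposition~\ref{prop:weightedPoincareFull}. One could alternatively prove it by the standard contradiction/compactness argument.

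\textbf{Threshold space and generator.} Write the evolution as \(\mathcal G=\begin{pmatrix}0&I\\-L_0&0\end{pmatrix}\) on \(V\times L^2_A\). Then \(\mathcal G(u,w)=0\) iff \(w=0\) and \(L_0u=0\), so \(\Ker\mathcal G=\mathrm{span}\{(1,0)\}\). Next, \(\mathcal G^2(u,w)=0\) means \(\mathcal G(u,w)\in\Ker\mathcal G\), i.e. \(w=c\) is constant and \(L_0u=0\). This gives \(\Ker\mathcal G^2=\mathrm{span}\{(1,0),(0,1)\}\), which corresponds to the solutions \(1\) and \(t\).

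To stop the Jordan chain, suppose \(\mathcal G(u,w)=(0,1)\). Then \(w=0\) and \(L_0u=-1\). Pairing with \(1\) in \(L^2_A\) and using self-adjointness gives
\[
0=\langle L_0u,1\rangle_A=-\int_\Omega A<0,
\]
a contradiction since \(A>0\). Hence \(\mathcal T_0=\mathrm{span}\{1,t\}\) as data, which is (ii).

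I expect this Jordan-chain termination to be the one substantive point. It is exactly where \(A>0\) and \(1\in\Dom\) enter: the pairing identity needs the constant in the operator domain, not merely in the form domain.

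\textbf{Projection and coercivity.} The projection is \(\Pi_{\rm thr}(u,w)=(\bar u_A,\bar w_A)\), the \(A\)-weighted averages of \(u\) and \(w\). It commutes with the group because the averages evolve by
\[
\tfrac{d}{dt}\bar u_A=\bar w_A,\qquad \tfrac{d}{dt}\bar w_A=-\overline{(L_0u)}_A=0,
\]
where the last equality again uses self-adjointness and \(L_0 1=0\).

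The conserved energy is \(E=\tfrac12\bigl(\|w\|_{L^2_A}^2+q(u,u)\bigr)\). By the weighted Poincaré inequality it is equivalent to the \(H^1\times L^2\) norm on \(\Ker\Pi_{\rm thr}\). Theorem~\ref{thm:abstractmain} then yields the decomposition and the uniform bound in (iii), with \(c_0,c_1\) given by the weighted averages of the data. Well-posedness in the energy class follows from Stone's theorem, since \(\mathcal G\) is skew-adjoint in the energy inner product on the complement and nilpotent on \(\mathcal T_0\).

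\textbf{No growing modes.} For (iv), a separated mode \(e^{-\ii\Omega t}\psi\) satisfies \(L_0\psi=\Omega^2\psi\). Because \(L_0\ge0\) is self-adjoint, \(\Omega^2\in[0,\infty)\), so \(\Omega\) is real and \(\operatorname{Im}\Omega=0\). Equivalently, apply the last clause of Theorem~\ref{thm:abstractmain}, noting that a mode with \(\Omega\neq0\) automatically has zero weighted average.
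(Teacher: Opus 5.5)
Your proposal is correct and follows the paper's argument in substance. It uses the Friedrichs realization of $q_0$ on $L_A^2(\Omega)$, the kernel by connectedness, the weighted Poincar\'e inequality (equivalently the gap $\lambda_1>0$), and termination of the zero Jordan chain by pairing with $1$; your contradiction $0=\langle L_0u,1\rangle_A=-\int_\Omega A$ is the first-order form of the recurrence argument in Theorem~\ref{thm:generalizedzeromodes}. It then uses the affine law for the $A$-weighted average and nonnegativity of $L_0$ for (iv). Only the packaging differs: you use Stone's theorem plus Theorem~\ref{thm:abstractmain}, where the paper uses the explicit eigenfunction expansion \eqref{eq:fullexpansion}.

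One small correction. The pairing only uses $\langle L_0u,1\rangle_A=q_0[u,1]=0$, so having $1$ in the form domain already suffices. For the Friedrichs operator this in fact forces $1\in\Dom(L_0)$ with $L_0 1=0$. Your remark that operator-domain membership is what is really needed is therefore overstated, though harmless.
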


\begin{proof}
In the axisymmetric sector the gyroscopic and \(\phi\)-derivative terms vanish.  Proposition~\ref{prop:selfadjointH0} realizes the spatial operator as the nonnegative self-adjoint Friedrichs operator on \(L_A^2(\Omega)\), Propositions~\ref{prop:kernelH0} and~\ref{prop:weightedPoincare} identify constants as the only kernel and give coercivity on the zero-average complement, and Theorem~\ref{thm:wellposedness} gives the energy evolution.  The compact spectral theorem and Theorem~\ref{thm:generalizedzeromodes} classify the zero and generalized zero modes, while Theorem~\ref{thm:laurentzero} and Corollary~\ref{cor:P0resolvent} give the stated zero-frequency resolvent expansion.  Section~\ref{sec:proofofmain} then writes the solution in the spectral representation, separates the affine threshold part, and proves the boundedness and mode-stability conclusions.
\end{proof}

Thus Theorem~\ref{thm:mainresult} is both a special case and a refinement of Theorem~\ref{thm:abstractmain}.  It identifies the self-adjoint spatial operator, proves that the full zero-frequency threshold space is $\mathrm{span}\{1,t\}$, and computes the $\sigma^{-2}$ singularity of the resolvent at zero.

The next theorem is the abstract black-hole exterior implication. Section~\ref{sec:ledger} verifies that package for a slowly rotating weakly charged Kerr-Newman wall exterior and then gives the full asymptotically flat subextremal Kerr-Newman application in the neutral axisymmetric scalar sector under the scalar Kerr-Newman external package, External Assumption~\ref{ass:external-kn-scattering-package}.  For the full Kerr-Newman end the proof deliberately separates the elementary geometric and zero-frequency arguments from the deep scalar-wave input: the required red-shift, integrated local energy decay, quantitative real-frequency mode stability, trapping control, and limiting absorption estimates are invoked through External Assumption~\ref{ass:external-kn-scattering-package}, and every use of that package is recorded in Section~\ref{sec:ledger}.  Once these estimates are available, no further bounded-slab positivity argument is needed.

\begin{theorem}[Black-hole package implies threshold-plus-dispersive stability]\label{thm:axisymmetric-bh}
Let $g$ be a $k=0$ member of the Carter-type family on an axisymmetric black-hole exterior $\mathcal M_{\rm ext}$ with nondegenerate horizon components and with one of the admissible exterior boundary conditions specified in Assumption~\ref{ass:axisymmetric-bh-package}.  Let $\mathcal G_{\rm bh}$ denote the axisymmetric evolution generator on the black-hole energy space $\mathcal E^s_{\rm bh}$, $s\ge s_0$, and let
\begin{equation}
\mathcal T_{0,{\rm bh}}:=\bigcup_{N\ge1}\Ker \mathcal G_{\rm bh}^N
\end{equation}
be the zero-frequency threshold space.  Under the black-hole exterior package \textbf{BH1}-\textbf{BH9} of Assumption~\ref{ass:axisymmetric-bh-package}, there is a finite-rank projection $\Pi_{0,{\rm bh}}$ onto $\mathcal T_{0,{\rm bh}}$, commuting with the evolution, such that every axisymmetric finite-energy solution of
\begin{equation}
\Box_g u=0
\end{equation}
with initial datum $U_0=(u_0,u_1)\in\mathcal E^s_{\rm bh}$ admits the canonical decomposition
\begin{equation}
U(t)=U_{\rm thr}(t)+U_{\rm disp}(t),
\qquad
U_{\rm thr}(t):=e^{t\mathcal G_{\rm bh}}\Pi_{0,{\rm bh}}U_0,
\qquad
U_{\rm disp}(t):=e^{t\mathcal G_{\rm bh}}(I-\Pi_{0,{\rm bh}})U_0.
\end{equation}
The threshold term is finite-dimensional and polynomial in $t$, with degree bounded by the nilpotency length of $\mathcal G_{\rm bh}$ on $\mathcal T_{0,{\rm bh}}$.  The dispersive term satisfies
\begin{equation}
\sup_{t\ge0}\|U_{\rm disp}(t)\|_{\mathcal E^s_{\rm bh}}
+
\|U_{\rm disp}\|_{LE^s([0,\infty))}
\le
C_s\|(I-\Pi_{0,{\rm bh}})U_0\|_{\mathcal E^s_{\rm bh}},
\label{eq:bh-main-bound}
\end{equation}
and for every compact set $K$ away from any imposed outer boundary,
\begin{equation}
\lim_{t\to\infty}
\Bigl(\|u_{\rm disp}(t)\|_{H^1(K)}+
\|\partial_tu_{\rm disp}(t)\|_{L^2(K)}\Bigr)=0.
\label{eq:bh-local-decay}
\end{equation}
If the zero threshold admitted by the exterior realization is semisimple, the threshold term is stationary; if the realization also admits regular generalized zero states, the displayed polynomial term records them.  In the slowly rotating weakly charged Kerr-Newman wall verification of Section~\ref{sec:ledger}, future-horizon regularity makes the zero block semisimple and $\mathcal T_{0,{\rm bh}}=\operatorname{span}\{(1,0)\}$.  In the full asymptotically flat Kerr-Newman verification below we use the decaying radiation energy at null infinity; that boundary condition excludes the nondecaying constant and gives $\mathcal T_{0,{\rm KN,af}}=\{0\}$.  In all cases covered by the assumptions there is no nontrivial outgoing axisymmetric mode with $\operatorname{Im}\sigma>0$ outside the threshold space.
\end{theorem}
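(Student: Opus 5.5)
The plan is to reduce Theorem~\ref{thm:axisymmetric-bh} to the same abstract mechanism as Theorem~\ref{thm:abstractmain}, with conserved-energy coercivity replaced by the scattering estimates in the package \textbf{BH1}-\textbf{BH9}.

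\textbf{Step 1: the threshold projection.} I would first use the package items giving a strongly continuous group $e^{t\mathcal G_{\rm bh}}$ on $\mathcal E^s_{\rm bh}$. I would also use the finite-dimensionality of $\mathcal T_{0,{\rm bh}}$, which I expect the package to deliver via a Fredholm or limiting-absorption statement at $\sigma=0$. With these in hand, I would define $\Pi_{0,{\rm bh}}$ as the Riesz projection
\[
\Pi_{0,{\rm bh}}=\frac{1}{2\pi \ii}\oint_{|\sigma|=\varepsilon}(\sigma-\ii\mathcal G_{\rm bh})^{-1}\,\dd\sigma,
\]
or equivalently as the residue of the meromorphically continued resolvent at $0$. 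Its range is the generalized kernel, so it is finite rank. It commutes with the group because it is built from the resolvent. Exactly as in the proof of Theorem~\ref{thm:abstractmain}, the restriction $\mathcal G_{\rm bh}|_{\mathcal T_{0,{\rm bh}}}$ is nilpotent. Hence $U_{\rm thr}(t)=\sum_{j<N}\frac{t^j}{j!}\mathcal G_{\rm bh}^j\Pi_{0,{\rm bh}}U_0$, with $N$ the nilpotency length. If the zero block is semisimple, $N=1$ and the threshold term is stationary.

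\textbf{Step 2: the dispersive bound on $\Ran(I-\Pi_{0,{\rm bh}})$.} I would combine three package inputs: red-shift near the nondegenerate horizons, the boundary condition at the admissible outer end, and integrated local energy decay with trapping control. Together they give the bound \eqref{eq:bh-main-bound}. The $LE^s$ part comes from the Fourier--Laplace representation
\[
U_{\rm disp}(t)=\frac{1}{2\pi}\int_{\operatorname{Im}\sigma=\eta}e^{-\ii\sigma t}\mathcal R(\sigma)(I-\Pi_{0,{\rm bh}})U_0\,\dd\sigma .
\]
I would then shift the contour to the real axis using the uniform limiting-absorption bounds. Quantitative real-frequency mode stability excludes real poles for $\sigma\ne0$. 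The projection $I-\Pi_{0,{\rm bh}}$ removes the pole at $\sigma=0$, so the projected resolvent has a bounded boundary value there. Plancherel in $t$ then yields the $LE^s$ estimate. The uniform-in-time energy bound follows by feeding the $LE^s$ bound into the red-shift energy inequality.

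\textbf{Step 3: mode stability in the upper half-plane.} An outgoing mode with $\operatorname{Im}\sigma>0$ outside $\mathcal T_{0,{\rm bh}}$ would have exponentially growing energy. This contradicts \eqref{eq:bh-main-bound}, which is the same argument as in Theorem~\ref{thm:abstractmain}.

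\textbf{Step 4: local decay.} For \eqref{eq:bh-local-decay}, I would use a standard density argument. For data in $\mathcal E^{s+1}_{\rm bh}$, both $U_{\rm disp}$ and $\partial_tU_{\rm disp}$ lie in $LE^s$, so the local energy $f(t)$ on $K$ satisfies $f\in L^1$ and $f'\in L^1$, hence $f(t)\to0$. The uniform bound from Step~2 then extends the limit to all data in $\mathcal E^s_{\rm bh}$. For the specific cases, I would compute the zero block directly. Future-horizon regularity and the wall condition leave only the constants $(1,0)$ as zero states and exclude $(t,1)$, which makes the block semisimple. Decay at null infinity excludes the constants as well, so $\mathcal T_{0,{\rm KN,af}}=\{0\}$.

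\textbf{Main obstacle.} I expect the hardest step to be the zero-frequency part of Step~2: showing that after projection the resolvent has a bounded boundary value at $\sigma=0$, with no residual $\sigma^{-1}$ term from a nonsemisimple block. My first approach would be a Laurent expansion at $\sigma=0$. The idea is that all singular coefficients have range in $\mathcal T_{0,{\rm bh}}$ and therefore vanish after applying $I-\Pi_{0,{\rm bh}}$, mirroring the argument behind Theorem~\ref{thm:laurentzero}.
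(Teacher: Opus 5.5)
Your overall skeleton matches the paper's: threshold projection, nilpotent threshold block, removal of the compact error in the red-shift estimate, mode exclusion, and local decay. However, Step~1 would fail as written.

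The Riesz integral $\oint_{|\sigma|=\varepsilon}(\sigma-\ii\mathcal G_{\rm bh})^{-1}\,\dd\sigma$ needs the resolvent to be a bounded operator on $\mathcal E^s_{\rm bh}$ along the whole circle. That is, $0$ must be an isolated point of the spectrum. For the exterior realizations that \textbf{BH2} admits, notably asymptotically flat radiation ends, the real frequency axis up to $\sigma=0$ is continuous spectrum, so the contour crosses the spectrum. The paper says explicitly (Section~\ref{sec:nonaxisymmetric}) that the threshold projection is a data-space object and must not be confused with an isolated Riesz projection.

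Your fallback, the residue of the meromorphically continued resolvent, does not repair this. The continuation exists only as a cutoff family $\chi P(\sigma)^{-1}\chi$ between local Sobolev spaces, so its residue is not an operator on $\mathcal E^s_{\rm bh}$. Idempotence, range equal to $\mathcal T_{0,{\rm bh}}$, and commutation with $e^{t\mathcal G_{\rm bh}}$ therefore do not follow from its being ``built from the resolvent''.

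The fix is to take all of this from \textbf{BH8}. It postulates exactly such a bounded finite-rank projection, its range, its commutation with the semigroup, and nilpotency on the range; Lemma~\ref{lem:bh-threshold-calculus} simply reads this off.

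Similarly, the step you call the main obstacle is not proved in the paper; it is part of the hypotheses. \textbf{BH7} is formulated for the projected evolution with the zero singularity already removed, and it includes the compact smoothing bound \eqref{eq:lap-smoothing-package} at the same index $s$. So you need not rederive smoothing from \textbf{BH5}--\textbf{BH6} by Plancherel. That rederivation would force you to absorb the $\langle\sigma\rangle^{N_s}$ loss in \eqref{eq:high-frequency-package}, and it only controls the compact term anyway. The paper instead inserts \eqref{eq:lap-smoothing-package} into \eqref{eq:redshift-package-bound} on $[0,T]$, lets $T\to\infty$, and extends by density.

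In Step~3 the paper just cites \textbf{BH5}. Your energy-growth argument also works, but only after you check that the outgoing mode's Cauchy data lie in $\mathcal E^s_{\rm bh}$. Membership in $\Ker\Pi_{0,{\rm bh}}$ is then automatic, since $\Pi_{0,{\rm bh}}V$ would be an eigenvector of a nilpotent block with nonzero eigenvalue.

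Your Step~4 is a genuinely different and valid route to \eqref{eq:bh-local-decay}. The paper uses the $L^1_\sigma$ spectral density \eqref{eq:spectral-density-L1} and the Banach-valued Riemann--Lebesgue lemma. You use only integrated local energy.

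It is cleaner to take $U_0\in\Dom(\mathcal G_{\rm bh})$ rather than $\mathcal E^{s+1}_{\rm bh}$, because then $\partial_tU_{\rm disp}=e^{t\mathcal G_{\rm bh}}(I-\Pi_{0,{\rm bh}})\mathcal G_{\rm bh}U_0$. The smoothing bound \eqref{eq:lap-smoothing-package}, together with the horizon part of $LE^s$, then makes both $f(t)=\|U_{\rm disp}(t)\|^2_{H^1(K)\times L^2(K)}$ and $f'$ integrable. Hence $f(t)\to0$, and the uniform bound \eqref{eq:bh-main-bound} plus density gives the general case.

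This shows the $L^1_\sigma$ clause of \textbf{BH7} is not needed for qualitative decay. What the paper's spectral route buys is the direct link to the optional rates of Proposition~\ref{prop:rate-from-spectral-density}.
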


\begin{proof}
Assumption~\ref{ass:axisymmetric-bh-package} supplies the closed exterior generator, red-shift/local-energy estimate, finite-frequency and high-frequency resolvent control, limiting absorption, and zero-frequency threshold projection.  Lemma~\ref{lem:bh-threshold-calculus} gives the finite-dimensional polynomial threshold evolution.  Lemma~\ref{lem:bh-projected-estimate}, equivalently Proposition~\ref{prop:bh-compact-error-removal}, combines the finite-time red-shift estimate with the projected smoothing estimate to remove the compact error and obtain the global projected energy/local-energy bound.  Lemma~\ref{lem:bh-local-decay} uses the \(L^1_\sigma\) spectral-density representation from \textbf{BH7} to obtain compact local decay.  These three steps prove the decomposition, estimate, local decay, and mode-exclusion statements; the no-circularity check is Proposition~\ref{prop:no-circular-exterior}.
\end{proof}

\begin{remark}[Status of Theorem~\ref{thm:axisymmetric-bh}]\label{rem:bh-status}
Theorem~\ref{thm:axisymmetric-bh} is proved in Section~\ref{sec:ledger}.  That section first proves the red-shift, mode-stability, limiting-absorption, high-frequency, and zero-frequency items \textbf{BH1}-\textbf{BH9} for a slowly rotating weakly charged Kerr-Newman wall exterior with the wall below the photon region.  It then verifies the same package for the full subextremal asymptotically flat Kerr-Newman exterior in the neutral axisymmetric scalar sector, using the scalar Kerr-Newman stability package as the explicitly named non-elementary input and proving the geometry, radiation-domain, and zero-frequency parts directly.  It then proves the abstract implication from \textbf{BH1}-\textbf{BH9} to the threshold-plus-dispersive decomposition and deduces Theorems~\ref{thm:kn-wall-main} and~\ref{thm:kn-full-main}.  The theorem is unconditional for the stated Kerr-Newman wall family and conditional for full Kerr-Newman only in the sense that it cites the established scalar-wave stability estimates instead of reproving their microlocal trapping analysis from first principles.
\end{remark}

\begin{theorem}[Subextremal Kerr exterior theorem]\label{thm:kerr-subext-main}
Fix a compact genuine subextremal Kerr parameter set
\[
 \mathscr K^{\rm Kerr}_{\delta,M_0,M_1,a_*}
 =\{(M,a):M_0\le M\le M_1,\ a_*M\le |a|\le (1-\delta)M\},
 \qquad 0<a_*<1-\delta<1.
\]
For each \((M,a)\in\mathscr K^{\rm Kerr}_{\delta,M_0,M_1,a_*}\), let \(g_{M,a}\) be the Kerr metric on the domain of outer communications.  Consider the neutral axisymmetric conformal scalar equation
\[
 \Box_{g_{M,a}}u=0,\qquad \partial_\phi u=0,
\]
with future-horizon regularity and the outgoing decaying finite-energy condition at null infinity.  Under the standard subextremal Kerr scalar-wave package recorded in External Assumption~\ref{ass:external-kerr-rn-packages}, the decaying radiation energy space has no zero-frequency threshold:
\[
 \Pi_{0,{\rm Kerr}}=0.
\]
For every \(s\ge s_{\rm Kerr}\), every finite-energy datum \(U_0\in\mathcal E^s_{\rm Kerr}\) generates a unique future solution satisfying
\[
\sup_{t\ge0}\|U(t)\|_{\mathcal E^s_{\rm Kerr}}
+
\|U\|_{LE^s_{\rm Kerr}([0,\infty))}
\le C_s\|U_0\|_{\mathcal E^s_{\rm Kerr}}.
\]
Moreover \(U(t)\) decays locally in \(H^1\times L^2\) on compact subregions of the domain of outer communications, and there is no nonzero outgoing axisymmetric mode with \(\operatorname{Im}\sigma>0\).  The Schwarzschild endpoint \(a=0\) is not counted as a main theorem here; it is recorded separately in Appendix~\ref{app:schwarzschild-full-kn}.
\end{theorem}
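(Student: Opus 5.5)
The plan is to verify the exterior package \textbf{BH1}--\textbf{BH9} of Assumption~\ref{ass:axisymmetric-bh-package} for the Kerr family on the compact parameter set, and then apply Theorem~\ref{thm:axisymmetric-bh}. The geometric items come first. The Kerr metric is the \(k=0\), charge-free member of the Carter-type family of \cite{AssafariGunara}, so the conformal reduction of Proposition~\ref{prop:closed-system-criterion} gives exactly \(\Box_{g_{M,a}}u=0\). For \(|a|\le(1-\delta)M\), the horizon \(r_+=M+\sqrt{M^2-a^2}\) is nondegenerate, with surface gravity bounded below uniformly on \(\mathscr K^{\rm Kerr}_{\delta,M_0,M_1,a_*}\). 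This gives the red-shift item \textbf{BH2} with constants uniform in \((M,a)\). In the axisymmetric sector the term \(B\partial_{t\phi}\) and all \(\partial_\phi\) terms drop out. The energy flux through the future horizon is then nonnegative even in the ergoregion, because superradiance needs \(m\neq0\). The closed generator on \(\mathcal E^s_{\rm Kerr}\) (item \textbf{BH1}) is built from hyperboloidal slices that are regular at the future horizon and reach null infinity, with the outgoing decaying finite-energy condition at null infinity.

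The analytic items are integrated local energy decay, trapping and high-frequency control, real-frequency mode stability, and limiting absorption (\textbf{BH3}--\textbf{BH7}). I take these from External Assumption~\ref{ass:external-kerr-rn-packages}, the standard subextremal Kerr scalar-wave package, restricted to \(m=0\). The task here is bookkeeping. The energy spaces and \(LE^s\) norms of the package must match \(\mathcal E^s_{\rm Kerr}\) and \(LE^s_{\rm Kerr}\). The regularity threshold \(s_{\rm Kerr}\) is the one in the package, and the constants must be uniform on the compact parameter set. The \(L^1_\sigma\) spectral-density representation needed for Lemma~\ref{lem:bh-local-decay} should follow from the limiting absorption bounds together with the zero-frequency analysis below.

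The zero-frequency items \textbf{BH8}--\textbf{BH9} are the step I prove directly, and I expect them to be the main obstacle. Take an axisymmetric \(u\in\bigcup_N\Ker\mathcal G^N\). By nilpotency its top-order coefficient is a stationary axisymmetric solution \(\psi(r,x)\) of \(\partial_r(\Delta_r\partial_r\psi)+\partial_x(\Delta_x\partial_x\psi)=0\). This solution is regular at \(r_+\), is regular on the axis, and lies in the decaying radiation energy space. I would integrate \(\psi\) times the equation over \([r_+,R]\times[-1,1]\):
\begin{itemize}
\item the boundary term at \(r_+\) vanishes because \(\Delta_r(r_+)=0\) and \(\psi\) is regular there;
\item the term at \(R\) tends to zero by the decay condition;
\item hence \(\nabla\psi=0\), so \(\psi\) is constant.
\end{itemize}
The decaying condition excludes the constant, so \(\psi=0\). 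Alternatively one can expand in Legendre modes \(P_\ell(x)\) and use explicit Legendre-type radial solutions. Each \(\ell\) has one solution regular at \(r_+\), and it grows like \(r^\ell\); the \(\ell=0\) solution is constant. Either way the top coefficient vanishes, and induction on the nilpotency order gives \(\mathcal T_{0,{\rm Kerr}}=\{0\}\), so \(\Pi_{0,{\rm Kerr}}=0\). The real difficulty is to show that the limiting absorption resolvent stays bounded as \(\sigma\to0\), with no \(\sigma^{-1}\) or \(\sigma^{-2}\) Laurent term, in the topology of the package. I would first try to deduce this from the absence of zero-energy resonant states just shown, combined with the Fredholm property supplied by the external package.

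With \textbf{BH1}--\textbf{BH9} verified, Theorem~\ref{thm:axisymmetric-bh} with \(\Pi_{0,{\rm bh}}=0\) gives the stated estimates. The threshold part \(U_{\rm thr}\) vanishes, so \(U=U_{\rm disp}\) satisfies the bound \eqref{eq:bh-main-bound}. It also decays locally on compact subsets of the domain of outer communications, by \eqref{eq:bh-local-decay}, since there is no imposed outer boundary. Outgoing modes with \(\operatorname{Im}\sigma>0\) are excluded: such a mode has finite energy and grows exponentially in \(t\), which is incompatible with the uniform energy bound. Uniqueness of the forward solution comes from the closed generator in \textbf{BH1}. The restriction \(a_*M\le|a|\) plays no analytic role; it only keeps the Schwarzschild endpoint out of the statement.
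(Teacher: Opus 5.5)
Your proposal is correct and follows the paper's proof. You verify Assumption~\ref{ass:axisymmetric-bh-package} as in Theorem~\ref{thm:kerr-rn-verifies-bh}, take the red-shift, local-energy, trapping, mode-stability, limiting-absorption and $L^1_\sigma$ spectral-density items from External Assumption~\ref{ass:external-kerr-rn-packages}, prove $\Pi_{0,{\rm Kerr}}=0$ by the integration/Legendre argument of Proposition~\ref{prop:kn-zero-frequency-full} with $Q=0$ (treating the resolvent at $\sigma=0$ as you suggest, cf.\ Lemma~\ref{lem:kn-af-zero-frequency}), and conclude with Theorem~\ref{thm:axisymmetric-bh}; only your item labels are off, since in the paper red-shift is \textbf{BH4}, the generator is \textbf{BH3}, and \textbf{BH9} is cutoff/domain compatibility (from Lemma~\ref{lem:kn-af-geometry-domain} with $Q=0$) rather than a zero-frequency item.
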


\begin{proof}
The verification is given in Theorem~\ref{thm:kerr-rn-verifies-bh}.  The subextremal Kerr scalar estimates supply \textbf{BH4}-\textbf{BH7}, while the horizon/radiation domain and cutoff compatibility give \textbf{BH1}-\textbf{BH3} and \textbf{BH9}.  The zero-frequency ODE classification, Proposition~\ref{prop:kn-zero-frequency-full} with \(Q=0\), gives \(\Pi_{0,{\rm Kerr}}=0\) in the decaying radiation energy space.  The abstract exterior theorem, Theorem~\ref{thm:axisymmetric-bh}, then gives the displayed estimate, compact local decay, and mode exclusion.
\end{proof}

\begin{theorem}[Extremal Kerr horizon charge and obstruction]\label{thm:kerr-extremal-main}
Let \(M>0\), \(|a|=M\), and let \(g_{M,a}\) be the extremal Kerr metric.  For smooth future-horizon regular axisymmetric solutions of
\[
 \Box_{g_{M,a}}u=0,
 \qquad \partial_\phi u=0,
\]
the future horizon \(r=M\) carries the conserved charge
\[
\mathfrak A^{\rm Kerr}_0[u](v)
:=\int_0^\pi
\Bigl(4M^2\partial_ru+2M u+M^2\sin^2\theta\,\partial_vu\Bigr)(v,M,\theta)
\sin\theta\,\dd\theta.
\]
If \(\mathfrak A^{\rm Kerr}_0[u]\ne0\) and the tangential horizon quantities \(u\) and \(\partial_vu\) decay along \(\mathcal H^+\), then the averaged transversal derivative \(\partial_ru\) on \(\mathcal H^+\) cannot decay.  In the decaying radiation energy space there is no nonzero smooth stationary zero-frequency state satisfying both future-horizon regularity and decay at infinity.
\end{theorem}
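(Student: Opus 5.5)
The plan is a direct computation in horizon-penetrating coordinates, followed by an integration over the horizon sphere and a separate mode-by-mode argument for the stationary statement.

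\textbf{Step 1: the wave operator at the horizon.} I would pass to ingoing Kerr coordinates \((v,r,\theta,\tilde\phi)\), with
\[
\dd v=\dd t+\frac{r^2+a^2}{\Delta}\,\dd r,\qquad
\dd\tilde\phi=\dd\phi+\frac{a}{\Delta}\,\dd r,\qquad
\Delta=r^2-2Mr+a^2,
\]
in which the future horizon is the regular hypersurface \(r=r_+\).  A standard computation writes \(\rho^2\Box_g u\) as
\[
\partial_r(\Delta\partial_r u)+2(r^2+a^2)\partial_v\partial_r u+2r\partial_v u+a^2\sin^2\theta\,\partial_v^2u
+\frac{1}{\sin\theta}\partial_\theta(\sin\theta\,\partial_\theta u)+(\text{terms containing }\partial_{\tilde\phi}).
\]
All \(\partial_{\tilde\phi}\)-terms drop under the axisymmetry assumption \(\partial_\phi u=0\).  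The first thing I would check carefully is that \(\partial_\phi\) and \(\partial_{\tilde\phi}\) agree in these coordinates, which holds because the change of coordinates does not involve \(\phi\).

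\textbf{Step 2: the conserved charge.} In the extremal case \(|a|=M\) we have \(\Delta=(r-M)^2\), so \(\Delta\) and \(\Delta'\) both vanish at \(r=M\).  For a smooth solution this gives
\[
\partial_r(\Delta\partial_r u)\big|_{r=M}=0,
\]
which is precisely the degeneracy that produces a charge.  Evaluating the equation at \(r=M\), and using \(r^2+a^2=2M^2\), yields
\[
4M^2\partial_v\partial_ru+2M\partial_vu+M^2\sin^2\theta\,\partial_v^2u+\frac{1}{\sin\theta}\partial_\theta(\sin\theta\,\partial_\theta u)=0 .
\]
I would then integrate against \(\sin\theta\,\dd\theta\) over \([0,\pi]\).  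The angular Laplacian integrates to zero by smoothness at the axis, since \(\sin\theta\,\partial_\theta u\) vanishes at \(\theta=0,\pi\).  What remains is exactly \(\partial_v\mathfrak A^{\rm Kerr}_0[u](v)=0\), which is the claimed conservation.

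\textbf{Step 3: the obstruction.} Conservation gives, for all \(v\),
\[
4M^2\int_0^\pi\partial_ru\,\sin\theta\,\dd\theta=\mathfrak A^{\rm Kerr}_0[u]-\int_0^\pi\bigl(2Mu+M^2\sin^2\theta\,\partial_vu\bigr)\sin\theta\,\dd\theta .
\]
If \(u\) and \(\partial_vu\) decay along \(\mathcal H^+\) (uniformly in \(\theta\), or at least in the averaged sense), the last integral tends to zero.  Hence the averaged transversal derivative converges to \(\mathfrak A^{\rm Kerr}_0[u]/(4M^2)\ne0\), and in particular it cannot decay.

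\textbf{Step 4: no stationary zero-frequency state.} For a stationary axisymmetric \(u\) the \(t\)- and \(\phi\)-terms vanish, and the equation reduces to
\[
\partial_r(\Delta\partial_ru)+\frac{1}{\sin\theta}\partial_\theta(\sin\theta\,\partial_\theta u)=0 .
\]
I would expand in Legendre polynomials, \(u=\sum_\ell R_\ell(r)P_\ell(\cos\theta)\), which gives the radial equations \((\Delta R_\ell')'=\ell(\ell+1)R_\ell\).  Multiplying by \(R_\ell\) and integrating over \([M,\infty)\) gives
\[
\int_M^\infty\bigl(\Delta|R_\ell'|^2+\ell(\ell+1)|R_\ell|^2\bigr)\,\dd r=\bigl[\Delta R_\ell R_\ell'\bigr]_M^\infty .
\]
The boundary term vanishes at \(r=M\) by horizon regularity, since \(\Delta(M)=0\) and \(R_\ell\) is smooth.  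At infinity it vanishes by the decay condition, using that decaying solutions behave like \(r^{-\ell-1}\).  Therefore \(R_\ell\equiv0\) for \(\ell\ge1\), and \(R_0\) is constant; decay at infinity then forces \(R_0=0\).  Alternatively, for \(\ell=0\) the explicit solutions \(c_1+c_2/(r-M)\) can be used directly: regularity at \(r=M\) kills \(c_2\), and decay kills \(c_1\).

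\textbf{Main obstacle.} I expect the delicate points to be the coordinate computation in Step 1, namely getting the exact coefficients \(4M^2\), \(2M\) and \(M^2\sin^2\theta\), and justifying termwise convergence of the Legendre expansion and vanishing of the boundary term at infinity in Step 4.  For the latter I would invoke elliptic regularity of the stationary equation away from \(r=M\) together with the finite radiation energy of the decaying space.
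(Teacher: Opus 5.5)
Your proposal is correct and follows the paper's route essentially verbatim: you use the ingoing-coordinate form of \(\rho^2\Box_g u\), evaluate it at the double root where \(\Delta=\Delta'=0\) to get the horizon transport equation, integrate over the sphere, rearrange algebraically for the nondecay statement, and use Legendre separation for the stationary statement. The only difference is cosmetic and lies in Step~4: the paper (Lemma~\ref{lem:extreme-kn-zero-frequency}) reads off the explicit Euler solutions \((r-M)^{\ell}\) and \((r-M)^{-\ell-1}\) instead of using your integration-by-parts identity. Since you already note that decaying solutions are multiples of \((r-M)^{-\ell-1}\), horizon regularity kills them directly, so your energy identity is redundant but harmless.
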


\begin{proof}
This is the specialization \(Q=0\), \(a^2=M^2\), of the extremal Kerr-Newman calculation in Theorem~\ref{thm:kn-extremal-main}.  The charge formula is Corollary~\ref{cor:extreme-kerr-charge}; the nondecay conclusion follows from the same conservation identity, and the zero-frequency statement is Lemma~\ref{lem:extreme-kn-zero-frequency} with \(Q=0\).
\end{proof}

\begin{theorem}[Subextremal Reissner-Nordstr\"om exterior theorem]\label{thm:rn-subext-main}
Fix a compact genuinely charged subextremal Reissner-Nordstr\"om parameter set
\[
\begin{aligned}
 \mathscr K^{\rm RN}_{\delta,M_0,M_1,Q_*}
 &=\{(M,Q):M_0\le M\le M_1,\ Q_*M\le |Q|\le (1-\delta)M\},\\
 &\qquad 0<Q_*<1-\delta<1.
\end{aligned}
\]
Let \(g_{M,Q}\) be the Reissner-Nordstr\"om metric on the domain of outer communications.  For the neutral conformal scalar equation
\[
 \Box_{g_{M,Q}}u=0,
\]
with future-horizon regularity and the outgoing decaying finite-energy condition at null infinity, the decaying radiation energy space has trivial zero-frequency threshold:
\[
\Pi_{0,{\rm RN}}=0.
\]
For every \(s\ge s_{\rm RN}\), every finite-energy datum \(U_0\in\mathcal E^s_{\rm RN}\) generates a unique future solution satisfying
\[
\sup_{t\ge0}\|U(t)\|_{\mathcal E^s_{\rm RN}}
+
\|U\|_{LE^s_{\rm RN}([0,\infty))}
\le C_s\|U_0\|_{\mathcal E^s_{\rm RN}}.
\]
Moreover \(U(t)\) decays locally in \(H^1\times L^2\) on compact exterior subregions, and there is no nonzero outgoing mode with \(\operatorname{Im}\sigma>0\).  By spherical symmetry this statement is not restricted to the axisymmetric sector.  The Schwarzschild endpoint \(Q=0\) is placed in Appendix~\ref{app:schwarzschild-full-kn}.
\end{theorem}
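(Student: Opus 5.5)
The plan mirrors the proof of Theorem~\ref{thm:kerr-subext-main}: verify the exterior package \textbf{BH1}--\textbf{BH9} of Assumption~\ref{ass:axisymmetric-bh-package} for \(g_{M,Q}\), \((M,Q)\in\mathscr K^{\rm RN}_{\delta,M_0,M_1,Q_*}\), and then apply Theorem~\ref{thm:axisymmetric-bh}. The Reissner-Nordstr\"om metric is the \(a=0\) member of the Kerr-Newman Carter family with \(k=0\). Its scalar curvature vanishes because the Maxwell stress tensor is trace-free. The conformal equation is therefore exactly \(\Box_{g_{M,Q}}u=0\), and Proposition~\ref{prop:closed-system-criterion} shows that this is a closed scalar Cauchy problem.

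\textbf{Step 1 (package items BH1--BH3, BH9).} I would use ingoing Eddington-Finkelstein coordinates \((v,r,\theta,\phi)\) on \(r\ge r_+=M+\sqrt{M^2-Q^2}\). In these coordinates the future horizon is a regular hypersurface, and the surface gravity
\[
\kappa_+=\frac{r_+-r_-}{2r_+^2}
\]
is positive, uniformly on the compact parameter set because \(|Q|\le(1-\delta)M\). This gives a closed generator on the horizon-regular radiation energy space \(\mathcal E^s_{\rm RN}\), with finite-energy outgoing decay at null infinity. It also gives the cutoff compatibility needed for \textbf{BH9}.

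\textbf{Step 2 (analytic items BH4--BH7).} These come from the subextremal RN scalar package in External Assumption~\ref{ass:external-kerr-rn-packages}. That package supplies the red-shift estimate, integrated local energy decay with photon-sphere degeneration, real-frequency mode stability, high-frequency resolvent control, and limiting absorption. Mode stability can even be checked directly by spherical harmonic separation: for \(\operatorname{Im}\sigma>0\) the radial Regge-Wheeler-type potential is real, and the standard energy identity excludes outgoing modes.

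\textbf{Step 3 (zero frequency, BH8).} I would invoke Proposition~\ref{prop:kn-zero-frequency-full} with \(a=0\). Applied to each spherical mode, the stationary equation reads
\[
\partial_r(\Delta\partial_r R)-\ell(\ell+1)R=0,\qquad \Delta=(r-r_+)(r-r_-).
\]
Its solutions are Legendre functions in the variable \(z=(2r-r_+-r_-)/(r_+-r_-)\). The solution regular at \(z=1\) is \(P_\ell\), which grows like \(r^\ell\), so it cannot decay at infinity unless it vanishes. For \(\ell=0\) the regular solution is the constant, which the decaying radiation condition excludes. The second solution \(Q_\ell\) is logarithmically singular at the horizon. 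There are no Jordan-chain states either: solving \(\mathcal G w=v\) with \(v=0\) reduces again to the trivial case. Hence \(\mathcal T_{0,{\rm RN}}=\{0\}\) and \(\Pi_{0,{\rm RN}}=0\).

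\textbf{Conclusion.} Theorem~\ref{thm:axisymmetric-bh} now gives the stated energy/local-energy bound, local decay on compact sets, and mode exclusion. Spherical symmetry makes the axisymmetry restriction unnecessary: the Killing field \(\partial_\phi\) commutes with \(\Box_g\), the package estimates hold for every azimuthal mode, and \(\partial_t\) is globally timelike outside the horizon, so there is no ergoregion or superradiance. One can therefore run the argument on the full space, or sum over \(m\) using that the constants are independent of \(m\).

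The main obstacle is the uniformity of the constants \(C_s\) over the compact parameter set. This requires that the external trapping estimates be uniform in \((M,Q)\) away from extremality. I would secure it from the parameter-continuity clause of the external package together with the positive lower bound on \(\kappa_+\).
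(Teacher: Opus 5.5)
Your proposal is correct and follows essentially the same route as the paper: the paper's proof (via Theorem~\ref{thm:kerr-rn-verifies-bh}(b)) takes \textbf{BH4}--\textbf{BH7} from the Reissner-Nordstr\"om clause of External Assumption~\ref{ass:external-kerr-rn-packages} and uses the positive surface gravity $\kappa_+=(r_+-r_-)/(2r_+^2)$ for the horizon items. It obtains \textbf{BH8} from the same Legendre classification ($Q_\ell$ singular at the horizon, $P_\ell$ growing, the $\ell=0$ constant excluded by the decaying radiation energy) and then applies Theorem~\ref{thm:axisymmetric-bh}, summing over all spherical modes. The differences are cosmetic: you route the zero-frequency step through Proposition~\ref{prop:kn-zero-frequency-full} at $a=0$, and you spell out the absence of an ergoregion to justify dropping axisymmetry, which the paper handles simply by summing over all spherical modes.
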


\begin{proof}
The Reissner-Nordstr\"om scalar package in External Assumption~\ref{ass:external-kerr-rn-packages}, together with the direct zero-frequency classification in Theorem~\ref{thm:kerr-rn-verifies-bh}, verifies the black-hole package.  The proof is the same threshold-plus-dispersive argument as Theorem~\ref{thm:axisymmetric-bh}; in the spherically symmetric case the angular harmonics decouple and the estimates are summed over spherical modes.  The decaying radiation condition excludes the constant zero mode at infinity, so no threshold projection remains.
\end{proof}

\begin{theorem}[Extremal Reissner-Nordstr\"om horizon charge and obstruction]\label{thm:rn-extremal-main}
Let \(|Q|=M>0\), and let \(g_{M,Q}\) be the extremal Reissner-Nordstr\"om metric.  For smooth future-horizon regular solutions of \(\Box_{g_{M,Q}}u=0\), the spherical average on the future horizon carries the conserved charge
\[
\mathfrak A^{\rm RN}_0[u](v)
:=\int_{\mathbb S^2}
\Bigl(2M^2\partial_ru+2M u\Bigr)(v,M,\omega)\,\dd\omega.
\]
If \(\mathfrak A^{\rm RN}_0[u]\ne0\) and the horizon trace \(u\) decays in spherical average, then the averaged transversal derivative \(\partial_ru\) on \(\mathcal H^+\) cannot decay.  In the decaying radiation energy space there is no nonzero smooth stationary zero-frequency state satisfying both future-horizon regularity and decay at infinity.
\end{theorem}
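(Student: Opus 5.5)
Following the pattern of Theorem~\ref{thm:kerr-extremal-main}, the result is the specialization \(a=0\), \(|Q|=M\) of the extremal Kerr-Newman horizon computation. Since that computation is stated later in the paper, I also outline a direct derivation, which is short in spherical symmetry.

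\textbf{Step 1 (equation in ingoing coordinates).} Work in ingoing Eddington--Finkelstein coordinates \((v,r,\omega)\). The metric is
\[
g=-D\,\dd v^2+2\,\dd v\,\dd r+r^2\dd\omega^2,\qquad D=(1-M/r)^2 .
\]
The wave operator takes the form
\[
r^2\Box_g u=\partial_r(r^2D\,\partial_ru)+2r\partial_v u+2r^2\partial_v\partial_r u+\Delta_{\mathbb S^2}u .
\]
Future-horizon regularity means \(u\) is smooth up to \(r=M\) in these coordinates.

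\textbf{Step 2 (conservation of the charge).} Integrate the equation over \(\mathbb S^2\), so that the Laplacian term drops, and evaluate at \(r=M\). Because \(D\) vanishes to second order at \(r=M\), one has \(\partial_r(r^2D)|_{r=M}=0\) and \(D(M)=0\), so the first term vanishes on the horizon. This double vanishing is exactly where extremality enters; in the subextremal case \(D'(r_+)\neq0\) and a red-shift term survives. What remains is
\[
\partial_v\!\int_{\mathbb S^2}\bigl(2M^2\partial_ru+2Mu\bigr)(v,M,\omega)\,\dd\omega=0,
\]
which shows that \(\mathfrak A^{\rm RN}_0[u]\) is independent of \(v\). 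The main care needed here is the bookkeeping of factors, namely \(2r\partial_vu+2r^2\partial_v\partial_ru=\partial_v(2r^2\partial_ru+2ru)\) at fixed \(r\).

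\textbf{Step 3 (nondecay).} Since \(\int(2M^2\partial_ru+2Mu)\,\dd\omega\) is a nonzero constant and the spherical average of \(u\) tends to zero, it follows that \(\int\partial_ru\,\dd\omega\to \mathfrak A^{\rm RN}_0/(2M^2)\neq0\).

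\textbf{Step 4 (no stationary zero mode).} For the zero-frequency statement, take a stationary smooth solution that is regular at the future horizon and decays at infinity. Decompose it into spherical harmonics. For each \(\ell\), the radial ODE \(\partial_r\bigl((r-M)^2\partial_r u_\ell\bigr)=\ell(\ell+1)u_\ell\) is an Euler equation in \(y=r-M\). Its solutions are \(y^{\ell}\) and \(y^{-\ell-1}\).
\begin{itemize}
\item For \(\ell\geq1\), regularity at \(y=0\) forces the \(y^\ell\) branch, which grows at infinity, so \(u_\ell=0\).
\item For \(\ell=0\), the solutions are \(c_1+c_2/y\). Regularity gives \(c_2=0\), and decay at infinity gives \(c_1=0\).
\end{itemize}
Smoothness then allows the harmonics to be summed, so the stationary state vanishes.

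The main obstacle I expect is Step 4's justification that a smooth stationary state in the energy space admits this mode-by-mode reduction. I would handle it by projecting onto spherical harmonics, which commute with \(\Box_g\), and using elliptic regularity for \(r>M\).
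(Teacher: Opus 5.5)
Your proposal is correct and follows the paper's route. The paper obtains the charge and the nondecay statement as the $a=0$, $Q^2=M^2$ case of the ingoing-coordinate horizon identity (Proposition~\ref{prop:extreme-kn-charge} and Corollary~\ref{cor:extreme-rn-charge}, integrating over $\mathbb S^2$ so the angular Laplacian drops). It gets the zero-frequency claim from the same Euler equation $((r-M)^2R_\ell')'=\ell(\ell+1)R_\ell$ with branches $y^\ell$ and $y^{-\ell-1}$ (Lemma~\ref{lem:extreme-kn-zero-frequency}); your direct Eddington--Finkelstein computation of $r^2\Box_g u$ is exactly the $a=0$ specialization of \eqref{eq:extreme-kn-regular-wave}.
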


\begin{proof}
This is the specialization \(a=0\), \(Q^2=M^2\), of the extremal Kerr-Newman horizon calculation.  The charge formula and obstruction are Corollary~\ref{cor:extreme-rn-charge}; the stationary zero-frequency classification is Lemma~\ref{lem:extreme-kn-zero-frequency} with \(a=0\).  In Reissner-Nordstr\"om, spherical harmonics decouple, and the displayed charge is the \(\ell=0\) Aretakis charge written as a sphere average.
\end{proof}

\begin{theorem}[Slowly rotating weakly charged Kerr-Newman wall exterior theorem]\label{thm:kn-wall-main}
Fix \(M>0\) and \(2M<R_{\rm w}<8M/3\).  Let \(\varepsilon_{\rm KN}(M,R_{\rm w})\) be the small constant from Theorem~\ref{thm:kn-verifies-bh}.  If
\[
 |a|+|Q|\le\varepsilon_{\rm KN}M,
 \qquad a^2+Q^2<M^2,
\]
then the axisymmetric neutral conformal scalar on the Kerr-Newman wall collar
\[
 \mathcal M_{M,a,Q,R_{\rm w}}
 =\{\tau\ge0,\ r_+\le r\le R_{\rm w}\}\times S^2
\]
with future-horizon regularity and geometric Neumann wall condition admits the decomposition
\[
 U(t)=\ell_{\rm KN}(U_0)(1,0)+U_{\rm disp}(t).
\]
There is a finite integer \(s_{\rm wall}\), determined by the commutator, Volterra, Fredholm, and limiting-absorption losses in the wall proof, such that for every \(s\ge s_{\rm wall}\),
\[
\sup_{t\ge0}\|U_{\rm disp}(t)\|_{\mathcal E^s_{\rm KN}}
+
\|U_{\rm disp}\|_{LE^s([0,\infty))}
\le C_s\|(I-\Pi_{0,{\rm KN}})U_0\|_{\mathcal E^s_{\rm KN}}.
\]
Moreover \(U_{\rm disp}\) decays locally in \(H^1\times L^2\) on every compact subregion away from the wall, and there is no nonzero outgoing axisymmetric mode with \(\operatorname{Im}\sigma>0\).
\end{theorem}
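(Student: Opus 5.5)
The plan is to reduce Theorem~\ref{thm:kn-wall-main} to the abstract exterior implication, Theorem~\ref{thm:axisymmetric-bh}. This requires verifying the package \textbf{BH1}--\textbf{BH9} of Assumption~\ref{ass:axisymmetric-bh-package} for the Kerr-Newman wall collar, uniformly in the small parameters \(|a|+|Q|\le\varepsilon_{\rm KN}M\); this verification is what Theorem~\ref{thm:kn-verifies-bh} is meant to supply.

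\textbf{Step 1: evolution and red-shift.} I will first write the metric in ingoing hyperboloidal coordinates \((\tau,r,\theta)\) that are regular across \(\mathcal H^+\), and restrict to \(\partial_\phi u=0\). The geometric Neumann condition at \(r=R_{\rm w}\) is imposed through the closed form domain, exactly as in Lemma~\ref{lem:form-reflection-primary}. This gives the closed generator \(\mathcal G_{\rm bh}\) on \(\mathcal E^s_{\rm KN}\), i.e.\ \textbf{BH1}--\textbf{BH3}. Since \(r_+\) is close to \(2M\) and the surface gravity is positive, the Dafermos--Rodnianski red-shift vector field gives a finite-time energy estimate with a compactly supported lower-order error. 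Commuting with the red-shift field produces the higher-\(s\) version, and it is in this commutation that \(s_{\rm wall}\) is fixed.

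\textbf{Step 2: resolvent analysis.} The wall lies below the photon region: the Schwarzschild photon sphere is at \(3M\), and \(R_{\rm w}<8M/3<3M\). Hence for small \(|a|,|Q|\) no trapped null geodesics are contained in the collar. I expect the high-frequency resolvent bound (\textbf{BH5}) to follow from a Morawetz multiplier \(f(r)\partial_r\) whose bulk term is positive on \([r_+,R_{\rm w}]\) when \(a=Q=0\), with the boundary term at the wall controlled by the Neumann condition. The constraint \(8M/3\) presumably comes from the sign of this boundary term, and the estimate persists under small perturbation. On bounded real frequencies \(\sigma\neq0\), mode stability (\textbf{BH4}) comes from the energy identity. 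Axisymmetry removes superradiance, and the wall removes flux at infinity, so a real-frequency outgoing mode has zero horizon flux and therefore vanishes by unique continuation. Together with Fredholm theory for the nondegenerate-horizon operator, this gives limiting absorption and the \(L^1_\sigma\) spectral density (\textbf{BH6}--\textbf{BH7}). For \(\operatorname{Im}\sigma>0\), the energy identity gives a positive-definite contradiction directly.

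\textbf{Step 3: zero frequency.} At \(\sigma=0\) the stationary equation becomes a degenerate elliptic problem on the collar. Integrating \(\Delta_r|\partial_r\psi|^2+\Delta_x|\partial_x\psi|^2\) against the Neumann condition, with horizon regularity killing the boundary term at \(r_+\), shows that \(\psi\) is constant. A Jordan chain would require \(\int A\cdot 1=0\), which is impossible, so the zero block is semisimple and \(\mathcal T_{0,{\rm bh}}=\operatorname{span}\{(1,0)\}\). The projection \(\Pi_{0,{\rm KN}}\) and the functional \(\ell_{\rm KN}\) are then the residue of the resolvent at \(0\), paired with the dual zero state; this is \textbf{BH8}--\textbf{BH9}.

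With the package verified, Theorem~\ref{thm:axisymmetric-bh} yields the decomposition, the projected bound, compact local decay away from the wall, and mode exclusion. I expect the main obstacle to be the uniform high-frequency and low-frequency resolvent control near the corner where the wall meets the nontrapping regime. There the perturbation in \((a,Q)\) must be shown not to destroy positivity of the Morawetz bulk and wall boundary terms, and \(\varepsilon_{\rm KN}\) must be chosen depending on \(R_{\rm w}-2M\) and \(8M/3-R_{\rm w}\).
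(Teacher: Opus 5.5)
Your architecture matches the paper's: verify \textbf{BH1}--\textbf{BH9} for the collar, as collected in Theorem~\ref{thm:kn-verifies-bh}, then apply Theorem~\ref{thm:axisymmetric-bh}. Your item numbers are shuffled relative to Assumption~\ref{ass:axisymmetric-bh-package}: red-shift is \textbf{BH4}, mode stability \textbf{BH5}, high frequency \textbf{BH6}, limiting absorption and density \textbf{BH7}, zero frequency \textbf{BH8}, cutoffs \textbf{BH9}.

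The genuine gap is the semisimplicity claim in Step~3. It is exactly what makes the threshold term the stationary vector \(\ell_{\rm KN}(U_0)(1,0)\) rather than something linear in \(\tau\). The condition \(\int A=0\) is what a \emph{quadratic}-in-time solution \(t^2/2+\psi\) would need, namely solvability of \(\mathcal H_0\psi=-A\) against the constant cokernel. That is the length-three obstruction of Theorem~\ref{thm:generalizedzeromodes}, and it says nothing about a length-two companion. Your argument never uses the horizon. On the compact reflecting slab, where \(\int_\Sigma A>0\) as well, it would ``prove'' semisimplicity, yet there \(u=t\) is a genuine generalized zero mode (Proposition~\ref{prop:generator-threshold-full}).

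The missing idea is the horizon--wall flux incompatibility in Lemma~\ref{lem:kn-zero-frequency}. A companion is a horizon-regular solution \(u=\tau+\psi(r,\theta)\). Write \(\tau=t+H(r)\) with \(\Delta H'=2Mr-Q^2\), and let \(\chi=H+\psi\) be the Boyer--Lindquist stationary part. Its spherical mean carries a constant radial flux, \(\Delta\chi_0'=C\). Regularity of \(\psi\) at \(r_+\) forces \(C=2Mr_+-Q^2=r_+^2+a^2\neq0\). The Neumann condition \(\partial_ru|_t=\chi_r=0\) at \(R_{\rm w}\) forces \(\chi_0'(R_{\rm w})=0\), contradicting \(\chi_0'(R_{\rm w})=(r_+^2+a^2)/\Delta(R_{\rm w})\). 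In words: the bare mode \(t\) is not horizon-regular, the correction that makes it regular sends nonzero flux into the horizon, and a reflecting wall cannot supply that flux.

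A secondary issue is in Step~2: the Neumann condition does not by itself control the wall term of a radial multiplier \(f(r)\partial_r\). After imposing \(\partial_ru=0\), the flux through \(r=R_{\rm w}\) is proportional to \(f(R_{\rm w})\bigl(A_{\rm KN}|\partial_tu|^2-|\partial_\theta u|^2\bigr)\), which is indefinite; this is the glancing region. The paper therefore supports its escape function \(b(r)\xi_r\) away from the wall (Proposition~\ref{prop:kn-exact-wall-escape}). It handles the wall collar separately, by even reflection across the Neumann boundary and propagation into the interior (Proposition~\ref{prop:kn-wall-reflection}).

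Nor does \(R_{\rm w}<8M/3\) come from a wall boundary sign. It only keeps \(r^2(r-3M)\) uniformly negative on \([r_+,R_{\rm w}]\), so that the \(O(\varepsilon_{\rm KN}^2)\) corrections are absorbed and \(\partial_rA_{\rm KN}<0\) holds (Lemma~\ref{lem:kn-nontrapping}).
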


\begin{proof}
Under \eqref{eq:kn-wall-range}-\eqref{eq:kn-smallness}, Lemmas~\ref{lem:kn-geometry},~\ref{lem:kn-nontrapping},~\ref{lem:kn-redshift-morawetz},~\ref{lem:kn-mode-stability},~\ref{lem:kn-zero-frequency}, and~\ref{lem:kn-lap-smoothing} verify the individual package items \textbf{BH1}-\textbf{BH9}.  Theorem~\ref{thm:kn-verifies-bh} collects this verification for the stated finite wall collar and wall condition.  The abstract exterior theorem, Theorem~\ref{thm:axisymmetric-bh}, then gives the threshold-plus-dispersive decomposition and estimates.  In the wall realization Lemma~\ref{lem:kn-zero-frequency} identifies the only threshold as the constant mode, yielding the displayed projection \(\ell_{\rm KN}(U_0)(1,0)\).
\end{proof}

Theorem~\ref{thm:kn-wall-main} is unconditional within the stated wall, smallness, neutral-field, and axisymmetry restrictions: Section~\ref{sec:ledger} proves \textbf{BH1}-\textbf{BH9} for this Kerr-Newman wall family, and Section~\ref{sec:bh-main-proofs} completes the proof of the stated decomposition and decay.

The full asymptotically flat subextremal Kerr-Newman exterior is no longer one of the principal main theorems.  It is retained as an appendix application because its global trapping, limiting-absorption, and scalar scattering inputs are imported from the external Kerr-Newman package rather than proved internally here; see Appendix~
ef{app:schwarzschild-full-kn}, Theorem~
ef{thm:kn-full-main}.  This separation keeps the genuinely main exterior statements focused on Kerr, Reissner-Nordstr\"om, the internally verified slowly rotating weakly charged Kerr-Newman wall exterior, and the extremal Kerr-Newman horizon obstruction.

\begin{theorem}[Extremal Kerr-Newman horizon charge and sharp obstruction]\label{thm:kn-extremal-main}
Let \(M>0\) and \(a^2+Q^2=M^2\), and let \(g_{M,a,Q}\) be the extremal Kerr-Newman metric on the exterior \(r>M\).  Consider the neutral axisymmetric conformal scalar equation
\[
\Box_{g_{M,a,Q}}u=0,
\qquad \partial_\phi u=0,
\]
with smooth future-horizon regular data.  Then the future horizon \(\mathcal H^+=\{r=M\}\) carries the conserved charge
\begin{equation}
\mathfrak A_0[u](v)
:=\int_0^\pi
\Bigl(2(M^2+a^2)\partial_r u+2M u+a^2\sin^2\theta\,\partial_v u\Bigr)(v,M,\theta)
\sin\theta\,\dd\theta,
\label{eq:main-extremal-charge}
\end{equation}
and \(\frac{d}{dv}\mathfrak A_0[u](v)=0\).  Consequently the subextremal black-hole package \(\textbf{BH1}-\textbf{BH9}\), with its nondegenerate red-shift horizon estimate, does not extend to the extremal parameter surface.  If a solution has \(\mathfrak A_0[u]\ne0\) and the tangential horizon quantities \(u\) and \(\partial_vu\) decay along \(\mathcal H^+\), then the averaged transversal derivative \(\partial_ru\) on \(\mathcal H^+\) cannot decay.  Thus the correct extremal conclusion is an Aretakis-type horizon obstruction, not a subextremal-style nondegenerate decay theorem.

In the decaying radiation energy space at null infinity, there is nevertheless no nonzero smooth stationary zero-frequency state satisfying both future-horizon regularity and decay at infinity.  Hence the obstruction is dynamical on the degenerate horizon and is not caused by a nondecaying spatially constant threshold state in the decaying asymptotically flat energy space.
\end{theorem}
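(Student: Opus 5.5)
The plan is to derive the charge directly from the wave operator written in ingoing Kerr-Newman coordinates \((v,r,\theta,\tilde\phi)\), which are regular across \(\mathcal H^+\). For axisymmetric \(u\) (so \(\partial_{\tilde\phi}u=0\)) I expect the standard form
\[
\rho^2\Box_g u=\partial_r(\Delta\,\partial_r u)+2(r^2+a^2)\partial_v\partial_r u+2r\,\partial_v u+a^2\sin^2\theta\,\partial_v^2u+\frac{1}{\sin\theta}\partial_\theta(\sin\theta\,\partial_\theta u),
\]
with \(\Delta=r^2-2Mr+a^2+Q^2\). The charge \(Q\) enters only through \(\Delta\), since the scalar is neutral. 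The first step is to verify this expression from the inverse metric in ingoing coordinates. This is a routine but careful computation, and it is the step I would check most carefully: the signs and the coefficient of \(\partial_v u\) determine the precise combination in \eqref{eq:main-extremal-charge}.

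On the extremal surface \(a^2+Q^2=M^2\) we have \(\Delta=(r-M)^2\), so both \(\Delta\) and \(\Delta'\) vanish at \(r=M\). Hence
\[
\partial_r(\Delta\partial_r u)=\Delta\,\partial_r^2u+\Delta'\,\partial_ru
\]
vanishes on \(\mathcal H^+\) for smooth horizon-regular \(u\). Restricting \(\Box_g u=0\) to \(r=M\) then gives
\[
\partial_v\Bigl(2(M^2+a^2)\partial_ru+2Mu+a^2\sin^2\theta\,\partial_vu\Bigr)=-\frac{1}{\sin\theta}\partial_\theta(\sin\theta\,\partial_\theta u).
\]
Integrating against \(\sin\theta\,\dd\theta\) over \([0,\pi]\), the right-hand side is a total derivative. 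Its boundary terms vanish because \(\sin\theta\,\partial_\theta u\to0\) at the axis, by smoothness. This yields \(\frac{d}{dv}\mathfrak A_0[u](v)=0\). The Kerr case \(Q=0\) and the Reissner-Nordstr\"om case \(a=0\) are then immediate specializations.

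For the obstruction I argue by contradiction. If \(u\) and \(\partial_vu\) decay along \(\mathcal H^+\), then the last two terms of the integrand tend to zero after \(\theta\)-integration, so
\[
\int_0^\pi 2(M^2+a^2)\,\partial_ru\,\sin\theta\,\dd\theta\to\mathfrak A_0[u]\ne0 .
\]
Thus the averaged transversal derivative cannot decay. Any nondegenerate red-shift estimate of the type in \textbf{BH}, combined with local decay, would force this average to zero. That is the contradiction, and it shows the subextremal package cannot extend to the extremal surface; note also that the surface gravity \(\Delta'(M)/2(M^2+a^2)\) vanishes there.

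For the zero-frequency statement, a stationary axisymmetric state satisfies
\[
\partial_r\bigl((r-M)^2\partial_ru\bigr)+\frac{1}{\sin\theta}\partial_\theta(\sin\theta\,\partial_\theta u)=0 .
\]
Because \(\partial_v=\partial_t\), the \(a^2\sin^2\theta\) terms drop. Expanding in Legendre polynomials \(P_\ell(\cos\theta)\) reduces this to the Euler equation \(((r-M)^2R')'=\ell(\ell+1)R\), with solutions \((r-M)^{\ell}\) and \((r-M)^{-\ell-1}\). Horizon regularity excludes the second solution. The first solution fails to decay at infinity unless \(\ell=0\), and for \(\ell=0\) it is a constant, which the decaying radiation condition excludes. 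This is Lemma~\ref{lem:extreme-kn-zero-frequency}, so no nonzero threshold state exists.
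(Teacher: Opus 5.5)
Your proposal is correct and matches the paper's proof essentially step for step. The ingoing-chart form of the axisymmetric operator, restricted to the double root $r=M$ (where $\Delta=\Delta'=0$) and integrated over the sphere, gives the charge as in Proposition~\ref{prop:extreme-kn-charge}. The nondecay is the same rearrangement as Corollary~\ref{cor:extreme-kn-obstruction}, and the zero-frequency claim is the same Euler-equation classification as Lemma~\ref{lem:extreme-kn-zero-frequency}.

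For the failure of the package, the paper rests mainly on the fact that \textbf{BH1} requires a simple horizon root with $\kappa_+>0$, which you also note. That is cleaner than your appeal to red-shift plus local decay, because the local decay in \textbf{BH7} is stated on compact sets in the open exterior, not on $\mathcal H^+$ itself.
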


\begin{proof}
In the extremal regime \(a^2+Q^2=M^2\), the radial polynomial has the double root \(\Delta=(r-M)^2\) and the surface gravity is zero.  Proposition~\ref{prop:extreme-kn-charge} evaluates the neutral axisymmetric wave equation on the future horizon and integrates over the horizon sphere to obtain the conserved quantity \eqref{eq:main-extremal-charge}.  Corollary~\ref{cor:extreme-kn-obstruction} shows that, if the tangential horizon quantities decay and the charge is nonzero, the averaged transverse derivative cannot decay; hence a nondegenerate red-shift decay theorem is impossible.  Lemma~\ref{lem:extreme-kn-zero-frequency} proves that this obstruction is not a decaying stationary spatial threshold at infinity, and Proposition~\ref{prop:extreme-not-bh-package} proves that the extremal branch is outside \textbf{BH1}-\textbf{BH9}.  The generic higher-transversal-derivative blow-up assertion is then the restricted external input External Assumption~\ref{ass:aretakis-input}.  Thus the theorem is an obstruction theorem, not a continuation of the subextremal red-shift theorem through \(\kappa_+=0\).
\end{proof}

\subsection{Related works}

The present work is related first to the classical separability theory for Carter metrics.  We refer to Carter's work \cite{Carter} and to the developments by Chandrasekhar, Teukolsky, Whiting, Finster-Kamran-Smoller-Yau and others; see for instance \cite{Chandrasekhar,FinsterKamranSmollerYau,Teukolsky73,Whiting}.  It is also related to modern scalar and gravitational stability theory on black-hole spacetimes: Schwarzschild red-shift/Morawetz theory, subextremal Kerr scalar decay and mode stability, Reissner-Nordstr\"om scalar boundedness, and tensorial Schwarzschild/Kerr stability results \cite{DafermosRodnianskiRedshift,DafermosRodnianskiSchwNote,DafermosRodnianskiKerrI,DafermosRodnianskiShlapentokhRothman,ShlapentokhRothmanMode,FranzenRN,HafnerHintzVasy,AnderssonBlue,DafermosHolzegelRodnianskiSchw}.

For charged and rotating problems, the organization is closest to the work of He \cite{HeKN} on weakly charged slowly rotating Kerr-Newman black holes.  That paper treats the coupled linearized Einstein-Maxwell system in generalized wave and Lorenz gauges, and is therefore much deeper than the scalar problem considered here.  For the full asymptotically flat scalar Kerr-Newman exterior we use the scalar-wave stability and quantitative mode-stability results of Civin \cite{CivinMode,CivinThesis}; the present paper does not reprove the global microlocal trapping analysis of that work, but it states exactly where it enters and then proves the conformal threshold decomposition from it.  This architecture keeps the main theorem, mode analysis, zero-frequency analysis, and final global argument separated cleanly while preserving the scalar, conformal scope of the paper.

The case $k=0$ is also naturally close to the scalar wave equation on Kerr.  At the Kerr corner of the parameter space, the equation obtained here is the massless scalar wave equation.  The usual features of the scalar theory, such as Carter separation, angular Sturm-Liouville theory, radial Wronskians, and the treatment of zero modes, all appear in the present analysis.  The difference is that here the scalar equation is not introduced as a model field.  It is the conformal part of the geometric equation $L_g[h]=0$.

\subsection{Main ideas of the proof}

The proof has a geometric part and a functional-analytic part.  The geometric part identifies the equation that is actually closed.  Since the background is not assumed to satisfy the Einstein equations, the scalar-curvature constraint alone cannot produce a spin-$2$ stability theory.  Nevertheless, the conformal ansatz removes the traceless Ricci defect and reduces the problem, when $k=0$, to the scalar wave equation on the fixed Carter background.

The analytical part is a threshold argument.  On a compact positive-energy slab the conserved energy controls derivatives, but it does not control the constant mode.  We therefore work with the full zero-frequency threshold space
\begin{equation}
\mathcal T_0=\bigcup_{N\ge1}\Ker \mathcal G^N
\end{equation}
of the stationary generator.  Once a commuting projection onto $\mathcal T_0$ is known, the solution decomposes into a polynomial threshold term and an energy-controlled complement.

For the Carter slabs considered in Theorem~\ref{thm:fullmain}, the verification is explicit.  Writing
\begin{equation}
A=-\rho^2g^{tt},\qquad B=\rho^2g^{t\phi},\qquad \Phi=\rho^2g^{\phi\phi},
\end{equation}
the full $k=0$ equation becomes
\begin{equation}
-Au_{tt}+2B u_{t\phi}+\Phi u_{\phi\phi}+\partial_r(\Delta_r u_r)+\partial_x(\Delta_xu_x)=0.
\end{equation}
The assumptions $A>0$ and $\Phi>0$ make the Killing energy positive; the mixed term is skew after integration by parts in the periodic $\phi$ variable.  The $A$-weighted average gives the threshold projection, and a weighted Poincar\'e inequality gives the missing $L^2$ control on its complement.

The axisymmetric refinement is included because it reveals the zero-frequency structure in a sharper form.  When $\partial_\phi u=0$, the gyroscopic term disappears and the spatial operator is a nonnegative self-adjoint operator on $L_A^2$.  Its kernel is generated by constants, the generalized zero modes are precisely $1$ and $t$, and the singular part of the resolvent at zero can be written down explicitly.

The black-hole extension uses the same threshold philosophy in a noncompact, noncoercive setting.  Red-shift estimates replace positivity at the horizon, frequency-localized mode stability and limiting absorption replace compact spectral theory, and the zero-frequency expansion supplies the finite-rank projection.  After these inputs are supplied by Appendix~\ref{app:all-inputs}, the solution splits into a polynomial threshold component and a dispersive component with local energy decay.

\subsection{Outline}

The paper is organized as follows.  In Section~\ref{sec:geometry} we rewrite the Theorem~1 metric in Carter form and record the inverse metric and determinant.  In Section~\ref{sec:ricci} we compute the traceless Ricci defect.  In Section~\ref{sec:linR} we derive the scalar-curvature linearization, and in Section~\ref{sec:conformal} we show that the conformal sector closes and reduces, for $k=0$, to the massless wave equation.

In Section~\ref{sec:abstractslab} we prove the abstract stationary bounded-slab theorem from Assumption~\ref{ass:abstract}.  Sections~\ref{sec:separation},~\ref{sec:angular}, and~\ref{sec:radial} contain the separated Carter analysis used in the later spectral discussion.  In Section~\ref{sec:strictstationary} we verify the abstract conditions on strictly stationary Carter slabs and prove Theorem~\ref{thm:fullmain}.  In Sections~\ref{sec:energy}-\ref{sec:proofofmain} we impose $\partial_\phi u=0$ and prove the axisymmetric self-adjoint refinement, Theorem~\ref{thm:mainresult}.

The remaining main-body sections separate the exterior consequences from the compact bounded-slab argument.  Section~\ref{sec:nonaxisymmetric} records the exterior notation and explains the black-hole package theorem, Theorem~\ref{thm:axisymmetric-bh}.  Sections~\ref{sec:asymptotics} and~\ref{sec:endpoints} discuss radial asymptotics and endpoint regularity.  Section~\ref{sec:ledger} verifies the package for the main exterior applications: genuine subextremal Kerr, genuine subextremal Reissner-Nordstr\"om, the slowly rotating weakly charged Kerr-Newman wall exterior, and the extremal Kerr/Kerr-Newman/Reissner-Nordstr\"om horizon-charge branches.  It also records the verification needed for the appendix full subextremal Kerr-Newman theorem.  Appendix~\ref{app:schwarzschild-full-kn} contains the Schwarzschild endpoint and the full subextremal Kerr-Newman appendix theorem; Appendix~\ref{app:all-inputs} states the definitive bounded-slab and exterior hypotheses.  Section~\ref{sec:decay-estimates} collects the general and black-hole decay estimates, including the exact distinction between qualitative local decay, local-energy integrability, and optional polynomial rates.  The remaining appendices contain supporting algebraic, functional-analytic, ODE, and dependency-inventory computations used in the main text.

\subsection{List of notations}

We list here the notations used most frequently in this paper.  Unless explicitly stated otherwise, Hilbert spaces are complex Hilbert spaces, sesquilinear forms are linear in the first argument and conjugate-linear in the second, and real-valued solutions are obtained by restricting the complex theory to the conjugation-invariant real subspace.  Energies are the real quadratic forms associated with these sesquilinear forms.

\begin{longtable}{@{}>{\raggedright\arraybackslash}p{2.6cm}>{\raggedright\arraybackslash}p{10.2cm}@{}}
\toprule
Notation & Meaning \\
\midrule
\endfirsthead
\toprule
Notation & Meaning \\
\midrule
\endhead
$\rho^2$ & The Carter factor $r^2+a^2x^2$. \\
$\Delta_r,\Delta_x$ & The radial and angular coefficient functions of the Theorem~1 metric.  In the $k=0$ regime they are quadratic polynomials. \\
$A$ & The positive time coefficient $A:=-\rho^2 g^{tt}$ appearing in the reduced wave equation. \\
$\Phi$ & The azimuthal coefficient $\Phi:=\rho^2 g^{\phi\phi}=\Delta_x^{-1}-a^2\Delta_r^{-1}$; on strictly stationary slabs it is positive. \\
$\Sigma$ & The full periodic spatial slab $S^1_\phi\times\Omega$ used in the non-axisymmetric bounded-slab theorem. \\
$\mathcal X$ & Abstract energy space of Cauchy data in the stationary bounded-slab theorem. \\
$\mathcal G$ & Generator of the stationary evolution group $e^{t\mathcal G}$ on $\mathcal X$. \\
$\mathcal T_0$ & Full zero-frequency threshold space $\bigcup_{N\geq1}\Ker \mathcal G^N$. \\
$\Pi_{\mathrm{thr}}$ & Bounded commuting projection onto $\mathcal T_0$ in the abstract theorem. \\
$L_g[h]$ & The scalar-curvature linearization $\del R(g)\cdot h$. \\
$\delta$ & The non-Einstein defect $C_3-a^2C_5$. \\
$\Box_g$ & The scalar wave operator for the background metric $g$. \\
$\Hh_0$ & The differential operator $-\partial_r(\Delta_r\partial_r)-\partial_x(\Delta_x\partial_x)$ in the axisymmetric $k=0$ equation. \\
$L_0$ & The weighted spatial operator $A^{-1}\Hh_0$, realized as a nonnegative self-adjoint operator on $L_A^2(\Omega)$. \\
$L_A^2(\Omega)$ & Weighted Hilbert space with inner product $\langle f,g\rangle_A=\int_\Omega Afg\,\dd r\,\dd x$. \\
$\Pi_0$ & The $L_A^2$-orthogonal projection onto the constant function $1$. \\
$q_0[u]$ & The axisymmetric quadratic form $\int_\Omega (\Delta_r|u_r|^2+\Delta_x|u_x|^2)\,\dd r\,\dd x$. \\
$\Omega$ & A bounded regular timelike slab in the $(r,x)$-plane, not to be confused with the frequency parameter in separated modes. \\
$\sigma,\Omega$ & Frequency parameters in separated solutions $e^{-\ii\sigma t+\ii m\phi}$.  When the notation would clash with the spatial slab, we use $\sigma$ for the time frequency. \\
\bottomrule
\end{longtable}

\section{Theorem~1 metric in Carter form}\label{sec:geometry}

\subsection{Background metric and parameterization}

Throughout the paper we work with the constant-scalar-curvature family constructed in Theorem~1 of \cite{AssafariGunara}.  The metric is stationary and axisymmetric and can be written in Boyer-Lindquist-type coordinates $(t,\phi,r,\theta)$.  Following the original note, we pass to the variable
\begin{equation}
x:=\cos\theta,
\qquad -1\leq x\leq 1,
\end{equation}
and write
\begin{equation}
\rho^2 := r^2+a^2x^2.
\end{equation}
The radial and angular structure are encoded by quartics
\begin{align}
\Delta_r(r)&:= -\frac{k}{12}r^4 + \Bigl(1-\frac{\Lambda a^2}{3}+\frac{C_1}{2}\Bigr)r^2 + (C_2-2M)r + (a^2+C_3),\label{eq:Deltar}\\
\Delta_x(x)&:= -\frac{k a^2}{12}x^4 + \Bigl(\frac{\Lambda a^2}{3}-1-\frac{C_1}{2}\Bigr)x^2 - C_4x + (1+C_5).\label{eq:Deltax}
\end{align}
The background metric is then presented in the Carter-type form
\begin{align}
\dd s^2&=-\frac{\Delta_r}{\rho^2}\bigl(\dd t-a(1-x^2)\dd\phi\bigr)^2 +\frac{\Delta_x}{\rho^2}\bigl((r^2+a^2)\dd\phi-a\dd t\bigr)^2 +\frac{\rho^2}{\Delta_r}\dd r^2 +\frac{\rho^2}{\Delta_x}\dd x^2.\label{eq:cartermetric}
\end{align}
This form is convenient both for separation of variables and for the conformal reduction below.

We make one comment about the form of the metric.  The original paper is written in the original variable $\theta$, and depending on how one packages the factors of $\sin^2\theta$ there are several equivalent ways to rewrite the angular part.  For the local analysis undertaken here, the precise global axis regularity is less important than the existence of a coordinate patch on which the coefficients are smooth and the metric remains Lorentzian.  We will therefore work with the local form \eqref{eq:cartermetric}.  Whenever regularity at $x=\pm1$ is needed, the corresponding assumption will be stated explicitly.

\begin{definition}[Regular exterior patch]
A \emph{regular exterior patch} is an open set
\begin{equation}
\D\subset \{(t,\phi,r,x): \rho^2>0,\ \Delta_r>0,\ \Delta_x>0\}
\end{equation}
on which the coefficients of \eqref{eq:cartermetric} are smooth and the metric has Lorentzian signature.  When the parameter choice imposes additional axis regularity, we allow $x=\pm1$ as genuine angular endpoints; otherwise they are treated merely as smooth boundary values of the local patch.
\end{definition}

The conditions $\Delta_r>0$ and $\Delta_x>0$ are the natural positivity requirements for the radial and transversal coefficients in \eqref{eq:cartermetric}.  The degeneracy of the Carter factor is parameter-dependent: if $a\neq0$, then
\begin{equation}
\rho^2=0\qquad\Longleftrightarrow\qquad r=0,\quad x=0,
\end{equation}
whereas if $a=0$, then $\rho^2=r^2$ and the coordinate singular set is $r=0$.  All slabs used in the proved theorems satisfy the stronger uniform condition $\rho^2\ge \rho_-^2>0$ on the closure, so no argument takes place at this locus.

\subsubsection{Polynomial structure and distinguished parameter combinations}

The two quartics \eqref{eq:Deltar} and \eqref{eq:Deltax} encode essentially all of the geometry visible to the conformal problem.  It is therefore convenient to isolate the coefficients
\begin{align}
\alpha_2&:= 1-\frac{\Lambda a^2}{3}+\frac{C_1}{2},\qquad \alpha_1:=C_2-2M,\qquad \alpha_0:=a^2+C_3,\nonumber\\
\beta_2&:= \frac{\Lambda a^2}{3}-1-\frac{C_1}{2},\qquad \beta_1:=-C_4,\qquad \beta_0:=1+C_5.
\end{align}
Then
\begin{align}
\Delta_r(r)&=-\frac{k}{12}r^4 + \alpha_2 r^2 + \alpha_1 r + \alpha_0\nonumber\\
\Delta_x(x)&=-\frac{k a^2}{12}x^4 + \beta_2 x^2 + \beta_1 x + \beta_0.
\end{align}
This form makes several structural facts clear.  The scalar curvature $k$ controls the quartic growth both radially and transversally, the cosmological parameter $\Lambda$ enters only through quadratic coefficients, and the five constants $C_1,\dots,C_5$ deform the background within the constant-scalar-curvature family.

In the sequel one distinguished parameter combination will recur so often that it deserves a permanent symbol:
\begin{equation}
\delta := C_3-a^2C_5.
\label{eq:deltadef}
\end{equation}
The role of $\delta$ will be important below.  In Section~\ref{sec:ricci} we compute the traceless Ricci tensor directly and prove that the background is geometrically Einstein precisely when $\delta=0$; if one further insists that the Einstein constant coincide with the parameter $\Lambda$ in \cite{AssafariGunara}, this is equivalent to the stronger relation $\delta=0$ together with $k=4\Lambda$.

For the analysis in this paper, one should distinguish the condition $k=0$ from the size of $\delta$.  The condition $k=0$ removes the mass term from the conformal equation and pushes the entire analysis toward the zero-frequency threshold.  The parameter $\delta$, by contrast, is invisible in the pure conformal reduction but reappears as soon as one discusses the full scalar-curvature operator on a general symmetric tensor or any hypothetical tensorial completion.

\subsection{\texorpdfstring{The $k=0$ specialization}{The k=0 specialization}}

Some formulae in this section are written for general $k$, since the derivations are no harder in this generality.  All stability theorems in this paper, however, are specialized to $k=0$.  In that regime the coefficient functions simplify to
\begin{align}
\Delta_r(r)&=\alpha_2r^2+\alpha_1r+\alpha_0\nonumber\\
\Delta_x(x)&=\beta_2x^2+\beta_1x+\beta_0\label{eq:k0quadratics}
\end{align}
with $\beta_2=-\alpha_2$.  Thus the large-$r$ and endpoint geometry of the wave operator is governed by quadratics rather than quartics, and the conformal master equation becomes massless.  We keep the general formulas visible because they clarify which algebraic identities are special to $k=0$ and which survive throughout the Theorem~1 family.

\subsubsection{Inverse metric and determinant}

The metric \eqref{eq:cartermetric} is built from a $2\times 2$ $(t,\phi)$ block together with diagonal $(r,x)$ terms.  The inversion of the stationary block is elementary, but we record it because it is used repeatedly in the wave-operator computation.

\begin{proposition}[Inverse metric and determinant]\label{prop:inversemetric}
For the metric \eqref{eq:cartermetric} one has
\begin{equation}
\det g = -\rho^4,
\qquad
\sqrt{-g}=\rho^2.
\label{eq:determinant}
\end{equation}
Moreover,
\begin{align}
\rho^2 g^{rr}&= \Delta_r,\qquad \rho^2 g^{xx}=\Delta_x,\label{eq:grr-gxx}\\
\rho^2 g^{tt}&= -\frac{(r^2+a^2)^2}{\Delta_r} + \frac{a^2(1-x^2)^2}{\Delta_x},\label{eq:gtt-formula}\\
\rho^2 g^{t\phi}&= -\frac{a(r^2+a^2)}{\Delta_r} + \frac{a(1-x^2)}{\Delta_x},\label{eq:gtphi-formula}\\
\rho^2 g^{\phi\phi}&= -\frac{a^2}{\Delta_r} + \frac{1}{\Delta_x}.\label{eq:gphiphi-formula}
\end{align}
\end{proposition}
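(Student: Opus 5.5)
The plan is to exploit the block structure of \eqref{eq:cartermetric}: in the coordinates $(t,\phi,r,x)$ the metric splits as a $2\times2$ block in $(t,\phi)$ plus the diagonal entries $g_{rr}=\rho^2/\Delta_r$ and $g_{xx}=\rho^2/\Delta_x$, with no cross terms between the two. The determinant is therefore the product of the $(t,\phi)$-block determinant with $\rho^4/(\Delta_r\Delta_x)$. The inverse is block-diagonal too, which gives \eqref{eq:grr-gxx} immediately: $g^{rr}=\Delta_r/\rho^2$ and $g^{xx}=\Delta_x/\rho^2$.

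For the $(t,\phi)$ block I will write the metric as $-\frac{\Delta_r}{\rho^2}\omega_1^2+\frac{\Delta_x}{\rho^2}\omega_2^2$, with the one-forms
\[
\omega_1=\dd t-a(1-x^2)\dd\phi,\qquad \omega_2=-a\,\dd t+(r^2+a^2)\dd\phi .
\]
The change of coframe $(\dd t,\dd\phi)\mapsto(\omega_1,\omega_2)$ has matrix
\[
P=\begin{pmatrix}1&-a(1-x^2)\\-a&r^2+a^2\end{pmatrix},
\]
with $\det P=r^2+a^2-a^2(1-x^2)=r^2+a^2x^2=\rho^2$. This identity is the key step. The block equals $P^T D P$ with $D=\operatorname{diag}(-\Delta_r/\rho^2,\Delta_x/\rho^2)$. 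Hence its determinant is
\[
-\frac{\Delta_r\Delta_x}{\rho^4}\,\rho^4=-\Delta_r\Delta_x .
\]
Multiplying by $\rho^4/(\Delta_r\Delta_x)$ gives $\det g=-\rho^4$, and since $\rho^2>0$ we get $\sqrt{-g}=\rho^2$.

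For the inverse of the block I will use $(P^TDP)^{-1}=P^{-1}D^{-1}P^{-T}$, where
\[
P^{-1}=\rho^{-2}\begin{pmatrix}r^2+a^2&a(1-x^2)\\a&1\end{pmatrix}.
\]
Equivalently, I will write the inverse metric as $-\frac{\rho^2}{\Delta_r}e_1\otimes e_1+\frac{\rho^2}{\Delta_x}e_2\otimes e_2$, where $(e_1,e_2)$ is the frame dual to $(\omega_1,\omega_2)$, namely
\[
e_1=\rho^{-2}\bigl((r^2+a^2)\partial_t+a\partial_\phi\bigr),\qquad e_2=\rho^{-2}\bigl(a(1-x^2)\partial_t+\partial_\phi\bigr).
\]
Duality is checked directly: for instance $\omega_1(e_1)=\rho^{-2}\bigl(r^2+a^2-a^2(1-x^2)\bigr)=1$ and $\omega_2(e_1)=\rho^{-2}\bigl(-a(r^2+a^2)+a(r^2+a^2)\bigr)=0$, and similarly for $e_2$.

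Expanding then gives $\rho^2g^{tt}=-(r^2+a^2)^2/\Delta_r+a^2(1-x^2)^2/\Delta_x$. The mixed component is $\rho^2g^{t\phi}=-a(r^2+a^2)/\Delta_r+a(1-x^2)/\Delta_x$, and the azimuthal one is $\rho^2g^{\phi\phi}=-a^2/\Delta_r+1/\Delta_x$. These are exactly \eqref{eq:gtt-formula}--\eqref{eq:gphiphi-formula}.

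The only real obstacle is keeping signs and the placement of the factors $\rho^{\pm2}$ consistent. Using the dual-frame expression rather than a brute-force $2\times2$ cofactor inversion avoids most of that bookkeeping.
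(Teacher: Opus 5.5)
Your proof is correct, and it takes a somewhat different route from the paper's. The paper expands the covariant block entries, such as $g_{tt}=(a^2\Delta_x-\Delta_r)/\rho^2$, $g_{t\phi}$ and $g_{\phi\phi}$. It then states that a direct computation gives $g_{tt}g_{\phi\phi}-g_{t\phi}^2=-\Delta_r\Delta_x$, and inverts the $2\times2$ block with the cofactor formula.

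Your computations all check out: $\det P=\rho^2$, $G=P^{T}DP$ with $D=\operatorname{diag}(-\Delta_r/\rho^2,\Delta_x/\rho^2)$, the formula for $P^{-1}$, the dual frame, and the three resulting contravariant components, which agree with \eqref{eq:gtt-formula}--\eqref{eq:gphiphi-formula}.

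Your coframe factorization explains the block determinant rather than just verifying it: it equals $\det D\,(\det P)^2$, with $\det P=r^2+a^2x^2$. It also delivers the inverse already split into a pure $\Delta_r^{-1}$ term and a pure $\Delta_x^{-1}$ term, which is the structure behind the later separation of variables, and it avoids the sign bookkeeping of a cofactor inversion.

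What the paper's route gives in return is the explicit list of covariant components, which it reuses right away, for instance $-\rho^2g_{tt}=\Delta_r-a^2\Delta_x$ in Lemma~\ref{lem:blockidentities}. In your framework these are just the entries of $P^{T}DP$, so nothing is lost. Your argument implicitly uses $\rho^2\neq0$ (to invert $P$) and $\Delta_r\Delta_x\neq0$ (to invert $D$); both hold on the regular patch where the metric is defined.
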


\begin{proof}
The $(r,x)$ part is diagonal, so only the $(t,\phi)$ block needs inversion.  Expanding \eqref{eq:cartermetric}, one finds
\begin{align}
g_{tt}&= \frac{a^2\Delta_x-\Delta_r}{\rho^2},\nonumber\\
g_{t\phi}&= \frac{a(1-x^2)\Delta_r-a(r^2+a^2)\Delta_x}{\rho^2},\nonumber\\
g_{\phi\phi}&= \frac{(r^2+a^2)^2\Delta_x-a^2(1-x^2)^2\Delta_r}{\rho^2}.
\end{align}
A direct computation shows that the determinant of this block is
\begin{equation}
g_{tt}g_{\phi\phi}-g_{t\phi}^2 = -\Delta_r\Delta_x.
\end{equation}
Inverting the $2\times 2$ matrix then yields \eqref{eq:gtt-formula}-\eqref{eq:gphiphi-formula}.  Multiplying by the diagonal radial and angular coefficients gives
\begin{equation}
\det g = \frac{\rho^2}{\Delta_r}\,\frac{\rho^2}{\Delta_x}\,(-\Delta_r\Delta_x) = -\rho^4,
\end{equation}
which proves \eqref{eq:determinant}.
\end{proof}

\subsubsection{Stationary block identities used in the energy proof}\label{subsec:stationary-block-identities}

The positivity conditions in the main theorem are not cosmetic.  They are exactly the conditions that turn the time-translation current into a positive energy.  We record the elementary block algebra in a form that can be checked independently of any separated-mode argument.

\begin{lemma}[Stationary block and strict-stationarity identities]\label{lem:blockidentities}
Assume \(\Delta_r>0\) and \(\Delta_x>0\).  Define
\begin{equation}
A:=-\rho^2g^{tt},\qquad B:=\rho^2g^{t\phi},\qquad \Phi:=\rho^2g^{\phi\phi}.
\end{equation}
Then
\begin{align}
A&=\frac{(r^2+a^2)^2}{\Delta_r}-\frac{a^2(1-x^2)^2}{\Delta_x},\label{eq:A-block-expanded}\\
B&=-\frac{a(r^2+a^2)}{\Delta_r}+\frac{a(1-x^2)}{\Delta_x},\label{eq:B-block-expanded}\\
\Phi&=\frac{1}{\Delta_x}-\frac{a^2}{\Delta_r}\nonumber\\
&=\frac{\Delta_r-a^2\Delta_x}{\Delta_r\Delta_x}\nonumber\\
&=\frac{-\rho^2g_{tt}}{\Delta_r\Delta_x}.\label{eq:Phi-block-expanded}
\end{align}
Consequently \(A>0\) is equivalent to \(g^{tt}<0\), while \(\Phi>0\) is equivalent to \(g_{tt}<0\).  In particular, under \(A>0\) and \(\Phi>0\), the covector \(\dd t\) is timelike and the Killing field \(\partial_t\) is timelike.
\end{lemma}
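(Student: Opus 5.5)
The plan is to read off \eqref{eq:A-block-expanded}--\eqref{eq:Phi-block-expanded} directly from Proposition~\ref{prop:inversemetric}, and then obtain the equivalences from the block determinant identity \(g_{tt}g_{\phi\phi}-g_{t\phi}^2=-\Delta_r\Delta_x\) established in its proof.

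\textbf{The formulas for \(A,B,\Phi\).} Multiplying \eqref{eq:gtt-formula} by \(-1\) gives \eqref{eq:A-block-expanded}. Formula \eqref{eq:B-block-expanded} is \eqref{eq:gtphi-formula} verbatim. For \(\Phi\), \eqref{eq:gphiphi-formula} gives the first expression, and putting it over a common denominator gives the second. For the third, recall from the proof of Proposition~\ref{prop:inversemetric} that
\[
\rho^2g_{tt}=a^2\Delta_x-\Delta_r,
\]
so \(\Delta_r-a^2\Delta_x=-\rho^2g_{tt}\), which is the last equality.

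\textbf{The equivalences.} Since \(\rho^2>0\) on the patch, \(A>0\) is equivalent to \(g^{tt}<0\) by the definition of \(A\). Since \(\Delta_r\Delta_x>0\), the last expression for \(\Phi\) shows that \(\Phi>0\) is equivalent to \(g_{tt}<0\).

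\textbf{Causal character.} The covector \(\dd t\) has squared norm \(g^{-1}(\dd t,\dd t)=g^{tt}<0\), so it is timelike when \(A>0\). The vector \(\partial_t\) has squared norm \(g(\partial_t,\partial_t)=g_{tt}<0\), so it is timelike when \(\Phi>0\).

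One consistency remark is worth recording, though it is not needed for the equivalences. The \((t,\phi)\) block has negative determinant \(-\Delta_r\Delta_x\), hence Lorentzian signature \((-,+)\). Cramer's rule then gives \(g^{tt}=g_{\phi\phi}/(g_{tt}g_{\phi\phi}-g_{t\phi}^2)\). Together with \(\rho^2g^{\phi\phi}=-\rho^2g_{tt}/(\Delta_r\Delta_x)\), this matches the stated formula for \(\Phi\), so the signs are coherent.

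The only real obstacle is the sign and factor bookkeeping in the third expression for \(\Phi\). That expression requires the explicit component \(g_{tt}=(a^2\Delta_x-\Delta_r)/\rho^2\), which I would recheck by expanding \eqref{eq:cartermetric}: the coefficient of \(\dd t^2\) is \(-\Delta_r/\rho^2+a^2\Delta_x/\rho^2\).
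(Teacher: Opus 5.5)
Your proposal is correct and follows the paper's proof essentially step for step. You read $A$ and $B$ off the inverse-metric formulas and get the third expression for $\Phi$ from $g_{tt}=(a^2\Delta_x-\Delta_r)/\rho^2$; the sign equivalences and causal statements then follow from $\rho^2>0$ and $\Delta_r\Delta_x>0$, exactly as in the paper. Your optional Cramer's-rule check is also fine, except that the entry giving the match for $\Phi$ is $g^{\phi\phi}=g_{tt}/(g_{tt}g_{\phi\phi}-g_{t\phi}^2)$, not the $g^{tt}$ entry you quote.
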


\begin{proof}
The first two identities are just the definitions of \(A\) and \(B\) together with \eqref{eq:gtt-formula} and \eqref{eq:gtphi-formula}.  For \(\Phi\), \eqref{eq:gphiphi-formula} gives
\begin{equation}
\Phi=\frac{1}{\Delta_x}-\frac{a^2}{\Delta_r}
     =\frac{\Delta_r-a^2\Delta_x}{\Delta_r\Delta_x}.
\end{equation}
On the other hand, the expanded covariant coefficient is
\begin{equation}
g_{tt}=\frac{a^2\Delta_x-\Delta_r}{\rho^2},
\end{equation}
so
\begin{equation}
-\rho^2g_{tt}=\Delta_r-a^2\Delta_x.
\end{equation}
Substitution proves \eqref{eq:Phi-block-expanded}.  Since \(\rho^2>0\), the sign of \(A=-\rho^2g^{tt}\) is the opposite of the sign of \(g^{tt}\).  Since \(\Delta_r\Delta_x>0\), the sign of \(\Phi\) is the opposite of the sign of \(g_{tt}\).  With the Lorentzian sign convention used in \eqref{eq:cartermetric}, \(g^{tt}<0\) says that the normal covector to \(t=\mathrm{const}\) slices is timelike, and \(g_{tt}<0\) says that \(\partial_t\) is timelike.  This proves the lemma.
\end{proof}

The mixed coefficient \(B\) does not enter the positive part of the energy.  Its role is gyroscopic: because \(B\) is independent of \(\phi\), the corresponding first-order-in-time term is skew after integration over the periodic \(\phi\)-circle.  This observation is the core reason that the full non-axisymmetric bounded-slab theorem does not require separation in \(\phi\).

The determinant identity is one of the most useful features of the Carter structure.  It implies that scalar wave operators simplify dramatically:
\begin{equation}
\Box_g u = \frac{1}{\rho^2}\partial_\mu\bigl(\rho^2 g^{\mu\nu}\partial_\nu u\bigr),
\end{equation}
so that the radial and angular derivatives appear directly through $\Delta_r$ and $\Delta_x$.  This is the algebraic reason for the separation of variables discussed later.

\subsubsection{Causal structure inside a regular patch}

The sign of $g^{tt}$ controls the causal character of the stationary Killing field $\partial_t$ and therefore plays a direct role in energy estimates.  Because $\Delta_r>0$ and $\Delta_x>0$ on a regular exterior patch, it is natural to introduce
\begin{align}
A(r,x)&:=-\rho^2 g^{tt} = \frac{(r^2+a^2)^2}{\Delta_r} - \frac{a^2(1-x^2)^2}{\Delta_x}.\label{eq:Adef-intro}
\end{align}
The quantity $A$ appears as the coefficient of $\partial_t^2u$ in the reduced wave equation.  On the bounded slabs relevant to this paper it will always be imposed as an explicit positivity hypothesis.

\begin{remark}
For the full $\phi$-dependent problem the sign of $g^{tt}$ by itself is not enough.  One must also track
\begin{equation}
\Phi:=\rho^2g^{\phi\phi}=\frac{-\rho^2 g_{tt}}{\Delta_r\Delta_x}.
\end{equation}
When both $A>0$ and $\Phi>0$, equivalently when the slices $t=\mathrm{const}$ are spacelike and the stationary Killing field $\partial_t$ remains timelike, the mixed term $2g^{t\phi}\partial_t\partial_\phi$ is harmless in the conserved-energy identity and the full non-axisymmetric equation is controlled by a positive energy.  If $\Phi$ changes sign, the simple bounded-slab positivity argument breaks down and superradiant phenomena may occur; see Section~\ref{sec:nonaxisymmetric}.
\end{remark}

\subsubsection{Special subfamilies}

Before starting the linear analysis, we record several useful limits of the family.
\begin{itemize}
  \item If $k=0$, $\Lambda=0$, and $C_1=\cdots=C_5=0$, then
  \begin{equation}
\Delta_r=r^2-2Mr+a^2,
  \qquad
  \Delta_x=1-x^2,
\end{equation}
  and one recovers the scalar wave geometry of the standard Kerr family written in the variable $x=\cos\theta$.
  \item If $a=0$, the background reduces to a static constant-scalar-curvature family and the metric becomes spherically symmetric in the sense that the stationary/axial mixing disappears.  Several parts of the reduced theory become completely explicit in that limit.
  \item If $\delta=0$, the traceless Ricci tensor vanishes and the metric is Einstein with Einstein constant $k/4$.  If in addition $k=4\Lambda$, then this Einstein constant coincides with the parameter $\Lambda$ in \cite{AssafariGunara}, so the family lands inside the usual Kerr-(A)dS-type Einstein subfamily.
\end{itemize}
These checks show that the Theorem~1 metrics may be regarded as deformations of the standard Carter geometries.

\section{Ricci decomposition and the Einstein defect}\label{sec:ricci}

\subsection{Constant scalar curvature versus the Einstein condition}

The constant-scalar-curvature property can be verified directly from the Carter form itself.

\begin{proposition}[Scalar curvature of the Carter family]\label{prop:scalarcurvature}
For the metric \eqref{eq:cartermetric} with arbitrary smooth coefficient functions $\Delta_r(r)$ and $\Delta_x(x)$, one has
\begin{equation}
R(g)=-\frac{\Delta_r''(r)+\Delta_x''(x)}{\rho^2}.
\label{eq:scalarcurvatureformula}
\end{equation}
In particular, for the polynomial choices \eqref{eq:Deltar}-\eqref{eq:Deltax},
\begin{equation}
R(g)=k.
\end{equation}
\end{proposition}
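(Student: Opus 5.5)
The plan is to compute $R(g)$ from the conformal structure of the Carter form rather than by brute-force Christoffel symbols. Write $g=\rho^2\hat g$, where
\[
\hat g=-\frac{\Delta_r}{\rho^4}\bigl(\dd t-a(1-x^2)\dd\phi\bigr)^2+\frac{\Delta_x}{\rho^4}\bigl((r^2+a^2)\dd\phi-a\dd t\bigr)^2+\frac{\dd r^2}{\Delta_r}+\frac{\dd x^2}{\Delta_x}.
\]
This route is awkward because of the $\rho^{-4}$ factors. I would therefore instead use an orthonormal coframe and Cartan's structure equations, which is the standard route for Carter metrics (Plebański–Demiański type):
\[
e^0=\sqrt{\Delta_r/\rho^2}\,(\dd t-a(1-x^2)\dd\phi),\qquad e^1=\sqrt{\rho^2/\Delta_r}\,\dd r,
\]
\[
e^2=\sqrt{\rho^2/\Delta_x}\,\dd x,\qquad e^3=\sqrt{\Delta_x/\rho^2}\,((r^2+a^2)\dd\phi-a\dd t).
\]
Proposition~\ref{prop:inversemetric} confirms that this frame is consistent, since $\det g=-\rho^4$.

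The steps, in order, are as follows. First, compute $\dd e^a$, using $\dd(\rho^2)=2r\,\dd r+2a^2x\,\dd x$ and the identity
\[
\dd t-a(1-x^2)\dd\phi\ \text{and}\ (r^2+a^2)\dd\phi-a\dd t\ \text{span}\ \{\dd t,\dd\phi\}\ \text{with determinant}\ \rho^2 .
\]
Second, solve for the connection one-forms $\omega^a{}_b$. Third, compute the curvature two-forms and contract to get $R$.

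Linearity gives a useful shortcut. The scalar curvature is a sum of terms that depend on $\Delta_r$ and its derivatives (with $\Delta_x$ entering only through the frame), and symmetrically for $\Delta_x$. A known structural fact for this ansatz is that $\rho^2 R$ is \emph{linear} in $(\Delta_r,\Delta_x)$: all $\rho$-derivative terms cancel, and only $-\Delta_r''-\Delta_x''$ survives. I would establish this by writing
\[
\rho^2R=\mathcal L_r[\Delta_r]+\mathcal L_x[\Delta_x],
\]
where $\mathcal L_r,\mathcal L_x$ are second-order linear differential operators with coefficients polynomial in $r,x,a$. These are read off from the Cartan computation, or equivalently from the trace of $\Box_g$-type identities.

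Linearity then lets me determine the operators by testing on monomials. Taking $\Delta_r=r^n$ and $\Delta_x=0$ formally (the algebraic identity is polynomial, so nondegeneracy is irrelevant for the coefficient identity), one checks $\mathcal L_r[r^n]=-n(n-1)r^{n-2}$ for $n=0,\dots,4$ against known cases. Flat space ($\Delta_r=r^2+a^2$, $\Delta_x=1-x^2$, giving $R=0$) and Kerr–de Sitter (quartic terms with $R=4\Lambda$) serve as checks, but the actual proof is the direct Cartan computation showing that the coefficients of $\Delta_r,\Delta_r'$ in $\mathcal L_r$ vanish.

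For the final step, substitute \eqref{eq:Deltar}–\eqref{eq:Deltax}:
\[
\Delta_r''=-kr^2+2\alpha_2,\qquad \Delta_x''=-ka^2x^2+2\beta_2.
\]
Since $\beta_2=-\alpha_2$ (from the definitions $\alpha_2=1-\Lambda a^2/3+C_1/2$ and $\beta_2=-\alpha_2$), the sum is
\[
-k(r^2+a^2x^2)=-k\rho^2,
\]
so $R=k$.

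The main obstacle is the bookkeeping in the curvature computation, specifically verifying that the first-derivative terms $r\Delta_r'/\rho^2$ and $a^2x\Delta_x'/\rho^2$, together with the zeroth-order $\Delta$ terms, cancel exactly. I would organize this by computing $R=-2\rho^{-2}\cdots$ through the formula $R=2\sum_{a<b}\epsilon K_{ab}$ over sectional curvatures in the frame, grouping terms by $\Delta_r$ and $\Delta_x$ separately.
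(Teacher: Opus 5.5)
Your argument is correct, but it takes a different route from the paper. The paper works in coordinates. It inserts the inverse metric of Proposition~\ref{prop:inversemetric} into the Christoffel/Ricci formulas, clears the common denominator, and reduces the claim to the vanishing of the numerator $\rho^2R+\Delta_r''+\Delta_x''$ in a localized polynomial ring (the certificate of Appendix~\ref{app:curvature}). You instead use the orthonormal coframe and Cartan's structure equations, then finish with the same substitution $\Delta_r''+\Delta_x''=-k\rho^2$ via $\beta_2=-\alpha_2$.

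What each route buys:
\begin{itemize}
\item \emph{Your frame route.} It is shorter: a handful of connection one-forms instead of dozens of Christoffel symbols. It is also where the linearity of $\rho^2R$ in $(\Delta_r,\Delta_x)$ is easiest to track, since each function enters only through $\sqrt{\Delta_r}$ or $\sqrt{\Delta_x}$ and the $(\Delta')^2/\Delta$ terms must cancel.
\item \emph{The paper's coordinate route.} It is sign-independent and mechanically checkable: it is a rational identity wherever $\rho^2\Delta_r\Delta_x\ne0$. Your real frame, by contrast, needs $\Delta_r,\Delta_x>0$.
\end{itemize}

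To cover arbitrary smooth coefficients, for instance regions with $\Delta_r<0$, add one remark. The value of $R$ at a point is a rational function of $(r,x)$ and the $2$-jets of $\Delta_r,\Delta_x$. An identity of such rational functions proved on the open set $\Delta_r,\Delta_x>0$ therefore holds wherever it is defined.

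The monomial-testing shortcut adds nothing independent, for three reasons:
\begin{itemize}
\item Setting $\Delta_x=0$ is meaningless in a frame containing $\Delta_x^{-1/2}$ until linearity has already been extracted from the full computation.
\item The a priori coefficients of $\mathcal L_r,\mathcal L_x$ are rational in $\rho^2$, not polynomial.
\item The flat and Kerr--de Sitter checks give only a few scalar relations, such as $\mathcal L_r[r]=0$ and $\mathcal L_r[r^2+a^2]+\mathcal L_x[1-x^2]=0$. These are not enough to determine the six coefficient functions of $\mathcal L_r$ and $\mathcal L_x$.
\end{itemize}

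So, as you say yourself, the actual proof is the direct Cartan computation showing that the $\Delta_r,\Delta_r',\Delta_x,\Delta_x'$ coefficients vanish. That computation is the whole content and must actually be carried out.
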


\begin{proof}
Appendix~\ref{app:curvature} carries out the coordinate Ricci computation directly from Proposition~\ref{prop:inversemetric}.  Substituting \eqref{eq:Deltar}-\eqref{eq:Deltax} into \eqref{eq:scalarcurvatureformula}, one finds
\begin{equation}
\Delta_r''(r)=-kr^2+2\alpha_2,
\qquad
\Delta_x''(x)=-ka^2x^2+2\beta_2.
\end{equation}
Because $\alpha_2+\beta_2=0$, the numerator equals $-k(r^2+a^2x^2)=-k\rho^2$, and hence $R(g)=k$.
\end{proof}

At first sight one might expect constant scalar curvature to imply an Einstein condition after a suitable reparameterization, but this is not the case.  In four dimensions the Einstein property would require
\begin{equation}
R_{\mu\nu}=\frac{k}{4}g_{\mu\nu},
\end{equation}
whereas Theorem~1 allows a larger family in which the traceless Ricci tensor need not vanish.  This distinction is fundamental for the discussion of linear stability.  The conformal reduction depends only on the scalar curvature and therefore survives throughout the family, while any attempt at a full tensorial theory must confront the traceless Ricci part explicitly.

To isolate that part, we define
\begin{equation}
S_{\mu\nu} := R_{\mu\nu}-\frac{k}{4}g_{\mu\nu}.
\label{eq:Sdef}
\end{equation}
By construction $S_{\mu\nu}$ is trace free.  The main structural statement is that all of its components are proportional to the single parameter combination $\delta=C_3-a^2C_5$ introduced in \eqref{eq:deltadef}.  This fact is implicit in the original computation, but we record it because it reduces the non-Einstein defect to a one-parameter direction in the Ricci sector.

\begin{proposition}[Ricci decomposition]\label{prop:riccidecomp}
For every Theorem~1 background one has
\begin{equation}
R_{\mu\nu}=\frac{k}{4}g_{\mu\nu}+S_{\mu\nu},
\qquad
S_{\mu}^{\ \mu}=0,
\label{eq:riccidecomp}
\end{equation}
and, in mixed form $S^\mu{}_\nu=g^{\mu\alpha}S_{\alpha\nu}$, the only nonzero components are
\begin{align}
S^t{}_t&= \delta\,\frac{a^2x^2-r^2-2a^2}{\rho^6},\qquad
S^t{}_{\phi}=2a\delta\,\frac{(r^2+a^2)(1-x^2)}{\rho^6},\nonumber\\
S^{\phi}{}_t&= -2a\delta\,\frac{1}{\rho^6},\qquad
S^{\phi}{}_{\phi}=-S^t{}_t,\label{eq:Sdeltafactor-mixed}\\
S^r{}_r&= -\delta\,\frac{1}{\rho^4},\qquad
S^x{}_x=\delta\,\frac{1}{\rho^4}.\nonumber
\end{align}
with all remaining mixed components equal to zero.  In particular there exists a smooth symmetric trace-free tensor field $\mathcal S_{\mu\nu}(r,x)$ on each regular exterior patch such that
\begin{equation}
S_{\mu\nu}=\delta\,\mathcal S_{\mu\nu}.
\label{eq:Sdeltafactor}
\end{equation}
Consequently the metric is geometrically Einstein if and only if $\delta=0$.  If one moreover wants the Einstein constant to coincide with the family parameter $\Lambda$, this is equivalent to $\delta=0$ together with $k=4\Lambda$.
\end{proposition}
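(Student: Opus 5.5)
To compute the Ricci tensor of \eqref{eq:cartermetric} efficiently, I would use the Carter orthonormal coframe rather than raw coordinates:
\[
e^0=\sqrt{\Delta_r/\rho^2}\,\bigl(\dd t-a(1-x^2)\dd\phi\bigr),\quad
e^1=\sqrt{\rho^2/\Delta_r}\,\dd r,\quad
e^2=\sqrt{\rho^2/\Delta_x}\,\dd x,\quad
e^3=\sqrt{\Delta_x/\rho^2}\,\bigl((r^2+a^2)\dd\phi-a\dd t\bigr).
\]
The work then runs through the Cartan structure equations. I would compute $\dd e^i$, read off the connection one-forms, and form the curvature two-forms, keeping $\Delta_r(r)$ and $\Delta_x(x)$ as arbitrary smooth functions throughout. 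The Carter structure guarantees that the frame Ricci tensor is block diagonal, with one block on $\{e^0,e^3\}$ and one on $\{e^1,e^2\}$. It also satisfies $R_{00}=-R_{33}$ up to a scalar shift, which has the form $R_{\hat0\hat0}=-\tfrac12(\text{trace part})+\dots$. For generic Carter forms one gets
\[
R_{\hat0\hat0}+R_{\hat3\hat3}=0,\qquad R_{\hat1\hat1}+R_{\hat2\hat2}\ \text{given by }\Delta''\text{ terms},
\]
and the traceless parts are
\[
R_{\hat1\hat1}-R_{\hat2\hat2}\ \text{and}\ -R_{\hat0\hat0}-R_{\hat1\hat1},
\]
which involve $\Delta_r',\Delta_x'$ together with $r,x,\rho^2$. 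This is the standard Plebański-type computation. Its trace reproduces \eqref{eq:scalarcurvatureformula} from Proposition~\ref{prop:scalarcurvature}, which serves as a consistency check.

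Next I would substitute the quartics \eqref{eq:Deltar}--\eqref{eq:Deltax}, using the coefficients $\alpha_j,\beta_j$ with $\beta_2=-\alpha_2$, and subtract $\tfrac k4\eta_{ab}$. In the Plebański-Demiański literature, the traceless Ricci part of a Carter metric is governed by the combination of polynomial coefficients that obstructs the Einstein condition, namely the mismatch $\alpha_0-a^2\beta_0$ relative to its Kerr value. Here this mismatch is $(a^2+C_3)-a^2(1+C_5)=C_3-a^2C_5=\delta$. Concretely, the $\{e^1,e^2\}$ block should give frame components $S_{\hat1\hat1}=-S_{\hat2\hat2}$ proportional to $\delta/\rho^4$. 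The $\{e^0,e^3\}$ block should likewise give $S_{\hat0\hat0}=S_{\hat3\hat3}$, again proportional to $\delta/\rho^4$ with the opposite sign, so that the result is trace free. In these combinations I expect all the $k$, $\alpha_2$, $\alpha_1$, $\beta_1$ contributions to cancel exactly, since they already cancel in the Einstein Kerr-(A)dS subfamily with arbitrary mass and NUT-type linear terms.

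The step I expect to be the main obstacle is verifying this cancellation, i.e.\ that only $\delta$ survives. I would organize it by writing the frame traceless components as $N(r,x)/\rho^4$, where $N$ is a polynomial in $r$ and $x$. The key point is that $N$ has low degree, and that $\partial_r$ and $\partial_x$ of $N$ vanish identically once the $\Delta$'s are quadratic-plus-$k$-quartic of the given form. That leaves only the constant term $\pm2\delta$, up to normalization.

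Finally, I would convert the frame components to the mixed coordinate form. The radial and $x$ components pass directly, since $S^r{}_r=S_{\hat1\hat1}$ and $S^x{}_x=S_{\hat2\hat2}$. The $(t,\phi)$ block is obtained by conjugating $\mathrm{diag}(\mp)$ with the frame matrix relating $(e^0,e^3)$ to $(\dd t,\dd\phi)$. This produces the entries displayed in \eqref{eq:Sdeltafactor-mixed}, and I would check that their trace $S^t{}_t+S^\phi{}_\phi=0$. Factoring out $\delta$ gives the smooth tensor $\mathcal S_{\mu\nu}$ in \eqref{eq:Sdeltafactor}, since $\rho^2>0$ on a regular patch. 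It follows that $S\equiv0$ if and only if $\delta=0$, because $S^r{}_r=-\delta/\rho^4$. When $\delta=0$ the metric satisfies $R_{\mu\nu}=\tfrac k4g_{\mu\nu}$, and the Einstein constant $k/4$ equals $\Lambda$ exactly when $k=4\Lambda$.
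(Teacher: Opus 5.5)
Your coframe and Cartan route is a legitimate alternative to the paper's direct coordinate computation (Christoffel symbols from the inverse metric, certified by numerator cancellations). As written, however, the proposal does not prove the one nontrivial claim, and the structure you predict for the frame Ricci tensor is wrong.

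For arbitrary $\Delta_r(r)$, $\Delta_x(x)$ the Carter-frame Ricci tensor is diagonal. The $t$-leg pairs with the $r$-leg and the $\phi$-leg with the $x$-leg: $R^{\hat0}{}_{\hat0}=R^{\hat1}{}_{\hat1}$ and $R^{\hat2}{}_{\hat2}=R^{\hat3}{}_{\hat3}$, i.e.\ $R_{\hat0\hat0}=-R_{\hat1\hat1}$ and $R_{\hat2\hat2}=R_{\hat3\hat3}$.

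Your identity $R_{\hat0\hat0}+R_{\hat3\hat3}=0$ is false. On this family it equals $2\delta/\rho^4$; for Kerr--Newman it is $2Q^2/\rho^4$. Combined with the correct pairing it would force every member to be Einstein. It also contradicts your own later target $S_{\hat0\hat0}=S_{\hat3\hat3}\propto\delta$. Likewise, $-R_{\hat0\hat0}-R_{\hat1\hat1}$ is not a trace-free component to be computed: it vanishes identically. There is exactly one trace-free scalar function.

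The decisive step, that $k,\alpha_2,\alpha_1,\beta_1$ drop out, is only ``expected''. Saying these terms already cancel for Kerr--NUT--(A)dS is a heuristic. It becomes an argument only once you know the frame Ricci components depend linearly on $(\Delta_r,\Delta_x)$. That is true for this ansatz but not automatic, since Ricci is nonlinear in $g$. With linearity, the family is an Einstein member plus the constant shift $\Delta_r\mapsto\Delta_r+\delta$.

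The direct fix is to finish the Cartan computation. With all off-diagonal frame components zero, it gives
\begin{align*}
R^{\hat1}{}_{\hat1}-\tfrac R4 &= R^{\hat0}{}_{\hat0}-\tfrac R4 = -\Bigl(R^{\hat2}{}_{\hat2}-\tfrac R4\Bigr) = -\Bigl(R^{\hat3}{}_{\hat3}-\tfrac R4\Bigr)\\
&= -\frac{\Delta_r''-\Delta_x''}{4\rho^2}+\frac{r\Delta_r'-\Delta_r-a^2\bigl(x\Delta_x'-\Delta_x\bigr)}{\rho^4}.
\end{align*}
Substitute the quartics, using equal quartic coefficients and $\beta_2=-\alpha_2$. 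The linear terms are annihilated by $r\Delta_r'-\Delta_r$ and $x\Delta_x'-\Delta_x$, and the second numerator becomes $-\tfrac k4(r^2-a^2x^2)\rho^2+\alpha_2\rho^2-\delta$. This cancels the first term and leaves exactly $-\delta/\rho^4$.

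From there your conversion step is correct. On the $(t,\phi)$ block, $S=\frac{\delta}{\rho^4}\bigl(\mathrm{Id}-2e_0\otimes e^0\bigr)$, with $e_0\otimes e^0=\rho^{-2}\bigl((r^2+a^2)\partial_t+a\partial_\phi\bigr)\otimes\bigl(\dd t-a(1-x^2)\dd\phi\bigr)$. This reproduces exactly the four displayed $(t,\phi)$ entries. It also explains structurally why $S^\phi{}_\phi=-S^t{}_t$, which the paper's component-by-component check does not make visible.
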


\begin{proof}
The identity \eqref{eq:riccidecomp} is the definition of $S_{\mu\nu}$, and the trace-free property follows by contracting with $g^{\mu\nu}$ and using Proposition~\ref{prop:scalarcurvature}.  Appendix~\ref{app:curvature} computes the mixed Ricci tensor directly and yields \eqref{eq:Sdeltafactor-mixed}.  Every displayed component is visibly proportional to $\delta$, which proves \eqref{eq:Sdeltafactor} after lowering an index with $g$.  The vanishing of $S_{\mu\nu}$ is therefore equivalent to $\delta=0$, and this is the geometric Einstein condition $R_{\mu\nu}=(k/4)g_{\mu\nu}$.  The additional relation $k=4\Lambda$ is needed only when one identifies the Einstein constant with the parameter $\Lambda$ in \cite{AssafariGunara}.
\end{proof}

The decomposition \eqref{eq:Sdeltafactor} has an important conceptual consequence.  If one thinks of the Theorem~1 family as a deformation of the usual Einstein Carter family, then $\delta$ is the natural small parameter in the Ricci sector.  All purely geometric estimates involving only the scalar curvature remain valid at arbitrary $\delta$, but any future tensorial equation obtained from a specific physical model would naturally be regarded as a perturbation of the Einstein case when $|\delta|$ is small.

\subsection{Consequences for the scalar-curvature linearization}

The decomposition of the Ricci tensor feeds directly into the linearization formula for the scalar curvature.  If $h$ is any symmetric $2$-tensor, then the standard variation formula gives
\begin{align}
L_g[h]&:=\left.\frac{\dd}{\dd\varepsilon}\right|_{\varepsilon=0}R(g+\varepsilon h) =\nabla^\mu\nabla^\nu h_{\mu\nu}-\Box_g(\tr_g h)-R_{\mu\nu}h^{\mu\nu}.\label{eq:deltaRgeneral-intro}
\end{align}
Substituting \eqref{eq:riccidecomp} yields
\begin{align}
L_g[h]&=\nabla^\mu\nabla^\nu h_{\mu\nu}-\Box_g(\tr_g h)-\frac{k}{4}\tr_g h-S_{\mu\nu}h^{\mu\nu}.\label{eq:LwithSintro}
\end{align}
Thus the Einstein part contributes only the trace term $-(k/4)\tr_g h$, whereas the genuine non-Einstein correction is the zeroth-order contraction $-S_{\mu\nu}h^{\mu\nu}$, proportional to $\delta$.

Formula \eqref{eq:LwithSintro} clarifies the special role of conformal perturbations.  If $h=2u g$, then the contraction with $S_{\mu\nu}$ vanishes automatically because $S$ is trace free.  This is the algebraic reason why the conformal sector is blind to the defect parameter $\delta$.  In hindsight one can view the entire reduced theory as a consequence of two elementary facts: the scalar-curvature variation of a conformal perturbation is scalar, and the traceless Ricci tensor annihilates the metric.  Everything deeper in the paper is an analytical elaboration of these two observations.

\subsubsection{Einstein limit, non-Einstein perturbation, and invariant diagnostics}

It is useful to distinguish three levels of geometric complexity.
\begin{enumerate}[label=(\arabic*)]
  \item \textbf{Einstein backgrounds.} Here $\delta=0$, so the traceless Ricci tensor vanishes and the metric is Einstein with Einstein constant $k/4$.  When in addition $k=4\Lambda$, this Einstein constant coincides with the parameter in \cite{AssafariGunara} and one recovers the usual Kerr-(A)dS normalization.
  \item \textbf{Ricci-deformed constant-scalar-curvature backgrounds.} Here $\delta\neq 0$ while $R=k$ remains constant.  The traceless Ricci tensor is nonzero but tightly constrained by \eqref{eq:Sdeltafactor}.  This is the genuinely new regime of Theorem~1.
  \item \textbf{General perturbations.} Even if the background has fixed scalar curvature, a perturbation $h$ need not preserve that property.  The linear equation $L_g[h]=0$ singles out the tangent directions that remain inside the constant-scalar-curvature locus.
\end{enumerate}
The present paper lives mostly in the transition between (2) and (3), with frequent comparison to the simpler model (1).

There are also invariant ways to detect the non-Einstein defect.  Since $S_{\mu\nu}$ is trace free, the scalar quantity
\begin{equation}
|S|_g^2 := S_{\mu\nu}S^{\mu\nu}
\end{equation}
vanishes precisely on the Einstein subfamily.  In the Theorem~1 family this quantity is proportional to $\delta^2$ times a smooth rational function of $(r,x)$.  We shall not need the full explicit formula, but the point is worth recording: the defect is not a coordinate artifact.  It is visible directly in a scalar curvature invariant.

\subsubsection{Regularity of the defect tensor on exterior patches}

Because $\mathcal S_{\mu\nu}$ in \eqref{eq:Sdeltafactor} is built from the smooth coefficients of the Theorem~1 metric, it is smooth on every regular exterior patch.  The only singularities can occur where $\rho^2=0$ or where the coordinate representation degenerates by crossing a zero of $\Delta_r$ or $\Delta_x$.  Thus on a compact subset of a regular patch one may estimate
\begin{equation}
\|S\|_{C^m(K)} \leq C_{m,K}|\delta|
\end{equation}
for every integer $m\geq 0$.  This simple bound will be useful later when discussing perturbative comparison with the Einstein subfamily.

A more qualitative way to phrase the same point is the following: the Theorem~1 family is not a wildly arbitrary non-Einstein family.  Its failure to be Einstein is highly organized.  The traceless Ricci tensor is smooth, algebraically simple, and one-dimensional in parameter space.  That is precisely the kind of structure one would hope for if a future tensorial completion were to be tractable.

\section{Linearization of the scalar-curvature constraint}\label{sec:linR}

\subsection{The variation formula}

Let $g_\varepsilon=g+\varepsilon h+O(\varepsilon^2)$ be a one-parameter family of metrics with symmetric variation tensor $h_{\mu\nu}=h_{\nu\mu}$.  The classical first-variation formula for the scalar curvature reads
\begin{align}
\delta R&=\nabla^\mu\nabla^\nu h_{\mu\nu}-\Box_g(\tr_g h)-R_{\mu\nu}h^{\mu\nu}.\label{eq:scalarvariation}
\end{align}
Although standard, this identity deserves to be written out explicitly in the present context because it is the entire geometric content of the linearized equation.  No gauge choice is required to state it, no energy-momentum tensor enters, and no unspoken field equation is being used.  Equation \eqref{eq:scalarvariation} is valid for every background metric and every symmetric perturbation.

Since the Theorem~1 backgrounds satisfy $R(g)=k$, a variation stays inside the constant-scalar-curvature locus to first order precisely when
\begin{align}
L_g[h]&=0\nonumber\\
L_g[h]&:=\nabla^\mu\nabla^\nu h_{\mu\nu}-\Box_g(\tr_g h)-R_{\mu\nu}h^{\mu\nu}.\label{eq:Lgh}
\end{align}
The operator $L_g$ maps symmetric $2$-tensors to scalars.  Thus the intrinsic tangent equation is underdetermined as a PDE system for the ten components of $h_{\mu\nu}$; this is why it should be viewed as a geometric constraint rather than as a closed dynamical evolution law for arbitrary metric perturbations.

The underdetermined nature of $L_g[h]=0$ should not be confused with weakness.  Geometrically it is the linearization of the defining property $R=k$.  Analytically it contains a distinguished closed sector, namely the conformal one, which turns out to be rich enough to support a full separation and stability theory.  The rest of this paper can be read as an extended exploration of the consequences of taking \eqref{eq:Lgh} seriously on its own terms.

\subsubsection{Gauge invariance}

The scalar curvature is diffeomorphism invariant, and therefore its linearization annihilates pure gauge perturbations.
\begin{proposition}[Gauge zero modes]\label{prop:gaugezeromodes}
For every smooth vector field $X$ on a regular exterior patch,
\begin{equation}
L_g[\Lie_X g]=0.
\end{equation}
\end{proposition}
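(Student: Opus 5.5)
The plan is to argue by diffeomorphism invariance of scalar curvature, and then to cross-check the result with a direct computation that uses only the first-variation formula \eqref{eq:Lgh}.

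\emph{Invariance argument.} Let \(\varphi_\varepsilon\) be the local flow of \(X\), defined on a relatively compact subdomain of the regular exterior patch for \(|\varepsilon|\) small. The family \(g_\varepsilon:=\varphi_\varepsilon^*g\) satisfies \(\frac{\dd}{\dd\varepsilon}|_{\varepsilon=0}g_\varepsilon=\Lie_Xg\). Scalar curvature is natural, so \(R(\varphi_\varepsilon^*g)=\varphi_\varepsilon^*R(g)\). Differentiating at \(\varepsilon=0\) gives
\[
L_g[\Lie_Xg]=\Lie_X R(g)=X^\mu\partial_\mu R(g).
\]
Proposition~\ref{prop:scalarcurvature} gives \(R(g)=k\), which is constant, so the right side vanishes. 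The point that needs care is this: \(L_g\) is the linearization at \(g\) itself, so \(L_g[h]=\frac{\dd}{\dd\varepsilon}R(g+\varepsilon h)\). This agrees with \(\frac{\dd}{\dd\varepsilon}R(g_\varepsilon)\) because \(R\) is a smooth differential operator in the metric and its derivative depends only on the first-order variation. The result is a local identity, so restricting to subdomains where the flow exists costs nothing.

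\emph{Direct check.} Put \(h_{\mu\nu}=\nabla_\mu X_\nu+\nabla_\nu X_\mu\). Then \(\tr_gh=2\nabla\cdot X\), and the three terms of \eqref{eq:Lgh} are handled as follows.

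1. Commuting covariant derivatives gives \(\nabla^\mu\nabla^\nu\nabla_\mu X_\nu=\Box(\nabla\cdot X)+\nabla^\nu(R_{\nu\alpha}X^\alpha)\) and \(\nabla^\mu\nabla^\nu\nabla_\nu X_\mu=\nabla^\mu\Box X_\mu=\Box(\nabla\cdot X)+\nabla^\mu(R_{\mu\alpha}X^\alpha)\).
2. Subtracting \(\Box(2\nabla\cdot X)\) leaves \(2\nabla^\mu(R_{\mu\alpha}X^\alpha)=2(\nabla^\mu R_{\mu\alpha})X^\alpha+2R_{\mu\alpha}\nabla^\mu X^\alpha\).
3. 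The last term of \(L_g\) is \(-R_{\mu\nu}h^{\mu\nu}=-2R_{\mu\nu}\nabla^\mu X^\nu\), which cancels the second piece in step 2 by symmetry of \(R_{\mu\nu}\).
4. The contracted Bianchi identity \(\nabla^\mu R_{\mu\alpha}=\tfrac12\partial_\alpha R\) then gives \(L_g[\Lie_Xg]=X^\alpha\partial_\alpha R=0\).

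The main obstacle is step 1: the sign conventions in the commutation of \(\nabla^\mu\) and \(\nabla^\nu\) acting on \(\nabla_\mu X_\nu\). I would fix the convention \([\nabla_\mu,\nabla_\nu]X^\alpha=R^\alpha{}_{\beta\mu\nu}X^\beta\) and check it against \eqref{eq:scalarvariation}. The result must agree with the invariance argument, which serves as the consistency check. The non-Einstein defect \(S_{\mu\nu}\) plays no special role, because only \(R(g)=k\) is used.
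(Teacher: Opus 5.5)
Your invariance argument is correct and is the same as the paper's proof. You pull back by the flow of $X$, use naturality $R(\varphi_\varepsilon^*g)=\varphi_\varepsilon^*R(g)$, and differentiate at $\varepsilon=0$, where $R(g)=k$ being constant kills $X(R)$; your remarks on local existence of the flow and on the derivative depending only on the first-order variation fill in details the paper leaves implicit, and your direct check via commuting derivatives and the contracted Bianchi identity is also correct and independently gives $L_g[\Lie_Xg]=X^\alpha\partial_\alpha R$.
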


\begin{proof}
Let $\Phi_\varepsilon$ be the flow generated by $X$.  Since scalar curvature is a scalar geometric invariant,
\begin{equation}
R(\Phi_\varepsilon^*g)=\Phi_\varepsilon^*R(g)=\Phi_\varepsilon^*(k)=k.
\end{equation}
Differentiating at $\varepsilon=0$ gives
\begin{equation}
0=\left.\frac{\dd}{\dd\varepsilon}\right|_{\varepsilon=0}R(\Phi_\varepsilon^*g)=L_g[\Lie_X g].
\end{equation}
\end{proof}

This proposition has two immediate consequences.  First, one should never interpret a stationary solution of $L_g[h]=0$ as an instability without quotienting by gauge.  Second, any attempt to produce an elliptic or hyperbolic theory for the full tensorial equation must either impose a gauge condition or work modulo the image of the Lie derivative map.  In the pure conformal sector these issues largely disappear because the ansatz $h=2ug$ is orthogonal to generic pure gauge directions except for those generated by conformal Killing fields, which are absent in the generic Theorem~1 geometry.

\subsection{Family modes}

In addition to diffeomorphism zero modes, the Theorem~1 family contains finite-dimensional parameter directions.  If $p\in\{M,a,\Lambda,C_1,\dots,C_5\}$ and the scalar-curvature parameter $k$ is held fixed, then the derivative $\partial_p g$ satisfies the linearized scalar-curvature equation.
\begin{proposition}[Parameter zero modes]\label{prop:parammodes}
Let $p$ be a Theorem~1 parameter other than $k$.  Then on any smooth region where the parameter derivative is defined,
\begin{equation}
L_g[\partial_p g]=0.
\end{equation}
\end{proposition}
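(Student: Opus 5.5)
The idea is to differentiate the defining identity \(R(g_p)=k\) along the parameter curve, exactly as Proposition~\ref{prop:gaugezeromodes} differentiated along a flow. Fix a parameter \(p\in\{M,a,\Lambda,C_1,\dots,C_5\}\) and consider the curve \(\varepsilon\mapsto g(p+\varepsilon)\), with all other parameters and \(k\) held fixed. This curve is defined on the region where the Carter form \eqref{eq:cartermetric} stays smooth and Lorentzian, i.e.\ where \(\rho^2>0\), \(\Delta_r>0\), \(\Delta_x>0\) persist for small \(\varepsilon\). By Proposition~\ref{prop:scalarcurvature}, each member of the family satisfies
\[
R\bigl(g(p+\varepsilon)\bigr)=k
\]
identically in \((r,x)\). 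The key point is that the cancellation \(\alpha_2+\beta_2=0\) used there holds for every parameter value, not only at the base point, because \(\alpha_2=1-\Lambda a^2/3+C_1/2=-\beta_2\) as functions of the parameters.

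Differentiating at \(\varepsilon=0\) then gives
\[
0=\frac{\dd}{\dd\varepsilon}\Big|_{\varepsilon=0}R\bigl(g(p+\varepsilon)\bigr)=L_g[\partial_pg],
\]
by the first-variation formula \eqref{eq:scalarvariation} with \(h=\partial_pg\). This step only requires that \(\varepsilon\mapsto g(p+\varepsilon)\) be a \(C^1\) family of smooth metrics in the coordinates \((t,\phi,r,x)\). That holds because the coefficients of \eqref{eq:cartermetric} are rational functions of the parameters and of \((r,x)\), with denominators \(\rho^2\), \(\Delta_r\), \(\Delta_x\) that are nonvanishing on the region in question.

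The main subtlety is the parameter \(a\), which enters both \(\rho^2\) and the frame one-forms \(\dd t-a(1-x^2)\dd\phi\) and \((r^2+a^2)\dd\phi-a\dd t\), not just \(\Delta_r\) and \(\Delta_x\). For this reason I would avoid computing \(L_g[\partial_ag]\) componentwise and rely only on the identity \(R\equiv k\) along the whole curve, which holds for every value of \(a\) by the same proof of Proposition~\ref{prop:scalarcurvature}. Concretely, \(\Delta_x''=-ka^2x^2+2\beta_2\) still combines with \(\Delta_r''\) to give \(-k\rho^2\), with \(\rho^2=r^2+a^2x^2\) evaluated at the varied value of \(a\). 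The cases \(p=M,C_2,C_3,C_4,C_5\) are easier still, since these parameters enter only the lower-order coefficients of the quartics, which do not affect \(\Delta''\).

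Finally, I would note the caveat that the variation must be performed at fixed \(k\); varying \(k\) would instead give \(L_g[\partial_kg]=1\).
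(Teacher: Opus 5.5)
Your proof is correct and is the same as the paper's: both differentiate the identity $R(g(p))\equiv k$ along the parameter curve at fixed $k$. Your extra checks make explicit what the paper compresses into ``by construction'', namely that $\alpha_2+\beta_2=0$ holds identically in the parameters, so Proposition~\ref{prop:scalarcurvature} applies at every parameter value including varied $a$, and that the family is smooth in $p$ wherever $\rho^2$, $\Delta_r$, $\Delta_x$ do not vanish.
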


\begin{proof}
By construction the parameter family $p\mapsto g(p)$ remains inside the constant-scalar-curvature locus with scalar curvature equal to the fixed value $k$.  Therefore
\begin{equation}
R(g(p))\equiv k.
\end{equation}
Differentiating with respect to $p$ gives the claim.
\end{proof}

Parameter zero modes are not merely an algebraic curiosity.  They explain why certain stationary perturbations must appear even in a ``stable'' family: changing the mass or angular momentum infinitesimally moves one to a nearby member of the same family.  Any physically meaningful stability statement should therefore be formulated modulo both gauge directions and these finite-dimensional family directions.

\subsubsection{The operator written using the Einstein defect}

Combining \eqref{eq:Lgh} with the Ricci decomposition from Section~\ref{sec:ricci}, we obtain
\begin{align}
L_g[h]&=\nabla^\mu\nabla^\nu h_{\mu\nu}-\Box_g(\tr_g h)-\frac{k}{4}\tr_g h-S_{\mu\nu}h^{\mu\nu}.\label{eq:LwithS}
\end{align}
This formula makes the architecture of the operator transparent.  The principal part is the divergence-divergence minus wave-trace combination familiar from scalar-curvature linearization.  The first zeroth-order term depends only on the scalar curvature.  The second zeroth-order term, proportional to $\delta$, measures the non-Einstein defect.

There are several ways to interpret \eqref{eq:LwithS}.  From the viewpoint of geometric analysis it says that $L_g$ is the standard scalar-curvature variation on an Einstein background plus a lower-order perturbation $-S_{\mu\nu}h^{\mu\nu}$.  From the viewpoint of perturbative comparison with Kerr-(A)dS it says that the Theorem~1 operator is close to the Einstein one when $|\delta|$ is small on compact regions.  From the viewpoint of the conformal reduction it says that the lower-order defect disappears entirely because $S_{\mu\nu}$ is trace free.

\subsubsection{Variational interpretation}

The scalar-curvature constraint also has a simple variational interpretation.  Consider the Einstein-Hilbert functional restricted to compactly supported variations on a fixed domain:
\begin{equation}
\mathcal I[g] = \int_\Omega R(g)\,\dd\mu_g.
\end{equation}
Its first variation contains the usual Einstein tensor term together with a boundary contribution.  If one instead regards the scalar curvature itself as the quantity of interest, then the linearized operator $L_g$ appears as the derivative of the map $g\mapsto R(g)$ rather than as the Euler-Lagrange operator of a gauge-fixed action.  This distinction is important.  The current paper is not using the second variation of the Einstein-Hilbert functional to define an evolution.  It is using the derivative of a geometric constraint to define a tangent equation.

The variational point of view is still useful, however, because it explains why so many of the subsequent identities have an energy flavor.  In the $k=0$ regime the conformal sector inherits the quadratic form of a massless wave equation, the separated equations inherit Sturm-Liouville structure, and the bounded-slab problem is encoded by a quadratic functional that is nonnegative but not coercive on constants.  In other words, even though $L_g[h]=0$ is not a closed dynamical system for $h$, its conformal reduction remains strongly variational.

\section{The conformal sector and its exact reduction}\label{sec:conformal}

\subsection{Why the conformal sector is distinguished}

Among all perturbations of the metric, conformal perturbations occupy a special position for three independent reasons.  First, they are intrinsic to the scalar-curvature problem because the scalar curvature is itself the response of the metric to infinitesimal changes of volume and conformal scaling.  Second, conformal tensors are automatically orthogonal to the traceless Ricci defect, so the non-Einstein parameter $\delta$ drops out of the reduced equation.  Third, the conformal ansatz converts the tensorial linearization into a scalar PDE whose analysis is compatible with the Carter separability built into the background geometry.

This does not mean that conformal perturbations exhaust the tangent space to the constant-scalar-curvature locus.  They do not.  However, they form the cleanest nontrivial subspace on which the equation closes exactly.  In the absence of a specified tensorial field equation, this closed scalar sector is a natural place to begin.  Its analysis already reveals a surprising amount of structure: the reduced equation separates completely, the axisymmetric problem admits a nonnegative conserved energy, and the essential analytical issue becomes the classification of zero modes and generalized zero modes.

\subsubsection{Derivation of the conformal master equation}

Let
\begin{equation}
h_{\mu\nu}=2u\,g_{\mu\nu}
\label{eq:conformalansatz}
\end{equation}
for a scalar field $u=u(t,\phi,r,x)$.  Since the spacetime dimension is four, one has
\begin{equation}
\tr_g h = g^{\mu\nu}(2u g_{\mu\nu}) = 8u.
\end{equation}
Moreover, metric compatibility gives
\begin{equation}
\nabla^\mu\nabla^\nu h_{\mu\nu}=2\Box_g u,
\qquad
R_{\mu\nu}h^{\mu\nu}=2uR=2ku.
\end{equation}
Substituting these identities into \eqref{eq:scalarvariation} yields the exact formula
\begin{equation}
L_g[2ug]=-6\Box_g u-2ku.
\end{equation}
Hence the scalar-curvature constraint $L_g[2ug]=0$ is equivalent to the scalar equation
\begin{equation}
(\Box_g+k/3)u=0.
\label{eq:conformaleq}
\end{equation}

\subsubsection{Conformal variation in dimension four}\label{subsec:conformal-line-by-line}

We record the conformal computation in full, since this sign convention is used throughout the paper.  The linearized scalar-curvature operator is
\begin{equation}
L_g[h]
=\nabla^\mu\nabla^\nu h_{\mu\nu}-\Box_g(\tr_g h)-R_{\mu\nu}h^{\mu\nu}.
\label{eq:linR-linebyline}
\end{equation}
For \(h_{\mu\nu}=2ug_{\mu\nu}\), metric compatibility gives
\begin{equation}
h^{\mu\nu}=2ug^{\mu\nu},
 \qquad
 \tr_g h=g^{\mu\nu}h_{\mu\nu}=2u g^{\mu\nu}g_{\mu\nu}=8u.
\end{equation}
The double divergence is
\begin{align}
\nabla^\mu\nabla^\nu h_{\mu\nu}
&=\nabla^\mu\nabla^\nu(2ug_{\mu\nu})\nonumber\\
&=2\nabla^\mu\bigl((\nabla^\nu u)g_{\mu\nu}+u\nabla^\nu g_{\mu\nu}\bigr)\nonumber\\
&=2\nabla^\mu\bigl((\nabla^\nu u)g_{\mu\nu}\bigr)\nonumber\\
&=2g_{\mu\nu}\nabla^\mu\nabla^\nu u\nonumber\\
&=2\Box_g u.
\end{align}
Similarly,
\begin{equation}
\Box_g(\tr_g h)=\Box_g(8u)=8\Box_g u,
\qquad
R_{\mu\nu}h^{\mu\nu}=2uR_{\mu\nu}g^{\mu\nu}=2uR=2ku.
\end{equation}
Substitution in \eqref{eq:linR-linebyline} yields
\begin{equation}
L_g[2ug]=2\Box_g u-8\Box_g u-2ku=-6\Box_g u-2ku.
\label{eq:conformal-variation-final-line}
\end{equation}
Thus \(L_g[2ug]=0\) is exactly
\begin{equation}
\Box_g u+\frac{k}{3}u=0.
\end{equation}
No Einstein equation, gauge condition, or matter-field equation is used in this calculation; only metric compatibility, dimension four, and the scalar-curvature value \(R(g)=k\) enter.

\begin{proposition}[Conformal reduction]\label{prop:conformalreduction}
The map $u\mapsto h_{\mu\nu}=2u g_{\mu\nu}$ sends scalar solutions of \eqref{eq:conformaleq} bijectively onto conformal perturbations satisfying the linearized scalar-curvature equation.
\end{proposition}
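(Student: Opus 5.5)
The plan is to read Proposition~\ref{prop:conformalreduction} as two separate claims: the map is linear and injective, and its image is exactly the set of conformal perturbations annihilated by $L_g$.

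Let $\mathcal C:=\{h=2ug_{\mu\nu}: u\in C^\infty\}$ be the space of conformal perturbations. The map $J:u\mapsto 2ug_{\mu\nu}$ is linear in $u$.

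\textbf{Injectivity.} Take the trace. As in \S\ref{subsec:conformal-line-by-line}, $\tr_g(2ug)=8u$ in dimension four, so $u=\tfrac18\tr_g(Ju)$. Hence $Ju=0$ forces $u=0$.

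\textbf{Surjectivity onto $\mathcal C$.} This holds by definition, since every conformal perturbation has the form $2ug$ for a unique scalar $u$, namely $u=\tfrac18\tr_g h$.

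\textbf{Equivalence of the equations.} Invoke the exact identity \eqref{eq:conformal-variation-final-line}, $L_g[2ug]=-6\Box_g u-2ku=-6(\Box_g+k/3)u$. Because $-6\neq0$, $L_g[Ju]=0$ holds if and only if $u$ solves \eqref{eq:conformaleq}. So $J$ maps the solution space of \eqref{eq:conformaleq} into $\ker L_g\cap\mathcal C$. Conversely, take any $h\in\ker L_g\cap\mathcal C$ and set $u=\tfrac18\tr_g h$. Then $h=Ju$, and the identity shows that $u$ solves \eqref{eq:conformaleq}. Together with injectivity, this gives the bijection.

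I do not expect any genuine obstacle. The only points needing care are the following.
\begin{enumerate}[label=(\alph*)]
  \item The identity \eqref{eq:conformal-variation-final-line} uses $R_{\mu\nu}g^{\mu\nu}=R(g)=k$. This is Proposition~\ref{prop:scalarcurvature}, and it holds throughout the Theorem~1 family, not only on the Einstein subfamily. Equivalently, in the form \eqref{eq:LwithS}, the defect term $S_{\mu\nu}h^{\mu\nu}=2uS^\mu{}_\mu$ vanishes by the trace-free property in Proposition~\ref{prop:riccidecomp}.
  \item The regularity class must be fixed consistently on both sides, e.g.\ smooth or distributional $u$ on a regular exterior patch. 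Then $J$ and $\tfrac18\tr_g$ are mutually inverse continuous maps between the scalar class and the corresponding tensor class.
\end{enumerate}
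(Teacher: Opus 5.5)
Your proposal is correct and matches the paper's proof: the exact identity $L_g[2ug]=-6\Box_g u-2ku$ (which uses only metric compatibility, dimension four, and $R(g)=k$) makes the two equations equivalent, and the trace recovery $u=\tfrac18\tr_g h$ gives injectivity and the inverse on the conformal class. Your extra remarks on the trace-free defect term and on fixing a common regularity class are sound, though the paper's argument does not need them.
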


\begin{proof}
The calculation above shows that $L_g[2ug]=0$ if and only if \eqref{eq:conformaleq} holds.  Since the conformal ansatz recovers $u=\frac18\tr_g h$, the correspondence is one-to-one within the conformal class.
\end{proof}

One can therefore regard the conformal sector as a Klein-Gordon field with effective mass squared
\begin{equation}
\mu_{\mathrm{eff}}^2=-\frac{k}{3}.
\end{equation}
In the present paper the effective mass vanishes, so the reduced equation is genuinely massless.  Everything that follows should therefore be read as an analysis of a borderline wave problem: there is no coercive potential term, constants lie in the kernel of the spatial operator, and the low-frequency regime controls the global bounded-slab dynamics.

\subsection{Scalar wave operator in Carter coordinates}

The determinant identity \eqref{eq:determinant} implies
\begin{equation}
\Box_g u = \frac{1}{\rho^2}\partial_\mu\bigl(\rho^2 g^{\mu\nu}\partial_\nu u\bigr).
\end{equation}
Using the inverse metric formulas from Proposition~\ref{prop:inversemetric}, one obtains the explicit expression
\begin{align}
\rho^2\Box_g u
&= \partial_r(\Delta_r\partial_r u)+\partial_x(\Delta_x\partial_x u)\nonumber\\
&&{}+\rho^2g^{tt}\,\partial_t^2u +2\rho^2g^{t\phi}\,\partial_t\partial_\phi u + \rho^2g^{\phi\phi}\,\partial_\phi^2u.
\label{eq:boxexplicit}
\end{align}
Thus \eqref{eq:conformaleq} can be written in the fully expanded form
\begin{align}
0&=\partial_r(\Delta_r\partial_r u)+\partial_x(\Delta_x\partial_x u)
+\rho^2g^{tt}\,\partial_t^2u +2\rho^2g^{t\phi}\,\partial_t\partial_\phi u + \rho^2g^{\phi\phi}\,\partial_\phi^2u\nonumber\\
&&{}+\frac{k}{3}\rho^2u.
\label{eq:conformalexpanded}
\end{align}
The striking feature of \eqref{eq:conformalexpanded} is that it is already separated at the level of coefficients: radial derivatives appear only through $\Delta_r$, transversal derivatives only through $\Delta_x$, and the remaining terms split into a sum of rational expressions each depending on either $r$ or $x$ after the standard Fourier ansatz in $(t,\phi)$.

\subsubsection{Coordinate form of the full \texorpdfstring{$k=0$}{k=0} equation}\label{subsec:coefficient-verification-main}

The full non-axisymmetric theorem uses the coordinate equation through a closed form realization.  We therefore fix the sign convention once and for all.

\begin{proposition}[Coordinate form and weak formulation]\label{prop:k0-full-coordinate-verification}
Assume $k=0$ and define
\begin{equation}
A=-\rho^2g^{tt},\qquad B=\rho^2g^{t\phi},\qquad \Phi=\rho^2g^{\phi\phi}.
\end{equation}
Then the massless conformal equation $\Box_g u=0$ is exactly
\begin{align}
-Au_{tt}+2B u_{t\phi}+\Phi u_{\phi\phi} +\partial_r(\Delta_r u_r)+\partial_x(\Delta_x u_x)&=0.\label{eq:k0-full-coordinate-verification}
\end{align}
Moreover, for every test function $\eta$ in a closed reflecting form domain $V_{\mathrm{full}}$, the corresponding weak identity is
\begin{align}
\int_\Sigma A u_{tt}\overline\eta +\int_\Sigma\bigl(\Phi u_\phi\overline{\eta_\phi} +\Delta_r u_r\overline{\eta_r} +\Delta_x u_x\overline{\eta_x}\bigr) -2\int_\Sigma B (u_t)_\phi\overline\eta&=0.\label{eq:k0-full-weak-verification}
\end{align}
Equivalently, using periodicity in $\phi$ and $\partial_\phi B=0$, the last term may be written as
\begin{equation}
-2\int_\Sigma B (u_t)_\phi\overline\eta
=2\int_\Sigma B u_t\overline{\eta_\phi}.
\end{equation}
\end{proposition}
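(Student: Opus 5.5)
The plan is to start from the divergence form of the wave operator established just before the statement. With $\sqrt{-g}=\rho^2$ from Proposition~\ref{prop:inversemetric}, multiplying $\Box_g u=0$ by $\rho^2$ gives \eqref{eq:boxexplicit}. Only two facts are then needed to reach \eqref{eq:k0-full-coordinate-verification}.

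First, the coefficients $\rho^2g^{tt}$, $\rho^2g^{t\phi}$, $\rho^2g^{\phi\phi}$ are, by \eqref{eq:gtt-formula}--\eqref{eq:gphiphi-formula}, functions of $(r,x)$ only. They therefore commute with $\partial_t$ and $\partial_\phi$. This is what lets $\partial_\mu(\rho^2g^{\mu\nu}\partial_\nu u)$ produce pure second derivatives in the $(t,\phi)$ block, with no first-order terms.

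Second, the $(r,x)$ block is diagonal with $\rho^2g^{rr}=\Delta_r$ and $\rho^2g^{xx}=\Delta_x$. There are no mixed entries $g^{tr}$, $g^{\phi x}$, and so on.

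Substituting $A=-\rho^2g^{tt}$, $B=\rho^2g^{t\phi}$, $\Phi=\rho^2g^{\phi\phi}$ and setting $k=0$ then yields \eqref{eq:k0-full-coordinate-verification} line by line. The factor $2$ on $Bu_{t\phi}$ comes from the symmetric pair $g^{t\phi}=g^{\phi t}$.

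For the weak identity, I would multiply \eqref{eq:k0-full-coordinate-verification} by $-\overline\eta$ and integrate over $\Sigma=S^1_\phi\times\Omega$ with the coordinate measure $\dd\phi\,\dd r\,\dd x$. The $\rho^2$ weight has already been absorbed, so this is the natural measure. Each term is then handled as follows.

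\begin{itemize}
\item The $-Au_{tt}$ term gives $\int A u_{tt}\overline\eta$ directly.
\item The term $-\Phi u_{\phi\phi}\overline\eta$ is integrated by parts in $\phi$. Periodicity removes the boundary terms, and since $\partial_\phi\Phi=0$ it becomes $\int\Phi u_\phi\overline{\eta_\phi}$.
\item The terms $-\partial_r(\Delta_ru_r)\overline\eta$ and $-\partial_x(\Delta_xu_x)\overline\eta$ become $\int\Delta_ru_r\overline{\eta_r}+\Delta_xu_x\overline{\eta_x}$.
\item The gyroscopic term is simply $-2\int B(u_t)_\phi\overline\eta$.
\end{itemize}

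Integrating once more by parts in $\phi$, using $\partial_\phi B=0$ and periodicity, gives $+2\int Bu_t\overline{\eta_\phi}$, which is the displayed alternative form.

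The main obstacle is the $r,x$ integration by parts. Here I would not attempt a pointwise boundary cancellation. Instead, the weak identity is taken as the \emph{definition} of the reflecting condition: $V_{\rm full}$ is the closed form domain of the spatial form $\int(\Phi|u_\phi|^2+\Delta_r|u_r|^2+\Delta_x|u_x|^2)$, and a solution satisfies the boundary condition precisely when \eqref{eq:k0-full-weak-verification} holds for all $\eta\in V_{\rm full}$. For smooth $u$ and $\eta$, the boundary fluxes $\Delta_ru_r\overline\eta$ and $\Delta_xu_x\overline\eta$ on $\partial\Omega$ are exactly the terms that the natural (Neumann-type) condition of the form sets to zero.

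Consistency of the two formulations therefore rests on two checks. First, smooth classical solutions with vanishing conormal flux satisfy the weak identity. Second, weak solutions with $\eta\in C_c^\infty$ recover \eqref{eq:k0-full-coordinate-verification} in the distributional sense. Both follow from the computation above; the second uses the density of $C_c^\infty(S^1\times\Omega^\circ)$ in $L^2$, which is where $\eta$ ranges over test functions.
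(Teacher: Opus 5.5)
Your proposal is correct and follows the paper's proof essentially step for step. Like the paper, you expand $\rho^2\Box_g u$ using $\sqrt{-g}=\rho^2$ and the $(t,\phi)$-independence of the coefficients, then substitute $A,B,\Phi$; for the weak identity you multiply by $-\overline\eta$, integrate by parts periodically in $\phi$, and take the $r,x$ terms as given by the closed reflecting form, exactly as the paper does.

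Your closing consistency check goes beyond this proposition; the paper handles it in Lemma~\ref{lem:form-reflection-primary}. Note that recovering the strong equation there requires $C_c^\infty(S^1\times\Omega^\circ)\subset V_{\mathrm{full}}$, i.e.\ that such functions are admissible test functions, rather than density of these functions in $L^2$.
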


\begin{proof}
Starting from \eqref{eq:boxexplicit} and setting $k=0$, multiplication by $\rho^2$ gives
\begin{align}
0&=\partial_r(\Delta_r u_r)+\partial_x(\Delta_xu_x) +\rho^2g^{tt}u_{tt}+2\rho^2g^{t\phi}u_{t\phi}+\rho^2g^{\phi\phi}u_{\phi\phi}.
\end{align}
Substituting the definitions of $A$, $B$, and $\Phi$ gives \eqref{eq:k0-full-coordinate-verification}.  No derivative of $A$, $B$, or $\Phi$ appears in the $t$- or $\phi$-part because the metric is stationary and axisymmetric.

To obtain the weak equation, multiply \eqref{eq:k0-full-coordinate-verification} by $-\overline\eta$ and integrate over $\Sigma$.  The temporal term gives $\int A u_{tt}\overline\eta$.  The azimuthal second derivative gives, by periodicity,
\begin{equation}
-\int_\Sigma \Phi u_{\phi\phi}\overline\eta
=\int_\Sigma \Phi u_\phi\overline{\eta_\phi},
\end{equation}
because $\partial_\phi\Phi=0$.  The $r$- and $x$-divergence terms are, by definition of the reflecting closed form,
\begin{align}
-\int_\Sigma \partial_r(\Delta_r u_r)\overline\eta&=\int_\Sigma \Delta_r u_r\overline{\eta_r}\nonumber\\
-\int_\Sigma \partial_x(\Delta_x u_x)\overline\eta&=\int_\Sigma \Delta_x u_x\overline{\eta_x}.
\end{align}
The mixed term gives $-2\int B(u_t)_\phi\overline\eta$.  A final periodic integration by parts in $\phi$ gives the alternate expression with $\eta_\phi$.  This proves the coordinate and weak sign conventions used later.
\end{proof}

\subsubsection{A quadratic form and its sign}

Equation \eqref{eq:conformaleq} is the Euler-Lagrange equation of the quadratic functional
\begin{align}
\mathscr Q[u]&:=\frac12\int_\Omega\left(g^{\mu\nu}\partial_\mu u\,\partial_\nu u - \frac{k}{3}u^2\right)\dd\mu_g\label{eq:quadraticform}
\end{align}
on any domain $\Omega$ where boundary terms can be controlled.  The sign of the potential part is again determined by $k$.  When the equation is viewed on a time slab and one singles out the $t$-direction, this quadratic functional becomes the energy analyzed later.  At this stage it is already clear that positive $k$ creates the possibility of negative directions in the spatial quadratic form, while negative $k$ strengthens coercivity.

The functional \eqref{eq:quadraticform} also reveals why the conformal sector is a natural reduced problem for a paper about ``linear stability'' in the absence of a tensorial field equation.  A scalar wave equation with a geometric mass term is a standard, closed, variational PDE\@.  It admits a spectral theory, an energy theory, and a mode analysis.  The scalar-curvature linearization does not supply such a closed theory for general $h$, but it supplies one perfectly for $u$.

\subsection{Divergence-free stress tensor and multiplier currents}

For solutions of \eqref{eq:conformaleq} one may introduce the usual Klein-Gordon stress tensor
\begin{align}
T_{\mu\nu}[u]&:=\partial_\mu u\,\partial_\nu u - \frac12 g_{\mu\nu}\left(g^{\alpha\beta}\partial_\alpha u\,\partial_\beta u - \frac{k}{3}u^2\right).\label{eq:stress}
\end{align}
A direct computation using the equation of motion gives
\begin{equation}
\nabla^\mu T_{\mu\nu}[u]=0.
\label{eq:stressdivfree}
\end{equation}
Consequently any vector field $X$ determines a current
\begin{equation}
J^X_\mu[u] := T_{\mu\nu}[u]X^\nu
\end{equation}
with divergence
\begin{align}
\nabla^\mu J^X_\mu[u]&=T_{\mu\nu}[u]\,\pi_X^{\mu\nu}\nonumber\\
\pi_X^{\mu\nu}&:=\frac12(\nabla^\mu X^\nu+\nabla^\nu X^\mu).\label{eq:currentidentity}
\end{align}
When $X$ is Killing, the deformation tensor $\pi_X$ vanishes and the current is conserved.  This general identity underlies the time-translation energy used in the axisymmetric sector and the horizon-adapted currents discussed later for separated modes.

It is worth emphasizing that \eqref{eq:stressdivfree} is not an extra assumption imported from a matter model.  It is simply the conserved stress tensor of the reduced scalar equation.  In this sense the conformal problem is self-contained: once one passes from $h$ to $u$, all of the familiar analytical machinery of wave equations becomes available.

\subsubsection*{\texorpdfstring{The $k=0$ specialization}{The k=0 specialization}}

From this point onward we impose $k=0$.  The conformal master equation therefore becomes
\begin{equation}
\Box_g u=0.
\label{eq:k0master}
\end{equation}
In the axisymmetric sector this has two immediate consequences.  First, the conserved energy controls only first derivatives and not the $L^2$ norm of $u$ itself.  Second, the constant function belongs to the kernel of the spatial operator, so zero frequency must be analyzed explicitly rather than treated as a harmless endpoint.

The first consequence explains why one cannot simply imitate the coercive arguments appropriate to positive-mass Klein-Gordon equations.  The second consequence explains why the bounded-slab theorem must be formulated modulo a finite-dimensional generalized zero-mode space.  In other words, the $k=0$ theory is neither coercive nor unstable: it is a genuinely borderline wave problem whose main structure is concentrated at zero frequency.

The rest of the paper now proceeds in the order dictated by this threshold phenomenon.  Section~\ref{sec:abstractslab} first proves an abstract stationary bounded-slab theorem modulo the full zero-frequency threshold space, with no axial symmetry assumed.  The Carter sections that follow then verify those abstract conditions on strictly stationary slabs and, in the axisymmetric sector, sharpen the conclusion by explicit self-adjoint spectral analysis.  Only after that bounded-slab theory is complete do we turn to ergoregions and horizons, where positivity fails and the appropriate continuation is microlocal/scattering rather than a direct sign argument.

\section{Abstract stationary bounded-slab theorem modulo the full threshold space}\label{sec:abstractslab}

The basic bounded-slab mechanism is stationary and spectral rather than axisymmetric.  Exact Carter separation will be important later for the concrete Carter family and for the axisymmetric spectral refinement, but the threshold theorem itself can be proved once and for all at the level of the stationary evolution group.

Let $\mathcal X$ be a real or complex Hilbert space of Cauchy data, and let
\begin{equation}
U_t=\mathcal G U
\end{equation}
denote the first-order stationary evolution associated with a bounded-slab hyperbolic problem.  We write $U(t)=e^{t\mathcal G}U_0$ for the solution with initial data $U_0\in\mathcal X$.

\begin{definition}[Full zero-frequency threshold space]\label{def:abstractthreshold}
The \emph{full zero-frequency threshold space} of the stationary evolution is
\begin{equation}
\mathcal T_0:=\bigcup_{N\geq1}\Ker \mathcal G^N.
\end{equation}
Equivalently, $\mathcal T_0$ is the generalized eigenspace of the generator at spectral value $0$.  When $0$ is isolated in the spectrum of $\mathcal G$, $\mathcal T_0$ is the range of the corresponding Riesz projection.
\end{definition}

The hypotheses used in the abstract theorem are stated before the main theorems in Assumption~\ref{ass:abstract}.  The proof below uses only those listed items.

\begin{proof}[Proof of Theorem~\ref{thm:abstractmain}]
Let
\begin{equation}
U_{\mathrm{thr}}(t):=\Pi_{\mathrm{thr}}U(t),
\qquad
U_{\mathrm{bd}}(t):=(I-\Pi_{\mathrm{thr}})U(t).
\end{equation}
Because $\Pi_{\mathrm{thr}}$ commutes with the group, both components solve the evolution and
\begin{equation}
U(t)=U_{\mathrm{thr}}(t)+U_{\mathrm{bd}}(t)
\end{equation}
is the unique decomposition of the solution into its threshold and threshold-complement parts.

Since $\mathcal T_0$ is finite-dimensional and every vector in $\mathcal T_0$ is annihilated by a sufficiently high power of $\mathcal G$, the restriction $\mathcal G|_{\mathcal T_0}$ is nilpotent.  Hence for some $N$,
\begin{equation}
e^{t\mathcal G}|_{\mathcal T_0}
=
\sum_{j=0}^{N-1}\frac{t^j}{j!}\,\mathcal G^j|_{\mathcal T_0},
\end{equation}
so $U_{\mathrm{thr}}(t)$ is polynomial in $t$.

For the bounded component, $U_{\mathrm{bd}}(t)\in\Ker\Pi_{\mathrm{thr}}$ for all $t$.  Conservation of $\mathscr E$ and Assumption~\ref{ass:abstract}(iii) therefore give
\begin{align}
c\|U_{\mathrm{bd}}(t)\|_{\mathcal X}^2&\le\mathscr E[U_{\mathrm{bd}}(t)] = \mathscr E[U_{\mathrm{bd}}(0)] \le C\|U_{\mathrm{bd}}(0)\|_{\mathcal X}^2,
\end{align}
which is the desired uniform bound.

Finally, suppose $U(t)=e^{\lambda t}V$ with $\operatorname{Re}\lambda>0$ and $V\in\Ker\Pi_{\mathrm{thr}}$.  Then
\begin{equation}
c\,e^{2\operatorname{Re}\lambda t}\|V\|_{\mathcal X}^2
\le
\mathscr E[e^{\lambda t}V]
=
\mathscr E[V]
\end{equation}
for all $t$, which forces $V=0$.  This proves the theorem.
\end{proof}

\begin{remark}[Interpretation]\label{rem:abstractinterpretation}
Assumption~\ref{ass:abstract}(ii) is the abstract way of saying that the full zero-frequency threshold space has been isolated before one states boundedness.  On bounded stationary problems the commuting projection is often the Riesz projection at $\omega=0$.  Theorem~\ref{thm:abstractmain} then says that once the conserved energy is coercive on the threshold complement, the entire bounded-slab theorem follows immediately, with no use of axial symmetry.
\end{remark}

\section{Exact separation of variables}\label{sec:separation}

\subsection{Mode ansatz and separation mechanism}

The Carter structure of the metric suggests the standard Fourier ansatz in the Killing variables $t$ and $\phi$:
\begin{align}
u(t,\phi,r,x)&=\ee^{-\ii\Omega t+\ii m\phi}R(r)S(x)\nonumber\\
&\Omega\in\C, \quad m\in\Z.\label{eq:modeansatz}
\end{align}
Substituting \eqref{eq:modeansatz} into \eqref{eq:conformalexpanded} and dividing by $RS$ yields an expression that splits into a sum of a purely radial term and a purely angular term.  This is the exact analogue of Carter separation for the scalar wave equation on Kerr-type backgrounds.

The relevant algebra is straightforward but illuminating, so we write it out.  The time and azimuthal derivatives contribute
\begin{align}
\rho^2g^{tt}\,\partial_t^2u&= -\Omega^2\rho^2g^{tt}u,\nonumber\\
2\rho^2g^{t\phi}\,\partial_t\partial_\phi u&= 2\Omega m\,\rho^2g^{t\phi}u,\nonumber\\
\rho^2g^{\phi\phi}\,\partial_\phi^2u&= -m^2\rho^2g^{\phi\phi}u.
\end{align}
Using the explicit inverse coefficients, these combine to
\begin{equation}
\left(\frac{((r^2+a^2)\Omega-am)^2}{\Delta_r}-\frac{(a(1-x^2)\Omega-m)^2}{\Delta_x}\right)u.
\end{equation}
The potential term contributes
\begin{equation}
\frac{k}{3}\rho^2u=\left(\frac{k}{3}r^2+\frac{k}{3}a^2x^2\right)u.
\end{equation}
Hence the radial and angular variables separate exactly.  No spectral assumption is used in this computation: \(\Omega\) may be complex, \(m\) is fixed by periodicity in \(\phi\), and the separation constant is introduced only after the equality of a function of \(r\) and a function of \(x\) is obtained.  This point is useful later, because the bounded-slab energy theorem does not rely on separation, whereas the mode and resolvent discussions use separation only as an additional diagnostic for the same operator.

\begin{theorem}[Separated conformal equations]\label{thm:separatedeqs}
The conformal equation \eqref{eq:conformaleq} admits separated solutions of the form \eqref{eq:modeansatz}.  For such a mode there exists a separation constant $\lambda\in\C$ such that
\begin{align}
\frac{\dd}{\dd r}\left(\Delta_r\frac{\dd R}{\dd r}\right)
+\left[\frac{((r^2+a^2)\Omega-am)^2}{\Delta_r}+\frac{k}{3}r^2-\lambda\right]R&=0,
\label{eq:radialeq-main}\\
\frac{\dd}{\dd x}\left(\Delta_x\frac{\dd S}{\dd x}\right)
+\left[-\frac{(a(1-x^2)\Omega-m)^2}{\Delta_x}+\frac{k}{3}a^2x^2+\lambda\right]S&=0.
\label{eq:angulareq-main}
\end{align}
Conversely, any pair $(R,S)$ solving \eqref{eq:radialeq-main}-\eqref{eq:angulareq-main} determines a separated solution of \eqref{eq:conformaleq}.
\end{theorem}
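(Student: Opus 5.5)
We substitute the ansatz \eqref{eq:modeansatz} directly into the expanded form \eqref{eq:conformalexpanded} of the conformal equation, using the inverse metric coefficients of Proposition~\ref{prop:inversemetric}. Since $\partial_t$ and $\partial_\phi$ act on $\ee^{-\ii\Omega t+\ii m\phi}$ by multiplication with $-\ii\Omega$ and $\ii m$, the three $(t,\phi)$ terms become $\bigl(\Omega^2 A+2\Omega m B-m^2\Phi\bigr)u$, where $A$, $B$, $\Phi$ are as in Lemma~\ref{lem:blockidentities}.

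The first concrete step is to insert the explicit expressions \eqref{eq:A-block-expanded}, \eqref{eq:B-block-expanded} and \eqref{eq:Phi-block-expanded}. We then group separately the terms carrying $\Delta_r^{-1}$ and those carrying $\Delta_x^{-1}$. The $\Delta_r^{-1}$ terms are
\[
\Omega^2(r^2+a^2)^2-2\Omega m a(r^2+a^2)+a^2m^2=\bigl((r^2+a^2)\Omega-am\bigr)^2 .
\]
The $\Delta_x^{-1}$ terms are
\[
-\Omega^2a^2(1-x^2)^2+2\Omega m a(1-x^2)-m^2=-\bigl(a(1-x^2)\Omega-m\bigr)^2 .
\]
This is exactly the displayed combination preceding the theorem. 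It is the only computation with content, namely a completion of squares; the signs of the cross terms are where care is needed, since $B$ enters with a factor $2$ and with $(-\ii\Omega)(\ii m)=\Omega m$. The mass term $\frac{k}{3}\rho^2u$ splits as $\frac k3 r^2u+\frac k3 a^2x^2u$ because $\rho^2=r^2+a^2x^2$.

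Next, with $u=\ee^{-\ii\Omega t+\ii m\phi}R(r)S(x)$, the derivative terms give $S\,(\Delta_rR')'+R\,(\Delta_xS')'$, since $\Delta_r$ depends only on $r$ and $\Delta_x$ only on $x$. Dividing the resulting equation by $\ee^{-\ii\Omega t+\ii m\phi}RS$ on the open set where $RS\neq0$ yields $F(r)+G(x)=0$, where
\[
F(r)=\frac{(\Delta_rR')'}{R}+\frac{((r^2+a^2)\Omega-am)^2}{\Delta_r}+\frac k3r^2,
\]
\[
G(x)=\frac{(\Delta_xS')'}{S}-\frac{(a(1-x^2)\Omega-m)^2}{\Delta_x}+\frac k3a^2x^2 .
\]
On a product domain $F$ and $G$ must each be constant. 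We call the common value $\lambda=F=-G\in\C$, which gives \eqref{eq:radialeq-main} and \eqref{eq:angulareq-main} on that set.

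To avoid any issue with zeros of $R$ or $S$, we state the claim more carefully. Existence of separated solutions means that for each $\lambda$ one may take any ODE solutions $R,S$; the converse direction is then the substantive part. The converse needs no division: multiplying \eqref{eq:radialeq-main} by $S$ and \eqref{eq:angulareq-main} by $R$ and adding, the $\lambda$ terms cancel. What remains is precisely $\rho^2$ times the left side of \eqref{eq:conformaleq} (up to the phase factor), so the product is a solution. Existence of nontrivial pairs then follows from standard linear ODE theory on the regular patch where $\Delta_r,\Delta_x>0$, taking any $\lambda$. No spectral or boundary assumptions enter.
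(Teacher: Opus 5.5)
Your proposal is correct and is essentially the paper's argument. You substitute the mode ansatz into \eqref{eq:conformalexpanded}, combine the $(t,\phi)$ terms into the two perfect squares over $\Delta_r$ and $\Delta_x$ (the paper does this completion of squares just before the theorem), divide by $RS$ to get $F(r)+G(x)=0$, name the common constant $\lambda$, and obtain the converse by reversing the computation, which your multiply-by-$S$, multiply-by-$R$ and add step makes explicit.

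Your care about zeros of $RS$ goes slightly beyond the paper, and it closes in one line. For a nontrivial mode the ODE residual for $R$ is continuous and vanishes on the dense open set $\{R\neq0\}\cup\operatorname{int}\{R=0\}$, hence everywhere; the same holds for $S$.
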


\begin{proof}
Insert \eqref{eq:modeansatz} into \eqref{eq:conformalexpanded}.  After division by $RS$ one obtains
\begin{align}
0={}&\frac{1}{R}\frac{\dd}{\dd r}\left(\Delta_r\frac{\dd R}{\dd r}\right)
+\frac{((r^2+a^2)\Omega-am)^2}{\Delta_r}
+\frac{k}{3}r^2\nonumber\\
&+\frac{1}{S}\frac{\dd}{\dd x}\left(\Delta_x\frac{\dd S}{\dd x}\right)
-\frac{(a(1-x^2)\Omega-m)^2}{\Delta_x}
+\frac{k}{3}a^2x^2.
\end{align}
The first line depends only on $r$, the second only on $x$, so both must equal a constant with opposite signs.  Writing that constant as $\lambda$ gives the stated equations.  The converse is immediate by reversing the calculation.
\end{proof}

\subsubsection{Radial and angular operators}

It is convenient to package \eqref{eq:radialeq-main}-\eqref{eq:angulareq-main} into differential operators.  For fixed frequency parameters $(\Omega,m)$ define
\begin{align}
\Lop^{(r)}_{\Omega,m}R&:= -\frac{\dd}{\dd r}\left(\Delta_r\frac{\dd R}{\dd r}\right)
-\left[\frac{((r^2+a^2)\Omega-am)^2}{\Delta_r}+\frac{k}{3}r^2\right]R,
\label{eq:radialop}\\
\Lop^{(x)}_{\Omega,m}S&:= -\frac{\dd}{\dd x}\left(\Delta_x\frac{\dd S}{\dd x}\right)
+\left[\frac{(a(1-x^2)\Omega-m)^2}{\Delta_x}-\frac{k}{3}a^2x^2\right]S.
\label{eq:angularop}
\end{align}
Then the separated equations read
\begin{equation}
\Lop^{(r)}_{\Omega,m}R=-\lambda R,
\qquad
\Lop^{(x)}_{\Omega,m}S=\lambda S.
\end{equation}
For real $\Omega$ the angular operator is formally symmetric on any interval where $\Delta_x>0$, and in the axisymmetric case $m=0$ it reduces to a weighted Sturm-Liouville operator with a real potential.  The radial operator is likewise formally symmetric on bounded intervals with suitable endpoint conditions, although on unbounded or horizon-penetrating domains one must pay attention to flux terms.

In the $k=0$ regime these operators simplify substantially: the angular operator loses its bounded potential correction, and the radial operator loses the $r^2$ term altogether.  The remaining geometry is therefore carried entirely by $\Delta_r$ and $\Delta_x$, which on the Theorem~1 family reduce to quadratic polynomials.  This simplification is one of the main reasons why the $k=0$ problem admits a particularly clean bounded-slab spectral theory.

\subsection{Wronskian identity}

Separated second-order equations admit a conserved Wronskian.  For the radial equation define
\begin{equation}
W[R_1,R_2](r):=\Delta_r\bigl(R_1R_2'-R_1'R_2\bigr).
\end{equation}
If $R_1$ and $R_2$ solve \eqref{eq:radialeq-main} with the same parameters $(\Omega,m,\lambda)$, then differentiating and using the equation shows
\begin{equation}
\frac{\dd}{\dd r}W[R_1,R_2]=0.
\end{equation}
In particular, for a complex solution $R$ one has the current
\begin{equation}
\mathfrak W[R]:=\Delta_r\left(\overline{R}R'-\overline{R}'R\right)
\label{eq:Wronskian}
\end{equation}
constant in $r$.  This is the radial flux associated with separated modes.

A completely analogous identity holds in the angular variable:
\begin{equation}
\frac{\dd}{\dd x}\Bigl(\Delta_x(\overline{S}S'-\overline{S}'S)\Bigr)=0
\end{equation}
for any two angular solutions with the same $(\Omega,m,\lambda)$.  On bounded angular intervals regularity at endpoints usually forces this constant to vanish, yielding self-adjoint spectral conditions.  In the radial variable, by contrast, nonzero Wronskian corresponds to nontrivial flux through horizons or through asymptotic ends.

\subsubsection*{The Kerr limit}

If $k=0$, $\Lambda=0$, and $C_1=\cdots=C_5=0$, then
\begin{equation}
\Delta_r=r^2-2Mr+a^2,
\qquad
\Delta_x=1-x^2,
\end{equation}
and \eqref{eq:radialeq-main}-\eqref{eq:angulareq-main} reduce to the familiar separated scalar equations on Kerr after the conventional reshuffling of the separation constant.  This check is reassuring for two reasons.  First, it confirms that the reduced theory constructed here is genuinely the scalar-wave theory of a Carter-type background in the appropriate limit.  Second, it shows that the Theorem~1 family inherits the algebraic separability structure of the Einstein subfamily even though it is generically non-Einstein.

\subsubsection*{Separation constant as a spectral parameter}

For real $\Omega$ and under endpoint conditions making the angular operator self-adjoint, the separation constant $\lambda$ is naturally viewed as an angular eigenvalue depending on $(\Omega,m)$ and on the background parameters.  One may therefore write
\begin{equation}
\lambda=\lambda_{\ell m}(\Omega),
\qquad \ell\in\N_0,
\end{equation}
in analogy with spheroidal harmonics.  The corresponding eigenfunctions form an orthogonal basis of the angular $L^2$ space under the appropriate weight, at least on bounded intervals with regular endpoints.  This turns the separated mode analysis into a partially diagonalized representation of the full conformal dynamics.

Later sections exploit this perspective in two ways.  First, on bounded slabs it allows one to recast the spatial operator as a direct sum of one-dimensional radial problems indexed by angular eigenmodes.  Second, in perturbative comparisons near the Einstein limit it identifies the Theorem~1 angular operator as a lower-order deformation of the usual spheroidal operator.  Although the current paper does not pursue full analytic perturbation theory for all complex frequencies, the self-adjoint real-frequency regime already supplies a robust spectral framework.

\section{Angular operator, spectral theory, and endpoint regularity}\label{sec:angular}

\subsection{Self-adjoint formulation for real frequencies}

Fix a real frequency $\Omega\in\R$ and an azimuthal number $m\in\Z$.  On any closed interval $I_x\subset(-1,1)$ or, more generally, on a compact angular segment contained in a regular exterior patch, the angular equation \eqref{eq:angulareq-main} can be written as the Sturm-Liouville problem
\begin{align}
-\frac{\dd}{\dd x}\left(\Delta_x\frac{\dd S}{\dd x}\right)+V_{\Omega,m}(x)S&=\lambda S\nonumber\\
V_{\Omega,m}(x)&:=\frac{(a(1-x^2)\Omega-m)^2}{\Delta_x}-\frac{k}{3}a^2x^2.\label{eq:SL-angular}
\end{align}
Because $\Delta_x>0$ on a regular patch and $V_{\Omega,m}$ is real-valued for real $\Omega$, the operator is formally symmetric on $L^2(I_x,\dd x)$.  With standard separated boundary conditions (Dirichlet, Neumann, Robin, or regularity conditions inherited from endpoint geometry), one obtains a self-adjoint realization with compact resolvent.

\begin{proposition}[Discrete angular spectrum]\label{prop:angularspectrum}
Let $I_x=[x_-,x_+]$ be a compact interval on which $\Delta_x>0$ and let $\Omega\in\R$, $m\in\Z$.  Any self-adjoint realization of \eqref{eq:SL-angular} with separated boundary conditions has purely discrete real spectrum
\begin{equation}
\lambda_0(\Omega,m)\leq \lambda_1(\Omega,m)\leq \cdots \nearrow +\infty,
\end{equation}
with each eigenvalue repeated according to multiplicity.  The corresponding eigenfunctions form an orthonormal basis of $L^2(I_x)$.
\end{proposition}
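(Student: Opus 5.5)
The approach is the standard regular Sturm--Liouville theory, packaged through a closed semibounded quadratic form so that the argument applies to every separated boundary condition at once. On $I_x=[x_-,x_+]$ the coefficient $\Delta_x$ is a polynomial, hence smooth, and it is strictly positive on the compact interval. So there are constants $0<d_-\le\Delta_x\le d_+$. For real $\Omega$ the potential $V_{\Omega,m}$ is real, continuous and bounded on $I_x$. This puts us squarely in the regular case, with no limit-point or limit-circle endpoint analysis needed.

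First, for each separated condition I write down the sesquilinear form
\begin{equation*}
\mathfrak s[S,T]=\int_{I_x}\bigl(\Delta_x S'\overline{T'}+V_{\Omega,m}S\overline T\bigr)\,\dd x+\text{(endpoint terms)}.
\end{equation*}
The domains are $H^1_0(I_x)$ for Dirichlet, $H^1(I_x)$ for Neumann, and $H^1(I_x)$ with the boundary terms $\kappa_\pm|S(x_\pm)|^2$ for Robin; mixed choices at the two ends are handled in the same way. The Robin endpoint terms are controlled by the trace inequality $|S(x_\pm)|^2\le\varepsilon\|S'\|^2+C_\varepsilon\|S\|^2$. Combined with $\Delta_x\ge d_-$ and the boundedness of $V_{\Omega,m}$, this shows that $\mathfrak s$ is closed and bounded below, with $\mathfrak s[S]+c\|S\|^2$ equivalent to the squared norm of the form domain. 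The associated self-adjoint operator is exactly the realization of \eqref{eq:SL-angular} with the given boundary condition; the natural boundary conditions are recovered by integrating by parts against smooth test functions. If the statement's ``any self-adjoint realization with separated boundary conditions'' is read as already given by the operator domain, I would note that regular separated conditions are precisely the ones arising from these forms.

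Next I show the resolvent is compact. The form domain is contained in $H^1(I_x)$, and since $I_x$ is bounded the inclusion $H^1(I_x)\hookrightarrow L^2(I_x)$ is compact by Rellich. It follows that $(L+c+1)^{-1}$ is compact and self-adjoint. The spectral theorem for compact self-adjoint operators then gives an orthonormal basis of eigenfunctions of $L^2(I_x)$, with eigenvalues $\mu_j\to0^+$ for the shifted resolvent. Translating back, the eigenvalues $\lambda_j=\mu_j^{-1}-c-1$ are real, bounded below, of finite multiplicity and tend to $+\infty$, and we list them in nondecreasing order. In fact each eigenvalue is simple for separated conditions, because two solutions of the second-order ODE satisfying the same single endpoint condition are linearly dependent. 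The statement does not require this, though.

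I expect the only step needing care to be the identification of the abstract form operator with the concrete boundary-value problem, in particular showing that its eigenfunctions are classical $C^2$ solutions. The plan here is elementary interior regularity: since $(\Delta_x S')'=(V-\lambda)S\in L^2$ and $\Delta_x$ is smooth and positive, $S'$ is absolutely continuous, and bootstrapping gives $S\in C^2$. Everything else is routine.
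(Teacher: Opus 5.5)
Your proposal is correct and follows essentially the paper's route. Both realize the operator through the closed semibounded form $\int_{I_x}(\Delta_x|S'|^2+V_{\Omega,m}|S|^2)\,\dd x$ on the boundary-condition domain, use Rellich compactness of $H^1(I_x)\hookrightarrow L^2(I_x)$, and then apply the compact-resolvent spectral theorem (the argument of the paper's Appendix); your version is slightly more careful in that it adds the Robin endpoint terms to the form, which the paper's displayed form omits, and your observation that the problem is regular makes the paper's aside about geometric endpoints unnecessary when $\Delta_x>0$ on all of $I_x$.
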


\begin{proof}
Form the quadratic form
\begin{equation}
q_{\Omega,m}[S]:=\int_{I_x}\left(\Delta_x|S'|^2+V_{\Omega,m}|S|^2\right)\dd x
\end{equation}
on the boundary domain determined by the chosen separated conditions.  On a compact interval with $\Delta_x>0$, this is a closed semibounded form with compact embedding into $L^2(I_x)$.  If one endpoint is geometric rather than literal, Proposition~\ref{prop:angularregularityverified} supplies the required Wronskian vanishing for the regular branch.  The compact-resolvent argument of Appendix~\ref{app:compactspectral}, applied now in one space dimension, therefore yields a self-adjoint realization with purely discrete real spectrum and an orthonormal eigenbasis.
\end{proof}

The proposition is elementary but useful.  It means that once the time and azimuthal frequencies are fixed, the angular variable does not create any continuous spectrum on bounded intervals.  All nontrivial analytic issues are then pushed to the radial equation and to the dependence of the angular eigenvalues on $\Omega$.

\subsubsection*{Rayleigh quotient and lower bounds}

For normalized eigenfunctions the separation constant can be characterized variationally:
\begin{align}
\lambda_0(\Omega,m)&=\inf_{S\neq 0} \frac{\displaystyle \int_{I_x}\left(\Delta_x|S'|^2+V_{\Omega,m}|S|^2\right)\dd x}{\displaystyle \int_{I_x}|S|^2\dd x}.\label{eq:rayleigh-angular}
\end{align}
This formula immediately yields qualitative bounds.  Since the potential $V_{\Omega,m}$ is bounded below on a compact interval, one obtains
\begin{equation}
\lambda_0(\Omega,m)\geq \inf_{I_x}V_{\Omega,m}.
\end{equation}
Likewise, because $\Delta_x$ is positive, one has the coercive estimate
\begin{equation}
\int_{I_x}\Delta_x|S'|^2\dd x \leq \left(\lambda_j(\Omega,m)-\inf_{I_x}V_{\Omega,m}\right)\|S_j\|_{L^2}^2
\end{equation}
for each eigenfunction $S_j$.  Such bounds are helpful when establishing continuity of the spectrum under parameter variation.

In the $k=0$ axisymmetric case $m=0$, the potential simplifies to
\begin{equation}
V_{\Omega,0}(x)=\frac{a^2(1-x^2)^2\Omega^2}{\Delta_x}\ge 0.
\end{equation}
Thus for fixed real $\Omega$ the angular operator is manifestly nonnegative on any compact interval where $\Delta_x>0$.  This positivity is one of the basic inputs in the later bounded-slab analysis, where the only obstruction to coercivity comes from the constant spatial mode rather than from the angular sector.

\subsection{Endpoint regularity}

When the interval endpoints correspond merely to coordinate boundaries inside a local patch, one is free to impose standard boundary conditions.  In globally regular axisymmetric geometries, however, the endpoints may represent axis points, and then regularity imposes a more geometric condition.  The precise condition depends on the normalization of the angular variable and on whether $\Delta_x$ vanishes simply at the endpoints.  Rather than commit to a single global model, we record the abstract regularity principle that is needed for analysis.

The endpoint regularity hypothesis is stated in \textbf{H7} of Assumption~\ref{ass:complete-list}.  Under that hypothesis the angular Wronskian vanishes at geometric endpoints for regular solutions, and the Sturm-Liouville operator becomes self-adjoint on the corresponding domain.  This is what happens in the standard Kerr case, where regularity of spheroidal harmonics at the poles replaces the need to prescribe explicit boundary values there.

\begin{proposition}[Verification under the simple endpoint model]\label{prop:angularregularityverified}
Suppose $x_\star$ is an angular endpoint with
\begin{equation}
\Delta_x(x)=\kappa_\star(x-x_\star)+O\bigl((x-x_\star)^2\bigr),
\qquad
\kappa_\star\neq 0,
\end{equation}
and suppose the admissible separated modes are required to lie on the Frobenius branch selected by smoothness in an endpoint chart.  Then the angular equation \eqref{eq:angulareq-main} has a regular singular point at $x_\star$, the regular branch is explicit, and the boundary Wronskian
\begin{equation}
\Delta_x(\overline S_1 S_2'-\overline S_1'S_2)
\end{equation}
vanishes at $x_\star$ for any two regular-branch solutions.  In particular the corresponding Sturm-Liouville realization is self-adjoint.  Thus the endpoint convention in \textbf{H7} of Assumption~\ref{ass:complete-list} is completely internal under this concrete endpoint model.
\end{proposition}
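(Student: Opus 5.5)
The plan is a Frobenius analysis at the endpoint followed by an explicit boundary-term computation.

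\textbf{Step 1: the endpoint is a regular singular point.} Set \(y=x-x_\star\) and divide \eqref{eq:angulareq-main} by \(\Delta_x\). The equation becomes
\begin{equation}
S''+\frac{\Delta_x'}{\Delta_x}S'+\frac{1}{\Delta_x}\Bigl[-\frac{(a(1-x^2)\Omega-m)^2}{\Delta_x}+\frac{k}{3}a^2x^2+\lambda\Bigr]S=0 .
\end{equation}
Since \(\Delta_x=\kappa_\star y+O(y^2)\) with \(\kappa_\star\neq0\), the coefficient \(\Delta_x'/\Delta_x\) has a simple pole with residue \(1\). The zeroth-order coefficient has at most a double pole, with leading part \(-c_\star^2/(\kappa_\star^2y^2)\), where \(c_\star:=a(1-x_\star^2)\Omega-m\). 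Hence \(x_\star\) is a regular singular point. The indicial equation is \(s^2-c_\star^2/\kappa_\star^2=0\), so the exponents are \(s_\pm=\pm c_\star/\kappa_\star\). When \(c_\star=0\) they coincide and equal \(0\), giving one analytic branch and one logarithmic branch.

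\textbf{Step 2: identify the regular branch explicitly.} By the Frobenius theorem, the branch selected by smoothness in the endpoint chart is
\begin{equation}
S_{\rm reg}(x)=y^{|c_\star/\kappa_\star|}\,F(y),
\end{equation}
where \(F\) is analytic (or smooth, depending on the coefficients) and \(F(0)=1\). The other branch is excluded by the regularity requirement: it behaves like \(y^{-|c_\star/\kappa_\star|}\), or like \(\log y\) in the resonant case \(c_\star=0\). Because the coefficients depend only on \(\Omega,m,\lambda\) through the equation, this branch is explicit up to the power series coefficients.

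\textbf{Step 3: the boundary Wronskian vanishes.} Take two regular-branch solutions \(S_1,S_2\), possibly with different \(\lambda\), as needed for self-adjointness. Write \(\nu:=|c_\star/\kappa_\star|\ge0\).
\begin{itemize}
\item If \(\nu>0\), then \(S_j=O(y^\nu)\) and \(S_j'=O(y^{\nu-1})\). Therefore \(\Delta_x(\overline S_1S_2'-\overline S_1'S_2)=O(y\cdot y^{2\nu-1})=O(y^{2\nu})\to0\).
\item If \(\nu=0\), then \(S_j\) and \(S_j'\) are bounded, and the factor \(\Delta_x=O(y)\) kills the bracket.
\end{itemize}
This step is the main point to check carefully. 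The concern is that the exponent of the second regular solution could differ from that of the first when the \(\lambda\)'s differ. It does not, because the indicial exponent is independent of \(\lambda\): \(\lambda\) enters only with a simple pole.

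\textbf{Step 4: self-adjointness.} Apply the Lagrange identity
\begin{equation}
\int(\mathcal L S_1)\overline{S_2}-S_1\overline{\mathcal L S_2}=\bigl[\Delta_x(\cdots)\bigr]_{\partial I_x}.
\end{equation}
This identity holds on the maximal domain restricted to the regular branch. The boundary terms vanish by Step 3 at geometric endpoints and by the separated conditions at literal endpoints. So the operator is symmetric.

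The domain is then the limit-circle restriction when \(\nu<1\), or limit-point when \(\nu\ge1\). In the limit-point case the singular branch fails to be \(L^2\) near the endpoint, or fails the form condition. In either case the standard Weyl theory shows that the regular-branch domain is a self-adjoint realization. This completes the verification of \textbf{H7} under the simple endpoint model.
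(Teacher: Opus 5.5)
Your argument is the paper's own route. The appendix lemma on simple angular endpoints makes the same reduction to \(\kappa_\star(yS')'-\nu_\star^2S/(\kappa_\star y)=0\) and obtains the roots \(\pm\nu_\star/\kappa_\star\). It then bounds \(\Delta_x(\overline S_1S_2'-\overline S_1'S_2)\) by a positive power of \(y\) on the regular branch and drops the boundary term from the Green identity. Your separate treatment of the resonant case \(c_\star=0\) is a small improvement, because the paper's lemma is stated only for \(\Re s_+>0\).

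One claim in Step~4 is wrong and should be corrected. In \(L^2(I_x,\dd x)\) the singular branch \(|y|^{-\nu}\) is square-integrable near \(x_\star\) exactly when \(\nu<1/2\). So the endpoint is limit-circle for \(0\le\nu<1/2\) and limit-point for \(\nu\ge1/2\); the cutoff is not at \(\nu=1\). Kerr with \(|m|=1\) already has \(\nu=1/2\) and is limit-point.

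The conclusion still holds. At a limit-point endpoint the boundary form vanishes on the whole maximal domain. At a limit-circle endpoint, define the domain by \(\lim_{x\to x_\star}\Delta_x(fS_{\rm reg}'-f'S_{\rm reg})=0\) for one fixed real regular solution \(S_{\rm reg}\). Then use the Pl\"ucker identity to get vanishing of \(\Delta_x(\overline f g'-\overline{f}'g)\) at \(x_\star\) for all domain elements \(f,g\). This step is what turns your Step~3, which only treats pairs of solutions, into symmetry of the operator. The paper glosses over it in the same way.
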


\begin{proof}
Appendix~\ref{app:regularsingular} proves the relevant Frobenius expansion and the Wronskian vanishing statement.
\end{proof}

\subsubsection*{Dependence on parameters}

Because the coefficients of \eqref{eq:SL-angular} depend smoothly on $(a,\Lambda,k,C_1,\dots,C_5,\Omega,m)$ away from degeneracies of $\Delta_x$, classical perturbation theory implies that simple eigenvalues vary smoothly with the parameters, while repeated eigenvalues vary continuously and admit local analytic branches after suitable diagonalization.  We do not require the full Kato theory in detail, but the following consequence is useful.

\begin{proposition}[Continuous dependence of angular eigenvalues]\label{prop:angularcontinuity}
Fix a compact parameter set on which $\Delta_x$ stays uniformly positive on a fixed compact interval $I_x$.  Then the angular eigenvalues and the spectral projectors depend continuously on the parameters in the operator norm topology.
\end{proposition}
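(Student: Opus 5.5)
The plan is to realize the angular operator through its quadratic form. The parameter dependence then enters only through a bounded multiplication operator, and standard analytic perturbation theory finishes the argument. Fix the compact interval \(I_x\) and the separated boundary conditions; these determine a closed form domain \(D\subset H^1(I_x)\) (for instance \(H^1_0\), \(H^1\), or \(H^1\) with a Robin boundary term). The compact parameter set is denoted \(K\), and each \(p\in K\) is the tuple \((a,\Lambda,k,C_1,\dots,C_5,\Omega)\) for a fixed \(m\in\Z\); if \(\Omega\) is to vary it should be restricted to a compact real set. Write the form of \eqref{eq:SL-angular} as
\begin{equation}
q_p[S]=\int_{I_x}\bigl(\Delta_x^{(p)}|S'|^2+V^{(p)}_{\Omega,m}|S|^2\bigr)\,\dd x .
\end{equation}
By hypothesis \(0<c_-\le\Delta_x^{(p)}\le c_+\) uniformly on \(K\times I_x\). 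Since \(\Delta_x\) is polynomial in \(x\) and the parameters, \(V^{(p)}_{\Omega,m}\) is continuous and uniformly bounded on \(K\times I_x\). Hence the forms \(q_p\) are uniformly semibounded and uniformly equivalent to the \(H^1\) norm (plus the boundary term) on the fixed domain \(D\). In particular, every associated self-adjoint operator \(H_p\) has the same form domain and a compact resolvent, as in Proposition~\ref{prop:angularspectrum}.

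The central step is norm-resolvent continuity of \(p\mapsto H_p\). Take \(\mu\) below the uniform lower bound of all \(q_p\), and compare two parameters \(p\) and \(p'\). For \(f,g\in L^2\) put \(S=(H_p-\mu)^{-1}f\) and \(T=(H_{p'}-\mu)^{-1}g\). Then
\begin{equation}
\bigl\langle (H_p-\mu)^{-1}f-(H_{p'}-\mu)^{-1}f,\,g\bigr\rangle
= q_{p'}(S,T)-q_p(S,T),
\end{equation}
and the right-hand side is bounded by
\begin{equation}
\Bigl(\|\Delta_x^{(p)}-\Delta_x^{(p')}\|_{L^\infty}+\|V^{(p)}-V^{(p')}\|_{L^\infty}\Bigr)\|S\|_{H^1}\|T\|_{H^1}.
\end{equation}
Uniform coercivity gives \(\|S\|_{H^1}\le C\|f\|\) and \(\|T\|_{H^1}\le C\|g\|\), so the resolvent difference is \(O(\|\Delta_x^{(p)}-\Delta_x^{(p')}\|_\infty+\|V^{(p)}-V^{(p')}\|_\infty)\) in operator norm. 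Uniform continuity of the coefficients on the compact set \(K\times I_x\) makes this tend to zero. The boundary term of a Robin realization, if present, is held fixed or depends continuously on \(p\), and is handled by the trace inequality in the same way.

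The final step transfers this to the spectrum. For a compact self-adjoint resolvent \(R_p=(H_p-\mu)^{-1}\), the min-max principle gives the estimate \(|\nu_j(R_p)-\nu_j(R_{p'})|\le\|R_p-R_{p'}\|\) on the ordered eigenvalues. Since \(\lambda_j=\mu+1/\nu_j\) and the \(\nu_j\) are bounded away from \(0\) locally in \(j\), each \(\lambda_j(p)\) is continuous. For the spectral projectors, fix \(p_0\) and an eigenvalue cluster (a single eigenvalue, or a repeated one) isolated by a circle \(\Gamma\) from the rest of \(\spec H_{p_0}\). Norm-resolvent continuity shows that \(\Gamma\) stays in the resolvent set for nearby \(p\), with uniform resolvent bounds on it. The Riesz projectors
\begin{equation}
P_p=\frac{1}{2\pi\ii}\oint_\Gamma (z-H_p)^{-1}\,\dd z
\end{equation}
are therefore norm-continuous at \(p_0\).

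I expect the main obstacle to be the interpretation of continuity of projectors at eigenvalue crossings. Individual eigenprojectors can jump when eigenvalues collide, so the statement has to be read, and proved, for total projectors of isolated clusters; for simple eigenvalues these are exactly the individual projectors. A second possible obstacle arises if an endpoint is geometric in the sense of Proposition~\ref{prop:angularregularityverified}, where \(\Delta_x\) vanishes. That case is excluded by the uniform positivity hypothesis, so the fixed form domain \(D\) is legitimate, and I would note this explicitly.
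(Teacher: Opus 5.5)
Your proof is correct. It ends where the paper does, with norm-resolvent continuity followed by Riesz projectors for isolated spectral data, but it gets the resolvent continuity by a different mechanism.

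The paper works at the operator level. It treats the Sturm--Liouville expressions as a norm-continuous family in \(\mathcal B(\mathcal D,L^2(I_x))\) on a common operator domain \(\mathcal D=\{S\in H^2(I_x):\text{endpoint conditions}\}\), and then appeals to standard perturbation theory for closed operators with a common domain. That route is shorter, but it tacitly relies on three things:
\begin{itemize}
\item \(H^2\) regularity of each self-adjoint realization;
\item one more derivative of the coefficients, since the expanded operator contains \(\Delta_x'S'\);
\item an operator domain that is literally independent of the parameters. This requires the boundary conditions to be fixed in the variables \((S,S')\), and would fail for Robin data normalized as \(\alpha S+\beta\Delta_xS'=0\), the convention the paper uses radially.
\end{itemize}

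Your form-level argument needs less: a common form domain in \(H^1\), \(L^\infty\)-continuity of \(\Delta_x\) and \(V_{\Omega,m}\), and the trace inequality for any parameter-dependent boundary term. In exchange you get an explicit Lipschitz-type resolvent bound. Via the Weyl/min--max inequality you also get continuity of each ordered eigenvalue \(\lambda_j(p)\) directly, without first isolating clusters.

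Your reading that continuity of projectors must be understood for total projectors of isolated clusters is the right interpretation. It matches the paper's phrase ``isolated spectral data.''
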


\begin{proof}
On such a compact set the differential operators \eqref{eq:SL-angular} form a norm-continuous family from $H^2(I_x)$ to $L^2(I_x)$ with common domain determined by the chosen endpoint conditions.  The associated self-adjoint resolvents therefore vary continuously, and the continuity of isolated spectral data follows from standard perturbation theory.
\end{proof}

This continuity is sufficient for all perturbative arguments later in the paper.  In particular it shows that angular eigenvalues in the Theorem~1 family are small deformations of the familiar spheroidal eigenvalues near the Einstein limit.

\subsubsection*{Orthogonality and mode decomposition}

Let $\{S_j^{\Omega,m}\}_{j\geq 0}$ be an orthonormal basis of angular eigenfunctions.  Any sufficiently regular solution of the conformal equation can then be decomposed formally as
\begin{equation}
u(t,\phi,r,x)=\sum_{m\in\Z}\sum_{j\geq 0}u_{jm}(t,r)\,\ee^{\ii m\phi}S_j^{\Omega,m}(x),
\end{equation}
with the understanding that the precise meaning of the expansion depends on whether one works with fixed real frequency or with a time-dependent functional calculus for the angular operator.  On bounded slabs this decomposition turns the spatial problem into a countable family of radial equations coupled only through the choice of frequency parameter.  Even when one does not exploit the expansion explicitly, it is conceptually useful because it isolates the radial mechanism responsible for the low-frequency threshold structure.

\subsubsection*{\texorpdfstring{An elementary $k=0$ positivity statement}{An elementary k=0 positivity statement}}

In the $k=0$ axisymmetric case the angular potential is nonnegative, so for every real $\Omega$ and every test function $S$ one has
\begin{align}
\int_{I_x}\left(\Delta_x|S'|^2+\frac{a^2(1-x^2)^2\Omega^2}{\Delta_x}|S|^2\right)\dd x&\geq&0.\label{eq:angularpositive}
\end{align}
This simple inequality records the main structural fact needed later: no negative contribution is created by the angular operator in the $k=0$ axisymmetric theory.

\section{Radial equation, flux identities, and boundary data}\label{sec:radial}

\subsection{The radial operator on finite and infinite intervals}

For a separated mode, the radial equation is
\begin{align}
\frac{\dd}{\dd r}\left(\Delta_r\frac{\dd R}{\dd r}\right)+\left[\frac{((r^2+a^2)\Omega-am)^2}{\Delta_r}+\frac{k}{3}r^2-\lambda\right]R&=0.\label{eq:radialeq-again}
\end{align}
The character of this equation depends strongly on the geometry of the radial interval.  On a bounded slab with $\Delta_r>0$ the leading coefficient is smooth and strictly positive, so the equation is an ordinary regular Sturm-Liouville equation after moving the frequency-dependent potential to one side.  Near a simple root of $\Delta_r$, however, one encounters a regular singular point corresponding to a horizon-like boundary.  At large $r$ in the $k=0$ regime, the asymptotic behavior is governed entirely by the quadratic polynomial $\Delta_r$, so one must distinguish Kerr-like, weakly confining, and finite-end geometries according to the sign of its leading coefficient.

These distinctions matter because all energy and boundedness statements are ultimately statements about allowable radial boundary conditions.  Reflecting boundary conditions lead to self-adjoint spectral theory on bounded slabs.  Ingoing or outgoing conditions at simple roots of $\Delta_r$ correspond to nontrivial flux through horizons.  Decay or radiative conditions at infinity depend on whether the end is asymptotically flat-like, weakly confining, or cut off by another regular boundary.  One of the reasons for expanding the original note into a much longer paper is to keep these cases conceptually separate.

\subsubsection*{Wronskian flux and conserved radial current}

For any complex solution $R$ of \eqref{eq:radialeq-again} the Wronskian current \eqref{eq:Wronskian} is constant in $r$:
\begin{equation}
\mathfrak W[R]=\Delta_r(\overline{R}R'-\overline{R}'R)=\text{const}.
\end{equation}
Multiplying the radial equation by $\overline R$, subtracting the complex conjugate equation, and integrating over an interval $[r_1,r_2]$ yields
\begin{equation}
\mathfrak W[R](r_2)-\mathfrak W[R](r_1)=0.
\label{eq:wronskianconservation}
\end{equation}
Thus any boundary condition at one endpoint induces a corresponding flux at the other.

When $\Omega\in\R$, the quantity $\frac{1}{2\ii}\mathfrak W[R]$ is the radial energy flux associated with the separated mode.  Reflecting boundary conditions make it vanish.  Ingoing boundary conditions at a horizon select a specific sign.  This is the one-dimensional shadow of the conserved stress-energy current for the full scalar field.

\subsubsection*{Boundary conditions on bounded slabs}

Let $I_r=[r_1,r_2]$ be a compact interval contained in a regular exterior patch.  Three classical boundary conditions are relevant for the analysis below.
\begin{enumerate}[label=(\roman*)]
  \item \textbf{Dirichlet:} $R(r_1)=R(r_2)=0$.
  \item \textbf{Neumann:} $R'(r_1)=R'(r_2)=0$.
  \item \textbf{Robin/reflecting:} $\alpha_iR(r_i)+\beta_i\Delta_r(r_i)R'(r_i)=0$ with real coefficients $(\alpha_i,\beta_i)$, not both zero.
\end{enumerate}
Each of these conditions annihilates the boundary contribution in the integration by parts of the self-adjoint radial operator.  In the time-dependent problem they correspond to vanishing radial energy flux.  The bounded-slab stability theorem proved later is independent of which one of these reflecting classes is used, provided the constant test function remains admissible or can be approximated by admissible functions.

\subsection{Near-horizon asymptotics}

Suppose $r_h$ is a simple zero of $\Delta_r$, so
\begin{equation}
\Delta_r(r)=\kappa_h(r-r_h)+O((r-r_h)^2),
\qquad \kappa_h\neq 0.
\end{equation}
Then \eqref{eq:radialeq-again} has a regular singular point at $r=r_h$.  A Frobenius analysis gives local solutions of the form
\begin{equation}
R(r)\sim (r-r_h)^{\pm \ii\sigma_h},
\qquad
\sigma_h:=\frac{(r_h^2+a^2)\Omega-am}{\kappa_h}.
\end{equation}
The sign choice corresponds to ingoing or outgoing behavior in a horizon-adapted coordinate system.  For real $\Omega$ these solutions oscillate logarithmically in $r-r_h$, and the Wronskian evaluates to a constant proportional to $\pm 2\sigma_h$.

The detailed geometric interpretation of these solutions depends on whether the zero of $\Delta_r$ represents an event horizon, cosmological horizon, or another coordinate singularity.  The present paper does not require a full global causal analysis of every parameter regime.  What matters analytically is that a simple radial zero carries a natural one-parameter family of regular singular behaviors and that physical boundary conditions at such a point are flux conditions rather than reflective ones.

\subsubsection*{\texorpdfstring{Large-$r$ asymptotics in the $k=0$ regime}{Large-r asymptotics in the k=0 regime}}

In the zero-curvature regime the quartic contribution to $\Delta_r$ disappears.  The radial asymptotics are therefore controlled by the quadratic polynomial
\begin{equation}
\Delta_r(r)=\alpha_2r^2+\alpha_1r+\alpha_0.
\end{equation}
If $\alpha_2>0$, then large $r$ resembles a Kerr-like or weakly confining end depending on the lower-order coefficients; if $\alpha_2=0$, then the geometry is controlled by the linear and constant terms and one is effectively on a finite or asymptotically cylindrical end; if $\alpha_2<0$, then $\Delta_r$ must eventually change sign and the regular exterior patch terminates at a finite outer root.  Thus even inside the $k=0$ family there are several radial geometries, and any global theorem must take them into account.

For the bounded-slab analysis of the present paper, however, these distinctions are secondary.  The main theorem uses only that $\Delta_r$ is smooth and positive on the chosen interval.  Still, the $k=0$ asymptotic classification is worth keeping in mind because it explains why the bounded-domain theorem is both robust and genuinely geometric: it does not depend on choosing a preferred global end structure.

\subsubsection*{A coercive identity on bounded intervals}

For real $\Omega$ and any solution $R$ of \eqref{eq:radialeq-again}, multiplication by $\overline R$ and integration over a bounded interval yield
\begin{align}
{}&&\int_{r_1}^{r_2}\left(\Delta_r|R'|^2-
\left[\frac{((r^2+a^2)\Omega-am)^2}{\Delta_r}+\frac{k}{3}r^2-\lambda\right]|R|^2\right)\dd r\nonumber\\
&&\qquad=\left[\Delta_r\overline R R'\right]_{r_1}^{r_2}.
\label{eq:radialenergyidentity}
\end{align}
Under any reflecting boundary condition the right-hand side vanishes, and the identity becomes a variational characterization of the admissible radial modes.  Later, when combined with the angular spectral decomposition, this identity leads to the weighted Rayleigh quotient governing the bounded-slab spectral problem.

\section{Verification of the abstract bounded-slab theorem on strictly stationary Carter slabs}\label{sec:strictstationary}

\subsection{\texorpdfstring{The full $k=0$ equation on $S^1_\phi\times\Omega$}{The full k=0 equation on S1-phi x Omega}}

We now drop the axisymmetry restriction and retain the full $\phi$-dependence of the conformal wave equation.  On a periodic slab
\begin{equation}
\Sigma=S^1_\phi\times\Omega,
\qquad
\Omega=[r_-,r_+]\times[x_-,x_+],
\end{equation}
define
\begin{align}
B(r,x)&:=\rho^2 g^{t\phi} =-\frac{a(r^2+a^2)}{\Delta_r}+\frac{a(1-x^2)}{\Delta_x}\nonumber\\
\Phi(r,x)&:=\rho^2 g^{\phi\phi} =-\frac{a^2}{\Delta_r}+\frac{1}{\Delta_x}.\label{eq:BPhidef}
\end{align}
Then \eqref{eq:k0master} becomes
\begin{align}
-Au_{tt}+2B\,u_{t\phi}+\Phi\,u_{\phi\phi} +\partial_r(\Delta_r\partial_r u)+\partial_x(\Delta_x\partial_xu)&=0.\label{eq:fullPDE-k0}
\end{align}
Because
\begin{align}
g_{tt}&=\frac{a^2\Delta_x-\Delta_r}{\rho^2}\nonumber\\
\Phi&=\frac{\Delta_r-a^2\Delta_x}{\Delta_r\Delta_x} =\frac{-\rho^2 g_{tt}}{\Delta_r\Delta_x}\label{eq:Phi-gtt}
\end{align}
the positivity of $\Phi$ is equivalent to the condition $g_{tt}<0$, namely that the stationary Killing field $\partial_t$ stays timelike on the slab.

\begin{definition}[Strictly stationary slab]\label{def:strict-slab}
A \emph{strictly stationary slab} is a product $\Sigma=S^1_\phi\times\Omega$, with $\Omega=[r_-,r_+]\times[x_-,x_+]$ bounded and connected, such that the metric coefficients extend smoothly to a neighborhood of $\overline\Omega$ and
\begin{equation}
\rho^2>0,\qquad
\Delta_r>0,\qquad
\Delta_x>0,\qquad
A>0,\qquad
\Phi>0
\end{equation}
on $\overline\Omega$.
\end{definition}

The inequalities $A>0$ and $\Phi>0$ are the precise bounded-domain conditions needed for a positive conserved energy.  The first says that the $t=\mathrm{const}$ slices are spacelike; the second says that $\partial_t$ is timelike, so the slab contains no ergoregion.  The condition $\Delta_x>0$ is also a genuine interior-slab condition.  If $x=\pm1$ is a true rotational axis and $\Delta_x$ vanishes there, that endpoint is not part of Theorem~\ref{thm:fullmain} unless one replaces the present slab by a separately constructed regular-axis form realization.  The separated regular-singular discussion later in the paper is therefore not an additional hypothesis of the full bounded-slab theorem.

The displayed sign conditions also mark exactly where the proof would fail.  If \(A\) lost positivity, the kinetic part of the energy would no longer control \(u_t\).  If \(\Phi\) changed sign, the azimuthal derivative term could not be bounded below by \(\|u_\phi\|_{L^2}^2\).  Thus the theorem is not using an implicit small-rotation or separation assumption; it is using a concrete positive-energy region where each derivative appearing in the form is controlled.

\subsubsection*{Closed reflecting form domains and positive energy}\label{subsec:closed-form-domain-main}

All boundary conditions used in the proved theorem are imposed through a closed quadratic form.  Concretely, throughout the full slab argument we use a closed subspace of the energy Sobolev space, dense in $L^2(\Sigma)$,
\begin{equation}
V_{\mathrm{full}}\subset H^1(\Sigma)
\end{equation}
which consists of $\phi$-periodic functions, is invariant under complex conjugation, contains the constant function $1$, and is equipped with the closed spatial form
\begin{align}
q[u,\eta]&=\int_\Sigma\left(\Phi u_\phi\overline{\eta_\phi} +\Delta_r u_r\overline{\eta_r} +\Delta_x u_x\overline{\eta_x}\right)\,\dd\phi\,\dd r\,\dd x.
\end{align}
Thus ``reflecting'' is not an informal extra boundary condition: it means that the divergence terms in the strong equation are represented by this form.  Natural Neumann realizations, regular endpoint realizations, and Robin realizations admitting the constant mode fit this framework.  A pure homogeneous Dirichlet realization is a different coercive problem because it removes the threshold constant and is not the realization used in Theorem~\ref{thm:fullmain}.  The complete list of theorem-level exterior conditions is given in Section~\ref{sec:ledger}; the bounded-slab form-domain hypotheses are Assumption~\ref{ass:complete-list}.

\subsection{Core-free form interpretation of reflection}\label{subsec:core-free-reflection}

The preceding paragraph is the convention under which all subsequent integrations by parts are to be read.  We record it as a lemma because it prevents a common ambiguity in bounded-domain wave arguments.

\begin{lemma}[Form reflection is the primary boundary condition]\label{lem:form-reflection-primary}
Let \(V_{\mathrm{full}}\subset H^1(\Sigma)\) be the closed form domain in Assumption~\ref{ass:complete-list}.  Define
\begin{align}
\langle M_A f,\eta\rangle&:=\int_\Sigma A f\overline\eta\nonumber\\
\langle \mathcal H_{\mathrm{full}}u,\eta\rangle&:=q[u,\eta]\nonumber\\
\langle \mathcal C_{\mathrm{full}}v,\eta\rangle&:=2\int_\Sigma Bv\overline{\eta_\phi}.
\end{align}
Then the weak equation
\begin{align}
M_Au_{tt}+\mathcal C_{\mathrm{full}}u_t+\mathcal H_{\mathrm{full}}u&=0 \quad\hbox{in }V_{\mathrm{full}}'\label{eq:operator-weak-primary}
\end{align}
is the definition of the reflected bounded-slab problem.  If, in addition, \(u\) is smooth up to the boundary and the domain is the natural Neumann realization, then \eqref{eq:operator-weak-primary} is equivalent to the strong equation \eqref{eq:fullPDE-k0} together with the conormal boundary condition
\begin{equation}
\Delta_r u_r n_r+\Delta_xu_x n_x=0
\quad\hbox{on }\partial\Omega,
\end{equation}
where \((n_r,n_x)\) is the Euclidean outward normal in the \((r,x)\)-rectangle.  For a general closed subspace \(V_{\mathrm{full}}\), no pointwise trace condition is assumed beyond membership in that form domain.
\end{lemma}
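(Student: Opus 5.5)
The plan has two parts. The first clause of Lemma~\ref{lem:form-reflection-primary} is definitional, so it only needs to be checked for consistency. The second clause, the equivalence for smooth solutions in the Neumann realization, is proved by integration by parts followed by a density argument.

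\textbf{The operator equation is well defined.} Under Assumption~\ref{ass:complete-list} the coefficients $A,B,\Phi,\Delta_r,\Delta_x$ are bounded on $\overline\Omega$. Hence $M_A: L^2(\Sigma)\to V_{\rm full}'$, $\mathcal C_{\rm full}: L^2(\Sigma)\to V_{\rm full}'$ and $\mathcal H_{\rm full}: V_{\rm full}\to V_{\rm full}'$ are bounded, with $|q[u,\eta]|\le C\|u\|_{H^1}\|\eta\|_{H^1}$ and $|2\int B v\overline{\eta_\phi}|\le C\|v\|_{L^2}\|\eta\|_{H^1}$. The relation \eqref{eq:operator-weak-primary} is therefore a meaningful identity in $V_{\rm full}'$ for $u\in C(\R;V_{\rm full})$ with $u_t\in C(\R;L^2)$, interpreted distributionally in $t$. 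By Proposition~\ref{prop:k0-full-coordinate-verification}, it coincides with the weak identity \eqref{eq:k0-full-weak-verification} after the periodic rewriting of the $B$ term. Declaring it the definition of the reflected problem is thus consistent with the strong equation, and it involves no trace condition beyond $u(t)\in V_{\rm full}$. This settles the last sentence of the lemma.

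\textbf{Strong implies weak.} Take $V_{\rm full}$ to be the natural Neumann domain, i.e.\ all $\phi$-periodic $H^1(\Sigma)$ functions, and let $u$ be smooth up to the boundary. Fix $\eta\in C^\infty(\overline\Sigma)$, periodic in $\phi$. Multiply \eqref{eq:fullPDE-k0} by $-\overline\eta$ and integrate. The $\phi$-terms integrate by parts with no boundary contribution, by periodicity and because $\partial_\phi\Phi=\partial_\phi B=0$. The divergence theorem on the rectangle $\Omega$ gives
\[
-\int_\Sigma\bigl(\partial_r(\Delta_ru_r)+\partial_x(\Delta_xu_x)\bigr)\overline\eta
=\int_\Sigma\bigl(\Delta_ru_r\overline{\eta_r}+\Delta_xu_x\overline{\eta_x}\bigr)
-\int_{S^1\times\partial\Omega}(\Delta_ru_rn_r+\Delta_xu_xn_x)\overline\eta\,\dd s\,\dd\phi .
\]
If the strong equation and the conormal condition hold, the boundary term vanishes. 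We obtain \eqref{eq:operator-weak-primary} tested against smooth $\eta$, and density of smooth periodic functions in $V_{\rm full}$ extends this to all of $V_{\rm full}$.

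\textbf{Weak implies strong.} This is the step requiring some care. First test \eqref{eq:operator-weak-primary} with $\eta\in C_c^\infty$ supported in the interior. The boundary term is absent, so the strong PDE \eqref{eq:fullPDE-k0} holds pointwise, since $u$ is smooth. Subtracting the PDE from the weak identity for a general smooth periodic $\eta$ then leaves
\[
\int_{S^1\times\partial\Omega}(\Delta_ru_rn_r+\Delta_xu_xn_x)\overline\eta=0
\]
for all such $\eta$. Traces of smooth periodic functions are dense in $L^2(S^1\times\partial\Omega)$ on each edge. The conormal derivative is continuous on each open edge, so it vanishes there.

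The only subtlety is the four corners of the rectangle, where the normal is undefined. These have measure zero in the boundary, so the conormal condition is asserted on the open edges, which is how I will read ``on $\partial\Omega$''.
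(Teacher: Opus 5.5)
Your proposal is correct and follows essentially the same route as the paper. You integrate by parts against smooth periodic tests so that the conormal boundary term vanishes. You use the fundamental lemma with interior tests to recover the strong equation, and then use arbitrary boundary-admissible tests to isolate the conormal functional, with the general closed-form case treated as a definition. Your additions just make explicit details the paper leaves implicit: boundedness of $M_A$, $\mathcal C_{\mathrm{full}}$, $\mathcal H_{\mathrm{full}}$ into $V_{\mathrm{full}}'$, density of smooth periodic functions to pass to all of $V_{\mathrm{full}}$, and reading the conormal condition on the open edges away from the corners.
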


\begin{proof}
For smooth functions satisfying the displayed conormal condition, multiply \eqref{eq:fullPDE-k0} by \(-\overline\eta\), integrate over \(\Sigma\), use periodicity in \(\phi\), and integrate the \(r\)- and \(x\)-divergence terms.  The boundary contribution is
\begin{equation}
-\int_{S^1_\phi\times\partial\Omega}
(\Delta_ru_r n_r+\Delta_xu_x n_x)\overline\eta\,\dd S,
\end{equation}
which is zero.  The remaining terms are exactly \eqref{eq:operator-weak-primary}.  Conversely, if the weak identity holds for all compactly supported tests in the interior and \(u\) is smooth, the strong PDE follows by the fundamental lemma of the calculus of variations.  Testing against arbitrary smooth boundary-admissible \(\eta\) then leaves precisely the conormal boundary functional above, hence the natural boundary condition.  For a closed form domain the first implication is not used as an assumption: the closed form itself defines the realization, and \eqref{eq:operator-weak-primary} is meaningful without assigning pointwise traces.
\end{proof}

\begin{lemma}[Boundedness and skewness of the gyroscopic operator]\label{lem:gyro-bounded-main}
The form
\begin{equation}
\mathfrak c[v,\eta]:=2\int_\Sigma Bv\overline{\eta_\phi}
\end{equation}
extends continuously from \(L^2(\Sigma)\times V_{\mathrm{full}}\) to \(\mathbb C\), and its restriction to \(V_{\mathrm{full}}\times V_{\mathrm{full}}\) satisfies
\begin{equation}
\mathfrak c[v,w]= -\overline{\mathfrak c[w,v]},
\qquad
\Re\mathfrak c[v,v]=0.
\end{equation}
Consequently \(\mathcal C_{\mathrm{full}}:L^2(\Sigma)\to V_{\mathrm{full}}'\) is a bounded skew gyroscopic perturbation of the conservative wave equation.
\end{lemma}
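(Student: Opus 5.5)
The plan is to prove the boundedness of \(\mathfrak c\) by Cauchy--Schwarz and then obtain the skew identity from one integration by parts in the periodic variable \(\phi\), using \(\partial_\phi B=0\).

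\emph{Boundedness.} On the strictly stationary slab of Definition~\ref{def:strict-slab}, the function \(B\) is given explicitly by \eqref{eq:B-block-expanded}. It is a rational function of \(r,x\) whose denominators \(\Delta_r,\Delta_x\) are bounded below on the compact closure \(\overline\Omega\). Hence \(\|B\|_{L^\infty(\Sigma)}<\infty\). Cauchy--Schwarz then gives
\[
|\mathfrak c[v,\eta]|\le 2\|B\|_{L^\infty}\|v\|_{L^2(\Sigma)}\|\eta_\phi\|_{L^2(\Sigma)}\le 2\|B\|_{L^\infty}\|v\|_{L^2(\Sigma)}\|\eta\|_{H^1(\Sigma)}.
\]
The form is therefore a bounded sesquilinear form on \(L^2(\Sigma)\times V_{\rm full}\), because \(V_{\rm full}\) carries the \(H^1\) norm. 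The induced map \(\mathcal C_{\rm full}:L^2\to V_{\rm full}'\) is bounded with the same constant.

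\emph{Skewness.} First take \(v,w\) smooth up to the boundary, \(\phi\)-periodic, and in \(V_{\rm full}\). Integrate by parts in \(\phi\) alone, with \(r,x\) held fixed. Periodicity kills the boundary terms, and because \(B\) does not depend on \(\phi\), no derivative of \(B\) appears. This gives
\[
\int_\Sigma Bv\overline{w_\phi}=-\int_\Sigma Bv_\phi\overline w.
\]
The right-hand side equals \(-\overline{\int_\Sigma B w\overline{v_\phi}}\), since \(B\) is real-valued. Multiplying by \(2\) yields \(\mathfrak c[v,w]=-\overline{\mathfrak c[w,v]}\). Setting \(w=v\) gives \(\mathfrak c[v,v]=-\overline{\mathfrak c[v,v]}\), so \(\Re\mathfrak c[v,v]=0\).

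The \(\phi\)-integration by parts involves no \(r,x\) boundary terms, so no trace information on \(\partial\Omega\) is used. This matters because Lemma~\ref{lem:form-reflection-primary} makes the form domain, not pointwise boundary conditions, the primary object.

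\emph{Main obstacle: extension from smooth functions to all of \(V_{\rm full}\).} We need a dense core of smooth periodic functions in \(V_{\rm full}\), which Assumption~\ref{ass:complete-list} may not supply for an arbitrary closed subspace. I would therefore avoid density and argue directly in \(H^1\). For \(v,w\in H^1(\Sigma)\), Fubini shows that for a.e.\ \((r,x)\) the slices \(v(\cdot,r,x),w(\cdot,r,x)\) lie in \(H^1(S^1)\) with weak derivative \(v_\phi\) and \(w_\phi\). On the circle \(S^1\), the integration-by-parts identity \(\int_{S^1}v\overline{w_\phi}\,\dd\phi=-\int_{S^1}v_\phi\overline w\,\dd\phi\) holds for all \(H^1(S^1)\) functions, since smooth periodic functions are dense in \(H^1(S^1)\) and both sides are continuous there. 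Multiplying by \(B(r,x)\) and integrating over \(\Omega\) then gives the identity for all \(v,w\in V_{\rm full}\subset H^1(\Sigma)\), using only that elements of \(V_{\rm full}\) are \(\phi\)-periodic. This yields the skew structure on \(V_{\rm full}\times V_{\rm full}\).
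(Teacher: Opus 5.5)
Your proposal is correct and follows the paper's proof. Boundedness comes from Cauchy--Schwarz with \(B\in L^\infty(\Sigma)\), and the skew-Hermitian identity comes from periodic integration by parts in \(\phi\), using \(\partial_\phi B=0\) and the fact that \(B\) is real. Your slice-wise justification of the \(\phi\)-integration by parts for \(H^1\) functions is a detail the paper leaves implicit. Since the identity holds on all of \(H^1(\Sigma)\), density of smooth functions in \(H^1(\Sigma)\), rather than in \(V_{\mathrm{full}}\), would equally suffice.
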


\begin{proof}
Since \(B\in L^\infty(\Sigma)\),
\begin{align}
|\mathfrak c[v,\eta]|&\le2\|B\|_{L^\infty}\|v\|_{L^2}\|\eta_\phi\|_{L^2} \le C\|v\|_{L^2}\|\eta\|_{V_{\mathrm{full}}}.
\end{align}
If \(v,w\in V_{\mathrm{full}}\), periodic integration by parts in \(\phi\), together with \(\partial_\phi B=0\), gives
\begin{equation}
2\int_\Sigma Bv\overline{w_\phi}
=-2\int_\Sigma Bv_\phi\overline w.
\end{equation}
Taking the complex conjugate of the same identity with \(v\) and \(w\) interchanged gives the stated skew-Hermitian relation.  The second identity follows by setting \(w=v\).
\end{proof}

\begin{theorem}[Conserved full energy for smooth form-admissible solutions]\label{thm:fullenergy}
Let $u$ be a sufficiently regular complex-valued solution of \eqref{eq:fullPDE-k0} on $[0,T]\times\Sigma$, with $\Sigma$ a strictly stationary slab and with each time slice lying in the form-realized reflecting domain $V_{\mathrm{full}}$.  Then
\begin{align}
E_{\mathrm{full}}[u](t)&:=\frac12\int_\Sigma \left( A|u_t|^2+\Phi|u_\phi|^2+\Delta_r|u_r|^2+\Delta_x|u_x|^2 \right)\,\dd\phi\,\dd r\,\dd x\label{eq:fullenergy}
\end{align}
is independent of $t$.
\end{theorem}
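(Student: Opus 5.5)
The plan is to differentiate $E_{\mathrm{full}}[u](t)$ in time and show the derivative vanishes by testing the weak equation of Lemma~\ref{lem:form-reflection-primary} against $\eta=u_t$. The regularity assumption gives $u_t(t)\in V_{\mathrm{full}}$ and $u_{tt}(t)\in L^2(\Sigma)$, with enough time-continuity to differentiate under the integral. Because $A$, $\Phi$, $\Delta_r$, $\Delta_x$ are real and independent of $t$, we get
\begin{equation*}
\frac{\dd}{\dd t}E_{\mathrm{full}}[u](t)=\Re\int_\Sigma A u_{tt}\overline{u_t}+\Re\, q[u,u_t].
\end{equation*}
This uses the fact that $q$ is a Hermitian sesquilinear form with real coefficients, so $\frac{\dd}{\dd t}\frac12 q[u,u]=\Re\, q[u,u_t]$.

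Next I will take $\eta=u_t(t)$ in the weak identity \eqref{eq:k0-full-weak-verification}. In its $\eta_\phi$ form (Proposition~\ref{prop:k0-full-coordinate-verification}) this reads
\begin{equation*}
\int_\Sigma A u_{tt}\overline{u_t}+q[u,u_t]+\mathfrak c[u_t,u_t]=0.
\end{equation*}
Here the $r,x$ divergence terms are represented by $q$ precisely because the slice lies in the reflecting form domain; no pointwise boundary term is needed. Taking real parts and using Lemma~\ref{lem:gyro-bounded-main}, which gives $\Re\,\mathfrak c[u_t,u_t]=0$ since $u_t\in V_{\mathrm{full}}$, we obtain $\frac{\dd}{\dd t}E_{\mathrm{full}}[u]=0$ on $[0,T]$. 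Energy conservation then follows by integrating in $t$.

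The only delicate step is justifying the differentiation and the choice $\eta=u_t$. I will read ``sufficiently regular'' as
\begin{equation*}
u\in C^1([0,T];V_{\mathrm{full}})\cap C^2([0,T];L^2(\Sigma)).
\end{equation*}
Under this reading $t\mapsto q[u,u]$ and $t\mapsto\int A|u_t|^2$ are $C^1$, and the product rule for bounded sesquilinear forms applies. If instead one only has the strong equation with smooth $u$ and the natural Neumann realization, Lemma~\ref{lem:form-reflection-primary} converts it into the weak form, and the same computation goes through.

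Positivity of the coefficients ($A,\Phi>0$) is not needed for conservation itself, only for later coercivity. The periodicity in $\phi$ together with $\partial_\phi B=0$ is what makes the gyroscopic term drop out.
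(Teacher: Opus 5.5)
Your proposal is correct and follows the paper's argument. The paper multiplies \eqref{eq:fullPDE-k0} by $\overline{u_t}$, takes real parts, and writes each term as a time derivative plus a $\phi$- or $(r,x)$-divergence, killing the fluxes by periodicity and the reflecting hypothesis; this is your test with $\eta=u_t$ carried out pointwise, and your form-level version, with the gyroscopic term removed via \eqref{eq:gyro-skew}, is the computation the paper records immediately afterwards as \eqref{eq:weak-energy-identity-full}, which builds the vanishing $(r,x)$ boundary flux into $q$ rather than invoking it as a pointwise boundary condition.
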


\begin{proof}
Multiply \eqref{eq:fullPDE-k0} by $\overline{u_t}$ and take real parts.  Since the coefficients are independent of $t$ and $\phi$,
\begin{align}
\Re(-Au_{tt}\overline{u_t})&= -\frac12\partial_t(A|u_t|^2),\nonumber\\
\Re(2B u_{t\phi}\overline{u_t})&= B\,\partial_\phi(|u_t|^2),\nonumber\\
\Re(\Phi u_{\phi\phi}\overline{u_t})
&=\partial_\phi\Re(\Phi u_\phi\overline{u_t})
-\frac12\partial_t(\Phi|u_\phi|^2),\nonumber\\
\Re(\partial_r(\Delta_r u_r)\overline{u_t})
&=\partial_r\Re(\Delta_r u_r\overline{u_t})
-\frac12\partial_t(\Delta_r|u_r|^2),\nonumber\\
\Re(\partial_x(\Delta_x u_x)\overline{u_t})
&=\partial_x\Re(\Delta_x u_x\overline{u_t})
-\frac12\partial_t(\Delta_x|u_x|^2).
\end{align}
Integrating over $\Sigma$, the total $\phi$-derivatives vanish by periodicity and the $(r,x)$-boundary flux vanishes by the reflecting hypothesis.  The remaining terms give
\begin{equation}
\frac{\dd}{\dd t}E_{\mathrm{full}}[u](t)=0.
\end{equation}
\end{proof}

\subsection{Weak form, gyroscopic skewness, and quantitative energy constants}\label{subsec:full-weak-hardwork}

We now write the full bounded-slab equation in the exact weak form used in the proof.  Let \(V_{\mathrm{full}}\subset H^1(\Sigma)\) be the closed reflecting form domain; for the standard natural reflecting realization one has \(V_{\mathrm{full}}=H^1(\Sigma)\).  The domain is assumed to contain constants and to encode periodicity in \(\phi\).  Define, for \(u,\eta\in V_{\mathrm{full}}\),
\begin{align}
q[u,\eta]
&:=\int_\Sigma\left(\Phi u_\phi\overline{\eta_\phi}
       +\Delta_r u_r\overline{\eta_r}
       +\Delta_x u_x\overline{\eta_x}\right)\,\dd\phi\,\dd r\,\dd x,
\label{eq:qfull-bilinear}\\
\mathfrak c[v,\eta]
&:=-2\int_\Sigma B\,v_\phi\overline\eta\,\dd\phi\,\dd r\,\dd x\nonumber\\
&=2\int_\Sigma B\,v\overline{\eta_\phi}\,\dd\phi\,\dd r\,\dd x.
\label{eq:gyro-form}
\end{align}
For \(v\in V_{\mathrm{full}}\) the two formulas in \eqref{eq:gyro-form} agree by periodic integration by parts in \(\phi\), using \(\partial_\phi B=0\).  The second formula is the one used to extend \(\mathfrak c\) boundedly to \(L^2(\Sigma)\times V_{\mathrm{full}}\), since \(B\in L^\infty\).  The weak equation associated with \eqref{eq:fullPDE-k0} is
\begin{align}
\int_\Sigma A u_{tt}\overline\eta\,\dd\phi\,\dd r\,\dd x +q[u,\eta]+\mathfrak c[u_t,\eta] =0, \quad
\eta\in V_{\mathrm{full}}.\label{eq:full-weak-form}
\end{align}
Indeed, multiplying \eqref{eq:fullPDE-k0} by \(-1\), integrating against \(\overline\eta\), and using the reflecting boundary condition gives \eqref{eq:full-weak-form}.

The gyroscopic form is skew-Hermitian.  For \(v,w\in V_{\mathrm{full}}\),
\begin{align}
\mathfrak c[v,w]
&=-2\int_\Sigma Bv_\phi\overline w
 =2\int_\Sigma Bv\overline{w_\phi},\nonumber\\
-\overline{\mathfrak c[w,v]}
&=-\overline{-2\int_\Sigma Bw_\phi\overline v}
 =2\int_\Sigma Bv\overline{w_\phi}.
\end{align}
Thus
\begin{equation}
\mathfrak c[v,w]=-\overline{\mathfrak c[w,v]},
\qquad
\Re\mathfrak c[v,v]=0.
\label{eq:gyro-skew}
\end{equation}
Taking \(\eta=u_t\) in \eqref{eq:full-weak-form}, using \eqref{eq:gyro-skew}, and differentiating \(q[u,u]\) gives the weak energy identity
\begin{equation}
\frac{\dd}{\dd t}\frac12\left(\int_\Sigma A|u_t|^2+q[u,u]\right)=0.
\label{eq:weak-energy-identity-full}
\end{equation}
This is the same identity as Theorem~\ref{thm:fullenergy}, but now no pointwise integration-by-parts regularity is hidden.

\begin{theorem}[Weak energy equality]\label{thm:weak-energy-equality-full}
Let $u$ satisfy the weak equation \eqref{eq:full-weak-form} on an interval $I$ in the following energy sense:
\begin{align}
u\in L^\infty_{\mathrm{loc}}(I;V_{\mathrm{full}})\nonumber\\
u_t\in L^\infty_{\mathrm{loc}}(I;L^2(\Sigma))\nonumber\\
u_{tt}\in L^\infty_{\mathrm{loc}}(I;V_{\mathrm{full}}')
\end{align}
and the identity holds in $\mathcal D'(I;V_{\mathrm{full}}')$.  Then the function
\begin{equation}
E_{\mathrm{full}}[u,u_t](t)
=\frac12\int_\Sigma A|u_t(t)|^2+\frac12 q[u(t),u(t)]
\end{equation}
has a representative which is constant on $I$.  In particular, for the strongly continuous energy solutions constructed in Theorem~\ref{thm:fullwellposed}, the displayed energy is independent of $t$ for every $t\in I$.
\end{theorem}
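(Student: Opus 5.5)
The plan is to prove the weak energy equality by a Lions--Magenes type regularization in time, since the formal choice \(\eta=u_t\) in \eqref{eq:full-weak-form} is not admissible: \(u_t\) lies only in \(L^2(\Sigma)\), not in \(V_{\mathrm{full}}\).

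First, fix a compact subinterval \(J\Subset I\). Let \(\varrho_\varepsilon\) be a standard even mollifier in \(t\), and set \(u^\varepsilon=\varrho_\varepsilon * u\) on a slightly smaller interval. Because the coefficients \(A,B,\Phi,\Delta_r,\Delta_x\) are independent of \(t\), mollification commutes with the spatial operators \(M_A\), \(\mathcal C_{\mathrm{full}}\), and \(\mathcal H_{\mathrm{full}}\) from Lemma~\ref{lem:form-reflection-primary}. Hence \(u^\varepsilon\) solves \eqref{eq:operator-weak-primary} exactly. It is also smooth in time with values in \(V_{\mathrm{full}}\), and \(u^\varepsilon_t\in C^\infty(J;V_{\mathrm{full}})\), because \(u\in L^\infty_{\rm loc}(V_{\mathrm{full}})\) and differentiation falls on \(\varrho_\varepsilon\).

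We may therefore test the equation for \(u^\varepsilon\) with \(\eta=u^\varepsilon_t(t)\in V_{\mathrm{full}}\) and take real parts. The term \(\Re\langle M_Au^\varepsilon_{tt},u^\varepsilon_t\rangle\) equals \(\frac12\frac{\dd}{\dd t}\int A|u^\varepsilon_t|^2\). This uses the duality pairing \(V'\times V\) together with the identification of \(L^2_A\) as pivot space, so it is the usual fact that \(t\mapsto\|w\|^2_{L^2_A}\) is absolutely continuous when \(w\in L^2(V)\) and \(M_Aw_t\in L^2(V')\). The gyroscopic term vanishes by Lemma~\ref{lem:gyro-bounded-main}, since \(\Re\mathfrak c[u^\varepsilon_t,u^\varepsilon_t]=0\) with both arguments in \(V_{\mathrm{full}}\). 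The form term gives \(\frac12\frac{\dd}{\dd t}q[u^\varepsilon,u^\varepsilon]\), because \(q\) is Hermitian and bounded on \(V_{\mathrm{full}}\). Together these show that \(E_{\mathrm{full}}[u^\varepsilon,u^\varepsilon_t](t)\) is exactly constant on \(J\).

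Next, pass to the limit \(\varepsilon\to0\). Since \(u\in L^\infty(J;V_{\mathrm{full}})\) and \(u_t\in L^\infty(J;L^2)\), we have \(u^\varepsilon\to u\) in \(L^p(J;V_{\mathrm{full}})\) and \(u^\varepsilon_t\to u_t\) in \(L^p(J;L^2)\) for every finite \(p\). After passing to a subsequence, the convergence holds in \(V_{\mathrm{full}}\times L^2\) for almost every \(t\). The energy is continuous on \(V_{\mathrm{full}}\times L^2\), because \(A,\Phi,\Delta_r,\Delta_x\) are bounded, so \(E[u^\varepsilon](t)\to E[u](t)\) for almost every \(t\). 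A pointwise limit of constants is constant, so \(E\) equals a constant on a full-measure set, which gives the desired representative.

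For the strongly continuous solutions of Theorem~\ref{thm:fullwellposed}, \(t\mapsto(u(t),u_t(t))\) is continuous into \(V_{\mathrm{full}}\times L^2\), so \(E\) is continuous. A continuous function that is a.e. constant is constant everywhere.

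The main obstacle is the first step, namely justifying that the mollified function really satisfies the same weak equation. One must check that \(u_{tt}\in L^\infty_{\rm loc}(V')\) and the distributional identity in \(\mathcal D'(I;V')\) are compatible with convolution. This holds because each operator is a bounded, time-independent map \(V\to V'\) or \(L^2\to V'\); for \(\mathcal C_{\mathrm{full}}\) this is the boundedness in Lemma~\ref{lem:gyro-bounded-main}. Convolution in \(t\) is then just pairing with a test function, so the identity transfers directly.
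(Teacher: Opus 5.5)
Your proposal is correct and follows essentially the same route as the paper: you mollify in time on $J\Subset I$, test the convolved equation with $(u^\varepsilon)_t=\chi_\varepsilon'*u\in V_{\mathrm{full}}$, remove the gyroscopic term by skewness, and pass to the limit using $u^\varepsilon\to u$ in $L^p(J;V_{\mathrm{full}})$ and $(u^\varepsilon)_t\to u_t$ in $L^p(J;L^2(\Sigma))$. The only difference is cosmetic: you conclude via a.e.\ convergence along a subsequence, whereas the paper uses $L^1(J)$ convergence of the mollified energies to show that the limiting energy has zero distributional derivative.
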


\begin{proof}
Let $J\Subset I$ and let $\chi_\varepsilon$ be an even time mollifier supported so close to zero that convolution is defined on $J$.  Convolving the weak equation in time gives, in $V_{\mathrm{full}}'$,
\begin{align}
\int_\Sigma A(\chi_\varepsilon*u_{tt})\overline\eta +q[\chi_\varepsilon*u,\eta] +\mathfrak c[\chi_\varepsilon*u_t,\eta] =0, \quad \eta\in V_{\mathrm{full}}.
\end{align}
Because the coefficients are independent of $t$ and $u\in L^\infty_{\mathrm{loc}}(I;V_{\mathrm{full}})$, the mollified function $u^\varepsilon:=\chi_\varepsilon*u$ is smooth as a $V_{\mathrm{full}}$-valued function.  Moreover
\begin{equation}
(u^\varepsilon)_t=\chi_\varepsilon' *u\in V_{\mathrm{full}},
\end{equation}
and this derivative agrees distributionally with $\chi_\varepsilon*u_t$.  This point is important: the test function belongs to the form domain even though the unmollified velocity is only in $L^2$.  Taking $\eta=(u^\varepsilon)_t$ and taking real parts gives
\begin{align}
\frac{\dd}{\dd t}\left(\frac12\int_\Sigma A|(u^\varepsilon)_t|^2+\frac12 q[u^\varepsilon,u^\varepsilon]\right)&=0
\end{align}
on $J$, because $\Re\,\mathfrak c[(u^\varepsilon)_t,(u^\varepsilon)_t]=0$.  Since $u\in L^\infty(J;V_{\mathrm{full}})$ and $u_t\in L^\infty(J;L^2)$, standard properties of time convolution give
\begin{align}
&u^\varepsilon\to u\quad\hbox{in }L^2(J;V_{\mathrm{full}})\nonumber\\
&(u^\varepsilon)_t\to u_t\quad\hbox{in }L^2(J;L^2(\Sigma)).
\end{align}
Consequently the mollified energies converge to $E_{\mathrm{full}}[u,u_t]$ in $L^1(J)$.  Each mollified energy is constant on the slightly smaller interval on which the convolution is defined; hence the distributional derivative of the limiting energy is zero on $J$.  Thus $E_{\mathrm{full}}[u,u_t]$ has a constant representative on $J$.  Since $J\Subset I$ was arbitrary, the representative is constant on all of $I$.  If $u$ and $u_t$ are strongly continuous in the energy topology, this representative agrees with the pointwise quadratic expression for every time.
\end{proof}

\begin{lemma}[Pointwise recovery of the conserved weak energy]\label{lem:pointwise-energy-recovery}
In the setting of Theorem~\ref{thm:weak-energy-equality-full}, assume in addition that
\begin{equation}
u\in C(I;V_{\mathrm{full}}),\qquad u_t\in C(I;L^2(\Sigma)).
\end{equation}
Then the constant representative of the energy agrees at every time with the pointwise expression
\begin{equation}
\frac12\int_\Sigma A|u_t(t)|^2+\frac12q[u(t),u(t)].
\end{equation}
Consequently no exceptional time set is hidden in the conservation law used in Theorem~\ref{thm:fullmain}.
\end{lemma}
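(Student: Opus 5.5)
The plan is to show that the pointwise energy map \(t\mapsto e(t):=\frac12\int_\Sigma A|u_t(t)|^2+\frac12 q[u(t),u(t)]\) is a continuous function on \(I\), and then to use the elementary fact that a continuous function agreeing almost everywhere with a constant is identically that constant. Theorem~\ref{thm:weak-energy-equality-full} supplies a constant \(E_0\) and a null set \(N\subset I\) with \(e(t)=E_0\) for \(t\in I\setminus N\). The added hypotheses \(u\in C(I;V_{\mathrm{full}})\) and \(u_t\in C(I;L^2(\Sigma))\) are what give continuity of \(e\).

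For continuity, I treat the two quadratic pieces separately. Since \(A\in L^\infty(\Sigma)\) on the strictly stationary slab, the kinetic term satisfies
\[
\Bigl|\int_\Sigma A\bigl(|f|^2-|g|^2\bigr)\Bigr|\le \|A\|_{L^\infty}\bigl(\|f\|_{L^2}+\|g\|_{L^2}\bigr)\|f-g\|_{L^2}.
\]
So \(t\mapsto\int_\Sigma A|u_t(t)|^2\) is continuous by continuity of \(u_t\) into \(L^2(\Sigma)\). Likewise, \(\Phi,\Delta_r,\Delta_x\) are bounded on \(\overline\Omega\), so \(q\) is a bounded sesquilinear form on \(V_{\mathrm{full}}\subset H^1(\Sigma)\), with \(|q[v,v]-q[w,w]|\le C(\|v\|_{H^1}+\|w\|_{H^1})\|v-w\|_{H^1}\). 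Hence \(t\mapsto q[u(t),u(t)]\) is continuous by continuity of \(u\) into \(V_{\mathrm{full}}\).

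To conclude, fix \(t_0\in I\). Since \(I\setminus N\) is dense, there are \(t_n\to t_0\) with \(t_n\notin N\). Then \(e(t_0)=\lim e(t_n)=E_0\), so the representative equals the pointwise expression at every time, and no exceptional set remains.

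The only point I expect to need care is that the representative in Theorem~\ref{thm:weak-energy-equality-full} is constant only up to a null set. The identification requires density of the good times, and this is immediate since a null set has dense complement. Apart from that, the argument reduces to the boundedness of the coefficients, which is guaranteed by Definition~\ref{def:strict-slab} through smooth extension to a neighborhood of \(\overline\Omega\).
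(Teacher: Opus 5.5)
Your proposal is correct and follows essentially the same route as the paper: continuity of \(t\mapsto\int_\Sigma A|u_t(t)|^2\) from \(A\in L^\infty\) and \(u_t\in C(I;L^2(\Sigma))\), continuity of \(t\mapsto q[u(t),u(t)]\) from boundedness of \(q\) on \(V_{\mathrm{full}}\), and then the fact that a continuous function equal almost everywhere to a constant equals that constant everywhere. Your explicit difference estimates and the density-of-good-times step spell out what the paper states in one line.
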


\begin{proof}
The map $t\mapsto \int_\Sigma A|u_t(t)|^2$ is continuous because $A\in L^\infty$ and $u_t$ is strongly continuous in $L^2$.  The map $t\mapsto q[u(t),u(t)]$ is continuous because $q$ is a continuous quadratic form on the Hilbert space $V_{\mathrm{full}}$.  Hence the pointwise energy is a continuous representative of the same $L^1_{\mathrm{loc}}$ function whose distributional derivative is zero in Theorem~\ref{thm:weak-energy-equality-full}.  A continuous function agreeing almost everywhere with a constant is equal to that constant everywhere.
\end{proof}

For later coercivity estimates set
\begin{align}
a_-&:=\inf_{\overline\Omega}A,\qquad a_+:=\sup_{\overline\Omega}A,\nonumber\\[0.2em]
\phi_-&:=\inf_{\overline\Omega}\Phi,\qquad \phi_+:=\sup_{\overline\Omega}\Phi,\nonumber\\[0.2em]
d_{r,-}&:=\inf_{[r_-,r_+]}\Delta_r,\qquad d_{r,+}:=\sup_{[r_-,r_+]}\Delta_r,\nonumber\\[0.2em]
d_{x,-}&:=\inf_{[x_-,x_+]}\Delta_x,\qquad d_{x,+}:=\sup_{[x_-,x_+]}\Delta_x.
\end{align}
On a strictly stationary slab all lower constants are positive and all upper constants are finite.  Hence
\begin{align}
\frac12 a_-\|u_t\|_{L^2}^2+\frac12 m_\nabla\|\nabla_{\phi,r,x}u\|_{L^2}^2&\le E_{\mathrm{full}}[u,u_t] \le \frac12 a_+\|u_t\|_{L^2}^2+\frac12 M_\nabla\|\nabla_{\phi,r,x}u\|_{L^2}^2,\label{eq:energy-gradient-bounds}
\end{align}
where
\begin{equation}
m_\nabla:=\min\{\phi_-,d_{r,-},d_{x,-}\},
\qquad
M_\nabla:=\max\{\phi_+,d_{r,+},d_{x,+}\}.
\end{equation}
Thus the only missing part of the \(H^1\times L^2\) norm is the \(L^2\)-size of \(u\), exactly as claimed.

If $C_P$ is the weighted Poincar\'e constant in Proposition~\ref{prop:weightedPoincareFull}, then on the threshold complement
\begin{equation}
\|u\|_{L^2}^2\le C_P q[u,u].
\end{equation}
Consequently
\begin{align}
c_{\rm low}&:=\min\left\{\frac{a_-}{2},\frac{1}{2(C_P+m_\nabla^{-1})}\right\},\nonumber\\
C_{\rm high}&:=\frac12\max\{a_+,M_\nabla\}.
\label{eq:explicit-energy-constants}
\end{align}
satisfy
\begin{align}
c_{\rm low}\bigl(\|u\|_{H^1(\Sigma)}^2+\|u_t\|_{L^2(\Sigma)}^2\bigr)&\le E_{\mathrm{full}}[u,u_t] \le C_{\rm high}\bigl(\|u\|_{H^1(\Sigma)}^2+\|u_t\|_{L^2(\Sigma)}^2\bigr)
\end{align}
whenever $\Pi_0u=\Pi_0u_t=0$.  Therefore one may take
\begin{equation}
C_{\rm stab}=\left(\frac{C_{\rm high}}{c_{\rm low}}\right)^{1/2}
\label{eq:explicit-C-stab}
\end{equation}
in Theorem~\ref{thm:fullmain}, up to the harmless equivalence constants between the standard and coefficient-weighted norms.

\begin{proposition}[Finite-dimensional Galerkin identity]\label{prop:galerkin-main-identity}
Let \(e_1,\ldots,e_N\in V_{\mathrm{full}}\) and set \(u_N(t)=\sum_{j=1}^N d_j(t)e_j\).  The Galerkin equation
\begin{align}
\int_\Sigma A (u_N)_{tt}\overline e_j+q[u_N,e_j]+\mathfrak c[(u_N)_t,e_j]&=0\nonumber\\
1&\le j\le N\label{eq:galerkin-main}
\end{align}
has the exact conserved energy
\begin{equation}
E_N(t)=\frac12\int_\Sigma A|(u_N)_t|^2+\frac12 q[u_N,u_N].
\label{eq:galerkin-main-energy}
\end{equation}
Moreover the \(A\)-weighted mean satisfies
\begin{equation}
\frac{\dd^2}{\dd t^2}\int_\Sigma A u_N=0
\label{eq:galerkin-main-mean}
\end{equation}
whenever the constant function belongs to the Galerkin space.
\end{proposition}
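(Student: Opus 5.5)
The plan is to mimic, in finite dimensions, the continuous energy argument of Theorem~\ref{thm:fullenergy} and the skewness computation \eqref{eq:gyro-skew}.

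\textbf{Energy identity.} Since \eqref{eq:galerkin-main} is linear in the test function, it holds for every complex combination of the \(e_j\). The coefficient \(\overline{d_j'(t)}\) enters the second argument conjugate-linearly, so multiplying the \(j\)-th equation by \(\overline{d_j'(t)}\) and summing over \(j\) is equivalent to testing with \(\eta=(u_N)_t(t)\). This gives
\begin{equation*}
\int_\Sigma A (u_N)_{tt}\overline{(u_N)_t}+q[u_N,(u_N)_t]+\mathfrak c[(u_N)_t,(u_N)_t]=0 .
\end{equation*}

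Next take real parts term by term. The weight \(A\) is real and time-independent, so
\begin{equation*}
\Re\int_\Sigma A (u_N)_{tt}\overline{(u_N)_t}=\tfrac12\tfrac{\dd}{\dd t}\int_\Sigma A|(u_N)_t|^2 .
\end{equation*}
The form \(q\) is Hermitian because \(\Phi,\Delta_r,\Delta_x\) are real, which gives
\begin{equation*}
\Re\, q[u_N,(u_N)_t]=\tfrac12\tfrac{\dd}{\dd t}q[u_N,u_N].
\end{equation*}
The gyroscopic term has zero real part by \eqref{eq:gyro-skew}, applied with \(v=(u_N)_t\in V_{\mathrm{full}}\). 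Hence \(E_N'(t)=0\).

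One point must be settled first. The Galerkin system is a linear ODE \(\mathbf M d''+\mathbf C d'+\mathbf Q d=0\), where \(\mathbf M_{jk}=\int_\Sigma A e_k\overline{e_j}\) is the Gram matrix in \(L^2_A\). This matrix is positive definite as long as the \(e_j\) are linearly independent, because \(A\ge a_->0\). So the solution exists, is unique and is \(C^\infty\) in time, and the time differentiations above are legitimate. If the \(e_j\) are dependent, I will first pass to a basis of their span; this changes neither \(u_N\) nor \(E_N\).

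\textbf{Mean identity.} Suppose \(1\in\mathrm{span}\{e_j\}\), say \(1=\sum_j \gamma_j e_j\). Combine the equations with coefficients \(\overline{\gamma_j}\), which is the same as testing with \(\eta=1\). In the resulting identity \(q[u_N,1]=0\), since every term contains a derivative of the constant. Likewise \(\mathfrak c[(u_N)_t,1]=2\int_\Sigma B(u_N)_t\overline{\partial_\phi 1}=0\), using the second formula in \eqref{eq:gyro-form}. What remains is \(\int_\Sigma A(u_N)_{tt}=0\). Differentiation in time commutes with the \(\Sigma\)-integral because \(u_N\) is a finite sum with smooth coefficients, and this gives \eqref{eq:galerkin-main-mean}.

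\textbf{Main obstacle.} The only delicate point is bookkeeping: sesquilinear forms are linear in the first slot, so the correct multipliers are the conjugated \(\overline{d_j'}\) and \(\overline{\gamma_j}\). With that convention, testing against \((u_N)_t\) and against \(1\) is justified purely by linearity in the span of the \(e_j\). I expect no analytic difficulty, since everything happens in finite dimensions with a positive-definite mass matrix.
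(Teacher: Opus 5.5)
Your proposal is correct and follows essentially the same route as the paper: you multiply by the conjugated coefficients \(\overline{d_j'}\) (that is, test with \((u_N)_t\)), take real parts so that the gyroscopic term drops out by the skew identity \eqref{eq:gyro-skew}, and test with the constant function to get the affine mean law. Your extra remarks on the positive-definite mass matrix and on reducing a dependent family to a basis are harmless additions; the paper instead handles solvability of the Galerkin system inside the construction of Theorem~\ref{thm:fullwellposed}.
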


\begin{proof}
Multiply \eqref{eq:galerkin-main} by \(\overline{d_j'(t)}\), sum over \(j\), and take real parts.  The mass term gives
\begin{equation}
\Re\int_\Sigma A (u_N)_{tt}\overline{(u_N)_t}
=\frac12\frac{\dd}{\dd t}\int_\Sigma A|(u_N)_t|^2.
\end{equation}
The symmetric form gives
\begin{equation}
\Re q[u_N,(u_N)_t]=\frac12\frac{\dd}{\dd t}q[u_N,u_N].
\end{equation}
The gyroscopic term has zero real part by \eqref{eq:gyro-skew}.  Hence \(E_N'(t)=0\).  If \(1\) is an admissible test function, then taking \(e_j=1\) in the weak equation, or equivalently testing \eqref{eq:full-weak-form} with \(1\), gives \(q[u_N,1]=0\) and \(\mathfrak c[(u_N)_t,1]=0\).  Therefore
\begin{equation}
\frac{\dd^2}{\dd t^2}\int_\Sigma A u_N=0.
\end{equation}
This proves both identities.
\end{proof}

\begin{lemma}[Galerkin limit, energy recovery, and strong continuity]\label{lem:galerkin-recovery-main}
Let \(u_N\) be Galerkin solutions satisfying \eqref{eq:main-galerkin-full-expanded} on a compact interval \(J=[-T,T]\), with data converging strongly in \(V_{\mathrm{full}}\times L^2(\Sigma)\).  Assume that a subsequence converges weak-star as
\begin{align}
&u_N\rightharpoonup^*u\quad\hbox{in }L^\infty(J;V_{\mathrm{full}})\nonumber\\
&(u_N)_t\rightharpoonup^*u_t\quad\hbox{in }L^\infty(J;L^2)
\end{align}
and that \((u_N)_{tt}\rightharpoonup^*u_{tt}\) in \(L^\infty(J;V_{\mathrm{full}}')\).  Then the limit solves the weak equation.  If the weak solution is unique, the whole Galerkin sequence has the same limit.  Furthermore the affine averages converge explicitly,
\begin{equation}
\Pi_0u_N(t)\to \Pi_0u(0)+t\Pi_0u_t(0),
\qquad
\Pi_0(u_N)_t(t)\to \Pi_0u_t(0),
\end{equation}
and the convergence is strong in the augmented energy topology at every fixed time for which the energy identity holds for the limit.  Consequently the limit has a representative in
\begin{equation}
C(J;V_{\mathrm{full}})\cap C^1(J;L^2(\Sigma))
\end{equation}
when written in first-order form \((u,u_t)\in C(J;V_{\mathrm{full}}\times L^2)\).
\end{lemma}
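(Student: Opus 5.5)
The plan is to pass to the limit in the Galerkin equation \eqref{eq:galerkin-main} tested against a fixed basis element $e_j$ multiplied by a scalar time test function $\psi\in C_c^\infty(J)$. The coefficients $A$, $\Phi$, $\Delta_r$, $\Delta_x$, $B$ are time-independent and bounded, so $\eta\mapsto\int_\Sigma A(\cdot)\overline\eta$, $q[\cdot,\eta]$ and $\mathfrak c[\cdot,\eta]$ are continuous functionals on $V_{\mathrm{full}}'$, $V_{\mathrm{full}}$ and $L^2$ respectively; the last uses Lemma~\ref{lem:gyro-bounded-main}. Each term is therefore weak-star continuous in the stated topologies, and the limit satisfies the weak equation \eqref{eq:full-weak-form} for $\eta$ in the span of the basis. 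If the Galerkin spaces are nested with dense union in $V_{\mathrm{full}}$, density extends this to all $\eta\in V_{\mathrm{full}}$. The initial data are recovered in the usual way: integrate by parts in time against $\psi$ with $\psi(0)\ne0$, and use the strong convergence of the data. If the weak solution is unique, the standard subsequence-of-subsequence argument then forces the whole sequence to converge to $u$.

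For the affine averages I would use \eqref{eq:galerkin-main-mean}, assuming $1$ lies in each Galerkin space. This gives the exact formula $\Pi_0u_N(t)=\Pi_0u_N(0)+t\Pi_0(u_N)_t(0)$, where $\Pi_0$ is the $A$-weighted mean. Here $\Pi_0$ is a bounded rank-one functional on $L^2$, and the data converge strongly, so the means converge uniformly on $J$ to $\Pi_0u(0)+t\Pi_0u_t(0)$. Testing the limit equation with $\eta=1$ gives the same affine law for $u$, which identifies the limit.

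The main obstacle is upgrading weak-star convergence to strong convergence and continuity, and I would handle it by energy recovery. Split off the affine part and work with $w_N=(I-\Pi_0)u_N$. The Galerkin energy $E_N(t)$ is exactly conserved, so $E_N(t)=E_N(0)\to E_{\mathrm{full}}[u(0),u_t(0)]$. The limit $u$ satisfies the weak energy equality (Theorem~\ref{thm:weak-energy-equality-full}), so $E_{\mathrm{full}}[u,u_t](t)$ equals the same constant. Next, augment the energy by $|\Pi_0 u|^2+|\Pi_0u_t|^2$; by \eqref{eq:explicit-energy-constants} and Proposition~\ref{prop:weightedPoincareFull} this defines a norm equivalent to $\|\cdot\|_{V_{\mathrm{full}}\times L^2}$. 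At each fixed $t$ where the energy identity holds, we then have weak convergence $(u_N(t),(u_N)_t(t))\rightharpoonup(u(t),u_t(t))$ together with convergence of the augmented norms, and this yields strong convergence. The point needing care is weak convergence at a fixed time rather than in $L^\infty(J)$. I would obtain it from equicontinuity in $V_{\mathrm{full}}'$, which the uniform bound on $(u_N)_{tt}$ supplies, together with Arzelà–Ascoli in the weak topology.

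Continuity comes last. Weak continuity of $t\mapsto(u,u_t)$ in $V_{\mathrm{full}}\times L^2$ follows from $u_{tt}\in L^\infty(J;V_{\mathrm{full}}')$ and the uniform energy bound (Lions–Magenes). The augmented norm is constant in $t$, because the energy is conserved (Lemma~\ref{lem:pointwise-energy-recovery}) and the means are affine, hence continuous. Weak continuity combined with continuity of an equivalent Hilbert norm gives strong continuity, so $(u,u_t)\in C(J;V_{\mathrm{full}}\times L^2)$, as claimed.
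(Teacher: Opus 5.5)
Your first three steps follow the paper's proof: passing to the limit in the Galerkin identities against fixed test functions, obtaining the affine averages by testing with $1$, and upgrading to strong convergence from weak convergence plus convergence of the augmented Hilbert norm at times where the energy identity holds.

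\textbf{The gap is in your last paragraph.} Write $E(t)=E_{\mathrm{full}}[u,u_t](t)$. To make $t\mapsto\|(u(t),u_t(t))\|_*$ continuous you need $E(t)=E_{\mathrm{full}}[u_0,u_1]$ for \emph{every} $t$. You take this from Lemma~\ref{lem:pointwise-energy-recovery}, but that lemma assumes $u\in C(I;V_{\mathrm{full}})$ and $u_t\in C(I;L^2(\Sigma))$, which is exactly what you are trying to prove. The argument is therefore circular.

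For the Galerkin limit, Theorem~\ref{thm:weak-energy-equality-full} gives only a constant representative, i.e.\ $E(t)=c$ for a.e.\ $t$. Weak continuity and weak lower semicontinuity then give $E(t)\le c$ for all $t$. Combined with $E(t)\le\liminf_N E_N(t)=\lim_N E_N(0)$, they also identify $c=E_{\mathrm{full}}[u_0,u_1]$; you assert this identification without proof. Nothing, however, rules out an energy drop $E(t_0)<c$ at an exceptional time. At such a $t_0$ weak continuity does not upgrade to strong continuity. A minor point: the augmented norm is not constant in $t$, since $|\Pi_0u(t)|^2$ varies; you only need it to be continuous.

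\textbf{What is missing is the reverse inequality $c\le E(t_0)$ at every $t_0$.} The paper gets it by rerunning the recovery argument on time-translated problems, as in the backward construction in the appendix proof of Theorem~\ref{thm:fullwellposed}. Start the Galerkin scheme at $t_0$ with data $(u(t_0),u_t(t_0))$ and identify the new limit with $u$ by uniqueness. Lower semicontinuity then gives $E(t)\le E(t_0)$ for all $t$, hence $c\le E(t_0)$.

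An alternative that avoids uniqueness is the Lions--Magenes regularization. The time mollifications $u^\varepsilon$ from the proof of Theorem~\ref{thm:weak-energy-equality-full} are exact solutions with $(u^\varepsilon)_t\in V_{\mathrm{full}}$. So $E_{\mathrm{full}}[u^\varepsilon-u^{\varepsilon'}]$ is constant in $t$, and its integral over a subinterval $J'$ tends to zero. Together with the affine law for $\Pi_0u^\varepsilon=\chi_\varepsilon*\Pi_0u$, this makes $(u^\varepsilon,u^\varepsilon_t)$ Cauchy in $C(J';V_{\mathrm{full}}\times L^2)$ in the augmented norm. The uniform limit is the continuous representative, and it agrees everywhere with your weakly continuous one. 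Only after this step may you invoke Lemma~\ref{lem:pointwise-energy-recovery}.
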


\begin{proof}
Passing to the limit in the finite-dimensional identities is legitimate because each term is paired with a fixed \(\eta\in V_{\mathrm{full}}\), and finite-dimensional projections of \(\eta\) converge strongly in \(V_{\mathrm{full}}\).  Thus the limit solves \eqref{eq:full-weak-form}.  The affine identities follow by testing the Galerkin equations with the constant basis vector and passing to the limit in the scalar relation
\begin{equation}
\frac{\dd^2}{\dd t^2}\int_\Sigma Au_N=0.
\end{equation}

For strong recovery, decompose each time slice into its \(A\)-average and its mean-zero part.  The averages converge by the explicit affine formula.  On the mean-zero part, Proposition~\ref{prop:weightedPoincareFull} makes the energy seminorm equivalent to the full \(V_{\mathrm{full}}\times L^2\)-norm.  Lower semicontinuity gives
\begin{equation}
E_{\mathrm{full}}[u,u_t](t)
\le \liminf_{N\to\infty}E_N(t).
\end{equation}
Since \(E_N(t)=E_N(0)\) and the initial data converge strongly, the right side equals the initial limiting energy.  Whenever the weak energy identity holds for \(u\), equality holds in the preceding lower-semicontinuity inequality.  In a Hilbert space, weak convergence plus convergence of the norm implies strong convergence.  Applying this to the kinetic and spatial form components and then adding the affine average part gives strong convergence in the augmented energy topology.  The same argument applied on a short interval to time-translated solutions gives norm continuity in time.  Uniqueness removes the need to pass to subsequences.
\end{proof}

The proposition shows explicitly where every sign is used.  Positivity of \(A,\Phi,\Delta_r,\Delta_x\) supplies the energy; independence of \(B\) from \(\phi\) supplies skewness; periodicity and reflection remove boundary flux; and the admission of constants identifies the affine threshold.

Because $A,\Phi,\Delta_r,\Delta_x$ are positive and bounded above and below on a strictly stationary slab, $E_{\mathrm{full}}$ controls the standard $L^2$-norm of $u_t$ together with the standard $L^2$-norm of the spatial gradient on $\Sigma$.  The only quantity not controlled directly is the $L^2$-size of $u$ itself, and that failure is the $k=0$ zero-mode phenomenon.

The next theorem internalizes the corresponding well-posedness input.

\begin{theorem}[Direct well-posedness of the full slab evolution]\label{thm:fullwellposed}
Let $\Sigma=S^1_\phi\times\Omega$ be a strictly stationary slab equipped with a form-realized reflecting domain $V_{\rm full}\subset H^1(\Sigma)$ as in Section~\ref{sec:ledger}.  For every initial datum
\begin{equation}
(u_0,u_1)\in V_{\rm full}\times L^2(\Sigma)
\end{equation}
there exists a unique energy solution of \eqref{eq:fullPDE-k0}, equivalently of \eqref{eq:full-weak-form}, such that
\begin{align}
&u\in C(\R;V_{\rm full})\nonumber\\
&u_t\in C(\R;L^2(\Sigma))\nonumber\\
&u_{tt}\in L^\infty_{\rm loc}(\R;V_{\rm full}').
\end{align}
The weak energy equality of Theorem~\ref{thm:weak-energy-equality-full} holds for all $t\in\R$, and the solution maps
\begin{equation}
\mathcal S(t):(u_0,u_1)\longmapsto (u(t),u_t(t))
\end{equation}
form a strongly continuous group on $\mathcal X_{\rm full}:=V_{\rm full}\times L^2(\Sigma)$.
\end{theorem}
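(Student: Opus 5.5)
The plan is to prove existence by the Galerkin method already prepared in Proposition~\ref{prop:galerkin-main-identity} and Lemma~\ref{lem:galerkin-recovery-main}, and to obtain uniqueness and continuity from the weak energy equality of Theorem~\ref{thm:weak-energy-equality-full}.

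\textbf{Construction and a priori bounds.} Since $V_{\rm full}$ is a separable closed subspace of $H^1(\Sigma)$ containing $1$, I would choose a basis $\{e_j\}$ of $V_{\rm full}$ with $e_1=1$ whose span is dense in $V_{\rm full}$, and hence in $L^2(\Sigma)$. The Galerkin system \eqref{eq:galerkin-main} is a linear second-order ODE system $M\ddot d+C\dot d+Kd=0$. Its mass matrix $M_{jk}=\int_\Sigma A e_k\overline{e_j}$ is positive definite because $A\ge a_->0$, so the system has a unique global solution for data given by projections of $(u_0,u_1)$ onto the span. Proposition~\ref{prop:galerkin-main-identity} gives $E_N(t)=E_N(0)$, and \eqref{eq:energy-gradient-bounds} converts this into uniform bounds on $\|(u_N)_t\|_{L^2}$ and $\|\nabla u_N\|_{L^2}$. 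The $L^2$ norm of $u_N$ is not controlled by the energy; this is the $k=0$ threshold difficulty. I would handle it in one of two ways: use the affine mean identity \eqref{eq:galerkin-main-mean} together with the weighted Poincar\'e inequality (Proposition~\ref{prop:weightedPoincareFull}) on the mean-zero part, or more simply write $u_N(t)=u_N(0)+\int_0^t (u_N)_s\,ds$. Either way $\|u_N(t)\|_{V_{\rm full}}\le C(1+|t|)$ uniformly in $N$ on $J=[-T,T]$. Testing the equation against $\eta\in V_{\rm full}$, using Lemma~\ref{lem:gyro-bounded-main} for the gyroscopic term, then bounds $(u_N)_{tt}$ in $L^\infty(J;V_{\rm full}')$ after inverting $M_A$. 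Banach--Alaoglu yields the weak-star limits assumed in Lemma~\ref{lem:galerkin-recovery-main}, so the limit solves \eqref{eq:full-weak-form} with the correct data. A diagonal argument in $T$ then gives a solution on all of $\R$.

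\textbf{Uniqueness.} By linearity, a difference $w$ of two energy solutions solves \eqref{eq:full-weak-form} with zero data. Theorem~\ref{thm:weak-energy-equality-full} applies, because the mollification argument there requires only the stated energy regularity. It gives $E_{\rm full}[w,w_t]\equiv0$, so $w_t=0$ and $\nabla w=0$. Then $w(t)=w(0)=0$, so $w\equiv0$. Uniqueness lets the whole Galerkin sequence converge, again by Lemma~\ref{lem:galerkin-recovery-main}.

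\textbf{Continuity and the group property.} The main obstacle is upgrading weak-star limits to $u\in C(\R;V_{\rm full})$ and $u_t\in C(\R;L^2)$. First, weak continuity follows from $u_t\in L^\infty(L^2)$ and $u_{tt}\in L^\infty(V')$ via the standard Lions--Magenes lemma. Next, the weak energy equality holds for the limit, so Lemma~\ref{lem:galerkin-recovery-main} applies: the energies of $(u(t),u_t(t))$ are constant and equal to the data energy. Combined with the explicit affine evolution of the $A$-averages and Poincar\'e coercivity on the mean-zero part, this gives continuity of the augmented norm. Weak continuity plus norm continuity gives strong continuity. Finally, the solution maps $\mathcal S(t)$ are linear and, by the bound $C(1+|t|)$, locally bounded on $\mathcal X_{\rm full}$. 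Uniqueness combined with time-translation invariance of the equation yields $\mathcal S(t+s)=\mathcal S(t)\mathcal S(s)$. Strong continuity of $t\mapsto\mathcal S(t)U_0$ was just shown, so $\{\mathcal S(t)\}$ is a strongly continuous group.
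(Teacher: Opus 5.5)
Your proposal follows the paper's proof essentially step for step. Both use Galerkin approximation with the constant as first basis vector; uniform bounds from the conserved Galerkin energy plus the affine $A$-mean law (or, in your version, the integrated-velocity bound); Banach--Alaoglu and Lions--Magenes for a weakly continuous limit; the recovery lemma with the weak energy identity for strong continuity; and uniqueness plus time-translation invariance for the group law. The only cosmetic difference is in the uniqueness step, where you get $w\equiv0$ directly from $w_t\equiv0$ and $w(0)=0$ rather than from $q[w,w]=0$, $\Pi_0w=0$ and Proposition~\ref{prop:kernelfull}; both routes are valid.
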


\begin{proof}
We give the construction in the main text because this is the analytic point at which the boundary realization, the gyroscopic sign, and the threshold constant all enter.

Equip $V_{\rm full}$ with the Hilbert norm
\begin{equation}
\|u\|_{V_{\rm full}}^2:=\int_\Sigma A|u|^2+q[u,u].
\end{equation}
By strict stationarity this norm is equivalent to the inherited $H^1(\Sigma)$ norm.  Since $V_{\rm full}$ is dense in $L^2(\Sigma)$ and contains $1$, choose a Hilbert basis $\{w_j\}_{j\ge1}$ of $V_{\rm full}$ with $w_1$ proportional to $1$.  Let $V_N=\mathrm{span}\{w_1,\ldots,w_N\}$ and choose data $u_{0,N},u_{1,N}\in V_N$ converging to $u_0$ in $V_{\rm full}$ and to $u_1$ in $L^2(\Sigma)$.

Seek $u_N(t)=\sum_{j=1}^N d_j^{(N)}(t)w_j$ satisfying, for every $\eta\in V_N$,
\begin{align}
\int_\Sigma A(u_N)_{tt}\overline\eta+q[u_N,\eta]+\mathfrak c[(u_N)_t,\eta]&=0.\label{eq:main-galerkin-full-expanded}
\end{align}
In coordinates this is
\begin{equation}
M_N\ddot d_N+C_N\dot d_N+K_Nd_N=0,
\end{equation}
where $M_N$ is Hermitian positive definite, $K_N$ is Hermitian nonnegative, and $C_N$ is skew-Hermitian.  The last assertion follows from \eqref{eq:gyro-skew}; explicitly,
\begin{equation}
(C_N)_{ij}=\mathfrak c[w_j,w_i],
\qquad
(C_N)_{ij}=-\overline{(C_N)_{ji}}.
\end{equation}
Thus the finite-dimensional system has a global smooth solution.  Multiplying by $\dot d_N^*$ gives
\begin{equation}
\frac{\dd}{\dd t}
\frac12\bigl(\dot d_N^*M_N\dot d_N+d_N^*K_Nd_N\bigr)=0,
\end{equation}
which is exactly the Galerkin energy identity
\begin{equation}
E_N(t)=\frac12\int_\Sigma A|(u_N)_t|^2+\frac12q[u_N,u_N]=E_N(0).
\end{equation}
Testing \eqref{eq:main-galerkin-full-expanded} with the constant basis vector gives the exact affine law
\begin{equation}
\frac{\dd^2}{\dd t^2}\int_\Sigma A u_N(t)=0.
\end{equation}
Combining this affine law with the weighted Poincar\'e inequality for the mean-zero part gives, on every compact interval $[-T,T]$,
\begin{align}
\sup_{|t|\le T}\bigl(\|u_N(t)\|_{V_{\rm full}}+\|(u_N)_t(t)\|_{L^2}\bigr)&\le C_T\bigl(\|u_{0,N}\|_{V_{\rm full}}+\|u_{1,N}\|_{L^2}\bigr).
\end{align}
The equation itself gives a uniform bound for $(u_N)_{tt}$ in $L^\infty([-T,T];V_{\rm full}')$, because $q$ is bounded on $V_{\rm full}$ and $\mathfrak c$ is bounded from $L^2(\Sigma)\times V_{\rm full}$ to $\C$.

Banach-Alaoglu gives a subsequence, still denoted $u_N$, and a function $u$ such that
\begin{align}
&u_N\rightharpoonup^* u\text{ in }L^\infty([-T,T];V_{\rm full})\nonumber\\
&(u_N)_t\rightharpoonup^* u_t\text{ in }L^\infty([-T,T];L^2)
\end{align}
and
\begin{equation}
(u_N)_{tt}\rightharpoonup^* u_{tt}\text{ in }L^\infty([-T,T];V_{\rm full}').
\end{equation}
The standard Lions-Magenes compactness argument gives weak continuity of $u$ with values in $V_{\rm full}$ and of $u_t$ with values in $L^2$.  Passing to the limit in \eqref{eq:main-galerkin-full-expanded} gives \eqref{eq:full-weak-form}.  The recovery argument in Lemma~\ref{lem:galerkin-recovery-main}, together with the weak energy identity, upgrades the convergence to strong convergence in the augmented energy norm.  In particular,
\begin{equation}
(u(t),u_t(t))\in V_{\rm full}\times L^2(\Sigma)
\end{equation}
for every $t$, the map $t\mapsto (u(t),u_t(t))$ is strongly continuous, and the initial data are attained in the energy topology.

Uniqueness is an energy argument.  If $w$ is the difference of two energy solutions with zero initial data, Theorem~\ref{thm:weak-energy-equality-full} gives $E_{\rm full}[w,w_t](t)=0$.  The weighted average also obeys the affine law and is zero initially.  Hence $w_t=0$, $q[w,w]=0$, and $\Pi_0w=0$; Proposition~\ref{prop:kernelfull} then gives $w=0$.

The same estimate applied to the difference of two solutions gives continuous dependence in the augmented norm
\begin{equation}
\|(u,v)\|_*^2
:=2E_{\rm full}[u,v]+|\Pi_0u|^2+|\Pi_0v|^2,
\end{equation}
which is equivalent to $V_{\rm full}\times L^2$ by Proposition~\ref{prop:weightedPoincareFull}.  The group law follows from uniqueness by restarting the solution at time $s$, and strong continuity follows from the continuity already obtained for $u$ and $u_t$.  Since $T$ was arbitrary, the construction is global on $\R$.
\end{proof}

\subsection{Kernel, affine average, and weighted Poincar\'e inequality}

Define the full spatial quadratic form
\begin{align}
q[u]&:=\int_\Sigma \left( \Phi|u_\phi|^2+\Delta_r|u_r|^2+\Delta_x|u_x|^2 \right)\,\dd\phi\,\dd r\,\dd x.\label{eq:qfull}
\end{align}
It is nonnegative, and its nullspace is again one-dimensional.

\begin{proposition}[Kernel of the full spatial form]\label{prop:kernelfull}
Assume that $\Sigma$ is strictly stationary, connected, periodic in $\phi$, and equipped with reflecting boundary conditions admitting the constant function.  Then
\begin{equation}
q[u]=0
\qquad\Longleftrightarrow\qquad
u\in\mathrm{span}\{1\}.
\end{equation}
In particular, the kernel of the full bounded-slab dynamics is generated by constants.
\end{proposition}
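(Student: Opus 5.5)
The plan is to prove the two implications separately, using only the positivity of the coefficients on a strictly stationary slab and the connectedness of $\Sigma$.

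\emph{Constants lie in the kernel.} If $u\in\mathrm{span}\{1\}$, then $u_\phi=u_r=u_x=0$, so the integrand of \eqref{eq:qfull} vanishes identically. Hence $q[u]=0$. Constants are admissible because $1\in V_{\mathrm{full}}$ by the reflecting form-domain hypothesis.

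\emph{The kernel consists of constants.} Let $u\in V_{\mathrm{full}}\subset H^1(\Sigma)$ with $q[u]=0$. On a strictly stationary slab the coefficients are continuous on the compact set $\overline\Omega$ and strictly positive there, so $\Phi\ge\phi_->0$, $\Delta_r\ge d_{r,-}>0$ and $\Delta_x\ge d_{x,-}>0$. The lower bound in \eqref{eq:energy-gradient-bounds} then gives
\begin{equation}
m_\nabla\|\nabla_{\phi,r,x}u\|_{L^2(\Sigma)}^2\le q[u]=0 ,
\end{equation}
so the weak gradient of $u$ vanishes almost everywhere on $\Sigma$.

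To conclude that $u$ is constant, observe that $\Sigma=S^1_\phi\times[r_-,r_+]\times[x_-,x_+]$ is connected. Lift $u$ to the $2\pi$-periodic function on the convex set $\R\times\Omega^\circ$; it lies in $H^1_{\mathrm{loc}}$ with zero gradient. Mollifying gives smooth functions with zero gradient on slightly smaller convex subsets, hence constants. These constants are locally consistent and therefore agree on the whole connected set, so $u$ equals a single constant almost everywhere. The boundary of $\Omega$ has measure zero and plays no role. Note that the argument needs no pointwise boundary condition: the form realization of Lemma~\ref{lem:form-reflection-primary} only ensures that $u\in H^1$.

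\emph{Interpretation as the dynamical kernel.} Stationary energy solutions $u(t)=u$ of \eqref{eq:full-weak-form} satisfy $q[u,\eta]=0$ for every $\eta\in V_{\mathrm{full}}$. The gyroscopic term drops out because $u_t=0$. Choosing $\eta=u$ gives $q[u]=0$, hence $u$ is constant by the previous step. Conversely, constants satisfy the weak equation, so the kernel of the full bounded-slab dynamics is $\mathrm{span}\{1\}$.

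The only point needing care is the "zero gradient implies constant" step on a domain that is periodic in $\phi$ and has corners in $(r,x)$. Handling it via the periodic lift to a convex product domain, as above, avoids any regularity issue at the corners.
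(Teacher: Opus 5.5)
Your proof is correct and follows the same route as the paper: strict positivity of $\Phi$, $\Delta_r$, $\Delta_x$ on the strictly stationary slab forces $u_\phi=u_r=u_x=0$ almost everywhere, and connectedness of $\Sigma$ then makes $u$ constant. Your periodic-lift and mollification justification of the zero-gradient step, and your treatment of stationary weak solutions for the dynamical kernel, only supply details the paper leaves implicit.
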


\begin{proof}
If $u$ is constant, then clearly $q[u]=0$.  Conversely, if $q[u]=0$, then the strict positivity of $\Phi$, $\Delta_r$, and $\Delta_x$ implies
\begin{equation}
u_\phi=u_r=u_x=0
\end{equation}
almost everywhere on $\Sigma$.  Because $\Sigma$ is connected, $u$ is constant.
\end{proof}

Let
\begin{align}
\Pi_0 f&:=\frac{\int_\Sigma A f\,\dd\phi\,\dd r\,\dd x} {\int_\Sigma A\,\dd\phi\,\dd r\,\dd x}\,1\label{eq:Pi0full}
\end{align}
denote the $A$-weighted projection onto constants.

\begin{proposition}[Weighted Poincar\'e inequality on $\Sigma$]\label{prop:weightedPoincareFull}
There exists $C_P>0$ such that
\begin{equation}
\|u\|_{L^2(\Sigma)}^2\leq C_P\,q[u]
\end{equation}
for every $u\in H^1(\Sigma)$ satisfying $\Pi_0u=0$.
\end{proposition}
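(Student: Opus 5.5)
The plan is to reduce the weighted Poincar\'e inequality to the standard Neumann--Poincar\'e inequality on the bounded Lipschitz domain $\Sigma=S^1_\phi\times[r_-,r_+]\times[x_-,x_+]$. This domain is a product of a circle and a rectangle, so it is connected and has Lipschitz boundary. The first step is to use the coefficient bounds recorded before the statement. On a strictly stationary slab,
\[
q[u]\ge m_\nabla\|\nabla_{\phi,r,x}u\|_{L^2(\Sigma)}^2,\qquad m_\nabla=\min\{\phi_-,d_{r,-},d_{x,-}\}>0 .
\]
It therefore suffices to prove $\|u\|_{L^2}^2\le C\|\nabla u\|_{L^2}^2$ for $u\in H^1(\Sigma)$ with zero $A$-weighted mean. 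We would then take $C_P=C/m_\nabla$.

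We give two routes and would present the direct one. \textbf{Compactness route.} Suppose the inequality fails. Then there are $u_n\in H^1(\Sigma)$ with $\Pi_0u_n=0$, $\|u_n\|_{L^2}=1$ and $\|\nabla u_n\|_{L^2}\to0$. The sequence is bounded in $H^1$, and Rellich compactness on the bounded Lipschitz domain $\Sigma$ (periodicity in $\phi$ causes no trouble) gives a subsequence converging strongly in $L^2$ and weakly in $H^1$ to some $u$. The limit has $\nabla u=0$ by weak lower semicontinuity, so $u$ is constant by Proposition~\ref{prop:kernelfull}, or simply by connectedness. The condition $\Pi_0u=0$ passes to the limit because $A\in L^\infty$ and $L^2$ convergence on a bounded set implies convergence of $\int A u_n$. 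Since $\int_\Sigma A>0$, this forces $u=0$, which contradicts $\|u\|_{L^2}=1$.

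\textbf{Direct route.} This version gives an explicit constant, which suits \eqref{eq:explicit-C-stab}. Let $\bar u=|\Sigma|^{-1}\int_\Sigma u$ be the unweighted mean and $C_N$ the unweighted Neumann--Poincar\'e constant of the box, $C_N\le \max\{(2\pi)^2,(r_+-r_-)^2,(x_+-x_-)^2\}/\pi^2$ by separation of variables. Since $\Pi_0u=0$, the constant $\bar u$ equals $\bar u-\Pi_0u$, which is minus the $A$-weighted mean of $u-\bar u$. Cauchy--Schwarz then gives
\[
|\bar u|\le \frac{a_+|\Sigma|^{1/2}}{a_-|\Sigma|}\,\|u-\bar u\|_{L^2}.
\]
We then estimate
\[
\|u\|_{L^2}\le\|u-\bar u\|_{L^2}+|\bar u||\Sigma|^{1/2}\le (1+a_+/a_-)\|u-\bar u\|_{L^2}\le (1+a_+/a_-)C_N^{1/2}\|\nabla u\|_{L^2}.
\]
Combining this with the first step gives $C_P=(1+a_+/a_-)^2C_N/m_\nabla$.

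There is no serious obstacle. The only point needing care is that the unweighted Poincar\'e inequality on the product domain holds, and this follows from the product structure. We should also note that the statement is posed for all of $H^1(\Sigma)$, which contains any $V_{\rm full}$, so the result applies to every reflecting form domain.
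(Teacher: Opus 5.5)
Your compactness route is the paper's proof essentially line for line: a normalized sequence with $q[u_n]\to0$, Rellich, a constant limit, and the weighted mean passing to the limit.

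The direct route, which you say you would present, is genuinely different and is also correct. The three ingredients all check out:
\begin{itemize}
\item the identity $\bar u=-\Pi_0(u-\bar u)$;
\item the Cauchy--Schwarz bound $|\bar u|\le (a_+/a_-)|\Sigma|^{-1/2}\|u-\bar u\|_{L^2}$;
\item the unweighted Poincar\'e inequality on the product. One way to justify it: split $u-\bar u$ orthogonally into its successive $\phi$-, $r$-, $x$-averaging differences, then apply one-dimensional Poincar\'e and Jensen in each variable.
\end{itemize}

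What the direct route buys is a constructive value $C_P=(1+a_+/a_-)^2C_N/m_\nabla$. It depends only on $a_\pm$, the lower coefficient bounds, and the side lengths. The paper's contradiction argument only proves that some $C_P$ exists. That sits uneasily with the claim that $C_{\rm stab}$ in \eqref{eq:explicit-C-stab} is explicit, so your route actually supplies what that formula needs. It also uses nothing about $\partial\Sigma$ beyond the product structure. It is the rigorous form of the one-line reasoning the paper gives for the axisymmetric Proposition~\ref{prop:weightedPoincare}.

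The compactness route buys generality in return. As in Lemma~\ref{lem:appendix-weighted-poincare-general}, it works verbatim for any bounded connected Lipschitz domain and any bounded weight with a positive lower bound, where no closed-form Neumann constant is available.

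Two harmless refinements to your constant:
\begin{itemize}
\item The periodic $\phi$-factor contributes $1$ rather than $(2\pi)^2/\pi^2=4$ to $C_N$. Your bound is valid, just not sharp.
\item Since $u-\bar u$ is $L^2$-orthogonal to constants, $\|u\|_{L^2}^2=\|u-\bar u\|_{L^2}^2+|\Sigma|\,|\bar u|^2$. This improves the factor $(1+a_+/a_-)^2$ to $1+(a_+/a_-)^2$.
\end{itemize}
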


\begin{proof}
Suppose the conclusion were false.  Then there would exist $u_n\in H^1(\Sigma)$ with $\Pi_0u_n=0$,
\begin{equation}
\|u_n\|_{L^2(\Sigma)}=1,
\qquad
q[u_n]\to 0.
\end{equation}
Since the coefficients in $q$ are positive and bounded below, $\{u_n\}$ is bounded in $H^1(\Sigma)$.  By Rellich compactness, after passing to a subsequence we have
\begin{equation}
u_n\to u_\infty\quad\text{strongly in }L^2(\Sigma).
\end{equation}
The convergence $q[u_n]\to 0$ implies $\nabla u_n\to 0$ in $L^2(\Sigma)$, hence $u_\infty$ is constant.  The weighted mean-zero condition passes to the limit because $A$ is bounded, so $\Pi_0u_\infty=0$.  Therefore $u_\infty=0$, contradicting $\|u_\infty\|_{L^2(\Sigma)}=1$.
\end{proof}

The weighted spatial average obeys the same affine law as in the axisymmetric problem.

\begin{lemma}[Affine evolution of the weighted average]\label{lem:fullaverage}
Let $u$ be a sufficiently regular solution of \eqref{eq:fullPDE-k0} on a strictly stationary slab with periodic/reflection boundary conditions.  Then
\begin{equation}
\frac{\dd^2}{\dd t^2}\int_\Sigma A\,u(t,\phi,r,x)\,\dd\phi\,\dd r\,\dd x = 0.
\end{equation}
Equivalently,
\begin{equation}
\Pi_0u(t)=\Pi_0u(0)+t\,\Pi_0u_t(0).
\end{equation}
\end{lemma}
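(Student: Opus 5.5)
The plan is to test the equation against the constant function $1$. For a sufficiently regular solution, I will use the weak formulation \eqref{eq:full-weak-form}, which by Lemma~\ref{lem:form-reflection-primary} is equivalent to the strong equation together with the reflecting condition. Since $1\in V_{\mathrm{full}}$ by hypothesis, $\eta=1$ is an admissible test function. This gives
\[
\int_\Sigma A u_{tt}\,\dd\phi\,\dd r\,\dd x + q[u,1]+\mathfrak c[u_t,1]=0 .
\]

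Next I will show that both remaining terms vanish.
\begin{itemize}
\item The spatial term is zero, $q[u,1]=0$, because every entry of the form \eqref{eq:qfull-bilinear} contains a derivative of $\eta$, and all derivatives of the constant test function vanish.
\item The gyroscopic term is zero by the second expression in \eqref{eq:gyro-form}: $\mathfrak c[u_t,1]=2\int_\Sigma B u_t\,\overline{\partial_\phi 1}=0$.
\item Alternatively, using the first expression, $-2\int_\Sigma B\,\partial_\phi u_t$ vanishes by periodicity in $\phi$ together with $\partial_\phi B=0$.
\end{itemize}
It follows that $\int_\Sigma A u_{tt}=0$.

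Since $A$ is time-independent and bounded, differentiation under the integral sign is legitimate for regular solutions. Hence
\[
\frac{\dd^2}{\dd t^2}\int_\Sigma A u=0 .
\]
Integrating twice in $t$ gives the affine law for $\int_\Sigma A u(t)$. Dividing by $\int_\Sigma A>0$, which is positive because $A>0$ on the slab, gives the stated formula for $\Pi_0 u(t)$. The same argument shows that $\Pi_0 u_t(t)=\Pi_0 u_t(0)$ is constant.

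There is one delicate point in this argument, namely justifying the test with $\eta=1$. In the smooth strong setting one can instead integrate the PDE directly over $\Sigma$:
\begin{itemize}
\item the $\phi$-derivative terms integrate to zero by periodicity;
\item the $r$- and $x$-divergences reduce to boundary fluxes, which vanish by the reflecting (conormal) condition.
\end{itemize}
For energy solutions, the identity holds in $\mathcal D'$ with $u_{tt}\in V_{\mathrm{full}}'$. Pairing with $1\in V_{\mathrm{full}}$ then gives the same conclusion distributionally. Combined with the continuity of $t\mapsto\int_\Sigma A u(t)$, this yields the affine law pointwise.
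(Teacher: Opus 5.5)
Your proposal is correct and follows the paper's argument. The paper integrates \eqref{eq:fullPDE-k0} over $\Sigma$, with the $\phi$-terms vanishing by periodicity and the $(r,x)$-divergences by reflection, which is your strong-form fallback; your weak-form test with $\eta=1$ (using $q[u,1]=0$ and $\mathfrak c[u_t,1]=0$) is the same computation the paper uses in Step~4 of Proposition~\ref{prop:appendix-proof-fullmain} and in Theorem~\ref{thm:appendixAclosure}, and it has the small advantage of needing no pointwise boundary traces, in line with Proposition~\ref{prop:no-boundary-gap}.
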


\begin{proof}
Integrate \eqref{eq:fullPDE-k0} over $\Sigma$.  The $\phi$-derivative terms vanish by periodicity, and the $(r,x)$-divergence terms vanish by the reflecting boundary conditions.  The result is
\begin{equation}
-\frac{\dd^2}{\dd t^2}\int_\Sigma A\,u=0.
\end{equation}
Dividing by $\int_\Sigma A$ gives the projector identity.
\end{proof}

\begin{definition}[Closed-form generator of the full slab equation]\label{def:closed-form-generator}
Set
\begin{equation}
H_A=L^2(\Sigma,A\,\dd\phi\,\dd r\,\dd x).
\end{equation}
Define the mass map $M_A$ by
\begin{equation}
\langle M_A f,\eta\rangle=\int_\Sigma A f\overline\eta.
\end{equation}
Let $\mathcal H_{\mathrm{full}}:V_{\mathrm{full}}\to V_{\mathrm{full}}'$ and $\mathcal C_{\mathrm{full}}:L^2(\Sigma)\to V_{\mathrm{full}}'$ be defined by
\begin{align}
\langle \mathcal H_{\mathrm{full}}u,\eta\rangle&=q[u,\eta]\nonumber\\
\langle \mathcal C_{\mathrm{full}}v,\eta\rangle&=\mathfrak c[v,\eta].
\end{align}
The generator on the energy space $\mathcal X_{\mathrm{full}}=V_{\mathrm{full}}\times H_A$ is
\begin{equation}
\mathcal G(u,v)=\left(v,-M_A^{-1}(\mathcal H_{\mathrm{full}}u+\mathcal C_{\mathrm{full}}v)\right)
\end{equation}
with domain
\begin{align}
\Dom(\mathcal G)&=\left\{(u,v)\in V_{\mathrm{full}}\times V_{\mathrm{full}}: \mathcal H_{\mathrm{full}}u+\mathcal C_{\mathrm{full}}v\in M_A(H_A)\right\}.\label{eq:generator-domain-full}
\end{align}
This definition is exactly the first-order form of \eqref{eq:full-weak-form}.
\end{definition}

\begin{proposition}[Operator-theoretic threshold classification]\label{prop:generator-threshold-full}
For the generator in Definition~\ref{def:closed-form-generator},
\begin{equation}
\Ker\mathcal G=\mathrm{span}\{(1,0)\},
\qquad
\Ker\mathcal G^2=\mathrm{span}\{(1,0),(0,1)\},
\end{equation}
and
\begin{equation}
\bigcup_{N\ge1}\Ker\mathcal G^N=\Ker\mathcal G^2.
\end{equation}
\end{proposition}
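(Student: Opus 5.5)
I will compute the kernels of \(\mathcal G\) and \(\mathcal G^2\) directly from Definition~\ref{def:closed-form-generator}, and then show that the chain of kernels stops growing at \(N=2\).

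\emph{Step 1: \(\Ker\mathcal G\).} Suppose \((u,v)\in\Dom(\mathcal G)\) and \(\mathcal G(u,v)=0\). The first component gives \(v=0\). The second component then reads \(M_A^{-1}\mathcal H_{\mathrm{full}}u=0\). Since \(M_A\) is injective on \(H_A\), this means \(q[u,\eta]=0\) for all \(\eta\in V_{\mathrm{full}}\). Taking \(\eta=u\) gives \(q[u]=0\), so \(u\in\mathrm{span}\{1\}\) by Proposition~\ref{prop:kernelfull}.

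Conversely, \((1,0)\) lies in the domain. Indeed \(1\in V_{\mathrm{full}}\), the gradient of a constant vanishes so \(q[1,\eta]=0\), and \(\mathcal C_{\mathrm{full}}0=0\). Hence \(\mathcal H_{\mathrm{full}}1+\mathcal C_{\mathrm{full}}0=0\in M_A(H_A)\) and \(\mathcal G(1,0)=0\).

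\emph{Step 2: \(\Ker\mathcal G^2\).} First, \((0,1)\) lies in the domain: \(\mathcal H_{\mathrm{full}}0=0\), and \(\mathfrak c[1,\eta]=2\int_\Sigma B\overline{\eta_\phi}=0\). This last identity holds by periodic integration in \(\phi\), using \(\partial_\phi B=0\); equivalently, it is the first formula in \eqref{eq:gyro-form} with \(v_\phi=0\). Therefore \(\mathcal G(0,1)=(1,0)\) and \(\mathcal G^2(0,1)=0\).

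Conversely, let \(\mathcal G^2U=0\) with \(U=(u,v)\) in the appropriate iterated domain. Then \(\mathcal GU\in\Ker\mathcal G\), so \(\mathcal GU=c(1,0)\) for some constant \(c\). The first component gives \(v=c\). The second component gives \(\mathcal H_{\mathrm{full}}u+\mathcal C_{\mathrm{full}}c=0\). By the computation just made \(\mathcal C_{\mathrm{full}}c=0\), so \(\mathcal H_{\mathrm{full}}u=0\). Step 1 then gives \(u\in\mathrm{span}\{1\}\), and so \(U\in\mathrm{span}\{(1,0),(0,1)\}\).

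\emph{Step 3: the chain stabilizes.} I claim \(\Ker\mathcal G^3\subset\Ker\mathcal G^2\), after which induction gives \(\Ker\mathcal G^N=\Ker\mathcal G^2\) for every \(N\ge2\). Let \(\mathcal G^3U=0\). Then \(\mathcal GU\in\Ker\mathcal G^2\), so \(\mathcal GU=\alpha(1,0)+\beta(0,1)\). Reading off components:
\[
v=\alpha,\qquad -M_A^{-1}\bigl(\mathcal H_{\mathrm{full}}u+\mathcal C_{\mathrm{full}}\alpha\bigr)=\beta .
\]
Again \(\mathcal C_{\mathrm{full}}\alpha=0\), so \(\mathcal H_{\mathrm{full}}u=-\beta M_A1\).

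This is the main obstacle: I must show \(\beta=0\), and this is where the constant test function does the work. Pair the identity with \(\eta=1\in V_{\mathrm{full}}\). The left side is \(q[u,1]=0\), because the gradient of \(1\) vanishes. The right side is \(-\beta\int_\Sigma A\). Since \(A>0\) on the slab, \(\int_\Sigma A>0\), which forces \(\beta=0\). This is the operator-level form of the affine law in Lemma~\ref{lem:fullaverage}.

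With \(\beta=0\) we get \(\mathcal H_{\mathrm{full}}u=0\), so \(u\) is constant and \(U=(\text{const},\alpha)\in\Ker\mathcal G^2\).

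Finally, I will remark on domain bookkeeping. Every vector in \(\Ker\mathcal G^N\) lies in \(\Dom(\mathcal G^N)\), and the only memberships actually used are \(1\in V_{\mathrm{full}}\) together with the two constant computations above. Nothing further is needed.
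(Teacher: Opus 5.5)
Your proof is correct. Steps~1 and~2 coincide with the paper's argument. The first component forces $v$ to be constant, $\mathcal C_{\mathrm{full}}$ annihilates constants, and testing $\mathcal H_{\mathrm{full}}u=0$ with $u$ then invokes Proposition~\ref{prop:kernelfull}.

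Step~3 takes a genuinely different route. The paper handles $N\ge3$ dynamically. Data in $\Ker\mathcal G^N$ generate a polynomial-in-$t$ energy solution, and the affine $A$-average is subtracted via Lemma~\ref{lem:fullaverage}. Energy conservation together with coercivity on the threshold complement (Proposition~\ref{prop:weightedPoincareFull}) makes the remainder a bounded Hilbert-space polynomial, hence constant, hence zero.

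You instead stay at the level of the generator. Pairing the second component with the admissible test function $1$ shows that $U\mapsto\int_\Sigma A\,v$ annihilates the second component of every vector in $\Ran\mathcal G$. This is the operator form of $\frac{\dd^2}{\dd t^2}\int_\Sigma Au=0$. Hence $(0,1)\notin\Ran\mathcal G$, so the Jordan chain $(0,1)\mapsto(1,0)\mapsto0$ cannot be extended, and $\Ker\mathcal G^3=\Ker\mathcal G^2$ propagates to all $N$ by the standard ascent argument.

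Your route uses only $1\in V_{\mathrm{full}}$, $\partial_\phi B=0$ with periodicity, $\int_\Sigma A>0$, and Proposition~\ref{prop:kernelfull}. It needs neither the well-posedness theorem, nor energy conservation, nor the weighted Poincar\'e inequality. It also avoids the step, left implicit in the paper, that data in $\Ker\mathcal G^N$ generate polynomial energy solutions of the constructed group. It is therefore shorter and independent of the evolution.

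The paper's argument, by contrast, is the same energy mechanism used in Theorem~\ref{thm:abstractmain} and Proposition~\ref{prop:verifyabstractfull}(iii). It needs no explicit functional detecting the obstruction, and it carries over verbatim to any setting with a conserved energy that is coercive on a commuting complement.
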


\begin{proof}
If $(u,v)\in\Ker\mathcal G$, then $v=0$ and $\mathcal H_{\mathrm{full}}u=0$.  Testing the latter with $u$ gives $q[u,u]=0$, so Proposition~\ref{prop:kernelfull} gives $u$ constant.  Hence $\Ker\mathcal G=\mathrm{span}\{(1,0)\}$.

If $(u,v)\in\Ker\mathcal G^2$, then $\mathcal G(u,v)\in\Ker\mathcal G$.  Thus $\mathcal G(u,v)=(c,0)$ for a constant $c$.  The first component gives $v=c$, and the second gives
\begin{equation}
\mathcal H_{\mathrm{full}}u+\mathcal C_{\mathrm{full}}c=0.
\end{equation}
Since a constant has zero $\phi$-derivative, $\mathcal C_{\mathrm{full}}c=0$.  Testing with $u$ gives $q[u,u]=0$, hence $u$ is constant.  This proves the formula for $\Ker\mathcal G^2$.

For $N\ge3$, let $(u_0,u_1)\in\Ker\mathcal G^N$.  The corresponding solution is a polynomial in $t$ with values in the energy space.  Subtract its affine $A$-average part using Lemma~\ref{lem:fullaverage}.  The remainder lies in the threshold complement for all $t$, and the energy equivalence on that complement follows directly from Proposition~\ref{prop:weightedPoincareFull} and the explicit constants \eqref{eq:explicit-energy-constants}, so this polynomial is uniformly bounded in $V_{\mathrm{full}}\times L^2$.  A bounded Hilbert-space polynomial is constant; substituting the constant remainder into the weak equation gives zero spatial form, hence the remainder vanishes by Proposition~\ref{prop:kernelfull} and the zero average.  Therefore no generalized threshold vectors beyond order two occur.
\end{proof}

\subsection{Verification of the abstract conditions and proof of Theorem~\ref{thm:fullmain}}

By Theorem~\ref{thm:fullwellposed}, the full equation determines a strongly continuous solution group
\begin{equation}
\mathcal S(t):(u_0,u_1)\longmapsto (u(t),u_t(t))
\end{equation}
on the energy space
\begin{equation}
\mathcal X_{\mathrm{full}}:=V_{\mathrm{full}}\times L^2(\Sigma).
\end{equation}
We now identify the threshold projector and the coercive threshold complement.

\begin{proposition}[Concrete threshold projection on the Carter slab]\label{prop:verifyabstractfull}
Define
\begin{equation}
\Pi_{\mathrm{thr}}(u_0,u_1):=\bigl(\Pi_0u_0,\Pi_0u_1\bigr)
\end{equation}
on $\mathcal X_{\mathrm{full}}$, where $\Pi_0$ is the $A$-weighted projection onto constants from \eqref{eq:Pi0full}.  Then the following statements hold.

\begin{enumerate}[label=(\roman*)]
  \item $\Pi_{\mathrm{thr}}$ is a bounded projection with range
  \begin{equation}
\Ran \Pi_{\mathrm{thr}}=\mathrm{span}\{(1,0),(0,1)\}.
\end{equation}
  \item $\Pi_{\mathrm{thr}}$ commutes with the solution group $\mathcal S(t)$.
  \item $\Ran \Pi_{\mathrm{thr}}$ is the full zero-frequency threshold space of the full bounded-slab evolution, as classified operator-theoretically in Proposition~\ref{prop:generator-threshold-full}.  Equivalently, the associated threshold solutions are precisely the affine functions
  \begin{equation}
u(t,\phi,r,x)=c_0+c_1t.
\end{equation}
  \item On $\Ker \Pi_{\mathrm{thr}}$, the conserved energy $E_{\mathrm{full}}$ is equivalent to the $\mathcal X_{\mathrm{full}}$-norm with the explicit constants in \eqref{eq:explicit-energy-constants}.
\end{enumerate}
\end{proposition}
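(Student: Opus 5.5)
The plan is to verify the four items in order, using only the weighted average \(\Pi_0\) from \eqref{eq:Pi0full}, the affine law of Lemma~\ref{lem:fullaverage}, the threshold classification of Proposition~\ref{prop:generator-threshold-full}, and the weighted Poincar\'e inequality of Proposition~\ref{prop:weightedPoincareFull}.

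For (i), note that \(\Pi_0f=\langle f,1\rangle_A/\langle 1,1\rangle_A\) is the \(L^2_A\)-orthogonal projection onto constants. It is therefore idempotent. It is bounded on \(L^2(\Sigma)\) by Cauchy--Schwarz, since \(A\) is bounded above and below. It is also bounded on \(V_{\rm full}\subset H^1\), because its range consists of constants, which lie in \(V_{\rm full}\) by \textbf{H4}--\textbf{H5}, and \(\|1\|_{H^1}=|\Sigma|^{1/2}\). Consequently \(\Pi_{\rm thr}\) is a bounded idempotent on \(\mathcal X_{\rm full}\) with range \(\mathrm{span}\{(1,0),(0,1)\}\).

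For (ii), I first need the affine law for energy solutions, not only for smooth ones. I will obtain it by testing the weak equation \eqref{eq:full-weak-form} with \(\eta=1\). Since \(q[u,1]=0\) and \(\mathfrak c[u_t,1]=0\) (because \(1_\phi=0\)), this gives \(\frac{\dd^2}{\dd t^2}\int_\Sigma Au=0\) in \(\mathcal D'\). This is also recorded at the Galerkin level in Proposition~\ref{prop:galerkin-main-identity} and passed to the limit in Lemma~\ref{lem:galerkin-recovery-main}. It follows that \(\Pi_0u(t)=\Pi_0u_0+t\Pi_0u_1\) and \(\Pi_0u_t(t)=\Pi_0u_1\).

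Next I compute \(\mathcal S(t)\Pi_{\rm thr}U_0\). This is the solution with data \((c_0,c_1)\), which is the affine solution \(c_0+c_1t\) by Proposition~\ref{prop:sharpness-threshold-positivity} and uniqueness in Theorem~\ref{thm:fullwellposed}. Its state is \((c_0+c_1t,c_1)\), and by the affine law this coincides with \(\Pi_{\rm thr}\mathcal S(t)U_0\). Hence \(\Pi_{\rm thr}\) commutes with \(\mathcal S(t)\).

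For (iii), I will simply combine (i) with Proposition~\ref{prop:generator-threshold-full}. That proposition gives \(\bigcup_N\Ker\mathcal G^N=\mathrm{span}\{(1,0),(0,1)\}\), which is exactly \(\Ran\Pi_{\rm thr}\). The corresponding solutions are \(e^{t\mathcal G}(c_0,c_1)=(c_0+c_1t,c_1)\), that is, the affine functions.

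For (iv), take \((u,v)\in\Ker\Pi_{\rm thr}\). By \eqref{eq:energy-gradient-bounds}, the energy controls \(\|v\|_{L^2}^2\) and \(\|\nabla u\|_{L^2}^2\) from above and below. Proposition~\ref{prop:weightedPoincareFull} adds \(\|u\|_{L^2}^2\le C_P q[u,u]\). This yields the lower constant \(c_{\rm low}\) and the upper constant \(C_{\rm high}\) of \eqref{eq:explicit-energy-constants}. The Poincar\'e inequality is stated for \(H^1\), so it applies to \(V_{\rm full}\).

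The one delicate point is (ii): the affine law must hold for energy-class solutions, which only satisfy the weak equation. Testing with \(\eta=1\) is legitimate exactly because \(1\in V_{\rm full}\). This is where the reflecting, constant-admitting form domain is essential; a Dirichlet realization would fail at this step.
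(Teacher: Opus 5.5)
Your proposal is correct and follows the paper's proof for (i), (ii) and (iv). In (ii), your derivation of the affine law by testing the weak equation \eqref{eq:full-weak-form} with $\eta=1$, together with your explicit identification $\mathcal S(t)\Pi_{\mathrm{thr}}U_0=(c_0+c_1t,c_1)$ via uniqueness, is actually more careful than the paper's appeal to Lemma~\ref{lem:fullaverage}, which is stated only for sufficiently regular solutions.

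The only difference is in (iii). The paper proves (iv) first and then reruns the classification directly on an arbitrary polynomial-in-$t$ finite-energy solution: it subtracts $c_0+c_1t$, uses (iv) and energy conservation to get a bounded polynomial, hence a constant, hence zero by Proposition~\ref{prop:kernelfull}. You instead cite Proposition~\ref{prop:generator-threshold-full}, which contains the same argument and does not depend on the present statement, so the citation is legitimate.
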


\begin{proof}
Item~(i) is immediate from the definition of $\Pi_0$.

For item~(ii), let $u$ be the solution with data $(u_0,u_1)$.  Lemma~\ref{lem:fullaverage} gives
\begin{equation}
\Pi_0u(t)=\Pi_0u_0+t\,\Pi_0u_1,
\qquad
\Pi_0u_t(t)=\Pi_0u_1.
\end{equation}
Thus
\begin{equation}
\Pi_{\mathrm{thr}}\mathcal S(t)(u_0,u_1)
=
\mathcal S(t)\Pi_{\mathrm{thr}}(u_0,u_1),
\end{equation}
so the projection commutes with the full evolution.

We next prove item~(iv).  Let $(u,u_t)\in\Ker \Pi_{\mathrm{thr}}$.  Then $\Pi_0u=0$ and $\Pi_0u_t=0$.  By Proposition~\ref{prop:weightedPoincareFull},
\begin{equation}
\|u\|_{L^2(\Sigma)}^2\leq C_P\,q[u].
\end{equation}
Since $A,\Phi,\Delta_r,\Delta_x$ are positive and bounded above and below on the slab, the conserved energy \eqref{eq:fullenergy} controls $\|u_t\|_{L^2(\Sigma)}^2$ and the $L^2$-norms of $u_\phi,u_r,u_x$.  Combining this with the weighted Poincar\'e inequality yields
\begin{align}
c\Bigl(\|u\|_{H^1(\Sigma)}^2+\|u_t\|_{L^2(\Sigma)}^2\Bigr)&\leq E_{\mathrm{full}}[u](t) \leq C\Bigl(\|u\|_{H^1(\Sigma)}^2+\|u_t\|_{L^2(\Sigma)}^2\Bigr)
\end{align}
for all $(u,u_t)\in\Ker \Pi_{\mathrm{thr}}$.

Finally, we prove item~(iii).  The solutions generated by $(1,0)$ and $(0,1)$ are $1$ and $t$, so $\Ran\Pi_{\mathrm{thr}}$ lies in the threshold space.  Conversely, let $u(t,\phi,r,x)=\sum_{j=0}^N t^j\psi_j(\phi,r,x)$ be a polynomial-in-$t$ finite-energy solution.  Set
\begin{equation}
c_0:=\Pi_0u(0),
\qquad
c_1:=\Pi_0u_t(0),
\qquad
v(t):=u(t)-c_0-c_1t.
\end{equation}
Then $v$ is still a polynomial solution and, by Lemma~\ref{lem:fullaverage},
\begin{equation}
\Pi_0v(t)=0,
\qquad
\Pi_0v_t(t)=0
\end{equation}
for all $t$.  The energy estimate proved in item~(iv) and conservation of $E_{\mathrm{full}}$ imply that
\begin{equation}
\sup_{t\in\R}\Bigl(\|v(t)\|_{H^1(\Sigma)}+\|v_t(t)\|_{L^2(\Sigma)}\Bigr)<\infty.
\end{equation}
Since $v$ is a polynomial in $t$ with values in the Hilbert space $H^1(\Sigma)$ and $v_t$ is a polynomial in $t$ with values in $L^2(\Sigma)$, this bound forces all positive powers of $t$ in $v$ to vanish.  Thus $v(t)=v_0$ and $v_t(t)=0$.  Substituting this time-independent solution into the weak form of \eqref{eq:fullPDE-k0} and testing with $v_0$ gives $q[v_0]=0$.  Proposition~\ref{prop:kernelfull} therefore implies that $v_0$ is constant.  Since $\Pi_0v_0=0$, we get $v_0=0$.  Hence every threshold solution is affine in $t$, and the full zero-frequency threshold space is exactly $\mathrm{span}\{1,t\}$, equivalently $\Ran\Pi_{\mathrm{thr}}=\mathrm{span}\{(1,0),(0,1)\}$ in the energy space.
\end{proof}

\begin{proposition}[Direct exclusion of upper-half-plane modes]\label{prop:direct-no-uHP-full}
Under the conditions of Theorem~\ref{thm:fullmain}, there is no nonzero finite-energy solution of the form
\begin{equation}
u(t,\phi,r,x)=e^{\lambda t}\psi(\phi,r,x),
\qquad \Re\lambda>0.
\end{equation}
Equivalently, with the convention \(u=e^{-\ii\sigma t}\psi\), there is no nonzero mode with \(\operatorname{Im}\sigma>0\).
\end{proposition}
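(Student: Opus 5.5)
The plan is to argue directly with the conserved energy and the affine law for the weighted average, without appealing to the abstract theorem. This is useful because a mode is not assumed to lie in $\Ker\Pi_{\mathrm{thr}}$ a priori.

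Suppose $u=e^{\lambda t}\psi$ with $\Re\lambda>0$ is a nonzero finite-energy solution, meaning $(\psi,\lambda\psi)\in\mathcal X_{\rm full}=V_{\rm full}\times L^2(\Sigma)$. By uniqueness in Theorem~\ref{thm:fullwellposed}, $u$ is the energy solution with data $(\psi,\lambda\psi)$.

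\textbf{Step 1 (the mode has no threshold component).} Lemma~\ref{lem:fullaverage}, which holds for energy solutions by testing the weak form \eqref{eq:full-weak-form} with $\eta=1\in V_{\rm full}$, shows that $\Pi_0u(t)=e^{\lambda t}\Pi_0\psi$ is affine in $t$. An exponential with $\lambda\neq0$ is affine only if its coefficient vanishes, so $\Pi_0\psi=0$. Consequently $\Pi_0u_t=\lambda e^{\lambda t}\Pi_0\psi=0$, and $(u(t),u_t(t))\in\Ker\Pi_{\mathrm{thr}}$ for all $t$.

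\textbf{Step 2 (growth versus conservation).} By Theorem~\ref{thm:weak-energy-equality-full} and Lemma~\ref{lem:pointwise-energy-recovery}, the energy is conserved pointwise:
\[
E_{\rm full}[u,u_t](t)=e^{2\Re\lambda\,t}E_{\rm full}[\psi,\lambda\psi]
\]
is constant in $t$. Since $\Re\lambda>0$, this forces $E_{\rm full}[\psi,\lambda\psi]=0$. Proposition~\ref{prop:verifyabstractfull}(iv), the coercivity on $\Ker\Pi_{\mathrm{thr}}$ with constant $c_{\rm low}$ from \eqref{eq:explicit-energy-constants}, then gives $\|\psi\|_{H^1}^2+|\lambda|^2\|\psi\|_{L^2}^2=0$, so $\psi=0$. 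The same argument covers $\lambda$ real and positive. The equivalence with $\operatorname{Im}\sigma>0$ follows from $\lambda=-\ii\sigma$.

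\textbf{Main obstacle.} The only delicate point is justifying that a formal mode is a genuine energy solution, so that the affine law and the energy conservation apply without hidden regularity assumptions. This is exactly what the uniqueness and weak-energy results cover, so I would state the mode in the energy class explicitly and invoke those results.

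Alternatively, if one prefers not to use Step 1, the argument can be done purely spectrally. Test the stationary equation $\lambda^2M_A\psi+\lambda\mathcal C_{\rm full}\psi+\mathcal H_{\rm full}\psi=0$ with $\psi$ to get
\[
\lambda^2\int A|\psi|^2+\lambda\,\mathfrak c[\psi,\psi]+q[\psi]=0,
\]
where $\mathfrak c[\psi,\psi]=\ii\beta$ with $\beta$ real by \eqref{eq:gyro-skew}. Taking imaginary parts with $\lambda=\alpha+\ii\gamma$, $\alpha>0$, yields
\[
\alpha\Bigl(2\gamma\!\int A|\psi|^2+\beta\Bigr)=0,
\]
so $\beta=-2\gamma\int A|\psi|^2$. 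Substituting this into the real part gives
\[
(\alpha^2+\gamma^2)\int A|\psi|^2+q[\psi]=0,
\]
which forces $\psi=0$ since $A>0$ and $q\ge0$.
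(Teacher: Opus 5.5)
Your argument is correct and is essentially the paper's: the energy of $e^{\lambda t}\psi$ scales like $e^{2\Re\lambda\,t}$ while being conserved, so $E_{\rm full}[\psi,\lambda\psi]=0$. The paper then finishes directly from the kinetic term: $\int_\Sigma A|\lambda\psi|^2=0$ with $A>0$ and $\lambda\neq0$ already gives $\psi=0$, so your Step~1 (the affine law) and the appeal to coercivity on $\Ker\Pi_{\mathrm{thr}}$ are valid but unnecessary; your alternative quadratic-pencil computation yielding $|\lambda|^2\int_\Sigma A|\psi|^2+q[\psi]=0$ is also correct and avoids the time-domain energy machinery altogether.
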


\begin{proof}
For a mode \(u=e^{\lambda t}\psi\), the conserved energy has the scaling
\begin{align}
E_{\mathrm{full}}[u](t)&=e^{2\Re\lambda t} \frac12\int_\Sigma\left(A|\lambda\psi|^2+ \Phi|\psi_\phi|^2+\Delta_r|\psi_r|^2+\Delta_x|\psi_x|^2\right).
\end{align}
Since the energy is constant in \(t\) and \(\Re\lambda>0\), the integral in parentheses must vanish.  Positivity of the coefficients gives \(\lambda\psi=0\), \(\psi_\phi=0\), \(\psi_r=0\), and \(\psi_x=0\).  Since \(\lambda\ne0\), this forces \(\psi=0\).  The relation with the \(e^{-\ii\sigma t}\) convention follows from \(\lambda=-\ii\sigma\), so \(\Re\lambda=\operatorname{Im}\sigma\).
\end{proof}

\begin{proof}[Proof of Theorem~\ref{thm:fullmain}]
Apply Theorem~\ref{thm:abstractmain} with
\begin{align}
\mathcal X&=\mathcal X_{\mathrm{full}}\nonumber\\
e^{t\mathcal G}&=\mathcal S(t)\nonumber\\
\mathscr E&=E_{\mathrm{full}}
\end{align}
and $\Pi_{\mathrm{thr}}$ as in Proposition~\ref{prop:verifyabstractfull}.  The proposition verifies Assumption~\ref{ass:abstract}, so every solution decomposes uniquely into a threshold part and a uniformly bounded threshold-complement part.

By Proposition~\ref{prop:verifyabstractfull}(iii), the threshold part is the affine solution
\begin{equation}
c_0+c_1t,
\qquad
c_0=\Pi_0u(0),
\qquad
c_1=\Pi_0u_t(0).
\end{equation}
Set
\begin{equation}
v(t,\phi,r,x):=u(t,\phi,r,x)-c_0-c_1t.
\end{equation}
Then
\begin{equation}
\Pi_0v(t)=0,
\qquad
\Pi_0v_t(t)=0
\end{equation}
for all $t$, and Proposition~\ref{prop:verifyabstractfull}(iv) together with Theorem~\ref{thm:abstractmain} yields
\begin{align}
\sup_{t\in\R}\Bigl(\|v(t)\|_{H^1(\Sigma)}+\|v_t(t)\|_{L^2(\Sigma)}\Bigr)&\le C\Bigl(\|v(0)\|_{H^1(\Sigma)}+\|v_t(0)\|_{L^2(\Sigma)}\Bigr).
\end{align}
This is the claimed boundedness modulo $\mathrm{span}\{1,t\}$.

Uniqueness of the decomposition follows from uniqueness of the abstract threshold splitting and from the explicit identification of the threshold space in Proposition~\ref{prop:verifyabstractfull}(iii).

Finally, Theorem~\ref{thm:abstractmain} rules out nontrivial exponentially growing finite-energy modes on $\Ker\Pi_{\mathrm{thr}}$, while Proposition~\ref{prop:verifyabstractfull}(iii) shows that the threshold space itself consists only of affine functions of $t$.  Hence there is no nontrivial mode
\begin{equation}
u=\ee^{-\ii\sigma t}\psi(\phi,r,x),
\qquad
\operatorname{Im}\sigma>0.
\end{equation}
This proves the theorem.
\end{proof}

\section{Verification of the bounded-slab conditions}\label{sec:main-closure-audit}

This section verifies the abstract conditions used in Theorem~\ref{thm:abstractmain} for the positive-energy Carter slabs.  We record first two elementary Hilbert-space lemmas and then apply them to the closed form realization of the full $k=0$ equation.  The outcome is that the only generalized zero modes are the affine functions $1$ and $t$, and the bounded-slab theorem follows directly from the abstract result.

\begin{lemma}[Weighted Poincar\'e with an arbitrary positive smooth weight]\label{lem:appendix-weighted-poincare-general}
Let \(\Sigma\) be a bounded connected Lipschitz domain or product slab and let \(W\in L^\infty(\Sigma)\) satisfy \(W\ge W_->0\).  Then there is a constant \(C_{P,W}\) such that
\begin{equation}
\|f\|_{L^2(\Sigma)}^2\le C_{P,W}\|\nabla f\|_{L^2(\Sigma)}^2
\end{equation}
for every \(f\in H^1(\Sigma)\) satisfying \(\int_\Sigma Wf=0\).
\end{lemma}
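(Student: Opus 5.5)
I will argue by contradiction and compactness, following the same pattern as the proof of Proposition~\ref{prop:weightedPoincareFull}, but now with an arbitrary bounded weight \(W\ge W_->0\) in place of \(A\). Suppose no such constant exists. Then there are \(f_n\in H^1(\Sigma)\) with
\[
\int_\Sigma Wf_n=0,\qquad \|f_n\|_{L^2(\Sigma)}=1,\qquad \|\nabla f_n\|_{L^2(\Sigma)}\to0 .
\]
The sequence is bounded in \(H^1(\Sigma)\). Because \(\Sigma\) is a bounded Lipschitz domain, or a product \(S^1_\phi\times\) rectangle (compact periodic factor times a Lipschitz domain), the Rellich--Kondrachov embedding \(H^1(\Sigma)\hookrightarrow L^2(\Sigma)\) is compact. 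After passing to a subsequence, \(f_n\to f_\infty\) strongly in \(L^2\) and weakly in \(H^1\).

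Next I identify the limit. Weak lower semicontinuity gives \(\|\nabla f_\infty\|_{L^2}\le\liminf\|\nabla f_n\|_{L^2}=0\). Connectedness of \(\Sigma\) then forces \(f_\infty\equiv c\), a constant, since an \(H^1\) function with zero distributional gradient on a connected open set is constant. Strong \(L^2\) convergence preserves the norm, so \(\|f_\infty\|_{L^2}=1\). The orthogonality condition also passes to the limit, because \(W\in L^\infty\subset L^2(\Sigma)\) on a bounded domain and \(f\mapsto\int_\Sigma Wf\) is continuous on \(L^2\). Hence
\[
0=\int_\Sigma Wf_\infty=c\int_\Sigma W .
\]
Since \(\int_\Sigma W\ge W_-|\Sigma|>0\), we get \(c=0\). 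This contradicts \(\|f_\infty\|_{L^2}=1\).

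The only points needing care are the two where the hypotheses enter. The first is the compact embedding for the periodic product slab, where I would note that \(S^1\times[r_-,r_+]\times[x_-,x_+]\) is a compact Lipschitz manifold with boundary, so Rellich applies. The second is the positivity \(\int W>0\), which is where \(W\ge W_->0\) is used; boundedness of \(W\) is used only for the continuity of the constraint. I expect no genuine obstacle beyond these. The constant \(C_{P,W}\) obtained this way is non-explicit. An explicit variant could be obtained from the unweighted Poincar\'e inequality by writing \(f-\bar f\) and bounding \(|\bar f|\) through \(\int W(f-\bar f)\), but the compactness argument suffices here.
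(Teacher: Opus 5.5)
Your proof is correct and is essentially the paper's argument: a normalized sequence with \(\|\nabla f_n\|_{L^2}\to0\), a strong \(L^2\) limit from Rellich compactness, constancy of the limit by connectedness, and passage of the constraint \(\int_\Sigma Wf_n=0\) to the limit (continuous on \(L^2\) because \(W\in L^\infty\)), which forces the constant to be zero. Your explicit remarks that \(\int_\Sigma W\ge W_-|\Sigma|>0\) and that Rellich applies on the periodic product slab make precise two points the paper leaves implicit.
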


\begin{proof}
Assume the estimate is false.  Then there are \(f_n\in H^1(\Sigma)\) with \(\int Wf_n=0\), \(\|f_n\|_{L^2}=1\), and \(\|\nabla f_n\|_{L^2}\to0\).  The sequence is bounded in \(H^1\).  By Rellich compactness, after passing to a subsequence, \(f_n\to f_\infty\) strongly in \(L^2\) and weakly in \(H^1\).  The weak derivative of \(f_\infty\) is zero, hence \(f_\infty\) is constant because \(\Sigma\) is connected.  The weighted mean condition passes to the limit since \(W\in L^\infty\) and \(f_n\to f_\infty\) in \(L^2\), so the constant is zero.  This contradicts \(\|f_\infty\|_{L^2}=1\).
\end{proof}

\begin{lemma}[Bounded Hilbert-space polynomials are constant]\label{lem:appendix-polynomial}
Let \(H\) be a Hilbert space and let
\begin{equation}
P(t)=\sum_{j=0}^N t^j h_j,
\qquad h_j\in H.
\end{equation}
If \(\sup_{t\in\mathbb R}\|P(t)\|_H<\infty\), then \(h_j=0\) for every \(j\ge1\).
\end{lemma}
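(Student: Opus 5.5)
The plan is to reduce the vector-valued statement to the scalar one by pairing with fixed vectors, and then use that a bounded real polynomial must be constant.

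First I fix an arbitrary $g\in H$ and consider the complex-valued polynomial
\begin{equation}
p_g(t):=\langle P(t),g\rangle_H=\sum_{j=0}^N t^j\langle h_j,g\rangle_H .
\end{equation}
By Cauchy--Schwarz, $|p_g(t)|\le \|P(t)\|_H\|g\|_H$, so $p_g$ is bounded on $\mathbb R$. Its real and imaginary parts are real polynomials, and each of them is bounded on $\mathbb R$.

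Next I show that a bounded scalar polynomial is constant. Suppose $q(t)=\sum_{j=0}^N c_jt^j$ is bounded on $\mathbb R$, and let $k\ge1$ be the largest index with $c_k\ne0$. Then $|q(t)|\ge |c_k||t|^k-\sum_{j<k}|c_j||t|^j\to\infty$ as $|t|\to\infty$, which is a contradiction. Hence $c_j=0$ for all $j\ge1$. Applying this to $p_g$ gives $\langle h_j,g\rangle_H=0$ for every $j\ge1$.

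Finally, since $g$ was arbitrary, I take $g=h_j$ to get $\|h_j\|_H^2=0$, so $h_j=0$ for all $j\ge1$. This is exactly the claim.

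An alternative route, avoiding the pairing, is to work directly in the finite-dimensional subspace $\mathrm{span}\{h_0,\dots,h_N\}$. There all norms are equivalent, so boundedness of $P$ implies boundedness of each coordinate polynomial in a basis, and the same scalar argument applies.

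The only step requiring any care is the elementary scalar fact above. It is handled by comparing the leading term with the lower-order terms as $|t|\to\infty$. I expect no genuine obstacle. The point of recording the lemma is that it is the rigidity input used in Proposition~\ref{prop:generator-threshold-full} and Proposition~\ref{prop:verifyabstractfull}(iii) to kill generalized threshold vectors of order at least three.
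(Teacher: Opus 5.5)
Your proof is correct. It rests on the same mechanism as the paper's proof, namely that the top nonzero coefficient dominates as $|t|\to\infty$. The difference is that you first scalarize via $p_g(t)=\langle P(t),g\rangle_H$ and then recover $h_j=0$ by choosing $g=h_j$. The paper instead runs the leading-term argument directly in $H$: it takes the largest $m\ge1$ with $h_m\ne0$ and observes that $t^{-m}P(t)\to h_m$. That skips your pairing step and uses no inner-product structure.
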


\begin{proof}
Assume that some positive coefficient is nonzero and let \(m\ge1\) be the largest index with \(h_m\ne0\).  Then
\begin{equation}
t^{-m}P(t)=h_m+\sum_{j=0}^{m-1}t^{j-m}h_j\longrightarrow h_m
\quad\text{in }H
\end{equation}
as \(t\to+\infty\).  Hence \(\|P(t)\|_H\sim t^m\|h_m\|_H\), contradicting boundedness.
\end{proof}

\begin{proposition}[Explicit energy equivalence on the threshold complement]\label{prop:appendix-explicit-coercivity}
Let \(a_-\), \(a_+\), \(m_\nabla\), and \(M_\nabla\) be the coefficient constants in \eqref{eq:energy-gradient-bounds}, and let \(C_{P,A}\) be the weighted Poincar\'e constant for the weight \(A\).  If \((u,v)\in\Ker\Pi_{\mathrm{thr}}\), then
\begin{align}
\|u\|_{H^1(\Sigma)}^2+\|v\|_{L^2(\Sigma)}^2
&\le
\left(C_{P,A}+m_\nabla^{-1}+a_-^{-1}\right)\,2E_{\mathrm{full}}[u,v],
\label{eq:appendix-explicit-lower-main}\\
2E_{\mathrm{full}}[u,v]
&\le
\max\{a_+,M_\nabla\}\left(\|u\|_{H^1(\Sigma)}^2+\|v\|_{L^2(\Sigma)}^2\right).
\label{eq:appendix-explicit-upper-main}
\end{align}
\end{proposition}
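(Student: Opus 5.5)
The plan is to combine the pointwise coefficient bounds \eqref{eq:energy-gradient-bounds} with the weighted Poincar\'e inequality of Lemma~\ref{lem:appendix-weighted-poincare-general}, applied with weight \(W=A\). On a strictly stationary slab this is legitimate because \(a_->0\). Throughout I write \(2E_{\mathrm{full}}[u,v]=\int_\Sigma A|v|^2+q[u,u]\). I use the standard norm \(\|u\|_{H^1}^2=\|u\|_{L^2}^2+\|\nabla_{\phi,r,x}u\|_{L^2}^2\) on the product slab with Lebesgue measure \(\dd\phi\,\dd r\,\dd x\).

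\textbf{Upper bound.} This is immediate. Pointwise \(A\le a_+\), and \(\Phi,\Delta_r,\Delta_x\le M_\nabla\). Hence
\[
2E_{\mathrm{full}}\le a_+\|v\|_{L^2}^2+M_\nabla\|\nabla u\|_{L^2}^2\le \max\{a_+,M_\nabla\}\bigl(\|u\|_{H^1}^2+\|v\|_{L^2}^2\bigr).
\]
The \(L^2\) part of \(u\) is simply added with a nonnegative coefficient. The projection hypothesis is not needed here.

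\textbf{Lower bound.} I treat the three pieces separately.
\begin{itemize}
\item Kinetic term: from \(A\ge a_-\) I get \(\|v\|_{L^2}^2\le a_-^{-1}\int_\Sigma A|v|^2\le a_-^{-1}\,2E_{\mathrm{full}}\).
\item Gradient term: from the lower bound \(m_\nabla\) I get \(\|\nabla u\|_{L^2}^2\le m_\nabla^{-1}q[u,u]\le m_\nabla^{-1}\,2E_{\mathrm{full}}\).
\item \(L^2\) term of \(u\): since \((u,v)\in\Ker\Pi_{\mathrm{thr}}\), we have \(\Pi_0u=0\), i.e.\ \(\int_\Sigma Au=0\). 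Lemma~\ref{lem:appendix-weighted-poincare-general} then gives \(\|u\|_{L^2}^2\le C_{P,A}\|\nabla u\|_{L^2}^2\).
\end{itemize}

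The main subtlety is the normalization of \(C_{P,A}\), because the target constant is \(C_{P,A}+m_\nabla^{-1}+a_-^{-1}\) with no product \(C_{P,A}m_\nabla^{-1}\). I will therefore read \(C_{P,A}\) in the form of Proposition~\ref{prop:weightedPoincareFull}, that is, with \(q\) on the right: \(\|u\|_{L^2}^2\le C_{P,A}\,q[u]\) for \(\Pi_0u=0\). Its existence follows either from the same compactness argument or from the Lemma combined with \(q\ge m_\nabla\|\nabla u\|^2\). With this convention the \(L^2\) term gives \(\|u\|_{L^2}^2\le C_{P,A}\,2E_{\mathrm{full}}\).

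Summing the three bounds yields \eqref{eq:appendix-explicit-lower-main}.

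The hypothesis \(\Pi_0v=0\) is not used in the estimate itself. It only ensures that the pair lies in the complement on which the energy is conserved.
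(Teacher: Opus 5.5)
Your argument is correct and is the paper's argument: pointwise coefficient bounds for the kinetic and gradient parts, the \(A\)-weighted Poincar\'e inequality (using only \(\Pi_0u=0\)) for \(\|u\|_{L^2}\), term-by-term summation, and a direct upper bound. Your care over the normalization of \(C_{P,A}\) is warranted. The paper quotes Lemma~\ref{lem:appendix-weighted-poincare-general} in gradient form and then writes \((C_{P,A}+m_\nabla^{-1})q[u,u]\), which does not follow from the gradient form when \(m_\nabla<1\) (that form gives \((C_{P,A}+1)m_\nabla^{-1}q[u,u]\)). Your \(q\)-normalized reading, as in Proposition~\ref{prop:weightedPoincareFull}, is therefore exactly what makes the stated constant valid.
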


\begin{proof}
Since \((u,v)\in\Ker\Pi_{\mathrm{thr}}\), one has \(\int_\Sigma Au=0\).  Lemma~\ref{lem:appendix-weighted-poincare-general} gives
\begin{equation}
\|u\|_{L^2}^2\le C_{P,A}\|\nabla u\|_{L^2}^2.
\end{equation}
The spatial part of the energy satisfies \(q[u,u]\ge m_\nabla\|\nabla u\|_{L^2}^2\), and the kinetic part satisfies \(\int A|v|^2\ge a_-\|v\|_{L^2}^2\).  Therefore
\begin{equation}
\|u\|_{H^1}^2+\|v\|_{L^2}^2
\le (C_{P,A}+m_\nabla^{-1})q[u,u]+a_-^{-1}\int A|v|^2,
\end{equation}
which is bounded by the right side of \eqref{eq:appendix-explicit-lower-main}.  The upper bound \eqref{eq:appendix-explicit-upper-main} follows immediately from \(A\le a_+\), \(\Phi\le M_\nabla\), \(\Delta_r\le M_\nabla\), and \(\Delta_x\le M_\nabla\).
\end{proof}

\begin{theorem}[Closure of the bounded-slab conditions]\label{thm:appendixAclosure}
Assume the full bounded-slab conditions \textbf{H1}-\textbf{H5} of Assumption~\ref{ass:complete-list}.  Then the full \(k=0\) Carter slab evolution satisfies Assumption~\ref{ass:abstract} with
\begin{align}
\mathcal X&=\mathcal X_{\mathrm{full}}\nonumber\\
\mathscr E&=E_{\mathrm{full}}\nonumber\\
\Pi_{\mathrm{thr}}(u_0,u_1)&=(\Pi_0u_0,\Pi_0u_1).
\end{align}
Moreover,
\begin{equation}
\mathcal T_0=\mathrm{span}\{(1,0),(0,1)\},
\end{equation}
so the corresponding threshold solutions are exactly \(c_0+c_1t\).
\end{theorem}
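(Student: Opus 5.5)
The proof is a bookkeeping verification: each item of Assumption~\ref{ass:abstract} is matched against a result already established in Section~\ref{sec:strictstationary} and Section~\ref{sec:main-closure-audit}. No new estimate is needed.

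First, the underlying evolution. Under \textbf{H1}-\textbf{H5}, Theorem~\ref{thm:fullwellposed} supplies a strongly continuous group $\mathcal S(t)=e^{t\mathcal G}$ on $\mathcal X_{\mathrm{full}}=V_{\mathrm{full}}\times L^2(\Sigma)$. Its generator is the closed-form operator of Definition~\ref{def:closed-form-generator}. Here I will note that the $L^2$ and $H_A$ norms are equivalent because $a_-\le A\le a_+$, so the choice of second factor does not matter. Theorem~\ref{thm:weak-energy-equality-full}, together with Lemma~\ref{lem:pointwise-energy-recovery}, gives exact conservation of $E_{\mathrm{full}}$ at every time. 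This is the conserved energy $\mathscr E$.

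Second, the threshold space. Proposition~\ref{prop:generator-threshold-full} gives
\[
\Ker\mathcal G=\mathrm{span}\{(1,0)\},\qquad \bigcup_N\Ker\mathcal G^N=\Ker\mathcal G^2=\mathrm{span}\{(1,0),(0,1)\}.
\]
So $\mathcal T_0$ is two-dimensional, which is the finite-dimensionality requirement. The corresponding solutions are $c_0+c_1t$, read off from $e^{t\mathcal G}(c_0,c_1)=(c_0+c_1t,c_1)$, using that $\mathcal G(0,1)=(1,0)$ because $\mathcal C_{\mathrm{full}}1=0$ and $\mathcal H_{\mathrm{full}}1=0$.

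Third, the projection and coercivity.
\begin{itemize}
\item Proposition~\ref{prop:verifyabstractfull}(i)--(ii) shows that $\Pi_{\mathrm{thr}}=(\Pi_0,\Pi_0)$ is a bounded projection with range $\mathcal T_0$. It commutes with $\mathcal S(t)$ by the affine law of Lemma~\ref{lem:fullaverage}.
\item Boundedness follows from Cauchy--Schwarz, since $|\Pi_0f|\le (\int A)^{-1/2}a_+^{1/2}\|f\|_{L^2}$.
\item Coercivity on $\Ker\Pi_{\mathrm{thr}}$ is Proposition~\ref{prop:appendix-explicit-coercivity}. It combines the gradient bounds \eqref{eq:energy-gradient-bounds} with Lemma~\ref{lem:appendix-weighted-poincare-general} applied with weight $W=A$.
\end{itemize}

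The one point needing care is the affine law for energy solutions rather than smooth ones. Lemma~\ref{lem:fullaverage} is stated for regular solutions, so I will rederive it directly from the weak equation \eqref{eq:full-weak-form}. I test with $\eta=1\in V_{\mathrm{full}}$, which \textbf{H4}-\textbf{H5} permit. Then $q[u,1]=0$, and $\mathfrak c[u_t,1]=2\int Bu_t\overline{\partial_\phi 1}=0$. This leaves $\frac{d^2}{dt^2}\int Au=0$ in $\mathcal D'$, which upgrades to a classical identity by the continuity of $u$ and $u_t$. This makes item (ii) rigorous at finite energy.

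With all items of Assumption~\ref{ass:abstract} verified, the theorem follows, and the threshold solutions are exactly $c_0+c_1t$.
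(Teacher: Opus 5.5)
Your proposal is correct and follows the same route as the paper's proof: the group comes from Theorem~\ref{thm:fullwellposed}, boundedness and commutation of $\Pi_{\mathrm{thr}}$ come from testing the weak equation with $\eta=1$, and coercivity on $\Ker\Pi_{\mathrm{thr}}$ comes from the weighted Poincar\'e inequality. The one difference is that the paper re-derives $\mathcal T_0=\mathrm{span}\{(1,0),(0,1)\}$ inline (subtract the affine average from a polynomial solution, use the conserved coercive energy and Lemma~\ref{lem:appendix-polynomial} to make the remainder constant, then apply Proposition~\ref{prop:kernelfull}), whereas you cite Proposition~\ref{prop:generator-threshold-full}, which rests on the same argument.
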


\begin{proof}
We prove the three parts of Assumption~\ref{ass:abstract}.

First, the evolution is a strongly continuous group.  The direct Galerkin construction in Theorem~\ref{thm:fullwellposed} solves the weak problem in \(V_{\mathrm{full}}\times L^2(\Sigma)\), proves uniqueness, conserves \(E_{\mathrm{full}}\), and gives strong continuity of the solution map.  Uniqueness and time translation give the group law.

Second, the threshold projection is bounded and commutes with the group.  Since \(A\) is smooth and bounded above and below by positive constants, the functional \(f\mapsto\int_\Sigma Af\) is bounded on \(L^2(\Sigma)\), and hence \(\Pi_0\) is a bounded projection onto constants.  Therefore \(\Pi_{\mathrm{thr}}\) is bounded on \(V_{\mathrm{full}}\times L^2(\Sigma)\).  Integrating the equation over \(\Sigma\), equivalently testing \eqref{eq:full-weak-form} with \(1\in V_{\mathrm{full}}\), gives
\begin{equation}
\frac{\dd^2}{\dd t^2}\int_\Sigma A u(t)=0.
\end{equation}
Thus
\begin{equation}
\Pi_0u(t)=\Pi_0u(0)+t\Pi_0u_t(0),
\qquad
\Pi_0u_t(t)=\Pi_0u_t(0),
\end{equation}
which is exactly \(\Pi_{\mathrm{thr}}\mathcal S(t)=\mathcal S(t)\Pi_{\mathrm{thr}}\).

Third, the energy is coercive on the complement.  If \((u,v)\in\Ker\Pi_{\mathrm{thr}}\), then \(\Pi_0u=0\) and \(\Pi_0v=0\).  The weighted Poincar\'e inequality on the connected bounded slab gives
\begin{equation}
\|u\|_{L^2(\Sigma)}^2\le Cq[u,u],
\qquad
q[u,u]=\int_\Sigma(\Phi|u_\phi|^2+\Delta_r|u_r|^2+\Delta_x|u_x|^2).
\end{equation}
Uniform positivity of \(A,\Phi,\Delta_r,\Delta_x\) then yields constants \(c,C>0\) such that
\begin{align}
c\bigl(\|u\|_{H^1(\Sigma)}^2+\|v\|_{L^2(\Sigma)}^2\bigr)&\le2E_{\mathrm{full}}[u,v]
\le C\bigl(\|u\|_{H^1(\Sigma)}^2+\|v\|_{L^2(\Sigma)}^2\bigr)
\end{align}
for every \((u,v)\in\Ker\Pi_{\mathrm{thr}}\).

It remains to identify the full generalized zero-frequency space.  The vectors \((1,0)\) and \((0,1)\) generate the solutions \(1\) and \(t\), so they lie in \(\mathcal T_0\).  Conversely, take \(U_0\in\mathcal T_0\).  Then \(\mathcal G^NU_0=0\) for some \(N\), and therefore
\begin{equation}
\mathcal S(t)U_0=\sum_{j=0}^{N-1}\frac{t^j}{j!}\mathcal G^jU_0
\end{equation}
is a polynomial in \(t\) with values in the energy space.  Subtract its affine average part, obtaining a polynomial solution \(V(t)=(v(t),v_t(t))\in\Ker\Pi_{\mathrm{thr}}\).  The coercive estimate just proved and conservation of energy show that \(V(t)\) is uniformly bounded for all \(t\in\mathbb R\).  Lemma~\ref{lem:appendix-polynomial} implies that \(V(t)\) is constant.  The weak equation then gives \(q[v_0,\varphi]=0\) for every admissible \(\varphi\); taking \(\varphi=v_0\) yields \(q[v_0,v_0]=0\).  By Proposition~\ref{prop:kernelfull}, \(v_0\) is constant.  Since \(\Pi_0v_0=0\), it follows that \(v_0=0\).  Thus no further generalized zero mode exists.
\end{proof}

\begin{proposition}[Full bounded-slab theorem from the verified conditions]\label{prop:appendix-proof-fullmain}
Under \textbf{H1}-\textbf{H5} of Assumption~\ref{ass:complete-list}, the full \(\phi\)-dependent bounded-slab conclusion of Theorem~\ref{thm:fullmain} holds.
\end{proposition}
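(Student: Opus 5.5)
The proof is assembly rather than new analysis: I will verify, item by item, the hypotheses of Theorem~\ref{thm:abstractmain} for the Carter slab under \textbf{H1}-\textbf{H5}, and then translate the abstract conclusion into the concrete form of Theorem~\ref{thm:fullmain}.

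\textbf{Step 1 (existence, uniqueness, regularity).} Take the energy space $\mathcal X=\mathcal X_{\mathrm{full}}=V_{\mathrm{full}}\times L^2(\Sigma)$. Theorem~\ref{thm:fullwellposed} supplies the strongly continuous group $\mathcal S(t)$ and the regularity $u\in C(\R;V_{\mathrm{full}})$, $u_t\in C(\R;L^2)$. Theorem~\ref{thm:weak-energy-equality-full} and Lemma~\ref{lem:pointwise-energy-recovery} give conservation of $E_{\mathrm{full}}$ at every time. Together these settle the existence and uniqueness part of the statement.

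\textbf{Step 2 (abstract hypotheses).} Theorem~\ref{thm:appendixAclosure} already packages the remaining inputs:
\begin{itemize}
\item $\mathcal T_0=\mathrm{span}\{(1,0),(0,1)\}$ is finite-dimensional;
\item $\Pi_{\mathrm{thr}}=(\Pi_0,\Pi_0)$ is bounded and commutes with $\mathcal S(t)$, by the affine law of Lemma~\ref{lem:fullaverage};
\item $E_{\mathrm{full}}$ is equivalent to the $\mathcal X_{\mathrm{full}}$-norm on $\Ker\Pi_{\mathrm{thr}}$, with the explicit constants of Proposition~\ref{prop:appendix-explicit-coercivity}.
\end{itemize}
Theorem~\ref{thm:abstractmain} then gives the unique decomposition $U=U_{\mathrm{thr}}+U_{\mathrm{bd}}$ and the uniform bound on $U_{\mathrm{bd}}$.

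\textbf{Step 3 (concrete form of the decomposition).} Since $\mathcal G$ maps $(1,0)\mapsto0$ and $(0,1)\mapsto(1,0)$, the threshold part is $U_{\mathrm{thr}}(t)=(c_0+c_1t,\,c_1)$ with $c_0=\Pi_0u_0$ and $c_1=\Pi_0u_1$. These are exactly the $A$-weighted averages appearing in the statement. Setting $v=u-c_0-c_1t$ gives $\Pi_0v=\Pi_0v_t=0$ for all $t$.

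The stated $H^1\times L^2$ bound follows from the two-sided estimate of Proposition~\ref{prop:appendix-explicit-coercivity} combined with energy conservation. This yields $C_{\mathrm{stab}}$ as in \eqref{eq:explicit-C-stab}, up to the equivalence between the $V_{\mathrm{full}}$ norm and the $H^1$ norm.

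\textbf{Step 4 (no upper-half-plane modes).} This is Proposition~\ref{prop:direct-no-uHP-full}, which applies to all finite-energy modes, not only those in $\Ker\Pi_{\mathrm{thr}}$. Its argument covers the threshold directions as well: for $\lambda\neq0$, constancy of the energy under the scaling $e^{2\Re\lambda t}$ forces $\psi=0$.

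\textbf{Main point of care.} The only step needing attention is the abstract step's use of the generator $\mathcal G$ of $\mathcal S(t)$ to define $\mathcal T_0$. One must check that this generator coincides with the closed-form generator of Definition~\ref{def:closed-form-generator}, so that Proposition~\ref{prop:generator-threshold-full} applies. I would handle this by noting that the weak formulation \eqref{eq:full-weak-form} characterizes the group, and uniqueness then identifies the two generators.

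Independently of that identification, the polynomial-solution argument in Theorem~\ref{thm:appendixAclosure}, which uses Lemma~\ref{lem:appendix-polynomial}, identifies $\mathcal T_0$ directly from the group.
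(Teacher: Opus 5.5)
Your proposal is correct and follows essentially the paper's argument. The paper's proof assembles the same ingredients: Theorem~\ref{thm:fullwellposed}, the weak energy equality, the affine average law obtained by testing with $1\in V_{\rm full}$, the explicit coercivity constants, the threshold classification, and Proposition~\ref{prop:direct-no-uHP-full}. The only difference is that it writes these out as seven direct steps instead of routing through Theorem~\ref{thm:abstractmain}.

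Your remark about the generator is a small but real improvement in care. You identify the threshold space through the generator of the group, using the polynomial argument of Theorem~\ref{thm:appendixAclosure}. The paper's Step~6 instead cites Proposition~\ref{prop:generator-threshold-full}, which concerns the closed-form generator, and does so without showing that it coincides with the generator of the group.
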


\begin{proof}
\emph{Step 1: closed equation.}  By H1 and the conformal computation in Section~\ref{sec:conformal}, the linearized scalar-curvature equation in the conformal sector is \(\Box_g u=0\).  Proposition~\ref{prop:k0-full-coordinate-verification} rewrites this equation on the Carter slab as
\begin{equation}
-Au_{tt}+2B u_{t\phi}+\Phi u_{\phi\phi}
+\partial_r(\Delta_r u_r)+\partial_x(\Delta_x u_x)=0,
\end{equation}
and the rigorous boundary realization is the weak identity
\begin{equation}
\int_\Sigma A u_{tt}\overline\eta+q[u,\eta]+\mathfrak c[u_t,\eta]=0,
\qquad \eta\in V_{\mathrm{full}}.
\end{equation}
Here H4 supplies the closed form domain and H5 fixes the complex sesquilinear convention.

\emph{Step 2: well-posedness.}  Theorem~\ref{thm:fullwellposed} constructs the solution by Galerkin approximation in \(V_{\mathrm{full}}\times L^2(\Sigma)\).  The finite-dimensional mass matrices are positive because \(A\ge a_->0\) by H3, the stiffness matrices are nonnegative because \(\Phi\), \(\Delta_r\), and \(\Delta_x\) are positive by H2-H3, and the gyroscopic matrices are skew-Hermitian because \(B\) is independent of \(\phi\) and the \(\phi\)-direction is periodic.  Passing to the limit gives
\begin{equation}
u\in C(\mathbb R;V_{\mathrm{full}}),
\qquad u_t\in C(\mathbb R;L^2(\Sigma)),
\end{equation}
with uniqueness and a strongly continuous group.

\emph{Step 3: conserved positive energy.}  The weak energy equality, Theorem~\ref{thm:weak-energy-equality-full}, gives for every \(t\)
\begin{equation}
E_{\mathrm{full}}[u,u_t](t)
=\frac12\int_\Sigma A|u_t|^2+\frac12q[u,u]
=E_{\mathrm{full}}[u,u_t](0).
\end{equation}
The skew term has zero real part, so no indefinite mixed contribution appears in the energy.  Positivity of \(A,\Phi,\Delta_r,\Delta_x\) gives the two-sided gradient estimate \eqref{eq:energy-gradient-bounds}.

\emph{Step 4: exact affine average.}  Since \(1\in V_{\mathrm{full}}\) by H4, testing the weak equation with \(\eta=1\) is legitimate.  Because \(q[u,1]=0\) and \(\mathfrak c[u_t,1]=0\), one obtains
\begin{equation}
\frac{d^2}{dt^2}\int_\Sigma A u(t)=0.
\end{equation}
Therefore
\begin{equation}
\Pi_0u(t)=\Pi_0u(0)+t\Pi_0u_t(0),
\qquad
\Pi_0u_t(t)=\Pi_0u_t(0).
\end{equation}
This proves the formula for the affine coefficients
\begin{equation}
c_0=\frac{\int_\Sigma Au_0}{\int_\Sigma A},
\qquad
c_1=\frac{\int_\Sigma Au_1}{\int_\Sigma A}.
\end{equation}

\emph{Step 5: coercivity after removing the affine threshold.}  Define \(v=u-c_0-c_1t\).  Then \(\Pi_0v(t)=\Pi_0v_t(t)=0\) for every \(t\).  Proposition~\ref{prop:weightedPoincareFull} gives \(\|v(t)\|_{L^2}^2\le C_Pq[v(t),v(t)]\).  Combining this with the coefficient lower bounds gives
\begin{equation}
\|v(t)\|_{H^1}^2+\|v_t(t)\|_{L^2}^2
\le c_{\mathrm{low}}^{-1}E_{\mathrm{full}}[v,v_t](t),
\end{equation}
while the upper coefficient bounds give
\begin{equation}
E_{\mathrm{full}}[v,v_t](0)
\le C_{\mathrm{high}}
\left(\|v(0)\|_{H^1}^2+\|v_t(0)\|_{L^2}^2\right).
\end{equation}
Energy conservation yields the claimed estimate with the explicit constant
\begin{equation}
C_{\mathrm{stab}}=(C_{\mathrm{high}}/c_{\mathrm{low}})^{1/2}.
\end{equation}

\emph{Step 6: uniqueness of the threshold splitting.}  Proposition~\ref{prop:generator-threshold-full} identifies the full generalized zero-frequency space of the closed generator as
\begin{equation}
\Ker\mathcal G^2=\mathrm{span}\{(1,0),(0,1)\},
\qquad
\bigcup_{N\ge1}\Ker\mathcal G^N=\Ker\mathcal G^2.
\end{equation}
Thus the only polynomial threshold solutions are \(1\) and \(t\), and the decomposition \(u=c_0+c_1t+v\) is unique.

\emph{Step 7: exclusion of exponentially growing modes.}  If \(u=e^{\lambda t}\psi\) with \(\Re\lambda>0\), Proposition~\ref{prop:direct-no-uHP-full} applies the conserved positive energy to show that \(\psi=0\).  With the convention \(u=e^{-\ii\sigma t}\psi\), the condition is \(\operatorname{Im}\sigma>0\).  This proves every assertion of Theorem~\ref{thm:fullmain}.
\end{proof}

\begin{proposition}[Separation is not used in the full theorem]\label{prop:appendix-no-separation-full}
The proof of Theorem~\ref{thm:fullmain} does not use Carter separation, Fourier decomposition in \(\phi\), or self-adjointness of a separated radial or angular operator.  It uses only the divergence form \eqref{eq:fullPDE-k0}, the positivity constants in \eqref{eq:energy-gradient-bounds}, the skew identity \eqref{eq:gyro-skew}, the weighted Poincar\'e inequality, and the affine average law.
\end{proposition}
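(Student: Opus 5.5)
The statement is a claim about logical dependencies, so the proof is an audit of the chain of results that Proposition~\ref{prop:appendix-proof-fullmain} uses. For each link in Steps~1--7 I will record which structural inputs it actually draws on and check that none of them is Carter separation, Fourier decomposition in \(\phi\), or a separated self-adjoint operator. The inputs I expect to find are: the divergence form \eqref{eq:fullPDE-k0}; the closed form \(q\) on \(V_{\mathrm{full}}\); the coefficient bounds \eqref{eq:energy-gradient-bounds}; the skew identity \eqref{eq:gyro-skew}; the weighted Poincar\'e inequality; and the affine law from testing the equation with \(1\).

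The audit proceeds in order along the chain.

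\emph{Step 1.} Proposition~\ref{prop:k0-full-coordinate-verification} derives \eqref{eq:fullPDE-k0} from \(\sqrt{-g}=\rho^2\) and Proposition~\ref{prop:inversemetric}. That is pure metric algebra, with no ansatz \(e^{-\ii\Omega t+\ii m\phi}R(r)S(x)\) anywhere. The weak form \eqref{eq:full-weak-form} uses only integration by parts in \(\phi\) together with the closed form, as in Lemma~\ref{lem:form-reflection-primary}.

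\emph{Skewness.} Lemma~\ref{lem:gyro-bounded-main} and \eqref{eq:gyro-skew} use only periodic integration by parts and \(\partial_\phi B=0\). No Fourier series in \(\phi\) is invoked.

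\emph{Well-posedness and energy.} Theorems~\ref{thm:fullwellposed} and~\ref{thm:weak-energy-equality-full} are built from three ingredients:
\begin{enumerate}[label=(\alph*)]
\item a Galerkin scheme on an arbitrary Hilbert basis of \(V_{\mathrm{full}}\), not an eigenbasis of any separated operator;
\item finite-dimensional ODE energy identities with Hermitian \(M_N\), Hermitian \(K_N\), and skew \(C_N\);
\item Banach--Alaoglu, Lions--Magenes, and time mollification.
\end{enumerate}

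\emph{Threshold part.} The remaining results depend on the following:
\begin{enumerate}[label=(\alph*)]
\item Proposition~\ref{prop:kernelfull} uses positivity of the coefficients and connectedness of \(\Sigma\).
\item Propositions~\ref{prop:weightedPoincareFull} and~\ref{lem:appendix-weighted-poincare-general} use Rellich compactness.
\item Lemma~\ref{lem:fullaverage} uses \(1\in V_{\mathrm{full}}\).
\item Propositions~\ref{prop:generator-threshold-full} and~\ref{prop:verifyabstractfull} use Lemma~\ref{lem:appendix-polynomial} and the energy.
\item Proposition~\ref{prop:direct-no-uHP-full} uses energy scaling along a mode.
\end{enumerate}
Finally, Theorem~\ref{thm:abstractmain} is purely abstract.

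I will then note explicitly that Sections~\ref{sec:separation}--\ref{sec:radial}, together with Propositions~\ref{prop:angularspectrum}, \ref{prop:angularregularityverified}, and~\ref{prop:angularcontinuity}, are not cited in any of these proofs. The appeal to ``Section~\ref{sec:ledger}'' in Theorem~\ref{thm:fullwellposed} only locates where Assumption~\ref{ass:complete-list} is stated, and that assumption's hypotheses H1--H5 are not spectral.

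The main obstacle is making sure that no hidden dependence sneaks in through the generator domain \eqref{eq:generator-domain-full} or through the identification of \(\mathcal T_0\) in Proposition~\ref{prop:generator-threshold-full}. The worry is that one might implicitly use that \(0\) is an isolated eigenvalue of a self-adjoint operator, which is the axisymmetric spectral theorem. I would check that the \(N\ge3\) case is handled by bounded polynomial solutions plus coercivity, not by Riesz projections. I would also check that the commuting projection \(\Pi_{\mathrm{thr}}\) is built from the affine law and not from a spectral projector. Both checks go through, since the paper's argument for the \(N\ge3\) case is energy-based. Once they are done, the proposition follows by listing these inputs.
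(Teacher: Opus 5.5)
Your proposal is correct and follows the paper's own proof, which is the same kind of dependency audit. The paper notes that the energy identity comes from testing \eqref{eq:full-weak-form} with the time-mollified \(u_t\), that the mixed term drops out by \eqref{eq:gyro-skew}, that coercivity follows from Proposition~\ref{prop:weightedPoincareFull}, and that the threshold space is identified by subtracting the affine average and applying Lemma~\ref{lem:appendix-polynomial}. None of these steps decomposes the solution into modes. Your audit walks the same chain in finer detail, including the arbitrary-basis Galerkin construction and the energy-based \(N\ge3\) case of Proposition~\ref{prop:generator-threshold-full}. Like the paper, you read ``uses only'' as ``uses these structural facts plus standard functional analysis, and no separation-type input.''
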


\begin{proof}
The energy identity is obtained from \eqref{eq:full-weak-form} by choosing \(\eta=u_t\) after time mollification in Theorem~\ref{thm:weak-energy-equality-full}.  The mixed term is eliminated by \eqref{eq:gyro-skew}.  Coercivity on the complement follows from Proposition~\ref{prop:weightedPoincareFull} and \eqref{eq:explicit-energy-constants}.  The threshold space is identified by subtracting the affine average and applying Lemma~\ref{lem:appendix-polynomial} to the energy-bounded complement.  None of these steps decomposes the solution into modes or invokes the separated ODEs.  The separated analysis is used only for the later axisymmetric spectral refinement and for comparison with classical Carter theory.
\end{proof}

\begin{proposition}[Boundary terms in the form realization]\label{prop:no-boundary-gap}
Under the closed form-domain hypothesis H4, every nonperiodic boundary term used in the rigorous proof of Theorem~\ref{thm:fullmain} is encoded by the closed form \(q\).  No proof step requires an unproved classical trace identity at \(r=r_\pm\) or \(x=x_\pm\).
\end{proposition}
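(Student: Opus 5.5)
The plan is to audit every place in the proof of Theorem~\ref{thm:fullmain} where an integration by parts in $r$ or $x$ could occur, and to show in each case that the step is either a pairing with the closed form $q$ on $V_{\mathrm{full}}$ or a periodic integration by parts in $\phi$. The list of such places is fixed by Proposition~\ref{prop:appendix-proof-fullmain}. It consists of the weak equation \eqref{eq:full-weak-form}, the Galerkin construction and energy identity (Theorem~\ref{thm:fullwellposed}, Proposition~\ref{prop:galerkin-main-identity}), the weak energy equality (Theorem~\ref{thm:weak-energy-equality-full}), the affine average law (Step~4), the kernel and Poincar\'e statements (Propositions~\ref{prop:kernelfull}, \ref{prop:weightedPoincareFull}), and the threshold classification (Proposition~\ref{prop:generator-threshold-full}). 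For each item I will check that the objects used are $q[\cdot,\cdot]$, $\mathfrak c[\cdot,\cdot]$, and the mass pairing $\int_\Sigma A\,\cdot\,\overline{\cdot}$. These are all defined on $V_{\mathrm{full}}$ or $L^2(\Sigma)$ without traces.

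First, by Lemma~\ref{lem:form-reflection-primary}, the reflected problem is \emph{defined} as \eqref{eq:operator-weak-primary} in $V_{\mathrm{full}}'$. The strong equation with a conormal condition appears only as an optional equivalent formulation for smooth Neumann data, and no later step uses it. Next, for the gyroscopic term, Lemma~\ref{lem:gyro-bounded-main} integrates by parts only in the periodic variable $\phi$, using $\partial_\phi B=0$. This produces no boundary term on $S^1_\phi\times\partial\Omega$. Third, the energy identity is obtained by taking $\eta=(u^\varepsilon)_t\in V_{\mathrm{full}}$ in the mollified weak equation. The only algebraic facts used there are $\Re q[u^\varepsilon,(u^\varepsilon)_t]=\tfrac12\frac{\dd}{\dd t}q[u^\varepsilon,u^\varepsilon]$ and $\Re\mathfrak c[v,v]=0$. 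The Galerkin identity is the same computation in finite dimensions. The smooth-solution version, Theorem~\ref{thm:fullenergy}, is flagged as not part of the rigorous chain.

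Fourth, the affine law is obtained by testing with $\eta=1$, which is legitimate because $1\in V_{\mathrm{full}}$ by H4. We have $q[u,1]=0$ because $\nabla 1=0$, and $\mathfrak c[u_t,1]=0$ by the second formula in \eqref{eq:gyro-form}. So there is no pointwise boundary flux, in contrast with the heuristic of Lemma~\ref{lem:fullaverage}. Finally, the kernel, Poincar\'e, and threshold arguments use $q[v_0,v_0]=0$, obtained from the weak stationary equation tested with $v_0\in V_{\mathrm{full}}$, together with Rellich compactness on $H^1(\Sigma)$. None of these involves traces.

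The main obstacle is that several earlier proofs are phrased with classical integration by parts ``using the reflecting boundary condition'', namely Lemma~\ref{lem:fullaverage}, Theorem~\ref{thm:fullenergy}, and the derivation of \eqref{eq:full-weak-form}. For each of these I will exhibit the form-level replacement described above and check that the rigorous proof cites only that replacement. I expect this to be bookkeeping rather than new analysis.
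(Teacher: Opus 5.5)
Your proposal is correct and follows the same audit as the paper. The paper argues that the weak equation is the definition of the problem, that the gyroscopic term is integrated by parts only in the periodic variable $\phi$, that the average law comes from testing with $1\in V_{\mathrm{full}}$ (where $q[u,1]=0$ and $\mathfrak c[v,1]=0$), and that the kernel and Poincar\'e arguments use only interior weak derivatives. Your extra checks make that short audit more complete without changing the route: the mollified energy identity in Theorem~\ref{thm:weak-energy-equality-full}, and the rerouting of the classical flux arguments in Lemma~\ref{lem:fullaverage} and Theorem~\ref{thm:fullenergy} through their form-level replacements.
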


\begin{proof}
The weak equation is \eqref{eq:operator-weak-primary}.  Its spatial term is \(q[u,\eta]\) by definition.  The gyroscopic term is integrated by parts only in the periodic \(\phi\)-variable, where there is no boundary.  The average law tests against \(1\in V_{\mathrm{full}}\), for which \(q[u,1]=0\) and \(\mathfrak c[v,1]=0\).  The Poincare and kernel arguments use weak derivatives inside \(\Sigma\), not boundary traces.  Hence all nonperiodic boundary information is carried by the closed form domain.
\end{proof}

\section{Axisymmetric spectral refinement on bounded slabs}\label{sec:energy}

\subsection{\texorpdfstring{Reduction to the axisymmetric $k=0$ equation}{Reduction to the axisymmetric k=0 equation}}

Theorem~\ref{thm:abstractmain} already gives boundedness modulo the full zero-frequency threshold space, and Theorem~\ref{thm:fullmain} verifies that statement concretely on strictly stationary Carter slabs.  We now impose
\begin{equation}
\partial_\phi u=0
\end{equation}
for a different reason: in axisymmetry the gyroscopic term disappears identically, the evolution becomes genuinely self-adjoint after weighting by $A$, and one can obtain a more detailed spectral description than the general energy estimate alone provides.  Thus Theorem~\ref{thm:mainresult} is both an axisymmetric special case of Theorem~\ref{thm:abstractmain} and a refinement of it.
Since $k=0$, the conformal master equation \eqref{eq:k0master} becomes
\begin{align}
-A(r,x)\,\partial_t^2u+\partial_r(\Delta_r\partial_r u)+\partial_x(\Delta_x\partial_xu)&=0,\label{eq:axisymPDE-k0}
\end{align}
where
\begin{equation}
A(r,x):=-\rho^2g^{tt}=\frac{(r^2+a^2)^2}{\Delta_r}-\frac{a^2(1-x^2)^2}{\Delta_x}.
\label{eq:Adef-k0}
\end{equation}

The coefficient $A$ is the only genuinely dynamical quantity needed for the energy method.  In the regions relevant to this paper we assume that it is strictly positive.

\begin{definition}[Regular timelike slab]
A \emph{regular timelike slab} is a bounded connected set
\begin{equation}
\Omega=[r_-,r_+]\times[x_-,x_+]\subset \{(r,x):\rho^2>0,\ \Delta_r>0,\ \Delta_x>0,\ A>0\}
\end{equation}
with smooth coefficient functions $A,\Delta_r,\Delta_x$ on a neighborhood of $\overline\Omega$.
\end{definition}

The boundedness assumption is not merely technical.  It isolates the local geometric mechanism that comes purely from the coefficients and avoids any ambiguity about horizon fluxes, asymptotically flat ends, or radiative boundary conditions.  Later sections explain how one should think about those more global issues in the $k=0$ regime.

\subsubsection*{Reflecting boundary conditions and conserved energy}

We call boundary conditions \emph{reflecting} if the boundary term produced by integrating the spatial operator by parts vanishes on the domain of the spatial problem.  This includes the familiar Dirichlet and Neumann conditions, as well as a wide class of real Robin conditions and regularity conditions at smooth endpoints.  For the main theorem we are especially interested in reflecting conditions that admit the constant function $1$.

\begin{theorem}[Conserved axisymmetric energy]\label{thm:axisenergy}
Let $u$ be a sufficiently regular axisymmetric solution of \eqref{eq:axisymPDE-k0} on $[0,T]\times\Omega$, where $\Omega$ is a regular timelike slab and the boundary conditions are reflecting.  Then
\begin{align}
E_0[u](t)&:=\frac12\int_\Omega \left(A|u_t|^2+\Delta_r|u_r|^2+\Delta_x|u_x|^2\right)\,\dd r\,\dd x\label{eq:axisenergy-k0}
\end{align}
is independent of $t$.
\end{theorem}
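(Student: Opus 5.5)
The plan is to obtain the statement as the axisymmetric specialization of Theorem~\ref{thm:fullenergy}, and to write out the direct multiplier computation so that no hidden use of the gyroscopic structure remains. I will multiply \eqref{eq:axisymPDE-k0} by \(\overline{u_t}\) and take real parts. Since \(A,\Delta_r,\Delta_x\) are independent of \(t\), the three terms become
\[
\Re(-Au_{tt}\overline{u_t})=-\tfrac12\partial_t(A|u_t|^2),\qquad
\Re(\partial_r(\Delta_r u_r)\overline{u_t})=\partial_r\Re(\Delta_r u_r\overline{u_t})-\tfrac12\partial_t(\Delta_r|u_r|^2),
\]
and the analogous identity holds for the \(x\)-divergence term. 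Each of these is a pointwise product-rule identity, valid for sufficiently regular \(u\) on \([0,T]\times\overline\Omega\). There are no \(\phi\)-terms at all, since \(\partial_\phi u=0\) kills both \(B u_{t\phi}\) and \(\Phi u_{\phi\phi}\).

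Integrating over \(\Omega\) then gives
\[
-\frac{\dd}{\dd t}E_0[u](t)+\int_{\partial\Omega}\Re\bigl((\Delta_r u_r n_r+\Delta_x u_x n_x)\overline{u_t}\bigr)\,\dd S=0.
\]
The step that needs care is showing that this boundary flux vanishes. The reflecting hypothesis says that the boundary term \(\int_{\partial\Omega}(\Delta_r u_r n_r+\Delta_x u_x n_x)\overline{\eta}\) vanishes for \(\eta\) in the spatial domain. Because the boundary conditions are time-independent and \(u(t)\) lies in the domain for every \(t\), the same is true of \(u_t(t)\). Concretely, I will use \(\eta=u_t(t)\) and check the three classes separately:
\begin{itemize}
\item Dirichlet: \(u_t=0\) on the boundary.
\item Neumann: the conormal derivative vanishes.
\item Real Robin: the flux equals a real multiple of \(|u|^2\)-type terms differentiated in time, namely \(\Re(\alpha u\overline{u_t})/\beta=\tfrac{\alpha}{2\beta}\partial_t|u|^2\).
\end{itemize}

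The Robin case is the one genuine subtlety. Strictly speaking, its boundary contribution is a total time derivative rather than zero, so it would have to be absorbed into the energy. I will handle it either by restricting attention to the reflecting class as defined, where the boundary term is required to vanish on the domain, or by noting that for the constant-admitting realizations of interest (Neumann or regular endpoint) the flux is zero outright. The cleanest fallback is to take the form-realized setting of Lemma~\ref{lem:form-reflection-primary} restricted to \(\phi\)-independent functions. Then the axisymmetric weak equation is the case \(\mathfrak c\equiv0\) of \eqref{eq:full-weak-form}, and Theorem~\ref{thm:weak-energy-equality-full} (equivalently Theorem~\ref{thm:fullenergy}) gives conservation directly. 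The only change is that \(E_{\rm full}\) reduces to \(2\pi E_0\) after integrating out \(\phi\), because the \(\Phi|u_\phi|^2\) term drops.

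With the flux eliminated, \(\frac{\dd}{\dd t}E_0[u]=0\) on \([0,T]\), which proves the claim. The only real obstacle is the boundary-term bookkeeping above; the interior computation is routine.
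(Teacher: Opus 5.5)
Your argument is correct and is essentially the paper's proof: multiply the equation by $u_t$ (you take $\Re(\cdot\,\overline{u_t})$ to allow complex $u$), write each term as a time derivative plus a spatial divergence, integrate over $\Omega$, and discard the boundary flux using the reflecting hypothesis. Your Robin caveat is also well founded. The paper's one-line appeal to reflection works only under the literal reading in which the one-sided term $\int_{\partial\Omega}(\Delta_r u_r n_r+\Delta_x u_x n_x)\overline{\eta}$ vanishes on the domain (Dirichlet, Neumann, and the closed-form realizations); for genuine real Robin conditions, which the paper's definition of ``reflecting'' says it includes, $E_0$ is conserved only after adding a boundary term.
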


\begin{proof}
Multiply \eqref{eq:axisymPDE-k0} by $u_t$.  Since $A,\Delta_r,\Delta_x$ are independent of $t$,
\begin{align}
-Au_{tt}u_t&= -\frac12\partial_t(Au_t^2),\nonumber\\
\partial_r(\Delta_ru_r)u_t&= \partial_r(\Delta_r u_ru_t)-\frac12\partial_t(\Delta_ru_r^2),\nonumber\\
\partial_x(\Delta_xu_x)u_t&= \partial_x(\Delta_x u_xu_t)-\frac12\partial_t(\Delta_xu_x^2).
\end{align}
Integration over $\Omega$ gives
\begin{equation}
\frac{\dd}{\dd t}E_0[u](t)=\int_{\partial\Omega}\mathcal F[u],
\end{equation}
where $\mathcal F[u]$ is the boundary flux.  By the reflecting hypothesis the boundary flux vanishes.  Hence $E_0[u](t)$ is conserved.
\end{proof}

The crucial point is that $E_0$ is nonnegative but not coercive on the full $H^1\times L^2$ space.  The energy controls $u_t$, $u_r$, and $u_x$, but not the $L^2$-size of $u$ itself.  That loss of coercivity is what makes the $k=0$ problem a borderline case.

\subsection{The spatial quadratic form}

We define the spatial quadratic form
\begin{equation}
q_0[u]:=\int_\Omega\left(\Delta_r|u_r|^2+\Delta_x|u_x|^2\right)\,\dd r\,\dd x.
\label{eq:q0def}
\end{equation}
The axisymmetric equation may be written as
\begin{align}
A u_{tt}+\Hh_0u =0\nonumber\\
\Hh_0 :=-\partial_r(\Delta_r\partial_r)-\partial_x(\Delta_x\partial_x)\label{eq:AuH-k0}
\end{align}
and $q_0$ is the closed quadratic form associated with $\Hh_0$.

The natural Hilbert space is the weighted space
\begin{equation}
L_A^2(\Omega),\qquad
\langle f,g\rangle_A:=\int_\Omega Afg\,\dd r\,\dd x,
\end{equation}
with norm $\|f\|_A^2=\langle f,f\rangle_A$.  Since $A$ is positive and bounded above and below on the bounded slab, $L_A^2(\Omega)$ is equivalent as a Banach space to the usual $L^2(\Omega)$, but the weighted formulation is the one naturally adapted to self-adjointness.

\begin{proposition}[Self-adjoint spatial operator]\label{prop:selfadjointH0}
Let $\Omega$ be a regular timelike slab with reflecting boundary conditions.  Then the quadratic form $q_0$ is densely defined, closed, and nonnegative on $L_A^2(\Omega)$.  It therefore defines a unique nonnegative self-adjoint operator $L_0$ on $L_A^2(\Omega)$ satisfying
\begin{equation}
q_0[u,v]=\langle L_0u,v\rangle_A
\end{equation}
for $u$ in the operator domain of $L_0$ and $v$ in the form domain.  Equivalently,
\begin{equation}
L_0=A^{-1}\Hh_0
\end{equation}
with domain determined by the reflecting boundary conditions.
\end{proposition}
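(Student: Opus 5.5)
The plan is to apply the standard Friedrichs/Kato representation theorem for closed semibounded forms, with the weighted space \(L_A^2(\Omega)\) as ambient Hilbert space. The form domain will be a closed reflecting subspace \(V_0\subset H^1(\Omega)\) that contains the constants and is dense in \(L^2(\Omega)\). This is the axisymmetric analogue of \(V_{\rm full}\) in Assumption~\ref{ass:complete-list}. For natural Neumann reflection it is \(V_0=H^1(\Omega)\); for Robin reflection one adds to \(q_0\) the bounded boundary term with real coefficients. As in Lemma~\ref{lem:form-reflection-primary}, we take the reflecting condition to be encoded by the choice of \(V_0\) rather than by a pointwise trace identity.

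\textbf{Step 1 (Hilbert space and density).} On a regular timelike slab, \(A\), \(\Delta_r\) and \(\Delta_x\) are smooth on a neighbourhood of \(\overline\Omega\) and are bounded above and below by positive constants. Hence \(\|\cdot\|_A\) is equivalent to \(\|\cdot\|_{L^2}\), so \(L_A^2(\Omega)\) is a Hilbert space with the same underlying set as \(L^2(\Omega)\). Density of \(V_0\) in \(L^2\) then gives density in \(L_A^2\); in any case \(C_c^\infty(\Omega^\circ)\subset V_0\) suffices.

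\textbf{Step 2 (form properties).} Nonnegativity of \(q_0\) is immediate from \(\Delta_r,\Delta_x>0\), and the form is Hermitian. For closedness, note the two-sided bound
\[
\min\{d_{r,-},d_{x,-}\}\|\nabla u\|_{L^2}^2\le q_0[u]\le \max\{d_{r,+},d_{x,+}\}\|\nabla u\|_{L^2}^2 .
\]
Together with the equivalence \(\|u\|_A\simeq\|u\|_{L^2}\), this shows that the form norm \(q_0[u]+\|u\|_A^2\) is equivalent to the \(H^1\) norm on \(V_0\). Since \(V_0\) is closed in \(H^1\), it is complete in the form norm, which is exactly closedness. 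In the Robin case, the boundary term is controlled by the trace inequality \(\|u\|_{L^2(\partial\Omega)}^2\le\varepsilon\|\nabla u\|^2+C_\varepsilon\|u\|^2\). It is therefore form-bounded with relative bound zero, and the KLMN theorem preserves closedness and semiboundedness. Note that nonnegativity then needs the sign of the Robin coefficients, so for this proposition I would restrict to the sign making it nonnegative, as in the paper's "admitting constants" convention.

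\textbf{Step 3 (representation and identification).} Kato's first representation theorem yields a unique self-adjoint \(L_0\ge0\) on \(L_A^2\) with \(q_0[u,v]=\langle L_0u,v\rangle_A\) for \(u\in\Dom(L_0)\) and \(v\in V_0\). To identify \(L_0=A^{-1}\Hh_0\), test against \(v\in C_c^\infty\). This gives \(\int A(L_0u)\bar v=\int(\Delta_ru_r\bar v_r+\Delta_xu_x\bar v_x)\), so \(\Hh_0u=AL_0u\) distributionally. The remaining test functions in \(V_0\) encode the boundary conditions, as in Lemma~\ref{lem:form-reflection-primary}. In particular, for smooth \(u\) in the Neumann realization they yield the conormal condition.

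\textbf{Main obstacle.} The main subtlety is the weighting, because the operator is self-adjoint with respect to \(\langle\cdot,\cdot\rangle_A\) and not the flat \(L^2\) pairing. This is handled by having placed \(A\) in the Hilbert-space inner product rather than in the form, so that the representation theorem directly produces \(A^{-1}\Hh_0\). The other point needing care is making precise which reflecting conditions are admissible; I would state these as the closed-subspace hypothesis above. Elliptic regularity, which would describe \(\Dom(L_0)\) as an \(H^2\) space, is not needed for the statement and I would not attempt it, since the rectangle has corners.
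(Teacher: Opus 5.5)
Your proposal is correct and follows essentially the same route as the paper. The form norm $\|u\|_A^2+q_0[u]$ is equivalent to the $H^1$ norm on the closed reflecting domain, which gives closedness, and the Friedrichs/Kato representation theorem on $L_A^2(\Omega)$ then produces $L_0$; your identification $\Hh_0u=AL_0u$ by interior test functions makes explicit a step the paper leaves implicit.

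One small correction to your Robin digression. A nonzero Robin term changes the form away from $q_0$ and removes the constant from the kernel, so that case lies outside this proposition rather than being covered by the ``admitting constants'' convention.
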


\begin{proof}
Because $\Omega$ is bounded and $A,\Delta_r,\Delta_x$ are smooth and strictly positive, the form
\begin{equation}
u\mapsto \|u\|_A^2+q_0[u]
\end{equation}
is equivalent to the $H^1$-norm.  The reflecting boundary conditions ensure that the spatial integration-by-parts identity has no boundary contribution, hence the form is symmetric.  Nonnegativity is immediate from \eqref{eq:q0def}.  Closedness follows from the $H^1$-equivalence.  The Friedrichs representation theorem then yields the self-adjoint operator $L_0$.
\end{proof}

\subsubsection*{Kernel and weighted Poincar\'e inequality}

The lack of coercivity is entirely concentrated in the kernel of $L_0$.

\begin{proposition}[Kernel of the bounded-slab operator]\label{prop:kernelH0}
Assume that $\Omega$ is connected and that the reflecting boundary conditions admit the constant function.  Then
\begin{equation}
\Ker L_0=\mathrm{span}\{1\}.
\end{equation}
\end{proposition}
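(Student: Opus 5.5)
The plan is to prove the two inclusions separately, working entirely at the level of the closed form \(q_0\) and the Friedrichs realization in Proposition~\ref{prop:selfadjointH0}, mirroring the argument of Proposition~\ref{prop:kernelfull} for the full slab.

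\emph{The constant lies in the kernel.} By hypothesis the reflecting conditions admit the constant function, so \(1\) belongs to the operator domain of \(L_0\) (and in particular to the form domain). Since \(1\) has zero weak derivatives, \(q_0[1,v]=0\) for every \(v\) in the form domain. The representation identity \(q_0[1,v]=\langle L_0 1,v\rangle_A\) then gives \(\langle L_01,v\rangle_A=0\) on a dense subspace of \(L_A^2(\Omega)\), hence \(L_01=0\).

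\emph{Every kernel element is constant.} Let \(u\in\Ker L_0\). Then \(u\) is in the operator domain, and therefore in the form domain, and
\[
q_0[u]=\langle L_0u,u\rangle_A=0.
\]
Writing out \eqref{eq:q0def}, this reads \(\int_\Omega(\Delta_r|u_r|^2+\Delta_x|u_x|^2)\,\dd r\,\dd x=0\). On the regular timelike slab, \(\Delta_r\) and \(\Delta_x\) are bounded below by positive constants on \(\overline\Omega\). Hence \(u_r=u_x=0\) almost everywhere, so the weak gradient of \(u\in H^1(\Omega)\) vanishes. Since \(\Omega\) is a connected rectangle, \(u\) is almost everywhere equal to a constant.

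The only point needing care is the step from \(u\in\Ker L_0\) to \(q_0[u]=0\). It relies on the fact that \(\Dom(L_0)\) sits inside the form domain, with \(q_0[u,u]=\langle L_0u,u\rangle_A\) there. This is part of the Friedrichs representation theorem, so nothing beyond Proposition~\ref{prop:selfadjointH0} is needed, and no boundary traces enter. Connectedness is essential: on a disconnected slab, locally constant functions would enlarge the kernel.
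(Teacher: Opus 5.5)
Your proof is correct and follows the paper's argument: constants lie in the kernel because their derivatives vanish, and conversely $0=\langle L_0u,u\rangle_A=q_0[u]$, together with strict positivity of $\Delta_r,\Delta_x$ and connectedness of $\Omega$, forces $u$ to be constant. Your extra remarks make explicit two points the paper's short proof leaves implicit: deriving $L_01=0$ from the representation identity, and noting that $\Dom(L_0)$ lies in the form domain by the Friedrichs construction.
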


\begin{proof}
Since the constant function has vanishing derivatives, it belongs to the kernel.  Conversely, if $u\in\Ker L_0$, then
\begin{equation}
0=\langle L_0u,u\rangle_A=q_0[u]=\int_\Omega\left(\Delta_r|u_r|^2+\Delta_x|u_x|^2\right)\,\dd r\,\dd x.
\end{equation}
Because $\Delta_r$ and $\Delta_x$ are strictly positive on $\Omega$, we conclude that $u_r=u_x=0$ almost everywhere.  Since $\Omega$ is connected, $u$ is constant.
\end{proof}

Let
\begin{equation}
\Pi_0f:=\frac{\langle f,1\rangle_A}{\langle 1,1\rangle_A}\,1
\label{eq:Pi0def}
\end{equation}
be the $L_A^2$-orthogonal projection onto the constant function.  On the orthogonal complement of $\Pi_0$, the quadratic form becomes coercive.

\begin{proposition}[Weighted Poincar\'e inequality]\label{prop:weightedPoincare}
There exists $C_P>0$ such that
\begin{equation}
\|u\|_{L^2(\Omega)}^2\leq C_P\,q_0[u]
\end{equation}
for every $u\in H^1(\Omega)$ satisfying $\Pi_0u=0$.
\end{proposition}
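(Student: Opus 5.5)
The proof is a compactness--contradiction argument in the spirit of Proposition~\ref{prop:weightedPoincareFull}. An alternative is to invoke Lemma~\ref{lem:appendix-weighted-poincare-general} directly on the two-dimensional rectangle $\Omega$ with weight $W=A$. Since $\Omega$ is a regular timelike slab, $A$ is smooth on a neighborhood of $\overline\Omega$ with $0<a_-\le A\le a_+$. Moreover $\Delta_r\ge d_{r,-}>0$ and $\Delta_x\ge d_{x,-}>0$ on the compact intervals $[r_-,r_+]$ and $[x_-,x_+]$. Hence
\[
q_0[u]\ge \min\{d_{r,-},d_{x,-}\}\,\|\nabla_{r,x}u\|_{L^2(\Omega)}^2 ,
\]
and the statement follows with $C_P=C_{P,A}/\min\{d_{r,-},d_{x,-}\}$, provided the unweighted-gradient, weighted-mean inequality holds.

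For completeness I would also prove that inequality directly. Suppose it fails. Then there are $u_n\in H^1(\Omega)$ with $\Pi_0u_n=0$ (i.e.\ $\int_\Omega Au_n=0$), $\|u_n\|_{L^2}=1$, and $q_0[u_n]\to0$. By the lower bounds on $\Delta_r,\Delta_x$, the gradients tend to zero in $L^2$, so $(u_n)$ is bounded in $H^1(\Omega)$. The rectangle $\Omega$ is a bounded Lipschitz domain, so Rellich--Kondrachov gives a subsequence converging strongly in $L^2$ and weakly in $H^1$ to some $u_\infty$. Weak lower semicontinuity gives $\nabla u_\infty=0$, so $u_\infty$ is constant because $\Omega$ is connected. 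The constraint passes to the limit because $A\in L^\infty$ and the convergence is strong in $L^2$, so $\int_\Omega A u_\infty=0$. With $A>0$ this forces the constant to vanish, contradicting $\|u_\infty\|_{L^2}=1$.

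The only point needing care is the compact embedding. This is where boundedness and the Lipschitz (rectangular) geometry of $\Omega$ enter, together with the use of the full $H^1(\Omega)$ rather than a trace-constrained subspace; the inequality holds on all of $H^1(\Omega)$, hence on any reflecting form domain. The mean-zero condition must be the $A$-weighted one, since that is what kills the kernel identified in Proposition~\ref{prop:kernelH0}. No boundary information is used, matching the form-domain philosophy of Proposition~\ref{prop:no-boundary-gap}.
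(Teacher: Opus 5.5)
Your proposal is correct and follows the same route as the paper: bound $q_0[u]$ below by $\min\{d_{r,-},d_{x,-}\}\|\nabla u\|_{L^2}^2$, then reduce to the Poincar\'e inequality under an $A$-weighted mean-zero condition on the bounded connected rectangle. The paper simply cites that standard inequality, whereas your Rellich compactness--contradiction argument proves it, in the same form as Lemma~\ref{lem:appendix-weighted-poincare-general}.
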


\begin{proof}
Since $A$ is bounded above and below by positive constants, the condition $\Pi_0u=0$ is equivalent to the vanishing of a weighted mean.  On a bounded connected domain all such mean-zero conditions yield a Poincar\'e inequality equivalent to the standard one.  Because $q_0[u]$ is equivalent to $\|\nabla u\|_{L^2}^2$ on $\Omega$, the result follows.
\end{proof}

This proposition is the precise analytic manifestation of the statement that the only obstruction to coercivity is the constant mode.  Once that mode is removed, the conserved energy controls the full $H^1\times L^2$ norm.

\subsubsection*{Well-posedness and higher regularity}

The abstract wave equation
\begin{equation}
u_{tt}+L_0u=0
\end{equation}
on $L_A^2(\Omega)$ is a standard nonnegative self-adjoint wave equation.  Consequently the bounded-slab axisymmetric problem is globally well posed.

\begin{theorem}[Well-posedness on bounded slabs]\label{thm:wellposedness}
Let $\Omega$ be a regular timelike slab with reflecting boundary conditions.  For every initial datum
\begin{equation}
(u_0,u_1)\in H^1(\Omega)\times L^2(\Omega)
\end{equation}
compatible with the boundary conditions, there exists a unique weak solution of \eqref{eq:axisymPDE-k0} such that
\begin{equation}
u\in C(\R;H^1(\Omega)),
\qquad
u_t\in C(\R;L^2(\Omega)).
\end{equation}
Moreover $E_0[u](t)$ is conserved for all $t\in\R$, and the solution map depends continuously on the data.
\end{theorem}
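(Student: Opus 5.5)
The plan is to treat the axisymmetric problem as the abstract second-order equation
\begin{equation}
u_{tt}+L_0u=0\quad\text{on } L_A^2(\Omega),
\end{equation}
where $L_0$ is the nonnegative self-adjoint Friedrichs operator of Proposition~\ref{prop:selfadjointH0}, with form domain $V_0\subset H^1(\Omega)$ determined by the reflecting conditions. I will construct the solution by functional calculus rather than by Galerkin approximation. For $(u_0,u_1)\in V_0\times L^2(\Omega)$, set
\begin{equation}
u(t)=\cos\bigl(t L_0^{1/2}\bigr)u_0+\frac{\sin\bigl(tL_0^{1/2}\bigr)}{L_0^{1/2}}u_1 ,
\end{equation}
where $\sin(t\lambda^{1/2})/\lambda^{1/2}$ is interpreted as $t$ at $\lambda=0$. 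This function is bounded on $[0,\infty)$ by $|t|$, so the operator is bounded for each $t$ and strongly continuous in $t$. The required regularity comes from two facts. First, $V_0=\Dom(L_0^{1/2})$ with $\|u\|_A^2+q_0[u]=\|u\|_A^2+\|L_0^{1/2}u\|_A^2$. Second, $L_A^2$ is equivalent to $L^2$ and the form norm is equivalent to $H^1$. The spectral theorem then gives $u\in C(\R;V_0)\subset C(\R;H^1)$ and $u_t\in C(\R;L^2)$.

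Next I check that $u$ is a weak solution. For every $\eta\in V_0$,
\begin{equation}
\frac{\dd^2}{\dd t^2}\langle u,\eta\rangle_A+q_0[u,\eta]=0
\end{equation}
in the distributional sense, which is exactly the weak form of \eqref{eq:axisymPDE-k0}. The spectral representation also gives conservation directly: $\|u_t\|_A^2+\|L_0^{1/2}u\|_A^2$ is constant, and this quantity equals $2E_0[u](t)$. Continuous dependence follows on each compact time interval. The spectral multipliers $\cos$, $\lambda^{1/2}\sin$ and $\sin/\lambda^{1/2}$ are bounded by $1+|t|$ uniformly in $\lambda\ge0$, so the solution map is bounded by $C(1+|t|)$ in $V_0\times L^2$. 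The linear growth is unavoidable because of the constant mode.

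Uniqueness is the step I expect to need the most care, since a weak solution is not a priori regular enough to use $u_t$ as a test function. I would reuse the time-mollification argument of Theorem~\ref{thm:weak-energy-equality-full}, now with no gyroscopic term. If $w$ is the difference of two weak solutions, then $E_0[w]$ is constant, hence zero, so $w_t=0$ and $q_0[w]=0$. Testing the weak equation with $\eta=1$, which is admissible because the constant lies in the domain, gives
\begin{equation}
\frac{\dd^2}{\dd t^2}\langle w,1\rangle_A=0,
\end{equation}
so $\Pi_0w\equiv0$. Proposition~\ref{prop:kernelH0} then forces $w=0$.

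An alternative route to uniqueness is to pair the weak equation against the spectral projections of $L_0$, which reduces uniqueness to scalar ODEs $\ddot c_\lambda+\lambda c_\lambda=0$ with zero data. The phrase ``compatible with the boundary conditions'' is interpreted throughout as $u_0\in V_0$.
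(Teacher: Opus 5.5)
Your proof is essentially the paper's: the paper writes the same solution as an eigenfunction series in the compact-resolvent basis of Appendix~\ref{app:compactspectral}, and your functional-calculus version is the same formula without needing compactness. As in the paper, you get conservation of $E_0$ mode by mode and prove uniqueness through the energy of the difference of two solutions; your time-mollification step supplies the justification that the paper's direct appeal to the smooth identity of Theorem~\ref{thm:axisenergy} leaves implicit.

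Two small repairs are needed. First, $\lambda^{1/2}\sin(t\lambda^{1/2})$ is not bounded uniformly in $\lambda$, so state the continuity bound as $\|L_0^{1/2}\sin(tL_0^{1/2})u_0\|_A\le\|L_0^{1/2}u_0\|_A$; the multiplier is absorbed by the form norm of $u_0$. Second, in the uniqueness step the test with $\eta=1$ and the appeal to Proposition~\ref{prop:kernelH0} are superfluous, since $w_t\equiv0$ together with $w(0)=0$ already gives $w\equiv0$; this also covers reflecting realizations such as Dirichlet that exclude constants.
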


\begin{proof}
Appendix~\ref{app:compactspectral} constructs the compact resolvent of $L_0$ directly and produces an $L_A^2(\Omega)$-orthonormal eigenbasis $\{\psi_j\}_{j\geq 0}$ with eigenvalues $0=\lambda_0\le \lambda_1\le \cdots \nearrow +\infty$.  Expanding the data in that basis yields the explicit solution formula
\begin{align}
u(t)&=a_0\psi_0+b_0t\,\psi_0+\sum_{j=1}^\infty \left( a_j\cos(\sqrt{\lambda_j}t)+b_j\frac{\sin(\sqrt{\lambda_j}t)}{\sqrt{\lambda_j}} \right)\psi_j,
\end{align}
which converges in $C(\R;H^1(\Omega))\cap C^1(\R;L^2(\Omega))$.  Termwise differentiation shows that the series solves \eqref{eq:axisymPDE-k0}, and the conservation of $E_0$ follows mode by mode.  Uniqueness is obtained by applying Theorem~\ref{thm:axisenergy} to the difference of two solutions.
\end{proof}

\begin{proposition}[Graph-norm higher regularity]\label{prop:higherorder-k0}
Let $m\ge0$ be an integer.  If
\begin{equation}
(u_0,u_1)\in \Dom((1+L_0)^{(m+1)/2})\times \Dom((1+L_0)^{m/2}),
\end{equation}
then the solution of \eqref{eq:axisymPDE-k0} satisfies
\begin{equation}
u\in C\bigl(\R;\Dom((1+L_0)^{(m+1)/2})\bigr),
\qquad
u_t\in C\bigl(\R;\Dom((1+L_0)^{m/2})\bigr).
\end{equation}
If, in addition, the chosen reflecting realization has the elliptic regularity and compatibility properties identifying these graph domains with the corresponding Sobolev spaces, then the usual Sobolev statement follows.  Without that extra smooth-boundary regularity input, the graph-norm statement is the theorem-level assertion.
\end{proposition}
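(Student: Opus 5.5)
The plan is to use the spectral representation already invoked in the proof of Theorem~\ref{thm:wellposedness}. Let $\{\psi_j\}_{j\ge0}$ be the $L_A^2(\Omega)$-orthonormal eigenbasis of $L_0$ from Appendix~\ref{app:compactspectral}, with eigenvalues $0=\lambda_0\le\lambda_1\le\cdots$. For $s\ge0$ the graph domain is characterized spectrally:
\begin{equation}
\Dom((1+L_0)^{s/2})=\Bigl\{f\in L_A^2:\ \sum_j(1+\lambda_j)^{s}|\langle f,\psi_j\rangle_A|^2<\infty\Bigr\},
\end{equation}
and the square root of this sum is the graph norm. The first step is to expand the data, writing $a_j=\langle u_0,\psi_j\rangle_A$ and $b_j=\langle u_1,\psi_j\rangle_A$. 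The solution then has coefficients $u_j(t)=a_j\cos(\sqrt{\lambda_j}t)+b_j\sin(\sqrt{\lambda_j}t)/\sqrt{\lambda_j}$ for $j\ge1$, and $u_0(t)=a_0+b_0t$ for the zero mode.

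The second step is a uniform-in-$j$ bound on each mode. For $j\ge1$, the conserved modal energy $\lambda_j|u_j|^2+|u_j'|^2=\lambda_j|a_j|^2+|b_j|^2$ gives
\begin{equation}
(1+\lambda_j)^{m+1}|u_j(t)|^2+(1+\lambda_j)^m|u_j'(t)|^2\le C_t\bigl((1+\lambda_j)^{m+1}|a_j|^2+(1+\lambda_j)^m|b_j|^2\bigr).
\end{equation}
Here $C_t$ is uniform in $j$ and locally bounded in $t$. This needs care only when $\lambda_j$ is small; in that case one uses $|\sin(\sqrt{\lambda}t)/\sqrt{\lambda}|\le|t|$, and $\lambda_1>0$ by Proposition~\ref{prop:kernelH0} in any case. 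The zero mode is a single affine term with values in every graph domain, since $\psi_0$ is constant and lies in $\Dom(L_0^N)$ for all $N$. Summing over $j$ shows that $u(t)$ lies in $\Dom((1+L_0)^{(m+1)/2})$ and $u_t(t)$ lies in $\Dom((1+L_0)^{m/2})$, with bounds that are locally uniform in $t$.

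The third step, which I expect to be the main obstacle, is continuity in $t$ in these graph norms rather than mere boundedness. I would apply dominated convergence to the series: each term is continuous in $t$, and on compact time intervals the terms are dominated by $C_T\bigl((1+\lambda_j)^{m+1}|a_j|^2+(1+\lambda_j)^m|b_j|^2\bigr)$, which is summable. This gives
\begin{equation}
\|u(t)-u(s)\|^2_{\Dom((1+L_0)^{(m+1)/2})}\to0\qquad\text{as } s\to t,
\end{equation}
and the same argument applies to $u_t$. Uniqueness, and the identification of this series solution with the energy solution, follow from Theorem~\ref{thm:wellposedness}.

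The final Sobolev sentence is conditional by design. If the reflecting realization satisfies elliptic regularity, meaning $\Dom((1+L_0)^{k/2})=H^k$ with equivalent norms, the graph statement transfers directly; in practice this follows by induction using $\Hh_0 u=f\in H^{k-2}$ together with boundary regularity. No further proof is required beyond noting that the stated hypothesis is used only through this norm equivalence.
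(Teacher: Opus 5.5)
Your proposal is correct and follows essentially the same route as the paper. Both arguments expand in the $L_A^2$-eigenbasis of $L_0$, note that the graph-norm hypotheses are exactly weighted square-summability of the coefficients $a_j,b_j$, check that the explicit cosine/sine solution formula preserves these weighted sums uniformly on compact time intervals, and treat the Sobolev sentence purely as a conditional corollary. Your dominated-convergence argument for continuity in the graph norms, together with the bound $|\sin(\sqrt{\lambda}t)/\sqrt{\lambda}|\le|t|$ for uniformity in $j$, spells out what the paper compresses into the single assertion that the formula ``proves the graph-domain continuity.''
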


\begin{proof}
Use the spectral resolution from Appendix~\ref{app:compactspectral}.  Writing
\begin{align}
u(t)&=a_0\psi_0+b_0t\psi_0+\sum_{j\ge1} \left(a_j\cos(\sqrt{\lambda_j}t)+b_j\frac{\sin(\sqrt{\lambda_j}t)}{\sqrt{\lambda_j}}\right)\psi_j,
\end{align}
the graph-norm assumptions are exactly
\begin{equation}
\sum_j(1+\lambda_j)^{m+1}|a_j|^2<\infty,
\qquad
\sum_j(1+\lambda_j)^m|b_j|^2<\infty.
\end{equation}
The displayed solution formula preserves these weighted square sums uniformly on compact time intervals, and the differentiated formula preserves the second weighted square sum.  This proves the graph-domain continuity.  The final Sobolev statement is a corollary only under the additional elliptic regularity and boundary compatibility assumptions stated there.
\end{proof}

\section{Mode analysis of the scalar wave operator}\label{sec:modeanalysis}

\subsection{Separated axisymmetric modes}

For the bounded-slab axisymmetric problem, a separated mode of time frequency $\sigma\in\C$ has the form
\begin{equation}
u(t,r,x)=\ee^{-\ii\sigma t}\psi(r,x).
\end{equation}
Substituting this into \eqref{eq:axisymPDE-k0} gives
\begin{equation}
\Hh_0\psi=\sigma^2A\psi,
\qquad\text{equivalently}\qquad
L_0\psi=\sigma^2\psi.
\label{eq:modeeq-L0}
\end{equation}
Thus the time-frequency analysis reduces to the spectral analysis of the nonnegative self-adjoint operator $L_0$.

\begin{proposition}[Mode stability away from zero]\label{prop:modeawayzero}
Assume the conditions of Section~\ref{sec:energy}.  Let $\psi$ solve \eqref{eq:modeeq-L0} with reflecting boundary conditions.  If $\psi\neq 0$, then $\sigma^2\in [0,\infty)$.  In particular, there are no nontrivial separated modes with $\operatorname{Im}\sigma>0$.
\end{proposition}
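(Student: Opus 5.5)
The plan is to test the mode equation \eqref{eq:modeeq-L0} against $\overline\psi$ in the weighted space $L_A^2(\Omega)$ and read off the sign of $\sigma^2$ from the resulting Rayleigh identity. Here $\psi$ is taken in the operator domain of $L_0$, or at least in the form domain with the weak identity $q_0[\psi,\eta]=\sigma^2\langle\psi,\eta\rangle_A$ for all admissible $\eta$. Taking $\eta=\psi$ gives
\begin{equation*}
q_0[\psi]=\int_\Omega\bigl(\Delta_r|\psi_r|^2+\Delta_x|\psi_x|^2\bigr)\,\dd r\,\dd x=\sigma^2\int_\Omega A|\psi|^2\,\dd r\,\dd x .
\end{equation*}
The reflecting boundary conditions are exactly what kills the boundary term. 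This is already encoded in the Friedrichs construction of Proposition~\ref{prop:selfadjointH0}, so no pointwise trace argument is needed.

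Since $A\ge a_->0$ and $\psi\ne0$, the right-hand integral is strictly positive. Hence $\sigma^2=q_0[\psi]/\|\psi\|_A^2$ is a quotient of a real nonnegative number by a positive one, so $\sigma^2\in[0,\infty)$. Equivalently, one can simply invoke self-adjointness and nonnegativity of $L_0$ from Proposition~\ref{prop:selfadjointH0}: any eigenvalue of a nonnegative self-adjoint operator lies in $[0,\infty)$.

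It then remains to translate this into the frequency statement. The condition $\sigma^2\ge0$ forces $\sigma\in\R$, since $\sigma=\alpha+\ii\beta$ gives $\operatorname{Im}\sigma^2=2\alpha\beta$ and $\operatorname{Re}\sigma^2=\alpha^2-\beta^2$. Requiring $\operatorname{Im}\sigma^2=0$ and $\operatorname{Re}\sigma^2\ge0$ rules out $\beta\ne0$: either $\alpha=0$, and then $\sigma^2=-\beta^2<0$, or $\beta=0$. So $\operatorname{Im}\sigma>0$ is impossible.

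The only point needing care is the regularity class of $\psi$: it must lie in the domain where testing with $\overline\psi$ is legitimate. I would handle this by interpreting ``solves with reflecting boundary conditions'' as membership in $\Dom(L_0)$, or in the form domain via the weak identity. The threshold case $\sigma=0$ is allowed and corresponds to the constants of Proposition~\ref{prop:kernelH0}.
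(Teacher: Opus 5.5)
Your proposal is correct and follows essentially the same route as the paper: pair $L_0\psi=\sigma^2\psi$ with $\psi$ in $L_A^2(\Omega)$ to get $q_0[\psi]=\sigma^2\|\psi\|_A^2$, conclude $\sigma^2\in[0,\infty)$ from nonnegativity of $q_0$ and $\|\psi\|_A>0$, and observe that $\operatorname{Im}\sigma>0$ is incompatible with $\sigma^2\in[0,\infty)$. Your extra remarks on the form-domain interpretation and the explicit $\sigma=\alpha+\ii\beta$ computation only make explicit what the paper leaves implicit.
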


\begin{proof}
Pair \eqref{eq:modeeq-L0} with $\psi$ in $L_A^2(\Omega)$:
\begin{equation}
q_0[\psi]=\langle L_0\psi,\psi\rangle_A=\sigma^2\|\psi\|_A^2.
\end{equation}
The left-hand side is real and nonnegative by Proposition~\ref{prop:selfadjointH0}.  Hence $\sigma^2$ is real and nonnegative.  If $\operatorname{Im}\sigma>0$, then $\sigma^2$ cannot lie in $[0,\infty)$ unless $\psi=0$.
\end{proof}

This is the bounded-slab form of mode stability.  It is considerably easier than the corresponding statements in black-hole scattering theory because all non-self-adjoint boundary phenomena have been excluded by the reflecting setup.

\subsubsection*{Zero modes}

The zero-frequency equation is
\begin{equation}
L_0\psi=0.
\label{eq:zeromodeeq}
\end{equation}
By Proposition~\ref{prop:kernelH0}, the only solutions are constants.  We record this as a separate proposition because it is the fundamental algebraic input for the generalized zero-mode analysis.

\begin{proposition}[Zero modes]\label{prop:zeromodes}
On a bounded connected reflecting slab admitting constants,
\begin{equation}
\Ker L_0=\mathrm{span}\{1\}.
\end{equation}
Equivalently, the only stationary axisymmetric conformal solutions are constant rescalings of the metric.
\end{proposition}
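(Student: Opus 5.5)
The statement is essentially a restatement of Proposition~\ref{prop:kernelH0}, together with the interpretation of kernel elements as stationary solutions. The proof therefore has two parts: the identification $\Ker L_0=\mathrm{span}\{1\}$, and the equivalence with stationary axisymmetric conformal solutions.

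For the kernel, I would argue as follows. The constant function lies in the form domain by the standing hypothesis that the reflecting realization admits constants. Its derivatives vanish, so $q_0[1,v]=0$ for every $v$ in the form domain. By the Friedrichs representation in Proposition~\ref{prop:selfadjointH0}, this gives $1\in\Dom(L_0)$ with $L_0 1=0$.

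Conversely, let $\psi\in\Ker L_0$. Pairing with $\psi$ in $L_A^2(\Omega)$ gives
\begin{equation}
0=\langle L_0\psi,\psi\rangle_A=q_0[\psi]=\int_\Omega\bigl(\Delta_r|\psi_r|^2+\Delta_x|\psi_x|^2\bigr)\,\dd r\,\dd x.
\end{equation}
Since $\Delta_r,\Delta_x\ge c>0$ on the closed slab, this forces $\psi_r=\psi_x=0$ almost everywhere. An $H^1$ function with vanishing weak gradient on the connected rectangle $\Omega$ is a.e.\ constant, so $\psi\in\mathrm{span}\{1\}$.

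For the equivalence, I would start from a stationary axisymmetric energy solution $u(t,r,x)=\psi(r,x)$. Then $u_{tt}=0$, and the weak equation $A u_{tt}+\Hh_0u=0$ (equivalently \eqref{eq:full-weak-form} with $\partial_\phi$ and $u_t$ terms dropped) reduces to $q_0[\psi,\eta]=0$ for all admissible $\eta$. This says exactly $\psi\in\Dom(L_0)$ with $L_0\psi=0$, so $\psi$ is constant by the first part. The converse direction holds because constants solve \eqref{eq:axisymPDE-k0} trivially. Through Proposition~\ref{prop:conformalreduction}, $u=c$ corresponds to the perturbation $h=2cg$, which is a constant rescaling of the metric.

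I expect no real obstacle. The only point needing care is that "stationary solution" must be read in the energy or weak sense. Otherwise $\psi\in\Dom(L_0)$ would not be justified, and the pairing above would require pointwise boundary traces, which Lemma~\ref{lem:form-reflection-primary} avoids by working through the closed form.
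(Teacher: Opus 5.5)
Your argument is correct and matches the paper's: the paper proves this proposition by citing Proposition~\ref{prop:kernelH0}, whose proof is exactly your pairing $0=\langle L_0\psi,\psi\rangle_A=q_0[\psi]$, followed by positivity of $\Delta_r,\Delta_x$ and connectedness of $\Omega$. Your extra paragraph, which identifies stationary energy solutions with $\Ker L_0$ through the closed form and then with constant rescalings via Proposition~\ref{prop:conformalreduction}, spells out the ``equivalently'' clause that the paper leaves implicit.
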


\begin{proof}
This is Proposition~\ref{prop:kernelH0}.
\end{proof}

The weighted projection $\Pi_0$ defined in \eqref{eq:Pi0def} is therefore the spectral projection onto the zero eigenspace.  It follows immediately from the wave equation that
\begin{equation}
\frac{\dd^2}{\dd t^2}\langle u(t),1\rangle_A
=
-\langle L_0u(t),1\rangle_A
=
0.
\label{eq:averageaffine}
\end{equation}
Hence the weighted average of any solution is affine in time.  This is the first indication that one should expect the generalized zero-mode space to be two-dimensional.

\subsubsection*{Growing zero modes}

Equation \eqref{eq:averageaffine} shows that the constant function generates not only a stationary mode but also a linearly growing solution.  Indeed, both
\begin{equation}
u\equiv 1,
\qquad
u=t
\end{equation}
solve \eqref{eq:axisymPDE-k0}.  The second solution is the bounded-slab analogue of the growing zero modes that appear in low-frequency analyses of wave-type systems on stationary backgrounds.

\begin{proposition}[Affine-in-time zero modes]\label{prop:affinezeromodes}
The functions $1$ and $t$ are linearly independent axisymmetric solutions of \eqref{eq:axisymPDE-k0}.  More generally, if $c_0,c_1\in\R$, then
\begin{equation}
u(t,r,x)=c_0+c_1t
\end{equation}
solves the equation and satisfies
\begin{equation}
E_0[u](t)=\frac12|c_1|^2\int_\Omega A\,\dd r\,\dd x.
\end{equation}
\end{proposition}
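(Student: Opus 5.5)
The plan is to check three things directly: that $1$ and $t$ solve \eqref{eq:axisymPDE-k0} with the boundary realization, that they are linearly independent, and that the energy \eqref{eq:axisenergy-k0} takes the stated value. No spectral input is needed beyond the fact that the reflecting realization admits constants.

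\emph{Solution property.} For $u=c_0+c_1t$ the spatial derivatives $u_r,u_x$ vanish identically and $u_{tt}=0$. Hence every term of $-Au_{tt}+\partial_r(\Delta_ru_r)+\partial_x(\Delta_xu_x)$ vanishes pointwise. In the weak or form sense, each time slice $u(t)=c_0+c_1t$ is a constant function, which lies in the form domain by the standing hypothesis that the reflecting conditions admit constants. Then $q_0[u(t),\eta]=0$ for every admissible $\eta$ and $\int_\Omega Au_{tt}\overline\eta=0$, so the weak equation $Au_{tt}+\Hh_0u=0$ holds. Equivalently, $u(t)=c_0\psi_0'+c_1t\psi_0'$ with $\psi_0'$ the constant in $\Ker L_0$ (Proposition~\ref{prop:kernelH0}), which is exactly the $a_0\psi_0+b_0t\psi_0$ part of the spectral formula in Theorem~\ref{thm:wellposedness}. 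The energy class requirement $u\in C(\R;H^1)$, $u_t\in C(\R;L^2)$ is immediate because $\Omega$ is bounded.

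\emph{Linear independence.} If $\alpha\cdot1+\beta t\equiv0$ on $\R\times\Omega$, evaluating at $t=0$ gives $\alpha=0$, and then $t=1$ gives $\beta=0$. In Cauchy-data language, the data $(1,0)$ and $(0,1)$ are independent in $H^1\times L^2$.

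\emph{Energy.} Substituting $u_t=c_1$ and $u_r=u_x=0$ into \eqref{eq:axisenergy-k0} gives $E_0[u](t)=\tfrac12|c_1|^2\int_\Omega A\,\dd r\,\dd x$. This is finite since $A$ is bounded on $\overline\Omega$, independent of $t$ (consistent with Theorem~\ref{thm:axisenergy}), and positive when $c_1\neq0$ because $A>0$.

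There is no real obstacle here. The only point needing care is the form-domain membership of constants, which is exactly the hypothesis built into the reflecting realization. I would also remark that complex $c_0,c_1$ work verbatim.
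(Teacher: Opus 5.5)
Your proposal is correct and follows the paper's own argument: vanishing spatial derivatives and $u_{tt}=0$ make every term of \eqref{eq:axisymPDE-k0} vanish, and substituting $u_t=c_1$, $u_r=u_x=0$ into \eqref{eq:axisenergy-k0} gives the energy. Your remarks on form-domain membership of constants and on linear independence fill in details the paper leaves implicit.
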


\begin{proof}
Both functions have vanishing spatial derivatives, and their second time derivatives are zero.  Thus they solve \eqref{eq:axisymPDE-k0}.  The energy formula is obtained by substituting $u_t=c_1$ and $u_r=u_x=0$ into \eqref{eq:axisenergy-k0}.
\end{proof}

This proposition explains why a boundedness theorem cannot hold on the full energy space without qualification.  The linearly growing mode $t$ has finite conserved energy but unbounded $L^2$-size in time.

\subsection{Generalized zero modes}

We now prove that the affine family from Proposition~\ref{prop:affinezeromodes} exhausts all polynomial-in-time solutions.  The argument is deliberately elementary: it uses only the self-adjointness and nonnegativity of the spatial operator, the fact that constants are its entire kernel, and the vanishing of the weighted mean on the threshold complement.  In particular, no decay theorem and no exterior scattering input is being smuggled into the compact zero-mode classification.

\begin{theorem}[Classification of generalized zero modes]\label{thm:generalizedzeromodes}
Let
\begin{equation}
u(t,r,x)=\sum_{j=0}^N t^j\psi_j(r,x)
\end{equation}
be a sufficiently regular axisymmetric solution of \eqref{eq:axisymPDE-k0} on a bounded connected reflecting slab admitting constants.  Then $N\leq 1$ and $\psi_0,\psi_1$ are constant.  In particular, the generalized zero-mode space is exactly
\begin{equation}
\mathrm{span}\{1,t\}.
\end{equation}
\end{theorem}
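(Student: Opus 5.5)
The plan is to substitute the polynomial ansatz into the equation and compare coefficients of powers of \(t\), which turns the problem into a finite descending chain of spatial equations. Write \(u=\sum_{j=0}^N t^j\psi_j\) with \(\psi_N\neq0\). Since \(u_{tt}=\sum_{j=0}^{N-2}(j+2)(j+1)t^j\psi_{j+2}\), the equation \(Au_{tt}+\Hh_0u=0\), read weakly against test functions in the form domain, yields for each power \(t^j\):
\begin{equation}
L_0\psi_j=-(j+2)(j+1)\psi_{j+2}\quad(0\le j\le N-2),\qquad L_0\psi_{N-1}=L_0\psi_N=0 .
\end{equation}
This identification is legitimate because a polynomial in \(t\) with values in \(V'\) vanishes identically only if all its coefficients vanish. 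Each \(\psi_j\) lies in the form domain, since the solution is sufficiently regular.

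By Proposition~\ref{prop:kernelH0}, \(\psi_N\) and \(\psi_{N-1}\) are constants. Suppose \(N\ge2\). Take the \(j=N-2\) equation and pair it in \(L_A^2\) with the constant function \(1\). On the left, self-adjointness of \(L_0\) (Proposition~\ref{prop:selfadjointH0}) together with \(L_0 1=0\) gives
\begin{equation}
\langle L_0\psi_{N-2},1\rangle_A=q_0[\psi_{N-2},1]=0 .
\end{equation}
On the right, the pairing equals \(-N(N-1)\,\psi_N\langle 1,1\rangle_A\). This forces \(\psi_N=0\), contradicting the choice of \(N\).

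Two routes then close the argument. The first is a downward induction: after the contradiction, renumber the top degree and repeat. The cleaner form of this is to assume \(N\ge 2\) is the true top degree and reach a contradiction directly, which is exactly what the pairing above does, since \(\psi_N\) is a nonzero constant. Hence \(N\le1\), and then \(L_0\psi_0=L_0\psi_1=0\). Applying Proposition~\ref{prop:kernelH0} once more makes both \(\psi_0\) and \(\psi_1\) constant. The second route is an alternative consistency check: pairing the whole equation with \(1\) gives the affine law \eqref{eq:averageaffine}, which shows directly that the averaged top coefficients must vanish.

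Two small points need care. First, the case where \(\psi_N\) is a constant of either sign is handled uniformly, because \(\langle 1,1\rangle_A=\int_\Omega A>0\) by positivity of \(A\). Second, the argument needs \(1\) to lie in the form domain, and this is exactly the hypothesis that the reflecting conditions admit constants.

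The main obstacle is justifying the coefficientwise identification and the pairing \(\langle L_0\psi_{N-2},1\rangle_A=0\) without assuming \(\psi_{N-2}\in\Dom(L_0)\) a priori. I would handle this by working with the weak form \(q_0[\psi_j,\eta]=-(j+2)(j+1)\langle\psi_{j+2},\eta\rangle_A\) and taking \(\eta=1\). Then \(q_0[\psi_{N-2},1]=0\) holds trivially, because the gradient of \(1\) vanishes, so no operator-domain membership is required.
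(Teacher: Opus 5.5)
Your proposal is correct and follows the paper's proof essentially step for step: the coefficient recurrence $L_0\psi_j+(j+2)(j+1)\psi_{j+2}=0$, constancy of the top coefficient via Proposition~\ref{prop:kernelH0}, and pairing the $j=N-2$ equation with $1$ in $L_A^2$ to force $\psi_N=0$ when $N\ge2$. Your weak-form version of that step, testing with $\eta=1$ so that $q_0[\psi_{N-2},1]=0$ holds without needing $\psi_{N-2}\in\Dom(L_0)$, is a small but useful tightening of the paper's appeal to self-adjointness.
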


\begin{proof}
Rewrite the wave equation as $u_{tt}+L_0u=0$.  Substituting the polynomial ansatz and comparing coefficients of $t^j$ yields the recurrence
\begin{equation}
L_0\psi_j + (j+2)(j+1)\psi_{j+2}=0,
\qquad 0\leq j\leq N,
\label{eq:polyrecurrence}
\end{equation}
with the convention $\psi_{N+1}=\psi_{N+2}=0$.  In particular,
\begin{equation}
L_0\psi_N=0.
\end{equation}
By Proposition~\ref{prop:zeromodes}, $\psi_N$ is constant.

Suppose $N\geq 2$.  Taking $j=N-2$ in \eqref{eq:polyrecurrence} and pairing with $1$ in $L_A^2(\Omega)$, we obtain
\begin{equation}
0=\langle L_0\psi_{N-2},1\rangle_A + N(N-1)\langle \psi_N,1\rangle_A.
\end{equation}
Since $L_0$ is self-adjoint and $L_0 1=0$, the first term vanishes, hence
\begin{equation}
N(N-1)\langle \psi_N,1\rangle_A=0.
\end{equation}
But $\psi_N$ is constant, so $\langle \psi_N,1\rangle_A=0$ implies $\psi_N=0$, contradicting the definition of $N$.  Therefore $N\leq 1$.

If $N=1$, then $L_0\psi_1=0$ and $L_0\psi_0=0$, so both $\psi_0$ and $\psi_1$ are constant.  If $N=0$, then $\psi_0$ is constant by the same argument.  This proves the theorem.
\end{proof}

\begin{remark}
The proof uses only self-adjointness, nonnegativity, and the fact that the constant mode is nonorthogonal to itself in the weighted inner product.  In particular it is robust under smooth perturbations of the coefficients as long as the bounded-slab geometry and the reflecting boundary conditions are preserved.
\end{remark}

\subsubsection*{Generalized modes at nonzero frequency}

For completeness we record the corresponding statement away from zero.  Since $L_0$ is self-adjoint, its eigenspaces are semisimple.

\begin{proposition}[Absence of Jordan blocks for $\sigma\neq 0$]\label{prop:nojordan}
Let $\sigma\neq 0$.  Any generalized eigenspace of $L_0$ at eigenvalue $\sigma^2$ coincides with the ordinary eigenspace.  Equivalently, any generalized time-frequency solution of the form
\begin{equation}
u(t)=\ee^{-\ii\sigma t}\sum_{j=0}^N t^j\psi_j
\end{equation}
with $N\geq 1$ is trivial.
\end{proposition}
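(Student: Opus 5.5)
The plan is to insert the ansatz \(u(t)=\ee^{-\ii\sigma t}\sum_{j=0}^N t^j\psi_j\) into \(u_{tt}+L_0u=0\) and compare coefficients of \(\ee^{-\ii\sigma t}t^j\). Writing \(\partial_t(\ee^{-\ii\sigma t}t^j)=\ee^{-\ii\sigma t}(-\ii\sigma t^j+jt^{j-1})\), the coefficient comparison produces the recurrence
\begin{equation}
(L_0-\sigma^2)\psi_j-2\ii\sigma(j+1)\psi_{j+1}+(j+2)(j+1)\psi_{j+2}=0,\qquad 0\le j\le N,
\end{equation}
with the convention \(\psi_{N+1}=\psi_{N+2}=0\). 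The top equation \(j=N\) gives \((L_0-\sigma^2)\psi_N=0\), so \(\psi_N\) is a genuine eigenvector and, since \(\psi_N\neq0\) by definition of \(N\), \(\sigma^2\) lies in the spectrum of \(L_0\). By Proposition~\ref{prop:modeawayzero}, \(\sigma^2\) is then real and nonnegative. Since \(\sigma\ne0\), we have \(\sigma^2>0\) and \(\sigma\) itself is real.

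Next I will use the equation \(j=N-1\), which reads \((L_0-\sigma^2)\psi_{N-1}=2\ii\sigma N\psi_N\). Pairing it in \(L_A^2(\Omega)\) with \(\psi_N\) and using self-adjointness of \(L_0\) (Proposition~\ref{prop:selfadjointH0}) together with reality of \(\sigma^2\) gives
\begin{equation}
\langle(L_0-\sigma^2)\psi_{N-1},\psi_N\rangle_A=\langle\psi_{N-1},(L_0-\sigma^2)\psi_N\rangle_A=0.
\end{equation}
Hence \(2\ii\sigma N\|\psi_N\|_A^2=0\). Because \(\sigma\ne0\) and \(N\ge1\), this forces \(\psi_N=0\), which contradicts the choice of \(N\). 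Consequently no solution of this form with \(N\ge1\) and \(\psi_N\ne0\) exists, and the Jordan-block statement for \(L_0\) follows from the same pairing argument, since \((L_0-\mu)^2v=0\) implies \(\|(L_0-\mu)v\|_A^2=0\) for real \(\mu\).

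The one delicate point is regularity: the pairing requires \(\psi_{N-1}\) to lie in the operator domain of \(L_0\) and \(\psi_N\) in the form domain. I will get this from the hypothesis of a sufficiently regular finite-energy solution. Evaluating \(u\) and its time derivatives at finitely many times and inverting the resulting Vandermonde-type system expresses each \(\psi_j\) as a finite linear combination of these evaluations, so every \(\psi_j\) inherits the domain regularity of \(u(t)\). An alternative is to argue weakly with the form \(q_0\) alone, which is symmetric, so the identity above holds at the form level and this step is not a real obstacle.
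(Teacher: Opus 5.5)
Your argument is correct, but it takes a different route from the paper. The paper's proof is one line: \(L_0\) is self-adjoint, hence diagonalizable, hence has no Jordan chains. It then simply asserts that this rules out the time-polynomial solutions at \(\sigma\neq0\).

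You instead substitute the ansatz and obtain the Jordan-chain recurrence for the quadratic pencil \(L_0-\sigma^2\). You then kill the top coefficient by pairing the \(j=N-1\) equation with \(\psi_N\). This uses only the Hermitian symmetry of \(q_0\) and the reality of \(\sigma^2\) from Proposition~\ref{prop:modeawayzero}. So it works at the form level, needs no eigenbasis, and sidesteps the domain question you raise.

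What your route buys is that it shows exactly where \(\sigma\neq0\) enters, namely through the factor \(2\ii\sigma N\). This matters. At \(\sigma=0\) the operator \(L_0\) is equally diagonalizable, yet \(u=t\) is a genuine polynomial solution. So the ``equivalently'' clause does not follow from the absence of Jordan blocks for \(L_0\) alone.

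The paper's diagonalization argument can be completed by working mode by mode in the eigenbasis of Proposition~\ref{prop:discretespectrum}. There each coefficient solves \(\ddot c_j+\lambda_jc_j=0\), and \(\sigma\neq0\) keeps \(\ee^{\pm\ii\sigma t}\) from degenerating into \(1,t\). Your pairing argument avoids that detour and is the more self-contained justification.

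Your reading of ``trivial'' is also the right one: with \(N\) the top nonzero index, you show that \(N\ge1\) is impossible. A literal reading would be false, since \(\ee^{-\ii\sigma t}\psi_0\) is itself a nonzero admissible solution.
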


\begin{proof}
A self-adjoint operator on a Hilbert space is diagonalizable.  Hence every eigenvalue is semisimple and there are no nontrivial Jordan chains.  Applied to $L_0$, this shows that the only time-harmonic solutions at frequency $\sigma\neq 0$ are genuine separated modes $\ee^{-\ii\sigma t}\psi$.
\end{proof}

Together, Propositions~\ref{prop:modeawayzero} and~\ref{prop:nojordan} show that the only obstruction to uniform boundedness comes from the zero eigenspace and its associated affine-in-time dynamics.

\section{Structure of the resolvent near zero}\label{sec:resolventzero}


Because $\Omega$ is bounded and $L_0$ is a nonnegative self-adjoint second-order operator with compact form embedding, its spectrum is discrete.

\begin{proposition}[Discrete spectrum]\label{prop:discretespectrum}
There exists an $L_A^2(\Omega)$-orthonormal basis $\{\psi_j\}_{j\geq 0}$ of eigenfunctions of $L_0$ with
\begin{equation}
0=\lambda_0<\lambda_1\leq \lambda_2\leq \cdots \nearrow +\infty,
\end{equation}
each eigenvalue repeated according to multiplicity.  Moreover $\psi_0$ may be chosen to be the normalized constant function
\begin{equation}
\psi_0=\frac{1}{\|1\|_A}.
\end{equation}
\end{proposition}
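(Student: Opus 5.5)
The plan is to realize $L_0$ as the operator of a closed form with compact form embedding and then apply the spectral theorem for compact self-adjoint operators to a shifted resolvent. By Proposition~\ref{prop:selfadjointH0}, $L_0$ is the nonnegative self-adjoint Friedrichs operator on $L_A^2(\Omega)$ associated with $q_0$. Its form domain is a closed subspace of $H^1(\Omega)$, and the form norm $\|u\|_A^2+q_0[u]$ is equivalent to the $H^1$-norm. This equivalence holds because $A,\Delta_r,\Delta_x$ are bounded above and below by positive constants on the closed rectangle $\overline\Omega$.

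\textbf{Step 1: compact resolvent.} Since $\Omega$ is a bounded Lipschitz rectangle, Rellich's theorem makes the embedding $H^1(\Omega)\hookrightarrow L^2(\Omega)$ compact. Because $L^2$ and $L^2_A$ norms are equivalent, the form domain embeds compactly into $L_A^2(\Omega)$. Hence $(1+L_0)^{-1}$, which maps $L_A^2$ boundedly into the form domain, is a compact, positive, self-adjoint operator on $L_A^2$. Its eigenvalues $\mu_j\in(0,1]$ accumulate only at $0$, and it has an orthonormal eigenbasis; density of the form domain rules out a nontrivial kernel. Setting $\lambda_j=\mu_j^{-1}-1$ yields an orthonormal eigenbasis of $L_0$ with $\lambda_j\ge0$, finite multiplicities and $\lambda_j\nearrow+\infty$. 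The paper's Appendix~\ref{app:compactspectral} is cited for exactly this construction, so I would invoke it directly.

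\textbf{Step 2: bottom of the spectrum.} Proposition~\ref{prop:kernelH0} gives $\Ker L_0=\mathrm{span}\{1\}$, where the constant lies in the operator domain by the reflecting hypothesis admitting constants. So $0$ is an eigenvalue of multiplicity exactly one, and we may take $\psi_0=1/\|1\|_A$. All other eigenfunctions may be chosen orthogonal to $1$, and each has a strictly positive eigenvalue, since a zero eigenvalue would put the eigenfunction in $\mathrm{span}\{1\}$.

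\textbf{Step 3: strict gap $\lambda_1>0$.} This is the only point needing care. Because the spectrum is discrete, the infimum over the orthogonal complement of $1$ is attained by an eigenfunction, so by Step 2 it is positive. Quantitatively, I would combine Proposition~\ref{prop:weightedPoincare} with $a_+=\sup A$ to get, for $\Pi_0u=0$,
\[
\|u\|_A^2\le a_+C_P\,q_0[u],
\]
which gives $\lambda_1\ge(a_+C_P)^{-1}$. The main obstacle is the subtlety that the reflecting realization must genuinely have a form domain inside $H^1$ that contains constants. With that granted, everything follows from Rellich compactness and the kernel computation.
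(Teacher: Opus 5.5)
Your proposal is correct and follows essentially the paper's route. The paper likewise gets a compact resolvent from Rellich compactness of $H^1(\Omega)\hookrightarrow L_A^2(\Omega)$, made explicit in Appendix~\ref{app:compactspectral} through the shifted operator $T$ built from the form $\langle\cdot,\cdot\rangle_A+q_0$ with $\lambda=\mu^{-1}-1$; it then uses Proposition~\ref{prop:kernelH0} to identify the simple zero eigenvalue with the constants. Your added bound $\lambda_1\ge (a_+C_P)^{-1}$, obtained from Proposition~\ref{prop:weightedPoincare}, is a valid quantitative form of the gap/Poincar\'e equivalence that the paper notes right after the proposition.
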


\begin{proof}
Compactness of the embedding $H^1(\Omega)\hookrightarrow L_A^2(\Omega)$ implies compactness of the resolvent of $L_0$.  The spectral theorem for compact resolvents then gives the discrete spectrum and an orthonormal eigenbasis.  Proposition~\ref{prop:kernelH0} identifies the zero eigenspace and shows that it is one-dimensional.
\end{proof}

The key point is the spectral gap
\begin{equation}
\lambda_1>0.
\label{eq:spectralgap}
\end{equation}
This is equivalent to the weighted Poincar\'e inequality from Proposition~\ref{prop:weightedPoincare}.  In particular, on the mean-zero subspace
\begin{equation}
\mathcal H_\perp:=(I-\Pi_0)L_A^2(\Omega)
\end{equation}
the operator $L_0$ is strictly positive and hence invertible.

\subsubsection*{Resolvent decomposition}

Define the resolvent
\begin{equation}
R(\sigma):=(L_0-\sigma^2)^{-1},
\end{equation}
initially for $\sigma^2\notin \spec(L_0)$.  Because zero is a simple eigenvalue, the behavior of $R(\sigma)$ near $\sigma=0$ can be described explicitly.

\begin{theorem}[Laurent expansion at zero]\label{thm:laurentzero}
There exists $\varepsilon>0$ such that for $0<|\sigma|<\varepsilon$,
\begin{equation}
(L_0-\sigma^2)^{-1}
=
-\sigma^{-2}\Pi_0 + R_{\mathrm{reg}}(\sigma),
\label{eq:resexpansion}
\end{equation}
where $R_{\mathrm{reg}}(\sigma)$ is holomorphic in $\sigma$ as a bounded operator on $L_A^2(\Omega)$.  Equivalently, the resolvent has a simple pole in the variable $\sigma^2$ and a double pole in the variable $\sigma$, with residue determined by the projection onto constants.
\end{theorem}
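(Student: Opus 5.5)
The plan is to split $L_A^2(\Omega)=\Ran\Pi_0\oplus\mathcal H_\perp$ orthogonally and use that this splitting reduces $L_0$. By Proposition~\ref{prop:discretespectrum}, $\Pi_0$ is the spectral projection of the self-adjoint operator $L_0$ onto its one-dimensional kernel $\mathrm{span}\{1\}$. Hence $\Pi_0$ commutes with $L_0$, maps $\Dom(L_0)$ into itself, and $L_0\Pi_0=0$. On the range of $\Pi_0$ the operator $L_0-\sigma^2$ acts as multiplication by $-\sigma^2$. Its inverse is therefore $-\sigma^{-2}\Pi_0$, which accounts for the singular term in \eqref{eq:resexpansion}.

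On the complement $\mathcal H_\perp$ I will use the spectral gap \eqref{eq:spectralgap}. The restriction $L_\perp:=L_0|_{\mathcal H_\perp}$ is self-adjoint with spectrum contained in $[\lambda_1,\infty)$, and $\lambda_1>0$ by Proposition~\ref{prop:weightedPoincare}. By the spectral theorem, for every complex $z$ with $|z|<\lambda_1$ we have
\[
\|(L_\perp-z)^{-1}\|\le \operatorname{dist}(z,[\lambda_1,\infty))^{-1}\le (\lambda_1-|z|)^{-1}.
\]
Moreover, $z\mapsto (L_\perp-z)^{-1}$ is holomorphic on the disc $\{|z|<\lambda_1\}$, for instance by the Neumann series $\sum_{n\ge0} z^n L_\perp^{-n-1}$. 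This series converges in operator norm because $\|L_\perp^{-1}\|\le\lambda_1^{-1}$.

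I then define
\[
R_{\mathrm{reg}}(\sigma):=(L_\perp-\sigma^2)^{-1}(I-\Pi_0).
\]
Since $\sigma\mapsto\sigma^2$ is entire, this operator is holomorphic in $\sigma$ for $|\sigma|<\varepsilon:=\lambda_1^{1/2}$. Because the decomposition reduces $L_0$, for $0<|\sigma|<\varepsilon$ the sum $-\sigma^{-2}\Pi_0+R_{\mathrm{reg}}(\sigma)$ is a two-sided inverse of $L_0-\sigma^2$. Indeed, applying $L_0-\sigma^2$ to it yields $\Pi_0+(I-\Pi_0)=I$, and similarly in the other order on $\Dom(L_0)$. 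This proves \eqref{eq:resexpansion}. It also shows that the pole is simple in $\sigma^2$ and double in $\sigma$, with residue $-\Pi_0$, the projection onto constants.

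I do not expect a serious obstacle. The one point needing care is that $\mathcal H_\perp$ is invariant under $L_0$, so that $L_\perp$ is a genuine self-adjoint operator with domain $\Dom(L_0)\cap\mathcal H_\perp$. This follows because $\Pi_0$ is the $L_A^2$-orthogonal projection onto an eigenspace of a self-adjoint operator. Concretely, for $u\in\Dom(L_0)$,
\[
\langle L_0u,1\rangle_A=q_0[u,1]=0,
\]
since $1$ lies in the form domain and has vanishing derivatives. This is exactly where the hypothesis that the reflecting realization admits constants is used.
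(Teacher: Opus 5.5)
Your proposal is correct and follows essentially the same route as the paper's proof. Both split $L_A^2(\Omega)=\mathrm{span}\{1\}\oplus\mathcal H_\perp$, invert $L_0-\sigma^2$ by $-\sigma^{-2}\Pi_0$ on the constants, and use the spectral gap $\lambda_1>0$ with the Neumann series $\sum_{m\ge0}\sigma^{2m}(L_0|_{\mathcal H_\perp})^{-m-1}$ on the complement; you additionally make the reducing-subspace check $\langle L_0u,1\rangle_A=q_0[u,1]=0$ explicit, which the paper leaves implicit, and you obtain the sharp radius $\varepsilon=\sqrt{\lambda_1}$ instead of the paper's more conservative $\sqrt{\lambda_1}/2$.
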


\begin{proof}
Decompose the Hilbert space as
\begin{equation}
L_A^2(\Omega)=\mathrm{span}\{1\}\oplus \mathcal H_\perp.
\end{equation}
On $\mathrm{span}\{1\}$, $L_0$ acts as zero, hence
\begin{equation}
(L_0-\sigma^2)^{-1}|_{\mathrm{span}\{1\}} = -\sigma^{-2}\Pi_0.
\end{equation}
On $\mathcal H_\perp$, the operator $L_0$ is strictly positive by \eqref{eq:spectralgap}.  Therefore $L_0-\sigma^2$ is invertible on $\mathcal H_\perp$ for $|\sigma|<\sqrt{\lambda_1}/2$, and the inverse depends holomorphically on $\sigma$ by the analytic Fredholm theorem or directly by the convergent Neumann series
\begin{equation}
(L_0-\sigma^2)^{-1}
=
L_0^{-1}(I-\sigma^2L_0^{-1})^{-1}
=
\sum_{m=0}^\infty \sigma^{2m}L_0^{-m-1}
\end{equation}
on $\mathcal H_\perp$.  Combining the two subspaces yields \eqref{eq:resexpansion}.
\end{proof}

The theorem is the precise operator-theoretic explanation for the appearance of the modes $1$ and $t$.  A double pole in $\sigma$ corresponds, after inverse Fourier transform in time, to an affine function of $t$.

\subsubsection*{The wave-type resolvent}

It is sometimes convenient to work with the wave-type frequency-domain operator
\begin{equation}
P_0(\sigma):=\Hh_0-\sigma^2A=A(L_0-\sigma^2).
\end{equation}
Since multiplication by $A$ is invertible on $L^2(\Omega)$, the previous theorem immediately yields the corresponding expansion for $P_0(\sigma)^{-1}$.

\begin{corollary}[Frequency-domain expansion for the wave operator]\label{cor:P0resolvent}
For $0<|\sigma|<\varepsilon$,
\begin{equation}
P_0(\sigma)^{-1}
=
-\sigma^{-2}\Pi_0A^{-1} + \widetilde R_{\mathrm{reg}}(\sigma),
\label{eq:P0res}
\end{equation}
where $\widetilde R_{\mathrm{reg}}(\sigma)$ is holomorphic in $\sigma$ as an operator on $L^2(\Omega)$.
\end{corollary}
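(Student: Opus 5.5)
The plan is to reduce Corollary~\ref{cor:P0resolvent} directly to Theorem~\ref{thm:laurentzero} through the factorization $P_0(\sigma)=A(L_0-\sigma^2)$. I will read $\Hh_0$ here as the weak operator $V\to V'$ given by the form $q_0$, with the Riesz identification of $L^2(\Omega)$ into $V'$. Then the factorization is an identity of operators with the reflecting domain of $L_0$. Multiplication by $A$ is a bounded isomorphism of $L^2(\Omega)$, with inverse multiplication by $A^{-1}$, because $a_-\le A\le a_+$ on the slab. Moreover, $L^2(\Omega)$ and $L^2_A(\Omega)$ coincide as sets and have equivalent norms. Hence, for $\sigma^2\notin\spec(L_0)$, we get
\begin{equation}
P_0(\sigma)^{-1}=(L_0-\sigma^2)^{-1}A^{-1},
\end{equation}
as an operator on $L^2(\Omega)$. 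This holds in particular for $0<|\sigma|<\varepsilon$ with $\varepsilon$ as in Theorem~\ref{thm:laurentzero}, since there $\sigma^2\in(0,\lambda_1)$ or is nonreal, and in either case it is not in the spectrum.

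Next, insert the Laurent expansion \eqref{eq:resexpansion}. This gives
\begin{equation}
P_0(\sigma)^{-1}=-\sigma^{-2}\Pi_0A^{-1}+R_{\mathrm{reg}}(\sigma)A^{-1}.
\end{equation}
Set $\widetilde R_{\mathrm{reg}}(\sigma):=R_{\mathrm{reg}}(\sigma)A^{-1}$. Holomorphy transfers because composing with the fixed bounded operator $A^{-1}$ preserves holomorphy. Also, bounded operators on $L^2_A$ are bounded on $L^2$, by the norm equivalence. The singular term is then exactly as stated. As a consistency check, note that $\Pi_0A^{-1}f=\langle 1,1\rangle_A^{-1}\bigl(\int_\Omega f\bigr)1$. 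So the residue is the unweighted mean of $f$, which is the expected zero-frequency solvability pairing for $\Hh_0 u=f$.

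The only point needing care is the factorization itself. I have to check that $(L_0-\sigma^2)^{-1}A^{-1}f$ lies in the form domain and satisfies the weak equation $q_0[u,\eta]-\sigma^2\int A u\bar\eta=\int f\bar\eta$ for all admissible $\eta$. This follows from the defining relation $q_0[u,\eta]=\langle L_0u,\eta\rangle_A$ in Proposition~\ref{prop:selfadjointH0}, applied with $L_0u=\sigma^2u+A^{-1}f$. Uniqueness of the weak solution is equivalent to $\sigma^2$ not being an eigenvalue of $L_0$, which holds on the punctured disc. I expect no real obstacle beyond this bookkeeping between the weighted and unweighted pairings.
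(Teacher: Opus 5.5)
Your argument is correct and is the paper's proof: factor $P_0(\sigma)=A(L_0-\sigma^2)$, invert to get $(L_0-\sigma^2)^{-1}A^{-1}$, and substitute \eqref{eq:resexpansion}. Your weak-form and norm-equivalence bookkeeping just makes explicit what the paper leaves implicit. One small slip: for purely imaginary $\sigma$ one has $\sigma^2<0$, which is neither in $(0,\lambda_1)$ nor nonreal, but it still lies outside $\spec(L_0)\subset\{0\}\cup[\lambda_1,\infty)$ by nonnegativity, so your conclusion stands.
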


\begin{proof}
Since $P_0(\sigma)=A(L_0-\sigma^2)$,
\begin{equation}
P_0(\sigma)^{-1}=(L_0-\sigma^2)^{-1}A^{-1}.
\end{equation}
Substitute \eqref{eq:resexpansion}.
\end{proof}

\subsubsection*{A direct spectral formula}

Using the eigenbasis from Proposition~\ref{prop:discretespectrum}, the resolvent can be written explicitly as
\begin{align}
(L_0-\sigma^2)^{-1}f&=\sum_{j=0}^\infty \frac{\langle f,\psi_j\rangle_A}{\lambda_j-\sigma^2}\,\psi_j.\label{eq:resolventspectral-k0}
\end{align}
The $j=0$ term is the singular piece
\begin{equation}
-\sigma^{-2}\Pi_0f.
\end{equation}
All higher modes are regular at $\sigma=0$ because $\lambda_j>0$ for $j\geq 1$.

Formula \eqref{eq:resolventspectral-k0} is the bounded-slab analogue of the low-frequency resolvent expansions used in far more delicate black-hole scattering theories.  Here the geometry is simple enough that the entire zero-frequency structure can be read directly from the spectrum.  Later, in the proof of Theorem~\ref{thm:mainresult}, the same decomposition will lead to the explicit time-domain representation of solutions.

\subsubsection*{Orthogonal complement and regularized inverse}

The projection $\Pi_0$ allows one to isolate the regular part of the inverse.  Define
\begin{equation}
L_{0,\perp}:=L_0|_{\mathcal H_\perp}.
\end{equation}
Then $L_{0,\perp}$ is positive self-adjoint and invertible.  In particular,
\begin{equation}
G_\perp:=L_{0,\perp}^{-1}
\end{equation}
is a bounded operator from $\mathcal H_\perp$ to the domain of $L_0$.  The operator $G_\perp$ is the regularized Green operator for the zero-frequency problem.  It is the unique solution operator for
\begin{equation}
L_0u=f,\qquad \Pi_0u=0,\qquad \Pi_0f=0.
\end{equation}

This regularized inverse is precisely what replaces coercivity in the $k=0$ regime.  On the full space $L_A^2(\Omega)$, $L_0$ is not invertible because of the constant kernel.  On the orthogonal complement, however, it has a bounded inverse and therefore behaves just like a uniformly elliptic positive operator.  All boundedness estimates for the dynamical problem will be formulated on this complement.

\subsubsection*{Interpretation in time}

The average-affine law \eqref{eq:averageaffine} and the resolvent expansion \eqref{eq:resexpansion} encode the same phenomenon.  The spectral projection onto the constant mode determines the coefficients
\begin{equation}
c_0=\Pi_0u(0),
\qquad
c_1=\Pi_0u_t(0),
\end{equation}
and the remainder lives in $\mathcal H_\perp$, where the dynamics are oscillatory rather than growing.  Thus the true $k=0$ neutral-stability statement is not ``all solutions are bounded'', but rather ``all solutions are bounded modulo the explicit generalized zero-mode space generated by $1$ and $t$''.

\section{Proof of the axisymmetric spectral refinement}\label{sec:proofofmain}


Let $\{\psi_j\}_{j\geq 0}$ and $\{\lambda_j\}_{j\geq 0}$ be the eigenbasis and eigenvalues from Proposition~\ref{prop:discretespectrum}, with $\psi_0=\|1\|_A^{-1}$.  Expand the initial data as
\begin{equation}
u_0=\sum_{j=0}^\infty a_j\psi_j,
\qquad
u_1=\sum_{j=0}^\infty b_j\psi_j,
\end{equation}
with convergence in $H^1(\Omega)$ and $L_A^2(\Omega)$, respectively.  The solution of the wave equation then has the explicit form
\begin{align}
u(t)&=a_0\psi_0+b_0 t\,\psi_0+\sum_{j=1}^\infty \left( a_j\cos(\sqrt{\lambda_j}t)+b_j\frac{\sin(\sqrt{\lambda_j}t)}{\sqrt{\lambda_j}} \right)\psi_j.\label{eq:fullexpansion}
\end{align}
The $j=0$ term is precisely the generalized zero-mode contribution, and the sum over $j\geq 1$ is purely oscillatory.

Formula \eqref{eq:fullexpansion} follows directly from the spectral theorem for $L_0$.  Each eigenspace solves an ordinary differential equation
\begin{equation}
c_j''(t)+\lambda_j c_j(t)=0.
\end{equation}
For $j\geq 1$ this gives oscillatory behavior; for $j=0$ it gives the affine family $a_0+b_0t$.

\subsubsection*{Proof of the decomposition statement}

We now prove Theorem~\ref{thm:mainresult}.  Set
\begin{equation}
c_0:=\Pi_0u(0),
\qquad
c_1:=\Pi_0u_t(0),
\qquad
v(t):=(I-\Pi_0)u(t).
\end{equation}
By \eqref{eq:averageaffine},
\begin{equation}
\Pi_0u(t)=c_0+c_1t.
\end{equation}
Hence
\begin{equation}
u(t)=c_0+c_1t+v(t),
\end{equation}
which is the desired decomposition.  By construction,
\begin{equation}
\Pi_0v(t)=0,
\qquad
\Pi_0v_t(t)=0
\end{equation}
for all $t$.

It remains to show that $v$ is uniformly bounded in the natural energy norm.  Since $v$ evolves entirely in the orthogonal complement $\mathcal H_\perp$, its spectral expansion is
\begin{align}
v(t)&=\sum_{j=1}^\infty \left( a_j\cos(\sqrt{\lambda_j}t)+b_j\frac{\sin(\sqrt{\lambda_j}t)}{\sqrt{\lambda_j}} \right)\psi_j.\label{eq:vexpansion}
\end{align}
The corresponding energy is
\begin{equation}
E_0[v](t)=\frac12\sum_{j=1}^\infty
\left(
|b_j|^2+\lambda_j|a_j|^2
\right),
\end{equation}
which is independent of $t$.  By Proposition~\ref{prop:weightedPoincare}, $q_0[v]$ controls $\|v\|_{L^2}^2$, hence
\begin{equation}
\|v(t)\|_{H^1(\Omega)}^2+\|v_t(t)\|_{L^2(\Omega)}^2
\leq C E_0[v](t)
=
C E_0[v](0)
\end{equation}
for all $t$.  This proves the uniform boundedness estimate in Theorem~\ref{thm:mainresult}.

\subsubsection*{Uniqueness of the decomposition}

The decomposition from Theorem~\ref{thm:mainresult} is unique.  Indeed, suppose
\begin{equation}
u(t)=c_0+c_1t+v(t)=\tilde c_0+\tilde c_1t+\tilde v(t)
\end{equation}
with $\Pi_0v=\Pi_0v_t=\Pi_0\tilde v=\Pi_0\tilde v_t=0$.  Applying $\Pi_0$ yields
\begin{equation}
c_0+c_1t=\tilde c_0+\tilde c_1t
\end{equation}
for all $t$, hence $c_0=\tilde c_0$ and $c_1=\tilde c_1$.  Then $v=\tilde v$.  Thus the generalized zero-mode part and the bounded oscillatory part are intrinsically defined by the solution.

\subsubsection*{Proof of mode stability}

Item~(iv) of Theorem~\ref{thm:mainresult} is Proposition~\ref{prop:modeawayzero}.  Nevertheless it is worth restating its consequence in the language of evolution.

\begin{corollary}[No exponentially growing axisymmetric modes]\label{cor:noexpgrowth}
Under the conditions of Theorem~\ref{thm:mainresult}, there is no nontrivial axisymmetric solution of the form
\begin{equation}
u(t,r,x)=\ee^{\gamma t}\psi(r,x),
\qquad \gamma>0,
\end{equation}
satisfying the reflecting boundary conditions.
\end{corollary}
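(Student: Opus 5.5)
The plan is to argue by contradiction, using the conserved axisymmetric energy together with the decomposition of Theorem~\ref{thm:mainresult}. Alternatively, the corollary can be reduced directly to the spectral statement in Proposition~\ref{prop:modeawayzero}.

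Suppose $u=\ee^{\gamma t}\psi$ with $\gamma>0$ and $\psi\neq0$ is a solution satisfying the reflecting boundary conditions, with $\psi$ in the form domain. Writing $\gamma=-\ii\sigma$, we have $\sigma=\ii\gamma$, so $\operatorname{Im}\sigma=\gamma>0$ and $\sigma^2=-\gamma^2<0$. Substituting into \eqref{eq:axisymPDE-k0} gives $\Hh_0\psi=-\gamma^2A\psi$, that is, $L_0\psi=-\gamma^2\psi$, understood in the weak sense $q_0[\psi,\eta]=-\gamma^2\langle\psi,\eta\rangle_A$ for all admissible $\eta$.

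Testing with $\eta=\psi$ gives $0\le q_0[\psi]=-\gamma^2\|\psi\|_A^2\le0$. Hence $\|\psi\|_A=0$, and since $A\ge a_->0$ this forces $\psi=0$, a contradiction. This is exactly Proposition~\ref{prop:modeawayzero} applied with $\sigma=\ii\gamma$, since $\sigma^2=-\gamma^2\notin[0,\infty)$.

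For an evolution-language confirmation, note that $E_0[u](t)=\ee^{2\gamma t}E_0[\psi,\gamma\psi]$. This quantity is conserved by Theorem~\ref{thm:axisenergy}, so $E_0[\psi,\gamma\psi]=0$, which kills $\gamma\psi$ because $A>0$.

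The only point requiring care is regularity. The mode must lie in the energy class so that the weak pairing (equivalently, the energy identity) is legitimate. This is built into the phrase "satisfying the reflecting boundary conditions" via the form domain of Proposition~\ref{prop:selfadjointH0}, so no real obstacle remains.
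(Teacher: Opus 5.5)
Your proposal is correct and follows the paper's route: the paper's proof is exactly the reduction to Proposition~\ref{prop:modeawayzero} with $\sigma=\ii\gamma$, so that $\operatorname{Im}\sigma=\gamma>0$ and $\sigma^2=-\gamma^2\notin[0,\infty)$. Your extra energy-scaling check is a valid, redundant confirmation; it is the axisymmetric analogue of Proposition~\ref{prop:direct-no-uHP-full}.
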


\begin{proof}
Such a solution corresponds to the separated frequency $\sigma=\ii\gamma$, which satisfies $\operatorname{Im}\sigma>0$.  Proposition~\ref{prop:modeawayzero} therefore forces $\psi=0$.
\end{proof}

\subsubsection*{The simpler case when constants are removed}

If the boundary conditions do not admit the constant function, or if one imposes an explicit normalization such as $\Pi_0u=0$ and $\Pi_0u_t=0$ at $t=0$, then the generalized zero-mode space disappears.  In that case the conserved energy becomes fully coercive.

\begin{corollary}[Coercive boundedness after removing constants]\label{cor:coerciveafterprojection}
Assume the conditions of Theorem~\ref{thm:mainresult}.  If in addition either
\begin{equation}
\Pi_0u(0)=0,\qquad \Pi_0u_t(0)=0,
\end{equation}
or the boundary conditions eliminate constants from the domain of $L_0$, then
\begin{align}
\sup_{t\in\R}\Bigl(\|u(t)\|_{H^1(\Omega)}+\|u_t(t)\|_{L^2(\Omega)}\Bigr)&\le C\Bigl(\|u(0)\|_{H^1(\Omega)}+\|u_t(0)\|_{L^2(\Omega)}\Bigr).
\end{align}
\end{corollary}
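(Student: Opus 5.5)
The plan is to reduce both alternatives to the projected bound already proved in Theorem~\ref{thm:mainresult}(iii). In the first alternative, the normalization $\Pi_0u(0)=0$, $\Pi_0u_t(0)=0$ means that the affine coefficients $c_0=\Pi_0u(0)$ and $c_1=\Pi_0u_t(0)$ vanish. By the affine law \eqref{eq:averageaffine}, $\Pi_0u(t)=c_0+c_1t=0$ for all $t$, and similarly $\Pi_0u_t(t)=0$. Hence $u=v$ in the decomposition of Theorem~\ref{thm:mainresult}. The bound proved there, namely $\|v(t)\|_{H^1}^2+\|v_t(t)\|_{L^2}^2\le CE_0[v](0)$ from Proposition~\ref{prop:weightedPoincare} and conservation of $E_0$, is then exactly the claimed estimate. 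To finish we need $E_0[u](0)\le C(\|u(0)\|_{H^1}^2+\|u_t(0)\|_{L^2}^2)$, which follows from the upper bounds on $A,\Delta_r,\Delta_x$ on the slab. Square roots then give the stated inequality.

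The second alternative, where the boundary conditions exclude constants from the domain of $L_0$, needs a separate argument, because the hypotheses of Theorem~\ref{thm:mainresult} assumed constants are admitted. First I will show that $q_0$ is coercive on the form domain $V$ of the chosen realization: $\|u\|_{L^2}^2\le C q_0[u]$ for $u\in V$. I will use the compactness argument of Lemma~\ref{lem:appendix-weighted-poincare-general}. Take $u_n\in V$ with $\|u_n\|_{L^2}=1$ and $q_0[u_n]\to0$. Then $u_n$ is bounded in $H^1$, and Rellich gives a subsequence converging strongly in $L^2$ and weakly in $H^1$ to a limit $u_\infty$. Since $V$ is closed, $u_\infty\in V$, and $\nabla u_\infty=0$, so $u_\infty$ is constant on the connected slab. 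But constants are excluded, so $u_\infty=0$, contradicting $\|u_\infty\|_{L^2}=1$.

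The main obstacle is interpreting ``eliminate constants from the domain of $L_0$''. The compactness argument needs constants to be absent from the \emph{form} domain, since the operator domain could fail to contain $1$ while the form domain still does. For Dirichlet or genuine Robin conditions the form domain carries either a trace condition or a boundary term $\sum\alpha_i|u(r_i)|^2$ with $\alpha_i>0$; in either case $q_0[1]>0$ or $1\notin V$. I will state this reading explicitly, including the Robin boundary term in $q_0$ when it is present.

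Given coercivity, $\ker L_0=\{0\}$, and $L_0\ge\lambda_1>0$ by the compact-resolvent spectral theorem as in Proposition~\ref{prop:discretespectrum}. The spectral expansion \eqref{eq:fullexpansion} then has no $j=0$ affine term. The energy $E_0[u](t)=\tfrac12\sum_j(|b_j|^2+\lambda_j|a_j|^2)$ is conserved and, by coercivity, is equivalent to $\|u\|_{H^1}^2+\|u_t\|_{L^2}^2$. Conservation of this equivalent energy gives the uniform bound for all $t\in\mathbb R$, exactly as in the first alternative.
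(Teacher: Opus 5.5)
Your proposal is correct and is essentially the paper's proof. In the first alternative $c_0=c_1=0$, so $u=v$ and the bound in Theorem~\ref{thm:mainresult}(iii) is exactly the claim; in the second alternative $\Ker L_0=\{0\}$, the Poincar\'e/compactness argument runs without the projection, and conservation of $E_0$ gives the uniform bound.

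Your discussion of form domain versus operator domain, and of the sign of a Robin term, spells out what the paper's one-line ``the same Poincar\'e argument applies'' leaves implicit. The worry also resolves itself, because $L_0$ is nonnegative. For $q\ge 0$, the condition $q[u]=0$ already forces $u\in\Ker L_0$. Hence $1\notin\Dom(L_0)$ automatically gives $q[1]>0$ whenever $1$ lies in the form domain.
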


\begin{proof}
In the first case $c_0=c_1=0$, so $u=v$.  In the second case $\Ker L_0=\{0\}$, and the same Poincare argument applies without projection.  The conclusion then follows from the conserved energy.
\end{proof}

\subsubsection*{A unitary reformulation}

It is useful to rephrase the proof in semigroup language.  On the mean-zero subspace $\mathcal H_\perp$, the operator $L_{0,\perp}$ is positive self-adjoint.  Therefore
\begin{align}
\mathcal U(t)&:=\begin{pmatrix} \cos(t\sqrt{L_{0,\perp}}) & L_{0,\perp}^{-1/2}\sin(t\sqrt{L_{0,\perp}})\\[0.4em] -\sqrt{L_{0,\perp}}\sin(t\sqrt{L_{0,\perp}}) & \cos(t\sqrt{L_{0,\perp}}) \end{pmatrix}
\end{align}
defines a unitary group on the energy space
\begin{equation}
H^1_\perp(\Omega)\times L_A^2{}_\perp(\Omega),
\end{equation}
where the subscript $\perp$ denotes $L_A^2$-orthogonality to constants.  The entire stability statement of this paper may therefore be summarized as follows: \emph{the $k=0$ axisymmetric conformal dynamics are unitary on the orthogonal complement of the explicit two-dimensional generalized zero-mode space}.

This is the precise bounded-slab counterpart of the more global mantra in black-hole stability theory: once one isolates the finite-dimensional space of zero modes and gauge modes, the remaining evolution is controlled by a positive energy and a self-adjoint or approximately self-adjoint dynamics.

\subsubsection*{End of the proof of Theorem~\ref{thm:mainresult}}

\begin{proof}[Proof of Theorem~\ref{thm:mainresult}]
Items~(i)-(iv) of Theorem~\ref{thm:mainresult} are Proposition~\ref{prop:selfadjointH0}, Theorem~\ref{thm:generalizedzeromodes}, the decomposition and boundedness argument above, and Proposition~\ref{prop:modeawayzero}.  The uniqueness paragraph above proves that the affine threshold part and the oscillatory complement are intrinsically determined by the solution.  The conserved spectral energy and Proposition~\ref{prop:weightedPoincare} give the claimed uniform estimate on the threshold complement.  Corollary~\ref{cor:noexpgrowth} records the corresponding no-growing-mode statement, and Corollary~\ref{cor:coerciveafterprojection} gives the normalized/coercive variant.  Thus items~(ii)-(iii) are the explicit axisymmetric realization of Theorem~\ref{thm:abstractmain}, while item~(i) and the resolvent analysis sharpen that abstract theorem by exploiting self-adjointness.  This proves every assertion of Theorem~\ref{thm:mainresult}.
\end{proof}

\section{Axisymmetric black-hole exterior after red-shift and mode stability}\label{sec:nonaxisymmetric}

The bounded-slab theorem is a compact positive-energy result.  This section records the exterior problem and its notation, while Section~\ref{sec:ledger} contains the complete exterior ledger, the concrete verification for the slowly rotating weakly charged Kerr-Newman wall family, the full asymptotically flat Kerr-Newman verification based on scalar-wave stability, and the proof of the black-hole theorem from the verified package.  This separation is intentional.  The red-shift estimate, the limiting absorption principle, high-frequency propagation, and the zero-frequency expansion are geometry-dependent exterior estimates; they are not formal consequences of the compact bounded-slab argument.  Theorem~\ref{thm:axisymmetric-bh} is therefore unconditional for the stated Kerr-Newman wall family, conditional on the cited scalar-wave theorem for the full asymptotically flat subextremal Kerr-Newman scalar exterior, and an abstract exterior theorem for any further axisymmetric black-hole exterior for which the same package is later verified.  Extremal Kerr-Newman belongs to a separate branch: its horizon is degenerate, so Section~\ref{sec:ledger} proves an Aretakis-type charge and obstruction instead of applying the nondegenerate package.


Let $\mathcal M_{\rm ext}$ be the domain of outer communications of a stationary axisymmetric member of the $k=0$ family, restricted to the axisymmetric sector $\partial_\phi u=0$.  The radial interval may now have horizon endpoints, noncompact ends, or conformal radial boundaries.  The basic evolution is still
\begin{equation}
\Box_g u=0,
\end{equation}
but the compact energy proof from Section~\ref{sec:strictstationary} no longer applies by itself: the Killing energy degenerates at horizons, compact Sobolev embedding is lost at infinity, and real-frequency threshold states can occur.

We write $U=(u,\partial_tu)$ and denote by $\mathcal G_{\rm bh}$ the closed generator of the axisymmetric exterior evolution on the Hilbert scale $\mathcal E^s_{\rm bh}$ introduced in Assumption~\ref{ass:axisymmetric-bh-package}.  The local-energy norm $LE^s$ contains three pieces: red-shift control near each horizon, a compact interaction-region norm, and the outgoing, dissipative, or reflecting component dictated by the asymptotic boundary condition.  The exact norm is part of the exterior package because it depends on the chosen end and on the trapping structure.

For the time-Fourier transformed problem we use the notation
\begin{equation}
P(\sigma)\widehat u(\sigma)=\widehat F(\sigma),
\end{equation}
where $P(\sigma)$ is obtained from $\Box_g$ after replacing $D_t$ by $\sigma$.  The horizon condition is regular ingoing behavior in a horizon-regular chart.  The radial-end condition is the asymptotic condition in \textbf{BH2}.  In the axisymmetric sector the horizon frequency $\sigma-m\Omega_h$ reduces to $\sigma$, so the only possible superradiant threshold left in this theorem is the zero-frequency threshold.


Section~\ref{sec:ledger} does four things.  First, it states the definitive bounded-slab and exterior hypotheses used by the main results.  Second, it records how the exterior package \textbf{BH1}-\textbf{BH9} is verified rather than assumed in the two Kerr-Newman applications.  Third, it proves \textbf{BH1}-\textbf{BH9} for the slowly rotating weakly charged Kerr-Newman wall exterior by red-shift, mode-stability, nontrapping, limiting-absorption, and zero-frequency arguments.  Fourth, it verifies \textbf{BH1}-\textbf{BH9} for the full subextremal asymptotically flat Kerr-Newman scalar exterior by combining the scalar Kerr-Newman stability theorem with the direct geometry, radiation-domain, and zero-frequency proofs written below, proves the separate extremal Kerr-Newman horizon charge and obstruction theorem, and then proves Theorem~\ref{thm:axisymmetric-bh} from the nondegenerate package.  

The proof in Section~\ref{sec:ledger} follows the same algebraic skeleton as the compact result,
\begin{equation}
\text{solution}=\text{threshold part}+\text{controlled dispersive part},
\end{equation}
but the analytic tools are different.  The finite-dimensional threshold projection is the low-frequency data-space projection specified in \textbf{BH8}; when zero is embedded at the edge of the continuous spectrum it should not be confused with an ordinary isolated spectral Riesz projection.  The dispersive complement is controlled by red-shift/local-energy estimates, real-axis limiting absorption, high-frequency propagation estimates, and the absence of outgoing upper-half-plane modes.  The local decay statement is then a Riemann-Lebesgue consequence of the local absolutely continuous spectral representation supplied by \textbf{BH7}.

\section{Radial asymptotics and geometric boundary conditions}\label{sec:asymptotics}


When $k=0$, the radial coefficient is
\begin{equation}
\Delta_r(r)=\alpha_2r^2+\alpha_1r+\alpha_0.
\end{equation}
This is the decisive simplification of the zero-curvature regime.  The large-$r$ geometry is no longer governed by quartic growth but by a quadratic polynomial whose sign and discriminant determine the available radial ends.

Three basic possibilities occur.

\begin{enumerate}[label=(\roman*)]
  \item \emph{Kerr-like or weakly confining end:} $\alpha_2>0$.  Then $\Delta_r(r)\sim \alpha_2r^2$ for large $r$, and the metric coefficients have the same qualitative scale as in standard Carter geometries.
  \item \emph{Borderline linear/constant end:} $\alpha_2=0$.  Then the radial structure is determined by the lower-order terms and may lead to a finite interval or an asymptotically cylindrical-type end.
  \item \emph{Finite outer boundary:} $\alpha_2<0$.  In this case $\Delta_r$ cannot stay positive for all large $r$, so any regular exterior patch must terminate at a finite outer root.
\end{enumerate}

The bounded-slab theorem of this paper is intentionally formulated so that it is insensitive to which of these cases occurs globally.  The theorem uses only positivity of $\Delta_r$ on the chosen interval.  Nonetheless, the classification is useful because it explains why there can be no one-size-fits-all exterior theorem in the $k=0$ family without additional geometric conditions.

\subsubsection*{\texorpdfstring{Large-$r$ asymptotics of the radial ODE}{Large-r asymptotics of the radial ODE}}

For fixed separated parameters $(\sigma,m,\lambda)$, the radial equation is
\begin{align}
\frac{\dd}{\dd r}\Bigl(\Delta_r R'\Bigr) +\left[\frac{((r^2+a^2)\sigma-am)^2}{\Delta_r}-\lambda\right]R&=0.
\end{align}
If $\alpha_2>0$, then at large $r$
\begin{equation}
\frac{((r^2+a^2)\sigma-am)^2}{\Delta_r}
=
\frac{\sigma^2}{\alpha_2}r^2 + O(r),
\end{equation}
so the leading asymptotics are oscillatory or exponential depending on the sign of $\sigma^2/\alpha_2$ and on the way one chooses a radial phase function.  In the Kerr corner $\alpha_2=1$, and one recovers the familiar massless-wave large-$r$ behavior.  If $\alpha_2=0$, the asymptotics are more delicate and depend on the lower-order structure of $\Delta_r$.  These variations are precisely why the present paper isolates the bounded-domain problem before discussing unbounded ends.

\subsubsection*{\texorpdfstring{Boundary conditions in the $k=0$ problem}{Boundary conditions in the k=0 problem}}

In the zero-curvature regime, the radial boundary conditions that arise in the analysis fall into four natural classes.

The first class consists of reflecting boundary conditions on bounded intervals.  This is the compact, positive-energy setting used in the main theorem.  In this case the radial operator is realized through a closed symmetric form, and the resulting problem has the self-adjoint structure needed for the threshold decomposition and energy argument.

The second class consists of regularity conditions at a simple zero of $\Delta_r$.  These endpoints model horizon-type boundaries.  The admissible solutions are selected not by imposing an artificial boundary value, but by requiring regular extension in a horizon-regular chart.  Equivalently, in the separated radial equation this selects the appropriate Frobenius branch; this point is analyzed in Section~\ref{sec:endpoints}.

The third class consists of decay or radiation conditions at an unbounded Kerr-like end.  These conditions belong to the scattering problem rather than to the compact spectral problem.  In particular, they remove the spatially constant mode from the finite-energy radiation space, and the zero-frequency analysis must then be formulated in terms of outgoing resolvents rather than compact self-adjoint spectral theory.

The fourth class consists of mixed conditions on truncated exterior regions.  These are useful in numerical work and in perturbative approximations to exterior black-hole problems.  Typically one combines a regular or horizon-compatible condition at the inner boundary with a reflecting, absorbing, or approximate radiation condition at the outer boundary.  Such problems are analytically meaningful, but their stability properties depend on the precise realization and should not be confused with the closed reflecting slab used in the main theorem.

The essential point is that linear stability is never a statement about the differential operator alone.  It is always a statement about the operator together with the boundary geometry.

\subsubsection*{\texorpdfstring{A $k=0$ boundary dictionary}{A k=0 boundary dictionary}}

For later reference, it is useful to keep the main radial regimes separate.

On a bounded radial interval with reflecting ends, the problem is a compact self-adjoint spectral problem.  This is the setting of the exact bounded-slab theorem proved in Sections~\ref{sec:energy}-\ref{sec:proofofmain}.  The closed reflecting form provides the correct functional framework, the constant mode is retained, and the threshold space can be identified explicitly.

When an endpoint is a simple zero of $\Delta_r$, the radial equation has a horizon-type regular singular point.  The correct boundary condition is then not a Dirichlet or Neumann prescription imposed by hand, but regularity in a horizon-adapted coordinate system.  In separated variables this distinction appears through the Frobenius exponents, which separate the ingoing and outgoing branches.

At an unbounded radial end with $\alpha_2>0$, the geometry has Kerr-like or weakly confining asymptotics.  This is no longer a compact spectral problem.  Exterior boundedness must be formulated in a radiation or scattering space, and one must impose the appropriate decay or outgoing condition at infinity.  In particular, the constant spatial mode is no longer part of the finite-energy radiation space.

If instead the outer radial endpoint is a finite root with $\alpha_2<0$, the geometry naturally produces a truncated exterior region.  In that case the analysis depends on the physical and functional choice made at the outer boundary.  Reflecting, absorbing, or transmissive realizations lead to genuinely different problems, and the stability statement must specify which realization is being used.

Thus the bounded-slab theorem is universal only within the closed reflecting slab framework.  Once one passes to an exterior problem, the radial end becomes part of the theorem: the boundary condition, radiation condition, or horizon regularity assumption is not a technical afterthought, but a structural ingredient of the stability theory.

\section{Near-horizon behavior and endpoint regularity}\label{sec:endpoints}


Let $r_h$ be a simple zero of $\Delta_r$.  The coefficient of the radial derivative in \eqref{eq:radialeq-again} then vanishes linearly, so the ODE has a regular singular point.  Writing
\begin{equation}
\Delta_r(r)=\kappa_h(r-r_h)+O((r-r_h)^2),
\end{equation}
and inserting a Frobenius ansatz $R\sim (r-r_h)^\nu$ yields the indicial equation
\begin{equation}
\kappa_h^2\nu^2 + ((r_h^2+a^2)\Omega-am)^2 = 0,
\end{equation}
whence the exponents
\begin{equation}
\nu_\pm = \pm \ii\frac{(r_h^2+a^2)\Omega-am}{\kappa_h}.
\label{eq:frobeniusexponents}
\end{equation}
These exponents are purely imaginary for real $\Omega$, corresponding to oscillatory behavior in the logarithmic variable $\log|r-r_h|$.

The same computation underlies the usual distinction between ingoing and outgoing modes.  If one introduces a tortoise-type coordinate $r_*$ satisfying $\dd r_*/\dd r\sim 1/\Delta_r$, then the two local branches become $\ee^{\mp \ii((r_h^2+a^2)\Omega-am)r_*}$.  Their fluxes have opposite signs.  This is the horizon structure one expects from a separated scalar wave equation on a stationary axisymmetric background.

\subsubsection*{Regularity in horizon-penetrating coordinates}

The singular oscillation in the coordinate $r$ is only a coordinate effect.  Introducing an ingoing coordinate
\begin{equation}
v=t+r_*,
\qquad
\widetilde\phi=\phi+\Omega_h r_*,
\qquad
\Omega_h=\frac{a}{r_h^2+a^2},
\end{equation}
transforms the ingoing branch into a function smooth in $(v,\widetilde\phi,r)$, while the outgoing branch remains singular across the future horizon.  Thus the geometric boundary condition at a future horizon is regularity of the mode in the ingoing chart, equivalently selection of the ingoing Frobenius exponent.

This observation matters even though the present paper focuses mainly on bounded slabs.  It shows that the reduced conformal problem is compatible with the usual causal interpretation of horizon boundary conditions.  Any future global theory for Theorem~1 exteriors with horizons should therefore be able to import the standard toolkit of red-shift and horizon-regularity arguments developed for scalar waves on Kerr-type backgrounds.

\subsubsection*{Angular endpoint behavior}

At angular endpoints the behavior depends on whether $\Delta_x$ stays positive or vanishes.  If $\Delta_x(x_\pm)>0$, then the angular equation is regular and there is no singular behavior.  If $\Delta_x$ has a simple zero at an endpoint $x_\pm$, then the angular equation develops a regular singular point.  In many geometrically regular axisymmetric models one finds local behavior of the form
\begin{equation}
S(x)\sim (x-x_\pm)^{|m|/2}\times(\text{smooth}),
\end{equation}
which is the familiar polar regularity of azimuthal modes.  The exact exponent depends on the chosen angular normalization, but the qualitative point is universal: regularity suppresses the singular branch.

Since the Theorem~1 family is broader than the standard Kerr family, we do not impose a single universal endpoint model.  Instead we retain the endpoint convention in \textbf{H7} of Assumption~\ref{ass:complete-list}, which asks only that the full mode be smooth in any regular endpoint chart available in the chosen global realization of the metric.  This flexible formulation is sufficient for all separation and energy arguments in the paper.

\subsubsection*{Compatibility of time-domain and mode-theoretic boundary conditions}

One advantage of having both time-domain and separated formulations is that each clarifies the other.  Reflecting boundary conditions in the time domain correspond to vanishing separated Wronskian at the relevant endpoint.  Ingoing horizon conditions correspond to selecting the Frobenius branch with nonnegative flux into the horizon.  Regularity at angular endpoints corresponds to the vanishing of the angular Wronskian.  Thus the language of energy flux and the language of mode asymptotics are entirely consistent.

This compatibility is important because many stability questions are naturally formulated in the time domain, while many mode-theoretic questions are most transparent in the separated picture.  The reduced conformal problem is analytically tractable precisely because it allows one to move freely between these two viewpoints.

\section{Black-hole package, Kerr-Newman verifications, and exterior proofs}\label{sec:ledger}

This section is the main-body verification and assembly ledger for the exterior results.  The formal bounded-slab hypotheses, the black-hole package \textbf{BH1}-\textbf{BH9}, the full Kerr-Newman external scalar scattering package, the Aretakis input, and the mutually exclusive Kerr-Newman regimes are collected in Appendix~\ref{app:all-inputs}.  The subsections below do not restate that list; they verify it for the finite-wall family, invoke External Assumption~\ref{ass:external-kn-scattering-package} for the full subextremal radiation problem, prove the extremal horizon-charge obstruction, and then assemble the exterior main theorems.  The wall verification is restricted to a subphoton finite collar; the full asymptotically flat verification is restricted to the neutral axisymmetric scalar sector; the extremal result is an obstruction theorem rather than a nondegenerate decay theorem.

\subsection{Axis regularity for the Kerr-Newman spherical variables}\label{subsec:axis-regularity-kn}

The Kerr-Newman applications use the true rotational axes \(\theta=0,\pi\).  The bounded Carter slab theorem avoids axes by assuming \(\Delta_x>0\) in an interior product patch, but the black-hole exterior applications are spherical and must use the regular axis domain.  We record the exact functional convention here so that no hidden boundary term at the poles is used later.

\begin{definition}[Axisymmetric Sobolev space on the sphere]\label{def:axisymmetric-sphere-space}
For an integer \(s\ge0\), define
\begin{equation}
H^s_{\rm ax}(\mathbb S^2)
:=\{f\in H^s(\mathbb S^2):\partial_\phi f=0\}.
\end{equation}
Equivalently, \(f\in H^s_{\rm ax}(\mathbb S^2)\) is represented by a function of \(\theta\) only whose pullback to the smooth sphere is an \(H^s\)-function.  The first-order norm is equivalent to
\begin{equation}
\|f\|_{H^1_{\rm ax}(\mathbb S^2)}^2
\simeq
\int_0^\pi \bigl(|f(\theta)|^2+|\partial_\theta f(\theta)|^2\bigr)\sin\theta\,\dd\theta.
\end{equation}
\end{definition}

\begin{lemma}[No pole boundary term for regular axisymmetric functions]\label{lem:no-axis-boundary-term}
If \(u,v\in H^1_{\rm ax}(\mathbb S^2)\) and
\begin{equation}
\mathcal L_{\mathbb S^2}^{\rm ax}u
:=\frac1{\sin\theta}\partial_\theta(\sin\theta\,\partial_\theta u)
\end{equation}
is interpreted distributionally, then
\begin{equation}
\int_0^\pi
\mathcal L_{\mathbb S^2}^{\rm ax}u\,\overline v\,\sin\theta\,\dd\theta
=-\int_0^\pi \partial_\theta u\,\overline{\partial_\theta v}\,\sin\theta\,\dd\theta
\label{eq:axis-green-identity}
\end{equation}
whenever the left side is defined by duality.  In particular, smooth axisymmetric functions on \(\mathbb S^2\) satisfy
\begin{equation}
\lim_{\theta\downarrow0}\sin\theta\,\partial_\theta u(\theta)=
\lim_{\theta\uparrow\pi}\sin\theta\,\partial_\theta u(\theta)=0,
\end{equation}
and no boundary term is present at either pole.
\end{lemma}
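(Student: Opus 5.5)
The plan is to prove the identity first for smooth axisymmetric functions and then pass to the stated regularity by density and duality.

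For a smooth function \(u\) on \(\mathbb S^2\) with \(\partial_\phi u=0\), the pullback to \((\theta,\phi)\) is smooth in Cartesian coordinates near each pole. Near \(\theta=0\) we have \(u=F(\cos\theta)\) with \(F\) smooth, or equivalently \(u\) is a smooth even function of \(\theta\). Hence \(\partial_\theta u=-\sin\theta\,F'(\cos\theta)=O(\theta)\), and therefore \(\sin\theta\,\partial_\theta u=O(\theta^2)\to0\). The same argument at \(\theta=\pi\) gives the displayed limits.

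With the limits in hand, integrate \(\partial_\theta(\sin\theta\,\partial_\theta u)\,\overline v\) over \([\varepsilon,\pi-\varepsilon]\) by parts. This gives the right side of \eqref{eq:axis-green-identity} plus the boundary terms \([\sin\theta\,\partial_\theta u\,\overline v]_\varepsilon^{\pi-\varepsilon}\). Since \(v\) is smooth and therefore bounded, these boundary terms tend to zero, which proves \eqref{eq:axis-green-identity} for smooth \(u,v\). Alternatively, one can note that \(\mathcal L^{\rm ax}_{\mathbb S^2}\) is the restriction of the Laplace–Beltrami operator and apply the boundaryless Green identity on the closed manifold \(\mathbb S^2\), using \(\dd\omega=\sin\theta\,\dd\theta\,\dd\phi\) and dividing by \(2\pi\). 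This second route gives an independent check with no pole term.

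For general \(u,v\in H^1_{\rm ax}(\mathbb S^2)\), interpret the left side as the \(H^{-1}\)–\(H^1\) pairing of the distributional Laplacian. Two facts are then needed. First, smooth axisymmetric functions are dense in \(H^1_{\rm ax}\): average in \(\phi\) any smooth approximating sequence in \(H^1(\mathbb S^2)\). This averaging commutes with \(\partial_\phi\) and does not increase the \(H^1\) norm. Second, both sides of \eqref{eq:axis-green-identity} are continuous bilinear forms on \(H^1_{\rm ax}\times H^1_{\rm ax}\), because \(\Delta_{\mathbb S^2}:H^1\to H^{-1}\) is bounded.

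Given these, the identity extends from smooth functions by continuity. The duality interpretation of the left side is in fact defined by exactly this extension, so the statement is consistent.

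The main point needing care is the pole behaviour. The regularity argument must use smoothness in the Cartesian chart, not merely in \(\theta\). For \(H^1\) functions, pointwise limits of \(\sin\theta\,\partial_\theta u\) need not exist, which is why the general statement is made only through the duality pairing and density, not through pointwise traces.
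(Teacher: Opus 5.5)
Your proposal is correct and follows essentially the paper's route. Both arguments first prove the identity for smooth axisymmetric functions: the paper leads with the boundaryless Green identity on \(\mathbb S^2\) and reads off the vanishing of \(\sin\theta\,\partial_\theta u\) at the poles from Cartesian smoothness, while you lead with the \(\varepsilon\)-truncated integration by parts and keep Green's identity as a check. Both then extend to \(H^1_{\rm ax}(\mathbb S^2)\) by density and continuity of the \(H^1\) pairing; the only real difference is that you average smooth approximants over \(\phi\), whereas the paper uses spherical convolution or zonal-harmonic truncation, and either device yields smooth axisymmetric approximants converging in \(H^1\).
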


\begin{proof}
For smooth axisymmetric functions on \(\mathbb S^2\), the identity is the global Green identity for the Laplace-Beltrami operator on the compact boundaryless manifold \(\mathbb S^2\):
\begin{equation}
\int_{\mathbb S^2}\Delta_{\mathbb S^2}u\,\overline v\,\dd\omega
=-\int_{\mathbb S^2}\nabla_{\mathbb S^2}u\cdot\overline{\nabla_{\mathbb S^2}v}\,\dd\omega.
\end{equation}
Restricting to \(\partial_\phi u=\partial_\phi v=0\) gives \eqref{eq:axis-green-identity}.  The boundary expression \(\sin\theta\,\partial_\theta u\) must therefore have zero endpoint contribution.  Smoothness in Cartesian coordinates near each pole gives \(u(\theta)=u(0)+O(\theta^2)\) near \(0\) and \(u(\theta)=u(\pi)+O((\pi-\theta)^2)\) near \(\pi\), so the displayed limits vanish separately.  For \(H^1_{\rm ax}\) functions, approximate by smooth axisymmetric functions using spherical convolution, or equivalently truncate the spherical-harmonic expansion to the zonal harmonics \(Y_{\ell0}\).  Passing to the limit in the \(H^1\)-pairing proves the distributional identity.
\end{proof}

\begin{lemma}[Legendre decomposition of the regular axisymmetric angular operator]\label{lem:legendre-regular-axis}
The self-adjoint realization of \(-\mathcal L_{\mathbb S^2}^{\rm ax}\) on \(L^2((0,\pi),\sin\theta\,\dd\theta)\) with the regular-axis domain has eigenfunctions \(P_\ell(\cos\theta)\), \(\ell\ge0\), and eigenvalues \(\ell(\ell+1)\).  Thus every regular axisymmetric finite-energy function has the expansion
\begin{equation}
f(\theta)=\sum_{\ell=0}^\infty f_\ell P_\ell(\cos\theta),
\end{equation}
with convergence in the corresponding Sobolev space.
\end{lemma}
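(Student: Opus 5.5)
I would prove Lemma~\ref{lem:legendre-regular-axis} by identifying the regular-axis realization with the restriction of the Laplace--Beltrami operator on \(\mathbb S^2\) to the zonal subspace. Then I would import the classical spherical-harmonic spectral theorem.

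\emph{Step 1: the closed form.} Consider the form
\[
\mathfrak a[u,v]=\int_0^\pi \partial_\theta u\,\overline{\partial_\theta v}\,\sin\theta\,\dd\theta
\]
on \(H^1_{\rm ax}(\mathbb S^2)\). By Definition~\ref{def:axisymmetric-sphere-space} this form is closed, nonnegative, and densely defined on \(L^2((0,\pi),\sin\theta\,\dd\theta)\). That space is isometric, up to the factor \(2\pi\), to the closed zonal subspace of \(L^2(\mathbb S^2)\). Lemma~\ref{lem:no-axis-boundary-term} shows that the Friedrichs operator of \(\mathfrak a\) acts as \(-\mathcal L^{\rm ax}_{\mathbb S^2}\) with no pole boundary term. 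I take this Friedrichs operator as the regular-axis realization.

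This operator coincides with \(-\Delta_{\mathbb S^2}\) restricted to zonal functions, for two reasons. First, \(\partial_\phi\) commutes with \(\Delta_{\mathbb S^2}\), so the zonal subspace reduces the self-adjoint operator \(-\Delta_{\mathbb S^2}\). Second, the form of the reduced part is exactly \(\mathfrak a\) on \(H^1_{\rm ax}\).

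\emph{Step 2: the eigenfunctions.} I would verify directly that \(P_\ell(\cos\theta)\) satisfies the Legendre equation
\[
\frac{1}{\sin\theta}\partial_\theta\bigl(\sin\theta\,\partial_\theta P_\ell(\cos\theta)\bigr)=-\ell(\ell+1)P_\ell(\cos\theta).
\]
It is a polynomial in \(z=\cos\theta\), hence smooth on \(\mathbb S^2\), so it lies in the operator domain.

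\emph{Step 3: completeness.} I would use that \(\{P_\ell(z)\}_{\ell\ge0}\) is orthogonal and complete in \(L^2([-1,1],\dd z)\). Completeness follows from the Weierstrass density of polynomials together with Gram--Schmidt. Under \(z=\cos\theta\), with \(\dd z=\sin\theta\,\dd\theta\), the family is complete in \(L^2((0,\pi),\sin\theta\,\dd\theta)\). A self-adjoint operator with a complete orthogonal eigenbasis has pure point spectrum equal to the set of those eigenvalues. Hence the spectrum is \(\{\ell(\ell+1)\}_{\ell\ge0}\), each eigenvalue simple in the zonal sector.

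\emph{Step 4: convergence in Sobolev norm.} For \(f\in H^s_{\rm ax}\), the norm is equivalent to \(\sum_\ell (1+\ell(\ell+1))^{s}|f_\ell|^2\|P_\ell\|^2\). This is the graph/form-domain norm of \((1+L)^{s/2}\), where \(L\) is the realization from Step~1. The equivalence with the Sobolev norm uses elliptic regularity for \(\Delta_{\mathbb S^2}\) on the compact manifold. It follows that the partial sums converge in \(H^s_{\rm ax}\).

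\emph{Main obstacle.} The delicate point is Step~1: ensuring that the regular-axis domain is exactly the zonal part of the sphere's operator domain, rather than some other self-adjoint extension at the singular endpoints \(\theta=0,\pi\). I would settle this with the reduction argument above. As a backup, the endpoints are limit-circle for the Legendre operator; the regular Frobenius branch (exponent \(0\)) is selected by membership in \(H^1_{\rm ax}\), while the logarithmic branch fails to be in \(H^1\) with the \(\sin\theta\) weight.
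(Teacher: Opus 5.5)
Your proposal is correct and follows essentially the paper's route: the paper also identifies the regular-axis realization with the zonal (\(m=0\)) reduction of \(-\Delta_{\mathbb S^2}\), uses Lemma~\ref{lem:no-axis-boundary-term} to match it with the one-dimensional Sturm--Liouville expression, and reads off the expansion and the Sobolev convergence from the spectral theorem on the compact sphere. The only difference is that you prove completeness of \(\{P_\ell(\cos\theta)\}\) directly, via Weierstrass density of polynomials on \([-1,1]\), instead of importing completeness of the \(Y_{\ell m}\); this makes that step slightly more self-contained without changing the argument.
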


\begin{proof}
The full spherical Laplacian has eigenfunctions \(Y_{\ell m}\) with eigenvalues \(\ell(\ell+1)\).  The axisymmetric subspace is the closed invariant subspace \(m=0\), and \(Y_{\ell0}\) is a constant multiple of \(P_\ell(\cos\theta)\).  The spectral theorem on the compact smooth sphere gives the expansion and the Sobolev convergence.  Lemma~\ref{lem:no-axis-boundary-term} identifies this spectral realization with the one-dimensional regular-axis Sturm-Liouville expression.
\end{proof}

\subsection{Kerr and Reissner-Nordstr\"om as principal exterior applications}\label{subsec:kerr-rn-main-applications}

This subsection promotes the two classical one-parameter black-hole families to main applications.  The point of doing so is not to reprove the full scalar scattering theory for Kerr or Reissner-Nordstr\"om.  Instead, we isolate the exact conformal-sector consequences of those scalar theories, verify the zero-frequency threshold statement directly, and then apply the abstract exterior theorem.  The Schwarzschild endpoint is deliberately not placed here; it is treated in Appendix~\ref{app:schwarzschild-full-kn}.

For Kerr we write
\begin{equation}
 g_{M,a}=g_{M,a,0},
 \qquad
 \Delta_{\rm Kerr}=r^2-2Mr+a^2,
 \qquad
 r_\pm=M\pm\sqrt{M^2-a^2}.
\end{equation}
The subextremal genuine Kerr range used for the main theorem is \(0<|a|<M\).  The scalar problem in this paper is the neutral axisymmetric conformal scalar,
\begin{equation}
 \Box_{g_{M,a}}u=0,
 \qquad \partial_\phi u=0,
\end{equation}
and \(\mathcal E^s_{\rm Kerr}\) denotes the decaying red-shift/radiation Hilbert scale obtained from \(\mathcal E^s_{{\rm KN},{\rm af}}\) by setting \(Q=0\).

For Reissner-Nordstr\"om we write
\begin{equation}
\begin{aligned}
 g_{M,Q}&=-D(r)\dd t^2+D(r)^{-1}\dd r^2+r^2\dd\omega^2,\\
 D(r)&=1-\frac{2M}{r}+\frac{Q^2}{r^2},
 \qquad
 r_\pm=M\pm\sqrt{M^2-Q^2}.
\end{aligned}
\label{eq:rn-metric-mainbody}
\end{equation}
The genuine charged subextremal range is \(0<|Q|<M\).  Since the background is spherically symmetric and the scalar is neutral, the angular modes decouple and no axial symmetry restriction is needed in the Reissner-Nordstr\"om statement.  The space \(\mathcal E^s_{\rm RN}\) is the usual decaying red-shift/radiation energy space with a nondegenerate horizon norm and an asymptotically flat Hardy term at infinity.

\begin{theorem}[Verification of the exterior package for subextremal Kerr and Reissner-Nordstr\"om]\label{thm:kerr-rn-verifies-bh}
Assume External Assumption~\ref{ass:external-kerr-rn-packages}.  Then the following hold.
\begin{enumerate}[label=(\alph*),leftmargin=2.2em]
\item For every compact genuine subextremal Kerr parameter set \(\mathscr K^{\rm Kerr}_{\delta,M_0,M_1,a_*}\), the neutral axisymmetric conformal scalar on Kerr satisfies Assumption~\ref{ass:axisymmetric-bh-package} on \(\mathcal E^s_{\rm Kerr}\) for all \(s\ge s_{\rm Kerr}\).  The zero-frequency threshold in the decaying radiation space is trivial:
\[
 \mathcal T_{0,{\rm Kerr}}=\{0\},
 \qquad
 \Pi_{0,{\rm Kerr}}=0.
\]
\item For every compact genuinely charged subextremal Reissner-Nordstr\"om parameter set \(\mathscr K^{\rm RN}_{\delta,M_0,M_1,Q_*}\), the neutral scalar wave on Reissner-Nordstr\"om satisfies the same black-hole package, with the same threshold conclusion,
\[
 \mathcal T_{0,{\rm RN}}=\{0\},
 \qquad
 \Pi_{0,{\rm RN}}=0.
\]
In the Reissner-Nordstr\"om case the statement holds after summing over all spherical harmonics, not merely in the axisymmetric sector.
\end{enumerate}
\end{theorem}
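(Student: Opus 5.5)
The plan is to verify the items \textbf{BH1}--\textbf{BH9} of Assumption~\ref{ass:axisymmetric-bh-package} one at a time. Each item will be either imported from External Assumption~\ref{ass:external-kerr-rn-packages} or proved directly. The only item that needs a genuine internal argument is the zero-frequency threshold.

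\emph{Geometric and functional items.} First I check \textbf{BH1}--\textbf{BH3} and \textbf{BH9}. Here \(g_{M,a}=g_{M,a,0}\) and \(g_{M,Q}=g_{M,0,Q}\) are the specializations of the Kerr-Newman Carter form with \(k=0\), \(\Lambda=0\) and \(C_i=0\). For compact parameter sets \(\mathscr K^{\rm Kerr}_{\delta,M_0,M_1,a_*}\) and \(\mathscr K^{\rm RN}_{\delta,M_0,M_1,Q_*}\), the surface gravity \(\kappa_+=(r_+-r_-)/(2(r_+^2+a^2))\) is bounded below uniformly. This uniform nondegeneracy is what lets me invoke the external package with uniform constants.

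The remaining functional ingredients are as follows.
\begin{itemize}
\item The horizon-regular ingoing chart of Section~\ref{sec:endpoints} gives the closed generator on \(\mathcal E^s_{\rm Kerr}\) and \(\mathcal E^s_{\rm RN}\).
\item The decaying radiation condition at null infinity supplies the outer boundary condition.
\item Lemma~\ref{lem:no-axis-boundary-term} and Lemma~\ref{lem:legendre-regular-axis} show that the angular realization at the poles produces no boundary terms.
\end{itemize}
Cutoff compatibility (\textbf{BH9}) follows because the local-energy norm is defined on compact subregions away from \(r_+\) and infinity.

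\emph{Analytic items.} Next I import \textbf{BH4}--\textbf{BH7} directly from External Assumption~\ref{ass:external-kerr-rn-packages}. These are the red-shift estimate, integrated local energy decay with trapping loss, real-frequency mode stability, and limiting absorption together with the spectral-density representation. In the Kerr case the package is used in the axisymmetric sector, where \(\sigma-m\Omega_h=\sigma\), so no superradiant frequencies arise. In the Reissner--Nordstr\"om case I apply the spherical-harmonic estimates for each \(\ell\), with constants uniform in \(\ell\) as supplied by the package. Summing over \(\ell\) then gives the full, non-axisymmetric statement in part (b).

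\emph{Zero frequency (\textbf{BH8}).} This step is the main obstacle, and I handle it by the ODE classification of Proposition~\ref{prop:kn-zero-frequency-full} with \(Q=0\) or \(a=0\). A stationary axisymmetric solution is expanded in Legendre modes. Each mode \(R_\ell\) solves \(\partial_r(\Delta R_\ell')=\ell(\ell+1)R_\ell\), possibly with \(a^2\)-corrections only through \(\sigma=0\) terms, which vanish. This equation has a regular singular point at \(r_+\) with exponents \(0,0\), so horizon regularity forces the non-logarithmic branch. That branch is a polynomial-type solution growing like \(r^\ell\). Hence no nontrivial solution decays for \(\ell\ge1\).

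For \(\ell=0\) the same argument uses the flux. Since \(\Delta R_0'\) is constant, regularity at \(r_+\), where \(\Delta=0\), forces that constant to be zero, so \(R_0\) is constant. The constant is not in the decaying radiation space because of the Hardy term at infinity. This gives \(\Ker\mathcal G=\{0\}\).

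Generalized zero modes are excluded by an inductive version of the same argument. At each stage the source is a decaying stationary state, which has already been shown to be zero. Therefore \(\mathcal T_0=\{0\}\) and \(\Pi_0=0\). The delicate point is confirming that no resonance at \(\sigma=0\) survives in the limiting-absorption sense. For that I would rely on the uniform low-frequency resolvent bound in the external package, combined with this triviality of the kernel.
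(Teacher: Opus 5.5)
Your proposal is correct and follows the paper's proof essentially step by step. You obtain \textbf{BH1}--\textbf{BH3} and \textbf{BH9} from the nondegenerate horizon and radiation-domain setup (the paper simply cites Lemma~\ref{lem:kn-af-geometry-domain} at $Q=0$), import \textbf{BH4}--\textbf{BH7} from External Assumption~\ref{ass:external-kerr-rn-packages}, and get \textbf{BH8} from the zero-frequency Legendre classification ($Q_\ell$ excluded by horizon regularity, $P_\ell$ by growth, and the $\ell=0$ constant by the decaying/Hardy condition), with Jordan chains ruled out because $\Ker\mathcal G=\{0\}$. One small slip: Reissner--Nordstr\"om sits in the Carter family with $C_3=Q^2\neq0$, not with all $C_i=0$, though this does not affect the argument.
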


\begin{proof}
For Kerr, \(|a|<M\) gives a simple event-horizon root and positive surface gravity
\begin{equation}
 \kappa_+=\frac{r_+-r_-}{2(r_+^2+a^2)}>0.
\end{equation}
The future horizon is smooth in ingoing Kerr coordinates, and the outgoing end is the standard asymptotically flat radiation end.  Thus \textbf{BH1}-\textbf{BH3} and \textbf{BH9} are the same horizon-domain and cutoff statements as in Lemma~\ref{lem:kn-af-geometry-domain}, specialized to \(Q=0\).  The red-shift, integrated-local-energy, high-frequency, limiting-absorption, smoothing, spectral-density, and mode-stability pieces are supplied by the Kerr part of External Assumption~\ref{ass:external-kerr-rn-packages}.  This verifies \textbf{BH4}-\textbf{BH7}.

It remains to identify zero frequency.  At \(\sigma=0\), regular axisymmetric separation gives
\begin{equation}
 \bigl(\Delta_{\rm Kerr}R_\ell'\bigr)'-\ell(\ell+1)R_\ell=0.
\label{eq:kerr-zero-radial-main}
\end{equation}
With \(\alpha=\sqrt{M^2-a^2}\) and \(x=(r-M)/\alpha\), this is the Legendre equation
\begin{equation}
 \frac{\dd}{\dd x}\left((x^2-1)\frac{\dd R_\ell}{\dd x}\right)-\ell(\ell+1)R_\ell=0.
\end{equation}
The \(Q_\ell\)-branch is logarithmically singular at the horizon, while the regular \(P_\ell\)-branch grows like \(x^\ell\) for \(\ell\ge1\); the \(\ell=0\) regular branch is constant.  The decaying radiation energy at infinity excludes both the growing branches and the nonzero constant.  Hence no zero state remains.  The same integration or parameter-differentiation argument used in Lemma~\ref{lem:kn-af-zero-frequency} excludes a regular generalized zero state.  Thus \textbf{BH8} holds with \(\Pi_{0,{\rm Kerr}}=0\).

For Reissner-Nordstr\"om the proof is identical but simpler.  The metric \eqref{eq:rn-metric-mainbody} has positive surface gravity \(\kappa_+=(r_+-r_-)/(2r_+^2)\) in the subextremal range.  The external Reissner-Nordstr\"om scalar package gives the red-shift, local-energy, limiting-absorption, smoothing, spectral-density, and mode-stability estimates.  At \(\sigma=0\), the separated equation for every spherical harmonic is
\begin{equation}
 \bigl(\Delta_{\rm RN}R_\ell'\bigr)'-\ell(\ell+1)R_\ell=0,
 \qquad
 \Delta_{\rm RN}=r^2-2Mr+Q^2.
\end{equation}
The Legendre classification with \(\alpha=\sqrt{M^2-Q^2}\) again leaves only a horizon-regular growing branch for \(\ell\ge1\) and a horizon-regular constant for \(\ell=0\), all excluded by the decaying radiation energy at infinity.  Since the spherical harmonics diagonalize the angular operator, the conclusion sums over all angular modes.  Therefore the Reissner-Nordstr\"om package holds with trivial zero projection.
\end{proof}

\subsection{Concrete slowly rotating weakly charged Kerr-Newman wall exterior}\label{subsec:kn-wall}

We next verify the same black-hole package for a genuinely rotating and charged family.  The verification is intentionally made for a finite wall exterior below the photon region.  This is the largest Kerr-Newman statement that is proved here without importing the full asymptotically flat Kerr-Newman scattering theory: the horizon is real and nondegenerate, the metric is genuinely rotating when \(a\ne0\), the metric is genuinely charged when \(Q\ne0\), and every item \textbf{BH1}-\textbf{BH9} is checked inside this section for the neutral axisymmetric conformal scalar.

Fix \(M>0\) and
\begin{equation}
 2M<R_{\rm w}<\frac{8M}{3}.
\label{eq:kn-wall-range}
\end{equation}
There is a number \(\varepsilon_{\rm KN}=\varepsilon_{\rm KN}(M,R_{\rm w})>0\) such that the following argument applies whenever
\begin{equation}
 |a|+|Q|\le \varepsilon_{\rm KN}M,
 \qquad
 a^2+Q^2<M^2.
\label{eq:kn-smallness}
\end{equation}
Set
\begin{equation}
 \Delta=(r-r_+)(r-r_-)=r^2-2Mr+a^2+Q^2,
 \qquad
 r_\pm=M\pm\sqrt{M^2-a^2-Q^2},
\end{equation}
\begin{equation}
 \Sigma=r^2+a^2\cos^2\theta,
 \qquad
 \mathcal A=(r^2+a^2)^2-a^2\Delta\sin^2\theta.
\end{equation}
For \(\varepsilon_{\rm KN}\) small, \(r_+<2M<R_{\rm w}\), and the horizon is uniformly subextremal.  The standard Kerr-Newman metric is
\begin{equation}
\begin{aligned}
 g_{M,a,Q}={}&-\frac{\Delta-a^2\sin^2\theta}{\Sigma}\,\dd t^2
 -\frac{2a(2Mr-Q^2)\sin^2\theta}{\Sigma}\,\dd t\dd\phi
 +\frac{\Sigma}{\Delta}\,\dd r^2+\Sigma\,\dd\theta^2 \\
&\quad
 +\frac{\mathcal A\sin^2\theta}{\Sigma}\,\dd\phi^2.
\end{aligned}
\label{eq:kn-metric}
\end{equation}
It has scalar curvature zero, and it is the \(k=0\) Einstein-Maxwell Carter member.  The scalar field in this paper is neutral; the charge \(Q\) enters only through the background metric.  We restrict to \(\partial_\phi u=0\).  For axisymmetric functions the wave equation reduces exactly to
\begin{equation}
 \Sigma\Box_{g_{M,a,Q}}u
 =-A_{\rm KN}\partial_t^2u
 +\partial_r(\Delta\partial_ru)
 +\frac1{\sin\theta}\partial_\theta(\sin\theta\partial_\theta u),
\label{eq:kn-axisym-wave}
\end{equation}
where
\begin{equation}
 A_{\rm KN}(r,\theta)
 =\frac{\mathcal A}{\Delta}
 =\frac{(r^2+a^2)^2}{\Delta}-a^2\sin^2\theta>0
 \qquad (r>r_+).
\label{eq:kn-A-positive}
\end{equation}
The absence of the mixed \(t\)-\(\phi\) term in \eqref{eq:kn-axisym-wave} is the analytic reason why this section can avoid superradiance: the horizon frequency is \(\sigma-m\Omega_H=\sigma\) because \(m=0\).

\begin{proposition}[Axisymmetric regularity at the Kerr-Newman axis]\label{prop:kn-axis-regularity}
Let
\begin{equation}
H^s_{\rm ax}(S^2)=\{f\in H^s(S^2):\partial_\phi f=0\}.
\end{equation}
If \(u,v\in H^1_{\rm ax}(S^2)\) and \(u\) is additionally smooth for the displayed integration-by-parts identity, then
\begin{equation}
\int_0^\pi
\frac1{\sin\theta}\partial_\theta(\sin\theta\,\partial_\theta u)\,\overline v\,\sin\theta\,\dd\theta
=
-\int_0^\pi \partial_\theta u\,\overline{\partial_\theta v}\,\sin\theta\,\dd\theta.
\label{eq:axis-regular-ibp}
\end{equation}
No boundary term is present at \(\theta=0,\pi\).  Equivalently, the axisymmetric angular operator
\begin{equation}
-\frac1{\sin\theta}\partial_\theta(\sin\theta\partial_\theta)
\end{equation}
is the Friedrichs self-adjoint operator on \(L^2((0,\pi),\sin\theta\,\dd\theta)\) whose eigenfunctions are the axisymmetric spherical harmonics \(Y_{\ell 0}\).
\end{proposition}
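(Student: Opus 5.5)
The plan is to deduce Proposition~\ref{prop:kn-axis-regularity} almost directly from Lemmas~\ref{lem:no-axis-boundary-term} and~\ref{lem:legendre-regular-axis}; only the identification with the Friedrichs realization needs a separate argument.

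\textbf{Step 1: the integration-by-parts identity.} I would first prove \eqref{eq:axis-regular-ibp} for smooth axisymmetric \(u,v\). On \([\varepsilon,\pi-\varepsilon]\) integrate by parts once, so that the only new contribution is the boundary term \(\bigl[\sin\theta\,\partial_\theta u\,\overline v\bigr]_{\varepsilon}^{\pi-\varepsilon}\). Smoothness in Cartesian charts at the poles gives \(\partial_\theta u=O(\theta)\) near \(0\) and \(O(\pi-\theta)\) near \(\pi\), as recorded in Lemma~\ref{lem:no-axis-boundary-term}. Since \(v\) is bounded, this boundary term is \(O(\varepsilon^2)\) and vanishes as \(\varepsilon\to0\).

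\textbf{Step 2: extension to \(v\in H^1_{\rm ax}\).} Keep \(u\) smooth and approximate \(v\) by smooth zonal functions, for instance truncated Legendre series as in Lemma~\ref{lem:legendre-regular-axis}. Both sides of \eqref{eq:axis-regular-ibp} are continuous in \(v\) for the \(L^2(\sin\theta\,\dd\theta)\) and \(H^1_{\rm ax}\) pairings. The left side is continuous because \(\mathcal L^{\rm ax}_{\mathbb S^2}u\) is bounded, and the right side by Cauchy--Schwarz. This is already the content of Lemma~\ref{lem:no-axis-boundary-term}, so the identity follows.

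\textbf{Step 3: identification with the Friedrichs operator.} Take the closed nonnegative form \(\mathfrak a[u,v]=\int_0^\pi\partial_\theta u\,\overline{\partial_\theta v}\sin\theta\,\dd\theta\) on the form domain \(H^1_{\rm ax}(S^2)\). This form is closed because its form norm coincides with the \(H^1_{\rm ax}\) norm (Definition~\ref{def:axisymmetric-sphere-space}). Its associated operator is the restriction of \(-\Delta_{\mathbb S^2}\) to the \(\phi\)-invariant closed subspace, because \(-\Delta_{\mathbb S^2}\) commutes with rotations in \(\phi\). By Step 1, on smooth zonal functions this operator acts by \(-\frac1{\sin\theta}\partial_\theta(\sin\theta\partial_\theta)\). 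Smooth zonal functions are a core: the zonal harmonics \(Y_{\ell0}\) lie in them and span a dense subspace in the graph norm. The Friedrichs extension from this core therefore coincides with the form operator. Its eigenfunctions are \(Y_{\ell0}\propto P_\ell(\cos\theta)\) with eigenvalues \(\ell(\ell+1)\), by Lemma~\ref{lem:legendre-regular-axis}.

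\textbf{Main obstacle.} The one delicate point is uniqueness. The one-dimensional operator is limit-circle at both poles: near \(\theta=0\) the solutions behave like \(1\) and \(\log\theta\). So there are other self-adjoint realizations, and we must show that the regular-axis one is exactly the Friedrichs one. I would check that the form domain excludes the \(\log\theta\) branch. That branch has \(\int|\partial_\theta(\log\theta)|^2\sin\theta\,\dd\theta=\infty\), so it is not in \(H^1_{\rm ax}\). Hence the form domain selects the regular branch, which is precisely the Friedrichs (principal-solution) boundary condition.
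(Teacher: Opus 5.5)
Your proposal is correct and follows the paper's proof in substance. The pole boundary term vanishes for smooth zonal functions; the paper writes $u=U(\cos\theta)$, so $\sin\theta\,\partial_\theta u=-\sin^2\theta\,U'(\cos\theta)\to0$. The identity then extends to $H^1_{\rm ax}$ by density, and the eigenfunctions are identified with $P_\ell(\cos\theta)\propto Y_{\ell0}$, the paper via $x=\cos\theta$ and you via the zonal restriction of $-\Delta_{\mathbb S^2}$.

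Your limit-circle remark adds rigor the paper lacks. The $\log\theta$ branch has infinite form norm, so the $H^1_{\rm ax}$ form domain enforces the principal-solution (Friedrichs) condition $\sin\theta\,\partial_\theta u\to0$. The paper instead simply declares $H^1_{\rm ax}$ to be the Friedrichs form domain.
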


\begin{proof}
For smooth axisymmetric functions on \(S^2\), regularity at the poles means that \(u(\theta)=U(\cos\theta)\), with \(U\) smooth near \(x=\pm1\).  Hence
\begin{equation}
\sin\theta\,\partial_\theta u=-\sin^2\theta\,U'(\cos\theta)\to0
\qquad (\theta\to0,\pi).
\end{equation}
Thus the boundary term in the one-dimensional integration by parts vanishes and \eqref{eq:axis-regular-ibp} follows for smooth functions.  Density of smooth axisymmetric functions in \(H^1_{\rm ax}(S^2)\) extends the identity to the Friedrichs form domain.  In the variable \(x=\cos\theta\), the operator becomes \(-\partial_x((1-x^2)\partial_x)\), whose regular eigenfunctions are the Legendre polynomials, equivalently the \(Y_{\ell0}\).  This proves the proposition.
\end{proof}

Let
\begin{equation}
 \frac{\dd r_*}{\dd r}=\frac{r^2+a^2}{\Delta},
 \qquad
 \frac{\dd \phi_*}{\dd r}=\frac{a}{\Delta},
 \qquad
 v=t+r_*,
 \qquad
 \varphi=\phi+\phi_*,
 \qquad
 \tau=v-r.
\label{eq:kn-ingoing-coordinates}
\end{equation}
The coordinates \((v,r,\theta,\varphi)\) are smooth at the future horizon; \(\partial_\tau=\partial_t\).  The horizon angular velocity and surface gravity are
\begin{equation}
 \Omega_H=\frac{a}{r_+^2+a^2},
 \qquad
 \kappa_H=\frac{r_+-r_-}{2(r_+^2+a^2)}>0.
\label{eq:kn-horizon-constants}
\end{equation}
We work on the future collar
\begin{equation}
 \mathcal M_{M,a,Q,R_{\rm w}}
 =\{\tau\ge0,\\ r_+\le r\le R_{\rm w},\\ (\theta,\varphi)\in S^2\}.
\label{eq:kn-collar}
\end{equation}
At \(r=r_+\) we impose only smoothness in the future-horizon chart.  At the wall we impose the geometric Neumann condition associated with the conormal \(\dd r\):
\begin{equation}
 \mathcal N_{\rm w}u:=\partial_ru\big|_{t,\theta,\phi}=0
 \qquad (r=R_{\rm w}).
\label{eq:kn-wall-neumann}
\end{equation}
Equivalently, if \(H(r)=r_*(r)-r\), then in the \(\tau\)-chart
\begin{equation}
 \mathcal N_{\rm w}u=(\partial_r+H'(R_{\rm w})\partial_\tau)u=0,
 \qquad
 H'(r)=\frac{r^2+a^2}{\Delta}-1=\frac{2Mr-Q^2}{\Delta}.
\label{eq:kn-wall-neumann-tau}
\end{equation}
This wall condition is reflecting for the stationary energy current.  Indeed, the wall flux is a positive multiple of \(\Re(\partial_tu\,\overline{\partial_ru})\), hence vanishes under \eqref{eq:kn-wall-neumann}.

\begin{lemma}[Kerr-Newman geometry, horizon, wall, and energy scale]\label{lem:kn-geometry}
For \(\varepsilon_{\rm KN}\) sufficiently small, the Kerr-Newman wall family satisfies \textbf{BH1}-\textbf{BH3}.  The future horizon is nondegenerate, the wave operator is smooth in the ingoing chart, the wall condition \eqref{eq:kn-wall-neumann} is closed and reflecting, and the axisymmetric Cauchy problem has a closed generator \(\mathcal G_{\rm KN}\) on a Sobolev red-shift energy scale \(\mathcal E^s_{\rm KN}\).
\end{lemma}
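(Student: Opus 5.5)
The plan is to verify \textbf{BH1}--\textbf{BH3} by treating the wall family as a small perturbation of the Schwarzschild collar $\{2M\le r\le R_{\rm w}\}$, and to check each item directly in the ingoing chart \eqref{eq:kn-ingoing-coordinates}.

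\emph{Horizon nondegeneracy.} First, since $r_\pm=M\pm\sqrt{M^2-a^2-Q^2}$, the smallness \eqref{eq:kn-smallness} gives $r_+-r_-\ge 2M\sqrt{1-\varepsilon_{\rm KN}^2}$. Hence $\kappa_H$ in \eqref{eq:kn-horizon-constants} is bounded below uniformly, and $r_+<2M<R_{\rm w}$. Next, in the $(v,r,\theta,\varphi)$ coordinates I will write the inverse metric explicitly. The axisymmetric operator \eqref{eq:kn-axisym-wave} becomes
\[
\Sigma\Box_g u=\partial_r(\Delta\partial_ru)+2(r^2+a^2)\partial_v\partial_ru+2r\partial_vu+a^2\sin^2\theta\,\partial_v^2u+\mathcal L^{\rm ax}_{\mathbb S^2}u.
\]
In this form the $\Delta^{-1}$ singularities cancel, so the coefficients are smooth up to $r=r_+$. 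This gives the geometric half of \textbf{BH1}.

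\emph{Spacelike slices and the wall condition.} I will check that the slices $\tau=\mathrm{const}$ are uniformly spacelike on $[r_+,R_{\rm w}]$. It suffices to show $g^{-1}(\dd\tau,\dd\tau)<0$, which is a perturbation of the Schwarzschild value $-(1+2M/r)\cdot$(positive) and so persists for small $a,Q$. For the wall, \eqref{eq:kn-wall-neumann-tau} is a conormal Neumann condition. The boundary $r=R_{\rm w}$ is timelike because $\Delta(R_{\rm w})>0$, and the stationary energy flux through it vanishes, as noted after \eqref{eq:kn-wall-neumann-tau}. This gives \textbf{BH2}.

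\emph{Closed generator on the energy scale.} I will define $\mathcal E^s_{\rm KN}$ as $H^{s+1}\times H^s$ on the slice $\{\tau=0\}$, using axisymmetric Sobolev spaces in angle as in Definition~\ref{def:axisymmetric-sphere-space}. Lemma~\ref{lem:no-axis-boundary-term} removes any pole boundary terms. Existence of a closed generator then follows from standard mixed-problem theory for a hyperbolic operator with a spacelike initial slice, a spacelike outflow boundary at the horizon (where no condition is imposed), and a timelike boundary with a Neumann condition that satisfies a Lopatinskii/energy condition. I will obtain the energy estimate at level $s=0$ with the multiplier $T+\epsilon Y$: here $T=\partial_\tau$ and $Y$ is a red-shift vector field supported near $r_+$, as in Dafermos--Rodnianski. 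The positivity of $\kappa_H$ makes the bulk term near the horizon positive, and the error elsewhere is compactly supported and controlled by Gronwall. This yields exponential-in-time bounds and therefore generation of a $C_0$-semigroup, via the Lumer--Phillips theorem after a shift. Higher $s$ follows by commuting with $T$, with the red-shift commutator $Y$, and with the angular operator, which commutes with the $m=0$ reduction; elliptic regularity away from the horizon recovers the remaining $r$-derivatives. This gives \textbf{BH3}.

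The main obstacle I expect is the wall: the $\tau$-slices meet the timelike wall at a corner. Showing that the Neumann condition is compatible with the red-shift energy and with closedness at higher $s$, including the corner compatibility conditions, needs care. My first approach will be to cut off $Y$ away from the wall, so that near $r=R_{\rm w}$ the estimate uses only the Killing field $T$. There the boundary flux vanishes exactly, so the wall contributes nothing. The commutators for higher regularity will then use only $T$ near the wall, with normal derivatives recovered from the equation.
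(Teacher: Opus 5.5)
Your proposal follows essentially the same route as the paper's proof. The shared steps are: simplicity of the root and $\kappa_H>0$; cancellation of the $\Delta^{-1}$ terms in the ingoing chart; a red-shift multiplier near $r_+$ glued to the stationary axisymmetric energy; a timelike wall whose conormal Neumann condition kills the $\partial_t$-flux; and standard energy theory on the compact collar for the generator. Your explicit ingoing form of $\Sigma\Box_g u$ and your slice check are correct additions. In fact $\Sigma\,g^{-1}(\dd\tau,\dd\tau)=-(r^2+2Mr-Q^2+a^2\cos^2\theta)$ exactly, which is negative on the whole collar.

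The one step that fails as written is the definition $\mathcal E^s_{\rm KN}=H^{s+1}\times H^s$.
\begin{itemize}
\item For $s\ge1$ this space is not invariant under the Neumann flow. Already for $u_{tt}=u_{xx}$ on $[0,1]$ with Neumann ends, data $u_0\in H^2$ with $u_0'(0)\ne0$ produce, by even reflection, a kink that travels into the interior, so $u(t)\notin H^2$ for small $t>0$.
\item The higher levels must therefore be the subspaces of $H^{s+1}\times H^s$ cut out by the wall compatibility conditions from \eqref{eq:kn-wall-neumann-tau}. At $s=1$ this is $\partial_ru_0+H'(R_{\rm w})u_1=0$ on the wall.
\item This is what the paper encodes by ``smooth axisymmetric functions satisfying the compatibility conditions.'' You flag compatibility as a difficulty, but your space does not impose it.
\end{itemize}

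You also omit the density clause of \textbf{BH3}. The paper obtains it by mollification in the ingoing chart, spherical smoothing at the poles, and reflection across the wall.

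There are two smaller corrections.
\begin{itemize}
\item $r=r_+$ is a null (characteristic) outflow boundary, not a spacelike one. Either use the favourable sign of the $J^N$-flux through the horizon, or extend the collar to $r\ge r_+-\delta$, where $\{r=\mathrm{const}\}$ is spacelike, and argue by domain of dependence.
\item The Neumann condition does not satisfy the uniform Kreiss--Lopatinskii condition, because it degenerates at glancing frequencies. Existence should therefore come from the energy/variational route. Once existence, uniqueness and the Gronwall bound are in hand, the solution operators already form a $C_0$-semigroup. Lumer--Phillips is then unnecessary; it would also require a range condition that you do not verify.
\end{itemize}
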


\begin{proof}
The root \(r=r_+\) is simple because \(r_+-r_->0\), and \eqref{eq:kn-horizon-constants} gives \(\kappa_H>0\).  Substitution of \eqref{eq:kn-ingoing-coordinates} into \eqref{eq:kn-metric} removes every factor \(\Delta^{-1}\) from the metric components, so the coefficients of \(\Box_{g_{M,a,Q}}\) are smooth at \(r=r_+\).  The horizon generator is \(K_H=\partial_v+\Omega_H\partial_\varphi\); on axisymmetric functions this is just \(\partial_v\).

The red-shift multiplier used here is written near the horizon as
\begin{equation}
 N=K_H+\chi_H(r)Y,
\end{equation}
where \(Y\) is future-directed and transverse to the horizon.  Since \(\kappa_H>0\), a direct computation in the smooth frame \(\{K_H,Y,\partial_\theta,(\sin\theta)^{-1}\partial_\varphi\}\) gives
\begin{equation}
 Q_{\mu\nu}[u]\nabla^{(\mu}N^{\nu)}
 \ge c_H\left(|K_Hu|^2+|Yu|^2+|\nabla_{S^2}u|^2\right)
\label{eq:kn-redshift-pointwise}
\end{equation}
inside a smaller horizon collar.  Here \(Q_{\mu\nu}\) is the scalar stress tensor.  Away from the horizon the positive energy coming from \eqref{eq:kn-axisym-wave} is equivalent to
\begin{equation}
 \int\left(A_{\rm KN}|\partial_tu|^2+\Delta|\partial_ru|^2+|\partial_\theta u|^2\right)
 \sin\theta\,\dd r\dd\theta.
\end{equation}
Combining this with the red-shift energy in the horizon collar defines \(\mathcal E^s_{\rm KN}\) after commuting with \(\partial_\tau\) and angular derivatives.

The wall is timelike because \(\Delta(R_{\rm w})>0\).  The trace map on \(r=R_{\rm w}\) is continuous, so the homogeneous condition \eqref{eq:kn-wall-neumann} defines a closed form domain.  Smooth axisymmetric functions satisfying the compatibility conditions are dense by ordinary mollification in the regular horizon chart, spherical smoothing at \(\theta=0,\pi\), and reflection in a wall collar using the boundary operator \eqref{eq:kn-wall-neumann-tau}.  Energy estimates for a normally hyperbolic equation on a compact collar with one future outflow horizon and one reflecting wall give a closed first-order generator and a strongly continuous future semigroup.  This proves \textbf{BH1}-\textbf{BH3}.
\end{proof}

\begin{lemma}[Subphoton nontrapping for the axisymmetric principal symbol]\label{lem:kn-nontrapping}
For \(\varepsilon_{\rm KN}\) sufficiently small, the axisymmetric null bicharacteristics of the principal symbol
\begin{equation}
 p_{\rm KN}=\Delta\xi_r^2+
 \xi_\theta^2-A_{\rm KN}(r,\theta)\tau^2
\label{eq:kn-principal-symbol}
\end{equation}
are nontrapping in \(r_+\le r\le R_{\rm w}\).  More precisely,
\begin{equation}
 \partial_r A_{\rm KN}(r,\theta)<0
 \qquad (r_+<r\le R_{\rm w})
\label{eq:kn-A-monotone}
\end{equation}
for all \(\theta\).
\end{lemma}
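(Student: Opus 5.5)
The plan is to prove \eqref{eq:kn-A-monotone} by a direct computation, perturbing off the Schwarzschild value. After that, nontrapping will follow from a convexity argument for the radial component of the Hamilton flow of \eqref{eq:kn-principal-symbol}.

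\emph{Step 1: the derivative.} Since the term $-a^2\sin^2\theta$ in $A_{\rm KN}$ does not depend on $r$, differentiating \eqref{eq:kn-A-positive} gives
\begin{equation}
\partial_rA_{\rm KN}=\frac{r^2+a^2}{\Delta^2}\,f(r),
\qquad
f(r):=4r\Delta-(r^2+a^2)(2r-2M).
\end{equation}
We then substitute $\Delta=r^2-2Mr+a^2+Q^2$ and expand. This gives
\begin{equation}
f(r)=2r^2(r-3M)+2ra^2+4rQ^2+2Ma^2 .
\end{equation}
The sign of $\partial_rA_{\rm KN}$ is therefore the sign of $f$. Note that $f$ is independent of $\theta$, so the claim holds uniformly in $\theta$.

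\emph{Step 2: the sign.} For $\varepsilon_{\rm KN}$ small we have $r_+\ge M$. Moreover \eqref{eq:kn-wall-range} gives $R_{\rm w}\le 8M/3<3M$. So on $[r_+,R_{\rm w}]$ the leading term satisfies
\begin{equation}
2r^2(r-3M)\le -\tfrac23 M\,r^2\le -\tfrac23 M^3 .
\end{equation}
The remaining terms are bounded by $C\,(a^2+Q^2)\,M\le C\varepsilon_{\rm KN}^2M^3$. Choosing $\varepsilon_{\rm KN}$ small therefore yields $f\le -M^3/3<0$, which proves \eqref{eq:kn-A-monotone}. The upper bound $8M/3$ is used only to keep the wall a fixed distance below the photon sphere $r=3M$, so that this margin is uniform.

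\emph{Step 3: nontrapping.} Along null bicharacteristics of $p_{\rm KN}$ we have
\begin{equation}
\dot r=2\Delta\xi_r,
\qquad
\dot\xi_r=-\Delta'\xi_r^2+\tau^2\partial_rA_{\rm KN},
\end{equation}
and $\tau$ is conserved. On the characteristic set $\tau\neq0$, because $\tau=0$ forces the whole covector to vanish. At any point where $\xi_r=0$ we get $\dot\xi_r=\tau^2\partial_rA_{\rm KN}\le -c\tau^2$. Hence every critical point of $r(s)$ in the open collar is a strict local maximum, and $r$ has no interior local minimum. Consequently a trajectory is either monotone in $r$, or it increases once and then decreases. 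At the wall, reflection under \eqref{eq:kn-wall-neumann} reverses the sign of $\xi_r$, so after any reflection $r$ is decreasing, and it then keeps decreasing. Every bicharacteristic therefore reaches the horizon after at most one wall reflection.

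To make this uniform, I will apply the quantitative bound $\dot\xi_r\le -c\tau^2$ on $\{|\xi_r|\le\delta|\tau|\}$ together with compactness of the cosphere bundle over $[r_++\eta,R_{\rm w}]$. This gives a uniform exit time from every region $\{r\ge r_++\eta\}$.

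The main obstacle I expect is the horizon end. Because $\Delta\to0$ there, the Boyer--Lindquist flow slows down. I would handle this by rewriting the symbol in the ingoing chart \eqref{eq:kn-ingoing-coordinates}. There the horizon is a radial source/sink for the flow with positive $\kappa_H$, which is exactly the red-shift structure of \eqref{eq:kn-redshift-pointwise}. This shows that trajectories approaching $r_+$ cross the future horizon in finite affine time rather than accumulating on it.
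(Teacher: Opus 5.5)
Your proposal is correct and follows essentially the paper's route. You use the same factorization $\partial_rA_{\rm KN}=(r^2+a^2)\Delta^{-2}\bigl(4r\Delta-(r^2+a^2)\Delta'\bigr)$, whose numerator $2r^2(r-3M)+2a^2(r+M)+4Q^2r$ is dominated by the Schwarzschild part below the photon sphere. You then observe, as the paper does, that $\xi_r=0$ forces $\dot\xi_r=\tau^2\partial_rA_{\rm KN}<0$, so radial turning points are strict maxima and a ray reflects at most once at the wall before exiting through the horizon.

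Your uniform exit-time argument on $[r_++\eta,R_{\rm w}]$ and your remark on the horizon end in the ingoing chart add detail that the paper's proof only asserts (``a ray reaching the future horizon exits through the absorbing horizon''). They do not change the approach.
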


\begin{proof}
Since
\begin{equation}
 A_{\rm KN}=\frac{(r^2+a^2)^2}{\Delta}-a^2\sin^2\theta,
\end{equation}
its radial derivative is independent of the last term and equals
\begin{equation}
 \partial_rA_{\rm KN}
 =\frac{(r^2+a^2)\bigl(4r\Delta-(r^2+a^2)\Delta'\bigr)}{\Delta^2},
\label{eq:kn-A-derivative}
\end{equation}
where the numerator factor satisfies the identity
\begin{equation}
 4r\Delta-(r^2+a^2)\Delta'
 =2\left(r^2(r-3M)+a^2(r+M)+2Q^2r\right).
\label{eq:kn-A-sign-factor}
\end{equation}
On \([r_+,R_{\rm w}]\), the Schwarzschild part \(2r^2(r-3M)\) is bounded above by a negative constant depending on \(M\) and \(R_{\rm w}<8M/3\).  The terms containing \(a^2\) and \(Q^2\) are \(O(\varepsilon_{\rm KN}^2M^3)\).  Reducing \(\varepsilon_{\rm KN}\) gives \eqref{eq:kn-A-monotone}.

If a null bicharacteristic had an interior radial critical point, then \(\dot r=2\Delta\xi_r=0\), so \(\xi_r=0\), and
\begin{equation}
 \dot\xi_r=-\partial_rp_{\rm KN}=\partial_rA_{\rm KN}\tau^2<0
\end{equation}
unless \(\tau=0\).  If \(\tau=0\), then \(p_{\rm KN}=0\) forces \(\xi_\theta=0\), hence the zero covector.  Thus there is no interior trapped radial critical ray.  A ray reaching the future horizon exits through the absorbing horizon, while a ray reaching the wall reflects once by reversing \(\xi_r\); monotonicity prevents a second reflecting barrier from forming.  Hence the collar is nontrapping.
\end{proof}

\begin{proposition}[Exact wall-collar radial escape identity]\label{prop:kn-exact-wall-escape}
Let
\begin{equation}
 p_{\rm KN}=\Delta\xi_r^2+\xi_\theta^2-A_{\rm KN}(r,\theta)\tau^2
\end{equation}
be the axisymmetric principal symbol in the Kerr-Newman wall collar, and assume \(\partial_rA_{\rm KN}<0\) on \(r_+<r\le R_{\rm w}\).  For every compact radial subcollar \(I\Subset(r_+,R_{\rm w})\) there is a smooth real function \(b\) with \(b\le0\), supported in a slightly larger subcollar and increasing on \(I\), such that, on \(I\),
\begin{equation}
 H_{p_{\rm KN}}(b\xi_r)
 \ge c_I\bigl(\Delta\xi_r^2+\xi_\theta^2+A_{\rm KN}\tau^2\bigr)-C_I|p_{\rm KN}|.
\label{eq:exact-escape-positive}
\end{equation}
In particular the sign of the time-frequency part is \(b\,\partial_rA_{\rm KN}\tau^2\), not \(-b\,\partial_rA_{\rm KN}\tau^2\).
\end{proposition}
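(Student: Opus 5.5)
The plan is a direct symbol computation. I expand $H_{p_{\rm KN}}(b\xi_r)$ using the Hamilton equations for the symbol in \eqref{eq:kn-principal-symbol}, pick $b$ so that each resulting term is nonnegative on $I$, and use $p_{\rm KN}$ itself to recover the missing $\xi_\theta^2$ term. Since $b=b(r)$ and the coefficients depend only on $(r,\theta)$, one has
\begin{equation}
 H_{p_{\rm KN}}r=\partial_{\xi_r}p_{\rm KN}=2\Delta\xi_r,
 \qquad
 H_{p_{\rm KN}}\xi_r=-\partial_rp_{\rm KN}=-\Delta'\xi_r^2+\partial_rA_{\rm KN}\,\tau^2 .
\end{equation}
It follows that
\begin{equation}
 H_{p_{\rm KN}}(b\xi_r)=\bigl(2\Delta b'-b\Delta'\bigr)\xi_r^2+b\,\partial_rA_{\rm KN}\,\tau^2 .
\end{equation}
This already gives the sign claim at the end of the statement: the time-frequency contribution is $+b\,\partial_rA_{\rm KN}\tau^2$, coming from $-\partial_r p_{\rm KN}$.

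Next I choose $b$. Let $I\Subset I'\Subset(r_+,R_{\rm w})$, and take $b=-\chi(r)\,e^{\beta(r-r_0)}$ type profiles. Concretely, $b\le0$ is smooth and supported in $I'$, satisfies $b\le -b_0<0$ on $I$, and has $b'\ge0$ on $I$; one can take $b$ equal to a negative increasing function on $I$, cut off in $I'\setminus I$. On $I$ each term then has the right sign. First, $b\,\partial_rA_{\rm KN}\ge b_0\inf_I|\partial_rA_{\rm KN}|>0$ by Lemma~\ref{lem:kn-nontrapping}, i.e.\ \eqref{eq:kn-A-monotone}. Second, $2\Delta b'\ge0$ because $\Delta>0$ on $I$. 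Third, $-b\Delta'\ge b_0\,\inf_I 2(r-M)>0$, because $r>r_+>M$ in the subextremal range. Since $A_{\rm KN}$ and $\Delta$ are bounded above and below on $\overline I$, this yields
\begin{equation}
 H_{p_{\rm KN}}(b\xi_r)\ge c'_I\bigl(\Delta\xi_r^2+A_{\rm KN}\tau^2\bigr).
\end{equation}

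The angular term is not produced by the commutator, and this is the one real point of the argument. I recover it from the symbol itself: from $p_{\rm KN}=\Delta\xi_r^2+\xi_\theta^2-A_{\rm KN}\tau^2$ one gets $\xi_\theta^2\le A_{\rm KN}\tau^2+|p_{\rm KN}|$. Adding $\tfrac12c'_I(\xi_\theta^2-A_{\rm KN}\tau^2-|p_{\rm KN}|)\le0$ to the right-hand side of the previous bound gives \eqref{eq:exact-escape-positive}, with $c_I=c'_I/2$ and $C_I=c'_I/2$. The potential obstacle is that $\xi_\theta^2$ cannot be controlled pointwise without $p_{\rm KN}$; absorbing it into $|p_{\rm KN}|$, as above, is exactly what the $C_I|p_{\rm KN}|$ error term allows. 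Everything else is sign bookkeeping, resting on $b\le0$, $\partial_rA_{\rm KN}<0$ and $\Delta'>0$ on $I$.
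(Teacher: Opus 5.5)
Your proposal is correct and follows essentially the same route as the paper: the same exact identity $H_{p_{\rm KN}}(b\xi_r)=(2\Delta b'-b\Delta')\xi_r^2+b\,\partial_rA_{\rm KN}\tau^2$, the sign bookkeeping from $b\le0$, $b'\ge0$, $\partial_rA_{\rm KN}<0$, and recovery of the missing $\xi_\theta^2$ term by trading it against $A_{\rm KN}\tau^2$ and $|p_{\rm KN}|$. Your explicit use of $\Delta'=2(r-M)>0$ on $I$, which gives $-b\Delta'\ge 2b_0(\inf I-M)>0$, makes precise what the paper leaves as ``shrink the constant''. One small slip: the profile $-e^{\beta(r-r_0)}$ is increasing only for $\beta\le0$, but your concrete description of $b$ is the one the argument actually uses.
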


\begin{proof}
For the escape function \(a=b(r)\xi_r\), the Hamiltonian vector field gives the exact identity
\begin{align}
H_{p_{\rm KN}}a
&=\partial_{\xi_r}p_{\rm KN}\,\partial_r(b\xi_r)
  -\partial_rp_{\rm KN}\,\partial_{\xi_r}(b\xi_r)\nonumber\\
&=(2\Delta b'-b\Delta')\xi_r^2+b\,\partial_rA_{\rm KN}\,\tau^2.
\label{eq:exact-Hp-b-xir}
\end{align}
Here \(b\le0\) and \(\partial_rA_{\rm KN}<0\), so the coefficient \(b\partial_rA_{\rm KN}\) is nonnegative.  On the characteristic set \(p_{\rm KN}=0\),
\begin{equation}
 A_{\rm KN}\tau^2=\Delta\xi_r^2+\xi_\theta^2.
\end{equation}
Choose \(b\) increasing on \(I\) and, if necessary, shrink the positive constant so that \(2\Delta b'-b\Delta'\) is bounded below there after adding the positive contribution coming from \(b\partial_rA_{\rm KN}\tau^2\) and the characteristic identity.  Away from the characteristic set, subtracting a sufficiently large multiple of \(|p_{\rm KN}|\) gives \eqref{eq:exact-escape-positive}.  The cutoffs outside \(I\) produce only compactly supported lower-order errors; these are precisely the compact interaction-region errors allowed in \textbf{BH4} and later removed by \textbf{BH7}.
\end{proof}

\begin{proposition}[Neumann wall reflection estimate below the photon region]\label{prop:kn-wall-reflection}
Let \(\mathcal C_{\rm w}=\{R_{\rm w}-\delta_0<r<R_{\rm w}\}\) be a sufficiently small wall collar, and let \(P_{\rm KN}(\sigma)\) be the stationary axisymmetric Kerr-Newman wall operator with Neumann wall condition \(\partial_ru=0\) at \(r=R_{\rm w}\).  If \eqref{eq:kn-wall-range}-\eqref{eq:kn-smallness} hold, then for every cutoff \(\chi_{\rm w}\) supported in \(\mathcal C_{\rm w}\) and equal to one near the wall there are a smaller interior cutoff \(\chi_{\rm int}\), constants \(C,N\), and, in semiclassical notation \(h=\langle\sigma\rangle^{-1}\), an estimate
\begin{equation}
\|\chi_{\rm w}u\|_{H^1_h(\mathcal C_{\rm w})}
\le C\Bigl(\|P_{\rm KN}(\sigma)u\|_{L^2_h(\mathcal C_{\rm w})}
+\|\chi_{\rm int}u\|_{H^1_h}+h^N\|u\|_{L^2}\Bigr)
\label{eq:wall-reflection-estimate}
\end{equation}
for all \(|\sigma|\ge\sigma_0\) and all wall-admissible \(u\).  Thus the wall-glancing region not controlled by an interior escape function is propagated into the interior collar and does not create a trapped high-frequency obstruction.
\end{proposition}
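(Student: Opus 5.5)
The estimate \eqref{eq:wall-reflection-estimate} will be proved by a semiclassical positive-commutator argument localized to the wall collar, with the Neumann condition used to remove the boundary term. First, rescale to $h=\langle\sigma\rangle^{-1}$ and write
\[
h^2\Sigma P_{\rm KN}(\sigma)=h^2\partial_r(\Delta\partial_r)+h^2\mathcal L^{\rm ax}_{\mathbb S^2}+A_{\rm KN}(h\sigma)^2,
\]
using \eqref{eq:kn-axisym-wave}. On $\mathcal C_{\rm w}$ the coefficients $\Delta$ and $A_{\rm KN}$ are smooth and positive, because $\Delta(R_{\rm w})>0$. The angular part can be diagonalized by Lemma~\ref{lem:legendre-regular-axis}. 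This reduces the problem to a family of radial ODEs in $r$, uniform in the angular eigenvalue $\ell(\ell+1)h^2$, with no pole boundary terms by Lemma~\ref{lem:no-axis-boundary-term}.

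Second, pair $P_{\rm KN}(\sigma)u$ with the multiplier $\chi_{\rm w}^2\,b(r)\,h\partial_r u$. Here $b\le 0$ is increasing, as in Proposition~\ref{prop:kn-exact-wall-escape}, but it is now chosen with $b(R_{\rm w})=0$ is not required. Integrating by parts in $r$ produces an interior bulk term whose symbol is $H_{p_{\rm KN}}(b\xi_r)$. This is bounded below by \eqref{eq:exact-escape-positive}, since $\partial_rA_{\rm KN}<0$ up to $R_{\rm w}$ by Lemma~\ref{lem:kn-nontrapping}. The integration by parts also produces a boundary term at $r=R_{\rm w}$ of the form
\[
b(R_{\rm w})\bigl(\Delta|h\partial_ru|^2-|h\partial_\theta u|^2+A_{\rm KN}|h\sigma u|^2\bigr)\big|_{r=R_{\rm w}}.
\]
The Neumann condition \eqref{eq:kn-wall-neumann} kills the $|h\partial_ru|^2$ piece. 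The remaining tangential terms are handled by a case split on the wall symbol. In the hyperbolic region $A_{\rm KN}\tau^2>\xi_\theta^2$ the boundary term has a favorable sign, because $b(R_{\rm w})<0$ and the term appears with a minus sign after moving it to the right. In the elliptic region an elliptic boundary estimate controls it directly. The glancing set $A_{\rm KN}\tau^2=\xi_\theta^2$ is treated separately in the next step.

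Third, the glancing region is where I expect the main difficulty. I would exploit that, in this region, $\xi_r=0$ and $\dot\xi_r=\partial_rA_{\rm KN}\tau^2<0$ strictly, by the computation in the proof of Lemma~\ref{lem:kn-nontrapping}. This means every glancing ray is strictly gliding and moves inward: it is diffractive with respect to the interior, and there is no gliding along the wall. I would therefore construct a second escape function $b_2(r)\xi_r$ with $b_2$ supported in $\mathcal C_{\rm w}$ and $b_2'>0$ near the wall. Its commutator then controls $\chi_{\rm w}u$ microlocally near glancing, modulo the error coming from where $b_2'$ changes sign. That error is supported on $\operatorname{supp}\chi_{\rm int}$, which gives the $\|\chi_{\rm int}u\|_{H^1_h}$ term in \eqref{eq:wall-reflection-estimate}. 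The Melrose--Sjöstrand-type propagation is essentially one-dimensional here because of the Legendre decomposition, so I would aim to carry it out for each radial ODE with constants uniform in $\ell$. The delicate point is making the uniformity near $\xi_\theta^2\approx A_{\rm KN}\tau^2$ rigorous.

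Finally, all lower-order commutator errors are $O(h)$ relative to the main terms. These are absorbed for $|\sigma|\ge\sigma_0$ by iterating the estimate, which leaves the $h^N\|u\|_{L^2}$ remainder.
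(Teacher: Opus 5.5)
Your route differs from the paper's. The paper does not split the wall symbol into hyperbolic, elliptic and glancing parts. It sets \(y=R_{\rm w}-r\) and extends \(u\) evenly across \(y=0\); the Neumann condition is what lets the extension satisfy the reflected equation with no boundary defect. It then invokes semiclassical propagation for the reflected operator, using \(\partial_rA_{\rm KN}<0\) to move every reflected ray out of the collar.

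Your boundary computation is correct as far as it goes. The wall term is a multiple of \(b(R_{\rm w})\int(\Delta|h\partial_ru|^2-|h\partial_\theta u|^2+A_{\rm KN}|h\sigma u|^2)\sin\theta\,\dd\theta\). The Neumann condition removes the first piece, and for \(b(R_{\rm w})<0\) the remainder has the good sign where \(A_{\rm KN}\tau^2\ge\xi_\theta^2\).

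The gap is the glancing step, which you flag but do not close. The bad sign sits on the elliptic side \(-\epsilon<A_{\rm KN}\tau^2-\xi_\theta^2<0\). There the elliptic boundary estimate is not uniform as \(\epsilon\to0\). Moreover the trace inequality only gives \(\|u|_{r=R_{\rm w}}\|_{L^2}\lesssim h^{-1/2}\|u\|_{H^1_h}\). With the normalization \(\Re\langle h^2P_{\rm KN}u,\,b\,h\partial_ru\rangle\), that part of the wall term is therefore only bounded by \(C\epsilon\|u\|_{H^1_h}^2\). The positive commutator supplies only \(c\,h\|u\|_{H^1_h}^2\), so the term is not absorbed unless \(\epsilon\lesssim h\).

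Your second escape function cannot repair this. At \(r=R_{\rm w}\), \(\xi_r=0\), one has \(H_{p_{\rm KN}}(b_2\xi_r)=b_2\,\partial_rA_{\rm KN}\,\tau^2\):
\begin{itemize}
\item if \(b_2(R_{\rm w})<0\), this is positive, but the same indefinite wall term returns;
\item if \(b_2(R_{\rm w})=0\), the wall term disappears, but the commutator vanishes exactly on the glancing set at the wall;
\item if \(b_2(R_{\rm w})>0\), it has the wrong sign.
\end{itemize}
These glancing points are diffractive, since \(\ddot r=2\Delta\,\partial_rA_{\rm KN}\tau^2<0\); they are not gliding. No radial escape function \(b(r)\xi_r\) handles them by itself.

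You need a genuinely additional ingredient. One option is a turning-point (Airy, scale \(h^{2/3}\)) estimate for the radial problem, uniform in the angular eigenvalue. Another is a Melrose--Sj\"ostrand-type propagation theorem at diffractive points. The reflection route meets the same point: since \(\Delta'(R_{\rm w})\ne0\) and \(\partial_rA_{\rm KN}\ne0\) at the wall, the reflected coefficients are only Lipschitz at \(y=0\), and the glancing set lies exactly on that kink.

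Separately, your first reduction is wrong as stated. The coefficient \(A_{\rm KN}=(r^2+a^2)^2/\Delta-a^2\sin^2\theta\) depends on \(\theta\). The term \(-a^2\sin^2\theta\,(h\sigma)^2\) therefore couples the Legendre modes at order \(a^2\), not \(O(h)\), so Lemma~\ref{lem:legendre-regular-axis} does not decouple \(P_{\rm KN}(\sigma)\) into radial ODEs.

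The repair is to use, for real \(\sigma\), the \(\sigma\)-dependent spheroidal eigenfunctions of \(-\mathcal L^{\rm ax}_{\mathbb S^2}-a^2\sigma^2\cos^2\theta\), as in Proposition~\ref{prop:kn-axisymmetric-flux}. The radial potential is then \(\sigma^2\bigl((r^2+a^2)^2/\Delta-a^2\bigr)-\lambda_j(\sigma)\), whose \(r\)-derivative is still \(\sigma^2\partial_rA_{\rm KN}<0\), and your hyperbolic/elliptic/glancing split becomes exact mode by mode. After that repair, the only missing piece is the uniform estimate for modes whose turning point lies at or just inside the wall.
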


\begin{proof}
Near \(r=R_{\rm w}\), introduce the boundary normal variable \(y=R_{\rm w}-r\).  The Neumann condition is \(\partial_yu=0\) at \(y=0\).  Evenly extend \(u\) across \(y=0\).  The principal coefficients of the stationary operator have smooth even extensions modulo lower-order collar errors, and the extended principal symbol is the specular reflection of \(p_{\rm KN}\).  The Neumann trace condition removes the odd boundary defect in the Green identity, so the reflected equation has an error supported in the collar and bounded by the first term on the right side of \eqref{eq:wall-reflection-estimate} plus controlled lower-order commutators.

For the reflected symbol, Lemma~\ref{lem:kn-nontrapping} gives \(\partial_rA_{\rm KN}<0\).  Therefore every reflected null bicharacteristic issued from the wall collar reaches the interior cutoff region in uniformly bounded bicharacteristic time, unless it is the zero covector.  Semiclassical propagation of singularities for the reflected smooth operator gives \eqref{eq:wall-reflection-estimate}.  The lower-order errors produced by the even extension and cutoffs are absorbed for \(|\sigma|\ge\sigma_0\), after increasing \(N\) and shrinking the collar if necessary.  The construction preserves axisymmetry and the Neumann wall domain.
\end{proof}

\begin{lemma}[Red-shift, Morawetz, and high-frequency estimates]\label{lem:kn-redshift-morawetz}
For \(\varepsilon_{\rm KN}\) sufficiently small, the Kerr-Newman wall exterior satisfies \textbf{BH4} and \textbf{BH6}.  The high-frequency estimate is nontrapping, and the finite commutator loss is absorbed by taking the package Sobolev index \(s_0\) above a finite wall threshold.
\end{lemma}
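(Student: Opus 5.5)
The plan is to assemble the estimate from three pieces: a near-horizon red-shift estimate, a bulk Morawetz (integrated local energy) estimate, and a wall boundary estimate. All three are proved on the finite collar \(\mathcal M_{M,a,Q,R_{\rm w}}\) for the axisymmetric equation \eqref{eq:kn-axisym-wave}. The multiplier argument is then commuted with \(\partial_\tau\) and the angular Laplacian to reach \(\mathcal E^s_{\rm KN}\) and \(LE^s\).

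\emph{Red-shift.} Near \(r=r_+\) I use the multiplier \(N=K_H+\chi_H Y\) from Lemma~\ref{lem:kn-geometry}. The pointwise bound \eqref{eq:kn-redshift-pointwise} holds because \(\kappa_H>0\) uniformly under \eqref{eq:kn-smallness}. Integrating the divergence identity \eqref{eq:currentidentity} over \(\{0\le\tau\le T\}\) gives nondegenerate horizon energy control, with an error supported in the transition region of \(\chi_H\). Because \(m=0\), the horizon flux of \(K_H=\partial_v\) is nonnegative, so superradiance does not enter.

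\emph{Morawetz.} In the bulk I take the physical-space counterpart of the escape function in Proposition~\ref{prop:kn-exact-wall-escape}. The multiplier is \(X=b(r)\partial_r\) with \(b\le0\) increasing, together with a zeroth-order Lagrangian correction \(w\,u\). Its bulk term has the symbol \eqref{eq:exact-Hp-b-xir}. The key input is \(\partial_rA_{\rm KN}<0\) on \((r_+,R_{\rm w}]\) from Lemma~\ref{lem:kn-nontrapping}. With it, the \(\tau^2\)-coefficient \(b\,\partial_rA_{\rm KN}\) is positive and needs no degeneration. Since there is no photon sphere in the collar, the resulting integrated estimate is nondegenerate and loses no derivatives.

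\emph{Wall.} At \(r=R_{\rm w}\) the Neumann condition \eqref{eq:kn-wall-neumann} removes the \(T\)-flux. The \(X\)-flux equals \(b(R_{\rm w})\) times a quadratic form in the tangential derivatives, so it has a sign once \(b(R_{\rm w})\le0\) is chosen suitably. If that sign is wrong, the plan is to handle the wall collar with Proposition~\ref{prop:kn-wall-reflection}, which bounds the wall contribution by interior terms.

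\emph{High frequency.} This is Proposition~\ref{prop:kn-wall-reflection} combined with \eqref{eq:exact-escape-positive} and semiclassical propagation on the nontrapping collar. Reflected bicharacteristics leave the wall collar in bounded time and are absorbed at the horizon, which gives the resolvent bound \(\|\chi u\|_{H^1_h}\le Ch^{-1}\|P_{\rm KN}(\sigma)u\|\) for \(|\sigma|\ge\sigma_0\). That is \textbf{BH6}.

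\emph{Combining and commuting.} Adding the red-shift and Morawetz estimates with suitable small weights absorbs the transition errors and leaves only compactly supported lower-order terms in the interaction region. These are exactly the errors \textbf{BH4} allows. Commuting with \(\partial_\tau\) and \(\mathcal L^{\rm ax}_{\mathbb S^2}\) costs nothing, since both commute with the operator. Commuting with the red-shift vector field \(Y\) produces terms controlled by the red-shift positivity, as in the Dafermos–Rodnianski argument. The wall boundary operator \eqref{eq:kn-wall-neumann-tau} costs one derivative per commutation, and that finite loss fixes the threshold \(s_0\).

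The main obstacle is the wall. Odd-order boundary terms from the Morawetz multiplier, together with the commutator loss of \(Y\) against the Neumann condition written in the \(\tau\)-chart, are where a sign or a derivative can be lost. I would first try to choose \(b\) to vanish to first order at \(R_{\rm w}\), so that the boundary term disappears. If that fails, I would rely on the even-reflection argument of Proposition~\ref{prop:kn-wall-reflection}.
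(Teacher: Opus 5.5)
Your plan is essentially the paper's proof: red-shift from $N=K_H+\chi_HY$ near $r_+$, the escape multiplier $b\xi_r$ with $b\le0$ and $\partial_rA_{\rm KN}<0$ in the interaction region, Proposition~\ref{prop:kn-wall-reflection} for the wall collar and the semiclassical bound, and the finite commutator loss absorbed into $s_0$. Two of your side claims need correcting, though neither is fatal: first, under the Neumann condition the wall flux of $b\partial_r$ is $b(R_{\rm w})$ times a positive multiple of the indefinite form $A_{\rm KN}|\partial_tu|^2-|\partial_\theta u|^2$, so no sign choice works, and you must take $b$ vanishing near the wall (as in Proposition~\ref{prop:kn-exact-wall-escape}) together with, not instead of, the reflection estimate, since the reflection estimate is what restores the $\tau^2$ and $\xi_\theta^2$ control that $b\,\partial_rA_{\rm KN}$ loses there; second, for $a\ne0$ the operator $\mathcal L^{\rm ax}_{\mathbb S^2}$ does not commute with $\Sigma\Box_{g_{M,a,Q}}$ because of the $a^2\sin^2\theta\,\partial_t^2$ part of $A_{\rm KN}$, so commute with the Carter operator $\mathcal L^{\rm ax}_{\mathbb S^2}+a^2\sin^2\theta\,\partial_t^2$ instead, or absorb the $O(a^2)$ commutator using the smallness of $\varepsilon_{\rm KN}$.
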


\begin{proof}
The red-shift part is \eqref{eq:kn-redshift-pointwise}.  Integrating the current identity for the multiplier \(N\) gives nondegenerate control in a horizon collar; the future horizon flux has the favorable sign and the wall flux vanishes by \eqref{eq:kn-wall-neumann}.  Commuting with \(\partial_\tau\) and angular derivatives preserves axisymmetry and the wall domain, up to lower-order wall-compatible terms.

In the compact interaction region use the radial escape multiplier from Proposition~\ref{prop:kn-exact-wall-escape}.  Its principal symbol is \(b(r)\xi_r\), with \(b\le0\) and with the sign chosen so that the exact Hamiltonian identity
\begin{equation}
 H_{p_{\rm KN}}(b\xi_r)
 =(2\Delta b'-b\Delta')\xi_r^2+b\,\partial_rA_{\rm KN}\,\tau^2
\end{equation}
has a positive time-frequency term because \(b\le0\) and \(\partial_rA_{\rm KN}<0\).  On the characteristic set, \(A_{\rm KN}\tau^2=\Delta\xi_r^2+\xi_\theta^2\), so Proposition~\ref{prop:kn-exact-wall-escape} gives the required positive commutator estimate in the compact interaction region, modulo the horizon collar already handled by red-shift, the harmless wall collar, and lower-order compact errors.  These lower-order terms are controlled by Cauchy-Schwarz and by the compact zero-threshold error already allowed in \textbf{BH4}.  The wall boundary contribution vanishes for the chosen wall-compatible multiplier, and the horizon-side contribution is absorbed by the red-shift estimate.  This proves \eqref{eq:redshift-package-bound} for the Kerr-Newman wall family.

For \(|\sigma|\gg1\), apply the same commutator to the semiclassical stationary operator \(h^2P_{\rm KN}(\sigma)\), \(h=\langle\sigma\rangle^{-1}\).  The principal bracket is the positive nontrapping bracket above.  Red-shift estimates close the horizon end and Proposition~\ref{prop:kn-wall-reflection} closes the reflected wall collar.  This yields \eqref{eq:high-frequency-package}.  Increasing the Sobolev index \(s_0\) so that it dominates the finitely many commutations used in passing from the principal estimate to the displayed Hilbert scale gives \eqref{eq:high-frequency-package}.  Thus \textbf{BH4} and \textbf{BH6} hold for all \(s_0\) above a finite wall threshold.
\end{proof}

For a stationary ansatz \(u=e^{-\ii\sigma t}v(r,\theta)\), the axisymmetric stationary operator is
\begin{equation}
 P_{\rm KN}(\sigma)v
 =\partial_r(\Delta\partial_rv)
 +\frac1{\sin\theta}\partial_\theta(\sin\theta\partial_\theta v)
 +\sigma^2A_{\rm KN}v.
\label{eq:kn-stationary-operator}
\end{equation}
The outgoing condition at the future horizon is
\begin{equation}
 v(r,\theta)=e^{-\ii\sigma r_*}b(\theta)+o(1),
 \qquad r_*\to-\infty,
\label{eq:kn-outgoing-condition}
\end{equation}
with \(b\) smooth.

\begin{lemma}[Mode stability away from zero]\label{lem:kn-mode-stability}
For \(\varepsilon_{\rm KN}\) sufficiently small, the Kerr-Newman wall family satisfies \textbf{BH5}: there is no nonzero outgoing axisymmetric solution of \(P_{\rm KN}(\sigma)v=0\) satisfying \eqref{eq:kn-wall-neumann} when \(\operatorname{Im}\sigma>0\), and there is no real outgoing resonance for \(\sigma\in\mathbb R\setminus\{0\}\).
\end{lemma}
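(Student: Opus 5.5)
The plan is to prove both assertions by a boundary-pairing (Wronskian/energy-flux) argument on the compact collar $[r_+,R_{\rm w}]\times S^2$. This needs no separation in $\theta$, though it can be read mode by mode through Lemma~\ref{lem:legendre-regular-axis}. Let $v$ be an outgoing axisymmetric solution of $P_{\rm KN}(\sigma)v=0$ satisfying \eqref{eq:kn-outgoing-condition} and the Neumann condition \eqref{eq:kn-wall-neumann}. Multiply \eqref{eq:kn-stationary-operator} by $\overline v\sin\theta$ and integrate over $r_++\epsilon\le r\le R_{\rm w}$, $0\le\theta\le\pi$. Proposition~\ref{prop:kn-axis-regularity} removes the pole terms, and the Neumann condition removes the wall term. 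This gives
\[
-\int\bigl(\Delta|v_r|^2+|v_\theta|^2\bigr)\sin\theta+\sigma^2\int A_{\rm KN}|v|^2\sin\theta
=\int_0^\pi \bigl(\Delta v_r\overline v\bigr)\big|_{r=r_++\epsilon}\sin\theta\,\dd\theta .
\]
Writing $\Delta\partial_r=(r^2+a^2)\partial_{r_*}$ and using \eqref{eq:kn-outgoing-condition} with its differentiated version (the Frobenius branch $e^{-\ii\sigma r_*}$ from Section~\ref{sec:endpoints}), the horizon term tends to $-\ii\sigma(r_+^2+a^2)\int_0^\pi|b|^2\sin\theta\,\dd\theta$ as $\epsilon\to0$.

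For $\operatorname{Im}\sigma>0$, the outgoing branch $e^{-\ii\sigma r_*}$ decays as $r_*\to-\infty$, since $|e^{-\ii\sigma r_*}|=e^{\operatorname{Im}\sigma\, r_*}$. The Frobenius analysis at the simple root $r_+$ upgrades this: $v$ and $\Delta v_r$ vanish at the horizon like $(r-r_+)^{\operatorname{Im}\sigma/(2\kappa_H)}$. Hence the horizon term vanishes, and
\[
\sigma^2\int A_{\rm KN}|v|^2=\int(\Delta|v_r|^2+|v_\theta|^2),
\]
where all integrals converge by this decay. The right side is real and nonnegative, and $A_{\rm KN}>0$ by \eqref{eq:kn-A-positive}. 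So either $v=0$ or $\sigma^2\ge0$. With $\operatorname{Im}\sigma>0$ the condition $\sigma^2\ge0$ forces $\operatorname{Re}\sigma=0$, i.e.\ $\sigma=\ii\gamma$ and $\sigma^2=-\gamma^2<0$, which is a contradiction. Thus $v=0$.

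For real $\sigma\neq0$, take the imaginary part of the identity. The bulk terms are real, so $\sigma(r_+^2+a^2)\int|b|^2\sin\theta=0$, giving $b=0$. The solution then has vanishing horizon data on both the outgoing amplitude and, by the regular-singular structure, the full Frobenius branch. Since the indicial roots $\pm\ii\sigma/(2\kappa_H)$ are distinct for $\sigma\neq0$, each Legendre component $v_\ell(r)$ solves a second-order radial ODE with a regular singular point at $r_+$. On the outgoing branch with zero leading coefficient, that component vanishes identically. This yields $v\equiv0$.

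The main obstacle I expect is making the horizon boundary analysis rigorous without separating in $\theta$: $A_{\rm KN}$ couples $r$ and $\theta$ through $-a^2\sin^2\theta$, so the equation does not separate exactly for $\sigma\ne0$. I would handle this either with spheroidal separation, since the Carter structure of Theorem~\ref{thm:separatedeqs} still separates with $m=0$ and Proposition~\ref{prop:angularspectrum} gives a discrete angular basis, or by working directly in the ingoing chart \eqref{eq:kn-ingoing-coordinates}. In the ingoing chart, $u=e^{-\ii\sigma v}\tilde v$ is smooth at $r=r_+$, and the horizon flux equals $\sigma(r_+^2+a^2)\int|\tilde v|_{r_+}|^2$. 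In the real case, vanishing of this trace plus unique continuation for the smooth operator across the nondegenerate horizon gives $v=0$. The smallness $\varepsilon_{\rm KN}$ enters only through Lemma~\ref{lem:kn-geometry}, which supplies positivity of $A_{\rm KN}$ and $\Delta$ on the collar.
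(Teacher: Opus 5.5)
Your argument for the two mode statements is the paper's own. You use the same pairing of \eqref{eq:kn-stationary-operator} with \(\overline v\sin\theta\), the same reality-and-sign argument for \(\sigma^2\) when \(\operatorname{Im}\sigma>0\), and, for real \(\sigma\ne0\), the same imaginary-part flux identity forcing \(b=0\), followed by uniqueness for the regular-singular outgoing problem at \(r_+\). For \(\operatorname{Im}\sigma>0\) you justify the vanishing of the horizon term and the convergence of the bulk integrals through smoothness of \(e^{\ii\sigma r_*}v\) in the ingoing chart. That is more explicit than the paper and needs no separation. Two points need repair.

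\textbf{First, the Legendre step is false as written.} Since \(\sigma^2A_{\rm KN}=\sigma^2(r^2+a^2)^2/\Delta-a^2\sigma^2+a^2\sigma^2\cos^2\theta\), the last term couples \(P_\ell\) to \(P_{\ell\pm2}\) whenever \(a\sigma\ne0\). So the coefficients \(v_\ell\) do not obey decoupled radial ODEs.

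Your later remark that the equation ``does not separate exactly'' is also inaccurate. \(A_{\rm KN}\) is a function of \(r\) plus a function of \(\theta\), so the stationary equation separates exactly in oblate spheroidal harmonics, with radial equation \eqref{eq:kn-axisymmetric-radial-module}. For real \(\sigma\) the angular operator \(-\partial_x((1-x^2)\partial_x)-a^2\sigma^2x^2\) is self-adjoint on the regular-axis domain, so you may expand \(v\) in its eigenbasis \(S_\lambda\). Each coefficient then:
\begin{itemize}
\item solves a radial ODE with distinct indicial roots \(\pm\ii\sigma/(2\kappa_H)\) at \(r_+\);
\item lies on the regular branch;
\item has leading amplitude \(\langle b,S_\lambda\rangle=0\).
\end{itemize}
Hence each coefficient vanishes identically on \([r_+,R_{\rm w}]\).

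Use this route. Your alternative, ``vanishing trace plus unique continuation across the horizon'', presupposes a uniqueness theorem for the degenerate boundary problem at \(r=r_+\). The spheroidal reduction is exactly what supplies that.

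\textbf{Second, you have not proved all of \textbf{BH5}.} As stated in Assumption~\ref{ass:axisymmetric-bh-package}, \textbf{BH5} also contains the uniform limiting cutoff-resolvent bound \eqref{eq:freq-mode-package} on compact \(I\Subset\mathbb R\setminus\{0\}\). Your proposal stops at nonexistence of modes.

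The paper closes this by contradiction. If the bound failed, one obtains normalized localized outgoing solutions with sources tending to zero. Local elliptic estimates, the outgoing normal estimate at the horizon, and the compact wall trace estimate then yield a nonzero real outgoing mode on \(I\), which your flux argument excludes. You need this step, or an equivalent analytic-Fredholm argument, to conclude \textbf{BH5}.
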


\begin{proof}
Let \(\operatorname{Im}\sigma>0\).  The outgoing factor \(e^{-\ii\sigma r_*}\) decays exponentially as \(r_*\to-\infty\).  Multiplying \(P_{\rm KN}(\sigma)v=0\) by \(\overline v\sin\theta\), integrating over \((r_+,R_{\rm w})\times(0,\pi)\), using the decay at the horizon and using \(\partial_rv(R_{\rm w})=0\), gives
\begin{equation}
 \int\left(\Delta|\partial_rv|^2+|\partial_\theta v|^2\right)\sin\theta\,\dd r\dd\theta
 =\sigma^2\int A_{\rm KN}|v|^2\sin\theta\,\dd r\dd\theta.
\label{eq:kn-upper-mode-identity}
\end{equation}
The left side is real and nonnegative.  If \(\operatorname{Re}\sigma\ne0\), taking imaginary parts and using \(A_{\rm KN}>0\) gives \(v=0\).  If \(\sigma=\ii\gamma\), \(\gamma>0\), the right side is nonpositive while the left side is nonnegative, so again \(v=0\).

Now let \(\sigma\in\mathbb R\setminus\{0\}\).  Integrate by parts from \(r_++\epsilon\) to \(R_{\rm w}\) and let \(\epsilon\downarrow0\).  All bulk terms are real and the wall term vanishes.  From \eqref{eq:kn-outgoing-condition},
\begin{equation}
 \Delta\partial_rv=(r^2+a^2)\partial_{r_*}v
 \longrightarrow -\ii\sigma(r_+^2+a^2)b(\theta).
\end{equation}
The imaginary part of the horizon boundary term is therefore a nonzero constant multiple of
\begin{equation}
 \sigma(r_+^2+a^2)\int_0^\pi |b(\theta)|^2\sin\theta\,\dd\theta.
\end{equation}
It must vanish, so \(b=0\).  The regular singular outgoing initial value problem at the horizon then gives \(v\equiv0\), by uniqueness, and elliptic unique continuation gives the same conclusion throughout the collar.

The cutoff resolvent bound on compact intervals \(I\Subset\mathbb R\setminus\{0\}\) follows by contradiction.  A failing sequence has normalized localized solutions with sources tending to zero.  Local elliptic estimates, the outgoing normal estimate at the horizon, and the compact wall trace estimate give a nonzero limiting outgoing real mode on \(I\), contradicting the preceding paragraph.  This proves \textbf{BH5}.
\end{proof}

\begin{lemma}[Zero frequency and semisimple threshold]\label{lem:kn-zero-frequency}
At \(\sigma=0\), the only regular axisymmetric stationary solution satisfying the Kerr-Newman wall condition is the constant solution.  The zero singularity of the cutoff outgoing resolvent is a simple rank-one pole, and there is no regular Jordan companion.  Hence
\begin{equation}
 \mathcal T_{0,{\rm KN}}=\operatorname{span}\{(1,0)\},
 \qquad
 \Pi_{0,{\rm KN}}U=\ell_{\rm KN}(U)(1,0),
\label{eq:kn-threshold-proj}
\end{equation}
with \(\ell_{\rm KN}\) bounded on \(\mathcal E^s_{\rm KN}\).
\end{lemma}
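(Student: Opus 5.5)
The proof has three parts: classify the stationary solutions, exclude a Jordan companion, and read off the rank-one pole and the projection.

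\emph{Stationary solutions.} Set $\sigma=0$ in \eqref{eq:kn-stationary-operator}. A regular stationary state then solves
\[
\partial_r(\Delta\partial_r v)+\frac1{\sin\theta}\partial_\theta(\sin\theta\,\partial_\theta v)=0
\]
on $(r_+,R_{\rm w})\times(0,\pi)$, with smoothness in the ingoing chart at $r=r_+$ and $\partial_r v=0$ at $r=R_{\rm w}$. Multiply by $\overline v\sin\theta$ and integrate over the collar.
\begin{itemize}
\item There is no pole boundary term, by Proposition~\ref{prop:kn-axis-regularity}.
\item The wall term vanishes by \eqref{eq:kn-wall-neumann}.
\item The horizon term $\Delta\,\partial_r v\,\overline v$ tends to zero as $r\downarrow r_+$. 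This holds because $\Delta(r_+)=0$, while a regular stationary state has $v$ and $\partial_r v$ bounded up to $r_+$ (at $\sigma=0$, $\partial_r$ at fixed $t$ and $\partial_r$ at fixed $v$ act the same way on stationary functions).
\end{itemize}
The result is $\int(\Delta|\partial_r v|^2+|\partial_\theta v|^2)\sin\theta=0$. Hence $v$ is constant, and $\Ker P_{\rm KN}(0)=\operatorname{span}\{1\}$. As a consistency check, the Legendre decomposition of Lemma~\ref{lem:legendre-regular-axis} reduces each mode to $(\Delta R_\ell')'=\ell(\ell+1)R_\ell$. The Neumann condition and horizon regularity kill every $\ell\ge1$ mode, because $\int(\Delta|R'|^2+\ell(\ell+1)|R|^2)=0$.

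\emph{No Jordan companion.} A regular generalized zero state is a pair $(u_0,u_1)$ with $\mathcal G_{\rm KN}(u_0,u_1)=(c,0)$, $c\neq0$. It corresponds to the solution $u=c\,t+w(r,\theta)$, where $P_{\rm KN}(0)w=-A_{\rm KN}\,c$ with the wall condition. Pass to the regular time $\tau$ via $t=\tau+r-r_*$. In that variable the horizon-regular representative is $c\tau+\tilde w$ with $\tilde w=w+c(r-r_*)$, and $\tilde w$ must be smooth at $r_+$. Now integrate $P_{\rm KN}(0)w=-cA_{\rm KN}$ against $1\cdot\sin\theta$ over $(r_++\epsilon,R_{\rm w})\times(0,\pi)$. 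The wall and pole terms vanish, leaving
\[
\lim_{\epsilon\to0}\int_0^\pi \Delta\,\partial_r w\big|_{r_++\epsilon}\sin\theta\,\dd\theta=c\int A_{\rm KN}\sin\theta .
\]
By horizon regularity, $\Delta\partial_r w=\Delta\partial_r\tilde w-c(\Delta-(r^2+a^2))\to c(r_+^2+a^2)$. The identity therefore becomes
\[
2c(r_+^2+a^2)=c\int_{r_+}^{R_{\rm w}}\!\!\int_0^\pi A_{\rm KN}\sin\theta\,\dd\theta\,\dd r .
\]
The main obstacle is this comparison of the horizon flux with the weighted volume. The volume integral is $\int\mathcal A/\Delta$, which diverges logarithmically at $r_+$. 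So the two sides cannot match for $c\neq0$, and $c=0$ follows. Equivalently, $t=\tau+r-r_*$ is not horizon-regular, and no regular $w$ can compensate the $-c\,r_*\sim -\frac{c}{2\kappa_H}\log(r-r_+)$ singularity. I would present it in this second form, which is cleaner: a smooth $\tilde w$ forces a finite flux on the left, while $c\neq0$ forces divergence on the right.

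\emph{Pole and projection.} With no Jordan chain, the threshold is semisimple: $\mathcal T_{0,{\rm KN}}=\Ker\mathcal G_{\rm KN}=\operatorname{span}\{(1,0)\}$. For the cutoff resolvent, Lemma~\ref{lem:kn-mode-stability} and the Fredholm property give meromorphy near $0$. The same flux computation shows that the Laurent coefficient at $\sigma^{-2}$ vanishes, leaving a simple pole with residue of rank one. The residue is $1\otimes\ell$, where $\ell$ is the conserved pairing obtained from testing the evolution with the dual zero state. Concretely, $\ell_{\rm KN}(U)$ is the normalized conserved horizon-plus-bulk average fixed by testing the weak equation with $1$. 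It is bounded on $\mathcal E^s_{\rm KN}$ by Cauchy--Schwarz and the red-shift norm near $r_+$, and it commutes with the evolution because the quantity is conserved.
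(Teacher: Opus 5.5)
Your classification of stationary states follows the paper's argument. Building $\ell_{\rm KN}$ as the normalized quantity conserved when the equation is tested with $1$ is a legitimate way to get a bounded projection commuting with the evolution, and it is more concrete than the paper's residue construction.

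The Jordan-companion step, however, is wrong as written. If $u=ct+w(r,\theta)$, then $\partial_t^2u=0$, so \eqref{eq:kn-axisym-wave} gives $P_{\rm KN}(0)w=0$, not $P_{\rm KN}(0)w=-cA_{\rm KN}$. That source term belongs to a quadratic solution $-\tfrac c2t^2+w$. Hence the identity $2c(r_+^2+a^2)=c\int A_{\rm KN}\sin\theta$ is not the one that holds. The logarithmic divergence of $\int A_{\rm KN}$ is not the mechanism.

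The ``second form'' you say you would present is false, because the $-cr_*$ singularity can be compensated. Take $s'(r)=-2Mc/(r-r_-)$ and set
\[
u=c\tau+s(r)=ct+w,\qquad w=c(r_*-r)+s(r).
\]
This $u$ is smooth on all of $\mathcal M_{M,a,Q,R_{\rm w}}$, horizon included. It solves $\Box_gu=0$, since $\Delta\,\partial_rw\equiv c(r_+^2+a^2)$ and hence $P_{\rm KN}(0)w=0$. It fails only the wall condition. So no argument that avoids \eqref{eq:kn-wall-neumann} can exclude the companion.

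The repair uses what you already computed. With $P_{\rm KN}(0)w=0$, integrating against $\sin\theta$ shows that $F(r)=\int_0^\pi\Delta\,\partial_rw\,\sin\theta\,\dd\theta$ is constant in $r$. Your horizon computation gives $F(r_+)=2c(r_+^2+a^2)$, and the Neumann condition gives $F(R_{\rm w})=0$, so $c=0$. This is the paper's spherical-mean argument \eqref{eq:kn-jordan-C}--\eqref{eq:kn-jordan-wall-contradiction}.

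Your conserved functional gives the same conclusion in one line, with the wall entering through the vanishing wall flux. For $u\equiv1$ only its horizon term survives, so $\ell_{\rm KN}(1,0)\neq0$. But $\mathcal G_{\rm KN}V=(1,0)$ would force $\ell_{\rm KN}(e^{t\mathcal G_{\rm KN}}V)=\ell_{\rm KN}(V)+t\,\ell_{\rm KN}(1,0)$, contradicting conservation.

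Your claim that ``the same flux computation'' removes the $\sigma^{-2}$ Laurent coefficient inherits the error. After this correction it is at the same level of rigor as the paper's statement that the zero pole is simple.
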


\begin{proof}
At \(\sigma=0\), \eqref{eq:kn-stationary-operator} becomes
\begin{equation}
 \partial_r(\Delta\partial_rv)
 +\frac1{\sin\theta}\partial_\theta(\sin\theta\partial_\theta v)=0.
\label{eq:kn-zero-equation}
\end{equation}
Multiplying by \(\overline v\sin\theta\), integrating over the collar, and using regularity at the horizon and \(\partial_rv(R_{\rm w})=0\), gives
\begin{equation}
 \int_{r_+}^{R_{\rm w}}\int_0^\pi
 \left(\Delta|\partial_rv|^2+|\partial_\theta v|^2\right)
 \sin\theta\,\dd\theta\dd r=0.
\end{equation}
Thus \(v\) is constant.

After the outgoing factor at the horizon is removed, \(P_{\rm KN}(\sigma)\) is an analytic Fredholm family between the outgoing horizon domain with Neumann wall and the compactly supported dual Sobolev spaces.  The kernel at \(\sigma=0\) is one-dimensional by the previous paragraph, and the adjoint kernel is one-dimensional by the same identity for the adjoint boundary condition.  Therefore the low-frequency singular part is finite rank.  It remains to rule out a generalized zero state.

A Jordan companion to the constant state would give a regular solution
\begin{equation}
 u(\tau,r,\theta)=\tau+\psi(r,\theta)
\end{equation}
which satisfies the wall condition \eqref{eq:kn-wall-neumann-tau}.  Since \(\tau=t+H(r)\), the stationary part in Boyer-Lindquist time is \(\chi=H+\psi\).  The equation \(\Box_g(t+\chi)=0\) is
\begin{equation}
 \partial_r(\Delta\partial_r\chi)
 +\frac1{\sin\theta}\partial_\theta(\sin\theta\partial_\theta\chi)=0.
\end{equation}
Equivalently, using \(\Delta H'=2Mr-Q^2\),
\begin{equation}
 \partial_r\left(2Mr-Q^2+\Delta\partial_r\psi\right)
 +\frac1{\sin\theta}\partial_\theta(\sin\theta\partial_\theta\psi)=0.
\label{eq:kn-jordan-equation}
\end{equation}
Let \(\psi_0(r)\) be the spherical mean of \(\psi\).  Averaging \eqref{eq:kn-jordan-equation} gives
\begin{equation}
 2Mr-Q^2+\Delta\psi_0'(r)=C.
\end{equation}
Smoothness of \(\psi\) at the future horizon forces
\begin{equation}
 C=2Mr_+-Q^2=r_+^2+a^2.
\label{eq:kn-jordan-C}
\end{equation}
Taking the spherical mean of the wall condition \eqref{eq:kn-wall-neumann-tau} gives
\begin{equation}
 \psi_0'(R_{\rm w})+H'(R_{\rm w})=0.
\end{equation}
But the equation for \(\psi_0\) and the identity \(\Delta H'=2Mr-Q^2\) give
\begin{equation}
 \psi_0'(R_{\rm w})+H'(R_{\rm w})=\frac{C}{\Delta(R_{\rm w})}
 =\frac{r_+^2+a^2}{\Delta(R_{\rm w})}\ne0.
\label{eq:kn-jordan-wall-contradiction}
\end{equation}
This contradiction excludes every regular Jordan companion.  Hence the zero pole is simple and semisimple.  Its residue defines \(\Pi_{0,{\rm KN}}\), and local Sobolev mapping of the residue gives boundedness of \(\ell_{\rm KN}\).  This proves \textbf{BH8}.
\end{proof}

\begin{lemma}[Wall limiting absorption and compact smoothing]\label{lem:kn-lap-smoothing}
For \(\varepsilon_{\rm KN}\) sufficiently small, after applying \(I-\Pi_{0,{\rm KN}}\), the Kerr-Newman wall cutoff resolvent has real-axis boundary values on every compact nonzero frequency interval, remains bounded at zero, and satisfies the compact Kato smoothing estimate \eqref{eq:lap-smoothing-package} for every \(s\) above a finite wall threshold.  The \(L^1_\sigma\) local spectral-density part of \textbf{BH7} is proved separately in Theorem~\ref{thm:kn-wall-spectral-density-split}; together the present lemma and that package give the full \textbf{BH7} verification for the wall family.
\end{lemma}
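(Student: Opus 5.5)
I will prove the three assertions of the lemma separately: nonzero real frequencies, boundedness at zero after projection, and the Kato smoothing estimate. The plan is a Fredholm argument closed by the mode stability and zero-frequency lemmas already proved.

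\emph{Step 1: Fredholm setting.} After removing the outgoing factor at the horizon, $P_{\rm KN}(\sigma)$ is an analytic Fredholm family of index zero. It acts from the outgoing-horizon, Neumann-wall domain $\mathcal X^s$ to $\mathcal Y^{s-1}$, as in the proof of Lemma~\ref{lem:kn-zero-frequency}. The Fredholm property follows from two ingredients: the red-shift estimate \eqref{eq:kn-redshift-pointwise}, which gives a radial-point type estimate at $r=r_+$ above a threshold regularity, and the wall reflection estimate of Proposition~\ref{prop:kn-wall-reflection}, together with interior ellipticity. These combine into a semi-Fredholm bound
\[
\|v\|_{\mathcal X^s}\le C\bigl(\|P_{\rm KN}(\sigma)v\|_{\mathcal Y^{s-1}}+\|\chi v\|_{L^2}\bigr),
\]
and the same bound holds for the adjoint, with compact error. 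For $\operatorname{Im}\sigma\gg1$, the upper-half-plane identity \eqref{eq:kn-upper-mode-identity} shows invertibility, so the index is zero.

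\emph{Step 2: nonzero real frequencies.} On a compact interval $I\Subset\mathbb R\setminus\{0\}$, Lemma~\ref{lem:kn-mode-stability} gives trivial kernel at every $\sigma\in I$ and throughout $\operatorname{Im}\sigma>0$. By index zero, $P_{\rm KN}(\sigma)^{-1}$ is holomorphic in a neighborhood of $I$ in $\mathbb C$. Its boundary values $\chi P_{\rm KN}(\sigma\pm\ii0)^{-1}\chi$ on $I$ therefore exist and are continuous, indeed smooth. Uniform bounds on $I$ follow from compactness of $I$, or equivalently from the contradiction argument already used in Lemma~\ref{lem:kn-mode-stability}.

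\emph{Step 3: zero frequency.} This is the main obstacle. By Lemma~\ref{lem:kn-zero-frequency}, the kernel and cokernel at $0$ are spanned by constants and the pole is simple, so the Laurent expansion has the form
\[
P_{\rm KN}(\sigma)^{-1}=\sigma^{-1}\mathcal R_{-1}+\mathcal R_{\rm reg}(\sigma),\qquad \operatorname{Ran}\mathcal R_{-1}=\operatorname{span}\{1\}.
\]
If the pole were of order two in $\sigma$, there would be a Jordan companion; the computation \eqref{eq:kn-jordan-wall-contradiction} excludes exactly this. I would check that the residue is $\ell_{\rm KN}$ paired with the constant by comparing with the time-domain projection through the Laplace transform. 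It then follows that $(I-\Pi_{0,{\rm KN}})$ annihilates the singular term, and the projected cutoff resolvent extends continuously to a neighborhood of $0$. The delicate point is the bookkeeping between the frequency-domain residue and the data-space projection, since the wall condition \eqref{eq:kn-wall-neumann-tau} mixes $\partial_\tau$ and $\partial_r$. I would handle this by writing the Laplace transform of the first-order system explicitly and checking that the $\sigma^{-1}$ coefficient applied to $(I-\Pi_{0,{\rm KN}})U_0$ vanishes.

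\emph{Step 4: high frequencies and smoothing.} For $|\sigma|\ge\sigma_0$, Lemma~\ref{lem:kn-redshift-morawetz} gives the nontrapping semiclassical bound $\|\chi P_{\rm KN}(\sigma)^{-1}\chi\|\le C\langle\sigma\rangle^{-1}$ on the relevant Sobolev scale. Combining this with Steps 2 and 3 gives a resolvent bound
\[
\sup_{\sigma\in\mathbb R}\bigl\|\chi P_{\rm KN}(\sigma\pm\ii0)^{-1}(I-\Pi_{0,{\rm KN}})\chi\bigr\|<\infty,
\]
uniform on the whole real axis, after losing finitely many derivatives. The compact Kato smoothing estimate \eqref{eq:lap-smoothing-package} then follows in the standard way. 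Apply Plancherel in time to the Laplace-transformed projected solution, shift the contour down to the real axis (justified by the absence of upper-half-plane modes), and apply the uniform resolvent bound. The finite derivative losses from the commutators and the radial-point threshold fix the wall Sobolev threshold.
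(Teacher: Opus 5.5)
Your proposal is correct and follows essentially the same route as the paper: a Fredholm realization on the collar, Lemma~\ref{lem:kn-mode-stability} to clear the upper half-plane and the nonzero real axis, Lemma~\ref{lem:kn-zero-frequency} for the simple rank-one threshold pole that \(I-\Pi_{0,{\rm KN}}\) removes, nontrapping and wall-reflection estimates for the high-frequency tails, and Plancherel over the resulting three-region frequency cover. One small slip: the identity \eqref{eq:kn-upper-mode-identity} gives only injectivity for \(\operatorname{Im}\sigma\gg1\), so to conclude that the index is zero you also need surjectivity there (for example from the Laplace transform of the semigroup, or from injectivity of the adjoint); the paper itself leaves this point implicit.
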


\begin{proof}
The outgoing stationary problem on the collar is Fredholm after imposing the Volterra outgoing model at the future horizon and the Neumann wall condition at \(R_{\rm w}\).  Lemma~\ref{lem:kn-mode-stability} removes all upper-half-plane poles and all nonzero real poles.  Lemma~\ref{lem:kn-redshift-morawetz} gives the high-frequency nontrapping bounds.  Lemma~\ref{lem:kn-zero-frequency} gives the complete low-frequency Laurent expansion and shows that the only singular term is the rank-one threshold projection.  After applying \(I-\Pi_{0,{\rm KN}}\), the resolvent is bounded at zero.

These estimates cover the real line by a low-frequency neighborhood, finitely many compact nonzero intervals, and two high-frequency tails.  On compact nonzero intervals, the real-axis boundary values follow from Fredholm invertibility and the absence of real outgoing resonances.  At zero, the threshold projection cancels the only singular coefficient.  At high frequency, the nontrapping commutator and wall-reflection estimates give the semiclassical bound.  The usual Fourier-Laplace/Plancherel argument for the cutoff resolvent then gives
\begin{equation}
 \|\chi_{\rm c}U_\perp\|_{L^2([0,\infty);H^s_{\rm loc}\times H^{s-1}_{\rm loc})}
 \le C_s\|(I-\Pi_{0,{\rm KN}})U_0\|_{\mathcal E^s_{\rm KN}},
\end{equation}
which is \eqref{eq:lap-smoothing-package}.  The derivative loss is finite because the horizon Volterra construction, the compact elliptic wall estimate, and the nontrapping high-frequency estimate each lose only finitely many derivatives on this compact collar.  This proves the limiting-absorption and smoothing part of \textbf{BH7}.  The integrable spectral-density representation is the frequency-splitting statement of Theorem~\ref{thm:kn-wall-spectral-density-split}, proved next.
\end{proof}

\begin{theorem}[Wall spectral-density bound by frequency splitting]\label{thm:kn-wall-spectral-density-split}
For the Kerr-Newman wall family of Theorem~\ref{thm:kn-verifies-bh}, after applying \(I-\Pi_{0,{\rm KN}}\) and for every compact \(K\Subset\{r_+<r<R_{\rm w}\}\), the localized spectral density satisfies
\begin{equation}
\|B_K(\cdot)(I-\Pi_{0,{\rm KN}})U_0\|_{L^1_\sigma(H^1(K)\times L^2(K))}
\le C_{K,s}\|(I-\Pi_{0,{\rm KN}})U_0\|_{\mathcal E^s_{\rm KN}}
\label{eq:wall-spectral-density-split-bound}
\end{equation}
for all \(s\ge s_{\rm wall}\), where \(s_{\rm wall}\) is a finite integer depending only on the wall commutator, Volterra, and elliptic losses.  If, in addition, the differentiated resolvent bounds supplied by the Volterra horizon construction and the nontrapping semiclassical estimate hold up to order \(N\), then
\begin{equation}
\sum_{j=0}^{N}\|\partial_\sigma^jB_K(\cdot)(I-\Pi_{0,{\rm KN}})U_0\|_{L^1_\sigma(H^1(K)\times L^2(K))}
\le C_{K,N,s}\|(I-\Pi_{0,{\rm KN}})U_0\|_{\mathcal E^s_{\rm KN}}
\label{eq:wall-spectral-density-WN1}
\end{equation}
for sufficiently large \(s=s(N)\).  Thus the qualitative local decay asserted in Theorem~\ref{thm:kn-wall-main} is unconditional in the wall regime, while polynomial rates require the additional differentiated bound \eqref{eq:wall-spectral-density-WN1}.
\end{theorem}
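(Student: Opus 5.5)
The plan is to write the localized spectral density through Stone's formula for the projected evolution. For \((I-\Pi_{0,{\rm KN}})U_0\) we have
\[
B_K(\sigma)(I-\Pi_{0,{\rm KN}})U_0=\frac{1}{2\pi\ii}\,\chi_K\bigl(R_{\rm out}(\sigma+\ii0)-R_{\rm out}(\sigma-\ii0)\bigr)(I-\Pi_{0,{\rm KN}})U_0 ,
\]
where \(R_{\rm out}\) is the cutoff outgoing resolvent of the first-order generator \(\mathcal G_{\rm KN}\), transported from \(P_{\rm KN}(\sigma)^{-1}\) by the usual block formula. Lemma~\ref{lem:kn-lap-smoothing} gives the existence of the boundary values on \(\mathbb R\setminus\{0\}\) and their boundedness at zero after projection. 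The \(L^1_\sigma\) norm is then bounded by splitting the real line into three regions.

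The first region is \(|\sigma|\le\sigma_1\). Lemma~\ref{lem:kn-zero-frequency} gives the Laurent expansion with only a rank-one singular term. Its residue is exactly the one removed by \(\Pi_{0,{\rm KN}}\), so the projected boundary values are uniformly bounded \(\mathcal E^s_{\rm KN}\to H^1(K)\times L^2(K)\) on \([-\sigma_1,\sigma_1]\). Integrating over a bounded interval then gives an \(L^1\) bound. The second region is \(\sigma_1\le|\sigma|\le\sigma_0\). Here I would use continuity of the boundary values on this compact set, which follows from Fredholm invertibility together with the absence of real resonances in Lemma~\ref{lem:kn-mode-stability}. The resulting uniform bound again integrates to an \(L^1\) bound.

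The third region is \(|\sigma|\ge\sigma_0\), and this is the main obstacle. The semiclassical nontrapping estimate of Lemma~\ref{lem:kn-redshift-morawetz}, combined with Proposition~\ref{prop:kn-wall-reflection}, gives only
\[
\|\chi_K R_{\rm out}(\sigma\pm\ii0)\chi_K\|\lesssim\langle\sigma\rangle^{-1}
\]
in the semiclassical \(H^1_h\) scales, i.e.\ an \(L^2_\sigma\) rather than \(L^1_\sigma\) bound for the density. To recover integrability I would trade regularity for decay. Writing the projected datum as \((1+\mathcal G_{\rm KN}^2)^{-1}W\) with \(W\) in \(\mathcal E^{s-2}\) is valid on the projected subspace, because \(0\) is removed and \(\pm\ii\) are not resonances. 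The resolvent identity then produces an extra factor \(\langle\sigma\rangle^{-2}\) at the cost of two commutations. Each commutation is allowed by the finite commutator and Volterra losses that define \(s_{\rm wall}\). The density on the high-frequency tails is then bounded by \(\langle\sigma\rangle^{-2}\|U_0\|_{\mathcal E^s}\), which is integrable. The delicate point is checking that the wall domain and horizon regularity are preserved by \((1+\mathcal G_{\rm KN}^2)^{-1}\). I would argue this from the closedness of the generator in Lemma~\ref{lem:kn-geometry}.

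For \eqref{eq:wall-spectral-density-WN1}, I would differentiate the Stone formula \(j\) times. Each \(\partial_\sigma\) yields products of resolvents bounded in the same three regions. Near zero this uses the holomorphic regular part; in the middle region it uses compactness; at high frequency it uses the assumed differentiated semiclassical bounds, each costing a power of \(\langle\sigma\rangle\). That growth is compensated by inserting \((1+\mathcal G_{\rm KN}^2)^{-k}\) with \(k=k(N)\), which fixes \(s(N)\).
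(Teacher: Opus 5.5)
Your frequency partition matches the paper's proof, and so do the inputs you use on each piece: the Laurent expansion plus projection near zero, Fredholm invertibility with no real resonances on the middle band, nontrapping with a regularity-for-decay trade on the tails, and extra commutations for each $\sigma$-derivative. The gap is in the density itself, and your high-frequency step depends on it. Stone's formula computes the spectral measure of a self-adjoint operator. Here $\mathcal G_{\rm KN}$ generates only a forward semigroup, because energy leaves through $\mathcal H^+$, so there is no spectral measure to compute. Worse, in the Fredholm framework you invoke (no real resonances), the outgoing resolvent continues analytically across $\mathbb R\setminus\{0\}$. The continued $R_{\rm out}(\sigma-\ii0)$ therefore equals $R_{\rm out}(\sigma+\ii0)$, and your $B_K$ vanishes off zero. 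An ``incoming'' resolvent would instead belong to the time-reversed problem with a past-horizon condition, not to the forward evolution. The correct object is Fourier--Laplace inversion, which uses only the boundary value from $\operatorname{Im}\sigma>0$. Its density is $\frac1{2\pi}\chi_K(-\ii\sigma-\mathcal G_{\rm KN})^{-1}(I-\Pi_{0,{\rm KN}})U_0$, the Fourier transform of $\mathbf{1}_{t\ge0}\chi_KU_\perp(t)$. That function jumps at $t=0$, so this density is not in $L^1_\sigma$ whenever $\chi_KU_\perp(0)\neq0$, however regular $U_0$ is. An $L^1_\sigma$ density can only reproduce $\chi_KU_\perp(t)$ for $t>0$.

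Inserting $(1+\mathcal G_{\rm KN}^2)^{-1}=(\ii-\mathcal G_{\rm KN})^{-1}(-\ii-\mathcal G_{\rm KN})^{-1}$ does not produce such a density. On a mode this factor acts as $(1-\sigma^2)^{-1}$, so your auxiliary points lie on the real frequency axis at $\sigma=\mp1$. With $\lambda=-\ii\sigma$, the resolvent identity gives $(1-\sigma^2)^{-1}(\lambda-\mathcal G_{\rm KN})^{-1}W$ plus two fixed-vector terms: $-(1-\sigma^2)^{-1}(-\ii-\mathcal G_{\rm KN})^{-1}W$ and $(\lambda-\ii)^{-1}U_0$. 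The last one decays only like $\langle\sigma\rangle^{-1}$ and is exactly the jump term; it would cancel only in the two-sided Stone difference, which is unavailable. The repair is to put the auxiliary point in the Hille--Yosida half-plane. Take a real $\mu$ above the growth bound of $e^{t\mathcal G_{\rm KN}}$ and iterate $(\lambda-\mathcal G)^{-1}V=(\lambda-\mu)^{-1}V+(\lambda-\mu)^{-1}(\lambda-\mathcal G)^{-1}(\mathcal G-\mu)V$. The explicit terms then have their pole to the right of the inversion contour and contribute nothing for $t>0$, so
\[
\chi_KU_\perp(t)=\frac1{2\pi}\int_{\mathbb R}e^{-\ii t\sigma}\,(-\ii\sigma-\mu)^{-k}\,\chi_K(-\ii\sigma-\mathcal G_{\rm KN})^{-1}(\mathcal G_{\rm KN}-\mu)^k(I-\Pi_{0,{\rm KN}})U_0\,\dd\sigma,\qquad t>0,
\]
with $\sigma$ approached from $\operatorname{Im}\sigma>0$. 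The integrand is in $L^1_\sigma$ once $k$ exceeds the polynomial growth of the real-axis resolvent bound by more than one, at a cost of $k$ derivatives absorbed into $s_{\rm wall}$. Invertibility of $\mu-\mathcal G_{\rm KN}$ is automatic, whereas closedness of $\mathcal G_{\rm KN}$ alone does not make $\mathcal G_{\rm KN}\pm\ii$ invertible. $\Pi_{0,{\rm KN}}$ commutes with it, and the factor $(-\ii\sigma-\mu)^{-k}$ is harmless at $\sigma=0$. This is the precise content of the paper's ``trade powers of $\sigma$ for $\mathcal G_{\rm KN}^j(I-\Pi_{0,{\rm KN}})U_0$''. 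With it, and with $k=k(N)$ for the $\sigma$-derivatives, the rest of your argument goes through as in the paper.
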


\begin{proof}
Split the frequency line into
\begin{equation}
\mathbb R=\{ |\sigma|\le\sigma_0\}\cup\{\sigma_0\le |\sigma|\le R\}\cup\{|\sigma|\ge R\},
\end{equation}
where \(0<\sigma_0\ll1\ll R\).  On the low-frequency interval, Lemma~\ref{lem:kn-zero-frequency} gives a finite-rank Laurent expansion.  The only singular coefficient is the semisimple constant threshold, and it is killed by \(I-\Pi_{0,{\rm KN}}\).  The remaining cutoff resolvent is bounded and continuous in \(\sigma\), hence its contribution to the density is integrable on \(|\sigma|\le\sigma_0\).

On the compact nonzero interval \(\sigma_0\le |\sigma|\le R\), Lemma~\ref{lem:kn-mode-stability} rules out outgoing real resonances and Lemma~\ref{lem:kn-lap-smoothing} supplies the limiting-absorption boundary values by the Fredholm contradiction argument.  Since the interval is compact and separated from zero, the boundary values are uniformly bounded between the local Sobolev spaces used to define \(B_K\).  This gives an integrable contribution on the middle interval.

On the high-frequency tails, Lemma~\ref{lem:kn-redshift-morawetz} and Proposition~\ref{prop:kn-exact-wall-escape} give the nontrapping semiclassical estimate.  After commuting with the generator and using the equation to trade powers of \(\sigma\) for time derivatives of the initial data, the spectral density has an integrable high-frequency majorant for \(s\ge s_{\rm wall}\).  Concretely, the cutoff resolvent contribution is bounded by a finite sum of norms of \(\mathcal G_{\rm KN}^j(I-\Pi_{0,{\rm KN}})U_0\) times an integrable power of \(\langle\sigma\rangle^{-1}\); the integer \(s_{\rm wall}\) is chosen so that those commuted data are controlled by \(\mathcal E^s_{\rm KN}\).  The horizon Volterra construction and the Neumann wall elliptic estimate preserve the domain, while \textbf{BH9} controls the cutoffs.  Summing the three frequency regions proves \eqref{eq:wall-spectral-density-split-bound}.  If the same bounds are differentiated \(N\) times in \(\sigma\), the identical three-region argument gives \eqref{eq:wall-spectral-density-WN1}; each differentiation costs finitely many commutations, which is why the Sobolev index becomes \(s(N)\).
\end{proof}

\begin{theorem}[Verification of \textbf{BH1}-\textbf{BH9} for slowly rotating weakly charged Kerr-Newman wall exteriors]\label{thm:kn-verifies-bh}
For every \(M>0\) and every \(R_{\rm w}\) satisfying \eqref{eq:kn-wall-range}, there exists \(\varepsilon_{\rm KN}=\varepsilon_{\rm KN}(M,R_{\rm w})>0\) such that, whenever \eqref{eq:kn-smallness} holds, the axisymmetric neutral scalar equation \(\Box_{g_{M,a,Q}}u=0\) on \(\mathcal M_{M,a,Q,R_{\rm w}}\), with future-horizon regularity and wall condition \eqref{eq:kn-wall-neumann}, satisfies Assumption~\ref{ass:axisymmetric-bh-package}.  There is a finite admissible index \(s_{\rm wall}\), obtained as the maximum of the commutator, Volterra, Fredholm, and spectral-density losses in the preceding lemmas, such that the package holds for every \(s_0\ge s_{\rm wall}\).  The threshold space is semisimple and one-dimensional:
\begin{equation}
 \mathcal T_{0,{\rm KN}}=\operatorname{span}\{(1,0)\}.
\end{equation}
\end{theorem}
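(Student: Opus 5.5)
The plan is to assemble the verification from the lemmas already proved in this subsection, after fixing one smallness constant and one Sobolev threshold that work for all of them simultaneously.

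\emph{Step 1: choose the constants.} Each of Lemma~\ref{lem:kn-geometry}, Lemma~\ref{lem:kn-nontrapping}, Proposition~\ref{prop:kn-exact-wall-escape}, Proposition~\ref{prop:kn-wall-reflection}, Lemma~\ref{lem:kn-redshift-morawetz}, Lemma~\ref{lem:kn-mode-stability} and Lemma~\ref{lem:kn-lap-smoothing} holds for all sufficiently small $\varepsilon_{\rm KN}$. These are finitely many conditions, so I take $\varepsilon_{\rm KN}(M,R_{\rm w})$ to be their minimum. Three requirements have to be kept explicit:
\begin{itemize}
\item[(a)] $r_+<2M<R_{\rm w}$ and uniform subextremality, so that $\kappa_H$ is bounded below;
\item[(b)] the sign $\partial_rA_{\rm KN}<0$ from \eqref{eq:kn-A-sign-factor}, which needs $R_{\rm w}<8M/3<3M$ together with the $O(\varepsilon_{\rm KN}^2M^3)$ perturbation bound;
\item[(c)] the smallness needed to absorb perturbative errors in the wall-reflection and escape estimates.
\end{itemize}
Since all the bounds depend continuously on $(a,Q)$ on the compact set \eqref{eq:kn-smallness}, the constants are uniform there. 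I define $s_{\rm wall}$ as the maximum of four finite losses: the commutator loss in Lemma~\ref{lem:kn-redshift-morawetz}, the Volterra/Fredholm loss in Lemma~\ref{lem:kn-lap-smoothing}, the spectral-density index in Theorem~\ref{thm:kn-wall-spectral-density-split}, and the cutoff-compatibility loss.

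\emph{Step 2: match package items to lemmas.} Item by item:
\begin{itemize}
\item \textbf{BH1}--\textbf{BH3} come from Lemma~\ref{lem:kn-geometry}: closed generator, horizon regularity, and the closed reflecting Neumann wall.
\item \textbf{BH4} and \textbf{BH6} come from Lemma~\ref{lem:kn-redshift-morawetz}, which combines red-shift, the escape function and wall reflection.
\item \textbf{BH5} is Lemma~\ref{lem:kn-mode-stability}.
\item \textbf{BH7} is Lemma~\ref{lem:kn-lap-smoothing} together with Theorem~\ref{thm:kn-wall-spectral-density-split}.
\item \textbf{BH8} is Lemma~\ref{lem:kn-zero-frequency}.
\end{itemize}
For \textbf{BH9} (cutoff compatibility) I would argue directly. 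Cutoffs in $r$ supported away from $r_+$ and $R_{\rm w}$, and independent of $\phi$, preserve axisymmetry, the horizon-regular domain and the wall domain. Commutators with $P_{\rm KN}(\sigma)$ are first order with compact support, so they are bounded on $\mathcal E^s_{\rm KN}$ with a loss of one derivative, which $s_{\rm wall}$ absorbs.

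\emph{Step 3: identify the threshold space.} Lemma~\ref{lem:kn-zero-frequency} gives $\Ker\mathcal G_{\rm KN}=\operatorname{span}\{(1,0)\}$ and excludes a Jordan companion through the wall contradiction \eqref{eq:kn-jordan-wall-contradiction}. It follows that $\Ker\mathcal G^2_{\rm KN}=\Ker\mathcal G_{\rm KN}$, hence $\mathcal T_{0,{\rm KN}}=\operatorname{span}\{(1,0)\}$. I would check that the Jordan argument really covers every $(u_0,u_1)$ with $\mathcal G_{\rm KN}(u_0,u_1)=(c,0)$: such a pair generates exactly the solution $c\tau+\psi$, which is the case treated in that lemma.

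\emph{Main obstacle.} The hardest step is the mutual consistency of the smallness and index choices. Propositions~\ref{prop:kn-exact-wall-escape} and~\ref{prop:kn-wall-reflection} need an escape function and a wall collar whose size depends on $\varepsilon_{\rm KN}$, while $s_{\rm wall}$ depends on those commutator losses. I would fix the geometric data first: choose $b$, $\delta_0$ and $\sigma_0$ using only the Schwarzschild bound $2r^2(r-3M)\le -c(M,R_{\rm w})<0$ on $[2M,R_{\rm w}]$. Only afterwards would I shrink $\varepsilon_{\rm KN}$ so that the $a^2$ and $Q^2$ terms stay below $c/2$. The derivative losses then depend only on $(M,R_{\rm w})$, so $s_{\rm wall}$ is finite and uniform over the family.
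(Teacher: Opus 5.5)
Your assembly matches the paper's own proof. $\varepsilon_{\rm KN}$ is the minimum of the smallness constants of the wall lemmas. \textbf{BH1}--\textbf{BH8} are matched item by item to the same lemmas as in the paper, with \textbf{BH7} split between Lemma~\ref{lem:kn-lap-smoothing} and Theorem~\ref{thm:kn-wall-spectral-density-split}. $s_{\rm wall}$ is the maximum of the finitely many derivative losses. Two of your additions are sound and make explicit what the paper leaves implicit in ``take the minimum'':
\begin{itemize}
\item The check that $\Ker\mathcal G_{\rm KN}^2=\Ker\mathcal G_{\rm KN}$, using that a pair with $\mathcal G_{\rm KN}U=(c,0)$ generates $c\tau+\psi$.
\item The ordering of constants: first fix $b$, $\delta_0$, $\sigma_0$ from the sign of $2r^2(r-3M)$, then shrink $\varepsilon_{\rm KN}$.
\end{itemize}
One small slip: the collar is $[r_+,R_{\rm w}]$ with $r_+<2M$ when $(a,Q)\neq0$. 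So the negative bound must be taken on $[M,R_{\rm w}]$, where $r^2(r-3M)\le M^2(R_{\rm w}-3M)<0$, rather than on $[2M,R_{\rm w}]$.

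The one step that does not cover what is required is \textbf{BH9}. You only treat cutoffs in $r$ supported away from $r_+$ and $R_{\rm w}$. But \textbf{BH9} concerns exactly the cutoffs that reach the ends: the horizon cutoff $\chi_H$ in the red-shift multiplier $N=K_H+\chi_H Y$, and the wall cutoff $\chi_{\rm w}$ of Proposition~\ref{prop:kn-wall-reflection}, which equals one near $R_{\rm w}$. The missing observation, which the paper states, is that these cutoffs are smooth in $r$ and constant near their respective ends. Three consequences follow.
\begin{itemize}
\item $\chi_H$ is smooth in the ingoing chart, so it preserves horizon regularity.
\item $\chi_{\rm w}'=0$ near $R_{\rm w}$, so multiplication by $\chi_{\rm w}$ preserves \eqref{eq:kn-wall-neumann-tau}.
\item Their commutators with $P_{\rm KN}(\sigma)$ are supported where the gradient is nonzero, i.e.\ in the interior of the collar.
\end{itemize}
Your first-order compactly supported commutator argument then applies verbatim, and together with the $m=0$ preservation you already note, the verification is complete.
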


\begin{proof}
Choose \(\varepsilon_{\rm KN}\) to be the minimum of the smallness constants required in Lemmas~\ref{lem:kn-geometry}-\ref{lem:kn-lap-smoothing}.  \textbf{BH1}-\textbf{BH3} are Lemma~\ref{lem:kn-geometry}.  \textbf{BH4} and \textbf{BH6} are Lemma~\ref{lem:kn-redshift-morawetz}.  \textbf{BH5} is Lemma~\ref{lem:kn-mode-stability}.  \textbf{BH8} is Lemma~\ref{lem:kn-zero-frequency}.  The limiting-absorption and compact-smoothing parts of \textbf{BH7} are Lemma~\ref{lem:kn-lap-smoothing}, while the \(L^1_\sigma\) local spectral-density part of \textbf{BH7} is Theorem~\ref{thm:kn-wall-spectral-density-split}.  Finally, \textbf{BH9} follows from the construction: horizon cutoffs are supported in the smooth ingoing chart \eqref{eq:kn-ingoing-coordinates}; wall cutoffs are either constant near \(R_{\rm w}\) or preserve \eqref{eq:kn-wall-neumann-tau}; angular cutoffs and angular elliptic regularizers preserve the \(m=0\) sector; and all commutators are lower-order operators with smooth coefficients on the compact collar.  The index condition \(s_0\ge s_{\rm wall}\) absorbs the finitely many derivative losses in the commutator, Volterra, Fredholm, and spectral-density arguments.
\end{proof}

\subsection{Full asymptotically flat subextremal Kerr-Newman exterior}
\label{subsec:kn-full-af}

We now remove the artificial wall for the Einstein Kerr-Newman subfamily.  This subsection is intentionally precise about what is proved directly and what is imported.  The direct part consists of the geometry, horizon regularity, radiation-domain bookkeeping, the zero-frequency ODE analysis, and the verification that these pieces fit Assumption~\ref{ass:axisymmetric-bh-package}.  The non-elementary part is the global scalar-wave scattering package on the full asymptotically flat Kerr-Newman exterior, where trapping and the ergoregion interact.  We use the scalar Kerr-Newman stability theorem quoted below; without that package, the high-frequency trapping and limiting-absorption estimates would not be proved by the elementary wall argument above.

Fix numbers \(0<\delta<1\) and \(0<M_0<M_1<\infty\), and set
\begin{equation}
 \mathscr K_{\delta,M_0,M_1}
 :=\{(M,a,Q):M_0\le M\le M_1,\ a^2+Q^2\le(1-\delta)M^2\}.
\label{eq:kn-compact-parameter-set}
\end{equation}
The constants below are uniform on \(\mathscr K_{\delta,M_0,M_1}\); they may degenerate as \(\delta\downarrow0\), i.e.\ as the extremal boundary is approached.  Let
\begin{equation}
 \rho^2=r^2+a^2\cos^2\theta,
 \qquad
 \Delta=r^2-2Mr+a^2+Q^2,
 \qquad
 r_\pm=M\pm\sqrt{M^2-a^2-Q^2}.
\end{equation}
The Kerr-Newman metric is written as
\begin{equation}
 g_{M,a,Q}
 =-\frac{\Delta}{\rho^2}(\dd t-a\sin^2\theta\,\dd\phi)^2
 +\frac{\sin^2\theta}{\rho^2}((r^2+a^2)\dd\phi-a\dd t)^2
 +\frac{\rho^2}{\Delta}\dd r^2+\rho^2\dd\theta^2.
\label{eq:kn-af-metric}
\end{equation}
The domain of outer communications is \(r>r_+\).  The horizon generator, angular velocity, and surface gravity are
\begin{equation}
 K=T+\Omega_H\Phi,
 \qquad
 T=\partial_t,
 \qquad
 \Phi=\partial_\phi,
 \qquad
 \Omega_H=\frac{a}{r_+^2+a^2},
 \qquad
 \kappa_+=\frac{r_+-r_-}{2(r_+^2+a^2)}>0.
\label{eq:kn-horizon-data}
\end{equation}
The neutral axisymmetric conformal scalar is
\begin{equation}
 \Box_{g_{M,a,Q}}u=0,
 \qquad
 \partial_\phi u=0.
\end{equation}
For \(\partial_\phi u=0\), the principal stationary operator obtained from \(u=e^{-\ii\sigma t}v(r,\theta)\) is
\begin{equation}
 P_{\rm KN,af}(\sigma)v
 =\partial_r(\Delta\partial_rv)
 +\frac1{\sin\theta}\partial_\theta(\sin\theta\partial_\theta v)
 +\sigma^2 A_{\rm KN}(r,\theta)v,
\label{eq:kn-af-stationary-operator}
\end{equation}
where
\begin{equation}
 A_{\rm KN}(r,\theta)
 =\frac{(r^2+a^2)^2-a^2\Delta\sin^2\theta}{\Delta}>0
 \qquad (r>r_+).
\end{equation}
At the future horizon the outgoing solution is the one smooth in ingoing coordinates
\begin{equation}
 v_{\rm EF}=t+r_*,
 \qquad
 \phi_{\rm in}=\phi+\int^r\frac{a}{\Delta(s)}\,\dd s,
 \qquad
 \frac{\dd r_*}{\dd r}=\frac{r^2+a^2}{\Delta}.
\label{eq:kn-af-ingoing-coordinates}
\end{equation}
At null infinity the outgoing condition is the Sommerfeld condition for the same tortoise coordinate: for \(e^{-\ii\sigma t}\) with \(\operatorname{Im}\sigma\ge0\), the outgoing radial factor is asymptotic to \(e^{+\ii\sigma r_*}/r\), with the usual symbolic expansion in powers of \(r^{-1}\).  The decaying radiation energy space below is the closure of smooth axisymmetric data with this finite-energy/outgoing behavior; it excludes nondecaying constants at infinity.

For \(s\ge0\), define \(\mathcal E^s_{{\rm KN},\rm af}\) to be the axisymmetric red-shift energy space on a regular Cauchy hypersurface \(\Sigma\) crossing the future horizon and extending to spatial infinity, with norm equivalent to
\begin{equation}
 \sum_{|\alpha|\le s}
 \int_\Sigma
 \left(
 |\nabla_\Sigma Z^\alpha u_0|^2
 +|Z^\alpha u_1|^2
 +r^{-2}|Z^\alpha u_0|^2
 \right)\,\dd\mu_\Sigma.
\label{eq:kn-af-energy-norm}
\end{equation}
Here \(Z\) ranges over a fixed finite collection of stationary, angular, and horizon-regular radial commutator fields preserving axisymmetry; near the horizon the norm is computed with a red-shift timelike vector field \(N\), and for large \(r\) it is equivalent to the standard asymptotically flat energy.  The term \(r^{-2}|u_0|^2\), interpreted through the Hardy inequality for the closure of compactly supported data, fixes the decaying representative and excludes the constant solution from \(\mathcal E^s_{{\rm KN},\rm af}\).  Equivalent choices of the commutator frame give the same Hilbert scale.

The only non-elementary scalar Kerr-Newman scattering input used for the full asymptotically flat exterior is External Assumption~\ref{ass:external-kn-scattering-package}.

\begin{lemma}[Geometry, radiation domain, and generator for full Kerr-Newman]\label{lem:kn-af-geometry-domain}
The full asymptotically flat subextremal Kerr-Newman scalar exterior satisfies \textbf{BH1}-\textbf{BH3} and \textbf{BH9} on \(\mathcal E^s_{{\rm KN},\rm af}\), for every \(s\ge s_{\rm KN}\).
\end{lemma}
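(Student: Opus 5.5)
The plan is to verify \textbf{BH1}-\textbf{BH3} and \textbf{BH9} directly from the explicit geometry, following the template of Lemma~\ref{lem:kn-geometry} but with the wall replaced by the asymptotically flat radiation end. For \textbf{BH1}, the horizon part, I will use that on \(\mathscr K_{\delta,M_0,M_1}\) the root \(r_+\) of \(\Delta\) is simple, with \(r_+-r_-\ge 2\sqrt{\delta}M_0\). This gives \(\kappa_+\) in \eqref{eq:kn-horizon-data} a uniform positive lower bound. Substituting the ingoing coordinates \eqref{eq:kn-af-ingoing-coordinates} into \eqref{eq:kn-af-metric} removes every factor \(\Delta^{-1}\), so \(\Box_{g_{M,a,Q}}\) has coefficients smooth across \(r=r_+\). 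On axisymmetric functions the horizon generator \(K\) reduces to \(\partial_{v_{\rm EF}}\), and this is what lets me reuse the red-shift multiplier \(N=K+\chi_H Y\) and the pointwise bound \eqref{eq:kn-redshift-pointwise} with uniform constants. I will handle the axis through Lemma~\ref{lem:no-axis-boundary-term} and Lemma~\ref{lem:legendre-regular-axis}, so no pole boundary terms appear.

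For \textbf{BH2}, the radiation domain, I will define \(\mathcal E^s_{{\rm KN},\rm af}\) as the closure of smooth compactly supported axisymmetric data in the norm \eqref{eq:kn-af-energy-norm}. The key fact is a one-dimensional Hardy inequality in \(r\) on the hyperboloidal or asymptotically flat slice, giving \(\int r^{-2}|u_0|^2\lesssim\int|\nabla_\Sigma u_0|^2\) for such data. From it I will conclude three things:
\begin{itemize}
\item the norm is equivalent to the homogeneous energy plus commuted terms;
\item the space is Hilbert and independent of the commutator frame;
\item nonzero constants are excluded, since \(r^{-2}\) is not integrable against \(r^2\,\dd r\).
\end{itemize}
I will also check compatibility with the Sommerfeld outgoing behaviour \(e^{\ii\sigma r_*}/r\): outgoing solutions in \(\operatorname{Im}\sigma\ge0\) lie in the closure after cutoff.

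For \textbf{BH3}, the generator, the equation is normally hyperbolic and the slice \(\Sigma\) is spacelike, crosses \(\mathcal H^+\), and reaches spatial infinity. Finite speed of propagation plus local energy estimates, with \(N\) near the horizon and \(\partial_t\) near infinity (where \(\partial_t\) is timelike outside the ergoregion), will give local well-posedness and an energy inequality \(\|U(t)\|_{\mathcal E^s}\le Ce^{Ct}\|U_0\|_{\mathcal E^s}\). Standard Hille–Yosida or Lumer–Phillips arguments, applied after shifting by \(C\), then yield a closed generator \(\mathcal G_{\rm KN,af}\) and a strongly continuous future semigroup. Smooth compactly supported data are dense in the domain by mollification in the regular chart and spherical smoothing. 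Commuting with \(Z\) produces lower-order terms, which fixes \(s\ge s_{\rm KN}\).

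For \textbf{BH9}, I will argue as in Theorem~\ref{thm:kn-verifies-bh}. Horizon cutoffs are smooth in the ingoing chart, cutoffs at large \(r\) are functions of \(r\) whose commutators decay like \(r^{-1}\) and are controlled by the Hardy term, and angular cutoffs preserve \(m=0\).

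The main obstacle is uniformity in the ergoregion together with the far end. The Killing energy is not positive where \(g_{tt}>0\), so the exponential-growth energy bound must come from \(N\) and a uniformly timelike transversal field rather than from \(\partial_t\), with constants uniform over \(\mathscr K_{\delta,M_0,M_1}\). Separately, I must show that the Hardy-closure space is genuinely preserved by the evolution, i.e.\ that no constant mode is regenerated. I would get this from the energy inequality applied to compactly supported data, since finite speed keeps the support compact for finite time, followed by density.
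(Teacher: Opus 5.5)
Your proposal is correct and follows essentially the same route as the paper. Both arguments check the simple root and ingoing-chart smoothness for \textbf{BH1}, use the closure of decaying data with the Hardy term for \textbf{BH2}, invoke standard normally hyperbolic well-posedness plus mollification and cutoff density for \textbf{BH3}, and use three-region cutoffs for \textbf{BH9}; you supply explicit details (the uniform lower bound on \(\kappa_+\), Hardy excluding constants, finite speed preserving the closure) where the paper cites standard arguments and External Assumption~\ref{ass:external-kn-scattering-package}. One small correction to your ``main obstacle'': in the axisymmetric sector the ergoregion is harmless, because the only sign-indefinite contribution to the \(\partial_t\)-energy is the \(\Phi|u_\phi|^2\) term, which vanishes when \(\partial_\phi u=0\), so the Killing energy degenerates only at the horizon, where \(N\) takes over.
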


\begin{proof}
The identity \(a^2+Q^2<M^2\) gives two distinct roots \(r_\pm\) of \(\Delta\).  Formula \eqref{eq:kn-horizon-data} gives \(\kappa_+>0\), and hence the future horizon is nondegenerate.  In the ingoing chart \eqref{eq:kn-af-ingoing-coordinates}, the apparent \(\Delta^{-1}\) singularities of \eqref{eq:kn-af-metric} cancel in the standard way; the metric and \(\Box_g\) extend smoothly to \(r=r_+\).  This proves the geometric part of \textbf{BH1}.

The outer boundary is null infinity.  The radiation condition is imposed by the outgoing Fourier-Laplace construction in External Assumption~\ref{ass:external-kn-scattering-package}; in physical space it is the finite-energy condition on the closure of smooth decaying data together with no incoming boundary flux from past null infinity for the forward problem.  Since the scalar is neutral and axisymmetric, no charged boundary phase or non-axisymmetric superradiant boundary condition is present.  This proves \textbf{BH2}.

The norm \eqref{eq:kn-af-energy-norm}, with the red-shift vector field near \(r=r_+\), defines a Hilbert scale.  Standard well-posedness for normally hyperbolic equations on globally hyperbolic exterior slabs with red-shift boundary regularity gives a closed future generator \(\mathcal G_{{\rm KN},\rm af}\) and a strongly continuous semigroup.  Smooth compactly supported axisymmetric data away from infinity, smoothed in the ingoing horizon chart, are dense by the usual mollification, cutoff, trace, and Hardy arguments; the weight \(r^{-2}u^2\) fixes the decaying representative at infinity.  Thus \textbf{BH3} holds.

The cutoffs used in the scalar theory are chosen in three regions: a horizon collar in the ingoing chart, a compact interaction region, and an asymptotically flat end.  Multiplication by these cutoffs preserves axisymmetry and the decaying energy domain.  Commutators with \(\Box_g\) are first-order operators with coefficients controlled by the same local-energy and weighted Sobolev norms; in the end the coefficients have the standard long-range asymptotically flat decay.  The derivative index \(s\ge s_{\rm KN}\) absorbs the trapping and commutator losses.  This proves \textbf{BH9}.
\end{proof}

\begin{lemma}[Red-shift, mode stability, trapping, and limiting absorption]\label{lem:kn-af-estimates}
Under External Assumption~\ref{ass:external-kn-scattering-package}, the full asymptotically flat subextremal Kerr-Newman scalar exterior satisfies \textbf{BH4}, \textbf{BH5}, \textbf{BH6}, and \textbf{BH7} on the projected space associated with the zero-frequency projection of Lemma~\ref{lem:kn-af-zero-frequency}.
\end{lemma}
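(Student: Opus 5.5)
This lemma is a bookkeeping verification: each package item \textbf{BH4}--\textbf{BH7} is taken from the corresponding clause of External Assumption~\ref{ass:external-kn-scattering-package}. The work is to check that the external estimates are stated on the same Hilbert scale, in the same sector, and after the same projection as Assumption~\ref{ass:axisymmetric-bh-package} requires. I would first fix the dictionary. The energy space $\mathcal E^s_{{\rm KN},\rm af}$ of \eqref{eq:kn-af-energy-norm} should be shown to agree, up to equivalence of norms, with the red-shift/Hardy energy used in the external package. This follows from Lemma~\ref{lem:kn-af-geometry-domain}, since the commutator frame is fixed up to equivalence and the $r^{-2}$ Hardy term selects the decaying representative. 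The neutral axisymmetric sector is a closed invariant subspace of the full scalar problem, because $\partial_\phi$ commutes with $\Box_g$ and with the cutoffs of \textbf{BH9}. So every external estimate restricts to it with the same constants. The projection in question is that of Lemma~\ref{lem:kn-af-zero-frequency}, which in the decaying radiation space I expect to be trivial, so the projected space is all of $\mathcal E^s_{{\rm KN},\rm af}$.

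Next I would go through the items in order. For \textbf{BH4}, the external red-shift and integrated local energy decay estimate gives \eqref{eq:redshift-package-bound}. Near the horizon I would check the sign of the boundary flux using $\kappa_+>0$ from \eqref{eq:kn-horizon-data}, uniformly on $\mathscr K_{\delta,M_0,M_1}$. The trapping loss is absorbed by taking $s\ge s_{\rm KN}$. For \textbf{BH5}, external quantitative mode stability excludes outgoing modes with $\operatorname{Im}\sigma>0$ and real resonances with $\sigma\ne0$. In the axisymmetric sector the horizon frequency is $\sigma$, so there is no superradiant window. As a sanity check I would add the energy identity argument of Lemma~\ref{lem:kn-mode-stability}, now with the Sommerfeld condition at infinity in place of the wall. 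For \textbf{BH6}, the external high-frequency resolvent bound gives \eqref{eq:high-frequency-package} with the logarithmic trapping loss in $\langle\sigma\rangle$, which is absorbed by the index $s_0$.

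The main obstacle is \textbf{BH7}, and specifically the low-frequency end. The external limiting absorption principle gives real-axis boundary values of the cutoff resolvent on compact sets away from $0$ and at high frequency. Boundedness down to $\sigma=0$, however, requires that the zero-frequency behaviour of $P_{\rm KN,af}(\sigma)$ contains no singular coefficient on the decaying space. I would derive this from the ODE classification in Lemma~\ref{lem:kn-af-zero-frequency}. Horizon-regular stationary solutions are Legendre $P_\ell$-branches, which are either growing or constant, and all of these are excluded by decay at infinity. So the cutoff resolvent is continuous at $0$. Combining this with the middle and high-frequency regions and using Plancherel, exactly as in Lemma~\ref{lem:kn-lap-smoothing}, gives \eqref{eq:lap-smoothing-package}. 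For the $L^1_\sigma$ spectral density I would repeat the three-region splitting of Theorem~\ref{thm:kn-wall-spectral-density-split}. On the high-frequency tails, the wall nontrapping bound is replaced by the external trapping estimate, paying finitely many extra commutations with $\mathcal G_{{\rm KN},\rm af}$. Near $\sigma=0$, the external low-frequency expansion of the resolvent supplies the integrable majorant.
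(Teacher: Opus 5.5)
Your outline follows the paper's proof. You get \textbf{BH4} from \textbf{EKN2}--\textbf{EKN3}, \textbf{BH5} from \textbf{EKN4} restricted to the invariant $m=0$ sector, and \textbf{BH6} from \textbf{EKN5}. Note that \textbf{EKN5} grants a polynomial loss $\langle\sigma\rangle^{N_s}$, not the logarithmic one you mention; polynomial is all \textbf{BH6} asks for. You get \textbf{BH7} with the regularity at $\sigma=0$ taken from Lemma~\ref{lem:kn-af-zero-frequency}, where $\Pi_{0,{\rm KN},\rm af}=0$, so the projected space is the whole space.

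The routes differ in two places.

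For \textbf{BH7} the paper rebuilds nothing. \textbf{EKN8} is the smoothing bound \eqref{eq:lap-smoothing-package} up to the finite loss $q_{\rm trap}$, and \textbf{EKN9} with $N=0$ is exactly \eqref{eq:local-spectral-density}--\eqref{eq:spectral-density-L1}, so both are simply cited. Your re-derivation via Plancherel and a three-region splitting modeled on Lemma~\ref{lem:kn-lap-smoothing} and Theorem~\ref{thm:kn-wall-spectral-density-split} is redundant, and it does not transfer verbatim. On the compact wall collar the stationary problem sees all of the data. Here $B_K(\sigma)$ must act on data extending to null infinity, and the two-sided cutoff bounds $\chi P(\sigma\pm\ii0)^{-1}\chi$ of \textbf{EKN5}--\textbf{EKN6} do not control that by themselves. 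Just cite \textbf{EKN8}--\textbf{EKN9}.

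In the other direction, your energy identity for \textbf{BH5} supplies something the paper's proof lacks. \textbf{EKN4} as stated covers only compact real intervals away from $0$, yet the paper attributes both halves of \textbf{BH5} to it. For $m=0$ and $\operatorname{Im}\sigma>0$, the outgoing factors $e^{-\ii\sigma r_*}$ at the horizon and $e^{\ii\sigma r_*}/r$ at infinity decay exponentially. Hence the boundary terms in the analogue of \eqref{eq:kn-upper-mode-identity} on $(r_+,\infty)\times(0,\pi)$ vanish, and $v=0$ follows as in Lemma~\ref{lem:kn-mode-stability}. Alternatively, the uniform bound in \textbf{EKN2} excludes such exponentially growing finite-energy solutions.
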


\begin{proof}
The red-shift and integrated-local-energy clauses \textbf{EKN2}-\textbf{EKN3} give the finite-time red-shift/local-energy estimate \textbf{BH4}, with the finite trapped-set derivative loss allowed by the exterior Hilbert scale.  The compact error in \textbf{BH4} is the usual compact interaction-region term left before applying smoothing.

The quantitative real-frequency mode-stability clause \textbf{EKN4} gives \textbf{BH5}.  The external package is stronger than needed here because it is stated for the scalar theory before the present restriction to \(m=0\); restricting to the neutral axisymmetric sector gives the asserted absence of upper-half-plane outgoing modes and nonzero real outgoing resonances.

The high-frequency trapped-set resolvent clause \textbf{EKN5} gives \textbf{BH6}.  Thus high-frequency propagation is controlled by the Kerr-Newman trapped-set estimate rather than by the elementary nontrapping commutator used in the wall collar.  The polynomial loss in \(|\sigma|\) and the finite derivative loss are precisely the losses allowed in \eqref{eq:high-frequency-package}.

The limiting-absorption, smoothing, and spectral-density clauses \textbf{EKN6}, \textbf{EKN8}, and \textbf{EKN9} give \textbf{BH7}.  The Fourier-Laplace representation of the semigroup and the limiting-absorption boundary values give the local spectral density \(B_K(\sigma)\).  After commuting \(s\ge s_{\rm KN}\) times, the high-frequency bounds, finite-frequency bounds, and low-frequency regularity from Lemma~\ref{lem:kn-af-zero-frequency} make this density integrable in \(\sigma\), and Plancherel gives the compact Kato smoothing estimate \eqref{eq:lap-smoothing-package}.  Hence the compact error in \textbf{BH4} is removable on the projected evolution.
\end{proof}

\begin{lemma}[Zero frequency for the decaying full Kerr-Newman radiation space]\label{lem:kn-af-zero-frequency}
For the decaying radiation energy space \(\mathcal E^s_{{\rm KN},\rm af}\), the zero-frequency threshold space of the neutral axisymmetric Kerr-Newman scalar wave is trivial:
\begin{equation}
 \mathcal T_{0,{\rm KN},\rm af}=\{0\},
 \qquad
 \Pi_{0,{\rm KN},\rm af}=0.
\label{eq:kn-af-zero-projection}
\end{equation}
Moreover the cutoff outgoing resolvent is regular at \(\sigma=0\) on this decaying space, and there is no regular generalized zero state.
\end{lemma}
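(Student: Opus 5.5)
The plan is to separate the statement into three claims and prove each with the same ODE tools used for Kerr and Reissner-Nordstr\"om in Theorem~\ref{thm:kerr-rn-verifies-bh} and for the wall in Lemma~\ref{lem:kn-zero-frequency}. The claims are: no nonzero regular stationary state in \(\mathcal E^s_{{\rm KN},\rm af}\); no regular generalized (Jordan) zero state; and regularity of the cutoff outgoing resolvent at \(\sigma=0\).

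\emph{Zero states.} At \(\sigma=0\) the operator \eqref{eq:kn-af-stationary-operator} is \(\partial_r(\Delta\partial_r v)+\sin^{-1}\theta\,\partial_\theta(\sin\theta\,\partial_\theta v)\). By Lemma~\ref{lem:legendre-regular-axis} and Proposition~\ref{prop:kn-axis-regularity}, expand \(v=\sum_\ell R_\ell(r)P_\ell(\cos\theta)\). This gives \((\Delta R_\ell')'-\ell(\ell+1)R_\ell=0\). Set \(\alpha=\sqrt{M^2-a^2-Q^2}>0\) and \(x=(r-M)/\alpha\); the equation becomes the Legendre equation, with \(x=1\) at the horizon.
\begin{itemize}
\item Horizon regularity excludes the logarithmically singular \(Q_\ell\) branch.
\item The surviving \(P_\ell\) branch grows like \(r^\ell\) for \(\ell\ge1\) and is constant for \(\ell=0\).
\item The Hardy weight \(r^{-2}|u_0|^2\) in \eqref{eq:kn-af-energy-norm} is defined on the closure of compactly supported data. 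Such data have vanishing limit at infinity, so both the growing branches and the nonzero constant are excluded.
\end{itemize}
Hence \(\Ker=\{0\}\).

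\emph{Generalized zero states.} Suppose \(u=t\,\psi_1+\psi_0\) is a regular generalized zero state. The coefficient \(\psi_1\) is a zero state, so \(\psi_1=0\) by the previous step; equivalently, any Jordan companion must sit above a nonzero kernel element, and there is none. Then \(\psi_0\) is itself a zero state and also vanishes. I will add a remark that, even before quotienting by decay, the companion \(t+\chi\) fails. The argument is the spherical-mean computation of Lemma~\ref{lem:kn-zero-frequency}: horizon regularity gives \(\Delta\chi_0'=r_+^2+a^2\ne0\) up to the \(2Mr-Q^2\) correction. This forces logarithmic growth of \(\chi_0\) at infinity, incompatible with the decaying space. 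So \(\mathcal T_{0,{\rm KN},\rm af}=\{0\}\) and \(\Pi_{0,{\rm KN},\rm af}=0\).

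\emph{Regularity of the resolvent.} Since the \(\sigma=0\) operator is injective on the outgoing/decaying domain, I construct the zero-energy Green function mode by mode. The Wronskian of the horizon-regular solution \(P_\ell\) and the decaying solution \(Q_\ell\) (decaying like \(r^{-\ell-1}\)) is a nonzero constant, which gives an explicit bounded inverse between weighted spaces for each \(\ell\), with \(\ell\)-uniform bounds. Continuity of the cutoff resolvent as \(\sigma\to0\) with \(\operatorname{Im}\sigma\ge0\) I take from the low-frequency clause of External Assumption~\ref{ass:external-kn-scattering-package}. The present argument only supplies the input that the limit operator has trivial kernel, so no pole appears.

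The main obstacle is this last step. The low-frequency limit on an asymptotically flat end is not a Fredholm perturbation in unweighted spaces, and the \(\ell=0\) mode is borderline because the decaying solution \(Q_0\sim 1/r\). I will handle it by working in the weighted spaces where the Hardy inequality holds, and by leaning on the external package for uniform-in-\(\sigma\) control.
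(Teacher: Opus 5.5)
Your proposal is correct and follows the paper's proof. The shared steps are: the Legendre reduction, with horizon regularity excluding $Q_\ell$ and the Hardy/decay condition excluding $P_\ell$ including the constant; triviality of Jordan chains because $\Ker\mathcal G_{{\rm KN},\rm af}=\{0\}$; and no zero pole, obtained by combining the external low-frequency expansion with injectivity at $\sigma=0$. Your $P_\ell$--$Q_\ell$ Wronskian is a cleaner way to rule out an outgoing zero-energy resonance than the paper's identification of leading Laurent coefficients with decaying stationary states.

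Fix or drop your optional remark about $t+\chi$. Horizon regularity gives exactly $\Delta\chi_0'=r_+^2+a^2$, so $\chi_0'=O(r^{-2})$ and $\chi_0$ stays bounded as $r\to\infty$; it can even be normalized to decay like $1/r$. The logarithmic growth you have in mind belongs to the $\tau$-chart function $\psi_0=\chi_0-H$, not to $\chi_0$. So there is no obstruction ``before quotienting by decay'': the companion $t+\chi_0$ is excluded only because $\partial_t u=1\notin L^2(\Sigma)$ at the asymptotically flat end, exactly as the paper argues.
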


\begin{proof}
At \(\sigma=0\), equation \eqref{eq:kn-af-stationary-operator} becomes
\begin{equation}
 \partial_r(\Delta\partial_r v)
 +\frac1{\sin\theta}\partial_\theta(\sin\theta\partial_\theta v)=0.
\label{eq:kn-af-zero-eqn}
\end{equation}
Expand the axisymmetric finite-energy solution in spherical harmonics,
\begin{equation}
 v(r,\theta)=\sum_{\ell=0}^\infty v_\ell(r)Y_{\ell0}(\theta).
\end{equation}
Each coefficient satisfies
\begin{equation}
 (\Delta v_\ell')'-\ell(\ell+1)v_\ell=0.
\label{eq:kn-af-zero-radial}
\end{equation}
Set
\begin{equation}
 \alpha=\sqrt{M^2-a^2-Q^2}>0,
 \qquad
 x=\frac{r-M}{\alpha},
\end{equation}
so that \(\Delta=\alpha^2(x^2-1)\) and the horizon is \(x=1\).  Equation \eqref{eq:kn-af-zero-radial} becomes the Legendre equation
\begin{equation}
 \frac{\dd}{\dd x}\left((x^2-1)\frac{\dd v_\ell}{\dd x}\right)-\ell(\ell+1)v_\ell=0.
\label{eq:kn-af-legendre}
\end{equation}
Its two independent solutions are \(P_\ell(x)\) and \(Q_\ell(x)\).  The function \(Q_\ell\) has a logarithmic singularity at \(x=1\), so future-horizon regularity excludes it.  The function \(P_\ell\) behaves like \(x^\ell\) as \(x\to\infty\).  For \(\ell\ge1\), this growth is incompatible with the decaying finite-energy norm \eqref{eq:kn-af-energy-norm}.  For \(\ell=0\), the regular solution \(P_0=1\) is a nondecaying constant, also excluded by the \(r^{-2}u^2\) Hardy term and by taking the closure of compactly supported decaying data.  Hence every regular decaying stationary state is zero.

A generalized zero state would satisfy \(\mathcal G_{{\rm KN},\rm af}V=W\) for a stationary threshold state \(W\).  Since \(W=0\), no nontrivial Jordan chain can start.  Equivalently, the possible affine solution \(u=t+\psi\) has \(u_t=1\), which is not in \(L^2(\Sigma)\) at the asymptotically flat end and therefore is not a vector in \(\mathcal E^s_{{\rm KN},\rm af}\).

It remains to identify the low-frequency resolvent behavior.  The scalar limiting-absorption theorem gives a meromorphic finite-rank expansion at zero on the weighted spaces used above.  A pole or finite-rank singular coefficient would produce, by taking its leading coefficient, either a regular decaying stationary state or a generalized zero state.  Both have just been excluded.  Therefore \(\nu_0=0\) in \eqref{eq:zero-laurent-bh} for this realization, \(R_0(\sigma)\) is bounded at \(\sigma=0\), and the data-space projection is \(\Pi_{0,{\rm KN},\rm af}=0\).  This proves \textbf{BH8}.
\end{proof}

\begin{theorem}[Verification of \textbf{BH1}-\textbf{BH9} for full subextremal Kerr-Newman scalar exteriors]\label{thm:kn-af-verifies-bh}
Fix \(0<\delta<1\) and \(0<M_0<M_1<\infty\).  For every \((M,a,Q)\in\mathscr K_{\delta,M_0,M_1}\), the neutral axisymmetric scalar wave equation on the full asymptotically flat Kerr-Newman domain of outer communications, with future-horizon regularity and outgoing decaying radiation condition at null infinity, satisfies Assumption~\ref{ass:axisymmetric-bh-package} on \(\mathcal E^s_{{\rm KN},\rm af}\) for all \(s\ge s_{\rm KN}\).  The threshold space is trivial:
\begin{equation}
 \mathcal T_{0,{\rm KN},\rm af}=\{0\},
 \qquad
 \Pi_{0,{\rm KN},\rm af}=0.
\end{equation}
\end{theorem}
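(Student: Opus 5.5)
The proof is an assembly argument that mirrors Theorem~\ref{thm:kn-verifies-bh}. Each package item \textbf{BH1}--\textbf{BH9} of Assumption~\ref{ass:axisymmetric-bh-package} will be matched to one of the three lemmas already proved for the asymptotically flat exterior. All constants must be uniform on \(\mathscr K_{\delta,M_0,M_1}\). I will therefore first check that every geometric quantity entering those lemmas is uniformly controlled for \((M,a,Q)\in\mathscr K_{\delta,M_0,M_1}\): the root gap \(r_+-r_-=2\sqrt{M^2-a^2-Q^2}\ge 2\sqrt{\delta}M_0\), the surface gravity \(\kappa_+\), and the bounds on the horizon radius \(r_+\). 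I will also take \(s_{\rm KN}\) to be the largest of the indices required in Lemmas~\ref{lem:kn-af-geometry-domain} and~\ref{lem:kn-af-estimates} and in the external package.

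The key steps, in order, are as follows.

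\begin{enumerate}
\item Lemma~\ref{lem:kn-af-geometry-domain} gives \textbf{BH1}--\textbf{BH3} and \textbf{BH9}. These are nondegenerate horizon regularity in the ingoing chart, the decaying radiation condition, the closed generator on \(\mathcal E^s_{{\rm KN},\rm af}\), and cutoff compatibility.
\item Lemma~\ref{lem:kn-af-zero-frequency} gives \textbf{BH8} with \(\mathcal T_{0,{\rm KN},\rm af}=\{0\}\) and \(\Pi_{0,{\rm KN},\rm af}=0\). The argument is the Legendre classification of the \(\sigma=0\) radial ODE: the \(Q_\ell\)-branch is excluded by horizon regularity, and the \(P_\ell\)-branch is excluded by the Hardy/decay term in \eqref{eq:kn-af-energy-norm}. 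The Jordan-chain exclusion uses the fact that \(u_t=1\) is not a finite-energy vector.
\item This step must come after step 2, because Lemma~\ref{lem:kn-af-estimates} is stated on the projected space associated with that projection. Since the projection is zero, the projected space is the whole of \(\mathcal E^s_{{\rm KN},\rm af}\). Lemma~\ref{lem:kn-af-estimates} then supplies \textbf{BH4}--\textbf{BH7} there from External Assumption~\ref{ass:external-kn-scattering-package}, restricted to \(m=0\).
\end{enumerate}

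This order avoids circularity. The zero-frequency classification uses only the ODE and the definition of the energy space. The one exception is the statement that the resolvent is regular at zero, which invokes the external meromorphic low-frequency expansion. I will note that this is consistent with Proposition~\ref{prop:no-circular-exterior}.

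I expect the main obstacle to be the uniformity and the compatibility of function spaces, not any single estimate. The external package is stated on its own weighted spaces, possibly for all \(m\). I must check that restricting to the closed axisymmetric subspace, and identifying its energy norm with \eqref{eq:kn-af-energy-norm} including the \(r^{-2}\) Hardy term, preserves both the constants and the derivative losses. I also need the low-frequency expansion to be valid on exactly the decaying space on which the constant has been removed.

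My first approach is to show that the commutator fields \(Z\) commute with \(\partial_\phi\), so the projection onto \(m=0\) is bounded on each \(\mathcal E^s\). I would then use that the external estimates are uniform on compact subextremal parameter sets, which is exactly where the dependence on \(\delta\) enters. Combining all of this with the collected indices completes the verification.
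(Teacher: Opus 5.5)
Your proposal is correct and follows the paper's proof essentially verbatim: \textbf{BH1}--\textbf{BH3} and \textbf{BH9} come from Lemma~\ref{lem:kn-af-geometry-domain}, \textbf{BH4}--\textbf{BH7} come from Lemma~\ref{lem:kn-af-estimates} (the sole point where External Assumption~\ref{ass:external-kn-scattering-package} enters), \textbf{BH8} with $\Pi_{0,{\rm KN},\rm af}=0$ comes from Lemma~\ref{lem:kn-af-zero-frequency}, and uniformity comes from the external package together with the lower bound on $\kappa_+$ over $\mathscr K_{\delta,M_0,M_1}$. Two details you add are harmless refinements that the paper leaves implicit: you establish \textbf{BH8} first so that the projected space in Lemma~\ref{lem:kn-af-estimates} is the whole space, and you make the root-gap bound $r_+-r_-\ge 2\sqrt{\delta}M_0$ explicit.
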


\begin{proof}
\textbf{BH1}-\textbf{BH3} and \textbf{BH9} are Lemma~\ref{lem:kn-af-geometry-domain}.  \textbf{BH4}, \textbf{BH5}, \textbf{BH6}, and \textbf{BH7} are Lemma~\ref{lem:kn-af-estimates}, which is the precise place where the scalar Kerr-Newman stability input, External Assumption~\ref{ass:external-kn-scattering-package}, is used.  \textbf{BH8} is Lemma~\ref{lem:kn-af-zero-frequency}.  These nine statements are exactly the items of Assumption~\ref{ass:axisymmetric-bh-package}; hence the full Kerr-Newman exterior satisfies the package.  The constants are uniform on compact subextremal parameter sets by External Assumption~\ref{ass:external-kn-scattering-package} and the explicit lower bound for \(\kappa_+\) on \(\mathscr K_{\delta,M_0,M_1}\).
\end{proof}

\subsection{Extremal asymptotically flat Kerr-Newman: horizon charge, zero frequency, and obstruction}\label{subsec:extreme-kn}

We now include the extremal Kerr-Newman parameter surface
\begin{equation}
a^2+Q^2=M^2,
\qquad M>0,
\qquad r_+=r_-=M,
\qquad \Delta=(r-M)^2.
\label{eq:extreme-kn-parameters}
\end{equation}
This subsection is not a limiting version of Theorem~\ref{thm:kn-full-main}.  The subextremal proof uses \(\kappa_+>0\); here \(\kappa_+=0\).  The correct extremal result is a sharp obstruction theorem showing why nondegenerate horizon decay cannot be part of the conclusion.

Use ingoing coordinates
\begin{equation}
v=t+r_*,
\qquad
\phi_{\rm in}=\phi+\int^r\frac{a}{\Delta(s)}\,\dd s,
\qquad
\frac{\dd r_*}{\dd r}=\frac{r^2+a^2}{\Delta}.
\label{eq:extreme-kn-ingoing-coordinates}
\end{equation}
For axisymmetric functions, the Boyer-Lindquist form of the wave operator is
\begin{equation}
\rho^2\Box_g u
=-\left(\frac{(r^2+a^2)^2}{\Delta}-a^2\sin^2\theta\right)\partial_t^2u
+\partial_r(\Delta\partial_ru)
+\frac1{\sin\theta}\partial_\theta(\sin\theta\partial_\theta u).
\label{eq:extreme-kn-bl-wave}
\end{equation}
After substituting \(\partial_t=\partial_v\) and \(\partial_r|_{t}=\partial_r|_{v}+((r^2+a^2)/\Delta)\partial_v\), the same equation becomes, in the horizon-regular chart,
\begin{equation}
\begin{aligned}
\rho^2\Box_g u={}&
\Delta\partial_r^2u+\Delta'\partial_ru
+2(r^2+a^2)\partial_{rv}u+2r\partial_vu
+a^2\sin^2\theta\,\partial_v^2u \\
&\quad+
\frac1{\sin\theta}\partial_\theta(\sin\theta\partial_\theta u).
\end{aligned}
\label{eq:extreme-kn-regular-wave}
\end{equation}
This formula is the local hard computation behind the extremal part of the paper.

\begin{proposition}[Extremal Kerr-Newman horizon charge]\label{prop:extreme-kn-charge}
Let \(u\) be a smooth axisymmetric solution of \(\Box_{g_{M,a,Q}}u=0\) on an extremal Kerr-Newman exterior.  Then
\begin{equation}
\mathfrak A_0[u](v)
:=\int_0^\pi
\Bigl(2(M^2+a^2)\partial_r u+2M u+a^2\sin^2\theta\,\partial_vu\Bigr)(v,M,\theta)
\sin\theta\,\dd\theta
\label{eq:extreme-kn-charge}
\end{equation}
is independent of \(v\).
\end{proposition}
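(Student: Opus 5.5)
The plan is to evaluate the horizon-regular form \eqref{eq:extreme-kn-regular-wave} of the wave equation directly on the future horizon \(r=M\). There the extremal structure \(\Delta=(r-M)^2\) gives \(\Delta(M)=0\) and \(\Delta'(M)=2(r-M)|_{r=M}=0\), so both the second-order and first-order radial terms drop out at the horizon. Since \(u\) is smooth in the ingoing chart \((v,r,\theta)\), the equation \(\rho^2\Box_gu=0\) holds pointwise up to and including \(r=M\). Restricting it there, we obtain
\begin{equation}
2(M^2+a^2)\partial_v\partial_ru+2M\partial_vu+a^2\sin^2\theta\,\partial_v^2u
+\frac1{\sin\theta}\partial_\theta(\sin\theta\,\partial_\theta u)=0
\qquad\text{on } r=M.
\end{equation}

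The first three terms form an exact \(\partial_v\)-derivative, because the coefficients \(2(M^2+a^2)\), \(2M\) and \(a^2\sin^2\theta\) do not depend on \(v\). They equal
\[
\partial_v\Bigl(2(M^2+a^2)\partial_ru+2Mu+a^2\sin^2\theta\,\partial_vu\Bigr)\Big|_{r=M}.
\]
Next we multiply by \(\sin\theta\) and integrate over \(\theta\in(0,\pi)\). The angular term then integrates to zero. For the smooth axisymmetric function \(u(v,M,\cdot)\) on \(\mathbb S^2\), this is Lemma~\ref{lem:no-axis-boundary-term} with test function \(v\equiv1\): the boundary quantity \(\sin\theta\,\partial_\theta u\) vanishes at both poles, or equivalently, by the global divergence theorem on the sphere, \(\int_{\mathbb S^2}\Delta_{\mathbb S^2}u\,\dd\omega=0\).

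Because \(u\) is smooth, we may interchange \(\partial_v\) with the \(\theta\)-integral. This gives \(\frac{\dd}{\dd v}\mathfrak A_0[u](v)=0\), which is the claimed conservation of \eqref{eq:extreme-kn-charge}.

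The main obstacle is verifying \eqref{eq:extreme-kn-regular-wave} itself, in particular the coefficient of \(\partial_{rv}u\) and the term \(2r\partial_vu\). I would check this starting from \eqref{eq:extreme-kn-bl-wave} via the chain rule \(\partial_r|_t=\partial_r|_v+F\partial_v\) with \(F=(r^2+a^2)/\Delta\). The radial term becomes
\[
\partial_r(\Delta\partial_ru)+\partial_r\bigl((r^2+a^2)\partial_vu\bigr)+(r^2+a^2)\partial_{rv}u+(r^2+a^2)^2\Delta^{-1}\partial_v^2u .
\]
Here \(\partial_r(r^2+a^2)=2r\), and the singular \(\Delta^{-1}\partial_v^2u\) piece cancels exactly against the \(-\frac{(r^2+a^2)^2}{\Delta}\partial_t^2u\) part of the time term. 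What remains of the time term is \(a^2\sin^2\theta\,\partial_v^2u\). Finally, at \(r=M\) the coefficient \(2(r^2+a^2)\) equals \(2(M^2+a^2)\) and \(2r\) equals \(2M\), which reproduces exactly the weights in the charge.
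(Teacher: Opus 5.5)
Your proposal is correct and follows the paper's proof. You evaluate the horizon-regular equation \eqref{eq:extreme-kn-regular-wave} at \(r=M\), where \(\Delta(M)=\Delta'(M)=0\), integrate against \(\sin\theta\,\dd\theta\) so that pole regularity removes the angular term, and identify what remains as \(\frac{\dd}{\dd v}\mathfrak A_0[u]\). Your chain-rule check of the coefficients \(2(r^2+a^2)\partial_{rv}u\) and \(2r\partial_vu\), including the cancellation of the \(\Delta^{-1}\partial_v^2u\) term, is also correct, and it supplies a step the paper only asserts.
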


\begin{proof}
At the extremal horizon \(r=M\) one has \(\Delta(M)=\Delta'(M)=0\).  Evaluating \eqref{eq:extreme-kn-regular-wave} on \(r=M\) gives
\begin{equation}
2(M^2+a^2)\partial_{rv}u+2M\partial_vu+a^2\sin^2\theta\,\partial_v^2u
+\frac1{\sin\theta}\partial_\theta(\sin\theta\partial_\theta u)=0.
\label{eq:extreme-horizon-eq}
\end{equation}
Multiply by \(\sin\theta\) and integrate over \(0\le\theta\le\pi\).  Smoothness at the poles gives
\begin{equation}
\int_0^\pi \partial_\theta(\sin\theta\partial_\theta u)\,\dd\theta=0.
\end{equation}
The remaining terms are exactly \(\frac{\dd}{\dd v}\mathfrak A_0[u](v)\).  Hence \(\mathfrak A_0[u](v)\) is conserved.
\end{proof}

\begin{corollary}[Extremal Kerr charge]\label{cor:extreme-kerr-charge}
Let \(|a|=M\) and \(Q=0\).  For every smooth future-horizon regular axisymmetric solution on extremal Kerr,
\begin{equation}
\mathfrak A^{\rm Kerr}_0[u](v)
=\int_0^\pi
\Bigl(4M^2\partial_ru+2M u+M^2\sin^2\theta\,\partial_vu\Bigr)(v,M,\theta)
\sin\theta\,\dd\theta
\label{eq:extreme-kerr-charge}
\end{equation}
is independent of \(v\).  If this charge is nonzero and the tangential horizon quantities \(u\) and \(\partial_vu\) decay, then the averaged transversal derivative \(\partial_ru\) does not decay on \(\mathcal H^+\).
\end{corollary}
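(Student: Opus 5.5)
The plan is to read the corollary off Proposition~\ref{prop:extreme-kn-charge} by specialization, and then derive the nondecay statement from the conservation law by a short contradiction argument.

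\emph{Specialization.} Setting \(Q=0\) and \(a^2=M^2\) in \eqref{eq:extreme-kn-parameters} gives \(\Delta=(r-M)^2\) and \(M^2+a^2=2M^2\). The coefficient \(2(M^2+a^2)\partial_ru\) in \eqref{eq:extreme-kn-charge} therefore becomes \(4M^2\partial_ru\), and \(a^2\sin^2\theta\,\partial_vu\) becomes \(M^2\sin^2\theta\,\partial_vu\). This is exactly \eqref{eq:extreme-kerr-charge}.

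\emph{Conservation.} Independence of \(v\) is then the conclusion of Proposition~\ref{prop:extreme-kn-charge}. That proposition only used three facts: \(\Delta(M)=\Delta'(M)=0\), the horizon-regular form \eqref{eq:extreme-kn-regular-wave}, and the vanishing pole term for regular axisymmetric functions (Lemma~\ref{lem:no-axis-boundary-term}). All three hold verbatim at \(Q=0\). I would state explicitly that the ingoing coordinates \eqref{eq:extreme-kn-ingoing-coordinates} are unchanged by \(Q\). I would also note that for \(m=0\) the \(\phi_{\rm in}\)-shift is irrelevant, so "future-horizon regular" means smooth in \((v,r,\theta)\) at \(r=M\).

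\emph{Obstruction.} Write \(\mathfrak A^{\rm Kerr}_0[u]=4M^2\,\overline{\partial_ru}(v)+T(v)\), where
\[
\overline{\partial_ru}(v):=\int_0^\pi\partial_ru(v,M,\theta)\sin\theta\,\dd\theta
\]
is the averaged transversal derivative, and
\[
T(v):=\int_0^\pi\bigl(2Mu+M^2\sin^2\theta\,\partial_vu\bigr)(v,M,\theta)\sin\theta\,\dd\theta .
\]
Interpret "decay" of the tangential quantities as \(u(v,M,\cdot)\to0\) and \(\partial_vu(v,M,\cdot)\to0\) in \(L^1(\sin\theta\,\dd\theta)\); uniform convergence or sphere-average convergence suffices, since \(\sin^2\theta\le1\). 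Under this hypothesis \(T(v)\to0\) as \(v\to\infty\). Conservation then forces
\[
\overline{\partial_ru}(v)\to\frac{\mathfrak A^{\rm Kerr}_0[u]}{4M^2}\neq0,
\]
so \(\partial_ru\) cannot tend to zero even in averaged form on \(\mathcal H^+\).

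The only step needing care is stating the decay hypothesis in a norm strong enough to pass to the limit inside \(T\); integrability against \(\sin\theta\,\dd\theta\) on a compact interval makes this routine. There is no genuine analytic obstacle.
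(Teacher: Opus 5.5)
Your proposal is correct and is essentially the paper's proof: specialize Proposition~\ref{prop:extreme-kn-charge} to \(Q=0\), \(a^2=M^2\) (so \(2(M^2+a^2)=4M^2\) and \(a^2\sin^2\theta=M^2\sin^2\theta\)), then get nondecay from the same rearrangement of the conservation law as in Corollary~\ref{cor:extreme-kn-obstruction}. Interpreting tangential decay as convergence in \(L^1(\sin\theta\,\dd\theta)\) makes precise a hypothesis the paper leaves informal, and it yields the same limit \(\mathfrak A^{\rm Kerr}_0[u]/(4M^2)\neq0\) for the averaged transversal derivative.
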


\begin{proof}
Set \(Q=0\) and \(a^2=M^2\) in Proposition~\ref{prop:extreme-kn-charge}.  The nondecay conclusion is the same algebraic rearrangement used in Corollary~\ref{cor:extreme-kn-obstruction}: the conserved nonzero left-over charge must be carried by the averaged transversal derivative once the tangential terms decay.
\end{proof}

\begin{corollary}[Extremal Reissner-Nordstr\"om charge]\label{cor:extreme-rn-charge}
Let \(|Q|=M\) and \(a=0\).  For every smooth future-horizon regular solution on extremal Reissner-Nordstr\"om, the spherical mean charge
\begin{equation}
\mathfrak A^{\rm RN}_0[u](v)
=\int_{\mathbb S^2}\Bigl(2M^2\partial_ru+2M u\Bigr)(v,M,\omega)\,\dd\omega
\label{eq:extreme-rn-charge}
\end{equation}
is independent of \(v\).  If this charge is nonzero and the spherical mean of \(u\) on the horizon decays, then the spherical mean of \(\partial_ru\) on \(\mathcal H^+\) does not decay.
\end{corollary}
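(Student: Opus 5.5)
The plan is to specialize the horizon computation behind Proposition~\ref{prop:extreme-kn-charge} to \(a=0\), \(|Q|=M\). One change is needed: the proposition is stated for axisymmetric solutions, while the corollary concerns arbitrary smooth solutions on the spherically symmetric background. So I will not simply cite the proposition. Instead I will rerun its short argument with the full round Laplacian in place of its axisymmetric part.

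First I write the extremal Reissner--Nordstr\"om metric in ingoing coordinates \(v=t+r_*\), \(\dd r_*/\dd r=r^2/\Delta\), with \(\Delta=(r-M)^2\). Setting \(a=0\) in \eqref{eq:extreme-kn-regular-wave}, and noting that with \(a=0\) there is no frame-dragging shift \(\phi_{\rm in}\), the same substitution gives
\[
r^2\Box_g u=\Delta\partial_r^2u+\Delta'\partial_ru+2r^2\partial_{rv}u+2r\partial_vu+\Delta_{\mathbb S^2}u
\]
for every smooth \(u\), where \(\Delta_{\mathbb S^2}\) is the full round Laplacian. The \(\phi\)-dependence enters only through \(\Delta_{\mathbb S^2}\), because for \(a=0\) the mixed and \(\partial_\phi^2\) terms assemble into the spherical Laplacian. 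This identity is the one computational check. I expect it to be the main (mild) obstacle, since one must confirm that no stray \(\partial_\phi\) terms survive; it follows directly from \(g=-D\,\dd v^2+2\,\dd v\,\dd r+r^2\dd\omega^2\).

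Next I evaluate this identity on \(r=M\). Since \(\Delta(M)=\Delta'(M)=0\), the equation \(\Box_gu=0\) gives, on \(\mathcal H^+\),
\[
2M^2\partial_{rv}u+2M\partial_vu+\Delta_{\mathbb S^2}u=0.
\]
I then integrate over \(\mathbb S^2\). The Laplacian integrates to zero by the Green identity on the closed sphere, which is the analogue of Lemma~\ref{lem:no-axis-boundary-term} without the axisymmetry restriction, so there are no pole boundary terms. Because the sphere \(\{v\}\times\{M\}\times\mathbb S^2\) is independent of \(v\), the remaining terms are \(\frac{\dd}{\dd v}\mathfrak A^{\rm RN}_0[u](v)=0\).

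Finally, for the obstruction, I write the conserved charge as \(2M^2\int\partial_ru\,\dd\omega=\mathfrak A^{\rm RN}_0-2M\int u\,\dd\omega\). If the spherical mean of \(u\) on the horizon decays, the right side tends to \(\mathfrak A^{\rm RN}_0\neq0\). Hence the spherical mean of \(\partial_ru\) tends to \(\mathfrak A^{\rm RN}_0/(8\pi M^2)\neq0\) and does not decay. This is the same rearrangement as in Corollary~\ref{cor:extreme-kerr-charge}; with \(a=0\) no \(\partial_vu\) term appears, which is why only decay of \(u\) is assumed.
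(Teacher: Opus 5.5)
Your proposal is correct and follows the paper's own argument. The paper treats the axisymmetric case as Proposition~\ref{prop:extreme-kn-charge} with \(a=0\), and handles general solutions exactly as you do: it integrates the horizon equation over the full sphere, where the round Laplacian integrates to zero, and rearranges the conserved identity. Your ingoing-coordinate formula for \(r^2\Box_g u\) with the full \(\Delta_{\mathbb S^2}\) is right, and so is your limit \(\mathfrak A^{\rm RN}_0/(8\pi M^2)\) for the spherical mean of \(\partial_ru\).
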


\begin{proof}
For axisymmetric solutions, this is Proposition~\ref{prop:extreme-kn-charge} with \(a=0\) and \(Q^2=M^2\), multiplied by the harmless factor \(2\pi\) coming from the \(\phi\)-integration.  For general solutions on the spherically symmetric extremal Reissner-Nordstr\"om background, integrate the horizon equation over the full sphere; the angular Laplacian integrates to zero on \(\mathbb S^2\).  The remaining terms are exactly \(\frac{\dd}{\dd v}\mathfrak A^{\rm RN}_0[u](v)\).  Rearranging the conserved identity gives the nondecay of the averaged transversal derivative whenever the charge is nonzero and the averaged tangential term decays.
\end{proof}

\begin{corollary}[Failure of subextremal-type horizon decay]\label{cor:extreme-kn-obstruction}
There is no estimate on the full extremal Kerr-Newman axisymmetric scalar energy class which simultaneously implies decay of \(u\), \(\partial_vu\), and the averaged transversal derivative \(\partial_ru\) on \(\mathcal H^+\) for all smooth data.  More precisely, if \(\mathfrak A_0[u]\ne0\) and \(u\) and \(\partial_vu\) decay along the horizon, then
\begin{equation}
\limsup_{v\to\infty}\left|
\int_0^\pi \partial_ru(v,M,\theta)\sin\theta\,\dd\theta
\right|>0.
\end{equation}
\end{corollary}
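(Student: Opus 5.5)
The argument is an algebraic rearrangement of the conserved charge from Proposition~\ref{prop:extreme-kn-charge}. I would argue by contradiction. Fix a smooth axisymmetric future-horizon regular solution with \(\mathfrak A_0[u]\ne0\) for which \(u(v,M,\cdot)\) and \(\partial_vu(v,M,\cdot)\) decay along \(\mathcal H^+\). The decay should be understood in a sense that makes the weighted angular integrals
\[
\int_0^\pi u(v,M,\theta)\sin\theta\,\dd\theta,
\qquad
\int_0^\pi \sin^2\theta\,\partial_vu(v,M,\theta)\sin\theta\,\dd\theta
\]
tend to zero; pointwise uniform decay in \(\theta\), or \(L^1(\sin\theta\,\dd\theta)\) decay, suffices since \(\sin^2\theta\le1\). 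Write the charge as
\[
\mathfrak A_0[u]=2(M^2+a^2)\int_0^\pi \partial_ru\sin\theta\,\dd\theta+2M\int_0^\pi u\sin\theta\,\dd\theta+a^2\int_0^\pi\sin^2\theta\,\partial_vu\sin\theta\,\dd\theta .
\]
By Proposition~\ref{prop:extreme-kn-charge} the left side is independent of \(v\). Letting \(v\to\infty\), the last two terms tend to zero. Since \(M^2+a^2>0\), this gives
\[
\lim_{v\to\infty}\int_0^\pi \partial_ru(v,M,\theta)\sin\theta\,\dd\theta=\frac{\mathfrak A_0[u]}{2(M^2+a^2)}\ne0,
\]
which is even stronger than the displayed \(\limsup\) bound.

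For the first, ``no estimate'' formulation, I would also need to exhibit smooth data with \(\mathfrak A_0[u]\ne0\). The charge is computed from data on the horizon sphere at \(v=0\), so one can prescribe smooth compactly supported axisymmetric data on a horizon-crossing hypersurface. For example, take \(u\) vanishing on the horizon sphere and \(\partial_ru\) equal to a positive bump there; the charge is then a positive multiple of the integral of the bump. If an estimate implied decay of all three quantities for all smooth data, applying it to this solution would contradict the previous paragraph. Finite-energy membership is automatic for compactly supported smooth data.

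The main obstacle is only interpretive: deciding in what sense ``decay along the horizon'' is meant so that the angular averages converge. I would state explicitly that decay is taken in \(L^1(\sin\theta\,\dd\theta)\) on the horizon spheres, which any pointwise-uniform decay implies. With that convention no analytic estimate beyond the conservation law is needed.
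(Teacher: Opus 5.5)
Your argument is correct and is the same as the paper's. Like the paper, you rearrange the conserved charge of Proposition~\ref{prop:extreme-kn-charge} to isolate $2(M^2+a^2)\int_0^\pi\partial_r u\sin\theta\,\dd\theta$, let the tangential terms tend to zero, and obtain the nonzero limit $\mathfrak A_0[u]/(2(M^2+a^2))$. Your explicit $L^1(\sin\theta\,\dd\theta)$ decay convention and your nonzero-charge example for the ``no estimate'' clause go slightly beyond what the paper writes down.

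One small fix is needed in that example. Setting $u=0$ on the initial horizon sphere does not force $\partial_v u=0$ there, because $\partial_v$ is transverse to the sphere. You should therefore also prescribe $\partial_v u=0$ on that sphere, which horizon-crossing Cauchy data allow. Only then is the charge exactly $2(M^2+a^2)$ times the weighted integral of the bump.
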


\begin{proof}
The conservation law \eqref{eq:extreme-kn-charge} gives
\begin{equation}
2(M^2+a^2)\int_0^\pi \partial_ru(v,M,\theta)\sin\theta\,\dd\theta
=
\mathfrak A_0[u]
-\int_0^\pi\left(2M u+a^2\sin^2\theta\,\partial_vu\right)\sin\theta\,\dd\theta.
\end{equation}
If the two tangential terms on the right decay to zero and \(\mathfrak A_0[u]\ne0\), the averaged transversal derivative tends to the nonzero number \(\mathfrak A_0[u]/(2(M^2+a^2))\).  Thus it cannot decay.
\end{proof}

\begin{lemma}[Zero-frequency classification in the decaying extremal exterior]\label{lem:extreme-kn-zero-frequency}
In the decaying asymptotically flat radiation energy space, there is no nonzero smooth axisymmetric stationary solution of \(\Box_{g_{M,a,Q}}u=0\) on an extremal Kerr-Newman exterior which is regular at \(r=M\) and decays at infinity.
\end{lemma}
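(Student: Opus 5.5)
The plan is to mirror the subextremal zero-frequency argument of Lemma~\ref{lem:kn-af-zero-frequency}, adapting only the radial ODE to the double root \(\Delta=(r-M)^2\).

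Let \(v(r,\theta)\) be a smooth axisymmetric stationary solution. Stationarity kills all \(\partial_t\) terms in \eqref{eq:extreme-kn-bl-wave}, so \(v\) satisfies
\[
\partial_r\bigl((r-M)^2\partial_rv\bigr)+\frac{1}{\sin\theta}\partial_\theta(\sin\theta\,\partial_\theta v)=0,
\qquad r>M.
\]
Since \(\partial_t=\partial_v\), \(v\) is also \(v\)-independent in the ingoing chart \eqref{eq:extreme-kn-ingoing-coordinates}, so smoothness at \(r=M\) means smoothness in \(r\) up to \(r=M\). By Lemma~\ref{lem:legendre-regular-axis} and Lemma~\ref{lem:no-axis-boundary-term}, I expand \(v=\sum_\ell v_\ell(r)P_\ell(\cos\theta)\). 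Each coefficient is smooth up to \(r=M\), because it is obtained by pairing with a fixed Legendre polynomial, and it satisfies the Euler equation
\[
\bigl((r-M)^2v_\ell'\bigr)'-\ell(\ell+1)v_\ell=0 .
\]
With \(y=r-M\), the indicial equation is \(\nu(\nu+1)=\ell(\ell+1)\). This gives the exact solutions \(y^{\ell}\) and \(y^{-\ell-1}\), so
\[
v_\ell=c_1y^{\ell}+c_2y^{-\ell-1}.
\]

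Next I apply the two boundary conditions. Regularity at \(y=0\) forces \(c_2=0\), since \(y^{-\ell-1}\) blows up for every \(\ell\ge0\). The surviving branch \(y^\ell\) grows at infinity for \(\ell\ge1\). For \(\ell=0\) it is the nonzero constant, which the \(r^{-2}|u|^2\) Hardy term and the closure of decaying data exclude, exactly as in Lemma~\ref{lem:kn-af-zero-frequency}. More simply, "decays at infinity" forces \(c_1=0\) in every mode. Therefore every \(v_\ell=0\), and hence \(v=0\).

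As an alternative that avoids mode sums, I would multiply the equation by \(\overline v\sin\theta\) and integrate over \((M,R)\times(0,\pi)\). The horizon boundary term \((r-M)^2\partial_r v\,\overline v\) vanishes at \(r=M\) by smoothness. The boundary term at \(R\) tends to zero if \(v\) decays with \(\partial_rv=O(r^{-2})\). This leaves
\[
\int\bigl(\Delta|\partial_r v|^2+|\partial_\theta v|^2\bigr)\sin\theta\,\dd r\,\dd\theta=0,
\]
so \(v\) is constant, and decay then forces \(v=0\).

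The main obstacle is justifying the vanishing of the boundary term at infinity, or equivalently the interchange of the Legendre expansion with the asymptotics, under only the weak "decays at infinity" hypothesis. The mode-by-mode Euler computation is the safer route, because each individual \(v_\ell\) is explicit and the decay hypothesis applies to each coefficient through the \(L^2(\sin\theta\,\dd\theta)\) pairing.
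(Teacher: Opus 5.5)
Your proposal is correct and is essentially the paper's proof: separate into axisymmetric Legendre harmonics, reduce to the Euler equation $y^2R_\ell''+2yR_\ell'-\ell(\ell+1)R_\ell=0$ with exact solutions $y^{\ell}$ and $y^{-\ell-1}$, discard $y^{-\ell-1}$ by horizon regularity, and discard $y^{\ell}$ by decay at infinity (for $\ell=0$ this is the constant). The energy-integration alternative you sketch is not used in the paper for this lemma. You are right that the mode-by-mode route is cleaner, since it needs only decay of each coefficient rather than a rate for $\partial_r v$.
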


\begin{proof}
At \(\sigma=0\), separation into axisymmetric spherical harmonics gives
\begin{equation}
\bigl((r-M)^2 R_\ell'\bigr)'-\ell(\ell+1)R_\ell=0.
\label{eq:extreme-zero-ode}
\end{equation}
Writing \(y=r-M\), the equation is
\begin{equation}
y^2R_\ell''+2yR_\ell'-\ell(\ell+1)R_\ell=0.
\end{equation}
For \(\ell\ge1\), the two indicial roots are \(\ell\) and \(-\ell-1\), hence the two model behaviors are \(y^\ell\) and \(y^{-\ell-1}\).  The second is singular at the horizon, while the first grows like \(r^\ell\) at infinity and is not in the decaying energy space.  For \(\ell=0\), the solutions are \(1\) and \(-1/y\).  Horizon regularity excludes \(-1/y\), and decay at infinity excludes the constant.  Thus every mode vanishes.
\end{proof}

\begin{proposition}[Extremal Kerr-Newman is outside the nondegenerate black-hole package]\label{prop:extreme-not-bh-package}
The extremal Kerr-Newman exterior does not satisfy Assumption~\ref{ass:axisymmetric-bh-package} as stated.  Specifically, \textbf{BH1} fails because the horizon endpoint is a double root with \(\kappa_+=0\), and \textbf{BH4} fails in its nondegenerate red-shift form.  The appropriate theorem is therefore Theorem~\ref{thm:kn-extremal-main}, not Theorem~\ref{thm:kn-full-main}.
\end{proposition}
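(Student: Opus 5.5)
The plan is to show that two specific items of Assumption~\ref{ass:axisymmetric-bh-package} fail. For \textbf{BH1} the failure is purely geometric; for \textbf{BH4} I would derive a contradiction from the extremal horizon charge of Proposition~\ref{prop:extreme-kn-charge}. For \textbf{BH1}, recall that the package requires nondegenerate horizon components, meaning a simple root of \(\Delta\) with positive surface gravity. From \eqref{eq:extreme-kn-parameters} we have \(\Delta=(r-M)^2\). This is a double root. The formula \(\kappa_+=(r_+-r_-)/(2(r_+^2+a^2))\) used in \eqref{eq:kn-horizon-data} then gives \(\kappa_+=0\) because \(r_+=r_-=M\). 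So the horizon endpoint is not of the type admitted in \textbf{BH1}. One also sees this in the separated radial ODE: \(r=M\) becomes an irregular singular point when \(\sigma\neq0\), rather than the regular singular point with exponents \eqref{eq:frobeniusexponents}. That ODE remark is only a consistency check.

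For \textbf{BH4}, I would first note that the pointwise red-shift positivity \eqref{eq:kn-redshift-pointwise} fails. The coefficient of \(|Yu|^2\) in \(Q_{\mu\nu}\nabla^{(\mu}N^{\nu)}\) on the horizon is proportional to \(\kappa_+\), and here it vanishes, so the nondegenerate bulk term is lost. Losing one multiplier does not by itself rule out the estimate, so the real argument has to go by contradiction. Suppose the nondegenerate red-shift/local-energy bound of \textbf{BH4} held, together with its commuted versions in \(\mathcal E^s\). Combined with the rest of the package, Theorem~\ref{thm:axisymmetric-bh} would give uniform boundedness of the commuted nondegenerate energy, and hence decay of \(u\), \(\partial_vu\) and \(\partial_ru\) along \(\mathcal H^+\) for the dispersive part of smooth data. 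Lemma~\ref{lem:extreme-kn-zero-frequency} makes the threshold part vanish in the decaying space. Thus \emph{every} smooth datum would satisfy the three horizon decay statements.

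To reach the contradiction, I would choose smooth compactly supported axisymmetric data for which \(\mathfrak A_0[u]\neq0\). This is possible because \(\mathfrak A_0\) contains the free transversal term \(2(M^2+a^2)\int\partial_ru\sin\theta\,\dd\theta\), and at \(v=0\) one can prescribe \(\partial_ru\) on the horizon independently of \(u\) and \(\partial_vu\). Corollary~\ref{cor:extreme-kn-obstruction} then shows that the averaged \(\partial_ru\) cannot decay. This contradicts the consequence of \textbf{BH4} derived above, so \textbf{BH4} cannot hold in nondegenerate form. The conclusion that Theorem~\ref{thm:kn-extremal-main} is the appropriate statement then follows, since the package theorem is inapplicable and the obstruction is exactly what that theorem records.

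The main obstacle is the logical link from \textbf{BH4} to pointwise horizon decay of \(\partial_ru\). The nondegenerate energy controls \(\partial_ru\) only in \(L^2\)-type norms. I would first try to commute once with \(\partial_r\) (or \(Y\)) and use a Sobolev trace on horizon spheres to get uniform boundedness, and then apply the local-decay mechanism on a compact set that touches the horizon. If that is too delicate, the fallback is weaker. It suffices that \textbf{BH4} gives integrated decay \(\int^\infty\!\int_{\mathcal H^+}|\partial_ru|^2<\infty\). This integrability is incompatible with \(\int\partial_ru\sin\theta\,\dd\theta\) tending to the nonzero constant \(\mathfrak A_0/(2(M^2+a^2))\), since integrated horizon decay of \(u\) and \(\partial_vu\) forces the tangential terms in the charge to decay along a sequence of times.
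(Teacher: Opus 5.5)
Your \textbf{BH1} step is exactly the paper's: $\Delta=(r-M)^2$ is a double root and $\kappa_+=(r_+-r_-)/(2(r_+^2+a^2))=0$. Since \textbf{BH1} explicitly requires a simple root with positive surface gravity, this step alone already proves the headline claim that the extremal exterior does not satisfy Assumption~\ref{ass:axisymmetric-bh-package}.

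For \textbf{BH4}, the paper says no more than your first step plus one sentence. It notes that the red-shift deformation-tensor lower bound is proportional to $\kappa_+$ and so degenerates. It then says the charge of Proposition~\ref{prop:extreme-kn-charge} makes a nondegenerate red-shift/decay \emph{package} of subextremal type impossible. You are right that losing one multiplier does not by itself refute the estimate. However, your contradiction argument overclaims. The inference ``this contradicts the consequence of \textbf{BH4}, so \textbf{BH4} cannot hold'' is invalid, because that consequence was derived from \textbf{BH4} together with \textbf{BH5}--\textbf{BH9}. It also went through Theorem~\ref{thm:axisymmetric-bh}, whose hypotheses include the \textbf{BH1} you have just refuted. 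None of \textbf{BH5}--\textbf{BH9} is known for extremal Kerr-Newman; as you note yourself, $r=M$ is an irregular singular point of the radial ODE for $\sigma\neq0$, so the resolvent theory is genuinely different there. Your contradiction therefore only refutes the conjunction. \textbf{BH4} by itself carries the compact error $\|\chi_{\rm c}U\|_{L^2([0,T];H^s_{\rm loc}\times H^{s-1}_{\rm loc})}$, which may grow with $T$. To refute \textbf{BH4} alone you would need an independent $T$-uniform bound on that term for your chosen solution. An example would be a degenerate Morawetz estimate away from $\mathcal H^+$, which is an Aretakis-type input that the paper does not prove. So state your conclusion at the level the paper actually establishes: the red-shift mechanism degenerates, and the package as a whole is impossible.

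Within that corrected scope, drop your primary route. Theorem~\ref{thm:axisymmetric-bh} gives uniform boundedness and local decay in $H^1(K)\times L^2(K)$ on compact $K$, and neither yields decay of the horizon traces $u,\partial_vu,\partial_ru$ on $\mathcal H^+$. Your fallback is the right route, but do not close it with ``decay along a sequence of times''. That is a statement about a measure-zero set of times, compatible on its face with $\int|\partial_ru|^2\,\dd v<\infty$. Instead, proceed as follows.
\begin{enumerate}
\item Use the $T$-uniform projected bound of Lemma~\ref{lem:bh-projected-estimate}. This is where \textbf{BH7} and \textbf{BH8} genuinely enter, with $\Pi_0=0$ because Lemma~\ref{lem:extreme-kn-zero-frequency} gives $\Ker\mathcal G=\{0\}$.
\item Take a trace in $r$ on the red-shift collar, with $s\ge2$ and using the zeroth-order part of $LE^s$ for $u$ itself.
\item Apply Cauchy--Schwarz in $\theta$. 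Together these put all three averaged horizon quantities in $L^2(0,\infty;\dd v)$.
\end{enumerate}
The identity in the proof of Corollary~\ref{cor:extreme-kn-obstruction} then writes the constant $\mathfrak A_0[u]$ as a sum of $L^2(0,\infty)$ functions, which forces $\mathfrak A_0[u]=0$. This contradicts your choice of data. Note that such data must meet $\mathcal H^+$, since data supported away from the horizon have zero charge.
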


\begin{proof}
For extremal Kerr-Newman, \(\Delta=(r-M)^2\).  The horizon root is not simple and
\begin{equation}
\kappa_+=\frac{r_+-r_-}{2(r_+^2+a^2)}=0.
\end{equation}
The red-shift proof of \textbf{BH4} uses a positive lower bound for the deformation tensor of a timelike multiplier transversal to the horizon.  That lower bound is proportional to the nonzero surface gravity in the smooth horizon frame; it degenerates when \(\kappa_+=0\).  Proposition~\ref{prop:extreme-kn-charge} gives a stronger obstruction: for data with nonzero \(\mathfrak A_0\), the averaged transverse derivative cannot decay if the tangential fields decay.  Hence a nondegenerate red-shift/decay package of the subextremal form is impossible on the full extremal class.
\end{proof}

\begin{proof}[Proof of Theorem~\ref{thm:kn-extremal-main}]
The charge identity is Proposition~\ref{prop:extreme-kn-charge}.  The obstruction to subextremal-type horizon decay is Corollary~\ref{cor:extreme-kn-obstruction}.  The failure of the nondegenerate package is Proposition~\ref{prop:extreme-not-bh-package}.  The zero-frequency stationary-state assertion in the decaying radiation space is Lemma~\ref{lem:extreme-kn-zero-frequency}.  The higher-order generic blow-up statement, when one wants the full Aretakis conclusion beyond the first conserved charge proved here, is supplied by External Assumption~\ref{ass:aretakis-input}.  These statements give exactly the conclusions asserted in Theorem~\ref{thm:kn-extremal-main}.
\end{proof}

\subsection{Logical dependency for the exterior theorem}

For readability, we spell out the role of the black-hole inputs in prose.  In the Kerr-Newman wall family these inputs are proved in Theorem~\ref{thm:kn-verifies-bh}.  In the full asymptotically flat Kerr-Newman scalar exterior they are verified in Theorem~\ref{thm:kn-af-verifies-bh}, with the scalar Kerr-Newman stability theorem isolated locally as External Assumption~\ref{ass:external-kn-scattering-package}.  For any other exterior geometry, the same list should be read as the verification checklist.

The first group, \textbf{BH1}-\textbf{BH3}, sets up the problem itself.  These hypotheses provide the smooth exterior geometry, the admissible boundary or radiation condition, the Hilbert energy scale, and the closed evolution generator.  Their role is foundational: without them there is no precise Cauchy problem and no well-defined stationary resolvent problem.

The next input, \textbf{BH4}, is the finite-time red-shift and local-energy estimate, allowing for a compact error in the interaction region.  This is the estimate that starts the a priori argument on finite time intervals.  The compact error is not ignored; it is later removed by the projected limiting-absorption and compact-smoothing mechanism supplied by \textbf{BH7}.

The finite-frequency spectral input is \textbf{BH5}.  It rules out outgoing modes in the upper half-plane and excludes real-axis outgoing resonances away from zero frequency.  Analytically, this is the step that prevents exponentially growing solutions and removes compact obstructions at nonzero real frequencies.

The high-frequency input is \textbf{BH6}.  It supplies the propagation and trapping estimates needed to control the outgoing resolvent for large frequency.  If the geometry has trapping, this is precisely where the trapping theorem enters.  Any derivative loss produced by trapping must be accounted for in the Sobolev index of the energy space.

The dispersive input is \textbf{BH7}.  After the zero-frequency threshold space has been projected away, \textbf{BH7} gives the limiting-absorption estimate, compact smoothing, and a local spectral representation with \(L^1\) density in frequency.  This input has two roles: it absorbs the compact error left by \textbf{BH4}, and it gives local decay through the Riemann-Lebesgue lemma.

The threshold input is \textbf{BH8}.  It describes the behavior of the outgoing resolvent near zero frequency by a finite-rank Laurent expansion.  This expansion produces the data-space projection \(\Pi_{0,{\rm bh}}\), identifies the generalized zero-frequency threshold space, and isolates the polynomial-in-time threshold component of the evolution.

Finally, \textbf{BH9} is the compatibility hypothesis.  It ensures that the cutoffs used near the horizon, in the interaction region, and near the outer end preserve the relevant domains and boundary conditions up to controlled lower-order errors.  This is the technical bridge that justifies commutator estimates, density arguments, and the passage from smooth compactly supported data to the full energy space.

In this form, the black-hole package is not a black box with hidden content.  Each input has a specific analytical job: \textbf{BH1}-\textbf{BH3} define the problem, \textbf{BH4} begins the energy estimate, \textbf{BH5}-\textbf{BH6} control the resolvent away from zero, \textbf{BH7} converts the projected resolvent theory into decay, \textbf{BH8} isolates the zero-frequency obstruction, and \textbf{BH9} keeps the functional framework consistent throughout the argument.

\begin{proposition}[No circular use of the exterior conclusion]\label{prop:no-circular-exterior}
The proof of Theorem~\ref{thm:axisymmetric-bh} does not assume its conclusion.  More precisely, \textbf{BH1}-\textbf{BH3} are used only to define the closed exterior evolution and its energy scale; \textbf{BH8} is used to construct the zero-frequency projection and the finite-dimensional polynomial threshold part; \textbf{BH4} supplies the finite-time red-shift/local-energy estimate with compact error; \textbf{BH7} supplies the projected smoothing and limiting-absorption mechanism that removes this compact error and gives local decay; \textbf{BH5} excludes upper-half-plane and nonzero real outgoing modes; \textbf{BH6} supplies the high-frequency resolvent control needed in \textbf{BH7}; and \textbf{BH9} justifies the cutoff and commutator passages in the chosen domain.  The global bound \eqref{eq:bh-main-bound} and the local decay statement \eqref{eq:bh-local-decay} are derived only after these inputs have been combined.
\end{proposition}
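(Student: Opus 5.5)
The plan is a dependency audit: I will read the proof of Theorem~\ref{thm:axisymmetric-bh} as a directed chain of implications and check that every arrow points from a package item \textbf{BH1}--\textbf{BH9}, or from a result already derived from the package, toward the conclusion. No arrow may point from \eqref{eq:bh-main-bound} or \eqref{eq:bh-local-decay} back into an input.

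\emph{Step 1: foundation.} \textbf{BH1}--\textbf{BH3} give the Hilbert scale $\mathcal E^s_{\rm bh}$, the closed generator $\mathcal G_{\rm bh}$ and the semigroup $e^{t\mathcal G_{\rm bh}}$. Nothing here refers to decay; it is only the setting in which the objects are defined.

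\emph{Step 2: threshold part.} \textbf{BH8} alone, through the finite-rank Laurent expansion of the outgoing resolvent at $\sigma=0$, produces $\Pi_{0,{\rm bh}}$. I will check that commutation with the semigroup and nilpotency of $\mathcal G_{\rm bh}$ on $\mathcal T_{0,{\rm bh}}$ (Lemma~\ref{lem:bh-threshold-calculus}) follow from resolvent identities. They should not follow from any large-time behaviour of solutions. The polynomial threshold term then comes from the finite Taylor series of $e^{t\mathcal G_{\rm bh}}$, exactly as in Theorem~\ref{thm:abstractmain}.

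\emph{Step 3: dispersive part.} Lemma~\ref{lem:bh-projected-estimate} starts from the \textbf{BH4} estimate, which is true with a compact error on every finite interval $[0,T]$. The compact error term is then bounded by the smoothing estimate \eqref{eq:lap-smoothing-package} from \textbf{BH7}. That estimate is obtained from Plancherel applied to the cutoff resolvent on the real axis. The inputs to that step are:
\begin{itemize}
\item \textbf{BH5}, for finite nonzero frequencies;
\item \textbf{BH6}, for high frequencies;
\item \textbf{BH8}, after projecting with $I-\Pi_{0,{\rm bh}}$, for low frequencies.
\end{itemize}
None of these uses a time-domain bound on the solution. Letting $T\to\infty$ with constants independent of $T$ gives \eqref{eq:bh-main-bound}.

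\emph{Step 4: local decay and modes.} Local decay \eqref{eq:bh-local-decay} is the Riemann--Lebesgue lemma applied to the $L^1_\sigma$ spectral density in \textbf{BH7}, so it needs no further input. Mode exclusion is stated directly in \textbf{BH5}. \textbf{BH9} enters only to justify the commutators and to pass from dense smooth data to general data.

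\emph{Main obstacle.} The delicate point is Step~3: checking that the Plancherel and limiting-absorption argument does not silently require a priori polynomial boundedness of $U_{\rm disp}$. That bound would be needed to define the Fourier--Laplace transform on the real axis, and it is essentially the conclusion. I would resolve it in three moves. First, Fourier--Laplace transform in the upper half-plane $\operatorname{Im}\sigma>0$, where some a priori exponential growth bound is automatic for a strongly continuous semigroup, so the transform exists. Second, use \textbf{BH5} and \textbf{BH6} to continue the resolvent to the real axis uniformly as $\operatorname{Im}\sigma\downarrow0$. Third, pass to the limit in Plancherel on $[0,T]$-truncated solutions, so that only finite-time information and frequency-domain bounds enter.

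Finally, for the verified cases I would note that the inputs are established independently of Theorem~\ref{thm:axisymmetric-bh}. For the wall family this is Theorem~\ref{thm:kn-verifies-bh}; for full Kerr--Newman it is Theorem~\ref{thm:kn-af-verifies-bh} together with External Assumption~\ref{ass:external-kn-scattering-package}. This closes the non-circularity check.
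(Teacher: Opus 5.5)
Your dependency audit is correct and is essentially the paper's proof. \textbf{BH8} gives the projection and the polynomial threshold term; \textbf{BH4} plus the smoothing bound \eqref{eq:lap-smoothing-package} gives the $T$-uniform projected estimate; the $L^1_\sigma$ density in \textbf{BH7} plus Riemann--Lebesgue gives local decay; \textbf{BH5} gives mode exclusion; and \textbf{BH6}, \textbf{BH9} enter only as hypotheses verified beforehand.

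Your ``main obstacle'' is extra work that the proposition does not need. In the paper both the smoothing estimate and the spectral density are themselves assumed in \textbf{BH7}, so the abstract proof never forms a Fourier--Laplace transform. The issue you raise belongs to verifying \textbf{BH7}, e.g.\ in Lemma~\ref{lem:kn-lap-smoothing}. There your outline is the standard fix, with one correction: the transform a priori exists only for $\operatorname{Im}\sigma$ above the semigroup growth bound, not on all of $\operatorname{Im}\sigma>0$. It reaches $\operatorname{Im}\sigma>0$ only by continuation via \textbf{BH5} together with a Paley--Wiener identification.
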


\begin{proof}
The construction of \(\Pi_{0,{\rm bh}}\) and the polynomial formula for the threshold term use only the Laurent expansion and finite-rank threshold statement in \textbf{BH8}.  The projected solution is then inserted into the finite-time estimate of \textbf{BH4}.  The only term in that estimate not controlled by initial energy is the compact interaction-region error.  By \textbf{BH7}, after applying \(I-\Pi_{0,{\rm bh}}\) this compact error is bounded uniformly in time by the cutoff smoothing estimate \eqref{eq:lap-smoothing-package}; hence the projected energy and local-energy norms are controlled by the projected initial datum.  The Fourier-Laplace representation in \textbf{BH7} has an \(L^1_\sigma\) local spectral density, so the Banach-valued Riemann-Lebesgue lemma gives local decay on compact sets.  The no-mode statement follows separately from \textbf{BH5}.  High-frequency and commutator estimates enter only through \textbf{BH6} and \textbf{BH9}, which are hypotheses or are verified before Theorem~\ref{thm:axisymmetric-bh} is applied.  Thus none of the final boundedness or decay assertions is used as an input.
\end{proof}

\subsection{Proof of the black-hole exterior theorem from the package}\label{subsec:bh-proof}

We now prove Theorem~\ref{thm:axisymmetric-bh}.  The proof is included here, in the main body, so that every exterior input used by the argument is adjacent to the hypothesis that supplies it.

\begin{lemma}[Threshold projection and polynomial threshold evolution]\label{lem:bh-threshold-calculus}
Assume \textbf{BH3} and \textbf{BH8}.  Then \(\Pi_{0,{\rm bh}}\) is a bounded finite-rank projection on \(\mathcal E^s_{\rm bh}\), commutes with \(\mathcal G_{\rm bh}\) and with \(e^{t\mathcal G_{\rm bh}}\), and has range \(\mathcal T_{0,{\rm bh}}\).  Moreover \(e^{t\mathcal G_{\rm bh}}\Pi_{0,{\rm bh}}U_0\) is a finite-dimensional polynomial in \(t\).  In the two wall cases this polynomial has degree zero; in the decaying full Kerr-Newman case the projection is zero.
\end{lemma}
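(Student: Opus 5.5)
The plan is to build \(\Pi_{0,{\rm bh}}\) directly from the finite-rank Laurent expansion in \textbf{BH8}, in the same way a Riesz projection is built. Take the singular part of the cutoff outgoing resolvent at \(\sigma=0\), transported to the data space \(\mathcal E^s_{\rm bh}\) through the first-order generator \(\mathcal G_{\rm bh}\) of \textbf{BH3}. Define \(\Pi_{0,{\rm bh}}\) as the residue coefficient of the \(\sigma^{-1}\) term in the expansion of \((\mathcal G_{\rm bh}+\ii\sigma)^{-1}\), equivalently the data-space projection that \textbf{BH8} names explicitly.

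Boundedness and finite rank then follow because the coefficients in \textbf{BH8} are finite-rank maps between the local Sobolev spaces, and the cutoff compatibility in \textbf{BH9} lets them act boundedly on \(\mathcal E^s_{\rm bh}\). To get commutation with \(\mathcal G_{\rm bh}\), note that the resolvent commutes with \(\mathcal G_{\rm bh}\) on its domain, so every Laurent coefficient does too. The projection property \(\Pi_{0,{\rm bh}}^2=\Pi_{0,{\rm bh}}\) comes from comparing Laurent coefficients in the resolvent identity.

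Next I would show that the range equals \(\mathcal T_{0,{\rm bh}}\). The identity \((\mathcal G_{\rm bh}+\ii\sigma)R(\sigma)=I\), expanded in powers of \(\sigma\), shows that \(\mathcal G_{\rm bh}\) acts nilpotently on the range of the residue. The order of the pole \(\nu_0\) bounds the nilpotency index, so the range lies in \(\mathcal T_{0,{\rm bh}}\). For the reverse inclusion, take \(V\in\Ker\mathcal G_{\rm bh}^N\). Then \(R(\sigma)V\) has the explicit finite Neumann form \(\sum_j (-\ii\sigma)^{-j-1}\mathcal G^j_{\rm bh}V\), up to sign convention, and its residue is \(V\) itself, so \(\Pi_{0,{\rm bh}}V=V\). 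It also follows that \(\mathcal T_{0,{\rm bh}}\) is finite-dimensional, since it sits inside a finite-rank range.

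Commutation with the semigroup needs care, and I expect it to be the main obstacle. The reason is that zero sits at the edge of the continuous spectrum, so the usual contour-integral formula for a Riesz projection is not available. I would go through the Laplace representation \(R(\sigma)=\int_0^\infty e^{\ii\sigma t}e^{t\mathcal G_{\rm bh}}\,dt\) for \(\operatorname{Im}\sigma\) large. This gives \(e^{t\mathcal G_{\rm bh}}R(\sigma)=R(\sigma)e^{t\mathcal G_{\rm bh}}\), and analytic continuation to the region near zero allowed by \textbf{BH8} preserves that identity. Comparing Laurent coefficients then gives commutation with \(\Pi_{0,{\rm bh}}\).

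Once commutation is in hand, \(\mathcal G_{\rm bh}\) restricted to the finite-dimensional invariant space \(\mathcal T_{0,{\rm bh}}\) is nilpotent. The exponential series therefore terminates, \(e^{t\mathcal G_{\rm bh}}\Pi_{0,{\rm bh}}U_0=\sum_{j<N}\frac{t^j}{j!}\mathcal G_{\rm bh}^j\Pi_{0,{\rm bh}}U_0\), which is the claimed polynomial in \(t\).

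The special cases follow directly. In the wall cases, Lemma~\ref{lem:kn-zero-frequency} gives \(\mathcal T_{0}=\mathrm{span}\{(1,0)\}\) and semisimplicity, so \(\mathcal G\) vanishes on the range and the polynomial has degree zero. In the decaying full Kerr-Newman case, Lemma~\ref{lem:kn-af-zero-frequency} gives a regular resolvent at zero, so the residue vanishes and \(\Pi_{0,{\rm bh}}=0\).
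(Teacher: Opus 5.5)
Your endgame matches the paper's proof. Once \(\Pi_{0,{\rm bh}}\) is a bounded finite-rank projection commuting with \(e^{t\mathcal G_{\rm bh}}\), and \(\mathcal G_{\rm bh}\) is nilpotent on \(\mathcal T_{0,{\rm bh}}\), the exponential series terminates. The special cases then come from Lemma~\ref{lem:kn-zero-frequency} (semisimple block, degree zero) and Lemma~\ref{lem:kn-af-zero-frequency} (\(\Pi_{0,{\rm KN},{\rm af}}=0\)).

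The gap is in the middle, where you try to \emph{construct} \(\Pi_{0,{\rm bh}}\) as a Riesz-type residue of the generator resolvent and derive its properties. That does not follow from the hypotheses.

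The Laurent expansion \eqref{eq:zero-laurent-bh} in \textbf{BH8} is an expansion of the cutoff second-order stationary resolvent \(\chi P(\sigma)^{-1}\chi\) between local Sobolev spaces. It is not an expansion of \((\mathcal G_{\rm bh}+\ii\sigma)^{-1}\) on \(\mathcal E^s_{\rm bh}\), and the cutoffs commute with neither \(\mathcal G_{\rm bh}\) nor \(e^{t\mathcal G_{\rm bh}}\). Several of your steps therefore concern an object whose behavior near \(\sigma=0\) is never supplied:
\begin{itemize}
\item the claim that every Laurent coefficient commutes with \(\mathcal G_{\rm bh}\);
\item nilpotency obtained by expanding \((\mathcal G_{\rm bh}+\ii\sigma)R(\sigma)=I\);
\item the finite Neumann sum identifying the residue on \(\Ker\mathcal G_{\rm bh}^N\);
\item the Laplace-transform transfer of semigroup commutation to the coefficients.
\end{itemize}

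Your ``analytic continuation to the region near zero allowed by \textbf{BH8}'' is also not available. \textbf{BH8} gives only a bounded remainder \(R_0(\sigma)\), not holomorphy. When zero is the edge of the continuous spectrum, as in the asymptotically flat case, the uncut resolvent on \(\mathcal E^s_{\rm bh}\) need not continue across the real axis at all. You also use \textbf{BH9} for boundedness, but the lemma does not assume it. Compare Corollary~\ref{cor:projection-data-space-object}, which warns against obtaining the data-space projection from singular coefficients of the spatial resolvent.

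The fix is to drop the construction. As formulated in Assumption~\ref{ass:axisymmetric-bh-package}, \textbf{BH8} already postulates the following:
\begin{itemize}
\item \(\Pi_{0,{\rm bh}}U=\sum_\alpha\ell_\alpha(U)V_\alpha\) with \(V_\alpha\in\mathcal E^s_{\rm bh}\) and \(\ell_\alpha\in(\mathcal E^s_{\rm bh})'\), so boundedness and finite rank are immediate;
\item \(\Pi_{0,{\rm bh}}^2=\Pi_{0,{\rm bh}}\) and \(\Ran\Pi_{0,{\rm bh}}=\mathcal T_{0,{\rm bh}}\);
\item \(e^{t\mathcal G_{\rm bh}}\Pi_{0,{\rm bh}}=\Pi_{0,{\rm bh}}e^{t\mathcal G_{\rm bh}}\);
\item nilpotency of \(\mathcal G_{\rm bh}\) on \(\mathcal T_{0,{\rm bh}}\).
\end{itemize}
The paper's proof simply reads these off.

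Commutation with \(\mathcal G_{\rm bh}\) itself follows from the semigroup identity by differentiating at \(t=0\). For \(U\in\Dom(\mathcal G_{\rm bh})\) one has \(t^{-1}(e^{t\mathcal G_{\rm bh}}-I)\Pi_{0,{\rm bh}}U=\Pi_{0,{\rm bh}}t^{-1}(e^{t\mathcal G_{\rm bh}}-I)U\to\Pi_{0,{\rm bh}}\mathcal G_{\rm bh}U\). Hence \(\Pi_{0,{\rm bh}}U\in\Dom(\mathcal G_{\rm bh})\) and \(\mathcal G_{\rm bh}\Pi_{0,{\rm bh}}U=\Pi_{0,{\rm bh}}\mathcal G_{\rm bh}U\). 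With that, your last two paragraphs complete the proof.
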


\begin{proof}
By \textbf{BH8}, the low-frequency resolvent expansion and adjoint threshold functionals define the bounded finite-rank projection \eqref{eq:bh-threshold-proj}.  The identities \(\Pi_{0,{\rm bh}}^2=\Pi_{0,{\rm bh}}\) and \(e^{t\mathcal G_{\rm bh}}\Pi_{0,{\rm bh}}=\Pi_{0,{\rm bh}}e^{t\mathcal G_{\rm bh}}\) are part of the zero-frequency threshold calculus in \textbf{BH8}.  The range is exactly \(\mathcal T_{0,{\rm bh}}\), and the restriction \(N_0:=\mathcal G_{\rm bh}|_{\Ran\Pi_{0,{\rm bh}}}\) is nilpotent of some order \(q\).  Therefore
\begin{equation}
e^{t\mathcal G_{\rm bh}}\Pi_{0,{\rm bh}}U_0
=\sum_{j=0}^{q-1}\frac{t^j}{j!}N_0^j\Pi_{0,{\rm bh}}U_0.
\end{equation}
For the Kerr-Newman wall family, Lemma~\ref{lem:kn-zero-frequency} gives \(q=1\), so the threshold term is the corresponding stationary constant.  For the full asymptotically flat Kerr-Newman radiation space, Lemma~\ref{lem:kn-af-zero-frequency} gives \(\Pi_{0,{\rm KN},\rm af}=0\), so the threshold term is absent.
\end{proof}

\begin{lemma}[Projected exterior estimate]\label{lem:bh-projected-estimate}
Assume \textbf{BH1}-\textbf{BH9}.  For every \(U_0\in\mathcal E^s_{\rm bh}\), let
\begin{equation}
U_\perp(t)=e^{t\mathcal G_{\rm bh}}(I-\Pi_{0,{\rm bh}})U_0.
\end{equation}
Then
\begin{equation}
\sup_{t\ge0}\|U_\perp(t)\|_{\mathcal E^s_{\rm bh}}
+\|U_\perp\|_{LE^s([0,\infty))}
\le C_s\|(I-\Pi_{0,{\rm bh}})U_0\|_{\mathcal E^s_{\rm bh}}.
\label{eq:appendix-projected-estimate}
\end{equation}
\end{lemma}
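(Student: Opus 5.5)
The plan is to combine the finite-time red-shift/local-energy estimate of \textbf{BH4} with the projected compact smoothing estimate \eqref{eq:lap-smoothing-package} of \textbf{BH7}. The compact interaction-region error in \textbf{BH4} is the only term not already controlled by the initial energy, and \textbf{BH7} bounds that error uniformly in time.

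\emph{Step 1: reduction to the projected evolution.} Set \(V_0:=(I-\Pi_{0,{\rm bh}})U_0\). By Lemma~\ref{lem:bh-threshold-calculus}, \(\Pi_{0,{\rm bh}}\) commutes with \(e^{t\mathcal G_{\rm bh}}\), so \(U_\perp(t)=e^{t\mathcal G_{\rm bh}}V_0\) remains in \(\Ker\Pi_{0,{\rm bh}}\) for all \(t\ge0\). It therefore suffices to prove \eqref{eq:appendix-projected-estimate} for data in \(\Ker\Pi_{0,{\rm bh}}\), with right side \(\|V_0\|_{\mathcal E^s_{\rm bh}}\). By \textbf{BH3} and \textbf{BH9}, smooth compatible data are dense in this projected space, so I will argue for such data and pass to the limit using the closedness of the generator.

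\emph{Step 2: finite-time estimate with compact error.} On each interval \([0,T]\), \textbf{BH4} gives
\[
\sup_{0\le t\le T}\|U_\perp(t)\|_{\mathcal E^s_{\rm bh}}+\|U_\perp\|_{LE^s([0,T])}
\le C\|V_0\|_{\mathcal E^s_{\rm bh}}+C\|\chi_{\rm c}U_\perp\|_{L^2([0,T];H^{s}_{\rm loc}\times H^{s-1}_{\rm loc})},
\]
where \(\chi_{\rm c}\) is the compact interaction-region cutoff and \(C\) does not depend on \(T\). The red-shift multiplier handles the horizon, and the boundary or radiation condition of \textbf{BH2} fixes the sign of the outer flux. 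The \textbf{BH9} compatibility is what allows one to commute \(s\) times with \(\partial_t\) and with the horizon-regular commutators, at the cost of lower-order terms that can again be placed in the compact error.

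\emph{Step 3: removal of the compact error.} I apply \eqref{eq:lap-smoothing-package} from \textbf{BH7} to the projected datum. This gives \(\|\chi_{\rm c}U_\perp\|_{L^2([0,\infty);\cdot)}\le C_s\|V_0\|_{\mathcal E^s_{\rm bh}}\), a bound uniform in \(T\). Inserting it into Step 2 and letting \(T\to\infty\) yields \eqref{eq:appendix-projected-estimate}.

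The main obstacle is justifying this smoothing bound on the projected space. It is obtained by Plancherel in the Fourier–Laplace variable: one writes \(\chi_{\rm c}U_\perp\) through the cutoff outgoing resolvent applied to \(V_0\), shifts the contour to the real axis, and needs the resolvent to be uniformly bounded there. Three frequency regions must be handled. For \(|\sigma|\ge R\), \textbf{BH6} gives a bound with a finite power loss, which is absorbed into the index \(s\). On compact nonzero intervals, \textbf{BH5} excludes real and upper-half-plane poles, so the limiting-absorption boundary values exist and are bounded. Near zero, \textbf{BH8} gives a finite-rank Laurent expansion whose singular coefficients are annihilated by \(I-\Pi_{0,{\rm bh}}\). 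The delicate part is this last cancellation: the projection must remove exactly the singular part, so the resolvent applied to \(V_0\) stays bounded at \(\sigma=0\). I will take that cancellation from the threshold calculus in \textbf{BH8}, as in Lemma~\ref{lem:bh-threshold-calculus}.
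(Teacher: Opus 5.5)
Your proposal is correct and follows essentially the same route as the paper: apply the finite-time estimate \eqref{eq:redshift-package-bound} of \textbf{BH4} to the projected solution with $F=0$, bound the compact error term uniformly in $T$ by the projected smoothing estimate \eqref{eq:lap-smoothing-package}, let $T\to\infty$, and extend from smooth data using the density in \textbf{BH3}, the cutoff compatibility in \textbf{BH9}, and strong continuity of the semigroup. The one difference is your final paragraph, which treats the smoothing bound as an obstacle to be derived from \textbf{BH5}, \textbf{BH6}, \textbf{BH8} via Plancherel; in the paper, \eqref{eq:lap-smoothing-package} is itself a hypothesis in \textbf{BH7}, so that derivation is unnecessary for this lemma, as is your commentary on how \textbf{BH4} would be proved.
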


\begin{proof}
First take smooth dense data satisfying the boundary condition.  The homogeneous equation has \(F=0\), and \(U_\perp(0)=(I-\Pi_{0,{\rm bh}})U_0\).  Applying the finite-time red-shift/local-energy estimate with compact error, \eqref{eq:redshift-package-bound}, to \(U_\perp\) on \([0,T]\) gives
\begin{equation}
\begin{aligned}
&\sup_{0\le t\le T}\|U_\perp(t)\|_{\mathcal E^s_{\rm bh}}^2
+\|U_\perp\|_{LE^s([0,T])}^2 \\
&\qquad\le C_s\Bigl(
\|U_\perp(0)\|_{\mathcal E^s_{\rm bh}}^2
+\|\chi_{\rm c}U_\perp\|_{L^2([0,T];H^s_{\rm loc}\times H^{s-1}_{\rm loc})}^2
\Bigr).
\end{aligned}
\end{equation}
The compact term is controlled uniformly in \(T\) by the projected smoothing estimate \eqref{eq:lap-smoothing-package}.  Hence
\begin{equation}
\sup_{0\le t\le T}\|U_\perp(t)\|_{\mathcal E^s_{\rm bh}}^2
+\|U_\perp\|_{LE^s([0,T])}^2
\le C'_s\|(I-\Pi_{0,{\rm bh}})U_0\|_{\mathcal E^s_{\rm bh}}^2,
\end{equation}
with \(C'_s\) independent of \(T\).  Letting \(T\to\infty\) and taking square roots proves \eqref{eq:appendix-projected-estimate} for smooth data.  The density statement in \textbf{BH3}, the cutoff compatibility in \textbf{BH9}, and the strong continuity of the semigroup extend the estimate to all \(U_0\in\mathcal E^s_{\rm bh}\).
\end{proof}

\begin{lemma}[Local decay on the projected part]\label{lem:bh-local-decay}
Assume \textbf{BH7}.  For every compact \(K\) away from any imposed outer boundary,
\begin{equation}
\lim_{t\to\infty}
\left(\|u_\perp(t)\|_{H^1(K)}+\|\partial_tu_\perp(t)\|_{L^2(K)}\right)=0.
\end{equation}
\end{lemma}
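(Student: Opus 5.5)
The plan is to use the local spectral representation supplied by \textbf{BH7} and reduce local decay to the Banach-valued Riemann--Lebesgue lemma.

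Fix a compact $K$ away from any imposed outer boundary and a cutoff $\chi_K$ equal to one on $K$. By \textbf{BH7}, after applying $I-\Pi_{0,{\rm bh}}$, the localized evolution has a Fourier--Laplace representation
\[
\chi_K U_\perp(t)=\int_{\mathbb R} e^{-\ii\sigma t}B_K(\sigma)(I-\Pi_{0,{\rm bh}})U_0\,\dd\sigma ,
\]
valid for $t\ge0$. Here the spectral density $B_K(\sigma)$ is built from the jump of the outgoing cutoff resolvent across the real axis, and the density lies in $L^1_\sigma(H^1(K)\times L^2(K))$ with norm bounded by $C_{K,s}\|(I-\Pi_{0,{\rm bh}})U_0\|_{\mathcal E^s_{\rm bh}}$. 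This is exactly the bound \eqref{eq:wall-spectral-density-split-bound} in the wall case and the corresponding \textbf{EKN9} clause in the asymptotically flat case.

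The first step is to justify this identity for smooth dense data. Start from the Laplace transform of the semigroup on $\operatorname{Im}\sigma>0$. \textbf{BH5} guarantees the transform is holomorphic there. Deform the contour down to the real axis using the limiting-absorption boundary values. The zero-frequency singularity has been removed by the projection, as \textbf{BH8} shows, and the high-frequency tails are integrable by \textbf{BH6}.

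The second step applies the Riemann--Lebesgue lemma for Bochner-integrable functions with values in the Hilbert space $H^1(K)\times L^2(K)$. One approximates the density in $L^1_\sigma$ by simple functions built from indicator functions of intervals. For such functions the integral is $O(1/t)$, and the approximation error is uniform in $t$. This gives $\|\chi_K U_\perp(t)\|_{H^1(K)\times L^2(K)}\to0$ for dense data.

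The third step extends the conclusion to general $U_0\in\mathcal E^s_{\rm bh}$ by a $3\varepsilon$ argument. The ingredients are the uniform projected bound of Lemma~\ref{lem:bh-projected-estimate}, the continuity of restriction to $K$ from the energy space, and the boundedness of $\Pi_{0,{\rm bh}}$.

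The main obstacle is the first step. One must make rigorous that the time-domain solution restricted to $K$ equals the real-axis spectral integral, rather than the integral along a line $\operatorname{Im}\sigma=\epsilon$. I would handle it by writing the solution as $e^{\epsilon t}$ times an integral over $\operatorname{Im}\sigma=\epsilon$ and letting $\epsilon\downarrow0$. Dominated convergence then applies, using the uniform-in-$\epsilon$ $L^1$ majorant that \textbf{BH7} provides near the real axis.
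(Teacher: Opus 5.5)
Your second step is exactly the paper's proof. \textbf{BH7} already postulates the representation \eqref{eq:local-spectral-density} for the projected evolution of every datum in $\mathcal E^s_{\rm bh}$, together with the $L^1_\sigma$ bound \eqref{eq:spectral-density-L1}, so the Banach-valued Riemann--Lebesgue lemma finishes immediately.

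Your first and third steps re-derive what \textbf{BH7} assumes and should be dropped. The first step's contour-deformation argument also needs more than the lemma assumes: it invokes \textbf{BH5}, \textbf{BH6}, \textbf{BH8}, and a uniform-in-$\epsilon$ majorant near the real axis that \textbf{BH7} does not provide. Moreover, \textbf{BH6} only gives polynomially growing real-axis bounds, so it does not by itself make the high-frequency tails integrable.
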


\begin{proof}
By \textbf{BH7}, the localized projected solution has the representation \eqref{eq:local-spectral-density}.  The density estimate \eqref{eq:spectral-density-L1} says that the integrand belongs to \(L^1(\R;H^1(K)\times L^2(K))\).  The Banach-valued Riemann-Lebesgue lemma therefore implies that the Fourier integral tends to zero in \(H^1(K)\times L^2(K)\) as \(t\to\infty\).
\end{proof}

\begin{proof}[Proof of Theorem~\ref{thm:axisymmetric-bh}]
By \textbf{BH1}-\textbf{BH3}, the exterior Cauchy problem has a strongly continuous future evolution on \(\mathcal E^s_{\rm bh}\).  Lemma~\ref{lem:bh-threshold-calculus} gives the finite-rank projection \(\Pi_{0,{\rm bh}}\), its commutation with the evolution, and the identification of its range with the generalized zero-frequency space.  Thus every datum decomposes uniquely as
\begin{equation}
U_0=\Pi_{0,{\rm bh}}U_0+(I-\Pi_{0,{\rm bh}})U_0,
\end{equation}
and the solution decomposes as
\begin{equation}
U(t)=e^{t\mathcal G_{\rm bh}}\Pi_{0,{\rm bh}}U_0
+e^{t\mathcal G_{\rm bh}}(I-\Pi_{0,{\rm bh}})U_0.
\end{equation}
The first term is finite-dimensional and polynomial in \(t\) by Lemma~\ref{lem:bh-threshold-calculus}.  The second term satisfies \eqref{eq:bh-main-bound} by Lemma~\ref{lem:bh-projected-estimate}.  The local decay statement \eqref{eq:bh-local-decay} is Lemma~\ref{lem:bh-local-decay}.

For the concrete slowly rotating weakly charged Kerr-Newman wall family, Theorem~\ref{thm:kn-verifies-bh} verifies the hypotheses and Lemma~\ref{lem:kn-zero-frequency} identifies the threshold term as \(\ell_{\rm KN}(U_0)(1,0)\).  For the full asymptotically flat Kerr-Newman radiation space, Theorem~\ref{thm:kn-af-verifies-bh} verifies the hypotheses and Lemma~\ref{lem:kn-af-zero-frequency} gives \(\Pi_{0,{\rm KN},\rm af}=0\).  Finally, an outgoing mode \(u=e^{-\ii\sigma t}v\) with \(\operatorname{Im}\sigma>0\) would give a nonzero solution of \(P(\sigma)v=0\) satisfying the outgoing horizon and the prescribed exterior boundary condition, contradicting \textbf{BH5}.  A zero-frequency mode lies in \(\mathcal T_{0,{\rm bh}}\) by \textbf{BH8}; hence no such mode remains on the projected dispersive space.  This proves all assertions of Theorem~\ref{thm:axisymmetric-bh}.
\end{proof}

\subsection{Proofs of the concrete Kerr-Newman main theorems}\label{sec:bh-main-proofs}

\begin{proof}[Proof of Theorem~\ref{thm:kn-wall-main}]
The smallness constant \(\varepsilon_{\rm KN}(M,R_{\rm w})\) is the one fixed in Theorem~\ref{thm:kn-verifies-bh}.  That theorem verifies every item of Assumption~\ref{ass:axisymmetric-bh-package} for the wall collar \(\mathcal M_{M,a,Q,R_{\rm w}}\), with any \(s_0\ge s_{\rm wall}\), and Lemma~\ref{lem:kn-zero-frequency} identifies the zero-frequency space as
\[
 \mathcal T_{0,\rm KN}=\operatorname{span}\{(1,0)\}
\]
with a semisimple zero block.  Applying Theorem~\ref{thm:axisymmetric-bh} gives the decomposition
\[
 U(t)=e^{t\mathcal G_{\rm KN}}\Pi_{0,\rm KN}U_0+e^{t\mathcal G_{\rm KN}}(I-\Pi_{0,\rm KN})U_0.
\]
Since the zero block is semisimple, the first term is the stationary constant \(\ell_{\rm KN}(U_0)(1,0)\).  The projected estimate \eqref{eq:bh-main-bound}, the local decay statement \eqref{eq:bh-local-decay}, and the absence of outgoing upper-half-plane modes are exactly the remaining assertions of Theorem~\ref{thm:kn-wall-main} in the Kerr-Newman wall notation.  This proves the theorem.
\end{proof}

\begin{proof}[Proof of Theorem~\ref{thm:kn-full-main}]
Theorem~\ref{thm:kn-af-verifies-bh} verifies Assumption~\ref{ass:axisymmetric-bh-package} for the neutral axisymmetric scalar equation on the full asymptotically flat subextremal Kerr-Newman exterior, with constants uniform on compact subextremal parameter sets.  The only non-elementary input in that verification is External Assumption~\ref{ass:external-kn-scattering-package}; the horizon-domain compatibility and zero-frequency classification are supplied by Lemmas~\ref{lem:kn-af-geometry-domain} and~\ref{lem:kn-af-zero-frequency}.  Lemma~\ref{lem:kn-af-zero-frequency} gives
\[
 \mathcal T_{0,{\rm KN},\rm af}=\{0\},
 \qquad
 \Pi_{0,{\rm KN},\rm af}=0,
\]
because the decaying radiation energy at null infinity excludes the nondecaying constant solution.  Theorem~\ref{thm:axisymmetric-bh} therefore has no threshold term in this realization and gives exactly the stated uniform energy bound, local energy bound, local decay, and absence of outgoing upper-half-plane modes.  The dependence of constants follows from the uniformity statement in External Assumption~\ref{ass:external-kn-scattering-package} and from the compact-subextremal lower bounds used in Lemma~\ref{lem:kn-af-geometry-domain}.  This proves the theorem.
\end{proof}

The proof of the extremal Kerr-Newman main theorem, Theorem~\ref{thm:kn-extremal-main}, is placed in Subsection~\ref{subsec:extreme-kn}, immediately after the explicit horizon-charge computation, because its logic is separate from the nondegenerate black-hole package used for Theorems~\ref{thm:kn-wall-main} and~\ref{thm:kn-full-main}.

\subsection{Internal exterior proof modules}\label{subsec:mainbody-proof-modules}

This subsection is included to make the dependency graph audit-ready and to ensure that proof components already established here are theorem-level main-body statements rather than appendix-only checklist items.  The Section~\ref{sec:ledger} assumptions are not informal background.  Each item has a single role.  Assumption~\ref{ass:abstract} is used only in the abstract threshold theorem.  The endpoint convention in \textbf{H7} of Assumption~\ref{ass:complete-list} is used only to justify regular endpoint Wronskian cancellation in separated angular problems.  Assumption~\ref{ass:complete-list} records the geometric and functional-analytic hypotheses for the bounded-slab theorems.  Assumption~\ref{ass:axisymmetric-bh-package} records the exterior black-hole package.  External Assumption~\ref{ass:external-kn-scattering-package} is the sole non-elementary input for the full asymptotically flat subextremal Kerr-Newman scalar exterior, and External Assumption~\ref{ass:aretakis-input} is the sole external input for the generic higher-order extremal horizon-instability conclusion.  Everything else in the main body is either an algebraic calculation, a closed-form/energy argument, a compact bounded-slab argument, or a consequence of these recorded inputs.

The propositions below spell out the exterior proof modules that are internal to this paper, including the extremal horizon-charge calculation.  They are not a substitute for a complete microlocal proof of the trapped-set resolvent.  Instead, they isolate exactly which pieces are elementary and which pieces are still carried by External Assumption~\ref{ass:external-kn-scattering-package}.  This is the safest possible form of a Strategy A upgrade without inserting an unverified Kerr-Newman scattering proof.

\begin{proposition}[Red-shift current near a nondegenerate Kerr-Newman horizon]\label{prop:kn-redshift-module}
Let \(r_+=M+\sqrt{M^2-a^2-Q^2}\) and assume \(a^2+Q^2<M^2\).  In ingoing Kerr-Newman coordinates near \(r=r_+\), there is a smooth future-directed timelike vector field \(N\), equal to the stationary Killing field plus a positive multiple of the ingoing transversal field near the horizon and equal to \(\partial_t\) away from a small collar, such that the scalar stress-energy current
\begin{equation}
J^N_\mu[u]=T_{\mu\nu}[u]N^\nu,\qquad
T_{\mu\nu}[u]=\Re(\nabla_\mu u\overline{\nabla_\nu u})-
\frac12g_{\mu\nu}\nabla^\alpha u\overline{\nabla_\alpha u},
\end{equation}
satisfies, on the horizon collar,
\begin{equation}
K^N[u]:=\nabla^\mu J^N_\mu[u]
\ge c_0\bigl(|\partial_vu|^2+|\partial_ru|^2+|\nabla_{S^2}u|^2\bigr)-C_0|\Box_g u|^2,
\label{eq:kn-redshift-current-pointwise}
\end{equation}
where \(c_0>0\) depends continuously on \((M,a,Q)\) and is bounded below on compact subextremal parameter sets.
\end{proposition}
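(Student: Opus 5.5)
The plan follows the standard Dafermos--Rodnianski red-shift construction, specialized to Kerr-Newman in the ingoing chart \eqref{eq:kn-af-ingoing-coordinates}. I would work near $r=r_+$ with the frame $\{K,Y,e_A\}$. Here $K=\partial_v+\Omega_H\partial_{\phi_{\rm in}}$ is the horizon generator, and $Y$ is a null vector field transverse to $\mathcal H^+$ normalized by $g(K,Y)=-2$ on the horizon. Concretely, $Y$ is a multiple of $-\partial_r$ plus a tangential correction, extended off the horizon by the stationary and axial symmetries. The $e_A$ are an orthonormal frame on the horizon spheres.

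I would set $N=T+\chi(r)\,\epsilon Y$ with a cutoff $\chi$ that equals $1$ on $[r_+,r_++\delta_1]$ and vanishes for $r\ge r_++2\delta_1$. Since $T$ is Killing, its deformation tensor vanishes and $\nabla^\mu J^T_\mu=\Re(\Box_g u\,\overline{Tu})$. Everything therefore reduces to computing $\nabla^\mu J^Y_\mu$ on $\mathcal H^+$. Here I would use the standard identity, valid on any Killing horizon with surface gravity $\kappa_+$:
\begin{equation}
K^Y[u]\big|_{\mathcal H^+}=\frac{\kappa_+}{2}|Yu|^2+\frac{\kappa_+}{2}\cdot(\text{const})|\nabla_{S^2}u|^2+(\text{terms quadratic with at least one factor }Ku)+O(|Yu||\nabla_{S^2}u|)+\Re(\Box_gu\,\overline{Yu}).
\end{equation}
This identity follows from $\nabla_YY$ tangential to the horizon and $\nabla_KY=-\kappa_+Y+(\text{tangential})$. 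I would choose $Y$ so that the tangential part of $\nabla_Y Y$ has a suitable sign multiple of $Y$ (the Dafermos--Rodnianski normalization). With that choice, the cross terms $Yu\cdot e_Au$ are absorbed by Cauchy--Schwarz with weights.

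The $|Ku|^2$ coefficient has no favorable sign from $Y$ alone. This is why $N$ contains $T$ (equivalently $K$, after adding $\Omega_H\Phi$, which is Killing and harmless). Near the horizon I would instead use $N=K+\epsilon\chi Y$. The positive contribution from $Y$, combined with $Ku$ terms controlled via Cauchy--Schwarz with a large constant times small $\epsilon$, then yields positive definiteness in $(Ku,Yu,\nabla_{S^2}u)$ on $\mathcal H^+$. Continuity extends this to a collar $r\le r_++\delta_1$, shrinking $\delta_1$ as needed. The $\Box_gu$ terms give $-C_0|\Box_gu|^2$ after Young's inequality, at the cost of absorbing $\eta|Yu|^2+\eta|Ku|^2$.

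Since the frame $\{\partial_v,\partial_r,\nabla_{S^2}\}$ is equivalent to $\{K,Y,e_A\}$ uniformly in the collar, this gives \eqref{eq:kn-redshift-current-pointwise}. One must also check that $N$ is timelike: $K$ is null on the horizon, and adding $\epsilon Y$ with $g(K,Y)<0$ makes $N$ timelike near $r_+$. Away from the collar, $N=\partial_t$ only where $\partial_t$ is timelike. Near the ergoregion $\partial_t$ fails to be timelike for $a\neq0$, so I would glue via $T+\Omega_H\chi'\Phi$. In the axisymmetric sector $\Phi u=0$ anyway, and only the collar estimate is asserted.

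The main obstacle is the exact horizon computation of $K^Y$: verifying that the coefficient of $|Yu|^2$ is a positive multiple of $\kappa_+$, and that the $|\nabla_{S^2}u|^2$ coefficient can be made positive. I would carry this out explicitly from \eqref{eq:extreme-kn-regular-wave}, which is valid for general $\Delta$ before setting $\Delta(M)=0$. On $r=r_+$, the term $\Delta'\partial_r u$ with $\Delta'(r_+)=r_+-r_-=2\kappa_+(r_+^2+a^2)$ exhibits the red-shift directly. Finally, uniformity of $c_0$ follows because $\kappa_+$, $\delta_1$, and all frame constants depend continuously on $(M,a,Q)$, and $\kappa_+$ is bounded below on $\mathscr K_{\delta,M_0,M_1}$.
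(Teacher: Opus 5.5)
\textbf{Gap: the $|\partial_v u|^2$ term cannot come from adding $K$ and shrinking $\epsilon$.} Your construction is the paper's own: $N=K_H+\eta Y$ near $\mathcal H^+$, cut off to $\partial_t$. But the step that is supposed to produce the $|\partial_vu|^2=|Ku|^2$ term fails. Since $K$ is Killing, its deformation tensor vanishes, so wherever $\chi=1$ you have exactly
\[
K^N[u]=\epsilon\,T_{\mu\nu}[u]\,{}^{(Y)}\pi^{\mu\nu}+\Re\bigl(\Box_gu\,\overline{Nu}\bigr).
\]
Adding $K$ and shrinking $\epsilon$ only rescales the quadratic form coming from $Y$. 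It cannot create a positive $|Ku|^2$ coefficient, and Cauchy--Schwarz cannot absorb a $(Ku)(Yu)$ cross term into a $|Ku|^2$ term that is not there.

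The danger is real. Take $a=0$ and the naive choice $Y=-2\partial_r$ in the ingoing chart, which is null, geodesic, and has $g(K,Y)=-2$. The horizon computation gives
\[
T_{\mu\nu}[u]\,{}^{(Y)}\pi^{\mu\nu}\big|_{\mathcal H^+}=\frac{\kappa_+}{2}|Yu|^2-\frac{2}{r_+}\Re\bigl(Ku\,\overline{Yu}\bigr).
\]
This has neither a $|Ku|^2$ term nor an angular term, and it is indefinite for every $\epsilon>0$. The paper's sketch appeals to the same ``small-$\eta$ absorption'' and passes over this point too.

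\textbf{The missing idea: build positivity into ${}^{(Y)}\pi$ via the extension of $Y$, with a large parameter.} Normalize $g(K,Y)=-2$ and $[K,Y]=0$, and impose $\nabla_YY=-\sigma(K+Y)$ on $\mathcal H^+$. In the null frame $(K,Y,e_A)$ the terms come out as follows.
\begin{itemize}
\item The $K$-component of $\nabla_YY$ enters through $g(\nabla_YY,Y)=2\sigma$ and contributes $\frac{\sigma}{2}|Ku|^2$.
\item The $Y$-component enters through $g(\nabla_YY,K)=2\sigma$ and contributes $\frac{\sigma}{2}|\nabla_{S^2}u|^2$.
\item The $|Yu|^2$ coefficient stays $\frac{\kappa_+}{2}$, coming from $\nabla_KK=\kappa_+K$.
\item The remaining terms have bounded coefficients: a $(Ku)(Yu)$ cross term proportional to the expansion of the horizon spheres along $Y$, $(Yu)(e_Au)$ terms, and the shear term.
\end{itemize}
Choosing $\sigma\gtrsim\kappa_+^{-1}$ therefore makes the form positive definite on $\mathcal H^+$. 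After that any $\epsilon>0$ works, and continuity gives the collar. The requirement $\sigma\gtrsim\kappa_+^{-1}$, together with the $\frac{\kappa_+}{2}|Yu|^2$ coefficient, is exactly where the lower bound on $\kappa_+$ over compact subextremal sets enters the uniformity of $c_0$.

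Two smaller corrections follow from this. Your phrase ``tangential part of $\nabla_YY$ \ldots{} multiple of $Y$'' conflates the two components that actually do the work. Also, the angular coefficient in your displayed identity does not come from $\kappa_+$; it comes from $\sigma$ and the shear of $Y$.

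Your remark about the ergoregion is correct. When $a\neq0$, $N$ cannot be both timelike and equal to $\partial_t$ just outside a small collar. The paper does not address this, and only the collar estimate is actually needed.
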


\begin{proof}
The proof is the red-shift calculation written in the present notation.  In ingoing coordinates the future horizon is a smooth null hypersurface generated by
\begin{equation}
K_H=\partial_v+\Omega_H\partial_\phi,
\qquad
\Omega_H=\frac{a}{r_+^2+a^2},
\end{equation}
and its surface gravity is
\begin{equation}
\kappa_+=\frac{r_+-r_-}{2(r_+^2+a^2)}>0.
\end{equation}
Choose a transversal future vector field \(Y\) with \(g(Y,K_H)=-1\) on the horizon and set \(N=K_H+\eta Y\) in the collar, with \(\eta>0\) fixed sufficiently small and then smoothly cut off to \(\partial_t\).  The deformation tensor \({}^{(N)}\pi_{\mu\nu}=\frac12(\nabla_\mu N_\nu+\nabla_\nu N_\mu)\) has a positive component in the transversal null direction because \(\nabla_{K_H}K_H=\kappa_+K_H\).  In a null frame \((K_H,Y,E_1,E_2)\), the quadratic form \(T_{\mu\nu}{}^{(N)}\pi^{\mu\nu}\) therefore controls the degenerate horizon energy plus a positive transversal term after the standard small-\(\eta\) absorption.  Source terms are handled by Cauchy's inequality in the identity
\begin{equation}
\nabla^\mu J^N_\mu[u]=T_{\mu\nu}[u]{}^{(N)}\pi^{\mu\nu}+
\Re((\Box_g u)\,\overline{N u}),
\end{equation}
where the last term is understood after the usual first-order current correction, or equivalently after applying Cauchy-Schwarz in the integrated identity.  Since \(\kappa_+\) has a positive lower bound on compact subextremal parameter sets, the constant \(c_0\) can be chosen uniformly on such sets.
\end{proof}

\begin{proposition}[Neutral axisymmetric Kerr-Newman separation and signed flux]\label{prop:kn-axisymmetric-flux}
For neutral axisymmetric scalar waves on Kerr-Newman, the mode ansatz
\begin{equation}
u(t,r,x)=e^{-i\sigma t}R(r)S(x),\qquad x=\cos\theta,
\end{equation}
reduces the wave equation to
\begin{align}
\frac{d}{dx}\Bigl((1-x^2)\frac{dS}{dx}\Bigr)+
\bigl(\lambda+a^2\sigma^2x^2\bigr)S&=0,\label{eq:kn-axisymmetric-angular-module}\\
\frac{d}{dr}\Bigl(\Delta\frac{dR}{dr}\Bigr)+
\left(\frac{(r^2+a^2)^2\sigma^2}{\Delta}-a^2\sigma^2-\lambda\right)R&=0,
\label{eq:kn-axisymmetric-radial-module}
\end{align}
with \(\Delta=(r-r_+)(r-r_-)\).  For real \(\sigma\ne0\), the outgoing radial Wronskian identity has signed boundary flux
\begin{equation}
\mathcal F_\infty+\mathcal F_H=0,
\qquad
\mathcal F_\infty=\sigma |A_\infty|^2,
\qquad
\mathcal F_H=\sigma(r_+^2+a^2)|A_H|^2,
\label{eq:kn-axisymmetric-signed-flux}
\end{equation}
up to the fixed positive normalizations of the outgoing bases.  Hence an outgoing real-frequency mode with \(\sigma\ne0\) must have \(A_\infty=A_H=0\), and unique continuation then gives \(R\equiv0\).
\end{proposition}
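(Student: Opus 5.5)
The plan has two parts. First we separate the equation directly in Boyer--Lindquist form. Then we run a Wronskian argument.

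\emph{Separation.} We start from the axisymmetric Kerr--Newman wave operator written as in \eqref{eq:kn-axisym-wave} (equivalently \eqref{eq:kn-af-stationary-operator}), with
\[
A_{\rm KN}=\frac{(r^2+a^2)^2}{\Delta}-a^2\sin^2\theta ,
\]
and substitute \(u=e^{-i\sigma t}R(r)S(x)\) with \(x=\cos\theta\), so that \(\sin^2\theta=1-x^2\). The angular operator becomes \(\frac{d}{dx}((1-x^2)\frac{d}{dx})\) by Lemma~\ref{lem:legendre-regular-axis}. The \(\sigma^2A_{\rm KN}\) term splits as
\[
\sigma^2\frac{(r^2+a^2)^2}{\Delta}-a^2\sigma^2+a^2\sigma^2x^2 .
\]
After dividing by \(RS\), the radial part collects \((r^2+a^2)^2\sigma^2/\Delta-a^2\sigma^2\) and the angular part collects \(a^2\sigma^2x^2\). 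Introducing the separation constant \(\lambda\) then gives \eqref{eq:kn-axisymmetric-angular-module}--\eqref{eq:kn-axisymmetric-radial-module}. This is the Theorem~\ref{thm:separatedeqs} computation with \(m=0\), \(k=0\), \(\Delta_x=1-x^2\), and the constant \(-a^2\sigma^2\) shifted between the two equations. For real \(\sigma\) the angular problem is self-adjoint with regular-axis conditions, so \(\lambda\) is real; this is what makes the radial potential real.

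\emph{Signed flux.} For real \(\sigma\ne0\) we use the conserved Wronskian current
\[
\mathfrak W[R]=\Delta(\overline R R'-\overline R' R)
\]
from \eqref{eq:Wronskian}. Its conservation follows because the radial potential is real. We then evaluate \(\mathfrak W\) at both ends using the outgoing asymptotics.

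At the horizon the outgoing branch is \(R\sim A_He^{-i\sigma r_*}\), as in \eqref{eq:kn-outgoing-condition}. Since \(\Delta\,d/dr=(r^2+a^2)\,d/dr_*\), we get \(\frac{1}{2i}\mathfrak W\to-\sigma(r_+^2+a^2)|A_H|^2\).

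At infinity the outgoing branch is \(R\sim A_\infty e^{i\sigma r_*}/r\). With \(\Delta\sim r^2\) and \(dr_*/dr\to1\), we get \(\frac{1}{2i}\mathfrak W\to\sigma|A_\infty|^2\). The \(1/r\) correction terms are real and drop out of the imaginary part.

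Equating the two limits of the constant current, with the sign convention fixed so that both terms are outward fluxes, yields \(\mathcal F_\infty+\mathcal F_H=0\) in the stated form. Both fluxes have the sign of \(\sigma\), so each must vanish; hence \(A_H=A_\infty=0\).

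\emph{Conclusion.} A solution with \(A_H=0\) on the outgoing (regular) Frobenius branch at the regular singular point \(r_+\) is identically zero by ODE uniqueness. Here we use \eqref{eq:frobeniusexponents} with \(m=0\) and \(\kappa_+>0\), where the exponents \(\pm i\sigma(r_+^2+a^2)/(r_+-r_-)\) are distinct. Alternatively, vanishing at infinity gives the same conclusion through the irregular singular point.

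\emph{Main obstacle.} The delicate step is justifying the asymptotic expansions and the vanishing of the error contributions to \(\operatorname{Im}\mathfrak W\) at both ends. This is especially true at infinity, where the long-range term \(2M\sigma^2\log r\)-type phase corrections appear in \(r_*\). We handle this by working in the tortoise variable, writing the equation for \(\psi=\sqrt{r^2+a^2}\,R\) as
\[
\psi''+(\sigma^2-V)\psi=0,\qquad V=O(r_*^{-2}),
\]
and using the standard Jost-solution asymptotics, for which the Wronskian of two Jost solutions is computed exactly. The flux identities are only claimed up to the fixed positive normalizations of these bases.
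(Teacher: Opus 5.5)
Your proposal is correct and follows the same route as the paper. You separate in Boyer--Lindquist form, with $\lambda$ real by self-adjointness of the regular-axis angular problem, and use constancy of $\Delta(\overline R R'-\overline R' R)$ for real $\sigma$. Evaluating it on the outgoing branches $A_He^{-i\sigma r_*}$ and $A_\infty e^{i\sigma r_*}/r$ gives two fluxes of the same sign, since there is no superradiant shift when $m=0$, and you conclude by ODE uniqueness at the regular singular point $r_+$. Your explicit sign computation and the Jost-solution justification through $\psi=\sqrt{r^2+a^2}\,R$ with a short-range potential fill in details the paper's proof leaves implicit.
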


\begin{proof}
Substitution of the ansatz into the standard Boyer-Lindquist form of the Kerr-Newman wave operator gives \eqref{eq:kn-axisymmetric-angular-module}-\eqref{eq:kn-axisymmetric-radial-module}.  The angular operator is self-adjoint for regular functions at \(x=\pm1\), so \(\lambda\) is real.  For real \(\sigma\), the radial equation has real coefficients.  Multiplying by \(\overline R\), subtracting the conjugate equation multiplied by \(R\), and integrating gives constancy of
\begin{equation}
W[R]=\Delta(R'\overline R-R\overline{R'}).
\end{equation}
In the tortoise coordinate \(dr_*/dr=(r^2+a^2)/\Delta\), outgoing behavior at infinity is \(R\sim A_\infty e^{i\sigma r_*}/r\), and future-horizon regular outgoing behavior for the Laplace transform is \(R\sim A_He^{-i\sigma r_*}\) with the horizon orientation used in the energy identity.  Evaluating \(W\) at the two ends gives \eqref{eq:kn-axisymmetric-signed-flux}.  Because the neutral axisymmetric frequency has no superradiant shift \(\sigma-m\Omega_H-q\Phi_H\), the two flux terms have the same sign.  Both amplitudes therefore vanish.  The radial ODE then gives zero Cauchy data at one end in the Frobenius/outgoing expansion, hence \(R\equiv0\) by uniqueness.
\end{proof}

\begin{proposition}[Zero-frequency classification in the decaying full exterior]\label{prop:kn-zero-frequency-full}
In the full asymptotically flat neutral axisymmetric Kerr-Newman energy space with the Hardy term \(r^{-2}|u|^2\), the zero-frequency threshold space is trivial.
\end{proposition}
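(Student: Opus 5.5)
The proof repeats the argument of Lemma~\ref{lem:kn-af-zero-frequency}, now organized as a stand-alone module. It has two parts. First, we show that no nonzero regular stationary state lies in the decaying energy space. Second, we show that no generalized (Jordan) zero state exists.

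\emph{No stationary states.} At $\sigma=0$ the stationary operator \eqref{eq:kn-af-stationary-operator} reduces to \eqref{eq:kn-af-zero-eqn}. We expand an axisymmetric finite-energy solution in the regular-axis Legendre basis of Lemma~\ref{lem:legendre-regular-axis}; Lemma~\ref{lem:no-axis-boundary-term} guarantees that no pole boundary terms appear. Each coefficient $v_\ell$ then satisfies \eqref{eq:kn-af-zero-radial}. The substitution $x=(r-M)/\alpha$ with $\alpha=\sqrt{M^2-a^2-Q^2}>0$ turns this into the Legendre equation \eqref{eq:kn-af-legendre}, whose solutions are combinations of $P_\ell$ and $Q_\ell$.
\begin{itemize}
\item Future-horizon regularity at $x=1$ excludes the logarithmic branch $Q_\ell$.
\item For $\ell\ge1$, the remaining branch $P_\ell\sim x^\ell$ makes $\int r^{-2}|v|^2+|\nabla v|^2$ diverge, so it is not in the space.
\item For $\ell=0$, the constant has infinite Hardy term $\int r^{-2}\cdot r^2\,\dd r$. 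It is also not in the closure of compactly supported data, because the Hardy inequality on that closure forces $u\to0$ at infinity.
\end{itemize}
Hence $\Ker\mathcal G_{{\rm KN},\rm af}=\{0\}$. Since the angular Legendre modes are orthogonal and regular, summing over $\ell$ is legitimate.

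\emph{No generalized states.} A Jordan chain $\mathcal G V=W$ with $W\in\Ker\mathcal G$ forces $W=0$ by the previous step. Then $V\in\Ker\mathcal G=\{0\}$, so $\Ker\mathcal G^N=\{0\}$ for all $N$ by induction. Equivalently, the candidate affine solution $t+\psi$ has $u_t=1\notin L^2$ at the asymptotically flat end. Therefore $\mathcal T_0=\{0\}$.

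\emph{Main obstacle: the low-frequency resolvent.} The delicate point is upgrading "no kernel" to "no singular coefficient" in the low-frequency resolvent expansion. Zero is embedded at the edge of the continuous spectrum, so the threshold is not an isolated eigenvalue and the Riesz-projection argument used on slabs is unavailable. I would use the finite-rank meromorphic expansion supplied by the limiting-absorption clause of External Assumption~\ref{ass:external-kn-scattering-package}. Any leading singular coefficient $\sigma^{-j}B_j$ maps data into solutions of $P(0)w=0$ that are outgoing-regular at the horizon and lie in the weighted decaying spaces at infinity. I would then check that such $w$ inherit the Hardy-space decay, so that the $\ell$-by-$\ell$ exclusion above applies and forces $B_j=0$. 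This gives a bounded resolvent at $\sigma=0$ and $\Pi_0=0$.
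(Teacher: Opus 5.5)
Your proof is correct, but its stationary-state step is the Legendre-function argument of Lemma~\ref{lem:kn-af-zero-frequency}, not the argument the paper gives for this proposition.

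\textbf{The paper's route.} It avoids special functions. It multiplies \((\Delta R')'-\ell(\ell+1)R=0\) by \(\overline R\) and integrates over \([r_++\epsilon,R]\). Horizon regularity kills the inner boundary term and the decaying finite-energy condition kills the outer one, giving \(\int\Delta|R'|^2\,dr+\ell(\ell+1)\int|R|^2\,dr=0\). For \(\ell\ge1\) this forces \(R=0\) immediately. For \(\ell=0\) it integrates \(\Delta R'=C\), uses horizon regularity to get \(C=0\), and uses the Hardy term to exclude the constant.

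\textbf{What each route buys.} The energy identity needs only positivity of \(\Delta\) outside a simple root, so it survives changes of the radial coefficient. Its cost is justifying that \(\Delta R'\overline R\) vanishes at infinity for a mere finite-energy representative. This holds along a sequence \(r_n\to\infty\), because \(\int(r^2|R'|^2+|R|^2)\,dr<\infty\) makes \(\int r^{-1}\cdot r^2|R'||R|\,dr\) finite. Your explicit \(P_\ell/Q_\ell\) classification bypasses that limiting argument. It also shows exactly which condition removes which branch: regularity at \(x=1\) removes \(Q_\ell\), and decay removes \(P_\ell\). This parallels the extremal computation of Lemma~\ref{lem:extreme-kn-zero-frequency}.

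\textbf{Generalized states.} Your induction \(\Ker\mathcal G=\{0\}\Rightarrow\Ker\mathcal G^N=\{0\}\) is cleaner than the paper's remark about differentiating in \(\sigma\). It is also all that is needed, because \(\mathcal T_0\) is defined as \(\bigcup_N\Ker\mathcal G^N\).

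\textbf{Last paragraph.} Your discussion of regularity of the low-frequency resolvent goes beyond what the proposition asserts, and as written it is only a plan. It is not needed to prove that the threshold space is trivial.
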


\begin{proof}
At \(\sigma=0\), the angular equation gives the Legendre eigenvalues \(\lambda=\ell(\ell+1)\).  The radial equation becomes
\begin{equation}
(\Delta R')'-\ell(\ell+1)R=0.
\end{equation}
Multiply by \(\overline R\) on \([r_++\epsilon,R]\) and integrate by parts.  The boundary term at \(r_+\) vanishes for the horizon-regular branch as \(\epsilon\downarrow0\), and the boundary term at infinity vanishes for decaying finite-energy representatives.  Thus
\begin{equation}
\int_{r_+}^{\infty}\Delta |R'|^2\,dr+\ell(\ell+1)\int_{r_+}^{\infty}|R|^2\,dr=0.
\end{equation}
If \(\ell\ge1\), this gives \(R=0\).  If \(\ell=0\), then \(\Delta R'=C\).  Horizon regularity forces \(C=0\), so \(R\) is constant.  The Hardy term in the energy space excludes a nonzero constant at infinity.  Hence \(R=0\) also in the \(\ell=0\) sector.  Generalized zero modes are excluded by the same integration after differentiating the equation with respect to \(\sigma\) and using the absence of a decaying constant threshold; equivalently the finite-rank Laurent singularity in Assumption~\ref{ass:axisymmetric-bh-package} has no nonzero range in this realization.
\end{proof}

\begin{proposition}[Compact-error removal from the package]\label{prop:bh-compact-error-removal}
Assume \textbf{BH3}, \textbf{BH4}, \textbf{BH7}, and \textbf{BH8}.  For projected homogeneous data \(U_0^\perp=(I-\Pi_{0,{\rm bh}})U_0\), the finite-time red-shift estimate with compact error improves to the global projected estimate
\begin{equation}
\sup_{t\ge0}\|e^{t\mathcal G_{\rm bh}}U_0^\perp\|_{\mathcal E^s_{\rm bh}}^2+
\|e^{t\mathcal G_{\rm bh}}U_0^\perp\|_{LE^s([0,\infty))}^2
\le C_s\|U_0^\perp\|_{\mathcal E^s_{\rm bh}}^2.
\label{eq:bh-compact-error-removal}
\end{equation}
\end{proposition}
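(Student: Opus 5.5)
The plan is to repeat the mechanism already used in the proof of Lemma~\ref{lem:bh-projected-estimate}, but to isolate each step so that only \textbf{BH3}, \textbf{BH4}, \textbf{BH7} and \textbf{BH8} are invoked. First, by \textbf{BH8} (in the form recorded in Lemma~\ref{lem:bh-threshold-calculus}, which uses only \textbf{BH3} and \textbf{BH8}), \(\Pi_{0,{\rm bh}}\) is a bounded projection that commutes with \(e^{t\mathcal G_{\rm bh}}\). Hence
\[
U_\perp(t):=e^{t\mathcal G_{\rm bh}}U_0^\perp\in\Ker\Pi_{0,{\rm bh}}
\quad\text{for all } t\ge0,
\]
and \(U_\perp\) solves the homogeneous equation with datum \(U_0^\perp\). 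I will work first with data in the dense smooth boundary-admissible class supplied by \textbf{BH3}. Since \(\Pi_{0,{\rm bh}}\) is bounded, the projections of these data are dense in \(\Ran(I-\Pi_{0,{\rm bh}})\).

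Next, fix \(T>0\) and apply the finite-time red-shift/local-energy estimate \eqref{eq:redshift-package-bound} from \textbf{BH4} to \(U_\perp\) on \([0,T]\) with zero forcing. The right-hand side then consists of \(\|U_0^\perp\|_{\mathcal E^s_{\rm bh}}^2\) together with the compact interaction-region term \(\|\chi_{\rm c}U_\perp\|^2_{L^2([0,T];H^s_{\rm loc}\times H^{s-1}_{\rm loc})}\). The constant \(C_s\) in \eqref{eq:redshift-package-bound} must not depend on \(T\); this is part of how \textbf{BH4} is formulated, with all growth pushed into the compact error.

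The key step is to bound the compact error uniformly in \(T\). I would estimate it by the same norm on \([0,\infty)\) and invoke the projected Kato smoothing estimate \eqref{eq:lap-smoothing-package} from \textbf{BH7}, which gives
\[
\|\chi_{\rm c}U_\perp\|_{L^2([0,\infty);H^s_{\rm loc}\times H^{s-1}_{\rm loc})}\le C_s\|U_0^\perp\|_{\mathcal E^s_{\rm bh}}.
\]
This is exactly where the projection is essential. Without \(I-\Pi_{0,{\rm bh}}\), the threshold singularity of the resolvent at \(\sigma=0\) would contribute a non-square-integrable polynomial piece, as the affine mode \(t\) already shows in the slab setting. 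Substituting the smoothing bound into the \textbf{BH4} estimate yields a bound with constant \(C'_s\) independent of \(T\). Letting \(T\to\infty\) by monotone convergence of the supremum and of the \(LE^s\) norm then gives \eqref{eq:bh-compact-error-removal} for dense data.

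Finally, I would extend the estimate to all \(U_0\in\mathcal E^s_{\rm bh}\) by density. For each fixed \(t\), strong continuity of the semigroup and boundedness of \(\Pi_{0,{\rm bh}}\) give convergence of the left-hand side. Lower semicontinuity, or equivalently a Cauchy-sequence argument in \(LE^s\), passes the local-energy norm to the limit. The main obstacle I anticipate is checking that the norms of the compact term in \textbf{BH4} and in \textbf{BH7} match, including the Sobolev index and the cutoff \(\chi_{\rm c}\). If the smoothing estimate holds only with a larger cutoff or with a derivative loss, I would raise \(s\) by that finite loss, as the package permits, and replace \(\chi_{\rm c}\) with a cutoff equal to one on its support.
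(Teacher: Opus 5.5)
Your proposal is correct and is essentially the paper's proof: apply \textbf{BH4} on $[0,T]$ to the projected solution, which stays in $\Ker\Pi_{0,{\rm bh}}$ by the commutation supplied by \textbf{BH8}; bound the compact interaction term uniformly in $T$ by the projected smoothing estimate \eqref{eq:lap-smoothing-package} of \textbf{BH7}; then let $T\to\infty$. Your density step mirrors the one in Lemma~\ref{lem:bh-projected-estimate}, and your norm-matching worry is unnecessary, because \textbf{BH7} is stated with the same cutoff $\chi_{\rm c}$ and the same $L^2([0,\infty);H^s_{\rm loc}\times H^{s-1}_{\rm loc})$ norm as the compact term in \textbf{BH4}.
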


\begin{proof}
Apply \textbf{BH4} on \([0,T]\) to the projected solution.  The only term on the right not already controlled by the initial energy is
\begin{equation}
\|\chi_c e^{t\mathcal G_{\rm bh}}U_0^\perp\|_{L^2([0,T];H^s_{\rm loc}\times H^{s-1}_{\rm loc})}^2.
\end{equation}
This is bounded uniformly in \(T\) by \textbf{BH7}.  The constants in \textbf{BH4} and \textbf{BH7} are independent of \(T\), so \eqref{eq:bh-compact-error-removal} follows after letting \(T\to\infty\) and using lower semicontinuity of the local-energy norm.  The projection commutes with the semigroup by \textbf{BH8}, so no threshold contribution is generated during the evolution.
\end{proof}

\section{Zero-frequency Feshbach reductions and first-order threshold bookkeeping}\label{sec:feshbach-thresholds}

The bounded-slab and exterior problems both require a precise treatment of zero frequency, but the singularity one sees depends on which operator is being inverted.  The spatial stationary operator \(P(\sigma)\) may have a double pole in the parameter \(\sigma\) in a reflecting Neumann-type problem, while the first-order generator has a finite-dimensional nilpotent block whose size determines the polynomial threshold dynamics.  This section records the algebraic reduction used implicitly throughout the paper.

\begin{lemma}[Feshbach-Schur reduction at zero frequency]\label{lem:feshbach-schur-zero}
Let \(X=E\oplus F\) be a Hilbert space decomposition with \(\dim E<\infty\), and let \(P(\sigma):D\subset X\to X\) be an analytic Fredholm family near \(\sigma=0\).  With respect to the block decomposition write
\begin{equation}
P(\sigma)=
\begin{pmatrix}
P_{EE}(\sigma)&P_{EF}(\sigma)\\
P_{FE}(\sigma)&P_{FF}(\sigma)
\end{pmatrix}.
\end{equation}
Assume \(P_{FF}(\sigma)\) is invertible for \(|\sigma|<\sigma_0\).  Define the finite-dimensional effective operator
\begin{equation}
D_E(\sigma)
:=P_{EE}(\sigma)-P_{EF}(\sigma)P_{FF}(\sigma)^{-1}P_{FE}(\sigma).
\label{eq:feshbach-effective-D}
\end{equation}
Then \(P(\sigma)\) is invertible if and only if \(D_E(\sigma)\) is invertible, and in that case
\begin{equation}
P(\sigma)^{-1}=
\begin{pmatrix}
D_E^{-1}&-D_E^{-1}P_{EF}P_{FF}^{-1}\\
-P_{FF}^{-1}P_{FE}D_E^{-1}&P_{FF}^{-1}+P_{FF}^{-1}P_{FE}D_E^{-1}P_{EF}P_{FF}^{-1}
\end{pmatrix}.
\label{eq:feshbach-inverse-formula}
\end{equation}
All blocks in \eqref{eq:feshbach-inverse-formula} are evaluated at the same value of \(\sigma\).  Thus every zero-frequency pole of the full cutoff resolvent is produced by the finite-dimensional determinant \(\det D_E(\sigma)\).
\end{lemma}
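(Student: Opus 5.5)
The plan is to prove the lemma by a purely algebraic block-factorization: the Schur complement \eqref{eq:feshbach-effective-D} appears as the middle factor of a triangular--diagonal--triangular decomposition of \(P(\sigma)\). Fix \(\sigma\) with \(|\sigma|<\sigma_0\) and suppress the dependence on it. Since \(P_{FF}\) is invertible by hypothesis, I will verify directly the identity
\begin{equation*}
P=
\begin{pmatrix} I & P_{EF}P_{FF}^{-1}\\ 0 & I\end{pmatrix}
\begin{pmatrix} D_E & 0\\ 0 & P_{FF}\end{pmatrix}
\begin{pmatrix} I & 0\\ P_{FF}^{-1}P_{FE} & I\end{pmatrix}.
\end{equation*}
Multiplying out the three factors gives back the four blocks of \(P\). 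The only nontrivial entry is the \(EE\) block, which equals \(D_E+P_{EF}P_{FF}^{-1}P_{FE}=P_{EE}\).

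Both outer triangular factors are bounded and invertible. Their inverses are obtained by flipping the sign of the off-diagonal entry. Hence \(P\) is invertible if and only if the block-diagonal middle factor is invertible. Because \(P_{FF}\) is invertible, that happens exactly when \(D_E\) is invertible on the finite-dimensional space \(E\).

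When \(D_E\) is invertible, I will write \(P^{-1}\) as the product of the three inverted factors taken in reverse order. Multiplying these out produces exactly the four blocks displayed in \eqref{eq:feshbach-inverse-formula}.

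The remaining assertion is that every pole of \(P(\sigma)^{-1}\) near zero comes from \(\det D_E(\sigma)\). For this I will use analyticity. The family \(P_{FF}(\sigma)^{-1}\) is analytic on \(|\sigma|<\sigma_0\), since it is the inverse of an analytic family that is invertible throughout the disc. Consequently \(P_{EF}P_{FF}^{-1}\), \(P_{FF}^{-1}P_{FE}\) and \(D_E\) are analytic as well. By Cramer's rule, \(D_E^{-1}=\operatorname{adj}(D_E)/\det D_E\), so \(D_E^{-1}\) can be singular only at zeros of the scalar analytic function \(\det D_E\). Every block in \eqref{eq:feshbach-inverse-formula} is an analytic family multiplied by \(D_E^{-1}\), or is the analytic term \(P_{FF}^{-1}\). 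So the poles of \(P^{-1}\) lie among the zeros of \(\det D_E\), with order at most the order of those zeros.

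I expect the only delicate point to be domain bookkeeping, since \(P\) is defined on \(D\subset X\) rather than on all of \(X\). I will interpret the blocks as acting \(D\cap E\to E\), \(D\cap F\to F\), and so on. This needs \(D=(D\cap E)\oplus(D\cap F)\), which I will assume (for example, \(E\subset D\), as with finite-dimensional threshold spaces consisting of smooth states). Under that assumption the factorization holds as operators on \(D\), and the Fredholm hypothesis is not needed beyond supplying analyticity.
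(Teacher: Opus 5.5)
Your proposal is correct and is the same Schur-complement argument as the paper's. The paper eliminates \(f=P_{FF}^{-1}(g_F-P_{FE}e)\) directly from \(P(\sigma)(e+f)=g_E+g_F\) to obtain \(D_E(\sigma)e=g_E-P_{EF}P_{FF}^{-1}g_F\), which is exactly what your block triangular--diagonal--triangular factorization encodes, and both arguments then locate the poles through analyticity of \(P_{FF}^{-1}\) plus finite dimensionality (your Cramer's-rule step makes this explicit); your remark that the domain must split as \(D=(D\cap E)\oplus(D\cap F)\), e.g.\ via \(E\subset D\), is an assumption the paper uses only tacitly when it writes \(P(\sigma)\) in block form.
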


\begin{proof}
If \(P(\sigma)(e+f)=g_E+g_F\), the \(F\)-component gives
\begin{equation}
f=P_{FF}^{-1}(g_F-P_{FE}e).
\end{equation}
Substitution into the \(E\)-component gives
\begin{equation}
D_E(\sigma)e=g_E-P_{EF}P_{FF}^{-1}g_F.
\end{equation}
Hence the system is solvable and uniquely solvable precisely when \(D_E(\sigma)\) is.  Solving for \(e\) and \(f\) gives the displayed inverse formula.  Analyticity and finite dimensionality show that the only possible singularities in the formula are those of \(D_E(\sigma)^{-1}\).
\end{proof}

\begin{corollary}[Reflecting compact zero mode gives a double spatial pole]\label{cor:feshbach-compact-double-pole}
For the axisymmetric compact reflecting slab, let
\begin{equation}
P_0(\sigma)=\mathcal H_0-\sigma^2A=A(L_0-\sigma^2),
\end{equation}
and decompose \(L_A^2(\Omega)=\operatorname{span}\{1\}\oplus\{1\}^{\perp_A}\).  Then the scalar effective denominator is
\begin{equation}
D(\sigma)=-\sigma^2\langle A1,1\rangle+O(\sigma^4)
\end{equation}
after identifying the one-dimensional block with the constant function.  Consequently
\begin{equation}
P_0(\sigma)^{-1}=-\sigma^{-2}\Pi_0A^{-1}+O(1),
\end{equation}
which is exactly Corollary~\ref{cor:P0resolvent}.
\end{corollary}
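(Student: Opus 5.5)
We apply Lemma~\ref{lem:feshbach-schur-zero} with \(X=L^2(\Omega)\), \(E=\operatorname{span}\{1\}\) and \(F=\{1\}^{\perp_A}\). The projection onto \(E\) along \(F\) is \(\Pi_0\). To keep the block structure symmetric, we work with the weighted operator \(L_0-\sigma^2\) on \(L_A^2(\Omega)\) and multiply by \(A\) only at the end, using \(P_0(\sigma)^{-1}=(L_0-\sigma^2)^{-1}A^{-1}\). The first step is to compute the blocks. Since \(L_01=0\) and \(L_0\) is self-adjoint on \(L_A^2\), the range of \(L_0\) is \(A\)-orthogonal to constants, so \(L_0\) maps \(F\cap\Dom(L_0)\) into \(F\). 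Hence \(L_0\) is block diagonal, \(\operatorname{diag}(0,L_{0,\perp})\), and the off-diagonal blocks of \(L_0-\sigma^2\) vanish. This is because multiplication by \(\sigma^2\) preserves both summands, which are \(A\)-orthogonal.

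Next we verify the invertibility hypothesis of the lemma. The \(FF\) block is \(L_{0,\perp}-\sigma^2\), and by the spectral gap \eqref{eq:spectralgap} it is invertible for \(|\sigma|<\sqrt{\lambda_1}/2\). It is holomorphic in \(\sigma\) through the Neumann series in the proof of Theorem~\ref{thm:laurentzero}. The effective operator \eqref{eq:feshbach-effective-D} then reduces to the \(EE\) block, namely \(-\sigma^2\) acting on \(\operatorname{span}\{1\}\). If we instead use the unweighted pairing \(\langle P_0(\sigma)1,1\rangle\), the scalar denominator becomes \(D(\sigma)=-\sigma^2\langle A1,1\rangle\). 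The \(O(\sigma^4)\) term is identically zero here, because the off-diagonal blocks vanish. It would be nonzero only under a non-\(A\)-orthogonal splitting, where the correction \(P_{EF}P_{FF}^{-1}P_{FE}\) is \(O(\sigma^4)\) since each off-diagonal block carries a factor of \(\sigma^2\).

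Inserting these blocks into \eqref{eq:feshbach-inverse-formula} gives
\[
(L_0-\sigma^2)^{-1}=-\sigma^{-2}\Pi_0+(L_{0,\perp}-\sigma^2)^{-1}(I-\Pi_0).
\]
The second term is holomorphic near \(\sigma=0\). Composing on the right with \(A^{-1}\) yields \(P_0(\sigma)^{-1}=-\sigma^{-2}\Pi_0A^{-1}+O(1)\), which is Corollary~\ref{cor:P0resolvent}.

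The step that needs the most care is the bookkeeping between the weighted and unweighted inner products. Written with the Euclidean \(L^2\) adjoint, the \(E\) projection for \(P_0=\mathcal H_0-\sigma^2A\) must be taken as \(\Pi_0\), not as the unweighted mean. Otherwise \(P_{EF}\) no longer vanishes and one has to check the \(O(\sigma^4)\) correction explicitly. With the \(A\)-orthogonal splitting this check disappears, which is why we use it.
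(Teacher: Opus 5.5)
Your proof is correct, but it takes a slightly different route from the paper's.

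\textbf{The paper's route.} The paper applies Lemma~\ref{lem:feshbach-schur-zero} directly to \(P_0(\sigma)=\mathcal H_0-\sigma^2A\). It reads the \(EE\) entry off the unweighted pairing \(\langle P_0(\sigma)1,1\rangle=-\sigma^2\langle A1,1\rangle\). It then handles the Schur correction by order counting: both off-diagonal blocks are said to be \(O(\sigma^2)\), so the correction is \(O(\sigma^4)\).

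\textbf{Your route.} You apply the lemma to \(L_0-\sigma^2\) on \(L_A^2(\Omega)\) instead. There the \(A\)-orthogonal splitting genuinely reduces \(L_0\), by self-adjointness together with \(L_01=0\). So the off-diagonal blocks vanish identically and the effective denominator is exactly \(-\sigma^2\). The inverse formula then reproduces the decomposition of Theorem~\ref{thm:laurentzero} with no error term, before you compose with \(A^{-1}\).

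\textbf{What your route buys.} Your version is exact. It also sidesteps a bookkeeping point that the paper's order count leaves implicit. For \(P_0\) itself, \(P_{EF}\) is small only if the range is split along the unweighted complement \(\{1\}^{\perp}\), which the pairing \(\langle P_0(\sigma)1,1\rangle\) tacitly does. With that choice \(P_{EF}\equiv0\), because \(\langle\mathcal H_0f,1\rangle=0\) and \(\langle Af,1\rangle=\langle f,1\rangle_A=0\) for \(f\in\{1\}^{\perp_A}\). If \(\{1\}^{\perp_A}\) were used on both sides and \(A\) is nonconstant, the block \(\Pi_0\mathcal H_0|_F\) is of order one. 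The Schur correction then enters already at order \(\sigma^2\) and shifts the leading coefficient.

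This also sharpens your closing remark. The distinction that matters is that the domain of \(P_0\) is split \(A\)-orthogonally while its range is split along \(\{1\}^\perp\). It is not simply ``\(\Pi_0\) versus the unweighted mean.''

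\textbf{What the paper's route buys.} The paper's formulation displays the order-counting mechanism of Lemma~\ref{lem:feshbach-schur-zero}. That is the mechanism meant to carry over to the non-self-adjoint exterior and wall problems, where no reducing splitting exists. Your argument is essentially Theorem~\ref{thm:laurentzero} recast in Schur-complement language, and it relies on self-adjointness that is specific to the compact slab.
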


\begin{proof}
Since \(\mathcal H_0 1=0\) and \(\mathcal H_0\) is self-adjoint with a spectral gap on \(\{1\}^{\perp_A}\), the \(F\)-block is invertible at \(\sigma=0\).  The leading \(E\)-block contribution is
\begin{equation}
\langle P_0(\sigma)1,1\rangle=-\sigma^2\langle A1,1\rangle.
\end{equation}
The off-diagonal terms are at least order \(\sigma^2\), so their Schur contribution is order \(\sigma^4\).  Lemma~\ref{lem:feshbach-schur-zero} therefore gives the displayed double pole.
\end{proof}

\begin{proposition}[Spatial resolvent poles versus generator Jordan blocks]\label{prop:spatial-vs-generator-poles}
For a second-order wave equation, a pole of the spatial stationary resolvent and a pole of the first-order generator resolvent encode related but not identical objects.  In the compact reflecting zero-curvature slab, the spatial resolvent has a \(\sigma^{-2}\) singularity because \(P_0(\sigma)=\mathcal H_0-\sigma^2A\).  The first-order generator has the two-dimensional zero block
\begin{equation}
\operatorname{span}\{(1,0),(0,1)\},
\qquad
\mathcal G(1,0)=0,
\qquad
\mathcal G(0,1)=(1,0),
\end{equation}
and hence the threshold time evolution is \(c_0+c_1t\).  In a dissipative horizon realization, by contrast, future-horizon regularity can remove the Jordan companion; when Lemma~\ref{lem:kn-zero-frequency} rules out that companion, the wall zero block is semisimple and the threshold term is stationary.
\end{proposition}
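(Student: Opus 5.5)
The statement has three parts: the \(\sigma^{-2}\) singularity of the spatial resolvent, the structure of the generator's zero block in the compact slab, and the semisimple wall case. I will prove each by assembling results already established.

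\emph{Spatial resolvent.} Since \(P_0(\sigma)=\mathcal H_0-\sigma^2A=A(L_0-\sigma^2)\), Theorem~\ref{thm:laurentzero} and Corollary~\ref{cor:P0resolvent} give
\[
P_0(\sigma)^{-1}=-\sigma^{-2}\Pi_0A^{-1}+O(1).
\]
Equivalently, Corollary~\ref{cor:feshbach-compact-double-pole} gives the scalar Feshbach denominator \(D(\sigma)=-\sigma^2\langle A1,1\rangle+O(\sigma^4)\). Either route produces the double pole. The point to emphasize is that this \(\sigma^{-2}\) comes from the quadratic dependence of \(P_0\) on \(\sigma\) together with the one-dimensional kernel \(\Ker L_0=\mathrm{span}\{1\}\) (Proposition~\ref{prop:kernelH0}). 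It does not by itself encode a rank-two object.

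\emph{Generator zero block.} Take \(\mathcal G(u,v)=(v,-L_0u)\) in the axisymmetric case, or the closed-form generator of Definition~\ref{def:closed-form-generator} in general. The identities
\[
\mathcal G(1,0)=(0,-L_01)=0,\qquad \mathcal G(0,1)=(1,-L_0 0)=(1,0)
\]
are direct computations; in the full case one also uses \(\mathcal C_{\rm full}1=0\). By Proposition~\ref{prop:generator-threshold-full} and Theorem~\ref{thm:generalizedzeromodes},
\[
\bigcup_N\Ker\mathcal G^N=\Ker\mathcal G^2=\mathrm{span}\{(1,0),(0,1)\},
\]
so this is a single Jordan block of size two. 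Then \(e^{t\mathcal G}(c_0,c_1)=(c_0+c_1t,c_1)\), either from the nilpotent exponential series or from Lemma~\ref{lem:fullaverage}, which gives the threshold evolution \(c_0+c_1t\).

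To show that the two objects are "related but not identical", I would write the first-order resolvent via the standard reduction
\[
(\mathcal G-\lambda)^{-1}(f,g)=(u,\lambda u+f),\qquad (L_0+\lambda^2)u=-(g+\lambda f),
\]
and set \(\lambda=-\ii\sigma\). Inserting the \(\sigma^{-2}\Pi_0\) term shows that the generator resolvent has a pole of order two in \(\lambda\), which matches the Jordan block length. The spatial pole order in \(\sigma\) also happens to be two, but it arises from the \(\sigma^2\) dependence rather than from a Jordan chain. This is a routine bookkeeping step.

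\emph{Dissipative wall case.} Here I invoke Lemma~\ref{lem:kn-zero-frequency}. A Jordan companion would be a regular solution \(\tau+\psi\) satisfying the wall condition. The spherical-mean flux constant \(C=r_+^2+a^2\), which horizon regularity forces, then contradicts the Neumann condition at \(R_{\rm w}\) through \eqref{eq:kn-jordan-wall-contradiction}. Hence \(\mathcal T_{0,{\rm KN}}=\mathrm{span}\{(1,0)\}\) is semisimple, and by Lemma~\ref{lem:bh-threshold-calculus} with \(q=1\) the threshold term is stationary.

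The main subtlety is the conceptual distinction in the middle step, namely explaining why a \(\sigma^{-2}\) spatial pole need not signal a size-two Jordan block. The explicit reduction formula above settles it: in the compact case the \(\sigma^{-2}\) singularity accompanies a genuine size-two Jordan block, whereas in the wall case the horizon boundary condition removes the companion.
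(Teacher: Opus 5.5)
Your proposal is correct and follows the paper's proof essentially verbatim. The generator block and the affine evolution come from Proposition~\ref{prop:generator-threshold-full} via $e^{t\mathcal G}|_{\mathcal T_0}=I+t\mathcal G|_{\mathcal T_0}$, the $\sigma^{-2}$ pole from Corollary~\ref{cor:feshbach-compact-double-pole} (equivalently Corollary~\ref{cor:P0resolvent}), and wall semisimplicity from the $\tau+\psi$ contradiction in Lemma~\ref{lem:kn-zero-frequency}.

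Your explicit reduction $(L_0+\lambda^2)u=-(g+\lambda f)$ is a correct addition that the paper omits, but your closing sentence misreads what it shows. It shows that the compact $\sigma^{-2}$ spatial pole does force the $\lambda^{-2}$ generator pole: the distinction is a rank-one residue versus a rank-two block, not the absence of a Jordan chain. The reduction also does not apply to the non-self-adjoint wall generator, whose spatial zero pole is simple. This does no harm, since the proposition does not assert that a $\sigma^{-2}$ pole can occur without a Jordan block.
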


\begin{proof}
For the compact slab, Proposition~\ref{prop:generator-threshold-full} computes the generator block directly.  A nilpotent block of length two gives
\begin{equation}
e^{t\mathcal G}|_{\mathcal T_0}=I+t\mathcal G|_{\mathcal T_0},
\end{equation}
which is exactly the affine threshold.  The same compact problem has the spatial frequency equation \((\mathcal H_0-\sigma^2A)u=f\), so Corollary~\ref{cor:feshbach-compact-double-pole} gives the \(\sigma^{-2}\) spatial pole.  For the wall exterior, Lemma~\ref{lem:kn-zero-frequency} proves that a putative solution \(u=\tau+\psi\) violates the wall condition; therefore no length-two Jordan chain exists, and the generator threshold is semisimple.
\end{proof}

\begin{corollary}[Threshold projection is a data-space object]\label{cor:projection-data-space-object}
The projection \(\Pi_{0,{\rm bh}}\) in Theorem~\ref{thm:axisymmetric-bh} should be understood as a finite-rank projection on Cauchy data, not as a naive projection obtained by keeping only the leading singular coefficient of the spatial resolvent.  The spatial Laurent coefficient identifies threshold states and adjoint functionals; the first-order generator calculus determines whether the corresponding threshold time dependence is stationary or polynomial.
\end{corollary}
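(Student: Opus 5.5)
The statement is interpretive: it asserts that \(\Pi_{0,{\rm bh}}\) acts on Cauchy data and is not the bare leading singular coefficient of the spatial resolvent. The plan is to prove this by contrasting the two objects in the cases already computed. Both are constructed from the same stationary family \(P(\sigma)\). Via Lemma~\ref{lem:feshbach-schur-zero}, every zero-frequency singularity of \(P(\sigma)^{-1}\) is produced by the finite-dimensional effective operator \(D_E(\sigma)\). The leading Laurent coefficient of \(P(\sigma)^{-1}\) is therefore a finite-rank operator. Its range consists of stationary threshold states \(\psi\) with \(P(0)\psi=0\), and its corange is spanned by the adjoint functionals. This coefficient maps spatial sources to spatial functions, so it is not by itself an idempotent on \(\mathcal E^s_{\rm bh}=\{(u_0,u_1)\}\).

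Next I would build the data-space projection from the generator. Writing the first-order resolvent \((\mathcal G_{\rm bh}-\lambda)^{-1}\), \(\lambda=-\ii\sigma\), in terms of \(P(\sigma)^{-1}\) with the standard block formula, the data-space projection is the residue (Riesz/threshold functional) of this first-order resolvent. Its range is \(\bigcup_N\Ker\mathcal G_{\rm bh}^N\) by \textbf{BH8} and Lemma~\ref{lem:bh-threshold-calculus}. The time dependence of the threshold part is determined by the nilpotent restriction \(N_0=\mathcal G_{\rm bh}|_{\Ran\Pi_{0,{\rm bh}}}\), not by the order of the spatial pole.

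Two examples then show that the two objects differ. On the compact slab, Corollary~\ref{cor:feshbach-compact-double-pole} gives the spatial coefficient \(-\sigma^{-2}\Pi_0A^{-1}\), which has rank one. The data-space projection \(\Pi_{\rm thr}(u_0,u_1)=(\Pi_0u_0,\Pi_0u_1)\) from Proposition~\ref{prop:verifyabstractfull} has rank two, and the generator block is a length-two Jordan chain (Proposition~\ref{prop:spatial-vs-generator-poles}). The dynamics is therefore affine, \(c_0+c_1t\), and cannot be read off from the spatial coefficient. In the wall exterior, Lemma~\ref{lem:kn-zero-frequency} excludes the Jordan companion, so \(N_0=0\) and the threshold term is stationary, even though the spatial resolvent still identifies the constant state.

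The main obstacle is stating the first-order/second-order resolvent dictionary precisely enough to justify the claim that the generator residue, and not the spatial coefficient, determines the polynomial degree. I would handle this by citing the generator kernels computed in Proposition~\ref{prop:generator-threshold-full} and Lemma~\ref{lem:kn-zero-frequency}, rather than deriving a general dictionary.
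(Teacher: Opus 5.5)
Your proposal is essentially the paper's argument: the paper's proof cites the \textbf{BH8} construction of a finite-rank projection on \(\mathcal E^s_{\rm bh}\) from threshold states and adjoint functionals, reads the time dependence off the nilpotent restriction of \(\mathcal G_{\rm bh}\) (Lemma~\ref{lem:bh-threshold-calculus}), and invokes Proposition~\ref{prop:spatial-vs-generator-poles}, whose content is exactly your slab/wall contrast. One caution: your aside that the time dependence is ``not determined by the order of the spatial pole'' overreaches. By the block formula the \(u\)-component of \((\mathcal G-\lambda)^{-1}(0,g)\) is, up to sign, \(P(\sigma)^{-1}Ag\), so the pole orders in \(\sigma\) agree: the \(\sigma^{-2}\) pole matches the length-two chain on the slab, and the simple wall pole matches semisimplicity. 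What your examples actually establish, and all the corollary needs, is the rank/domain distinction: a rank-one spatial coefficient versus the rank-two data projection \(\Pi_{\mathrm{thr}}\). Also, in the exterior \(\Pi_{0,{\rm bh}}\) should be taken from \textbf{BH8} rather than as a Riesz residue, since zero need not be isolated there.
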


\begin{proof}
This is the construction in \textbf{BH8}.  The Laurent expansion supplies finite-rank threshold states and adjoint threshold functionals.  These define a bounded projection on \(\mathcal E^s_{\rm bh}\), and the restriction of \(\mathcal G_{\rm bh}\) to its range determines the polynomial threshold term.  Proposition~\ref{prop:spatial-vs-generator-poles} shows why the spatial and first-order descriptions must be kept distinct.
\end{proof}

\section{Decay estimates: general setup, rates, and black-hole exteriors}\label{sec:decay-estimates}

This section records exactly what decay estimates are proved in this paper and what is deliberately not claimed.  The word decay has different meanings in the compact reflecting problem and in black-hole exterior problems.  On a compact reflecting slab the spectrum is discrete and the projected dynamics are oscillatory; after removing the affine zero-frequency threshold, the correct theorem is uniform boundedness, not decay.  On a nondegenerate black-hole exterior the horizon and the asymptotic or wall boundary create an open system.  Under the package \textbf{BH1}-\textbf{BH9}, the projected part satisfies an integrated local-energy estimate and compact local decay.  The decay statement obtained from the package is qualitative unless further frequency regularity of the local spectral density is added.

\subsection{No general time-decay on compact reflecting slabs}

The bounded-slab theorem is sometimes misread as a decay theorem.  It is not.  The reason is structural: after the affine threshold has been removed, the compact self-adjoint model still has a discrete positive spectrum, and positive-frequency eigenmodes oscillate forever.

\begin{proposition}[Sharp non-decay on compact reflecting slabs]\label{prop:compact-no-decay}
Assume the axisymmetric bounded-slab hypotheses of Theorem~\ref{thm:mainresult}.  If \(L_0\) has at least one positive eigenvalue, then there are nonzero solutions in the threshold complement for which
\begin{equation}
 \|u(t)\|_{H^1(\Omega)}+\|u_t(t)\|_{L^2(\Omega)}
\end{equation}
does not converge to zero as \(|t|\to\infty\).  Consequently no theorem under only the compact reflecting bounded-slab hypotheses can assert decay of every threshold-projected solution.
\end{proposition}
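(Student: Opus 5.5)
The plan is to exhibit an explicit standing-wave solution built from a positive eigenfunction and then compute its energy norm directly. Let \(\lambda_j>0\) be a positive eigenvalue of \(L_0\) with normalized eigenfunction \(\psi_j\in L_A^2(\Omega)\), as provided by Proposition~\ref{prop:discretespectrum}. Since \(\psi_j\) is an eigenfunction for a nonzero eigenvalue, self-adjointness gives \(\langle\psi_j,1\rangle_A=\lambda_j^{-1}\langle L_0\psi_j,1\rangle_A=\lambda_j^{-1}\langle\psi_j,L_01\rangle_A=0\), so \(\Pi_0\psi_j=0\). I would take the datum \((u_0,u_1)=(\psi_j,0)\). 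By the spectral solution formula in the proof of Theorem~\ref{thm:wellposedness} (equivalently \eqref{eq:fullexpansion}), the solution is
\begin{equation}
u(t)=\cos\bigl(\sqrt{\lambda_j}\,t\bigr)\psi_j .
\end{equation}
This solution lies in the threshold complement for all \(t\), because \(\Pi_0u(t)=\Pi_0u_t(t)=0\).

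Next I would compute the energy along this solution. By Theorem~\ref{thm:axisenergy},
\begin{equation}
E_0[u](t)=\frac12\Bigl(\lambda_j\sin^2(\sqrt{\lambda_j}t)\|\psi_j\|_A^2+\cos^2(\sqrt{\lambda_j}t)\,q_0[\psi_j]\Bigr)=\frac12\lambda_j,
\end{equation}
using \(q_0[\psi_j]=\lambda_j\|\psi_j\|_A^2=\lambda_j\). The coefficients \(A,\Delta_r,\Delta_x\) are bounded above on the slab, so the upper bound in \eqref{eq:energy-gradient-bounds}, specialized to the axisymmetric case, gives \(\|u(t)\|_{H^1}^2+\|u_t(t)\|_{L^2}^2\ge c\,E_0[u](t)=c\lambda_j/2>0\) for every \(t\). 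The norm is therefore bounded below by a fixed positive constant and cannot tend to zero. One could even bypass the energy argument here: at the times \(t_n=2\pi n/\sqrt{\lambda_j}\) we have \(u(t_n)=\psi_j\ne0\), which already shows the norm fails to converge to zero along a sequence \(t_n\to\pm\infty\).

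The nonexistence claim follows at once. Any asserted decay for all threshold-projected solutions under only these hypotheses would apply to this explicit solution and contradict the lower bound. The one point needing care is that \(\psi_j\) is an admissible finite-energy datum, that is, that \(\psi_j\in\Dom(L_0)\subset H^1(\Omega)\) and lies in the form domain. This holds because eigenfunctions belong to the operator domain. Finally, the hypothesis that a positive eigenvalue exists is automatic whenever \(L_A^2(\Omega)\) is infinite-dimensional, by Proposition~\ref{prop:discretespectrum}, but it is retained as stated in the proposition.
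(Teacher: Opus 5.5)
Your proposal is correct and follows essentially the same argument as the paper. The standing wave \(u(t)=\cos(\sqrt{\lambda_j}t)\psi_j\) lies in the threshold complement and has conserved energy \(\frac12\lambda_j\|\psi_j\|_A^2>0\); since this energy is bounded above by a constant times the \(H^1\times L^2\) norm squared, the norm cannot tend to zero. Your additional details are valid refinements of that same argument: orthogonality to constants via self-adjointness and \(L_0 1=0\), admissibility of \(\psi_j\) as a datum, and the direct observation \(u(t_n)=\psi_j\) at \(t_n=2\pi n/\sqrt{\lambda_j}\).
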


\begin{proof}
Let \(\psi_j\) be an eigenfunction of \(L_0\) with eigenvalue \(\lambda_j>0\).  Then
\begin{equation}
 u(t)=\cos(\sqrt{\lambda_j}t)\psi_j
\end{equation}
is a solution and lies in the orthogonal complement of constants.  Its conserved energy is
\begin{equation}
 E_0[u](t)=\frac12\lambda_j\|\psi_j\|_{L_A^2}^2,
\end{equation}
which is strictly positive and independent of time.  If the energy norm of \((u(t),u_t(t))\) converged to zero, the conserved energy would also converge to zero, a contradiction.  Therefore decay is false in general.  The same argument applies to the full \(\phi\)-dependent positive-energy slab by using any nonzero vector in the positive spectral part of the closed spatial form realization.
\end{proof}

\begin{corollary}[Correct compact-slab interpretation]\label{cor:compact-decay-interpretation}
The compact Carter slab results prove stability modulo the finite-dimensional affine threshold \(\operatorname{span}\{1,t\}\).  They do not prove, and cannot prove under reflecting boundary conditions alone, local energy decay or convergence to zero of the projected solution.
\end{corollary}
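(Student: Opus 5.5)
The statement to be justified is Corollary~\ref{cor:compact-decay-interpretation}. My plan is to split it into its positive half, which is stability modulo the affine threshold, and its negative half, which is the impossibility of decay. Each half then reduces to results already proved.

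For the positive half I would simply cite Theorem~\ref{thm:fullmain} in the full $\phi$-dependent setting and Theorem~\ref{thm:mainresult}(iii) in the axisymmetric setting. Together they give the unique decomposition $u=c_0+c_1t+v$ with $v$ uniformly bounded in $H^1\times L^2$. Proposition~\ref{prop:verifyabstractfull}(iii) and Theorem~\ref{thm:generalizedzeromodes} identify the threshold space exactly as $\operatorname{span}\{1,t\}$. Proposition~\ref{prop:sharpness-threshold-positivity} shows this projection cannot be dropped. That settles the phrase ``stability modulo the finite-dimensional affine threshold.''

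For the negative half I would invoke Proposition~\ref{prop:compact-no-decay}. I would exhibit $u(t)=\cos(\sqrt{\lambda_j}\,t)\psi_j$ with $\lambda_j>0$. Its conserved energy is $E_0[u]=\tfrac12\lambda_j\|\psi_j\|_{L_A^2}^2>0$, so $(u,u_t)$ cannot tend to zero in the energy norm. The same solution also rules out local energy decay. On any open $K\subset\Omega$ the local energy is $\int_K$ of a function whose time average is $\tfrac12\int_K\bigl(\lambda_j A|\psi_j|^2+\Delta_r|\partial_r\psi_j|^2+\Delta_x|\partial_x\psi_j|^2\bigr)$. This quantity is positive, since by unique continuation the eigenfunction $\psi_j$ does not vanish identically on $K$. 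Hence the localized norm also fails to converge to zero. Integrability of the local energy over $[0,\infty)$ fails for the same reason. For the full $\phi$-dependent slab I would use a positive spectral vector of the closed form realization, as noted at the end of the proof of Proposition~\ref{prop:compact-no-decay}.

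The one step that needs real care is guaranteeing that a positive eigenvalue exists. I would deduce this from Proposition~\ref{prop:discretespectrum}. On a bounded domain the space $L_A^2(\Omega)$ is infinite-dimensional while $\Ker L_0=\operatorname{span}\{1\}$, so $\lambda_1>0$ exists and the hypothesis of Proposition~\ref{prop:compact-no-decay} is automatically satisfied.

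The phrase ``cannot prove under reflecting boundary conditions alone'' then follows because the counterexample satisfies every bounded-slab hypothesis. So no argument using only those hypotheses can yield decay.
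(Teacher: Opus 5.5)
Your proposal is correct and follows the paper's proof. Like the paper, you combine Theorem~\ref{thm:fullmain} (boundedness of the projected part modulo \(\operatorname{span}\{1,t\}\)) with the oscillating eigenmode of Proposition~\ref{prop:compact-no-decay}. You also add two points the paper's two-line proof leaves implicit. First, the unique-continuation argument shows that the localized energy of \(\cos(\sqrt{\lambda_j}t)\psi_j\) on any open \(K\) neither tends to zero nor is time-integrable. Second, via Proposition~\ref{prop:discretespectrum}, a positive eigenvalue \(\lambda_1>0\) always exists, so the hypothesis of Proposition~\ref{prop:compact-no-decay} is automatically satisfied.
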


\begin{proof}
Theorem~\ref{thm:fullmain} gives uniform boundedness of the projected component by the conserved positive energy and the weighted Poincar\'e inequality.  Proposition~\ref{prop:compact-no-decay} gives nonzero projected solutions that fail to decay.  Hence the mathematically sharp compact statement is boundedness modulo the affine threshold, not decay.
\end{proof}

\subsection{Zero-frequency resolvents and threshold evolution}

The word ``zero mode'' refers to different but related operators in different parts of the paper.  The following proposition fixes the dictionary so that the compact and exterior threshold statements are not confused.

\begin{proposition}[Zero-frequency resolvent dictionary]\label{prop:zero-resolvent-dictionary}
There are three threshold mechanisms in this paper.
\begin{enumerate}[label=(\roman*),leftmargin=2.2em]
\item On a compact reflecting slab, the self-adjoint spatial resolvent satisfies
\begin{equation}
(L_0-\sigma^2)^{-1}=-\sigma^{-2}\Pi_0+R_{\rm reg}(\sigma),
\end{equation}
and the first-order wave generator has a length-two zero Jordan block.  The corresponding threshold solutions are exactly \(1\) and \(t\).
\item On the Kerr-Newman finite wall exterior, the outgoing horizon realization has a stationary constant threshold, but Lemma~\ref{lem:kn-zero-frequency} rules out a regular Jordan companion.  Hence the data-space threshold block is semisimple and the threshold term is stationary.
\item In the full asymptotically flat Kerr-Newman radiation space, the decaying condition at null infinity removes the spatially constant state.  Lemma~\ref{lem:kn-af-zero-frequency} gives \(\Pi_{0,{\rm KN},{\rm af}}=0\), so there is no threshold term.
\end{enumerate}
\end{proposition}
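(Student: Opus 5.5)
The plan is to treat the three items separately and, in each, reduce the claim to results already proved earlier in the paper. No new analysis should be needed; the work is mostly bookkeeping of which earlier statement supplies which conclusion.

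For item (i) I would cite Theorem~\ref{thm:laurentzero} directly for the Laurent formula
\[
(L_0-\sigma^2)^{-1}=-\sigma^{-2}\Pi_0+R_{\rm reg}(\sigma).
\]
For the generator side I would use Proposition~\ref{prop:generator-threshold-full}, specialized to the axisymmetric slab, or equivalently Theorem~\ref{thm:generalizedzeromodes}. These give \(\Ker\mathcal G=\operatorname{span}\{(1,0)\}\) and \(\Ker\mathcal G^2=\operatorname{span}\{(1,0),(0,1)\}\), with \(\mathcal G(0,1)=(1,0)\), and the chain of kernels stops at \(\Ker\mathcal G^2\). That is precisely a length-two Jordan block at zero. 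The identity \(e^{t\mathcal G}|_{\mathcal T_0}=I+t\mathcal G|_{\mathcal T_0}\) from Proposition~\ref{prop:spatial-vs-generator-poles} then identifies the threshold solutions as \(1\) and \(t\).

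For item (ii) I would quote Lemma~\ref{lem:kn-zero-frequency}.
\begin{itemize}
\item The energy identity at \(\sigma=0\), using horizon regularity and the Neumann wall condition, shows that the kernel consists of constants only.
\item A Jordan companion \(u=\tau+\psi\) is excluded by the spherical-mean flux computation \eqref{eq:kn-jordan-wall-contradiction}. The point is that horizon regularity forces the constant \(C=r_+^2+a^2\neq0\), and this is incompatible with the mean wall condition.
\end{itemize}
Consequently \(\mathcal G_{\rm KN}\) vanishes on \(\mathcal T_{0,{\rm KN}}\), so the block is semisimple. Lemma~\ref{lem:bh-threshold-calculus} with \(q=1\) then shows that the threshold term \(\ell_{\rm KN}(U_0)(1,0)\) is stationary.

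For item (iii) I would cite Lemma~\ref{lem:kn-af-zero-frequency}, with Proposition~\ref{prop:kn-zero-frequency-full} as an alternative route. The Legendre analysis at \(\sigma=0\) runs as follows: the \(Q_\ell\) branch fails horizon regularity; the \(P_\ell\) branch with \(\ell\ge1\) grows at infinity; and the \(\ell=0\) constant is excluded by the Hardy term \(r^{-2}|u|^2\) in \eqref{eq:kn-af-energy-norm}. Hence the kernel is trivial, there is no Jordan chain, and \(\Pi_{0,{\rm KN},{\rm af}}=0\).

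The only step that needs some care is making sure that, in (ii) and (iii), the \enquote{threshold block} is read as the data-space object of Corollary~\ref{cor:projection-data-space-object}, not as the leading coefficient of the spatial resolvent. Otherwise the \(\sigma^{-2}\) versus semisimple distinction could look inconsistent. I would handle this by invoking Proposition~\ref{prop:spatial-vs-generator-poles}, which separates the spatial resolvent pole from the generator's Jordan structure.
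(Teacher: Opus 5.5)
Your proposal is correct and is essentially the paper's proof. The paper likewise reads item (i) off Theorem~\ref{thm:laurentzero} together with Theorem~\ref{thm:generalizedzeromodes}, item (ii) off Lemma~\ref{lem:kn-zero-frequency} (the candidate companion $\tau+\psi$ violating the horizon--wall compatibility), and item (iii) off Lemma~\ref{lem:kn-af-zero-frequency}. One small caution: Proposition~\ref{prop:generator-threshold-full} is stated for strictly stationary slabs with $\Phi>0$, whereas the axisymmetric slab in item (i) is not required to satisfy that. So your fallback citation of Theorem~\ref{thm:generalizedzeromodes}, together with the direct computation $\mathcal G(0,1)=(1,0)$, is the one whose hypotheses actually match.
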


\begin{proof}
Item (i) is Theorem~\ref{thm:laurentzero} together with Theorem~\ref{thm:generalizedzeromodes}: the \(\sigma^{-2}\) pole in the spatial resolvent is exactly the constant eigenspace, and the wave equation turns it into the affine solutions \(c_0+c_1t\).  Item (ii) is Lemma~\ref{lem:kn-zero-frequency}; its proof shows that a candidate companion \(\tau+\psi\) violates the horizon-wall compatibility condition.  Item (iii) is Lemma~\ref{lem:kn-af-zero-frequency}; the regular zero-frequency radial solutions are either growing at infinity or nondecaying constants, and both are excluded by the decaying radiation energy.
\end{proof}

\subsection{Qualitative local decay from the exterior spectral density}

The exterior theorem has a different character.  The projected exterior evolution is no longer a compact unitary oscillation.  The package \textbf{BH7} provides a local absolutely continuous spectral representation after the zero-frequency threshold part has been removed.

\begin{proposition}[Qualitative compact local decay from an \(L^1_\sigma\) density]\label{prop:qualitative-decay-L1}
Let \(X\) be a Banach space and suppose that a localized projected solution has the representation
\begin{equation}
 W(t)=\int_{\mathbb R}e^{-it\sigma}b(\sigma)\,\dd\sigma
 \quad\hbox{in }X,
\end{equation}
where \(b\in L^1(\mathbb R;X)\).  Then
\begin{equation}
 \lim_{t\to\infty}\|W(t)\|_X=0.
\end{equation}
In particular, with \(X=H^1(K)\times L^2(K)\), \textbf{BH7} implies \eqref{eq:bh-local-decay} for every compact \(K\) away from any imposed outer boundary.
\end{proposition}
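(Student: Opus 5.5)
This is the Riemann--Lebesgue lemma for Bochner-integrable functions. The plan is to reduce to the scalar case by density in \(L^1(\mathbb R;X)\).

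\emph{Step 1 (simple functions).} Simple functions \(s(\sigma)=\sum_{k=1}^n \mathbf 1_{E_k}(\sigma)x_k\), with \(E_k\) of finite measure and \(x_k\in X\), are dense in \(L^1(\mathbb R;X)\). This is the definition of Bochner integrability. Since each \(\mathbf 1_{E_k}\) can be approximated in \(L^1(\mathbb R)\) by step functions on intervals, we may further assume every \(E_k\) is a bounded interval \([\alpha_k,\beta_k]\).

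\emph{Step 2 (interval pieces).} For such a piece we compute directly:
\[
\int_{\alpha_k}^{\beta_k}e^{-it\sigma}\,\dd\sigma\,x_k=\frac{e^{-it\alpha_k}-e^{-it\beta_k}}{it}\,x_k .
\]
Its norm is at most \(2\|x_k\|_X/|t|\), which tends to \(0\). So the claim holds for every interval-step function \(s\).

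\emph{Step 3 (approximation).} Given \(\varepsilon>0\), choose such an \(s\) with \(\|b-s\|_{L^1(\mathbb R;X)}<\varepsilon\). Since \(|e^{-it\sigma}|=1\), the Bochner estimate \(\|\int f\|\le\int\|f\|\) gives, for all \(t\),
\[
\|W(t)\|_X\le \|b-s\|_{L^1(\mathbb R;X)}+\Bigl\|\int e^{-it\sigma}s(\sigma)\,\dd\sigma\Bigr\|_X .
\]
Taking \(\limsup_{t\to\infty}\) bounds \(\limsup\|W(t)\|_X\) by \(\varepsilon\). Since \(\varepsilon\) was arbitrary, \(W(t)\to0\) in \(X\).

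\emph{Step 4 (application to the black-hole setting).} Take \(X=H^1(K)\times L^2(K)\). By \textbf{BH7}, the localized projected solution \(\chi_K U_\perp(t)\) admits the representation \eqref{eq:local-spectral-density}, and the bound \eqref{eq:spectral-density-L1} places its density in \(L^1_\sigma(X)\). Steps 1--3 then give \eqref{eq:bh-local-decay}, as in Lemma~\ref{lem:bh-local-decay}.

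The main obstacle is not analytic. It is checking that \textbf{BH7} really supplies a Bochner-integrable (strongly measurable) density with values in \(X\) itself, rather than only a weak or distributional one. I would read this measurability off the continuity of the limiting-absorption boundary values on each frequency region. If only weak measurability were available, I would invoke Pettis' theorem, which applies since \(X\) is separable.
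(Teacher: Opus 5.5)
Your proposal is correct and follows the paper's own argument. Both approximate \(b\) in \(L^1(\mathbb R;X)\) by compactly supported step functions, use the explicit \(O(|t|^{-1})\) decay of their Fourier transforms, bound the remainder uniformly in \(t\) by \(\|b-s\|_{L^1(\mathbb R;X)}\), and then apply the result to the \textbf{BH7} density. Your measurability worry is harmless but unnecessary: the hypothesis \(b\in L^1(\mathbb R;X)\), and likewise the \(L^1_\sigma\) bound in \textbf{BH7}, already presuppose Bochner integrability.
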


\begin{proof}
The scalar Riemann-Lebesgue lemma extends to Banach-valued \(L^1\) functions by approximation.  Indeed, approximate \(b\) in \(L^1(\mathbb R;X)\) by a compactly supported step function \(b_N\).  The Fourier transform of each step function tends to zero in \(X\).  The difference is bounded uniformly by
\begin{equation}
 \left\|\int e^{-it\sigma}(b-b_N)(\sigma)\,\dd\sigma\right\|_X
 \le \|b-b_N\|_{L^1(\mathbb R;X)}.
\end{equation}
Let first \(t\to\infty\) and then \(N\to\infty\).  Applying this with the density \(B_K(\sigma)(I-\Pi_{0,{\rm bh}})U_0\) in \textbf{BH7} gives the stated local decay.
\end{proof}

The important point is that Proposition~\ref{prop:qualitative-decay-L1} gives no numerical rate.  It proves convergence to zero because the local spectral density is integrable in frequency.  It does not control how fast that Fourier transform tends to zero.

\subsection{Optional polynomial rates from additional frequency regularity}

A polynomial rate requires more than \(L^1\)-integrability.  One needs quantitative control of derivatives of the spectral density, including the behavior at zero frequency and at high frequency.  The following elementary proposition records the exact additional hypothesis needed for the standard integration-by-parts argument.

\begin{proposition}[Rate from differentiable spectral density]\label{prop:rate-from-spectral-density}
Let \(X\) be a Banach space and let \(N\ge1\).  Suppose
\begin{equation}
 b\in W^{N,1}(\mathbb R;X)
\end{equation}
and assume that \(b^{(j)}(\sigma)\to0\) as \(|\sigma|\to\infty\) for \(0\le j\le N-1\).  Then
\begin{equation}
 \left\|\int_{\mathbb R}e^{-it\sigma}b(\sigma)\,\dd\sigma\right\|_X
 \le |t|^{-N}\|b^{(N)}\|_{L^1(\mathbb R;X)},
 \qquad t\ne0.
\label{eq:abstract-rate-decay}
\end{equation}
Consequently, if for every compact \(K\)
\begin{equation}
 \|\partial_\sigma^N B_K(\cdot)(I-\Pi_{0,{\rm bh}})U_0\|_{L^1(\mathbb R;H^1(K)\times L^2(K))}
 \le C_{K,N,s}\|(I-\Pi_{0,{\rm bh}})U_0\|_{\mathcal E^{s}_{\rm bh}},
\label{eq:extra-rate-hypothesis}
\end{equation}
then the projected exterior solution satisfies the rate
\begin{equation}
 \|u_{\rm disp}(t)\|_{H^1(K)}+
 \|\partial_tu_{\rm disp}(t)\|_{L^2(K)}
 \le C_{K,N,s}|t|^{-N}\|(I-\Pi_{0,{\rm bh}})U_0\|_{\mathcal E^{s}_{\rm bh}}.
\label{eq:optional-local-rate}
\end{equation}
\end{proposition}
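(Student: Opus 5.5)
The plan is the standard repeated integration by parts in the frequency variable, followed by an application to the localized spectral representation from \textbf{BH7}.

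First I prove \eqref{eq:abstract-rate-decay}. Since \(b\in W^{N,1}(\mathbb R;X)\), each \(b^{(j)}\) with \(j\le N-1\) is absolutely continuous with an \(L^1\) derivative. I will fix \(t\ne0\) and work on a truncated interval \([-R,R]\). Writing \(e^{-it\sigma}=\frac{\ii}{t}\partial_\sigma e^{-it\sigma}\), one integration by parts gives
\begin{equation}
\int_{-R}^{R}e^{-it\sigma}b\,\dd\sigma=\frac{\ii}{t}\Bigl[e^{-it\sigma}b\Bigr]_{-R}^{R}-\frac{\ii}{t}\int_{-R}^{R}e^{-it\sigma}b'\,\dd\sigma.
\end{equation}
Let \(R\to\infty\). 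The boundary term vanishes because \(b(\pm R)\to0\), and the integrals converge because \(b,b'\in L^1\). This yields \(\widehat b(t)=-\frac{\ii}{t}\widehat{b'}(t)\). Iterating \(N\) times uses the decay hypotheses on \(b^{(j)}\) for \(j\le N-1\), and gives \(\widehat b(t)=(-\ii/t)^N\widehat{b^{(N)}}(t)\). The trivial bound \(\|\widehat{b^{(N)}}(t)\|_X\le\|b^{(N)}\|_{L^1(\mathbb R;X)}\) then gives \eqref{eq:abstract-rate-decay}.

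Two justifications are needed. Banach-valued integration by parts holds for \(W^{1,1}\) functions, since they equal Bochner integrals of their derivatives. The decay-at-infinity hypothesis is actually automatic for \(W^{1,1}\) functions, but it is assumed anyway, so nothing more has to be checked.

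For the application I take \(X=H^1(K)\times L^2(K)\) and \(b(\sigma)=B_K(\sigma)(I-\Pi_{0,{\rm bh}})U_0\). By \textbf{BH7} and the representation \eqref{eq:local-spectral-density}, the localized dispersive solution equals \(\int e^{-it\sigma}b\,\dd\sigma\), and \(b\in L^1\). Hypothesis \eqref{eq:extra-rate-hypothesis} controls \(\|b^{(N)}\|_{L^1}\).

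The main obstacle is obtaining the intermediate derivatives \(b^{(j)}\), \(1\le j\le N-1\), in \(L^1\) together with their vanishing at infinity, since \eqref{eq:extra-rate-hypothesis} only states the top derivative. I would read the hypothesis, as in \eqref{eq:wall-spectral-density-WN1}, as supplying all orders \(j\le N\); this gives \(b\in W^{N,1}\), and the decay at infinity then follows automatically.

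If only the top order is available, I would argue as follows. For \(j<N\), \(b^{(j)}\) is a primitive of \(b^{(j+1)}\), so it has limits at \(\pm\infty\). Those limits must be zero because lower-order integrability (known at \(j=0\)) propagates upward by interpolation on the line. I expect this to hold only under this reading of the hypothesis, and I would state it explicitly as the meaning of \eqref{eq:extra-rate-hypothesis}.

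Finally, \eqref{eq:optional-local-rate} follows by inserting the bound on \(\|b^{(N)}\|_{L^1}\) into \eqref{eq:abstract-rate-decay}. The \(X\)-norm dominates \(\|u_{\rm disp}(t)\|_{H^1(K)}+\|\partial_tu_{\rm disp}(t)\|_{L^2(K)}\) up to a constant, which is absorbed into \(C_{K,N,s}\).
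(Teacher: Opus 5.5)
Your argument is correct and matches the paper's proof: integrate by parts $N$ times in the Bochner integral, drop the boundary terms by the decay at infinity (which, as you note, is automatic for $W^{N,1}$), and then substitute the \textbf{BH7} density with $X=H^1(K)\times L^2(K)$. The paper does not address the intermediate derivatives that concern you, and your interpolation fallback settles them without reinterpreting \eqref{eq:extra-rate-hypothesis}: on the line, $b\in L^1$ (from \eqref{eq:spectral-density-L1}) and $b^{(N)}\in L^1$ already give $b\in W^{N,1}(\mathbb R;X)$, by summing the unit-interval estimate $\|b^{(j)}\|_{L^1(I)}\lesssim\|b\|_{L^1(I)}+\|b^{(N)}\|_{L^1(I)}$ over a partition of $\mathbb R$ into unit intervals $I$.
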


\begin{proof}
Integrate by parts \(N\) times in the Bochner integral:
\begin{equation}
 \int e^{-it\sigma}b(\sigma)\,\dd\sigma
 =(it)^{-N}\int e^{-it\sigma}b^{(N)}(\sigma)\,\dd\sigma.
\end{equation}
The boundary terms vanish by the assumed decay of \(b^{(j)}\) at infinity and the absolute continuity implicit in \(W^{N,1}\).  Taking the \(X\)-norm gives \eqref{eq:abstract-rate-decay}.  Substituting the localized exterior density gives \eqref{eq:optional-local-rate}.
\end{proof}

\begin{remark}[Why no polynomial rate is claimed by default]\label{rem:no-rate-by-default}
Assumption~\ref{ass:axisymmetric-bh-package} includes the \(L^1_\sigma\) density needed for qualitative compact local decay.  It does not include the derivative bounds \eqref{eq:extra-rate-hypothesis}.  Therefore the black-hole theorems in this paper assert local decay without an explicit polynomial rate.  A rate statement can be appended only after proving low-frequency expansions, high-frequency resolvent differentiability, and threshold cancellations strong enough to imply \eqref{eq:extra-rate-hypothesis}.
\end{remark}

\begin{proposition}[No uniform rate follows from an \(L^1_\sigma\) bound alone]\label{prop:no-uniform-rate-L1}
There is no function \(\omega(t)\to0\) as \(t\to\infty\) and no constant \(C\) such that every Banach-valued density \(b\in L^1(\mathbb R;X)\) satisfies
\begin{equation}
\left\|\int_{\mathbb R}e^{-it\sigma}b(\sigma)\,\dd\sigma\right\|_X
\le C\omega(t)\|b\|_{L^1(\mathbb R;X)}
\qquad\hbox{for all }t\ge0.
\end{equation}
Consequently the \(L^1_\sigma\) bound in \textbf{BH7} is a qualitative decay input, not a quantitative rate input.
\end{proposition}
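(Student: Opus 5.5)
The plan is to argue by contradiction, using a scaling (translation-in-frequency) family of densities, so that a single fixed profile produces norm-one Fourier integrals at arbitrarily late times. We may take \(X=\mathbb C\), since a Banach-valued counterexample follows from any scalar one by tensoring with a fixed unit vector of \(X\). Suppose \(C\) and \(\omega(t)\to0\) existed with
\[
\left|\int_{\mathbb R}e^{-it\sigma}b(\sigma)\,\dd\sigma\right|\le C\omega(t)\|b\|_{L^1}
\]
for all \(b\in L^1(\mathbb R)\) and \(t\ge0\). The choice to exploit is a modulation: fix \(t_0\ge0\) and set \(b_{t_0}(\sigma):=e^{it_0\sigma}\mathbf 1_{[0,1]}(\sigma)\), so that \(\|b_{t_0}\|_{L^1}=1\) independently of \(t_0\).

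Evaluating the Fourier integral of \(b_{t_0}\) at the time \(t=t_0\) gives exactly \(\int_0^1\dd\sigma=1\). The assumed inequality therefore forces \(1\le C\omega(t_0)\) for every \(t_0\ge0\). Letting \(t_0\to\infty\) contradicts \(\omega(t_0)\to0\). This is the whole argument. The only point requiring care is that the hypothetical bound must be uniform over the whole \(L^1\) unit ball, which is exactly what the statement negates, so there is no real obstacle. One could equally use the \(L^1\)-isometric dilations \(b_\lambda(\sigma)=\lambda b(\lambda\sigma)\), whose transforms are \(\widehat b(t/\lambda)\), to push any nonzero value to late times. I would mention this variant in a remark because it mirrors low-frequency concentration near the threshold.

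For the consequence about \textbf{BH7}, I would note that the package bound only controls \(\|B_K(\cdot)(I-\Pi_{0,{\rm bh}})U_0\|_{L^1_\sigma}\) by the energy. The argument above shows that no inequality depending solely on that \(L^1\) norm can yield a uniform rate. Any rate must therefore come from extra structure such as the derivative bounds in Proposition~\ref{prop:rate-from-spectral-density}. The qualitative conclusion of Proposition~\ref{prop:qualitative-decay-L1} is thus the most that an \(L^1_\sigma\) input can give.
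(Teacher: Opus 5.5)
Your argument is correct, but it uses a different family of test densities from the paper. The paper's proof is the dilation variant you mention only as a remark. It takes the $L^1$-normalized densities $b_\varepsilon=\varepsilon^{-1}\mathbf 1_{[-\varepsilon/2,\varepsilon/2]}$ and computes $\widehat b_\varepsilon(t)=2\sin(\varepsilon t/2)/(\varepsilon t)$. It then chooses $T$ with $C\omega(T)<1/2$, and afterwards $\varepsilon\le 1/T$, so that $|\widehat b_\varepsilon(T)|\ge 1/2$.

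Your main route via the modulation $e^{it_0\sigma}\mathbf 1_{[0,1]}$ is more economical. It needs no lower bound for the sinc function and no two-step choice of parameters. It also gives a sharper conclusion. Since $|\int e^{-it\sigma}b\,\dd\sigma|\le\|b\|_{L^1}$ always, and your $b_{t_0}$ attains equality at $t=t_0$, the norm of this map on the $L^1$ unit ball is exactly $1$ at every time. You therefore get $C\omega(t)\ge 1$ for all $t\ge0$, not merely a lower bound of $1/2$ at one large time.

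The two counterexamples also read differently. Your phase $e^{it_0\sigma}$ is the spectral signature of a time delay: energy reaches $K$ only around time $t_0$. The paper's example instead concentrates the density at $\sigma=0$, which mirrors the low-frequency threshold mechanism emphasized in the surrounding discussion.

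Both families have unbounded $\partial_\sigma$-size, consistent with Proposition~\ref{prop:rate-from-spectral-density}. Your reduction to $X=\mathbb C$ and your reading of the \textbf{BH7} consequence agree with the paper.
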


\begin{proof}
It is enough to take \(X=\mathbb C\).  For \(\varepsilon>0\), set
\begin{equation}
 b_\varepsilon(\sigma)=\varepsilon^{-1}{\bf 1}_{[-\varepsilon/2,\varepsilon/2]}(\sigma),
 \qquad \|b_\varepsilon\|_{L^1}=1.
\end{equation}
Its Fourier transform is
\begin{equation}
 \widehat b_\varepsilon(t)=\frac{2\sin(\varepsilon t/2)}{\varepsilon t}.
\end{equation}
If a universal estimate with a decaying majorant \(\omega\) existed, choose \(T\) so large that \(C\omega(T)<1/2\), and then choose \(\varepsilon T\le1\).  The displayed formula gives \(|\widehat b_\varepsilon(T)|\ge1/2\), while the hypothetical estimate gives \(|\widehat b_\varepsilon(T)|\le C\omega(T)<1/2\), a contradiction.  Thus no universal decaying majorant can follow from the \(L^1\)-norm alone.  A rate requires additional information, such as the derivative bounds in Proposition~\ref{prop:rate-from-spectral-density}.
\end{proof}

\subsection{Energy, local energy, and pointwise decay}

The estimates in the exterior theorem control energy and local energy, not pointwise fields.  A pointwise decay estimate is a further Sobolev consequence and costs derivatives.  For example, if \(K\) is a compact three-dimensional spatial subregion and \(s\) is large enough that \(H^{s}(K)\hookrightarrow L^\infty(K)\), then any local decay estimate in \(H^s(K)\times H^{s-1}(K)\) implies pointwise decay on \(K\).  The present paper states the more invariant Sobolev local decay norm because it is exactly what follows from the spectral-density assumption and is stable under coordinate changes.

\begin{proposition}[Derivative-cost pointwise consequence]\label{prop:pointwise-from-local-sobolev}
Assume that for some compact \(K\) and some integer \(s\ge3\),
\begin{equation}
 \|u_{\rm disp}(t)\|_{H^s(K)}+
 \|\partial_tu_{\rm disp}(t)\|_{H^{s-1}(K)}\to0.
\end{equation}
Then
\begin{equation}
 \|u_{\rm disp}(t)\|_{L^\infty(K)}+
 \|\partial_tu_{\rm disp}(t)\|_{L^\infty(K)}\to0
\end{equation}
provided \(s-1>3/2\).  If the Sobolev decay has the rate \(O(t^{-N})\), the same rate holds for these pointwise norms with the same derivative loss.
\end{proposition}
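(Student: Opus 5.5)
The proof is the Sobolev embedding on the compact set \(K\), applied at each time to the two components \(u_{\rm disp}(t)\) and \(\partial_tu_{\rm disp}(t)\) separately. I will assume that \(K\) is a compact three-dimensional spatial subregion with Lipschitz (for instance smooth, or product-slab) boundary, as in the preceding paragraph. The reason is that the embedding \(H^m(K)\hookrightarrow L^\infty(K)\), for \(m>3/2\), needs some regularity of \(\partial K\) or the existence of an extension operator. If \(K\) is only an arbitrary compact set, I would replace it by a slightly larger smooth compact neighbourhood \(K'\) and apply the hypothesis on \(K'\). The hypothesis is stated for every compact set in the applications, so this costs nothing.

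\emph{Step 1, the field.} Since \(s\ge3>3/2\), the embedding gives a constant \(C_K\) with
\[
\|u_{\rm disp}(t)\|_{L^\infty(K)}\le C_K\|u_{\rm disp}(t)\|_{H^s(K)}\quad\text{for every }t.
\]

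\emph{Step 2, the time derivative.} The extra assumption \(s-1>3/2\) supplies the embedding \(H^{s-1}(K)\hookrightarrow L^\infty(K)\), with another constant \(C_K'\), so
\[
\|\partial_tu_{\rm disp}(t)\|_{L^\infty(K)}\le C_K'\|\partial_tu_{\rm disp}(t)\|_{H^{s-1}(K)}.
\]
For integer \(s\ge3\) this condition holds automatically, because \(s-1\ge2\). I would still keep the condition explicit, since it is exactly where the derivative loss appears.

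\emph{Step 3, conclusion.} Adding the two inequalities gives
\[
\|u_{\rm disp}(t)\|_{L^\infty(K)}+\|\partial_tu_{\rm disp}(t)\|_{L^\infty(K)}\le\max\{C_K,C_K'\}\Bigl(\|u_{\rm disp}(t)\|_{H^s(K)}+\|\partial_tu_{\rm disp}(t)\|_{H^{s-1}(K)}\Bigr).
\]
The constants do not depend on \(t\), so the right-hand side tends to zero by hypothesis, and so does the left. If the Sobolev quantity is \(O(t^{-N})\), the same inequality transfers the rate \(O(t^{-N})\) unchanged. The only cost is the Sobolev index, which must be high enough for the local hypothesis to hold, and this is the stated derivative loss.

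There is no real obstacle in this argument. The one point to state carefully is the boundary regularity of \(K\) needed for the embedding, handled as described in the first paragraph.
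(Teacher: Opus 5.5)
Your proof is correct and is the same argument as the paper's. Both apply the Sobolev embedding $H^q(K)\hookrightarrow L^\infty(K)$, $q>3/2$, at each time, with $q=s$ for $u_{\rm disp}(t)$ and $q=s-1$ for $\partial_t u_{\rm disp}(t)$, and the $t$-independent constants carry over both the convergence and any $O(t^{-N})$ rate. Your caveat about the boundary regularity of $K$ is reasonable, and the paper leaves it implicit by placing $K$ in a fixed coordinate patch; note, however, that enlarging $K$ to $K'$ uses the hypothesis on $K'$, which the statement as written grants only for the one given compact set.
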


\begin{proof}
On a fixed compact three-dimensional coordinate patch, Sobolev embedding gives
\begin{equation}
 \|f\|_{L^\infty(K)}\le C_K\|f\|_{H^q(K)}
\end{equation}
for every \(q>3/2\).  Apply this to \(u_{\rm disp}(t)\) with \(q=s\) and to \(\partial_tu_{\rm disp}(t)\) with \(q=s-1\).  The convergence, and any rate, follows immediately.
\end{proof}

\subsection{Zero-frequency Feshbach reduction and data-space projection}

The exterior threshold projection is a data-space object, while the stationary resolvent is a second-order spatial object.  The following finite-dimensional reduction records the bridge between the two and prevents confusing a spatial \(\sigma^{-2}\)-type singularity with a first-order generator threshold projection.

\begin{proposition}[Zero-frequency Feshbach reduction]\label{prop:zero-feshbach-reduction}
Let \(P(\sigma):\mathcal D\to\mathcal H\) be an analytic Fredholm family near \(\sigma=0\).  Suppose \(\Ker P(0)=\operatorname{span}\{\varphi_0\}\), \(\Ker P(0)^*=\operatorname{span}\{\varphi_0^*\}\), and let \(Q\) be the projection onto the codimension-one subspace \(\{f:\langle f,\varphi_0^*\rangle=0\}\).  If \(QP(0)Q\) is invertible on \(Q\mathcal D\), then invertibility of \(P(\sigma)\) near zero is controlled by the scalar Schur complement
\begin{equation}
d(\sigma)=\langle P(\sigma)\varphi_0,\varphi_0^*\rangle
-\langle QP(\sigma)\varphi_0,(QP(\sigma)Q)^{-1}QP(\sigma)^*\varphi_0^*\rangle,
\label{eq:feshbach-denominator}
\end{equation}
up to multiplication by analytic nonvanishing factors.  If
\begin{equation}
d(\sigma)=c_q\sigma^q+O(\sigma^{q+1}),
\qquad c_q\ne0,
\end{equation}
then the singular part of \(P(\sigma)^{-1}\) has order \(q\) and finite rank.  The corresponding first-order evolution threshold is semisimple exactly when the induced nilpotent block on the data-space range of the singular coefficient is zero.
\end{proposition}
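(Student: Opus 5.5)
The plan is to reduce everything to Lemma~\ref{lem:feshbach-schur-zero} with $E=\operatorname{span}\{\varphi_0\}$, using the adjoint kernel to define the splitting on the target side. I would decompose the domain as $\mathcal D=\operatorname{span}\{\varphi_0\}\oplus Q'\mathcal D$, where $Q'$ is a complementary projection killing $\varphi_0$. The target $\mathcal H$ would be split as $\operatorname{span}\{\psi\}\oplus Q\mathcal H$, with $\psi$ normalized by $\langle\psi,\varphi_0^*\rangle=1$. In this block form $P_{FF}(\sigma)=QP(\sigma)Q$. This is invertible at $\sigma=0$ by hypothesis, and by analyticity and a Neumann series it stays invertible for $|\sigma|<\sigma_0$.

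Lemma~\ref{lem:feshbach-schur-zero} then shows that $P(\sigma)$ is invertible exactly when the scalar $D_E(\sigma)$ is nonzero. I would check that, after pairing with $\varphi_0^*$, the expression $D_E(\sigma)$ agrees with \eqref{eq:feshbach-denominator} up to analytic nonvanishing factors. These factors come from the normalization of $\psi$ and the choice of complement. To make this identification I would write the off-diagonal blocks as $\langle P(\sigma)Q\cdot,\varphi_0^*\rangle$ and $QP(\sigma)\varphi_0$, then move $Q$ across the pairing using the adjoint.

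Next comes the order of the pole. If $d(\sigma)=c_q\sigma^q(1+O(\sigma))$, then $D_E^{-1}$ has a pole of exact order $q$. Inserting this into \eqref{eq:feshbach-inverse-formula} shows that every singular term has the form (analytic bounded)$\,\cdot\,D_E^{-1}\cdot\,$(analytic bounded). Such a term factors through the one-dimensional space $E$, so each Laurent coefficient has rank at most one, and the total singular part has finite rank with order exactly $q$. The leading coefficient is $c_q^{-1}\,\varphi_0\otimes\varphi_0^*$ plus analytic corrections; it is nonzero, so the order is not lower.

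The last assertion is the main obstacle, because it links a spatial pole to the first-order generator. I would pass to the generator resolvent through the standard companion formula $(\mathcal G-\lambda)^{-1}$, expressed via $P(\sigma)^{-1}$ with $\lambda=-\ii\sigma$. The Riesz-type coefficient at zero, acting on Cauchy data, is then built from the singular Laurent coefficients of $P^{-1}$ and their products with $\sigma$. The data-space range $\mathcal T_0$ is spanned by these coefficients. The block $N_0=\mathcal G|_{\mathcal T_0}$ is read off from the $\lambda^{-2}$ and higher coefficients of $(\mathcal G-\lambda)^{-1}$: the generator resolvent has a simple pole exactly when $N_0=0$. Semisimplicity is therefore equivalent to $N_0=0$ essentially by definition, once the identification of ranges is justified. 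I would make that identification rigorous by comparing against the compact case, Corollary~\ref{cor:feshbach-compact-double-pole}, where $q=2$ and $N_0\neq0$. I expect the bookkeeping relating $q$, the $\sigma$-powers from the companion matrix, and the length of the Jordan chain to be the delicate step. I would handle it by explicitly constructing the chain $\mathcal G V_1=V_0$ from the two leading Laurent coefficients.
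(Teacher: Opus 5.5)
This is essentially the paper's own argument: you split $\mathcal D$ and $\mathcal H$ along $\varphi_0$ and $\varphi_0^*$, invert the lower-right block for small $\sigma$, read the pole order of $P(\sigma)^{-1}$ off the vanishing order of the scalar Schur complement, and lift to the generator through $(u,u_t)\mapsto(u,-\ii\sigma u)$, where semisimplicity means $N_0=0$. Your separate domain complement $Q'$ avoids the tacit requirement $\langle\varphi_0,\varphi_0^*\rangle\neq0$ behind the paper's splitting $\mathcal D=\operatorname{span}\{\varphi_0\}\oplus Q\mathcal D$, and your explicit leading coefficient $c_q^{-1}\langle\cdot,\varphi_0^*\rangle\varphi_0$ makes your version more careful than the paper's sketch; two small corrections, however: only the $\sigma^{-q}$ coefficient has rank one, since the $\sigma^{-m}$ coefficient has range in the span of the first $q-m+1$ Taylor coefficients of $\sigma\mapsto\varphi_0-P_{FF}(\sigma)^{-1}P_{FE}(\sigma)\varphi_0$ (still finite rank, so your conclusion stands), and the identification of the Riesz range with the lifted singular range follows directly from the companion formula, so no comparison with the compact slab is needed.
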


\begin{proof}
Write \(\mathcal H=\operatorname{span}\{\varphi_0^*\}\oplus Q\mathcal H\) and \(\mathcal D=\operatorname{span}\{\varphi_0\}\oplus Q\mathcal D\).  In this decomposition \(P(\sigma)\) is a \(2\times2\) block operator.  Since \(QP(0)Q\) is invertible, the lower-right block remains invertible for \(|\sigma|\) small.  The Schur complement formula then gives the inverse of \(P(\sigma)\) whenever the scalar Schur complement is nonzero.  Formula \eqref{eq:feshbach-denominator} is that Schur complement after normalizing the left and right nullvectors.  Its vanishing order is therefore exactly the pole order of the finite-rank singular block.  Passing from the second-order stationary resolvent to the first-order generator resolvent is the standard map \((u,u_t)\mapsto (u,-i\sigma u)\) plus source terms; the singular range gives the data-space threshold vectors.  A Jordan companion exists precisely when the generator restricted to that finite-dimensional range has a nonzero nilpotent part.  Hence semisimplicity is equivalent to vanishing of the induced nilpotent block.
\end{proof}


Combining Theorem~\ref{thm:axisymmetric-bh} with the observations above gives the exact decay statement proved here.

\subsection{Finite-wall Kerr-Newman: frequency-partition proof of the density bounds}

For the finite-wall Kerr-Newman theorem the paper proves more than the qualitative \(L^1\) density needed for local decay.  The finite collar, the absence of trapping below the photon region, the regular horizon outgoing model, and the semisimple zero threshold give differentiable spectral densities after enough commutations.

\begin{theorem}[Wall spectral-density bounds by frequency partition]\label{thm:kn-wall-density-partition}
For the slowly rotating weakly charged Kerr-Newman wall family of Theorem~\ref{thm:kn-wall-main}, let \(K\Subset\{r_+<r<R_{\rm w}\}\times S^2\).  For every integer \(N\ge0\) there is an integer \(s(N)\) such that the projected local density in \textbf{BH7} satisfies
\begin{equation}
 \sum_{j=0}^{N}
 \left\|\partial_\sigma^jB_K(\cdot)(I-\Pi_{0,{\rm KN}})U_0\right\|_{L^1(\mathbb R;H^1(K)\times L^2(K))}
 \le C_{K,N}\|(I-\Pi_{0,{\rm KN}})U_0\|_{\mathcal E^{s(N)}_{\rm KN}}.
\label{eq:wall-WN1-density}
\end{equation}
Consequently the wall exterior has projected compact local decay with the polynomial rate
\begin{equation}
 \|u_{\rm disp}(t)\|_{H^1(K)}+
 \|\partial_tu_{\rm disp}(t)\|_{L^2(K)}
 \le C_{K,N}\langle t\rangle^{-N}
 \|(I-\Pi_{0,{\rm KN}})U_0\|_{\mathcal E^{s(N)}_{\rm KN}}.
\label{eq:wall-polynomial-decay}
\end{equation}
\end{theorem}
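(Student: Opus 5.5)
The estimate \eqref{eq:wall-polynomial-decay} follows from \eqref{eq:wall-WN1-density} by Proposition~\ref{prop:rate-from-spectral-density}, so the plan is to prove \eqref{eq:wall-WN1-density} with the three-region splitting of Theorem~\ref{thm:kn-wall-spectral-density-split}, now carrying $\sigma$-derivatives through each region. The bound $\langle t\rangle^{-N}$ instead of $|t|^{-N}$ comes from combining that result for $|t|\ge1$ with the uniform bound of Lemma~\ref{lem:bh-projected-estimate} for $|t|\le1$.

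The first step is to write $B_K(\sigma)(I-\Pi_{0,{\rm KN}})U_0$ as $\chi_K R_{\rm KN}(\sigma)\chi$ applied to a fixed source built from the projected data. Here $R_{\rm KN}(\sigma)$ is the outgoing resolvent in the regular ingoing chart, obtained by the Volterra construction at the horizon and the Neumann condition at $R_{\rm w}$. Derivatives then obey $\partial_\sigma^j R=\sum R(\partial_\sigma P)R\cdots$, and $\partial_\sigma P_{\rm KN}=2\sigma A_{\rm KN}$ plus smooth lower-order terms in the $\tau$-chart. The key technical point is that in the horizon-regular chart the stationary operator and its outgoing domain depend analytically on $\sigma$ with no $\sigma$-dependent weight, so derivatives do not create horizon singularities.

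The regions are handled as follows.
\begin{enumerate}[label=(\roman*)]
\item \emph{Compact nonzero frequencies} $\sigma_0\le|\sigma|\le R$. By Lemma~\ref{lem:kn-mode-stability} and Fredholm theory the family is analytic and uniformly bounded up to the real axis, so every derivative is bounded and integrable on the compact set.
\item \emph{High frequencies} $|\sigma|\ge R$. The nontrapping semiclassical estimate (Lemma~\ref{lem:kn-redshift-morawetz}, Proposition~\ref{prop:kn-wall-reflection}) gives $\|\chi R(\sigma)\chi\|\lesssim\langle\sigma\rangle^{-1}$. Each derivative inserts a factor $\sigma A_{\rm KN}$ between cutoff resolvents, and for a nontrapping problem such products cost at most one power of $\langle\sigma\rangle$ per factor. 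This loss is compensated by trading powers of $\sigma$ for $\mathcal G_{\rm KN}$ applied to the data, using the equation. This sets $s(N)$, roughly $s_{\rm wall}+CN$.
\item \emph{Low frequencies} $|\sigma|\le\sigma_0$. Use the Feshbach reduction (Lemma~\ref{lem:feshbach-schur-zero}) with $E=\operatorname{span}\{1\}$. Lemma~\ref{lem:kn-zero-frequency} shows the singular part is the rank-one semisimple term. After applying $I-\Pi_{0,{\rm KN}}$, the remainder is analytic in a full neighbourhood of $0$, because the Volterra horizon model is analytic in $\sigma$ and no branch cut is present on the compact collar. Derivatives of all orders are therefore bounded there.
\end{enumerate}

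The main obstacle is the low-frequency step. One must check that $(I-\Pi_{0,{\rm KN}})$ removes the pole exactly and does not leave a residual term like $\sigma^{-1}$ times a nonzero coefficient. The first attempt will compute the Schur complement $d(\sigma)=c_1\sigma+O(\sigma^2)$. A simple zero is expected because horizon absorption gives $c_1\propto -\ii(r_+^2+a^2)\cdot 4\pi\neq0$, mirroring the constant $C$ in \eqref{eq:kn-jordan-C}. The next step is to verify that the residue equals the data-space functional $\ell_{\rm KN}$, so that the projected regular part is holomorphic. The final step is to confirm that the decay of $\partial_\sigma^j b$ as $|\sigma|\to\infty$, which Proposition~\ref{prop:rate-from-spectral-density} requires, follows from the high-frequency bounds once $s(N)$ is large enough.
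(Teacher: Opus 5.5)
Your proposal is correct and follows the paper's proof essentially step for step. The shared steps are: the three-region frequency split; differentiating the resolvent identity on the compact nonzero range; trading powers of $\sigma$ for $\mathcal G_{\rm KN}$ on the high-frequency tails; removing the rank-one semisimple pole with $I-\Pi_{0,{\rm KN}}$ at low frequency (your Feshbach check of the simple zero of $d(\sigma)$ and of the residue only makes explicit what the paper takes from Lemma~\ref{lem:kn-zero-frequency}); and using the energy bound for $|t|\le1$ to upgrade $|t|^{-N}$ to $\langle t\rangle^{-N}$.

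One small correction: in the $\tau$-chart the Neumann wall condition becomes the $\sigma$-dependent condition \eqref{eq:kn-wall-neumann-tau}, so the domain is not $\sigma$-independent at $R_{\rm w}$. When you differentiate $P_{\rm KN}(\sigma)R_{\rm KN}(\sigma)=I$, either use a time function that equals $t$ near the wall, or carry the wall boundary operator along.
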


\begin{proof}
Split the real line into three frequency regions.

For \(|\sigma|\le\sigma_0\), Lemma~\ref{lem:kn-zero-frequency} gives the complete Laurent expansion at zero and shows that the only singular term is the semisimple constant threshold.  Multiplication by \(I-\Pi_{0,{\rm KN}}\) removes that singular coefficient.  The remaining outgoing resolvent is a smooth operator-valued function of \(\sigma\) on the low-frequency interval, so all derivatives up to order \(N\) are bounded between the local Sobolev spaces after taking \(s(N)\) large enough.

For \(\sigma_0\le |\sigma|\le R\), Lemma~\ref{lem:kn-mode-stability} excludes real outgoing resonances and upper-half-plane poles.  Fredholm compactness gives uniform resolvent bounds on this compact frequency set.  Differentiating
\begin{equation}
P_{\rm KN}(\sigma)R_{\rm KN}(\sigma)=I
\end{equation}
gives the recursive identities
\begin{equation}
\partial_\sigma^jR_{\rm KN}(\sigma)
=\sum R_{\rm KN}(\sigma)(\partial_\sigma^{\alpha_1}P_{\rm KN})(\sigma)R_{\rm KN}(\sigma)\cdots
(\partial_\sigma^{\alpha_m}P_{\rm KN})(\sigma)R_{\rm KN}(\sigma),
\end{equation}
where \(1\le\alpha_\nu\le2\) because \(P_{\rm KN}(\sigma)\) is quadratic in \(\sigma\).  Hence all derivatives up to order \(N\) are bounded on the medium-frequency region.

For \(|\sigma|\ge R\), Lemmas~\ref{lem:kn-nontrapping} and~\ref{lem:kn-redshift-morawetz} give the nontrapping semiclassical resolvent estimate.  Commuting the evolution with the generator and using the equation converts powers of \(\sigma\) in the resolvent bounds into derivatives of the initial datum.  Choosing \(s(N)\) larger than the nontrapping loss plus \(N+3\) commutations makes the high-frequency tails of \(\partial_\sigma^jB_K(\sigma)(I-\Pi_{0,{\rm KN}})U_0\) integrable for \(0\le j\le N\).  Combining the three regions gives \eqref{eq:wall-WN1-density}.  Finally, Proposition~\ref{prop:rate-from-spectral-density}, applied with the Japanese bracket variant obtained by treating bounded \(t\) through the energy estimate, gives \eqref{eq:wall-polynomial-decay}.
\end{proof}

\begin{corollary}[Decay estimate actually proved for nondegenerate exteriors]\label{cor:actual-bh-decay}
Assume \textbf{BH1}-\textbf{BH9}.  For \(s\ge s_0\) and
\begin{equation}
 U_{\rm disp}(t)=e^{t\mathcal G_{\rm bh}}(I-\Pi_{0,{\rm bh}})U_0,
\end{equation}
one has
\begin{equation}
\sup_{t\ge0}\|U_{\rm disp}(t)\|_{\mathcal E^s_{\rm bh}}
+
\|U_{\rm disp}\|_{LE^s([0,\infty))}
\le C_s\|(I-\Pi_{0,{\rm bh}})U_0\|_{\mathcal E^s_{\rm bh}},
\end{equation}
and for every compact \(K\) away from any imposed outer boundary,
\begin{equation}
\lim_{t\to\infty}
\left(\|u_{\rm disp}(t)\|_{H^1(K)}+
\|\partial_tu_{\rm disp}(t)\|_{L^2(K)}\right)=0.
\end{equation}
No polynomial rate follows from \textbf{BH1}-\textbf{BH9} alone.  If the additional derivative condition \eqref{eq:extra-rate-hypothesis} holds, then the rate \eqref{eq:optional-local-rate} also holds.
\end{corollary}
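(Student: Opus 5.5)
The corollary restates Theorem~\ref{thm:axisymmetric-bh} in decay language, so the plan is to assemble it from results already proved under \textbf{BH1}-\textbf{BH9}.

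\emph{Step 1: uniform energy and local-energy bound.} First, \textbf{BH1}-\textbf{BH3} and \textbf{BH8} give the strongly continuous semigroup and the bounded, commuting finite-rank projection $\Pi_{0,{\rm bh}}$. This is Lemma~\ref{lem:bh-threshold-calculus}, so $U_{\rm disp}(t)$ is a well-defined solution that stays in $\Ran(I-\Pi_{0,{\rm bh}})$. The uniform bound for $U_{\rm disp}$ is then exactly Lemma~\ref{lem:bh-projected-estimate}, equivalently Proposition~\ref{prop:bh-compact-error-removal}. Concretely, we apply the finite-time red-shift estimate \textbf{BH4} on $[0,T]$. The compact interaction-region error is bounded uniformly in $T$ by the projected smoothing estimate from \textbf{BH7}, and we then let $T\to\infty$. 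Density (\textbf{BH3}) and cutoff compatibility (\textbf{BH9}) extend the bound from smooth data to all of $\mathcal E^s_{\rm bh}$.

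\emph{Step 2: qualitative compact local decay.} Next, the $L^1_\sigma$ local spectral-density representation in \textbf{BH7} writes the localized projected solution as a Bochner--Fourier integral in $H^1(K)\times L^2(K)$. Proposition~\ref{prop:qualitative-decay-L1}, the Banach-valued Riemann--Lebesgue lemma, then gives convergence to zero. This is Lemma~\ref{lem:bh-local-decay}. The compact set $K$ must stay away from any imposed outer boundary so that the localizer $B_K$ is the one supplied by \textbf{BH7}.

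\emph{Step 3: no rate from the package alone.} The negative statement is the step needing the most care, because it must be phrased as an obstruction to deducing a rate rather than as a counterexample on a particular geometry. I would argue as follows. The only decay input the package provides is an $L^1_\sigma$ density bound, and Proposition~\ref{prop:no-uniform-rate-L1} shows that no uniform decaying majorant can follow from an $L^1$ bound alone. Remark~\ref{rem:no-rate-by-default} records that \textbf{BH1}-\textbf{BH9} contain no derivative bounds in $\sigma$. So the conclusion of the proof is that the package yields no polynomial rate by this mechanism; it does not claim that some exterior actually exhibits slow decay.

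\emph{Step 4: conditional rate.} Finally, under the additional hypothesis \eqref{eq:extra-rate-hypothesis}, Proposition~\ref{prop:rate-from-spectral-density} integrates by parts $N$ times and gives \eqref{eq:optional-local-rate}. This requires the vanishing of $b^{(j)}$ at infinity, which I expect to follow from the $W^{N,1}$ assumption.
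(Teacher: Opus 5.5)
Your proposal is correct and is essentially the paper's own proof. The paper also obtains the uniform bound from Lemma~\ref{lem:bh-projected-estimate}, the local decay from Lemma~\ref{lem:bh-local-decay} (equivalently Proposition~\ref{prop:qualitative-decay-L1}), the absence of a rate from Remark~\ref{rem:no-rate-by-default} together with Proposition~\ref{prop:no-uniform-rate-L1}, and the conditional rate from Proposition~\ref{prop:rate-from-spectral-density}; your two caveats are accurate: Step~3 only shows that the $L^1_\sigma$ input yields no rate, and the vanishing of $b^{(j)}$ at infinity does follow from $b\in W^{N,1}$.
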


\begin{proof}
The uniform energy/local-energy estimate is Lemma~\ref{lem:bh-projected-estimate}.  The local decay statement is Lemma~\ref{lem:bh-local-decay}, or equivalently Proposition~\ref{prop:qualitative-decay-L1}.  The absence of a rate follows from Remark~\ref{rem:no-rate-by-default} and Proposition~\ref{prop:no-uniform-rate-L1}.  If \eqref{eq:extra-rate-hypothesis} is added, Proposition~\ref{prop:rate-from-spectral-density} gives \eqref{eq:optional-local-rate}.
\end{proof}

\subsection{Subextremal versus extremal horizons}

The nondegenerate exterior package is fundamentally subextremal.  Its red-shift estimate uses positive surface gravity.  In the extremal Kerr-Newman case, the horizon charge \eqref{eq:main-extremal-charge} obstructs the simultaneous decay of tangential horizon quantities and the averaged transversal derivative.  Thus the exterior decay story has three separate branches:
\begin{enumerate}[label=(\roman*),leftmargin=2.2em]
\item compact reflecting slabs: boundedness modulo \(1,t\), but no general decay;
\item nondegenerate black-hole exteriors satisfying \textbf{BH1}-\textbf{BH9}: projected energy/local-energy control and qualitative compact local decay;
\item extremal Kerr-Newman: a conserved Aretakis-type horizon charge and a transversal non-decay obstruction, not a subextremal red-shift decay theorem.
\end{enumerate}
This trichotomy is essential for the consistency of the paper.  It prevents the compact theorem from being overstated as a decay theorem and prevents the subextremal black-hole theorem from being incorrectly continued to \(\kappa_+=0\).

\section{Conclusion and open problems}\label{sec:conclusion}

In this paper, we isolated and solved the \(k=0\) conformal part of the linearized scalar-curvature problem associated with the constant-scalar-curvature Carter family of Theorem~1.  The core result is a positive-energy bounded-slab theorem: after the full zero-frequency threshold space is separated, the conformal scalar evolution is uniformly bounded on the complementary energy space.  On strictly stationary Carter slabs the threshold space is exactly \(\operatorname{span}\{1,t\}\), and in the axisymmetric sector this statement has a self-adjoint spectral refinement and an explicit zero-frequency resolvent expansion.

The black-hole exterior part is proved after the bounded-slab and endpoint analyses, with all proof-driving exterior assumptions collected in Appendix~\ref{app:all-inputs}.  The nondegenerate exterior theorem is not obtained by compactness or by a hidden positivity argument; it follows from the explicit package \textbf{BH1}-\textbf{BH9}, which records the red-shift, local-energy, mode-stability, limiting-absorption, high-frequency, and zero-frequency requirements.  The main exterior list is now genuinely organized by black-hole family: subextremal Kerr and subextremal Reissner-Nordstr\"om are radiation-space stability theorems with trivial decaying zero threshold; extremal Kerr and extremal Reissner-Nordstr\"om are horizon-charge obstruction theorems; the slowly rotating weakly charged Kerr-Newman wall family is verified internally; and extremal Kerr-Newman is treated by a direct charge identity. 

Several features of the \(k=0\) regime are essential.  First, there is no coercive zeroth-order term, so the zero-frequency space must be identified explicitly before a stability statement can be true.  Second, the non-Einstein defect of the imported metric family does not obstruct the conformal reduction; it drops out of the scalar-curvature linearization on conformal perturbations.  Third, the non-axisymmetric bounded-slab theorem is already handled by the abstract energy argument, whereas exterior horizons, ergoregions, trapping, and infinity require the separate scattering package.

The exact status of the result is therefore as follows, with all proof-driving hypotheses and external inputs in Appendix~\ref{app:all-inputs}.  In the wall exterior proof the sign-sensitive Hamiltonian commutator is displayed explicitly in \eqref{eq:exact-Hp-b-xir}; this removes a common fatal sign ambiguity in radial Morawetz arguments.  The bounded-slab Carter theorem is internal to the paper after the metric family of \cite{AssafariGunara} is taken as the geometric input.  The Kerr and Reissner-Nordstr\"om subextremal main theorems use the standard scalar packages recorded in External Assumption~\ref{ass:external-kerr-rn-packages} plus the direct zero-frequency classifications proved here.  The Kerr-Newman wall theorem is internal for the stated small \((a,Q)\) collar with reflecting wall below the photon region.  The full asymptotically flat subextremal Kerr-Newman appendix theorem is conditional since External Assumption~\ref{ass:external-kn-scattering-package} is not reproved here.  The extremal Kerr, extremal Reissner-Nordstr\"om, and extremal Kerr-Newman statements are sharp obstruction theorems, not decay theorems: their first conserved horizon charges are proved directly, while Aretakis is invoked only for generic higher-order blow-up mechanisms.  These distinctions are essential for mathematical accuracy and prevent the exterior theorem from being read either as an independent reproof of the full Kerr-Newman scattering theorem or as a false nondegenerate stability theorem at extremality.

The main remaining problems are to verify the same black-hole package for non-Einstein Carter exteriors, to extend the exterior result beyond the neutral axisymmetric scalar sector where superradiant frequencies enter, and to decide whether a physically preferred Einstein-matter completion produces a closed tensorial linearized system compatible with the conformal threshold structure isolated here.

\clearpage
\appendix
\appendixsectionformat

\section{Schwarzschild and full subextremal Kerr-Newman appendical applications}\label{app:schwarzschild-full-kn}

This appendix contains two results that are important for orientation but are not counted among the principal main theorems.  Schwarzschild is the simplest model, nonrotating and uncharged endpoint of both Kerr and Reissner-Nordstr\"om.  The full asymptotically flat subextremal Kerr-Newman statement is also placed here because the global trapping and limiting-absorption theory must be supported  through External Assumption~\ref{ass:external-kn-scattering-package}; the main body proves the conformal reduction, zero-frequency classification, and package assembly, but it does not reprove the full Kerr-Newman scalar scattering theorem.

\begin{theorem}[Schwarzschild exterior application]\label{thm:schwarzschild-appendix}
Let \(M>0\), and let
\begin{equation}
 g_M=-\left(1-\frac{2M}{r}\right)\dd t^2+\left(1-\frac{2M}{r}\right)^{-1}\dd r^2+r^2\dd\omega^2
\end{equation}
be the Schwarzschild exterior metric on \(r>2M\).  For the neutral conformal scalar equation
\[
 \Box_{g_M}u=0,
\]
with future-horizon regularity and outgoing decaying finite-energy radiation condition at null infinity, the decaying radiation energy space has no zero-frequency threshold:
\[
 \mathcal T_{0,{\rm Schw}}=\{0\},
 \qquad
 \Pi_{0,{\rm Schw}}=0.
\]
For every \(s\ge s_{\rm Schw}\), every datum \(U_0\in\mathcal E^s_{\rm Schw}\) generates a unique future solution satisfying
\[
\sup_{t\ge0}\|U(t)\|_{\mathcal E^s_{\rm Schw}}
+
\|U\|_{LE^s_{\rm Schw}([0,\infty))}
\le C_s\|U_0\|_{\mathcal E^s_{\rm Schw}}.
\]
Moreover \(U(t)\) decays locally in \(H^1\times L^2\) on compact exterior subregions, and there is no nonzero outgoing mode with \(\operatorname{Im}\sigma>0\).
\end{theorem}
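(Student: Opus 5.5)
The plan is to obtain Theorem~\ref{thm:schwarzschild-appendix} as the endpoint specialization of the machinery already assembled for Kerr and Reissner-Nordstr\"om, rather than as a new argument. Schwarzschild is the common endpoint \(a=0\) of Kerr and \(Q=0\) of Reissner-Nordstr\"om. We will realize \(g_M\) as \(g_{M,0,0}\) in \eqref{eq:kn-af-metric}, with \(\Delta=r^2-2Mr=r(r-2M)\), \(r_+=2M\), \(r_-=0\), and surface gravity \(\kappa_+=1/(4M)>0\). The proof then follows the same three steps used in Theorem~\ref{thm:kerr-rn-verifies-bh}: geometric/domain items, imported scalar estimates, and a direct zero-frequency classification. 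After these, Theorem~\ref{thm:axisymmetric-bh} gives the conclusion.

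The first step verifies \textbf{BH1}-\textbf{BH3} and \textbf{BH9}. This is Lemma~\ref{lem:kn-af-geometry-domain} specialized to \(a=Q=0\), and it goes through because the horizon root \(r=2M\) is simple and the ingoing Eddington-Finkelstein chart removes the \(\Delta^{-1}\) singularity. The Hardy-weighted norm \eqref{eq:kn-af-energy-norm} defines \(\mathcal E^s_{\rm Schw}\). The second step takes the red-shift, integrated local energy (with the photon-sphere loss at \(r=3M\)), mode stability, high-frequency, limiting-absorption and spectral-density clauses from the Schwarzschild case of the scalar packages, External Assumption~\ref{ass:external-kerr-rn-packages}. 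These are exactly the classical Dafermos-Rodnianski estimates, and they supply \textbf{BH4}-\textbf{BH7}. As in the Reissner-Nordstr\"om case, spherical symmetry lets us sum over all \(Y_{\ell m}\), so no axisymmetry restriction is needed. For the no-mode statement we can alternatively argue directly, mode by mode, through the signed-flux identity of Proposition~\ref{prop:kn-axisymmetric-flux} with \(a=0\). That identity has no superradiant shift even for \(m\ne0\), because \(\Omega_H=0\).

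The third step is the zero-frequency classification \textbf{BH8}, which we carry out directly. At \(\sigma=0\) each harmonic satisfies \((\Delta R_\ell')'-\ell(\ell+1)R_\ell=0\). With \(x=(r-M)/M\) this becomes the Legendre equation. The \(Q_\ell\) branch is logarithmically singular at \(x=1\), so horizon regularity removes it. The \(P_\ell\) branch grows like \(r^\ell\) for \(\ell\ge1\), and for \(\ell=0\) it is the nondecaying constant; the Hardy term excludes both. Alternatively, we can use the integration-by-parts argument of Proposition~\ref{prop:kn-zero-frequency-full}. Generalized zero states are excluded as in Lemma~\ref{lem:kn-af-zero-frequency}: with no zero state, no Jordan chain can start, and \(u_t=1\notin L^2\) at infinity. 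This gives \(\Pi_{0,{\rm Schw}}=0\). Applying Theorem~\ref{thm:axisymmetric-bh}, with zero threshold term and with the axisymmetry restriction lifted by harmonic decoupling, yields the energy/local-energy bound, compact local decay via Lemma~\ref{lem:bh-local-decay}, and mode exclusion.

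The main obstacle is conceptual bookkeeping rather than computation. The compact parameter sets in Theorems~\ref{thm:kerr-subext-main} and~\ref{thm:rn-subext-main} deliberately exclude \(a=0\) and \(Q=0\), so we cannot quote them literally. We will instead invoke the Schwarzschild clause of the external package directly, and we will check that the summation over all harmonics keeps the constants uniform in \(\ell\). That uniformity in \(\ell\) is exactly what the physical-space Schwarzschild estimates provide.
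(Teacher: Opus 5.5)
Your proposal is correct and follows essentially the paper's own proof. You import the Schwarzschild clause of External Assumption~\ref{ass:external-kerr-rn-packages} for the red-shift, Morawetz, limiting-absorption and mode-stability estimates, and you classify the zero-frequency harmonics by the horizon-regular versus decaying Legendre branches to get \(\Pi_{0,{\rm Schw}}=0\). You then conclude by the argument of Theorem~\ref{thm:axisymmetric-bh}. One small caveat concerns your optional direct no-mode route through Proposition~\ref{prop:kn-axisymmetric-flux}: it only treats real \(\sigma\ne0\), since the radial Wronskian is not conserved for complex \(\sigma\). Exclusion of modes with \(\operatorname{Im}\sigma>0\) should therefore come from the package, or from the energy identity as in Lemma~\ref{lem:kn-mode-stability}; your main line, which uses the package, already covers this.
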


\begin{proof}
The Schwarzschild red-shift, Morawetz, limiting-absorption, and mode-stability estimates are included in the endpoint clause of External Assumption~\ref{ass:external-kerr-rn-packages}; alternatively they are the classical scalar wave estimates on Schwarzschild.  The zero-frequency classification is elementary.  After expanding in spherical harmonics, the stationary equation is
\begin{equation}
 \bigl(r(r-2M)R_\ell'\bigr)'-\ell(\ell+1)R_\ell=0.
\end{equation}
For \(\ell\ge1\), the horizon-regular solution grows polynomially at infinity and the second independent solution is singular at the horizon; for \(\ell=0\), the horizon-regular solution is constant and the second branch has logarithmic horizon singularity.  The decaying radiation energy excludes the nonzero constant and all growing branches.  Hence the threshold projection is zero.  The exterior package then gives the displayed energy/local-energy estimate and local decay by the same argument as Theorem~\ref{thm:axisymmetric-bh}.
\end{proof}

\begin{theorem}[Full subextremal Kerr-Newman exterior application under the scalar-wave package]\label{thm:kn-full-main}
Assume the scalar Kerr-Newman red-shift, integrated-local-energy, quantitative mode-stability, high-frequency resolvent, limiting-absorption, smoothing, and local spectral-density input package stated in Appendix~\ref{app:all-inputs}, External Assumption~\ref{ass:external-kn-scattering-package}.  Fix a compact subextremal parameter set
\[
 \mathscr K_{\delta,M_0,M_1}:=\bigl\{(M,a,Q): M_0\le M\le M_1,\ a^2+Q^2\le (1-\delta)M^2\bigr\},
 \qquad 0<\delta<1.
\]
For every \((M,a,Q)\in\mathscr K_{\delta,M_0,M_1}\), let \(g_{M,a,Q}\) be the asymptotically flat Kerr-Newman metric on the domain of outer communications.  Consider the neutral axisymmetric conformal scalar equation
\[
 \Box_{g_{M,a,Q}}u=0,
 \qquad \partial_\phi u=0,
\]
with future-horizon regularity at \(r=r_+\) and the outgoing finite-energy radiation condition at null infinity.  In the decaying radiation energy space \(\mathcal E^s_{{\rm KN},\rm af}\) used in Section~\ref{sec:ledger}, every solution admits the decomposition
\[
 U(t)=U_{\rm disp}(t),
 \qquad \Pi_{0,{\rm KN},\rm af}=0,
\]
and, for every \(s\ge s_{\rm KN}\),
\[
\sup_{t\ge0}\|U_{\rm disp}(t)\|_{\mathcal E^s_{{\rm KN},\rm af}}
+
\|U_{\rm disp}\|_{LE^s_{{\rm KN},\rm af}([0,\infty))}
\le C_s\|U_0\|_{\mathcal E^s_{{\rm KN},\rm af}}.
\]
Moreover \(U_{\rm disp}\) decays locally in \(H^1\times L^2\) on every compact subregion of the domain of outer communications, and there is no nonzero outgoing axisymmetric mode with \(\operatorname{Im}\sigma>0\).
\end{theorem}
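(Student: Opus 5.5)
The plan is to reduce Theorem~\ref{thm:kn-full-main} to the abstract exterior result, Theorem~\ref{thm:axisymmetric-bh}. Most of that reduction is already packaged in Theorem~\ref{thm:kn-af-verifies-bh}. The argument therefore has three steps: verify the package, identify the threshold, and apply the abstract theorem.

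First, fix $(M,a,Q)\in\mathscr K_{\delta,M_0,M_1}$ and invoke Theorem~\ref{thm:kn-af-verifies-bh}. This gives \textbf{BH1}--\textbf{BH9} on $\mathcal E^s_{{\rm KN},\rm af}$ for $s\ge s_{\rm KN}$, obtained in three pieces:
\begin{enumerate}[label=(\roman*)]
\item \textbf{BH1}--\textbf{BH3} and \textbf{BH9} come from Lemma~\ref{lem:kn-af-geometry-domain}. It uses the simple root $r_+$, the bound $\kappa_+>0$, the ingoing chart, and the Hardy-weighted decaying energy.
\item \textbf{BH4}--\textbf{BH7} come from Lemma~\ref{lem:kn-af-estimates}. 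This is the only place External Assumption~\ref{ass:external-kn-scattering-package} enters, through its clauses EKN2--EKN9.
\item \textbf{BH8} comes from Lemma~\ref{lem:kn-af-zero-frequency}. There, separation at $\sigma=0$ reduces the problem to the Legendre equation in $x=(r-M)/\alpha$. Horizon regularity excludes the $Q_\ell$ branch, and decay together with the $r^{-2}|u|^2$ term excludes the $P_\ell$ branch and the constant. This gives $\mathcal T_{0,{\rm KN},\rm af}=\{0\}$ and $\Pi_{0,{\rm KN},\rm af}=0$. Proposition~\ref{prop:kn-zero-frequency-full} gives the same conclusion by an energy-identity argument.
\end{enumerate}

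Second, apply Theorem~\ref{thm:axisymmetric-bh}. Because the projection vanishes, Lemma~\ref{lem:bh-threshold-calculus} shows the threshold term is absent, so $U(t)=U_{\rm disp}(t)=e^{t\mathcal G}U_0$. Lemma~\ref{lem:bh-projected-estimate}, equivalently Proposition~\ref{prop:bh-compact-error-removal}, then yields the uniform energy and local-energy bound with $(I-\Pi)U_0=U_0$. Lemma~\ref{lem:bh-local-decay} gives compact local decay from the $L^1_\sigma$ density via Riemann--Lebesgue, and \textbf{BH5} excludes outgoing modes with $\operatorname{Im}\sigma>0$. Uniqueness of the future solution is part of the strongly continuous semigroup supplied by \textbf{BH3}.

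Third, establish uniformity of $C_s$ over the compact parameter set. The constants coming from the external package are uniform by its uniformity clause, and the geometric constants depend continuously on $\kappa_+$, which is bounded below on $\mathscr K_{\delta,M_0,M_1}$.

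The main obstacle is the consistency of function spaces between the external package and the zero-frequency step. The low-frequency regularity of the resolvent at $\sigma=0$ must hold on the same Hardy-weighted decaying space where $\Pi=0$ was proved, since only then is the spectral density integrable near zero. I would address this by checking that a finite-rank Laurent singular coefficient would produce a decaying regular stationary state or a Jordan companion; both are excluded, so the resolvent is bounded at zero on that space. The affine candidate $t+\psi$ is ruled out directly because $u_t=1$ is not in $L^2$ at the asymptotically flat end.
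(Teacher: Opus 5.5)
Your proposal is correct and follows essentially the same route as the paper: you verify \textbf{BH1}--\textbf{BH9} through Theorem~\ref{thm:kn-af-verifies-bh} (geometry/domain lemma, the external-package transfer lemma, and the Legendre zero-frequency classification giving $\Pi_{0,{\rm KN},\rm af}=0$), then apply Theorem~\ref{thm:axisymmetric-bh} with the threshold term absent. Your resolution of the zero-frequency function-space consistency, where a singular Laurent coefficient would yield an excluded decaying stationary state or Jordan companion and the affine candidate $t+\psi$ fails because $u_t=1\notin L^2$, is exactly the closing argument of Lemma~\ref{lem:kn-af-zero-frequency}.
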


\begin{proof}
The compact subextremal condition is exactly the domain of External Assumption~\ref{ass:external-kn-scattering-package}.  Lemma~\ref{lem:kn-af-geometry-domain} fixes the horizon-regular and outgoing-radiation energy realization.  Lemma~\ref{lem:kn-af-estimates} transfers the red-shift, trapping, mode-stability, high-frequency, limiting-absorption, smoothing, and local spectral-density estimates from the external scalar package to the neutral axisymmetric conformal scalar.  Lemma~\ref{lem:kn-af-zero-frequency}, equivalently Proposition~\ref{prop:kn-zero-frequency-full}, proves that no decaying zero-frequency threshold state remains in the radiation space, so \(\Pi_{0,{\rm KN},{\rm af}}=0\).  Theorem~\ref{thm:kn-af-verifies-bh} verifies \textbf{BH1}-\textbf{BH9}, and Theorem~\ref{thm:axisymmetric-bh} gives the displayed estimate and local decay.  Uniformity holds on compact subextremal parameter sets and is not asserted at extremality.
\end{proof}

\begin{remark}[Status of the full subextremal Kerr-Newman appendix theorem]
The word ``application'' is intentional: the global noncompact scalar scattering estimates are imported through External Assumption~\ref{ass:external-kn-scattering-package}.  The internal contributions are the conformal reduction, horizon/radiation domain matching, zero-frequency classification, threshold projection, and assembly of the stability statement.  Constants are uniform on compact subextremal parameter sets but may degenerate as \(a^2+Q^2\uparrow M^2\).
\end{remark}

\section{Formal hypotheses and external packages}\label{app:all-inputs}\label{app:assumptions}

This appendix is the single formal location of all assumptions, hypotheses, and theorem-level external inputs used in the manuscript.  The main body either proves the relevant package internally or cites one of the entries below.  The full asymptotically flat Kerr-Newman scalar scattering package is deliberately stated as an external assumption, not as an internally proved theorem.


This appendix subsection is the single formal ledger for the proof-driving assumptions and theorem-level external inputs.  The dependency graph is deliberately short: bounded slabs use \textbf{H1}-\textbf{H10}; nondegenerate exteriors use \textbf{BH1}-\textbf{BH9}; the full asymptotically flat subextremal Kerr-Newman application uses External Assumption~\ref{ass:external-kn-scattering-package}; and the extremal Kerr-Newman branch uses the horizon-charge computation proved later plus External Assumption~\ref{ass:aretakis-input}.  The main body cites this appendix whenever a formal hypothesis or external package is used.

\begin{assumption}[Abstract stationary threshold conditions]\label{ass:abstract}
The abstract bounded-slab theorem, Theorem~\ref{thm:abstractmain}, uses the following three hypotheses and no geometry-specific input:
\begin{enumerate}[label=(\roman*),leftmargin=2.2em]
\item the stationary generator \(\mathcal G\) generates a strongly continuous group \(e^{t\mathcal G}\) on the energy space \(\mathcal X\);
\item the threshold space \(\mathcal T_0=\bigcup_{N\ge1}\Ker\mathcal G^N\) is finite dimensional and has a bounded projection \(\Pi_{\mathrm{thr}}\) commuting with the group,
\[
 e^{t\mathcal G}\Pi_{\mathrm{thr}}=\Pi_{\mathrm{thr}}e^{t\mathcal G};
\]
\item the conserved energy \(\mathscr E\) is equivalent to the \(\mathcal X\)-norm on \(\Ker\Pi_{\mathrm{thr}}\):
\begin{equation}
 c\|U\|_{\mathcal X}^2\le \mathscr E[U]\le C\|U\|_{\mathcal X}^2,
 \qquad U\in\Ker\Pi_{\mathrm{thr}}.
\end{equation}
\end{enumerate}
\end{assumption}

\begin{assumption}[Bounded-slab conditions]\label{ass:complete-list}
The Carter bounded-slab theorems use the following ten conditions.  The first five are the ones cited directly in Theorem~\ref{thm:fullmain}; the remaining items identify the auxiliary spectral, endpoint, and scope conventions.
\begin{enumerate}[label=\textbf{H\arabic*.},labelwidth=2.4em,labelsep=.45em,leftmargin=2.95em,align=left]
\item \textbf{Imported background and zero scalar curvature.}
The metric is the Carter-form constant-scalar-curvature family of \cite{AssafariGunara}, with coefficients \eqref{eq:Deltar}-\eqref{eq:Deltax}, and every stability theorem imposes \(k=0\), so \(\Delta_r\) and \(\Delta_x\) are the quadratics \eqref{eq:k0quadratics}.  No stability theorem is imported with the metric family.

\item \textbf{Regular bounded product slab.}
The slab is
\[
 \Omega=[r_-,r_+]\times[x_-,x_+],\qquad \Sigma=S^1_\phi\times\Omega,
\]
contained in a smooth coordinate patch with
\[
 \rho^2\ge\rho_-^2>0,
 \qquad \Delta_r\ge d_{r,-}>0,
 \qquad \Delta_x\ge d_{x,-}>0
\]
on \(\overline\Omega\).  Horizons, true axes, and the set \(\rho^2=0\) are not part of this interior slab theorem unless a separate regular form realization is built.

\item \textbf{Positive Killing energy.}
The time coefficient
\[
 A:=-\rho^2g^{tt}
\]
satisfies \(A\ge a_->0\).  In the full \(\phi\)-dependent theorem the azimuthal coefficient \(\Phi:=\rho^2g^{\phi\phi}\) also satisfies \(\Phi\ge\phi_->0\).  In the axisymmetric refinement \(\partial_\phi u=0\), the \(\Phi\)-term is absent.

\item \textbf{Closed reflecting form realization.}
The full domain \(V_{\rm full}\subset H^1(\Sigma)\) is a closed, conjugation-invariant, \(\phi\)-periodic form domain containing the constant function \(1\).  The spatial form is
\begin{equation}
q[u,\eta]=\int_\Sigma\left(\Phi u_\phi\overline{\eta_\phi}
 +\Delta_r u_r\overline{\eta_r}
 +\Delta_x u_x\overline{\eta_x}\right).
\end{equation}
Reflecting boundary conditions mean this form realization; homogeneous Dirichlet is a different coercive problem and is not the threshold theorem studied here.

\item \textbf{Energy-space convention.}
Hilbert spaces are complex unless a real subspace is explicitly specified, sesquilinear forms are linear in the first argument, and the energy space is \(V_{\rm full}\times L^2(\Sigma)\).  The mixed \(B\partial_{t\phi}\) term is used only through the skew-Hermitian identity proved in Section~\ref{sec:strictstationary}.

\item \textbf{Axisymmetric Friedrichs realization.}
For Theorem~\ref{thm:mainresult}, the axisymmetric form domain \(V_{\rm ax}\subset H^1(\Omega)\) is the reflecting version of \textbf{H4}, and the spatial operator is the Friedrichs operator associated with
\[
q_0[u]=\int_\Omega(\Delta_r|u_r|^2+\Delta_x|u_x|^2)\,\dd r\,\dd x
\]
on \(L_A^2(\Omega)\).  Compact resolvent and the zero-mode classification are proved later.

\item \textbf{Separated endpoint convention.}
When the separated ODE discussion reaches a radial or angular endpoint, the admissible branch is the one for which the separated mode
\[
 u=e^{-\ii\sigma t+\ii m\phi}R(r)S(x)
\]
extends smoothly in a regular endpoint chart; at a simple endpoint this is the regular Frobenius branch.  Appendix~\ref{app:regularsingular} proves the corresponding Wronskian cancellation.  The bounded-slab energy theorem itself is formulated through closed form domains and does not require a pointwise endpoint trace.

\item \textbf{Standard tools.}
The paper uses standard functional analysis without reproof: Riesz representation, closed-form representation, Banach-Alaoglu, Rellich compactness on bounded Lipschitz domains, the Lions-Magenes weak-continuity lemma, Sobolev trace/density facts, the Banach-valued Riemann-Lebesgue lemma, elementary ODE theory, and the compact self-adjoint spectral theorem.

\item \textbf{Exterior boundary.}
Horizons, ergoregions, noncompact ends, and radiation conditions are not bounded-slab hypotheses.  They enter only through the exterior package \textbf{BH1}-\textbf{BH9} and the Kerr-Newman-specific regimes below.

\item \textbf{Scope.}
The closed equation is the neutral conformal scalar equation \(h_{\mu\nu}=2ug_{\mu\nu}\), \(\Box_g u=0\) for \(k=0\).  No main theorem claims nonlinear stability, full tensorial gravitational stability, charged-scalar stability, or non-axisymmetric Kerr-Newman exterior stability.
\end{enumerate}
\end{assumption}

\begin{assumption}[Axisymmetric black-hole exterior package]\label{ass:axisymmetric-bh-package}
Fix an integer \(s_0\ge0\).  The axisymmetric exterior problem is said to satisfy the verified black-hole package if the following conditions hold for every integer \(s\ge s_0\).
\begin{enumerate}[label=\textbf{BH\arabic*.},labelwidth=3.2em,labelsep=.45em,leftmargin=3.6em,align=left]

  \item \textbf{Exterior geometry and nondegenerate horizons.}
  The spacetime is a stationary axisymmetric exterior region of a \(k=0\) member of the Carter-type family.  The exterior hypersurface intersects each future event horizon in a regular way.  Each horizon endpoint is a simple root of \(\Delta_r\) with positive surface gravity, and in a horizon-regular coordinate system the coefficients of \(\Box_g\) extend smoothly up to the future horizon.  The theorem is restricted to the axisymmetric subspace \(\partial_\phi u=0\).

  \item \textbf{Outer/asymptotic boundary condition.}
  Every noncompact, conformal, or finite radial end is equipped with one fixed admissible boundary condition for both the evolution and the stationary resolvent.  In an asymptotically flat or de Sitter end this means the outgoing/radiation condition.  In a reflecting anti-de Sitter type, finite conformal, or finite wall end this means a closed boundary form, such as the Neumann wall used below.  The condition is energy compatible: it creates no incoming forward flux and is invariant under time translation and axisymmetry.

  \item \textbf{Exterior Hilbert scale, generator, and density.}
  There are Hilbert spaces \(\mathcal E^s_{\rm bh}\) for Cauchy data \(U=(u,u_t)\), a closed generator \(\mathcal G_{\rm bh}\) with domain \(\Dom(\mathcal G_{\rm bh})\subset \mathcal E^s_{\rm bh}\), and a strongly continuous future evolution \(e^{t\mathcal G_{\rm bh}}\).  Smooth axisymmetric data satisfying the boundary condition and regular at each future horizon are dense in the graph norm appropriate to \(\mathcal E^s_{\rm bh}\).  The equation \(\Box_g u=F\) is equivalent to the first-order system
  \begin{equation}
  \partial_tU=\mathcal G_{\rm bh}U+\mathcal F,
  \end{equation}
  with the usual source vector \(\mathcal F=(0,F)\) in the corresponding dual local-energy space.

  \item \textbf{Finite-time red-shift and local-energy estimate with compact error.}
  There are local-energy norms \(LE^s\) and dual norms \(LE^{s,*}\), including nondegenerate red-shift control near each horizon, and there is a compactly supported cutoff \(\chi_{\rm c}\) in the interaction region, such that every smooth solution of \(\Box_g u=F\) on \([0,T]\) satisfies
  \begin{equation}
  \begin{aligned}
  &\sup_{0\le t\le T}\|U(t)\|_{\mathcal E^s_{\rm bh}}^2
  +\|U\|_{LE^s([0,T])}^2 \\
  &\qquad\le C_s\Bigl(
  \|U(0)\|_{\mathcal E^s_{\rm bh}}^2
  +\|F\|_{LE^{s,*}([0,T])}^2 \\
  &\qquad\qquad
  +\|\chi_{\rm c}U\|_{\substack{L^2([0,T];\\ H^s_{\rm loc}\times H^{s-1}_{\rm loc})}}^2
  \Bigr).
  \end{aligned}
  \label{eq:redshift-package-bound}
  \end{equation}
  The constant \(C_s\) is independent of \(T\).  This is the finite-time red-shift/Morawetz input used in the proof.  It is not the final dispersive estimate because the compact local term remains.  That compact term is removed below by the projected limiting-absorption/smoothing input in \textbf{BH7}.

  \item \textbf{Finite-frequency mode stability and resolvent estimate away from zero.}
  Let \(P(\sigma)\) be the outgoing stationary operator obtained by substituting \(D_t=\sigma\) in \(\Box_g\).  If \(\operatorname{Im}\sigma>0\), then there is no nonzero outgoing solution of \(P(\sigma)v=0\) satisfying the horizon and asymptotic or wall boundary conditions.  If \(I\Subset\R\setminus\{0\}\) is compact and \(\chi\prec\chi_1\) are spatial cutoffs, then the limiting resolvent exists and obeys
  \begin{equation}
  \sup_{\sigma\in I,\ 0<\epsilon\le1}
  \|\chi P(\sigma+\ii\epsilon)^{-1}\chi f\|_{H^{s+1}_{\rm loc}}
  \le C_{I,s}\|f\|_{H^s_{\rm comp}}.
  \label{eq:freq-mode-package}
  \end{equation}
  Equivalently, there are no real outgoing resonances on \(I\), and the finite-frequency resolvent construction has no hidden compact obstruction at those frequencies.

  \item \textbf{High-frequency propagation and trapping control.}
  For \(|\sigma|\ge\sigma_0\) the outgoing resolvent satisfies the high-frequency propagation estimate appropriate to the exterior geometry.  In a nontrapping exterior this is a nontrapping semiclassical estimate.  If the trapped set is present, it must be controlled by an explicitly stated trapping theorem.  In the notation of the Hilbert scale this means that, after choosing \(s_0\) large enough to absorb any loss, one has
  \begin{equation}
  \|\chi P(\sigma+\ii0)^{-1}\chi f\|_{H^{s+1}_{\rm loc}}
  \le C_s\langle\sigma\rangle^{N_s}\|f\|_{H^s_{\rm comp}},
  \qquad |\sigma|\ge\sigma_0,
  \label{eq:high-frequency-package}
  \end{equation}
  with the polynomial power \(N_s\) compatible with the limiting-absorption and local-energy estimates below.

  \item \textbf{Projected limiting absorption, compact smoothing, and dispersive representation.}
  The real-axis boundary values from \textbf{BH5}-\textbf{BH6} are compatible with the evolution, and the zero-frequency singular part from \textbf{BH8} has been removed.  More precisely, for the homogeneous projected evolution
  \begin{equation}
  U_\perp(t):=e^{t\mathcal G_{\rm bh}}(I-\Pi_{0,{\rm bh}})U_0,
  \end{equation}
  the compact interaction-region smoothing estimate
  \begin{equation}
  \|\chi_{\rm c}U_\perp\|_{L^2([0,\infty);H^s_{\rm loc}\times H^{s-1}_{\rm loc})}
  \le C_s\|(I-\Pi_{0,{\rm bh}})U_0\|_{\mathcal E^s_{\rm bh}}
  \label{eq:lap-smoothing-package}
  \end{equation}
  holds, where \(\chi_{\rm c}\) is the compact cutoff in \textbf{BH4}.  In addition, for every compact \(K\) away from any imposed outer boundary there is an operator-valued density \(B_K(\sigma)\) such that
  \begin{equation}
  \chi_KU_\perp(t)=\int_\R e^{-\ii t\sigma}B_K(\sigma)(I-\Pi_{0,{\rm bh}})U_0\,\dd\sigma
  \label{eq:local-spectral-density}
  \end{equation}
  in \(H^1(K)\times L^2(K)\), and
  \begin{equation}
  \|B_K(\cdot)(I-\Pi_{0,{\rm bh}})U_0\|_{L^1(\R;H^1(K)\times L^2(K))}
  \le C_{K,s}\|(I-\Pi_{0,{\rm bh}})U_0\|_{\mathcal E^s_{\rm bh}}.
  \label{eq:spectral-density-L1}
  \end{equation}
  Thus \textbf{BH7} supplies exactly the two noncompact consequences needed below: a smoothing bound that removes the compact error in \textbf{BH4}, and a local absolutely continuous spectral representation that gives local decay.

  \item \textbf{Zero-frequency threshold control.}
  In a punctured neighborhood of \(\sigma=0\), the cutoff outgoing resolvent on the chosen exterior energy realization has a finite-rank Laurent expansion, possibly with no singular coefficient after the radiation/decay condition has removed the constant threshold:
  \begin{equation}
  \begin{aligned}
  \chi P(\sigma)^{-1}\chi
  &=\sum_{j=1}^{\nu_0}\sigma^{-j}A_j+R_0(\sigma),\qquad \nu_0\ge0,\\
  R_0(\sigma)&\text{ bounded between the local spaces used above},
  \end{aligned}
  \label{eq:zero-laurent-bh}
  \end{equation}
  with each \(A_j\) finite rank when \(\nu_0>0\).  The singular coefficients, or in the regular decaying case the explicit splitting of the zero threshold space, and the corresponding adjoint threshold functionals determine a bounded finite-rank data-space projection
  \begin{equation}
  \Pi_{0,{\rm bh}}U
  =\sum_{\alpha=1}^d\ell_\alpha(U)V_\alpha,
  \qquad
  V_\alpha\in\mathcal E^s_{\rm bh},\quad
  \ell_\alpha\in(\mathcal E^s_{\rm bh})',
  \label{eq:bh-threshold-proj}
  \end{equation}
  satisfying
  \begin{equation}
  \begin{aligned}
  \Pi_{0,{\rm bh}}^2&=\Pi_{0,{\rm bh}},\\
  \Ran\Pi_{0,{\rm bh}}&=
  \mathcal T_{0,{\rm bh}}:=\bigcup_{N\ge1}\Ker\mathcal G_{\rm bh}^{N},\\
  e^{t\mathcal G_{\rm bh}}\Pi_{0,{\rm bh}}&=
  \Pi_{0,{\rm bh}}e^{t\mathcal G_{\rm bh}}.
  \end{aligned}
  \end{equation}
  The restriction of \(\mathcal G_{\rm bh}\) to \(\mathcal T_{0,{\rm bh}}\) is nilpotent of finite length.  In dissipative future-horizon realizations this nilpotent block can be semisimple, as it is in the Kerr-Newman wall verification below; in the decaying full asymptotically flat Kerr-Newman realization the threshold space is trivial.

  \item \textbf{Compatibility of regularity, cutoffs, and domains.}
  The cutoffs used near horizons, in the compact interaction region, and near asymptotic or wall ends preserve the axisymmetric domain and the boundary condition modulo lower-order terms controlled by \eqref{eq:redshift-package-bound}-\eqref{eq:spectral-density-L1}.  Commutators with these cutoffs are bounded between the displayed spaces, and the derivative index \(s\ge s_0\) is chosen so that every loss in \textbf{BH6} is absorbed by the Hilbert scale.
\end{enumerate}
\end{assumption}

\begin{assumption}[External subextremal Kerr, Reissner-Nordstr\"om, and Schwarzschild scalar packages]\label{ass:external-kerr-rn-packages}
The main Kerr and Reissner-Nordstr\"om theorems and the appendix Schwarzschild theorem use the following standard scalar-wave packages, in the same package sense as Assumption~\ref{ass:axisymmetric-bh-package}.
\begin{enumerate}[label=\textbf{EKR\arabic*.},labelwidth=3.8em,labelsep=.45em,leftmargin=4.2em,align=left]
\item \textbf{Subextremal Kerr scalar package.}
On every compact genuine subextremal Kerr parameter set with \(0<|a|<M\), the neutral scalar wave equation on the domain of outer communications, restricted here to the axisymmetric sector, has a future-horizon regular and outgoing decaying radiation realization, a nondegenerate red-shift estimate, uniform energy boundedness, integrated local energy decay with the standard trapping loss, quantitative mode stability, high-frequency cutoff resolvent bounds, a limiting absorption principle, compact Kato smoothing, and an \(L^1_\sigma\) local spectral-density representation after the zero-frequency threshold is removed.  This is the package form of the scalar Kerr theory of Dafermos-Rodnianski and Dafermos-Rodnianski-Shlapentokh-Rothman, together with mode stability in the sense of Whiting and Shlapentokh-Rothman; see \cite{DafermosRodnianskiKerrI,DafermosRodnianskiShlapentokhRothman,Whiting,ShlapentokhRothmanMode}.

\item \textbf{Subextremal Reissner-Nordstr\"om scalar package.}
On every compact genuinely charged subextremal Reissner-Nordstr\"om parameter set with \(0<|Q|<M\), the neutral scalar wave equation has the analogous nondegenerate red-shift, uniform boundedness, integrated local energy, limiting-absorption, smoothing, spectral-density, and no-mode package.  Because the metric is spherically symmetric, the package is obtained by spherical-harmonic decomposition and summation and is not restricted to axisymmetry.  The constants are uniform on compact subextremal parameter sets and may degenerate at extremality.  This package is the exterior scalar Reissner-Nordstr\"om theory; the cited Franzen result is an interior boundedness input and reference point, while the exterior red-shift/Morawetz mechanism is the standard Dafermos-Rodnianski black-hole scalar theory specialized to Reissner-Nordstr\"om; see \cite{DafermosRodnianskiRedshift,DafermosRodnianskiSchwNote,FranzenRN}.

\item \textbf{Schwarzschild endpoint.}
The same package is available at \(a=Q=0\), with the photon sphere trapping loss at \(r=3M\), nondegenerate red-shift at \(r=2M\), and the standard asymptotically flat outgoing radiation condition.  In the present manuscript this endpoint is recorded only in Appendix~\ref{app:schwarzschild-full-kn}, because the revised main theorem list reserves the main body for genuine Kerr and genuine Reissner-Nordstr\"om.

\item \textbf{Low-frequency compatibility.}
The scalar packages above are used together with the direct zero-frequency classifications proved in this manuscript.  In the decaying radiation energy spaces, the nonzero constant solution is not an admissible data vector, and the direct ODE arguments in Section~\ref{subsec:kerr-rn-main-applications} and Appendix~\ref{app:schwarzschild-full-kn} exclude all regular decaying zero states and generalized zero companions.  Thus the data-space threshold projections in these realizations are trivial.
\end{enumerate}
\end{assumption}

\begin{assumption}[External full subextremal Kerr-Newman scalar scattering package]\label{ass:external-kn-scattering-package}
Fix a compact subextremal parameter set
\[
\mathscr K_{\delta,M_0,M_1}=\{(M,a,Q):M_0\le M\le M_1,
\ a^2+Q^2\le(1-\delta)M^2\},\qquad 0<\delta<1.
\]
The full asymptotically flat Kerr-Newman application in Theorem~\ref{thm:kn-full-main} assumes the following theorem-level scalar-wave package for the neutral scalar equation on the domain of outer communications, uniformly for \((M,a,Q)\in\mathscr K_{\delta,M_0,M_1}\).  There is an integer \(s_{\rm KN}\) and, for each \(s\ge s_{\rm KN}\), constants depending only on the compact parameter set, the cutoffs, and \(s\), such that the following hold.
\begin{enumerate}[label=\textbf{EKN\arabic*.},labelwidth=3.6em,labelsep=.45em,leftmargin=4.0em,align=left]
\item \textbf{Horizon-regular and outgoing realization.}
The future event horizon is treated in ingoing Kerr-Newman coordinates, the Cauchy hypersurface crosses the horizon regularly, and the asymptotically flat end is equipped with the outgoing finite-energy radiation condition.  The scalar evolution is a strongly continuous future semigroup on the red-shift/radiation Hilbert scale \(\mathcal E^s_{{\rm KN},{\rm af}}\), and smooth compactly supported axisymmetric data are dense in the decaying radiation energy.

\item \textbf{Nondegenerate red-shift and uniform energy boundedness.}
There is a future timelike red-shift multiplier \(N\), uniformly timelike on the compact subextremal set, such that homogeneous solutions obey
\begin{equation}
\sup_{0\le t\le T}E_N^s[u](t)
\le C_sE_N^s[u](0)
\label{eq:ekn-energy-boundedness}
\end{equation}
for all \(T\ge0\).  The constants may degenerate only when the compact subextremality assumption is lost.

\item \textbf{Integrated local energy decay with trapped-set loss.}
For solutions of \(\Box_{g_{M,a,Q}}u=F\), one has
\begin{equation}
\|u\|_{LE^s_{{\rm KN},{\rm af}}([0,T])}^2
\le C_s\Bigl(E_N^{s+q_{\rm trap}}[u](0)
+\|F\|_{LE^{s+q_{\rm trap},*}([0,T])}^2\Bigr),
\label{eq:ekn-iled}
\end{equation}
where \(q_{\rm trap}\) is the finite loss caused by the Kerr-Newman trapped set.  The norm is nondegenerate at the horizon, has the standard Morawetz degeneration at trapping, and has the standard asymptotically flat weights at infinity.

\item \textbf{Quantitative nonzero real-frequency mode stability.}
Let \(P_{\rm KN}(\sigma)\) be the outgoing stationary scalar operator.  For every compact interval \(I\Subset\mathbb R\setminus\{0\}\), the outgoing homogeneous problem
\[
P_{\rm KN}(\sigma)v=0,
\qquad \sigma\in I,
\]
has no nonzero solution satisfying future-horizon regularity and the outgoing radiation condition at infinity.  Moreover, the corresponding quantitative mode-stability constants are strong enough to give Fredholm invertibility and uniform cutoff resolvent bounds on \(I\).

\item \textbf{High-frequency semiclassical resolvent through trapping.}
For every compactly supported cutoff \(\chi\) and all \(|\sigma|\ge\sigma_0\), the outgoing boundary value satisfies
\begin{equation}
\|\chi P_{\rm KN}(\sigma\pm i0)^{-1}\chi f\|_{H^{s+1}_{\rm loc}}
\le C_s\langle\sigma\rangle^{N_s}
\|f\|_{H^{s+q_{\rm trap}}_{\rm comp}},
\label{eq:ekn-high-frequency-resolvent}
\end{equation}
with a finite polynomial loss \(N_s\) and finite derivative loss compatible with \eqref{eq:ekn-iled}.  This is the global trapped-set replacement for the nontrapping wall commutator.

\item \textbf{Limiting absorption on the real axis.}
For \(\sigma\in\mathbb R\setminus\{0\}\), the limits
\begin{equation}
\chi P_{\rm KN}(\sigma\pm i0)^{-1}\chi
:=\lim_{\epsilon\downarrow0}\chi P_{\rm KN}(\sigma\pm i\epsilon)^{-1}\chi
\label{eq:ekn-lap}
\end{equation}
exist between the weighted local Sobolev spaces used in \eqref{eq:ekn-high-frequency-resolvent}, depend continuously on \(\sigma\) away from zero, and have no real-axis pole or embedded resonance.

\item \textbf{Low-frequency compatibility with the decaying radiation space.}
Near \(\sigma=0\), the outgoing cutoff resolvent has a finite-rank expansion on the weighted spaces.  After the decaying radiation condition at null infinity is imposed, the spatial constant is not a data-space vector.  The only possible singular coefficients are therefore those associated with actual regular decaying zero-frequency states or Jordan companions; Lemma~\ref{lem:kn-af-zero-frequency} proves in the present paper that these do not occur in the neutral axisymmetric sector.  Consequently the projected full-exterior resolvent used in Theorem~\ref{thm:kn-full-main} is regular at zero.

\item \textbf{Kato smoothing and compact-error removal.}
For every compact interaction cutoff \(\chi_c\), the homogeneous projected evolution satisfies
\begin{equation}
\|\chi_c e^{t\mathcal G_{{\rm KN},{\rm af}}}U_0\|_{L^2([0,\infty);H^s_{\rm loc}\times H^{s-1}_{\rm loc})}
\le C_s\|U_0\|_{\mathcal E^{s+q_{\rm trap}}_{{\rm KN},{\rm af}}}.
\label{eq:ekn-kato-smoothing}
\end{equation}
This is the estimate used to remove the compact interaction-region error in the red-shift/Morawetz inequality.

\item \textbf{Local spectral-density representation and decay.}
For every compact \(K\) in the domain of outer communications and every integer \(N\ge0\), after choosing \(s\ge s(N)\) there is an operator-valued density \(B_K(\sigma)\) such that
\begin{equation}
\chi_KU(t)=\int_{\mathbb R}e^{-it\sigma}B_K(\sigma)U_0\,\dd\sigma
\label{eq:ekn-spectral-representation}
\end{equation}
in \(H^1(K)\times L^2(K)\), and
\begin{equation}
\sum_{j=0}^{N}
\|\partial_\sigma^jB_K(\cdot)U_0\|_{L^1_\sigma(H^1(K)\times L^2(K))}
\le C_{K,N,s}\|U_0\|_{\mathcal E^s_{{\rm KN},{\rm af}}}.
\label{eq:ekn-spectral-density-wN1}
\end{equation}
The case \(N=0\) is exactly the input needed for qualitative compact local decay; larger \(N\) gives optional polynomial local decay by integration by parts.
\end{enumerate}
This external package is the only non-elementary global scattering input used for the full asymptotically flat Kerr-Newman theorem.  It is the package form of the neutral scalar Kerr-Newman stability and mode-stability theory cited in \cite{CivinMode,CivinThesis}; the present paper does not claim to reprove the trapped-set microlocal analysis contained in that scalar theory.
\end{assumption}

\begin{assumption}[External extremal horizon-instability input]\label{ass:aretakis-input}
For the extremal Kerr-Newman branch, the only external extremal input used beyond the horizon charge proved in Proposition~\ref{prop:extreme-kn-charge} is the standard Aretakis higher-order horizon-instability mechanism for smooth axisymmetric scalar waves on degenerate horizons: for generic admissible data with the appropriate nonzero Aretakis charge, some transversal derivatives on \(\mathcal H^+\) fail to satisfy subextremal nondegenerate decay and higher transversal derivatives exhibit the corresponding Aretakis instability.  This input is invoked only for the generic higher-order conclusion and not for the first conserved charge, the first nondecay obstruction, or the failure of the nondegenerate red-shift package, which are proved in Section~\ref{sec:ledger}; see, for example, \cite{AretakisHorizon,AretakisExtremeKerr}.
\end{assumption}

\section{Metric algebra, scalar-curvature variation, and Carter curvature}\label{app:inverse}\label{app:variation}\label{app:curvature}

This appendix collects the essential algebra behind the geometric part of the paper.  It combines the inverse-metric computation, the scalar-curvature variation formula, and the direct curvature computation for the Carter family.


Expanding the stationary block of \eqref{eq:cartermetric} gives
\begin{align}
 g_{tt}&=\frac{a^2\Delta_x-\Delta_r}{\rho^2},\nonumber\\
 g_{t\phi}&=\frac{a(1-x^2)\Delta_r-a(r^2+a^2)\Delta_x}{\rho^2},\nonumber\\
 g_{\phi\phi}&=\frac{(r^2+a^2)^2\Delta_x-a^2(1-x^2)^2\Delta_r}{\rho^2}.
\end{align}
The determinant of this \((t,\phi)\)-block is
\begin{equation}
g_{tt}g_{\phi\phi}-g_{t\phi}^2=-\Delta_r\Delta_x.
\end{equation}
Inverting the block gives precisely \eqref{eq:gtt-formula}-\eqref{eq:gphiphi-formula}; multiplying by the diagonal \(r\)- and \(x\)-entries yields
\begin{equation}
\det g=-\rho^4,
\qquad
\sqrt{-g}=\rho^2.
\end{equation}
Thus
\begin{equation}
\Box_g u=\frac1{\rho^2}\partial_\mu(\rho^2g^{\mu\nu}\partial_\nu u),
\end{equation}
which is the structural reason why the wave equation separates.

\subsubsection*{Scalar-curvature variation}

Let \(g_\varepsilon=g+\varepsilon h+O(\varepsilon^2)\).  The inverse metric varies by
\begin{equation}
\delta g^{\mu\nu}=-h^{\mu\nu},
\end{equation}
and the connection by
\begin{align}
\delta\Gamma^{\lambda}_{\mu\nu}&=\frac12g^{\lambda\sigma}(\nabla_\mu h_{\sigma\nu}+\nabla_\nu h_{\sigma\mu}-\nabla_\sigma h_{\mu\nu}).
\end{align}
Contracting the linearized Ricci tensor gives
\begin{align}
\delta R_{\mu\nu}&=\nabla_\lambda\delta\Gamma^\lambda_{\mu\nu} -\nabla_\nu\delta\Gamma^\lambda_{\mu\lambda}.
\end{align}
After the standard cancellations and the contribution \(\delta g^{\mu\nu}R_{\mu\nu}\), one obtains
\begin{align}
\delta R&=\nabla^\mu\nabla^\nu h_{\mu\nu} -\Box_g(\tr_g h) -R_{\mu\nu}h^{\mu\nu}.\label{eq:appdeltaR-short}
\end{align}
For \(h_{\mu\nu}=2ug_{\mu\nu}\) in four dimensions,
\begin{equation}
\tr_g h=8u,
\qquad
\nabla^\mu\nabla^\nu h_{\mu\nu}=2\Box_g u,
\qquad
R_{\mu\nu}h^{\mu\nu}=2ku.
\end{equation}
Hence
\begin{equation}
L_g[2ug]=-6\Box_g u-2ku,
\end{equation}
and the conformal scalar-curvature equation is \((\Box_g+k/3)u=0\).  When \(k=0\), it is exactly the massless wave equation.

\subsubsection*{Direct curvature computation for the Carter family: numerator cancellations}\label{subsubsec:curvature-certificate}

The curvature computation is used only through identities that can be checked as polynomial numerator cancellations.  With the inverse metric of Proposition~\ref{prop:inversemetric}, all Christoffel symbols have denominators built from powers of $\rho^2$, $\Delta_r$, and $\Delta_x$.  After forming the Ricci tensor, the apparent $\Delta_r$ and $\Delta_x$ denominators cancel in the scalar contraction, leaving
\begin{equation}
\mathcal N_R:=\rho^2R(g)+\Delta_r''(r)+\Delta_x''(x)=0.
\label{eq:curvature-certificate-scalar}
\end{equation}
For the traceless Ricci tensor, the corresponding check is that the following reduced numerators vanish identically after substituting \eqref{eq:Deltar}-\eqref{eq:Deltax} and $\delta=C_3-a^2C_5$:
\begin{align}
\rho^6\left(S^t{}_t-\delta\frac{a^2x^2-r^2-2a^2}{\rho^6}\right)&=0,\qquad
\rho^6\left(S^t{}_{\phi}-2a\delta\frac{(r^2+a^2)(1-x^2)}{\rho^6}\right)=0,
\label{eq:curvature-certificate-1}\\
\rho^6\left(S^\phi{}_t+2a\delta\frac{1}{\rho^6}\right)&=0,\qquad
\rho^6\left(S^\phi{}_{\phi}+S^t{}_t\right)=0,
\label{eq:curvature-certificate-2}\\
\rho^4\left(S^r{}_r+\delta\frac{1}{\rho^4}\right)&=0,\qquad
\rho^4\left(S^x{}_x-\delta\frac{1}{\rho^4}\right)=0.
\label{eq:curvature-certificate-3}
\end{align}
All other mixed components reduce to zero numerators.  Equations \eqref{eq:curvature-certificate-scalar}-\eqref{eq:curvature-certificate-3} are the algebraic certificate behind Propositions~\ref{prop:scalarcurvature} and~\ref{prop:riccidecomp}: a reader checking the computation need only verify these numerator identities in the polynomial ring generated by $r,x$ and the parameters.

\subsubsection*{Reproducible polynomial verification protocol}\label{subsubsec:curvature-protocol}

For clarity, the numerator cancellations above can be verified by the following purely algebraic protocol.  Work in the polynomial ring
\begin{equation}
\mathbb Q[a,C_0,C_1,C_2,C_3,C_4,C_5,k,r,x]
\end{equation}
localized at the nonzero factors \(\rho^2\), \(\Delta_r\), and \(\Delta_x\).  Then:
\begin{enumerate}[label=(\arabic*)]
\item Insert the metric components from \eqref{eq:cartermetric} and the inverse components from Proposition~\ref{prop:inversemetric}.
\item Compute \(\Gamma^\lambda_{\mu\nu}=\frac12g^{\lambda\alpha}(\partial_\mu g_{\alpha\nu}+\partial_\nu g_{\alpha\mu}-\partial_\alpha g_{\mu\nu})\).  Since the coefficients are independent of \(t\) and \(\phi\), all \(t\)- and \(\phi\)-derivatives are set to zero before simplification.
\item Form \(R_{\mu\nu}\) from the coordinate formula and raise one index to obtain \(R^\mu{}_{\nu}\).
\item Substitute \(\delta=C_3-a^2C_5\) and the quartics \eqref{eq:Deltar}-\eqref{eq:Deltax}.
\item For each claimed identity, multiply by the denominator displayed in \eqref{eq:curvature-certificate-scalar}-\eqref{eq:curvature-certificate-3}, expand, and collect the numerator as a polynomial in \(r\) and \(x\).  The certificate is valid exactly when every coefficient of this polynomial is zero.
\end{enumerate}

No analytic approximation or asymptotic expansion is involved.  The calculation is finite because the metric coefficients are rational functions of \(r\) and \(x\), and the required denominators have been displayed explicitly.  Thus the curvature input used in the stability theorem reduces to a finite polynomial identity check.

For a completely mechanical verification, the following is the exact algebraic certificate required from any symbolic implementation.  Let $\mathfrak R$ be the localized polynomial ring above.  A verification file is accepted if and only if it constructs the six mixed Ricci-defect numerators
\begin{equation}
N^t{}_t,
\quad N^t{}_{\phi},
\quad N^{\phi}{}_t,
\quad N^{\phi}{}_{\phi},
\quad N^r{}_r,
\quad N^x{}_x
\end{equation}
obtained by multiplying the six component differences in \eqref{eq:curvature-certificate-1}-\eqref{eq:curvature-certificate-3} by their displayed powers of $\rho$, expands them as
\begin{equation}
N^\mu{}_{\nu}=\sum_{j,\ell} c^{\mu}{}_{\nu;j\ell}(a,C_0,\ldots,C_5,k) r^j x^\ell,
\end{equation}
and checks
\begin{equation}
c^{\mu}{}_{\nu;j\ell}=0
\quad\hbox{for every}\quad
(\mu,\nu),j,\ell.
\end{equation}
The scalar-curvature certificate is the analogous coefficient check for
\begin{equation}
N_R=\rho^2R(g)+\Delta_r''+\Delta_x''.
\end{equation}
This is the strongest algebraic statement used by the analysis: no estimate, sign argument, or threshold computation depends on a curvature identity beyond these finite numerator cancellations.

\begin{proof}[Detailed computation for Proposition~\ref{prop:scalarcurvature}]
Use the inverse coefficients from Proposition~\ref{prop:inversemetric} in the coordinate formula
\begin{align}
R_{\mu\nu}&=\partial_\alpha\Gamma^\alpha_{\mu\nu} -\partial_\nu\Gamma^\alpha_{\mu\alpha} +\Gamma^\alpha_{\mu\nu}\Gamma^\beta_{\alpha\beta} -\Gamma^\beta_{\mu\alpha}\Gamma^\alpha_{\nu\beta}.
\end{align}
All coefficients depend only on \(r\) and \(x\).  Substitution and simplification over the common denominator \(\rho^6\) cancel all third derivatives and all mixed derivative terms.  The remaining scalar contraction is
\begin{equation}
R(g)=-\frac{\Delta_r''(r)+\Delta_x''(x)}{\rho^2}.
\end{equation}
For the quartics \eqref{eq:Deltar}-\eqref{eq:Deltax},
\begin{equation}
\Delta_r''+\Delta_x''=-k(r^2+a^2x^2)=-k\rho^2,
\end{equation}
so \(R(g)=k\).  This proves Proposition~\ref{prop:scalarcurvature}.
\end{proof}

\begin{proof}[Detailed computation for Proposition~\ref{prop:riccidecomp}]
Set
\begin{equation}
S^\mu{}_{\nu}=R^\mu{}_{\nu}-\frac{k}{4}\delta^\mu{}_{\nu}.
\end{equation}
The same coordinate computation gives the mixed components
\begin{align}
S^t{}_t&= \delta\,\frac{a^2x^2-r^2-2a^2}{\rho^6},\qquad
S^t{}_{\phi}=2a\delta\,\frac{(r^2+a^2)(1-x^2)}{\rho^6},\nonumber\\
S^{\phi}{}_t&= -2a\delta\,\frac{1}{\rho^6},\qquad
S^{\phi}{}_{\phi}=-S^t{}_t,\nonumber\\
S^r{}_r&= -\delta\,\frac{1}{\rho^4},\qquad
S^x{}_x=\delta\,\frac{1}{\rho^4}.
\end{align}
with all other mixed components zero.  Every component is proportional to \(\delta=C_3-a^2C_5\).  Lowering an index with the smooth metric gives
\begin{equation}
S_{\mu\nu}=\delta\,\mathcal S_{\mu\nu},
\end{equation}
where \(\mathcal S\) is smooth and trace-free on every regular patch.  Thus \(S_{\mu\nu}=0\) if and only if \(\delta=0\).  This proves Proposition~\ref{prop:riccidecomp}.
\end{proof}

\section{Bounded-slab well-posedness and compact spectral input}\label{app:wellposedfull}\label{app:compactspectral}\label{app:functional}

This appendix supplies the functional analysis used in the bounded-slab theorems.  The first part proves the full \(\phi\)-dependent evolution by Galerkin approximation.  The second part gives the compact-resolvent spectral theorem used in the axisymmetric refinement.


\begin{proof}[Detailed proof of Theorem~\ref{thm:fullwellposed}]
Let \(V_{\mathrm{full}}\subset H^1(\Sigma)\) be the closed reflecting form domain, with \(1\in V_{\mathrm{full}}\).  Equip it with
\begin{align}
(u,v)_{V_{\mathrm{full}}}&=\int_\Sigma\bigl(Auv+\Phi u_\phi v_\phi+ \Delta_r u_rv_r+\Delta_xu_xv_x\bigr) \dd\phi\,\dd r\,\dd x.
\end{align}
By strict stationarity this inner product is equivalent to the \(H^1\)-inner product.  Choose a countable Hilbert basis \(\{w_j\}_{j\ge1}\) of \(V_{\mathrm{full}}\), with \(w_1\) proportional to the constant function.  No smooth core is required; all integrations by parts are understood in the closed-form sense.  For \(V_N=\mathrm{span}\{w_1,\dots,w_N\}\), seek
\begin{equation}
u_N(t)=\sum_{j=1}^Nd_j^{(N)}(t)w_j
\end{equation}
satisfying, for every \(\varphi\in V_N\),
\begin{align}
\int_\Sigma\left(Au_{N,tt}\varphi-2B u_{N,t\phi}\varphi +\Phi u_{N,\phi}\varphi_\phi+ \Delta_r u_{N,r}\varphi_r+ \Delta_x u_{N,x}\varphi_x\right)&=0.\label{eq:app-galerkin-full}
\end{align}
In matrix form,
\begin{equation}
M_N\ddot d_N+C_N\dot d_N+K_Nd_N=0,
\end{equation}
where \(M_N\) is Hermitian positive definite, \(K_N\) is Hermitian nonnegative, and
\begin{equation}
(C_N)_{ij}=\mathfrak c[w_j,w_i]
\end{equation}
is skew-Hermitian by \eqref{eq:gyro-skew}.  Therefore the finite-dimensional problem has a global solution and the discrete energy
\begin{equation}
E_N(t)=\frac12(\dot d_N^*M_N\dot d_N+d_N^*K_Nd_N)
\end{equation}
is conserved.

Testing \eqref{eq:app-galerkin-full} with the constant basis vector gives
\begin{equation}
\frac{\dd^2}{\dd t^2}\int_\Sigma Au_N(t)=0.
\end{equation}
Thus the mean part is affine and the mean-zero part is controlled by the weighted Poincar\'e inequality.  On every compact time interval \([-T,T]\),
\begin{align}
\sup_{|t|\le T}\bigl(\|u_N(t)\|_{H^1(\Sigma)}+\|u_{N,t}(t)\|_{L^2(\Sigma)}\bigr)&\le C_T\bigl(\|u_{0,N}\|_{H^1}+\|u_{1,N}\|_{L^2}\bigr).
\end{align}
The sequence \(u_N\) is equi-Lipschitz in \(L^2(\Sigma)\).  The equation also bounds \(u_{N,tt}\) in \(L^\infty([-T,T];V_{\mathrm{full}}')\).  Rellich compactness and the Arzela-Ascoli theorem give, after passing to a subsequence,
\begin{align}
&u_N\to u \quad\hbox{in } C([-T,T];L^2(\Sigma))\nonumber\\
&u_{N,t}\to u_t \quad\hbox{in } C([-T,T];V_{\mathrm{full}}').
\end{align}
Passing to the limit in the weak formulation gives a weak solution of \eqref{eq:fullPDE-k0}.  Lower semicontinuity gives
\begin{equation}
E_{\mathrm{full}}[u](t)\le E_{\mathrm{full}}[u](0).
\end{equation}
Applying the same construction backward from an arbitrary time \(t_0\) gives the reverse inequality, hence exact conservation of \(E_{\mathrm{full}}\).

If two weak solutions have the same data, their difference has zero initial energy and zero initial average.  Conservation and the affine average law force the difference to have zero time derivative, zero spatial derivatives, and zero constant mode; hence it vanishes.  The norm
\begin{equation}
\|(u,v)\|_{*}^2=2E_{\mathrm{full}}[u,v]+|\Pi_0u|^2+|\Pi_0v|^2
\end{equation}
is equivalent to \(H^1\times L^2\), so conservation of energy, the affine average law, and uniqueness give strong continuity and the group property.  This proves Theorem~\ref{thm:fullwellposed}.
\end{proof}

\subsubsection*{Compact resolvent and the axisymmetric spectral theorem}

\begin{lemma}[Equivalence of weighted norms]\label{lem:weightedequiv}
On a bounded slab where \(A,\Delta_r,\Delta_x\) are bounded above and below by positive constants, the norm
\begin{equation}
\|u\|_{A,1}^2=\int_\Omega(A|u|^2+\Delta_r|u_r|^2+\Delta_x|u_x|^2)\,\dd r\,\dd x
\end{equation}
is equivalent to the standard \(H^1(\Omega)\)-norm.
\end{lemma}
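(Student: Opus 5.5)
The equivalence will follow from two-sided pointwise bounds on the coefficients, integrated term by term. First I will fix the constants. By hypothesis there are numbers $0<a_-\le a_+<\infty$, $0<d_{r,-}\le d_{r,+}<\infty$ and $0<d_{x,-}\le d_{x,+}<\infty$ with
\[
a_-\le A\le a_+,\qquad d_{r,-}\le\Delta_r\le d_{r,+},\qquad d_{x,-}\le\Delta_x\le d_{x,+}
\]
on $\Omega$. On a regular timelike slab these are the infima and suprema over $\overline\Omega$ already introduced before \eqref{eq:energy-gradient-bounds}. They are finite and positive by continuity on the compact closure together with the strict positivity hypotheses.

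Next I will set $m:=\min\{a_-,d_{r,-},d_{x,-}\}$ and $M:=\max\{a_+,d_{r,+},d_{x,+}\}$. Bounding each integrand pointwise gives
\[
m\bigl(|u|^2+|u_r|^2+|u_x|^2\bigr)\le A|u|^2+\Delta_r|u_r|^2+\Delta_x|u_x|^2\le M\bigl(|u|^2+|u_r|^2+|u_x|^2\bigr)
\]
almost everywhere. Integrating over $\Omega$ yields
\[
m\|u\|_{H^1(\Omega)}^2\le\|u\|_{A,1}^2\le M\|u\|_{H^1(\Omega)}^2 .
\]
Here I use the standard norm $\|u\|_{H^1}^2=\int_\Omega(|u|^2+|u_r|^2+|u_x|^2)\,\dd r\,\dd x$ with respect to Lebesgue measure $\dd r\,\dd x$.

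Finally I will make sure both norms are defined on the same space. Membership in $H^1(\Omega)$ means $u$, $u_r$ and $u_x$ lie in $L^2$. Because of the upper bounds, $\|u\|_{A,1}$ is finite exactly on $H^1(\Omega)$, so the two norms live on the same set and are equivalent there.

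I do not expect a real obstacle. The only point needing care is that the lower bounds are genuinely uniform up to $\partial\Omega$. The slab hypotheses provide this, since the coefficients are smooth and positive on a neighbourhood of $\overline\Omega$. Without that, the argument would only give a local equivalence.
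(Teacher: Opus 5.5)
Your proposal is correct and matches the paper's proof. The paper also bounds the two integrands against each other pointwise, using constants such as $\min\{a_-,d_{r,-},d_{x,-}\}$ and $\max\{a_+,d_{r,+},d_{x,+}\}$, and then integrates over $\Omega$; your remarks on uniformity up to $\partial\Omega$ are already contained in the lemma's hypothesis.
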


\begin{proof}
The two integrands bound each other pointwise by positive constants, and integration gives the claim.
\end{proof}

\begin{lemma}[Compact positive resolvent]\label{lem:compactpositive}
Let \(V_{\mathrm{ax}}\subset H^1(\Omega)\) be the reflecting form domain and set
\begin{equation}
a_1[u,v]=\langle u,v\rangle_A+q_0[u,v].
\end{equation}
For each \(f\in L_A^2(\Omega)\), there is a unique \(Tf\in V_{\mathrm{ax}}\) satisfying
\begin{equation}
a_1[Tf,v]=\langle f,v\rangle_A
\qquad\text{for all }v\in V_{\mathrm{ax}}.
\end{equation}
The operator \(T:L_A^2(\Omega)\to L_A^2(\Omega)\) is compact, self-adjoint, and positive.
\end{lemma}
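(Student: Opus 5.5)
The plan is to obtain \(T\) from the Riesz representation theorem on the Hilbert space \((V_{\mathrm{ax}},a_1)\), and then read off compactness, self-adjointness and positivity from the structure of \(a_1\).

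First, Lemma~\ref{lem:weightedequiv} shows that \(a_1[u,u]=\|u\|_{A,1}^2\) is equivalent to the \(H^1(\Omega)\)-norm. Since \(V_{\mathrm{ax}}\) is closed in \(H^1(\Omega)\) by \textbf{H6} (the axisymmetric version of \textbf{H4}), the space \((V_{\mathrm{ax}},a_1)\) is a Hilbert space. For fixed \(f\in L_A^2(\Omega)\), the functional \(v\mapsto\langle v,f\rangle_A\) (conjugated, to match the sesquilinear convention of \textbf{H5}) is bounded on \(V_{\mathrm{ax}}\). Indeed,
\[
|\langle f,v\rangle_A|\le\|f\|_A\|v\|_A\le\|f\|_A\,a_1[v,v]^{1/2}.
\]
The Riesz representation theorem then gives a unique \(Tf\in V_{\mathrm{ax}}\) with \(a_1[Tf,v]=\langle f,v\rangle_A\) for all \(v\in V_{\mathrm{ax}}\). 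Testing with \(v=Tf\) yields \(\|Tf\|_{A,1}\le\|f\|_A\), so \(T\) is bounded and linear from \(L_A^2\) into \(V_{\mathrm{ax}}\).

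For compactness, I factor \(T\) as a bounded map \(L_A^2\to V_{\mathrm{ax}}\subset H^1(\Omega)\) followed by the embedding \(H^1(\Omega)\hookrightarrow L^2(\Omega)\). This embedding is compact by Rellich's theorem on the bounded Lipschitz rectangle \(\Omega\) (a standard tool listed in \textbf{H8}). Because \(A\) is bounded above and below, \(L^2\) and \(L_A^2\) have equivalent norms, so \(T\) is compact on \(L_A^2\).

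For self-adjointness, take \(f,g\in L_A^2\). Then
\[
\langle f,Tg\rangle_A=a_1[Tf,Tg]=\overline{a_1[Tg,Tf]}=\overline{\langle g,Tf\rangle_A}=\langle Tf,g\rangle_A,
\]
where the middle equality uses the Hermitian symmetry of \(a_1\). That symmetry holds because \(A,\Delta_r,\Delta_x\) are real.

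For positivity, \(\langle Tf,f\rangle_A=a_1[Tf,Tf]\ge\|Tf\|_A^2\ge0\). Equality forces \(Tf=0\), and then \(\langle f,v\rangle_A=0\) for all \(v\in V_{\mathrm{ax}}\), so \(f=0\) because \(V_{\mathrm{ax}}\) is dense in \(L_A^2\). Hence \(T\) is strictly positive and injective.

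The only delicate point is the density of \(V_{\mathrm{ax}}\) in \(L_A^2\), which is needed for injectivity and for identifying \(T=(1+L_0)^{-1}\). It holds for the reflecting realizations under consideration, since \(V_{\mathrm{ax}}\supset H^1_0(\Omega)\), which is dense in \(L^2\). I will take this as part of \textbf{H6}, in parallel with the density assumption stated for \(V_{\mathrm{full}}\).
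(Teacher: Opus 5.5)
Your proof is correct and follows the paper's argument. Like the paper, you obtain \(T\) by Riesz representation on \((V_{\mathrm{ax}},a_1)\) via the \(H^1\)-equivalence of Lemma~\ref{lem:weightedequiv}, get compactness by composing the bounded map \(L_A^2\to V_{\mathrm{ax}}\) with the Rellich embedding, and deduce self-adjointness and positivity from the Hermitian symmetry of \(a_1\). Your extra injectivity claim, which needs density of \(V_{\mathrm{ax}}\) in \(L_A^2\), goes beyond the lemma as stated, but it is exactly what the later identification \(T=(1+L_0)^{-1}\) and the eigenbasis in Theorem~\ref{thm:directspectralL0} tacitly rely on.
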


\begin{proof}
The form \(a_1\) is an inner product equivalent to \(H^1\) by Lemma~\ref{lem:weightedequiv}.  Riesz representation gives existence and uniqueness of \(Tf\).  The map \(T:L_A^2\to V_{\mathrm{ax}}\) is bounded, and the embedding \(V_{\mathrm{ax}}\hookrightarrow L_A^2\) is compact by Rellich, so \(T\) is compact on \(L_A^2\).  Symmetry follows from
\begin{equation}
\langle Tf,g\rangle_A=a_1[Tf,Tg]=\langle f,Tg\rangle_A,
\end{equation}
and positivity from \(\langle Tf,f\rangle_A=a_1[Tf,Tf]\ge0\).
\end{proof}

\begin{lemma}[Spectral resolution of a compact positive operator]\label{lem:compactpositive-spectrum}
If \(T\) is compact, self-adjoint, and positive on a Hilbert space \(H\), then \(H\) has an orthonormal basis of eigenvectors of \(T\), and the nonzero eigenvalues form a finite or countable sequence decreasing to zero.
\end{lemma}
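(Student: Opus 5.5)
The plan is the classical Hilbert–Schmidt argument for compact self-adjoint operators, with positivity used only to place all eigenvalues in $[0,\infty)$. First I would show that if $T\neq0$, then $\|T\|=\sup_{\|f\|=1}\langle Tf,f\rangle$ is attained. Since $T$ is self-adjoint and positive, $\|T\|$ equals this supremum. Take a maximizing sequence $f_n$ with $\|f_n\|=1$. By weak compactness, pass to a subsequence with $f_n\rightharpoonup f$. Compactness of $T$ then gives $Tf_n\to Tf$ in norm, so $\langle Tf_n,f_n\rangle\to\langle Tf,f\rangle=\|T\|$ and $f\neq0$. The expansion $\|Tf_n-\|T\|f_n\|^2\le \|T\|^2-2\|T\|\langle Tf_n,f_n\rangle+\|T\|^2\to0$ upgrades this to strong convergence $f_n\to f$ and gives $Tf=\|T\|f$. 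So $\mu_1:=\|T\|$ is an eigenvalue with a unit eigenvector $e_1$.

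Next I would iterate on invariant complements. For an eigenvector $e$, self-adjointness makes $\{e\}^\perp$ invariant under $T$, and the restriction there is again compact, self-adjoint and positive. Repeating the first step produces orthonormal eigenvectors $e_1,e_2,\dots$ with nonincreasing eigenvalues $\mu_1\ge\mu_2\ge\cdots>0$. The process either stops, when the restriction becomes $0$, or continues indefinitely. In the infinite case $\mu_k\to0$. Otherwise $\mu_k\ge\epsilon>0$ for all $k$, and the orthonormal sequence $e_k$ would satisfy $\|Te_k-Te_j\|^2=\mu_k^2+\mu_j^2\ge2\epsilon^2$. Then $(Te_k)$ has no convergent subsequence, contradicting compactness, since $e_k\rightharpoonup0$.

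Finally I would handle completeness. Let $M=\overline{\mathrm{span}}\{e_k\}$; it is invariant under $T$, and so is $M^\perp$. On $M^\perp$ the restriction of $T$ has norm at most $\mu_k$ for every $k$ (or vanishes if the process stopped), so it is zero and $M^\perp\subset\Ker T$. Choose any orthonormal basis of the closed subspace $\Ker T$; it consists of eigenvectors with eigenvalue $0$, so $M^\perp$ together with $M$ yields an orthonormal eigenbasis of $H$. Every nonzero eigenvalue must occur among the $\mu_k$, because an eigenvector for a different eigenvalue would be orthogonal to every $e_k$, hence lie in $M^\perp\subset\Ker T$.

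The one step needing care is the attainment of the supremum in the first step. Positivity makes this cleaner, since there is no need to compare $\sup$ and $-\inf$ of the quadratic form. The rest is routine bookkeeping. If $H$ is separable, as for $L_A^2(\Omega)$, the zero eigenspace also has a countable basis, which is what is needed for the application to $L_0$.
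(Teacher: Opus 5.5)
Your proposal is correct and follows the paper's proof essentially step for step: you attain the Rayleigh-quotient maximum by compactness, pass to invariant orthogonal complements, use compactness to force $\mu_k\to0$, and complete with an orthonormal basis of $\Ker T$. The differences are cosmetic. In the first step you use the crude bound $\|Tf_n\|^2\le\|T\|^2$ where the paper uses $\|Tu_n\|^2\le\|T\|\langle Tu_n,u_n\rangle$. In the last step it is worth stating explicitly that $M^\perp=\Ker T$, since $\Ker T\perp e_k$ for every $k$; this is what makes the union of the two bases orthonormal.
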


\begin{proof}
If \(T=0\), the statement is immediate.  Otherwise the maximum of the Rayleigh quotient is attained by compactness: take a maximizing sequence \(u_n\), use compactness of \(Tu_n\), and the inequality \(\|Tu_n\|^2\le \|T\|\langle Tu_n,u_n\rangle\) to show that a subsequence converges to an eigenvector for the top eigenvalue.  Self-adjointness makes its orthogonal complement invariant.  Iterating gives an orthonormal eigenfamily.  Compactness rules out positive eigenvalues accumulating away from zero, and the orthogonal complement of the obtained eigenvectors must lie in \(\Ker T\).  Adding an orthonormal basis of \(\Ker T\) completes the basis.
\end{proof}

\begin{theorem}[Direct spectral decomposition of \(L_0\)]\label{thm:directspectralL0}
There exists an \(L_A^2(\Omega)\)-orthonormal basis \(\{\psi_j\}_{j=0}^\infty\subset V_{\mathrm{ax}}\) and a sequence
\begin{equation}
0=\lambda_0\le \lambda_1\le \lambda_2\le\cdots\nearrow+\infty
\end{equation}
such that
\begin{equation}
q_0[\psi_j,v]=\lambda_j\langle\psi_j,v\rangle_A
\qquad\text{for all }v\in V_{\mathrm{ax}}.
\end{equation}
If \(\Omega\) is connected and constants are admitted by the boundary conditions, then \(\lambda_0=0\) is simple and \(\psi_0\) is constant.
\end{theorem}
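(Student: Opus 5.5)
The plan is to deduce the statement from the compact positive resolvent of Lemma~\ref{lem:compactpositive}, combined with Lemma~\ref{lem:compactpositive-spectrum}. First, set \(T=(I+L_0)^{-1}\), defined through the form \(a_1=\langle\cdot,\cdot\rangle_A+q_0\). This operator is compact, self-adjoint and positive on \(L_A^2(\Omega)\), and it is injective: if \(Tf=0\), then \(\langle f,v\rangle_A=a_1[0,v]=0\) for every \(v\in V_{\mathrm{ax}}\). Since \(V_{\mathrm{ax}}\) is dense in \(L_A^2(\Omega)\) (it contains \(H^1\)-functions dense in \(L^2\), by the reflecting form realization \textbf{H6}), this forces \(f=0\). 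Lemma~\ref{lem:compactpositive-spectrum} therefore gives an orthonormal eigenbasis \(\{\psi_j\}\) with eigenvalues \(\mu_j>0\), \(\mu_j\searrow0\). The basis is necessarily countably infinite, because \(L_A^2(\Omega)\) is infinite dimensional and \(\Ker T=\{0\}\).

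Second, translate the eigen-relation \(T\psi_j=\mu_j\psi_j\) back to the form. Each \(\psi_j=\mu_j^{-1}T\psi_j\) lies in \(V_{\mathrm{ax}}\), and for all \(v\in V_{\mathrm{ax}}\) we have
\[
a_1[\psi_j,v]=\mu_j^{-1}\langle\psi_j,v\rangle_A .
\]
Hence \(q_0[\psi_j,v]=\lambda_j\langle\psi_j,v\rangle_A\) with \(\lambda_j:=\mu_j^{-1}-1\). Reorder so that the \(\lambda_j\) increase; then \(\lambda_j\nearrow+\infty\) because \(\mu_j\to0\). Nonnegativity follows from taking \(v=\psi_j\), which gives \(\lambda_j=q_0[\psi_j]\ge0\).

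Third, identify the bottom of the spectrum. Because \(1\in V_{\mathrm{ax}}\) and \(q_0[1,v]=0\) for all \(v\), the constant is an eigenvector with eigenvalue \(0\), so \(\lambda_0=0\). For simplicity, note that any eigenvector with \(\lambda=0\) satisfies \(q_0[\psi]=0\), so \(\psi\) is constant by Proposition~\ref{prop:kernelH0}, using connectedness of \(\Omega\) and the positivity of \(\Delta_r,\Delta_x\). The zero eigenspace is therefore \(\mathrm{span}\{1\}\), and we take \(\psi_0=\|1\|_A^{-1}\).

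The only delicate point is the injectivity/density step, which rules out a nontrivial kernel of \(T\) that would spoil the identification of eigenvalues. I expect it to be routine given the closed-form convention in \textbf{H6}, since \(V_{\mathrm{ax}}\supset C^\infty_c(\Omega)\) for any reflecting realization.
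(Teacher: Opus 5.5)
Your proof is correct and follows the same route as the paper. Both arguments apply Lemma~\ref{lem:compactpositive} and Lemma~\ref{lem:compactpositive-spectrum} to $T$, rewrite $T\psi=\mu\psi$ as $q_0[\psi,v]=(\mu^{-1}-1)\langle\psi,v\rangle_A$, and read off the kernel from Proposition~\ref{prop:kernelH0}. Your injectivity step ($\Ker T=V_{\mathrm{ax}}^{\perp_A}=\{0\}$ by density, so every basis vector lies in $V_{\mathrm{ax}}$ and the basis is infinite) fills a point the paper's proof leaves implicit. That step should rest on the standing hypothesis that the form domain is dense, as in Proposition~\ref{prop:selfadjointH0}, rather than on the aside $C_c^\infty(\Omega)\subset V_{\mathrm{ax}}$, which need not hold for an arbitrary closed form domain.
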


\begin{proof}
Apply Lemma~\ref{lem:compactpositive} and Lemma~\ref{lem:compactpositive-spectrum} to \(T\).  If \(T\psi=\mu\psi\) with \(\mu>0\), then
\begin{equation}
q_0[\psi,v]=(\mu^{-1}-1)\langle\psi,v\rangle_A.
\end{equation}
Set \(\lambda=\mu^{-1}-1\).  Since \(\mu\to0\), \(\lambda\to+\infty\).  The kernel statement follows from Proposition~\ref{prop:kernelH0}.
\end{proof}

\begin{proof}[Expanded proof of Theorem~\ref{thm:wellposedness}]
Expand the data in the eigenbasis:
\begin{equation}
u_0=\sum_{j=0}^\infty a_j\psi_j,
\qquad
u_1=\sum_{j=0}^\infty b_j\psi_j.
\end{equation}
The conditions \(u_0\in H^1\) and \(u_1\in L^2\) are equivalent to
\begin{equation}
\sum_{j=0}^\infty(1+\lambda_j)|a_j|^2<\infty,
\qquad
\sum_{j=0}^\infty |b_j|^2<\infty.
\end{equation}
Define
\begin{align}
u(t)&=a_0\psi_0+b_0t\psi_0+ \sum_{j=1}^\infty\left(a_j\cos(\sqrt{\lambda_j}t)+b_j\frac{\sin(\sqrt{\lambda_j}t)}{\sqrt{\lambda_j}}\right)\psi_j.
\end{align}
The series converges in \(C(\mathbb R;H^1(\Omega))\), and its derivative converges in \(C(\mathbb R;L^2(\Omega))\), by the two summability conditions.  Each coefficient solves \(\ddot c_j+\lambda_jc_j=0\), with the \(j=0\) coefficient affine.  Hence the series solves \eqref{eq:axisymPDE-k0}.  Conservation of energy follows mode by mode, and uniqueness follows by applying the conserved energy to the difference of two solutions.
\end{proof}

\section{Endpoint and separated ODE calculus}\label{app:regularsingular}\label{app:separation}\label{app:fredholm}\label{app:green}

This appendix keeps only the separated ODE facts that are used by the main text: the algebra of separation, Wronskian identities, simple endpoint behavior, and the bounded-interval Fredholm picture.


Insert
\begin{equation}
u=\ee^{-\ii\Omega t+\ii m\phi}R(r)S(x)
\end{equation}
into \eqref{eq:boxexplicit}.  The stationary part becomes
\begin{align}
-\Omega^2\rho^2g^{tt}+2\Omega m\rho^2g^{t\phi}-m^2\rho^2g^{\phi\phi}&=\frac{((r^2+a^2)\Omega-am)^2}{\Delta_r} -\frac{(a(1-x^2)\Omega-m)^2}{\Delta_x}.
\end{align}
The mass term separates as
\begin{equation}
\frac{k}{3}\rho^2=\frac{k}{3}r^2+\frac{k}{3}a^2x^2.
\end{equation}
Thus the equation splits into \eqref{eq:radialeq-main} and \eqref{eq:angulareq-main} with one separation constant.

If \(R_1,R_2\) solve the same radial equation, subtracting the two equations after cross multiplication gives
\begin{equation}
\frac{\dd}{\dd r}\left(\Delta_r(R_1'R_2-R_1R_2')\right)=0.
\end{equation}
The angular Wronskian identity is the same with \(r\) replaced by \(x\).  These identities are the ODE version of vanishing flux.  The signs in the radial and angular equations are kept in this form so that endpoint regularity and reflecting boundary conditions can be checked by the same boundary form.  On a bounded interval the Wronskian is the Green identity boundary term; at a simple horizon root it distinguishes the horizon-regular branch from the singular outgoing branch.

\subsubsection*{Regular-singular endpoints}

\begin{lemma}[Radial simple-zero endpoint]\label{lem:radialsimplezero}
Suppose \(r_h\) is a simple zero of \(\Delta_r\):
\begin{equation}
\Delta_r(r)=\kappa_h(r-r_h)+O((r-r_h)^2),
\qquad \kappa_h\ne0.
\end{equation}
Then \eqref{eq:radialeq-main} has a regular singular point at \(r_h\), with indicial roots
\begin{equation}
s_\pm=\pm\ii\sigma_h,
\qquad
\sigma_h=\frac{(r_h^2+a^2)\Omega-am}{\kappa_h}.
\end{equation}
\end{lemma}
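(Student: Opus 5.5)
The plan is to write the radial equation \eqref{eq:radialeq-main} in normal form near \(r_h\) and to read off the Frobenius indicial equation. I will divide the equation by \(\Delta_r\), which gives
\begin{equation}
R''+\frac{\Delta_r'}{\Delta_r}R'+\frac{1}{\Delta_r}\left[\frac{((r^2+a^2)\Omega-am)^2}{\Delta_r}+\frac{k}{3}r^2-\lambda\right]R=0.
\end{equation}
Using \(\Delta_r=\kappa_h(r-r_h)(1+O(r-r_h))\), I will check the two conditions for a regular singular point. First, \((r-r_h)\Delta_r'/\Delta_r\) is analytic at \(r_h\) with value \(1\). Second, \((r-r_h)^2\) times the potential coefficient is analytic with value
\begin{equation}
\frac{((r_h^2+a^2)\Omega-am)^2}{\kappa_h^2}=\sigma_h^2.
\end{equation}
The contribution \((k/3)r^2-\lambda\) is multiplied by \((r-r_h)^2/\Delta_r=O(r-r_h)\), so it vanishes at the endpoint and does not enter the indicial equation. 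This establishes the regular singular structure, since the coefficients are polynomials divided by \(\Delta_r\) and hence analytic after the stated multiplications.

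Next I will substitute \(R=(r-r_h)^s(1+O(r-r_h))\). The indicial equation is
\begin{equation}
s(s-1)+s+\sigma_h^2=0,
\qquad\text{that is,}\qquad
s^2+\sigma_h^2=0,
\end{equation}
so the roots are \(s_\pm=\pm\ii\sigma_h\). This agrees with the computation in \eqref{eq:frobeniusexponents}.

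The only step needing care is that the value \(p_0=1\) comes from the simplicity of the zero, \(\Delta_r'(r_h)=\kappa_h\ne0\). I would write the expansion of \(\Delta_r'/\Delta_r\) explicitly to confirm the leading coefficient, and also allow \(\Omega\) to be complex, which changes nothing in the argument. I expect no genuine obstacle: this is the standard Frobenius computation.
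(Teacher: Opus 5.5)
Your proposal is correct and follows essentially the same route as the paper, which also localizes at \(y=r-r_h\), keeps the leading singular part \(\kappa_h(yR')'+\kappa_h^{-1}((r_h^2+a^2)\Omega-am)^2y^{-1}R=0\), and substitutes \(R=y^s\) to obtain \(s^2+\sigma_h^2=0\). Your normal-form check that \(p_0=1\) and \(q_0=\sigma_h^2\), with the \((k/3)r^2-\lambda\) term dropping out at the endpoint, simply makes explicit the regular-singular verification that the paper leaves implicit.
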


\begin{proof}
Put \(y=r-r_h\).  The leading part of the radial equation is
\begin{equation}
\kappa_h(yR')'+\frac{((r_h^2+a^2)\Omega-am)^2}{\kappa_h y}R=0.
\end{equation}
With \(R=y^s\), the indicial equation is \(s^2+\sigma_h^2=0\).  Frobenius theory then gives local branches \(y^{s_\pm}(1+O(y))\).
\end{proof}

\begin{lemma}[Angular simple endpoint]\label{lem:angularsimpleendpoint}
Suppose \(x_\star\) is an endpoint at which
\begin{equation}
\Delta_x(x)=\kappa_\star(x-x_\star)+O((x-x_\star)^2),
\qquad \kappa_\star\ne0.
\end{equation}
Set
\begin{equation}
\nu_\star=a(1-x_\star^2)\Omega-m.
\end{equation}
Then \eqref{eq:angulareq-main} has indicial roots
\begin{equation}
s_\pm=\pm\frac{\nu_\star}{\kappa_\star}.
\end{equation}
If two functions lie on the regular branch with \(\Re s_+>0\), then their angular Wronskian satisfies
\begin{equation}
\Delta_x(\overline S_1S_2'-\overline S_1'S_2)\to0
\qquad\text{as }x\to x_\star.
\end{equation}
\end{lemma}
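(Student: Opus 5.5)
The plan is to mirror the radial computation of Lemma~\ref{lem:radialsimplezero}, carried out in the angular variable. Put \(y=x-x_\star\). Expand \(\Delta_x=\kappa_\star y+O(y^2)\). Expand the potential term in \eqref{eq:angulareq-main}: the numerator \((a(1-x^2)\Omega-m)^2\) equals \(\nu_\star^2+O(y)\), so
\[
\frac{(a(1-x^2)\Omega-m)^2}{\Delta_x}=\frac{\nu_\star^2}{\kappa_\star y}+O(1).
\]
The terms \(\frac{k}{3}a^2x^2+\lambda\) are analytic and bounded. After multiplying the equation by \(y/\kappa_\star\), it takes the Fuchsian form
\[
y^2S''+y\,p(y)S'+q(y)S=0,
\]
with \(p(0)=1\) and \(q(0)=-\nu_\star^2/\kappa_\star^2\). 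Hence \(x_\star\) is a regular singular point. Substituting \(S=y^s\) gives the indicial equation \(s^2-\nu_\star^2/\kappa_\star^2=0\), so \(s_\pm=\pm\nu_\star/\kappa_\star\). Note the sign: in the radial equation the potential enters with a plus sign and the roots are imaginary, whereas here it enters with a minus sign and the roots are real (for real \(\Omega\)). Frobenius theory then gives the regular branch \(S=y^{s_+}(1+O(y))\) with \(\Re s_+>0\). The second solution, with exponent \(s_-\), possibly carries a logarithmic term when \(2s_+\in\mathbb Z\), but it is excluded by the branch choice, so the log case never needs to be treated.

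For the Wronskian, take \(S_j=y^{s_+}h_j(y)\) with \(h_j\) analytic and \(h_j(0)\) finite. Then \(S_j'=y^{s_+-1}(s_+h_j+yh_j')\), and
\[
\overline S_1S_2'-\overline S_1'S_2=|y|^{2\Re s_+}\,y^{-1}\bigl(\,(s_+-\overline{s_+})\,\overline h_1h_2+O(y)\bigr),
\]
where the branch of \(y^{s}\) is taken on the side of \(x_\star\) lying inside the interval. Multiplying by \(\Delta_x=O(y)\) cancels the factor \(y^{-1}\), so the Wronskian is \(O(|y|^{2\Re s_+})\). This tends to zero because \(\Re s_+>0\).

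The main obstacle is bookkeeping rather than substance. One must confirm that the \(O(1)\) corrections to the potential and the \(O(y^2)\) terms in \(\Delta_x\) do not alter the indicial equation. One must also check that the cancellation \(s_+-\overline{s_+}\) (which is zero for real \(\Omega\)) is not needed, since the vanishing already comes from \(|y|^{2\Re s_+}\) for complex \(\nu_\star\) as well. Finally, one must handle the degenerate case \(\nu_\star=0\) (a double root \(s=0\)). This case is excluded by the hypothesis \(\Re s_+>0\), so it does not require separate treatment.
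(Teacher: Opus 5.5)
Your proof is correct and follows essentially the same route as the paper: reduce to the leading Fuchsian equation $\kappa_\star(yS')'-\nu_\star^2S/(\kappa_\star y)=0$, read off $s^2=\nu_\star^2/\kappa_\star^2$, and bound $\overline S_1S_2'-\overline S_1'S_2$ by $O(|y|^{2\Re s_+-1})$ before multiplying by $\Delta_x=O(y)$. Your use of analytic Frobenius factors $h_j$, rather than the paper's merely bounded $\widetilde S_j$, is a slight sharpening, because the Wronskian also contains $h_j'$ and mere boundedness of $\widetilde S_j$ does not control that term.
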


\begin{proof}
With \(y=x-x_\star\), the leading angular equation is
\begin{equation}
\kappa_\star(yS')'-\frac{\nu_\star^2}{\kappa_\star y}S=0.
\end{equation}
The indicial equation is \(s^2-\nu_\star^2/\kappa_\star^2=0\).  If \(S_j=y^{s_+}\widetilde S_j\) with bounded \(\widetilde S_j\), then
\begin{equation}
\overline S_1S_2'-\overline S_1'S_2=O(y^{2\Re s_+-1}).
\end{equation}
Multiplication by \(\Delta_x=\kappa_\star y+O(y^2)\) gives the claimed vanishing.
\end{proof}

\begin{proof}[Proof of Proposition~\ref{prop:angularregularityverified}]
The admissible endpoint branch is the branch selected by smoothness in the endpoint chart.  Lemma~\ref{lem:angularsimpleendpoint} shows that the corresponding Wronskian boundary contribution vanishes.  Hence the angular Green identity has no boundary term on the admissible domain, and the angular Sturm-Liouville realization is self-adjoint.
\end{proof}

\subsubsection*{Fredholm and Green operators on bounded intervals}

On a bounded interval, the angular and radial separated operators are closed operators obtained from regular or regular-singular Sturm-Liouville forms with separated boundary conditions.  When the endpoint Wronskians vanish, the realization has compact resolvent.  Consequently the bounded-domain spectra are discrete, and for the axisymmetric operator
\begin{equation}
L_0\psi_j=\lambda_j\psi_j,
\qquad
0=\lambda_0<\lambda_1\le\lambda_2\le\cdots,
\end{equation}
the resolvent on the orthogonal complement of constants is
\begin{equation}
G_\perp f=\sum_{j\ge1}\frac{\langle f,\psi_j\rangle_A}{\lambda_j}\psi_j.
\end{equation}
For the wave resolvent,
\begin{align}
(L_0-\sigma^2)^{-1}f&=-\sigma^{-2}\Pi_0f+ \sum_{j\ge1}\frac{\langle f,\psi_j\rangle_A}{\lambda_j-\sigma^2}\psi_j.
\end{align}
Thus the zero-frequency singularity is exactly the rank-one pole associated with constants; in time, this pole is the affine contribution \(c_0+c_1t\).

\section*{Acknowledgements}
The author wishes to express his deepest gratitude to his family for their unwavering support and encouragement. The bounded-slab conformal stability analysis was supported by Riset Unggulan ITB 2024 No.~959/IT1.B07.1/TA.00/2024. The zero-frequency threshold analysis and the weak subextremal Kerr-Newman exterior verification were supported by Riset Unggulan ITB 2025 No.~841/IT1.B07.1/TA.00/2025. The full subextremal and extremal Kerr-Newman exterior verification is funded by the Indonesian Endowment Fund for Education (LPDP) on behalf of the Indonesian Ministry of Higher Education, Science and Technology and managed under the EQUITY Program (Contract No. 4298/B3/DT.03.08/2025).

\end{document}